\newenvironment{absolutelynopagebreak}
  {\par\nobreak\vfil\penalty0\vfilneg
   \vtop\bgroup}
  {\par\xdef\tpd{\the\prevdepth}\egroup
   \prevdepth=\tpd}
\numberwithin{equation}{section} %
\numberwithin{figure}{section}
\let\c@equation\c@figure \makeatother %
\crefname{equation}{}{}
\crefname{enumi}{}{}
\crefname{thm}{Theorem}{Theorems}
\theoremstyle{plain}
\newtheorem{thm}[equation]{Theorem}
\newtheorem{lemma}[equation]{Lemma}
\newtheorem{prop}[equation]{Proposition}
\newtheorem{corr}[equation]{Corollary}
\newtheorem{conj}[equation]{Conjecture}
\theoremstyle{definition} 
\newtheorem{defn}[equation]{Definition}
\newtheorem{notation}[equation]{Notation}
\newtheorem{eg}[equation]{Example}
\newtheorem{egs}[equation]{Examples}
\theoremstyle{remark}
\newtheorem{convention}[equation]{Convention}
\crefname{thm}{Theorem}{Theorems}
\crefname{lemma}{Lemma}{Lemmas}
\crefname{prop}{Proposition}{Propositions}
\crefname{corr}{Corollary}{Corollaries}
\crefname{conj}{Conjecture}{Conjectures}
\crefname{defn}{Definition}{Definitions}
\crefname{notation}{Notation}{Notations}
\crefname{eg}{Example}{Examples}
\crefname{egs}{Examples}{Examples}
\crefname{remark}{Remark}{Remarks}
\crefname{remarks}{Remarks}{Remarks}
\newcommand{\R}{\mathbb{R}}
\newcommand{\C}{\mathbb{C}}
\newcommand{\Q}{\mathbb{Q}}
\newcommand{\Z}{\mathbb{Z}}
\newcommand{\N}{\mathbb{N}}
\newcommand{\F}{\mathbb{F}}
\newcommand{\iso}{\cong}
\newcommand{\homeo}{\cong}
\newcommand{\del}{\partial}
\newcommand{\dd}{\mathop{}\! d}
\newcommand{\union}{\cup}
\newcommand{\tensor}{\otimes}
\newcommand{\inv}{^{-1}}
\newcommand{\sminus}{\!\smallsetminus\!} %
\newcommand{\cross}{\!\times\!} %
\newcommand{\set}[1]{\{#1\}} %
\newcommand{\gappy}[1]{\enskip #1\enskip} %
\newcommand{\nogapcite}[2][]{%
\ifthenelse{ \isempty{#1} }{\hspace{1sp}\cite{#2}}{\hspace{1sp}\cite[#1]{#2}}%
}%
\DeclareMathOperator{\id}{id}
\DeclareMathOperator{\pt}{pt}
\DeclareMathOperator{\ind}{ind}
\DeclareMathOperator{\nmc}{nmc}
\newcommand{\MB}{\mathrm{MB}}
\newcommand{\restr}[2]{{%
  \left.\kern-\nulldelimiterspace %
  #1 %
  \vphantom{\big|} %
  \right|_{#2} %
}}
\newcommand{\orbits}[2]{%
\StrLen{#1}[\topLen]%
\StrLen{#2}[\botLen]%
^{%
\IfSubStr{#1}{0}{\emptyset\leftarrow}{}%
\IfSubStr{#1}{h}{h_+\rightarrow}{%
\ifthenelse{\botLen > 1}{\protect\phantom{h_+\rightarrow}}{}}%
\IfSubStr{#1}{e}{e_-}{}%
}%
_{%
\ifthenelse{\topLen > 2}{\protect\phantom{\emptyset\leftarrow}}{}%
\IfSubStr{#2}{e}{e_+\rightarrow}{%
\ifthenelse{\topLen > 1}{\protect\phantom{e_+\rightarrow}}{}}%
\IfSubStr{#2}{h}{h_-}{}%
}}
\newcommand{\figureaxes}[6]{
\StrLen{#3}[\eastLen]%
\StrLen{#4}[\northLen]%
\StrLen{#5}[\westLen]%
\StrLen{#6}[\southLen]%
\ifthenelse{\eastLen > 0}{\draw[-latex] (#1,#2)--+(0.7,0) node [right]{$#3$};}{}%
\ifthenelse{\northLen > 0}{\draw[-latex] (#1,#2)--+(0,0.7) node [above]{$#4$};}{}%
\ifthenelse{\westLen > 0}{\draw[-latex] (#1,#2)--+(-0.7,0) node [left]{$#5$};}{}%
\ifthenelse{\southLen > 0}{\draw[-latex] (#1,#2)--+(0,-0.7) node [below]{$#6$};}{}%
}
\newcommand{\refDiagram}[1]{\hyperref[#1]{Diagram~\ref*{#1}}}
\newcommand{\refNamedThm}[2]{\hyperref[#2]{#1~(\ref*{#2})}}
\newcommand{\refListInThm}[2]{\hyperref[#2]{\cref*{#1}(\ref*{#2})}}
\begin{document}

\onehalfspacing
\setlength{\unitlength}{1cm}

\begin{titlepage}
    \begin{center}
		
        \includegraphics[width=0.5\textwidth]{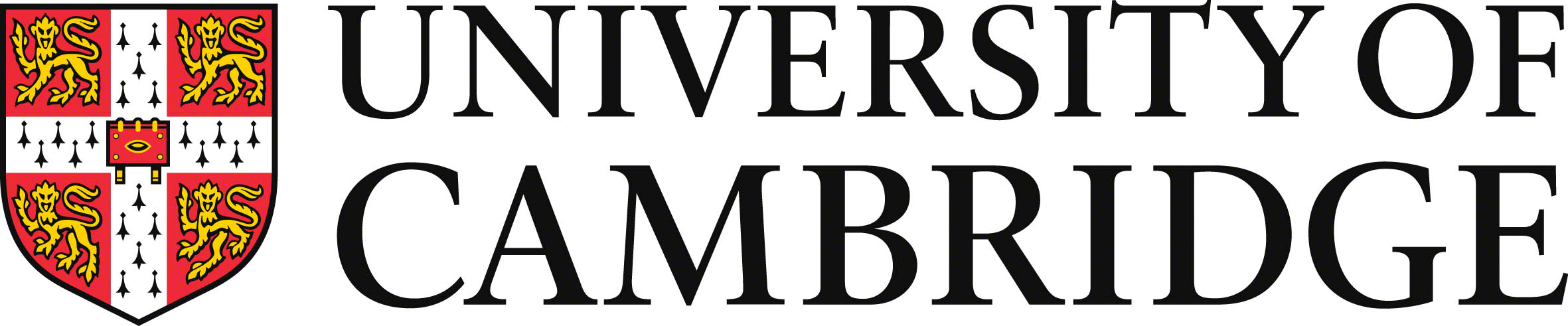}
        
				\vspace{2.5cm}
        
        \huge
        \textbf{Embedded contact knot homology \\ and a surgery formula}
        
        \vspace{2cm}
        
				\includegraphics[width=0.5\textwidth]{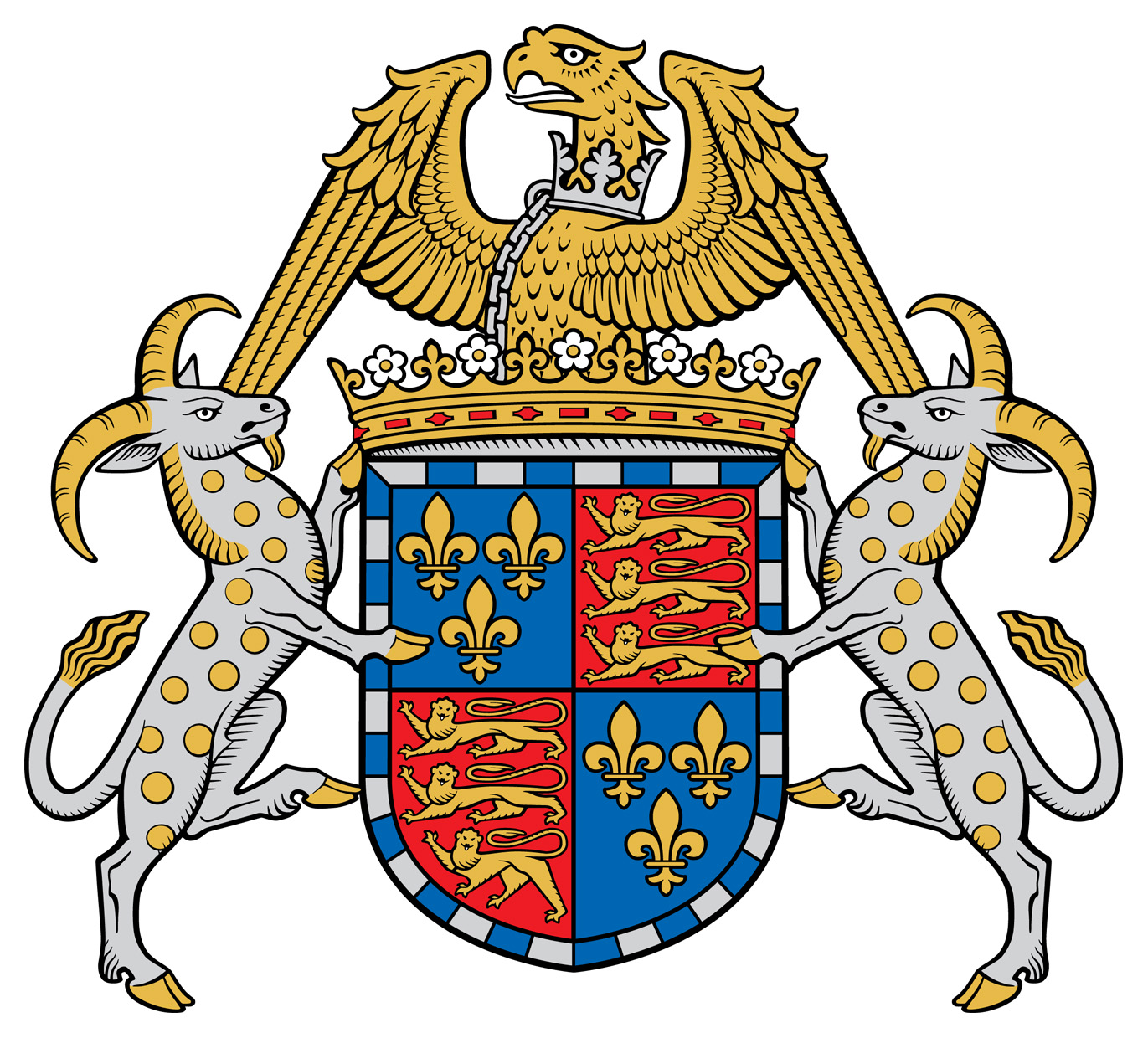}
        
				\vspace{1.6cm}
        
        \LARGE
        \textbf{Thomas Brown} \\
        
				\vspace{0.3cm}
				
        \large
        St John's College\\  
        University of Cambridge\\
        May 2018
        
        \vfill

        \large
        \textit{This dissertation is submitted for the degree of}\\
       Doctor of Philosophy

    \end{center}
\end{titlepage}
\cleardoublepage

\thispagestyle{empty}

\vspace*{\fill}

{
	\doublespacing

	\section*{\centering Declaration}

	\bigskip

	\noindent This dissertation is the result of my own work and includes nothing which is the outcome of work done in collaboration except as specified in the text.

	\bigskip

	\noindent It is not substantially the same as any that I have submitted, or, is being concurrently submitted for a degree or diploma or other qualification at the University of Cambridge or any other University or similar institution except as specified in the text. I further state that no substantial part of my dissertation has already been submitted, or, is being concurrently submitted for any such degree, diploma or other qualification at the University of Cambridge or any other University or similar institution except as specified in the text.

}

\vspace*{\fill}
\vspace*{\fill}
\vspace*{\fill}

\cleardoublepage

\thispagestyle{empty}

\vspace*{\fill}
\section*{\centering Abstract}
Embedded contact homology is an invariant of closed oriented contact 3-manifolds first defined by Hutchings, and is isomorphic to both Heegard Floer homology (by the work of Colin, Ghiggini and Honda) and Seiberg-Witten Floer cohomology (by the work of Taubes).  The embedded contact chain complex is defined by counting closed orbits of the Reeb vector field and certain pseudoholomorphic curves in the symplectization of the manifold. As part of their proof that ECH=HF, Colin, Ghiggini and Honda showed that if the contact form is suitably adapted to an open book decomposition of the manifold, then embedded contact homology can be computed by considering only orbits and differentials in the complement of the binding of the open book; this fact was then in turn used to define a knot version of embedded contact homology, denoted ECK, where the (null-homologous) knot in question is given by the binding.  

In this thesis we start by generalizing these results to the case of rational open book decompositions, allowing us to define ECK for rationally null-homologous knots.  In its most general form this is a bi-filtered chain complex whose homology yields ECH of the closed manifold.  There is also a hat version of ECK in this situation which is equipped with an Alexander grading equivalent to that in the Heegaard Floer setting, categorifies the Alexander polynomial, and is conjecturally isomorphic to the hat version of knot Floer homology. 

The main result of this thesis is a large negative $n$-surgery formula for ECK.  Namely, we start with an (integral) open book decomposition of a manifold with binding $K$ and compute, for all $n$ greater than or equal to twice the genus of $K$, ECK of the knot $K(-n)$ obtained by performing ($-n$)-surgery on $K$.  This formula agrees with Hedden's large $n$-surgery formula for HFK, providing supporting evidence towards the conjectured equivalence between the two theories. 

Along we the way, we also prove that ECK is, in many cases, independent of the choices made to define it, namely the almost complex structure on the symplectization and the homotopy type of the contact form.  We also prove that, in the case of integral open book decompositions, the hat version of ECK is supported in Alexander gradings less than or equal to twice the genus of the knot. 
\vspace*{\fill}
\vspace*{\fill}
\cleardoublepage

\tableofcontents
\cleardoublepage

\chapter*{Introduction}
\addcontentsline{toc}{chapter}{Introduction}
\markboth{INTRODUCTION}{INTRODUCTION}

\subsubsection{Embedded contact homology adapted to rational open book decompositions}
Let $M$ be a closed, oriented 3-manifold equipped with a contact form $\alpha$.  The embedded contact homology groups,
\[ ECH(M,\alpha)\quad\text{and}\quad \widehat{ECH}(M,\alpha),\]
arise from chain complexes which are generated by \emph{orbit sets}, collections of embedded closed orbits of the Reeb vector field of $\alpha$, and equipped with a differential which counts pseudoholomorphic curves in the symplectization $\R\cross M$ between orbit sets.

The corresponding chain complexes are defined in terms of both $\alpha$ and the chosen almost complex structure $J$ on the symplectization.  However it is shown~\cite{Taubes10} that the homology groups are independent of both these choices.  Furthermore, we have isomorphisms
\begin{equation} \label{ECH_HF_iso_intro}
ECH(M) \iso HF^+(-M)\quad\text{and}\quad \widehat{ECH}(M) \iso \widehat{HF}(-M)
\end{equation}
with Heegaard Floer homology~\cite{CGH_HF_ECH}, as well as isomorphisms
\begin{equation} 
ECH(M) \iso \widehat{HM}(M)\quad\text{and}\quad \widehat{ECH}(M) \iso \widetilde{HM}(M) 
\end{equation}
with Seiberg-Witten Floer cohomology~\cite{Taubes10}.   The isomorphisms between HF and HM have also been proven directly by Kutluhan, Lee and Taubes~\cite{KLT10}.

In this thesis we are concerned with a version of ECH adapted to knots $K\subset M$, embedded contact knot homology, which is analogous to knot Floer homology in the Heegaard Floer setting.  To define this we construct contact forms on $M$ which are in a sense ``adapted'' to the knot $K$.  In particular, we consider the case where $(M,K)$ admits a \emph{rational open book decomposition}, that is we can find a decomposition $M=\nu(K)\union N$ where $\nu(K)$ is a neighbourhood of the knot and 
\[ N = \frac{\Sigma\cross[0,1]}{(x,1)\sim(\phi(x),0)} \]
is a mapping torus over some oriented genus $g$ surface $\Sigma$ with one boundary component such that the restriction of the monodromy $\phi$ to $\del\Sigma$ is a fractional rotation.  It is required that, under the identification between $\del(\nu(K))$ and $\del N$, the meridian of the knot is identified with a (closed) integral curve of $\del_t$ (here $t$ is the $[0,1]$ coordinate).

Given this decomposition we construct contact forms $\alpha_N$ on $N$ which are \emph{extendable to the rational open book $M$} (c.f.~\cref{extendable_contact_form}), which in particular means that the boundary $\del N$ is a \emph{negative Morse-Bott torus} (c.f.\ \cref{morse-bott_def}) for $\alpha_N$ and the Reeb vector field on $N$ is transverse to all the pages $\Sigma\cross\set{t}$ of the open book.

We define (c.f.~\cref{relative_ECH_groups}) a collection of \emph{relative ECH groups},
\[ ECH(N,\del N, \alpha_N)\quad\text{and}\quad \widehat{ECH}(N,\del N, \alpha_N), \]
which involves some careful manipulation of the $S^1$-family of Reeb orbits foliating $\del N$---in particular we define the groups above by picking two distinguished orbits in $\del N$ which we denote by $e_-$ and $h_-$.  These two orbits are included when constructing the chain complexes above but we ignore all other orbits in $\del N$. 

The first main result of this thesis is \cref{relative_ECH_equals_ECH} which expresses the embedded contact homology groups of $M$ solely in terms of orbits and differentials in $N$, to obtain isomorphisms
\[ ECH(M) \iso ECH(N,\del N, \alpha_N)\quad\text{and}\quad \widehat{ECH}(M)\iso \widehat{ECH}(N,\del N, \alpha_N). \]
These isomorphisms are proved in the case of integral open book decompositions by Colin, Ghiggini and Honda~\cite{CGH_ECH_OBD}, and the generalization of their argument to the rational case is the subject of \cref{ECH_via_rational_open_book_decompositions_chapter} of this thesis.

\subsubsection{Embedded contact knot homology for rational knots}
The structure of the extension of $\alpha_N$ to $M$ gives rise to two additional orbits in $\nu(K)$, denoted by $e_+$ and $h_+$, and three pseudoholomorphic curves
\begin{equation}\label{three_differentials_intro}
h_+ \longrightarrow \emptyset,\quad h_+\longrightarrow e_-\quad\text{and}\quad e_+\longrightarrow h_-
\end{equation}
which contribute to the ECH differential. Here we are using the notation $\Gamma_+ \longrightarrow \Gamma_-$ to denote a pseudoholomorphic curve in $\R\cross M$ with positive end at $\Gamma_+$ and negative end at $\Gamma_-$ (c.f~\cref{moduli_spaces_of_holomorphic_curves}).  Let $\mathcal{P}$ denote the set of embedded orbits in $\mathrm{int}(N)$.  We introduce a complex denoted by
\begin{equation}\label{ECK_complex_intro_eqn}
ECC\orbits{0he}{eh}(\mathrm{int}(N),\alpha_N,J)
\end{equation}
(c.f.~\cref{ECK_complex_defn}), which is generated by orbit sets constructed from $\set{e_\pm, h_\pm}\union \mathcal{P}$, and has a differential obtained by counting $J$-pseudoholomorphic curves in $N$ as well as the three curves in \cref{three_differentials_intro} above.  The homology of the complex in \cref{ECK_complex_intro_eqn} is isomorphic to $ECH(M)$.

In analogy to knot Floer homology, this complex can be visualized graphically by plotting dots in the plane: an orbit set is represented by a dot at coordinates $(i,j)$ if it contains the orbit $e_+$ with multiplicity $i$ and its \emph{Alexander grading} is $j$.  Here, the Alexander grading is defined to be the number of times an orbit set winds around the mapping torus in the $[0,1]$ direction and is equivalent to the well-known Alexander grading from knot Floer homology (c.f.~\cref{ECK_hat_categorifies_Alex_poly_section}).  For an excellent introduction to knot Floer homology, refer to Manolescu~\cite{Mano14}.

No differentials arising from pseudoholomorphic curves in $\mathrm{int}(N)$ can affect the multiplicity of $e_+$ or the Alexander grading. From the description of the complex in \cref{ECK_complex_intro_eqn} we therefore see that the only differentials affecting these two gradings are
\begin{align*}
e_+&\longrightarrow h_-, \text{ which decreases the horizontal coordinate by 1, and} \\
h_+&\longrightarrow \emptyset, \text{ which decreases the vertical coordinate by 1.}
\end{align*}
Hence the complex is bi-filtered. We define \emph{embedded contact knot homology}, denoted $ECK(K,\alpha_N,J)$, to be the bi-filtered homotopy type of this complex.

Furthermore, in this setting $\widehat{ECH}(M)$ can be computed as the homology of the subcomplex lying in the column $i=0$, which is filtered by Alexander grading.  This motivates the definition of $\widehat{ECK}(M,\alpha_N,J)$, which is taken to be the homology of
\[ ECC\orbits{he}{h}(\mathrm{int}(N),\alpha_N,J), \]
the associated graded complex of this zeroth column.

$\widehat{ECK}$ was first defined in terms of sutured contact homology by Colin, Ghiggini, Honda and Hutchings~\cite{CGHH11}.  The full complex \cref{ECK_complex_intro_eqn} was first discussed by Spano~\cite{Spano17} but the above definition of ECK as the bi-filtered homotopy type of this full complex is new as far as the author is aware.

We also have a conjectured equivalence between embedded contact knot homology and knot Floer homology, namely that
\begin{equation} \label{ECK_HFK_hat_conj_intro}
\widehat{ECK}(K,\alpha_N,J) \iso \widehat{HFK}(-K) 
\end{equation}
and that the full complexes
\begin{equation} \label{ECK_HFK_conj_intro}
ECK(K,\alpha_N,J)\quad\text{and}\quad CFK^+(-K)
\end{equation}
have the same bi-filtered homotopy type.

\subsubsection{A large negative $n$-surgery formula}

The main result of this thesis is a large negative $n$-surgery formula for embedded contact knot homology.  More explicitly, start with an \emph{integral} open book decomposition, let $n>2g$ and identify a neighbourhood of $\del N$ with $\del N\cross[-\epsilon, \epsilon]$, where
\[ \del N\cross[-\epsilon, 0) \subset \nu(K)\quad\text{and}\quad \del N\cross(0,\epsilon] \subset \mathrm{int}(N). \]
We can choose a basis $(m,l)$ for $H_1(\del N)$ where $m$ is given by some Reeb orbit and $l$ is parallel to and oriented by the boundary of the page $\Sigma$.  Then since $\del N$ is negative Morse-Bott, according to this basis the Reeb slope is infinite at $\del N$, very large and positive on $\del N\cross(0,\epsilon]$, and very large and negative on $\del N\cross[-\epsilon,0)$.  The slope $-n$ is obtained by rotating the Reeb vector field in the negative direction until it is parallel to $nm-l$.

Let $M(-n)$ denote the manifold obtained by Dehn filling along the slope $-n$ and write $K(-n)$ for the knot arising as the core of the attached solid torus.  For $0\le j < n$, write 
\[  \widehat{ECK}(K(-n),\alpha';[j]) := \bigoplus_{j' \equiv j \ (n)} \widehat{ECK}(K(-n),\alpha';j'), \]
where here $\alpha'$ is a contact form on $M(-n)\sminus K(-n)$ adapted to $K(-n)$ and we are able to drop the choice of almost complex structure from the notation since $\widehat{ECK}$ is independent of $J$ (c.f.~\cref{ECC_independent_of_alpha}).  Before stating the surgery formula we must introduce the following notation.

\begin{notation}\label{complexes_A_j_and_B_i}
Consider the bi-filtered full ECK complex
\[ ECC\orbits{0eh}{he}(\mathrm{int}(N), \alpha_N,J). \]
We introduce the notation $A_j$ and $B_i$ as follows.
\begin{itemize}
\item The zeroth column ($i=0$) of the full complex, given by the subcomplex generated by orbit sets with ($e_+$)-multiplicity 0, is filtered by Alexander grading (or $j$ coordinate). Write $A_j$ for the subcomplex of this column with Alexander grading \emph{less than or equal to} $j$.  
\item The $2g$-th row ($j=2g$) of the full complex, generated by orbit sets with Alexander grading $2g$, is filtered by $e_+$-grading (or $i$ coordinate). Write $B_i$ for the subcomplex of this row with $e_+$-grading \emph{less than} $i$.
\end{itemize}
\end{notation}

\begin{thm}\label{surgery_formula}
Fix $n>2g$.  Then for $0\le j < n$, $\widehat{ECK}(K(-n),\alpha';[j])$ is given as follows:
\begin{itemize}
\item If $2g \le j < n$, $\widehat{ECK}(K(-n),\alpha';[j])$ lies entirely in Alexander grading $j$, and is isomorphic to the homology of $A_j$, which in turn is isomorphic to $\widehat{ECH}(M)$.
\item If $0 \le j < 2g$, $\widehat{ECK}(K(-n),\alpha';[j])$ lies entirely in Alexander gradings $j$ and $j+n$.  The part in Alexander grading $j$ is isomorphic to the homology of $A_j$, and the part in Alexander grading $j+n$ is isomorphic to the homology of $B_{2g-j}$.
\end{itemize}
\end{thm}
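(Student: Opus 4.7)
The plan is to build a contact form $\alpha'$ on $M(-n)\sminus K(-n)$ adapted to the rational open book for $K(-n)$ induced by the $(-n)$-surgery, analyze its Reeb dynamics via the Morse-Bott machinery of \cref{ECH_via_rational_open_book_decompositions_chapter}, and then identify the resulting hat complex with the pieces $A_j$ and $B_{2g-j}$ of the full ECK complex of the original knot $K$.

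First, outside the collar $\del N\cross[-\epsilon,\epsilon]$ I would take $\alpha'=\alpha_N$. Inside the collar I would rotate the Reeb vector field through negative slopes until it is parallel to $nm-l$ at the new boundary, and then extend $\alpha'$ across $\nu(K(-n))$ to an extendable rational open book contact form. The new boundary torus is negative Morse-Bott and can be perturbed to produce two distinguished orbits $e_-',h_-'$ playing the roles of $e_-,h_-$ in \cref{relative_ECH_equals_ECH}, together with analogs $e_+',h_+'$ inside $\nu(K(-n))$. Since $\alpha'=\alpha_N$ on $\mathrm{int}(N)\sminus(\del N\cross(0,\epsilon])$, the embedded interior orbits and the interior pseudoholomorphic curves are unchanged by the surgery, so the generators of $\widehat{ECC}\orbits{he}{h}$ for $(K(-n),\alpha')$ are in natural bijection with those of $\widehat{ECC}\orbits{he}{h}(\mathrm{int}(N),\alpha_N,J)$.

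Next I would carry out the Alexander grading bookkeeping. Choosing a rational Seifert surface $\Sigma'$ for $K(-n)$ that restricts to the page $\Sigma$ on $\mathrm{int}(N)$ and caps off across the collar, the new Alexander grading of an orbit set $\gamma\cup (h_+')^a$ equals the old Alexander grading $A(\gamma)+a$ shifted by $n$ times the (signed) number of times the set wraps around $K(-n)$, which is readable off the $e_+$-multiplicity in the old full complex. Combining the support bound $\widehat{ECK}(K)\subseteq\set{j\le 2g}$ (proved earlier in the thesis) with the hypothesis $n>2g$, I would deduce that each class $[j]$ with $2g\le j<n$ is realised by only the integer lift $j$, while each class $[j]$ with $0\le j<2g$ is realised by exactly the two integer lifts $j$ and $j+n$: the generators in integer grading $j$ are precisely those of $A_j$, and the generators in integer grading $j+n$ are precisely those of $B_{2g-j}$ (via the identification of the $e_+$-multiplicity with the wrapping number).

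Finally I would match the differentials. Because interior holomorphic curves are unchanged, on orbit sets lying in a single integer Alexander grading the new differential coincides with the one induced by $\alpha_N$, giving the isomorphisms with $H_*(A_j)$ and $H_*(B_{2g-j})$ directly. The hard part is ruling out new differentials connecting integer Alexander gradings $j$ and $j+n$ in the case $0\le j<2g$: these would have to come from pseudoholomorphic curves passing through the collar and into $\nu(K(-n))$. For this I would apply \cref{relative_ECH_equals_ECH} to $M(-n)$ to reduce such differentials to compositions of interior curves with the three distinguished boundary-crossing curves analogous to \cref{three_differentials_intro}, and then perform an ECH-index calculation to show that no index-one curve can simultaneously change the $e_+'$-multiplicity and shift the integer Alexander grading by $n$. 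The resulting chain-level splitting $\widehat{ECC}(K(-n),\alpha';[j])\iso A_j\oplus B_{2g-j}$ then yields the homological isomorphism stated in the theorem.
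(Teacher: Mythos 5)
There are two genuine gaps in your plan, both at its core.

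First, the claim that the generators of the surgered hat complex are ``in natural bijection'' with those of $\widehat{ECC}$ for $(N,\alpha_N)$ is false. In the collar where the Reeb vector field rotates from infinite slope down to slope $-n$, it passes through every rational slope $p/q\in(0,1/n)$, and each such slope contributes a Morse--Bott torus of closed orbits, hence (after perturbation) a pair of new generators $e_{p/q},h_{p/q}$ in every sufficiently large Alexander grading. Dealing with these extra orbits is most of the work: in the paper one filters by the winding of the twist-region part (\cref{a_filtration_on_the_surgery_complex}), identifies the twist-region complex with the periodic Floer complex of a $1/n$-Dehn twist, and uses the Hutchings--Sullivan ``rounding corners'' computation (\cref{orbits_and_differentials_in_the_twist_region}) to show that on homology only $e_{0/1}^k$, $h_{0/1}e_{0/1}^{k-1}$ and $h_{1/n}$ survive, yielding the two-tower first page of \cref{first_page_of_F_spectral_sequence}. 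Without this step your complex is simply not the one you describe. (Also, even for $j<n$ the identification of the answer with $H_*(A_j)$ is not a tautology: $A_j$ contains $h_+$ and the differentials $h_+\to\emptyset$, $h_+\to e_-$, and \cref{A_j_iso} needs a mapping-cone/five-lemma argument to match it with $ECH^{e_-}_{h_-}(\mathrm{int}(N),\alpha_N;j)$.)

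Second, your strategy for the gradings $j+n$ is to \emph{rule out} the curves crossing the collar by an index computation and obtain a chain-level splitting $A_j\oplus B_{2g-j}$. This cannot work, because those curves exist and contribute a nonzero differential. Within the single new Alexander grading $j+n$ the first page consists of \emph{both} towers, $ECH^{e_-}_{h_-}(\mathrm{int}(N),\alpha_N;j+n)$ and $h_{1/n}ECH(\mathrm{int}(N),\alpha_N;j)$, connected by a first-page differential $d^{(n)}_{\mathcal{F},1}$, and the answer $H_*(B_{2g-j})$ is the homology of its mapping cone (\cref{surgery_mapping_cones}); in particular the $A_{j+n}$-type summand is mostly killed rather than surviving as a direct summand. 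Computing $d^{(n)}_{\mathcal{F},1}$ is the hardest point of the proof and is done indirectly: a commutative diagram (\cref{commutative_diagram_L_filtration}) relates $d^{(n+1)}_{\mathcal{F},1}$ to $d^{(n)}_{\mathcal{F},1}$ composed with the $U$-map, and the base case is $(-1)$-surgery, where the resulting open book is integral and \cref{ECK_supported_genus} forces $d^{(1)}_{\mathcal{F},1}$ to be an isomorphism (\cref{1_surgery}); hence $d^{(n)}_{\mathcal{F},1}$ is identified with $U_*^{n-1}$, whose mapping cone is $B_{2g-j}$ via \cref{B_j_isos}. This inductive mechanism, not an index-based vanishing argument, is what produces the $B$-groups, and it is entirely absent from your proposal.
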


\begin{figure}\centering
  \begin{tikzpicture}  %
		\draw [-latex,thick] (-0.6,0) -- (-0.6,5.4) node [midway, left] {$A$};
		\draw [-latex,thick] (2,0.5) -- +(3.5,3.5);
		\node [rotate=45] at (3.75+0.2,2.25-0.2) {power of $e_+$};
		
		\begin{scope}
			\path[clip] (-0.5,-0.5) rectangle (6.5,6.5);
			\foreach \x in {0,...,6} {
				\foreach \y in {0,...,4}
					\fill (\x,\x+\y) circle (0.05);
				\draw [->] (\x+0.85,1+\x) -- +(-0.7,0);
				\draw [->] (\x+0.85,3+\x) -- +(-0.7,0);
				\draw [->] (\x,\x+1.85) -- +(0,-0.7);
				\draw [->] (\x,\x+3.85) -- +(0,-0.7);
				}
		\end{scope}

		\draw [rounded corners=8pt,gray](-0.3, 4-0.3) rectangle (2+0.3, 4+0.3);
		\draw [gray](2-0.3,2-0.3) rectangle (2+0.3,3+0.3);

		\foreach \x in {2,...,6}
			\draw [loosely dashed,thick] (\x+0.6,6.6) -- +(0.6,0.6);

	\end{tikzpicture}
  \caption{A diagram showing the bi-filtered complex $ECC\protect\orbits{0he}{eh}(\mathrm{int}(N), \alpha_N,J)$ in the case of the torus knot $T(2,5)$ (c.f.~\cref{ECK_for_torus_knots_section}) and the two subcomplexes $A_1$ (rectangle) and $B_{2g-1}$ (oval).  The complex $A_1$ is technically defined as a subcomplex of the zeroth column but we have shifted the rectangle here to highlight the well-known ``bent complex'' structure from the surgery formula on HFK.}
	\label{surgery_formula_figure}
\end{figure}
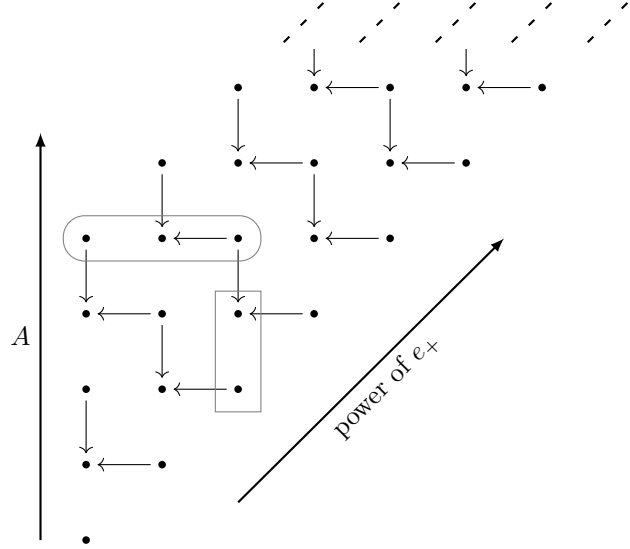

This formula agrees with the large (positive) $n$-surgery formula for knot Floer homology~\cite[Theorem 4.1]{Hedden}, supporting the conjectured equivalence between the two theories.  See \cref{surgery_formula_figure}.  Notice that chance of sign in the surgery coefficient is consistent with the changes of sign in \cref{ECH_HF_iso_intro} and the conjectured equivalences \cref{ECK_HFK_hat_conj_intro} and \cref{ECK_HFK_conj_intro} above.

\subsubsection{Other results}

Along the way, we also establish some results concerning ECK.  Three of the most important are that
\begin{enumerate}
\item The $e_+$-filtered homotopy type of the graded complex associated to the Alexander filtration on the full complex \cref{ECK_complex_intro_eqn} is invariant under the choice of almost complex structure $J$ and isotopies of $\alpha$ and, in the case of an integral open book decomposition, the bi-filtered homotopy type of \cref{ECK_complex_intro_eqn} is also invariant under such choices (\cref{ECC_independent_of_alpha,ECK_invariant_ZOBD});
\item $\widehat{ECK}$ categorifies the Alexander polynomial for rationally null-homologous knots in arbitrary manifolds (in the sense that its graded Euler characteristic is equal to
\[(1-[\mu])\tau(M\sminus K),\] where $\tau$ is the Turaev torsion and $\mu$ is the homology class of the meridian of the knot (\cref{ECK_hat_categorifies_Alex_poly})), and hence $\widehat{ECK}$ and $\widehat{HFK}$ are isomorphic at the level of Euler characteristic; and
\item in the case of an integral open book decomposition, $\widehat{ECK}$ is supported in Alexander gradings less than or equal to $2g$ (\cref{ECK_supported_genus}).
\end{enumerate}
Again, these three facts support the conjectures above.

\subsubsection{Acknowledgements}

First and foremost, I express enormous gratitude to my supervisor, Jake Rasmussen, for all the support he has given me throughout my PhD.  The time and attention he dedicates to all his students is greatly appreciated, and I certainly would not be where I am today without his generous and continual help and advice.  It must also be noted that it was Jake's captivating Master's course on 3-manifolds which first inspired me to pursue a PhD four and a half years ago.  Many thanks also go towards my two examiners, Vincent Colin and Ailsa Keating, for their insightful comments and helpful feedback.

This PhD could not have taken place without the generous funding from the St John's College Benefactors Scholarship, which also kindly supported me during my Master's studies.  My PhD was also supported by contributions from DPMMS and St John's College, for which I am very grateful.  I also wish to acknowledge Sue Colwell, the Tutor for Graduate Affairs at St John's, whose commitment to the graduate community of the college is second to none.

I am very grateful for several mathematical discussions which have taken place at conferences and workshops around the world; in particular my conversations with Paolo Ghiggini, Gilberto Spano and Vincent Colin, which sparked much of my interest towards the area of embedded contact knot homology.

To my PhD companions, Nate Davis, Christian Lund, Nina Friedrich and Claudius Zibrowius, thank you for your friendship and company over the years---I wish you all the best for the remainder of your academic and/or mathematical careers.  I also wish to thank Jeremy Judge: for our many scintillating discussions on the philosophy of learning, for allowing me to impart mathematical enthusiasm onto your students (and in turn acquire much from them), and for introducing me to the book \emph{Bounce}.  Good luck in Chicago.

Special thanks must go towards my partner---and resident \LaTeX\ guru---Chi-H\'e Elder, who has always been there for me over the last three years, providing unwavering support especially during the more difficult moments of my PhD.  Thank you for everything.  Many thanks also go to my two dance partners, Liane Dupont and Hanna Wikstrom, for doing a wonderful job of distracting me from the pains of PhD life when at practice, and of course for putting up with an overly analytic mathematician for a dance partner.

Finally, I wish to thank my family, who have of course been there for me for much longer than everyone else combined.  Most of all, to my mother, whose love and support is unparalleled and without whom I would not be who I am today.

\chapter{Embedded contact homology}\label{ECH_chapter}
\section{Definitions}
Embedded contact homology is a dynamical 3-manifold invariant originally defined by Hutchings~\cite{Hu14}, defined in terms of closed orbits of the Reeb vector field associated to the contact structure.  In this section we will give an exposition of embedded contact homology, following Hutchings~\cite{Hu14}.

\subsection{Contact geometry and Reeb dynamics}

\begin{defn}
Let $M$ be a closed oriented 3-manifold. A \emph{contact form} on $M$ is a 1 form $\alpha$ such that $\alpha \wedge d\alpha > 0$.
\end{defn}

Associated to a contact form we obtain a \emph{contact structure}, $\xi$, which is the 2-plane field defined by $\xi := ker(\alpha)$.  A contact form also gives rise to a \emph{Reeb vector field}, $R_\alpha$, the unique vector field such that $\iota_{R_\alpha}(d\alpha) = 0$ and $\alpha(R_\alpha)=1$.  Here $\iota_{R_\alpha}(d\alpha)$ denotes the contraction of $d\alpha$ with $R_\alpha$.  The Reeb vector field is always transverse to the contact structure $\xi$.  When the contact form is clear from the context, we will drop it from the notation and refer to the Reeb vector field by $R$. 

To define embedded contact homology, we consider the dynamics of the Reeb vector field.  A \emph{Reeb trajectory} is path $\gamma$ in $M$ such that
\[ \gamma'(t) = R(\gamma(t)) \text{ for all }t,\]
and a \emph{Reeb orbit} is a closed Reeb trajectory. The \emph{action} of a Reeb orbit is defined by
\[ \mathcal{A}(\gamma) := \int_\gamma \alpha. \]
A Reeb orbit is called \emph{simple} if it is injective.  Suppose that a Reeb orbit is paramaterized by
\[\gamma:[0,T]\to M,\]
so that $\gamma(0)=\gamma(T)$.  For $m>0$, the \emph{finite cover} $\gamma^m$ is given by the Reeb orbit
\[ [0,mT] \to M \]
defined at $t$ by evaluating $\gamma$ at $t \text{ mod } T$.  Every Reeb orbit is a finite cover of a simple Reeb orbit.

Suppose that $\gamma:[0,T]\to M$ is an injective Reeb orbit and that $\varphi_T:M\to M$ is the Reeb flow at time $T$.  The \emph{linearized return map} associated to $\gamma$ is the map 
\[f_\gamma:\xi_{\gamma(0)} \to \xi_{\gamma(T)}=\xi_{\gamma(0)}\] obtained by restricting the differential $d\varphi_T$ at $\gamma(0)$ to $\xi_{\gamma(0)}$.

\begin{defn}~
\begin{enumerate}
\item A Reeb orbit $\gamma$ is \emph{non-degenerate} if $f_\gamma$, and every power thereof, does not have 1 as an eigenvalue.
\item A contact form $\alpha$ is \emph{non-degenerate} if every Reeb orbit is non-degenerate.
\end{enumerate}
\end{defn}

Non-degeneracy is a generic condition, so any contact form can be made non-degenerate by performing a small $C^\infty$ perturbation~\cite[Lemma 2]{Bo03}.

Since the Lie derivative of $d\alpha$ with respect to $R$ is zero, $f_\gamma$ is a symplectic transformation of $(\xi_{\gamma(0)}, d\alpha)$.  If it does not have 1 as an eigenvalue then by the classification of elements in $Sp(2,\R)=SL_2(\R)$, it is exactly one of either
\begin{itemize}
\item \emph{hyperbolic}, if its eigenvalues are real, equal to $\set{\lambda, \lambda\inv}$, and the return map looks locally like a squeeze mapping, or
\item \emph{elliptic}, if its eigenvalues lie on the unit circle in $\C$, and the return map looks locally like a rotation.
\end{itemize}

\begin{defn}
A non-degenerate Reeb orbit $\gamma$ is called \emph{hyperbolic} (resp.\ \emph{elliptic}) if its linearized first return map $f_\gamma$ is hyperbolic (resp.\ elliptic).

A hyperbolic orbit is called \emph{positive} (resp.~\emph{negative}) if the eigenvalues of $f_\gamma$ are positive (resp.~negative).
\end{defn}

We are now ready to start defining the chain complex $ECC(M,\alpha)$ over the field $\F_2=\Z/2\Z$.  Assume that $\alpha$ is non-degenerate.  The generators of $ECC(M,\alpha)$ are orbit sets, defined below.

\begin{defn}
An \emph{orbit set} is a finite set $\Gamma = \set{(\gamma_i, m_i)}$ such that the following hold:
\begin{itemize}
\item each $\gamma_i$ is a simple Reeb orbit and $i\ne j \implies \gamma_i \ne \gamma_j$,
\item each $m_i$ is a positive integer, and
\item if $\gamma_i$ is hyperbolic then $m_i=1$.
\end{itemize}
The \emph{underlying set} of an orbit set is
\[ |\Gamma| := \bigcup_i \mathrm{Im}(\gamma_i). \]  
\end{defn}

The empty orbit set, $\emptyset$, is permitted as a generator of the complex.  Orbit sets are written multiplicatively as $\prod{\gamma_i^{m_i}}$, with $\emptyset$ written as 1.  This notation must not be confused with the notation for finite covers of orbits; if $\gamma$ is a simple orbit then the notation $\gamma^m$ can be interpreted either as
\begin{itemize}
\item an \emph{orbit}, the $m$-fold finite cover of $\gamma$, or
\item the \emph{orbit set} $\set{(\gamma,m)}$.
\end{itemize}
In the rare instances that these notations appear in close proximity it will be made clear which we are referring to---most of the time it will be clear from the context.  In later proofs, we will use multiplicative notation in the following sense: if $\Gamma=\prod{\gamma_i^{m_i}}$ and $\Gamma'=\prod{{\gamma_i}^{m'_i}}$ (note that here we are allowing $m_i$ and $m_i'$ to be zero for notational simplicity) then
\begin{align*}
\Gamma\cdot\Gamma' &= \prod{\gamma_i^{m_i+m_i'}}\text{ and} \\
\Gamma/\Gamma' &= \prod{\gamma_i^{m_i-m_i'}}.
\end{align*}

\begin{convention}
We will encounter orbit sets written in the multiplicative notation with $m_i<0$ or with $m_i>1$ for $\gamma_i$ hyperbolic---any orbit set where this is the case is equal to 0.
\end{convention}

The \emph{homology class} of an orbit set is defined to be 
\[ [\Gamma] := \sum{m_i[\gamma_i]}, \]
where $[\gamma_i]$ is the homology class of $\gamma_i$ (note that $[\Gamma]\ne[|\Gamma|]$).  This will usually be taken as an element of $H_1(M,\Z)$ but may occasionally be taken as an element of $H_1(|\Gamma|,\Z)$.  The \emph{action}, or \emph{length}, of an orbit set is
\[ \mathcal{A}(\Gamma) := \sum{m_i\mathcal{A}(\gamma_i)}.\]

\subsection{Moduli spaces of holomorphic curves}\label{moduli_spaces_of_holomorphic_curves}

The ECH differential arises by counting pseudoholomorphic curves in the symplectization of the contact manifold $M$. The \emph{symplectization} of a contact manifold $(M,\alpha)$ is the manifold $\R\cross M$ with symplectic form $d(e^s\alpha)$, where $s$ denotes the $\R$ coordinate.  We then consider almost complex structures $J$ which are \emph{adapted to $\alpha$}, in the sense that
\begin{itemize}
\item $J$ is $s$-invariant,
\item $J(\del_s) = R$, and
\item on each $\set{s}\cross M$, $J$ maps $\xi$ to itself and is compatible with $d\alpha$ in the sense that $d\alpha(\cdot, J\cdot)$ is a Euclidean metric on $\xi$. 
\end{itemize}
For the purposes of this thesis, pseudoholomorphic maps between almost complex manifolds will often be referred to simply as holomorphic maps.

We can now set up the idea of what it means for there to exist a differential between two orbit sets $\Gamma_+$ and $\Gamma_-$.  Let $(\hat{F},j)$ be a closed and not necessarily connected Riemann surface and consider (pseudo)holomorphic curves
\[ u:(F,j) \to (\R\cross M, J) \]
where $F$ is obtained by removing a finite number of puncture points from $\hat{F}$. A curve $u$ is \emph{pseudoholomorphic} if it satisfies the generalized Cauchy-Riemann equation
\[ du\circ j = J\circ du. \]
A puncture point $x\in \hat{F}$ is said to be \emph{asymptotic to a Reeb orbit $\gamma\in M$} if there exists a small neighbourhood $U\subset \hat{F}$ of $x$ such that $u(U)$ is asymptotic to $\R\cross \gamma \subset \R\cross M$.  Such a puncture point is also called an \emph{end} of the holomorphic curve $u$ and is said to be a \emph{positive end} if $u(U)$ is asymptotic to $[0,+\infty)\cross \gamma$ and a \emph{negative end} if it is asymptotic to $(-\infty,0]\cross \gamma$.  In addition, if a puncture point is asymptotic to a Reeb orbit $\gamma$ and $\gamma$ is an $m$-fold cover of a simple Reeb orbit $\gamma'$, then we say that the puncture point is asymptotic to $\gamma'$ with multiplicity $m$.

We say that the positive (resp.~negative) end of a holomorphic curve $u$ has \emph{multiplicity $m$ (with partition $(m_1, \dots, m_k)$)} at a simple Reeb orbit $\gamma'$ if $m = \sum{m_j}$ and $u$ has $k$ positive (resp.~negative) ends at $\gamma'$ with multiplicities $m_1, \dots, m_k$.  Here $(m_1, \dots, m_k)$ is an unordered tuple.  Finally, we say that the \emph{positive (resp.~negative) end of $u$} is equal to an orbit set $\Gamma = \set{(\gamma_i, m_i)}$ if its positive (resp.~negative) end has multiplicity $m_i$ at the simple orbit $\gamma_i$ and it has no positive (resp.~negative) ends at any other simple orbits.

Notice the subtle difference between the three definitions concerning ``ends'' above:
\begin{itemize}
\item an end of $u$ at $\gamma$ with multiplicity $m$, which just considers a single puncture point;
\item the positive (resp.~negative) end of $u$ at $\gamma$, with multiplicity $m$, which considers all positive (resp.~negative) puncture points ending at $\gamma$ and gives rise to a partition of $m$; and
\item the positive (resp.~negative) end of $u$, which is equal to some orbit set and considers all positive (resp.~negative) ends of $u$.
\end{itemize}
We say that $u$ is a holomorphic curve \emph{from $\Gamma_+$ to $\Gamma_-$}, sometimes written $\Gamma_+\longrightarrow\Gamma_-$, if the positive end of $u$ is equal to $\Gamma_+$ and the negative end is equal to $\Gamma_-$.  We will also refer to the orbits making up $\Gamma_+$ as \emph{positive orbits} and to the orbits making up $\Gamma_-$ as \emph{negative orbits}.

\begin{defn}
Let $\Gamma_+$ and $\Gamma_-$ be orbit sets.  $\mathcal{M}_J(\Gamma_+, \Gamma_-)$ is defined to be the moduli space of holomorphic curves
\[ u:(F,j) \to (\R\cross M, J) \]
from $\Gamma_+$ to $\Gamma_-$.  Furthermore, if $*$ is some predicate, then $\mathcal{M}^*_J(\Gamma_+, \Gamma_-)$ denotes the moduli space of those curves satisfying $*$.
\end{defn}

Notice that a holomorphic curve from $\Gamma_+$ to $\Gamma_-$ acts as a cobordism between the two orbit sets so $\mathcal{M}_J(\Gamma_+, \Gamma_-)$ can only be non-empty if $[\Gamma_+]=[\Gamma_-]$.

We say that the almost complex structure $J$ is \emph{regular} if near any connected holomorphic curve $u$, from $\Gamma_+$ to $\Gamma_-$, the moduli space $\mathcal{M}_J(\Gamma_+, \Gamma_-)$ is transversely cut out and hence a finite dimensional manifold.  Such $J$ exist by Dragnev~\cite{Dr04}.  We will always assume that $J$ is chosen to be regular.  The dimension of the moduli space near $u$ is given by the Fredhom index, $\ind(u)$, however the definition of ECH involves a different index, called the ECH index, defined in the following section.

\subsection{The ECH Index}\label{The_ECH_index}

The ECH differential is defined by counting certain holomorphic curves with ECH index equal to 1.  In this section we will give a detailed exposition of this index, the definition of which involves three parts:
\begin{enumerate}
\item the \emph{Conley-Zehnder indices} of the orbits at either end of the holomorphic curve $u$,
\item the \emph{relative first Chern class} of the curve, and
\item the \emph{self-intersection number} of the curve.
\end{enumerate}
All three of these are defined with respect to trivializations of $M$ near the Reeb orbits involved, however the final index is independent of this trivialization.  Furthermore, the ECH index is defined only in terms of the (relative) homology class of the closure of the curve, defined below.

\begin{defn}~
\begin{itemize}
\item $u_M := \pi_M\circ u$, where $\pi_M$ denotes projection from $\R\cross M$ to $M$.
\item $u_\R := \pi_\R\circ u$, where $\pi_\R$ denotes projection from $\R\cross M$ to $\R$.
\end{itemize}
\end{defn}

\begin{defn}
Let $H_2(M, \Gamma_+, \Gamma_-)$ be the set of relative homology classes in $H_2(M,|\Gamma_+|\union|\Gamma_-|)$ represented by chains $Z$ with
\[ [\del Z] = [\Gamma_+] - [\Gamma_-] \in H_1(|\Gamma_+|\union|\Gamma_-|). \]
Define the \emph{homology class} of a holomorphic curve $u$ by
\[ [u] := [\overline{u_M}] \in H_2(M, \Gamma_+, \Gamma_-). \]
\end{defn}

When discussing $H_2(M, \Gamma_+, \Gamma_-)$ we will again, in analogy to the holomorphic set-up, refer to the curves $\gamma_i^+$ making up the orbit set $\Gamma_+$ as \emph{positive orbits} and the curves $\gamma_i^-$ making up $\Gamma_-$ as \emph{negative orbits}.

\begin{defn}
Let $\gamma$ be a simple orbit in $M$.  A \emph{trivialization} of ($M$ near) $\gamma$ is a homeomorphism
\[ \tau : (V, \gamma) \xrightarrow{\iso} (S^1 \cross D^2, S^1 \cross \set{0}) \]
where $V\subset M$ is some neighbourhood of the orbit $\gamma$.

Trivializations are considered up to isotopy and hence are determined uniquely by a \emph{framing} of the orbit.  Since the orbit is transverse to the contact structure $\xi$, this framing can be taken as a non-zero section of the bundle $\restr{\xi}{\gamma}$ over $\gamma$.  We will denote the framing associated to a trivialization $\tau$ by $\sigma_\tau$.

If $\Gamma_+ = \set{(\gamma^+_i, m^+_i)}$ and $\Gamma_- = \set{(\gamma^-_i, m^-_i)}$ are orbit sets in $M$, then a trivialization of the pair $(\Gamma_+, \Gamma_-)$ consists of a trivialization for each $\gamma^+_i$ and each $\gamma^-_i$.  Such a trivialization will be also be denoted by $\tau$, and in this case $\sigma_\tau$ will denote the associated framing as a section of the bundle $\restr{\xi}{|\Gamma_+|\union|\Gamma_-|}$ over $|\Gamma_+|\union|\Gamma_-|$.
\end{defn}

Note that there is a free transitive $\Z$-action on the space of trivializations of a simple orbit.  We will use the sign convention adopted by Hutchings in 2009~\cite{Hu09} and let the element $1\in\Z$ act on a trivialization $\tau$ by adding a single positive twist to the framing associated with $\tau$.  This is the opposite sign convention to Hutchings' earlier paper~\cite{Hu02}.  We will write the action as $\tau + k$ for $k\in\Z$.

\subsubsection{The Conley-Zehnder Index}

Recall that when defining the return map $f_\gamma$ associated to a Reeb orbit $\gamma:[0,T]\to M$ we linearized the time $T$ flow of $R$ near $\gamma$ to obtain an element of $Sp(2,\R)$.  The Conley-Zehnder index $\mu_\tau(\gamma)$ is defined by linearizing the time $t$ flow for all $t\in[0,T]$, which gives us a path in $Sp(2, \R)$, starting at $\id_{Sp(2,\R)}$ and ending at $f_\gamma$.  We will not give the full definition here, instead defining $\mu_\tau$ only for the cases required for our purposes.

\begin{defn}
Let $\gamma$ be a simple Reeb orbit in $M$ with trivialization $\tau$, and let $\gamma^m$ be the $m$-fold cover of $\gamma$.  
\begin{itemize}
\item If $\gamma$ is hyperbolic, then with respect to the trivialization $\tau$, the linearized flow from time $t=0$ to $t=T$ will not only apply a squeeze factor, but also rotate by an angle of $n\pi$, where $n\in\Z$.  In this case we define $\mu_\tau(\gamma):=n$.  Furthermore, $\mu_\tau(\gamma^m) = mn$.  If $\gamma$ is positive (resp.~negative) hyperbolic then $n$ is even (resp.~odd).  
\item If $\gamma$ is elliptic, the linearized return map is rotation by the angle $2\pi\theta$ where $\theta\in\R/\Z$.  Armed with a trivialization $\tau$ we can use the linearized flow of $R$ from $t=0$ to $t=T$ to lift $\theta$ to a value in $\R$; in this case we define $\mu_\tau(\gamma):=2\lfloor\theta\rfloor + 1$.  Furthermore, $\mu_\tau(\gamma^m) = 2\lfloor m\theta \rfloor + 1$.  Note that since all powers of $\gamma$ are non-degenerate, $\theta$ is irrational.  
\end{itemize}
\end{defn}

For both hyperbolic and elliptic orbits, the sign convention is such that if the flow of $R$ near $\gamma$ winds around the Reeb orbit more positively (resp.~negatively) than $\sigma_\tau$ then $\mu_\tau(\gamma)$ is positive (resp.~negative).  Recall that $\gamma$ is oriented by $R$ and $\xi$ is oriented by $d\alpha$ so the notion of positive/negative winding is well-defined.

If we change the trivialization $\tau$ of $\gamma$, say by letting $\tau' = \tau+k$, then
\[ \mu_{\tau'}(\gamma) = \mu_{\tau'}(\gamma) - 2k. \]
This is because if $\sigma_{\tau'}$ winds around $\gamma$ $k$ times more than $\sigma_\tau$, then the flow of $R$ will wind around $k$ times less with respect to $\sigma_{\tau'}$, which decreases the rotation angle by $2\pi k$.  By the same reasoning,
\[ \mu_{\tau'}(\gamma^m) = \mu_{\tau'}(\gamma^m) - 2km. \]

\begin{defn}[{\nogapcite[Section 2.3]{Hu02}}]
If $\Gamma=\set{(\gamma_i, m_i)}$ is an orbit set with trivialization $\tau$, then we define the Conley-Zehnder index of $\Gamma$ by
\[ \mu_\tau := \sum_i{ \sum_{k=1}^{m_i}{ \gamma_i^k } }. \]
In other words for each pair $(\gamma_i, m_i)$ we sum the Conley-Zehnder indices of all finite covers of the simple orbit $\gamma_i$ up to and including the $m_i$-fold cover.
\end{defn}

\subsubsection{The relative first Chern class}

Let $Z$ be an embedded surface representing some homology class $[Z] \in H_2(M, \Gamma_+,\Gamma_-)$.

\begin{defn}[{\nogapcite[Section 2.2]{Hu02}}]
The \emph{relative first Chern class}, $c_1(\restr{\xi}{Z},\tau)$, is defined to be the algebraic count of zeros of a generic section $\sigma$ of $\restr{\xi}{Z}$, with the boundary condition that
\[ \restr{\sigma}{|\Gamma_+|\union|\Gamma_-|} = \sigma_\tau. \]
The sign at each zero is defined with respect to the orientation of $Z$ and the orientation on $\restr{\xi}{Z}$ coming from $d\alpha$.
\end{defn}

Suppose, as usual, that $\Gamma_+ = \set{(\gamma^+_i, m^+_i)}$ and  $\Gamma_- = \set{(\gamma^-_i, m^-_i)}$.  If we change the trivialization at some $\gamma^+_i$, say from $\tau_i$ to $\tau'_i = \tau_i + k$, then we introduce $km_i$ new zeros to our generic section near $\gamma^+_i$.  Taking into account the orientation of $Z$ near $\gamma^+_i$, it is easy to see that these zeros contribute positively (resp.~negatively) if $k$ is positive (resp.~negative).  If we perform the same procedure at an orbit $\gamma^-_i$ instead, then the orientation of $Z$ near the boundary is reversed and the introduced zeros contribute negatively (resp.~positively) if $k$ is positive (resp.~negative).

\subsubsection{The self-intersection number}

In order to define a notion of self-intersection on $H_2(M, \Gamma_+,\Gamma_-)$, we must first represent an element $[Z]\in H_2(M, \Gamma_+,\Gamma_-)$ by a surface immersed in a 4-dimensional space, so that it makes sense to count intersection points.

\begin{defn}~
\begin{enumerate}
\item An \emph{admissible representative} of $[Z]\in H_2(M, \Gamma_+,\Gamma_-)$ is an immersed surface
\[ S: (\Sigma,\del \Sigma) \to \big([0,1]\cross M, \set{1}\cross |\Gamma_+|\union \set{0}\cross |\Gamma_-|\big) \]
such that
\begin{enumerate}
\item $\Sigma$ is an oriented compact surface with boundary,
\item $S$ is an embedding on $\mathrm{int}(\Sigma)$ and is transverse to $\set{0,1}\cross M$ at the boundary, and
\item the projection of $\mathrm{Im}(S)$ onto $M$ is a representative of $[Z]$.
\end{enumerate}
Each component of $\del\Sigma$ is called an \emph{end} of $S$, and the end is called \emph{positive} (resp.~\emph{negative}) if it maps into $\set{1}\cross |\Gamma_+|$ (resp.~$\set{0}\cross |\Gamma_-|$).
\item Let
\[ S_s := \mathrm{Im}(S)\cap\big(\set{s}\cross M \big), \]
which is a 1-manifold for generic $s\in[0,1]$.  For $\epsilon>0$ small enough, $S_{1-\epsilon}$ (resp.~$S_\epsilon$) is a collection of curves in a neighbourhood of $\set{1}\cross|\Gamma_+|$ (resp.~$\set{0}\cross|\Gamma_-|$), which forms a closed braid $B^+_{1-\epsilon,i}$ with $m^+_i$ strands around each $\gamma^+_i$ (resp.~$B^-_{\epsilon,i}$ with $m^-_i$ strands around each $\gamma_i^-$).
Define a \emph{$\tau$-admissible representative} of $[Z]$ to have the further properties that for $\epsilon>0$ small enough, the braids $B^+_{1-\epsilon,i}$ around $\gamma^+_i$ consist of $m^+_i$ (resp.~$B^-_{\epsilon,i}$ around $\gamma_i^-$ consist of $m_i^-$) $\tau$-trivial longitudes.
\end{enumerate}
\end{defn}

Admissible representatives can be thought of as compact, embedded analogues of holomorphic curves in $\R\cross M$; punctured surfaces have been replaced by compact surfaces with boundary  and non-compact asymptotic ends by constraints on the images of the boundary components.  Every embedded holomorphic curve $u$ gives rise to an admissible representative of its homology class by compactification, however in the general case we first perturb our surface to make any intersection points transverse, before removing them by replacing a small neighbourhood of each point (which takes the form of the cone on the Hopf link) with an annulus (the Seifert surface of the Hopf link).  In addition, obtaining a $\tau$-admissible representative from $u$ requires further subtle manipulations; any ends of multiplicity $m>1$ must be replaced with a homologous surface which instead has $m$ ($\tau$-trivial) ends of multiplicity 1.

\begin{defn}[{\nogapcite[Section 2.4]{Hu02}}]
Suppose $[Z]\in H_2(M,\Gamma_+,\Gamma_-)$.  Pick two $\tau$-admissible representatives of $[Z]$, $S$ and $S'$, which intersect transversely on their interiors, and define the self-intersection number
\[ Q_\tau([Z],[Z]) := S\cdot S'. \]
\end{defn}

Suppose we change the trivialization by replacing $\tau_i$ with $\tau_i'=\tau_i+k$ at some positive orbit $\gamma^+_i$ of multiplicity $m^+_i$.  As $1-\epsilon$ approaches 1, the $m^+_i$ longitudes of $\gamma^+_i$ in $\mathrm{Im}(S)\cap\set{1-\epsilon}\cross M$ are required to form $k$ additional full positive twists around $\gamma^+_i$.  Forming these additional twists as $1-\epsilon$ increases introduces $k(m^+_i)^2$ new intersections of positive sign.  If instead we change the trivialization at a \emph{negative} orbit $\gamma_i^-$ of multiplicity $m_i^-$, then as $\epsilon$ decreases towards 0 we introduce $k(m_i^-)^2$ new intersections of negative sign.

We are now ready to define the ECH index.

\subsubsection{Definition of the ECH Index}

\begin{defn}
Suppose $[Z]\in H_2(M,\Gamma_+,\Gamma_-)$, and $\tau$ is a trivialization of $(\Gamma_+,\Gamma_-)$. The ECH index of $[Z]$ is defined by
\begin{equation}\label{ECH_index_formula}
I([Z]) := c_1(\restr{\xi}{Z},\tau) + Q_\tau([Z],[Z]) + \mu_\tau(\Gamma_+) - \mu_\tau(\Gamma_-).
\end{equation}
Furthermore, if $u$ is a holomorphic curve, we define
\[ I(u) := I([u]). \]
\end{defn}

It easy to see from the above discussion that the definition of the ECH index is independent of the choice of trivialization $\tau$; replacing $\tau_i$ with $\tau_i' = \tau_i +k$ at a positive orbit $\gamma^+_i$ of multiplicity $m^+_i$ increases $c_1([Z],\tau)$ by $km^+_i$, increases $Q_\tau([Z],[Z])$ by $k(m^+_i)^2$, and decreases $\mu_\tau(\Gamma)$ by $\sum_{m=1}^{m^+_i}{2km}$.  At a negative orbit, the contributions cancel similarly.

The ECH index satisfies an \emph{index parity formula}, which gives rise to an absolute $\Z/2$-homological grading which will prove useful later when computing Euler characteristics in \cref{ECK_hat_categorifies_Alex_poly_section}.  The formula involves the well-known \emph{Lefschetz sign}, which is defined for each Reeb orbit in terms of the linearized return map, $f_\gamma$, by
\begin{equation}
\epsilon(\gamma) := \mathrm{sign}(\mathrm{det}(\id-f_\gamma))\in\set{1,-1}.
\end{equation}
It is straightforward to verify that
\begin{equation}\label{lemma_lefschetz_sign}
\epsilon(\gamma) = \begin{cases*}
1&\text{if $\gamma$ is elliptic or negative hyperbolic, and}\\
-1&\text{if $\gamma$ is positive hyperbolic.}
\end{cases*}
\end{equation}
Furthermore, by examining the definition of the ECH index, one obtains the following proposition.
\begin{prop}[{Index parity formula~\cite[Proposition 1.6]{Hu02}}]\label{index_parity_formula}
If $[Z]\in H_2(M,\Gamma_+,\Gamma_-)$, then
\[ (-1)^{I([Z])} = \epsilon(\Gamma_+)\epsilon(\Gamma_-), \]
where $\epsilon$ is defined on orbit sets multiplicatively, by the formula
\[ \epsilon\left( \prod{\gamma_i^{m_i}} \right) := \prod{\epsilon(\gamma_i)^{m_i}}. \]
\end{prop}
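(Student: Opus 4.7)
The approach is to compute the parity of each of the three terms in \cref{ECH_index_formula} separately and combine. Both sides of the identity are independent of the trivialization $\tau$ (trivially for the right-hand side; for the left-hand side, by the trivialization-invariance of the ECH index noted immediately after its definition), so I am free to work with any convenient trivialization.

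The Conley-Zehnder contributions are straightforward from the explicit formulas: $\mu_\tau(\gamma^k)\equiv 1\pmod{2}$ if $\gamma$ is elliptic, $\equiv 0$ if $\gamma$ is positive hyperbolic, and $\equiv k\pmod{2}$ if $\gamma$ is negative hyperbolic. Summing over $k=1,\dots,m_i$ for each simple orbit $\gamma_i$ of an orbit set $\Gamma$ yields
\[
\mu_\tau(\Gamma)\ \equiv\ n_e(\Gamma)+n_{h_-}(\Gamma)\ \pmod{2},
\]
where $n_e(\Gamma):=\sum_{\gamma_i \text{ elliptic}}m_i$ and $n_{h_-}(\Gamma)$ is the number of simple negative hyperbolic orbits in $\Gamma$ (each of which forces $m_i=1$).

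The main step is to establish the topological congruence
\[
c_1(\restr{\xi}{Z},\tau)+Q_\tau([Z],[Z])\ \equiv\ n(\Gamma_+)+n(\Gamma_-)\ \pmod{2},
\]
where $n(\Gamma):=\sum_i m_i$ is the total multiplicity. The strategy is to realize $[Z]$ by a $\tau$-admissible representative $S$ and close it up to a surface $\bar S\subset[0,1]\cross M$ by capping each of its $n(\Gamma_+)+n(\Gamma_-)$ boundary longitudes with a small disk transverse to $R$. For the resulting closed surface, a Stiefel-Whitney class computation using $T(\R\cross M)\iso\xi\oplus\C$ as complex bundles (together with the splitting $T(\R\cross M)|_{\bar S}=T\bar S\oplus\nu\bar S$) gives $c_1(\xi|_{\bar S})+\bar S\cdot\bar S\equiv\chi(\bar S)\equiv 0\pmod{2}$, since $\chi(\bar S)=2-2g(\bar S)$. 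A careful bookkeeping of how the relative quantities $c_1(\restr{\xi}{Z},\tau)$ and $Q_\tau([Z],[Z])$ differ from their closed-surface counterparts $c_1(\xi|_{\bar S})$ and $\bar S\cdot\bar S$ across each capping disk---tracking the twisting of the framing $\sigma_\tau$ and the linking of parallel $\tau$-trivial longitudes---then shows that the parity correction at each capping disk contributes exactly one to the congruence, giving the claimed formula. This boundary accounting is the main technical obstacle: the interior topology contributes only even corrections, and the parity information is concentrated entirely at the punctures.

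Combining with $\epsilon(\Gamma)=(-1)^{n_{h_+}(\Gamma)}$ (immediate from \cref{lemma_lefschetz_sign}, where $n_{h_+}(\Gamma)$ counts simple positive hyperbolic orbits in $\Gamma$) and the decomposition $n(\Gamma)=n_e(\Gamma)+n_{h_+}(\Gamma)+n_{h_-}(\Gamma)$,
\[
I([Z])\ \equiv\ n(\Gamma_+)+n(\Gamma_-)+(n_e+n_{h_-})(\Gamma_+)+(n_e+n_{h_-})(\Gamma_-)\ \equiv\ n_{h_+}(\Gamma_+)+n_{h_+}(\Gamma_-)\ \pmod{2},
\]
so $(-1)^{I([Z])}=\epsilon(\Gamma_+)\epsilon(\Gamma_-)$, as required.
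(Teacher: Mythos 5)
The paper does not give its own proof: it simply cites Hutchings~\cite[Proposition~1.6]{Hu02} and says the result follows ``by examining the definition of the ECH index.''  So your proposal has to be judged on its own merits and against the argument in Hutchings' paper.

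Your Conley--Zehnder parity computation and the final combination of parities are both correct, and the topological congruence $c_1(\restr{\xi}{Z},\tau)+Q_\tau([Z],[Z])\equiv n(\Gamma_+)+n(\Gamma_-)\pmod{2}$ (total multiplicity) is indeed the true heart of the matter.  The gap is in how you propose to establish that congruence.  You want to ``close up'' a $\tau$-admissible representative $S$ to a closed surface $\bar S\subset[0,1]\cross M$ by capping each boundary longitude with a small disk transverse to $R$.  But the boundary components of $S$ are $\tau$-trivial \emph{longitudes} of the orbits $\gamma_i^\pm$: in a tubular neighbourhood $V\iso S^1\cross D^2$ with core $\gamma_i$, each such longitude is a curve isotopic to $S^1\cross\set{p}$, which generates $H_1(V;\Z)\iso\Z$.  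No such curve bounds a disk in $V$, and in particular it certainly does not bound a \emph{small} disk transverse to $R$ (a small disk transverse to $R$ is a meridian disk, bounding a meridian, not a longitude).  So $\bar S$ is simply not a surface in $[0,1]\cross M$, and the subsequent Whitney/Stiefel--Whitney computation has nothing to apply to.  The ``careful bookkeeping across each capping disk'' you flag as the main obstacle is in fact the place where the argument breaks down: there are no capping disks.

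The way this congruence is actually proved (and the route Hutchings takes) is via the \emph{relative} adjunction formula: for an admissible representative $S$ with domain $\Sigma$,
\[
c_1(\restr{\xi}{Z},\tau)\;=\;\chi(\Sigma)+Q_\tau([Z],[Z])+w_\tau(S)-2\delta(S),
\]
where $w_\tau$ is the writhe of the boundary braids and $\delta$ the algebraic count of double points.  For a $\tau$-admissible representative the braids at each end are $\tau$-trivial unlinked longitudes, so $w_\tau=0$, and $2\delta$ vanishes mod $2$.  Since $\Sigma$ has $n(\Gamma_+)+n(\Gamma_-)$ boundary components, $\chi(\Sigma)=2-2g(\Sigma)-\bigl(n(\Gamma_+)+n(\Gamma_-)\bigr)$, giving exactly $c_1+Q_\tau\equiv n(\Gamma_+)+n(\Gamma_-)\pmod 2$.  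The relative adjunction formula is the genuinely nontrivial input here, and your proposal neither proves nor cites it; the closed-surface Whitney formula is not strong enough on its own because one cannot get to a closed surface in the way you describe.  If you replace the capping step with the relative adjunction formula, the rest of your argument goes through verbatim.
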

As a result, we can define the following $\Z/2$-grading on $ECC(M,\alpha)$:
\begin{defn}
If $\Gamma$ is an orbit set, then define
\[ I(\Gamma) := \begin{cases*}
0 & \text{if $\epsilon(\Gamma)=1$, and}\\
1 & \text{if $\epsilon(\Gamma)=-1$}
\end{cases*}\quad\in \Z/2.\]
\end{defn}
As mentioned at the start of this section, the ECH differential is defined by counting certain holomorphic curves with ECH index 1. For this reason we will call $I$ the \emph{absolute $\Z/2$-homological grading} on the ECC complex.  The result of this discussion is that, when computing Euler characteristics later in \cref{ECK_hat_categorifies_Alex_poly_section}, the contribution of an orbit set to the Euler characteristic is given precisely by the Lefschetz sign. 

We will now set up some terminology which will enable us to state the final result of this section, namely a relation between the Fredholm index, ind, and the ECH index.

\pagebreak %
\begin{defn}
Let $u:F\to \R\cross M$ and $u':F'\to \R\cross M$ be two holomorphic curves.  We write $u\union u'$ to denote the map $F\union F' \to \R\cross M$ and say that $u$ and $u'$ are \emph{disjoint} if their images are disjoint, writing $u \sqcup u'$.

Now consider a single curve $u:F\to \R\cross M$ and write $F$ as a disjoint union of its connected components, $F=\bigsqcup F_i$.  We call each $u_i = \restr{u}{F_i}$ a \emph{component} of $u$ and say that $u_i$ is a \emph{multiply-covered component} if it is a (possibly branched) non-trivial covering map onto its image and \emph{simply-covered} otherwise. Finally, we say that a component $u_i$ is \emph{repeated} if there exists at least one other $u_j$ such that $u_i=u_j$, when considered up to reparametrization.  If every component of $u$ is simply-covered and not repeated, then we say that $u$ is \emph{simply-covered}.
\end{defn}

\begin{thm}[Index inequality, {\cite[Theorem 4.15]{Hu09}}]
If $u$ is a simply-covered holomorphic curve, then
\[ \ind(u) \le I(u) \]
and equality holds if and only if $u$ is an embedding and $u$ satisfies certain \emph{partition conditions} defined by Hutchings~\cite[Definition 4.7]{Hu02}.
\end{thm}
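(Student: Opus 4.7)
The plan is to express $I(u) - \ind(u)$ as a sum of manifestly non-negative terms whose simultaneous vanishing characterizes the equality case. My three main ingredients would be: an explicit algebraic comparison of the two indices; a \emph{relative adjunction formula} eliminating $c_1(\restr{\xi}{u},\tau)$ and $Q_\tau(u,u)$; and a \emph{writhe bound} at each end coming from the asymptotic analysis of holomorphic curves.

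First, I would write the Fredholm index in the standard form
$$\ind(u) = -\chi(F) + 2c_1(\restr{\xi}{u}, \tau) + \sum_e \epsilon_e \, \mu_\tau(e),$$
where the sum is over punctures $e$ with $\epsilon_e = \pm 1$ according to sign, and $\mu_\tau(e)$ is the Conley-Zehnder index of the cover of the underlying simple orbit determined by the partition of $u$ at $e$. Comparing with $I(u) = c_1(\restr{\xi}{u},\tau) + Q_\tau(u,u) + \mu_\tau(\Gamma_+) - \mu_\tau(\Gamma_-)$ and remembering that the ECH $\mu_\tau(\Gamma_\pm)$ sums $\mu_\tau(\gamma_i^k)$ over \emph{all} $k = 1, \dots, m_i$ (whereas the Fredholm term sums only over the actual partition multiplicities at the ends), a rearrangement gives
$$I(u) - \ind(u) = \bigl(\chi(F) - c_1(\restr{\xi}{u},\tau) + Q_\tau(u,u)\bigr) + D(u),$$
in which $D(u)$ is a purely combinatorial signed ``partition defect''.

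Next I would prove the relative adjunction formula
$$c_1(\restr{\xi}{u},\tau) = \chi(F) + Q_\tau(u,u) + w_\tau(u) - 2\delta(u),$$
valid for simply-covered $u$. Here $w_\tau(u)$ is the total asymptotic writhe of the braids obtained by intersecting $u$ with level sets $\{s\} \times M$ for $|s|$ large, and $\delta(u) \ge 0$ counts double points of a small generic perturbation of $u$, vanishing iff $u$ is embedded. The proof is by perturbing $u$ to have only transverse self-intersections, compactifying to a $\tau$-admissible representative of $[u]$ exactly as in the paper (with the Seifert-surface replacement at high-multiplicity ends), and then computing $c_1$ and self-intersection on the resulting surface. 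Substituting collapses the previous identity to
$$I(u) - \ind(u) = 2\delta(u) + D(u) - w_\tau(u),$$
so the remaining task is the writhe bound $w_\tau(u) \le D(u)$, with equality case identifying the ECH partitions.

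This writhe bound is the heart of the argument and the main obstacle. I would prove it end by end using the asymptotic analysis of Hofer-Wysocki-Zehnder and Siefring: near each puncture, the holomorphic curve is asymptotic to a leading eigenfunction of an associated asymptotic operator whose winding number around the limit orbit is constrained by the Conley-Zehnder index. Translating these winding constraints into writhe estimates for the associated braids and then optimizing the combinatorics over partitions at each end yields both the inequality $\ind(u) \le I(u)$ and the sharp equality conditions: $\delta(u) = 0$, forcing $u$ to be embedded, together with the partition at each end agreeing with the ECH partition determined by the rotation angle of that orbit. The adjunction identity and the index arithmetic are routine once set up, but the writhe step requires genuinely delicate functional-analytic input together with a non-trivial combinatorial optimization to extract the sharp bound; this is where the bulk of Hutchings' work lies.
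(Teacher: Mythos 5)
The paper does not prove this theorem at all: it is quoted verbatim from Hutchings~\cite[Theorem 4.15]{Hu09}, so there is no in-paper argument to compare against. What you have written is, in outline, exactly Hutchings' original proof: the rearrangement $I(u)-\ind(u) = \bigl(\chi(F)-c_1(\restr{\xi}{u},\tau)+Q_\tau(u,u)\bigr)+D(u)$ is correct given the standard Fredholm index formula, the relative adjunction formula you state (with the $-2\delta(u)$ term for a simple curve) is the right identity and does collapse the expression to $2\delta(u)+D(u)-w_\tau(u)$, and the remaining content is precisely the end-by-end writhe bound with equality characterizing the ECH partitions. Your accounting of the equality case ($\delta(u)=0$ forcing embeddedness, plus saturation of the writhe bound forcing the partition conditions) is also right.

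The one honest caveat is that the writhe bound itself --- the comparison of the asymptotic winding numbers of leading eigenfunctions with the Conley--Zehnder data, followed by the combinatorial optimization over partitions --- is asserted rather than proved, and you correctly identify it as the heart of the matter; as a self-contained proof the proposal is therefore incomplete, but as a reconstruction of the cited argument's architecture it is accurate and the algebraic steps you do carry out check out.
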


\subsection{Defining ECH and a hat version}\label{ECH_def_section}

We are now in a position to define the ECH differential.  Recall that as a vector space, $ECC(M,\alpha)$ is generated by orbit sets over $\F_2$.  Let $J$ be a regular almost complex structure adapted to $\alpha$ and let $\mathcal{M}_J^{I=k,\nmc}(\Gamma_+,\Gamma_-)$ denote the moduli space of holomorphic curves between $\Gamma_+$ and $\Gamma_-$ with ECH index $k$, and where the modifier nmc means that no component is multiply-covered (although repeated components are permitted).  Also note that  for any predicate $*$ there is an $\R$-action on $\mathcal{M}^*_J(\Gamma_+,\Gamma_-)$ defined by translation in the $\R$ direction.  We define the quotient
\[\widehat{\mathcal{M}}_J^*(\Gamma_+,\Gamma_-) := \mathcal{M}_J^*(\Gamma_+,\Gamma_-)/\R; \]
$\widehat{\mathcal{M}}_J^{I=1,\nmc}(\Gamma_+,\Gamma_-)$ is a discrete and finite set~\cite[Lemma 7.19]{HT07}.

\begin{defn}
We define the \emph{ECC chain complex}, 
\[ ECC(M,\alpha, J) := (ECC(M,\alpha), \del), \]
where the \emph{ECC differential} $\del$ is defined with respect to $J$ as follows:
\begin{equation}
\del \Gamma = \sum_{\Gamma'}{\#\widehat{\mathcal{M}}_J^{I=1,\nmc}(\Gamma,\Gamma')\cdot \Gamma'}
\end{equation}
where $\#$ denotes the mod 2 count.
\end{defn}

The summation in the above definition is finite~\cite[Lemma 7.19]{HT07} and the differential satisfies $\del^2=0$~\cite[Theorem 7.20]{HT07}.

\begin{defn}
\emph{Embedded Contact Homology} is defined to be the homology of the ECC chain complex:
\[ ECH(M,\alpha) := H_*(ECC(M,\alpha,J)). \]
\end{defn}

The homology is independent of the contact form $\alpha$, the contact structure $\xi$ and the almost complex structure $J$~\cite{KLT10,CGH_HF_ECH}.  Interestingly, this independence has not been proven directly, but through isomorphisms with Seiberg-Witten Floer cohomology (in the case of Kuluhan, Lee and Taubes) and Heegaard Floer homology (in the case of Colin, Ghiggini and Honda).  Note that since holomorphic curves only exist between orbit sets with the same homology class, ECH splits canonically as a direct sum
\[ ECH(M,\alpha) \iso \bigoplus_{A\in H_1(M,\Z)}ECH(M,\alpha,A), \]
although this splitting is not independent of $\alpha$.

Colin, Ghiggini and Honda define a hat version of ECH~\cite[Section 2.5]{CGH_ECH_OBD}, using the $U$-map defined by Hutchings~\cite[Section 2.5]{HuWeinstein}:

\begin{defn}
Let $z\in\R\cross M$ be a generic point away from any Reeb orbit.
\begin{enumerate}
\item $\mathcal{M}_J^{I=2,*}(\Gamma_+,\Gamma_-;z)$ is defined to be the space of ECH index 2 curves satisfying the predicate $*$ and that pass through the point $z$.
\item The $U$-map is a degree 2 map defined by
\[ U\Gamma = \sum_{\Gamma'}{\#\mathcal{M}_J^{I=2,\nmc}(\Gamma,\Gamma';z)\cdot\Gamma'}. \]
\end{enumerate}
\end{defn}
This is a well-defined chain map~\cite[Lemma 2.6]{HuWeinstein}.

\begin{defn}
$\widehat{ECH}(M,\alpha)$ is defined to be the homology of the mapping cone of $U$.
\end{defn}

$\widehat{ECH}$ is also independent of $\alpha$, $\xi$ and $J$, as shown indirectly by Colin, Ghiggini and Honda via an isomorphism with $\widehat{HF}$~\cite[Theorem 1]{CGH_HF_ECH}.  Also, since the $U$-map respects the splitting over $H_1(M,\Z)$, $\widehat{ECH}$ also splits:
\[ \widehat{ECH}(M,\alpha) \iso \bigoplus_{A\in H_1(M,\Z)}\widehat{ECH}(M,\alpha,A). \]

\section{Topological constraints on holomorphic curves}\label{top_constraints_on_holo_curves}

In this section we will briefly discuss the topological nature of the holomorphic curves which make up the ECH differential and the $U$-map.  Later, in \cref{topological_constrants_in_the_MB_settting}, we will see how further topological constraints arise in the Morse-Bott setting.

\begin{defn}
A \emph{trivial cylinder} over a simple orbit $\gamma$ is the holomorphic curve with image $\R\cross\gamma$.  A \emph{multiply-covered} trivial cylinder is a curve which is a (possibly branched) cover of a trivial cylinder.
\end{defn}

\begin{thm}[{\nogapcite[Proposition 3.7]{Hu14}}]\label{index_0_1_and_2_holo_curves}
Let $J$ be a regular almost complex structure adapted to $\alpha$ and $u$ be a holomorphic curve . Then the following holds:
\begin{itemize}
\item $I(u) \ge 0$,
\item $I(u)=0$ if and only if $u$ is a (possibly empty) union of (possibly repeated, possibly multiply-covered) trivial cylinders,
\item If $I(u)=1$ then $u=u_0 \sqcup u_1$, where $I(u_0)=0$ and $u_1$ is a connected, embedded curve with $\mathrm{ind}(u)=I(u_1)=1$.
\item If $I(u)=2$ then $u=u_0 \sqcup u_2$, where $I(u_0)=0$ as above and $u_2$ is an embedded curve with $\mathrm{ind}(u)=I(u_2)=2$.  $u_2$ is either a connected curve or a disjoint union of two embedded curves with Fredholm and ECH indices 1.
\end{itemize}
\end{thm}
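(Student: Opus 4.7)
The plan is to decompose $u$ according to which components are trivial cylinders and then invoke the index inequality from the end of \cref{The_ECH_index}. First I would write $u = u_T \sqcup u'$, where $u_T$ consists of those connected components of $u$ which are (possibly multiply-covered) trivial cylinders, and $u'$ contains the rest. A direct calculation from \cref{ECH_index_formula} gives $I(u_T) = 0$: choosing the obvious pushed-off annular representative of $[u_T]$ one can arrange $c_1(\restr{\xi}{u_T}, \tau)$ and $Q_\tau([u_T],[u_T])$ to both vanish, and the Conley-Zehnder contributions at the positive and negative ends cancel because both are carried by the same orbit set. Hence $I(u) = I(u')$, and it suffices to understand $u'$.

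The key claim is $I(u') \ge \mathrm{ind}(u')$, with equality only when every component of $u'$ is embedded, simply-covered, and satisfies Hutchings' partition conditions. For simply-covered components this is the index inequality cited in the excerpt; for multiply-covered components, which that inequality does not immediately handle, I would invoke the relative adjunction formula and writhe bound of \nogapcite[Section 5]{Hu09} to show that each such component contributes strictly more to $I$ than to $\mathrm{ind}$. Since $J$ is regular and no component of $u'$ is a trivial cylinder, transversality then gives $\mathrm{ind}(u'_i) \ge 1$ for every simply-covered component $u'_i$, which in particular yields $I(u') \ge \mathrm{ind}(u') \ge 0$ and so proves the first bullet.

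The remaining cases now follow by a numerical count. If $I(u) = 0$, the above lower bounds force $u' = \emptyset$, so $u$ is entirely trivial cylinders, as claimed. If $I(u) = 1$, then $u'$ can contain exactly one connected component $u_1$, of Fredholm index $1$, and equality in the index inequality forces $u_1$ to be embedded; setting $u_0 := u_T$ gives the required splitting. If $I(u) = 2$, the same arithmetic leaves two alternatives: a single connected embedded component with $\mathrm{ind} = 2$, or two disjoint embedded components each with $\mathrm{ind} = 1$. The main obstacle throughout is the reinforced index inequality for multiply-covered components of $u'$: this is where the relative adjunction formula does the genuine work, and it is precisely what forces such multiply-covered components to be absent whenever $I(u) \le 2$, so that the non-trivial part of $u$ consists of a bounded number of embedded curves of positive Fredholm index.
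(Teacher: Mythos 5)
The paper does not prove this result; it is quoted from Hutchings~\cite[Proposition~3.7]{Hu14}, so there is no in-text argument to compare against. With that caveat, your outline has the right general shape but contains a genuine gap at the very first reduction. The assertion ``hence $I(u) = I(u')$'' --- that deleting the trivial-cylinder components does not change the ECH index --- is false in general, and the correct replacement is one of the things the proposition actually establishes. Inspecting the definition in \cref{ECH_index_formula}, $I$ is not additive over disjoint unions: $c_1$ is additive, but $Q_\tau([u_T]+[u'],[u_T]+[u'])$ contributes a cross term $2Q_\tau([u_T],[u'])$, and $\mu_\tau$ is nonlinear in the orbit multiplicities, since $\mu_\tau(\Gamma)$ sums Conley--Zehnder indices of all covers of each simple orbit up to its multiplicity. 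If $u_T$ shares an asymptotic orbit $\gamma$ with $u'$ --- the typical case --- then combining the orbit sets produces higher covers $\gamma^k$ appearing in $\mu_\tau(\Gamma_T\Gamma'_\pm)$ but not in $\mu_\tau(\Gamma_T)+\mu_\tau(\Gamma'_\pm)$, and these do not cancel against the $Q_\tau$ cross term for free. What is true is super-additivity $I(u) \ge I(u_T)+I(u')$, which needs positivity of intersections for $Q_\tau([u_T],[u'])$ together with a winding-number analysis at the shared asymptotics; the equality $I(u)=I(u')$ in the $I\le 2$ cases is a conclusion one draws at the end, not a step one may take at the start.

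A second, smaller inaccuracy: for the multiply-covered components of $u'$ you want a bound on $I(u')$ from below, and ``$I$ exceeds $\mathrm{ind}$'' is not the useful form, because the Fredholm index of a multiple cover can be negative. The inequality that actually does the work is Hutchings~\cite[Theorem~5.1]{Hu09}, cited elsewhere in this thesis (proof of \cref{one-sided_implies_nice}): writing $u'$ as a union of $d_k$-fold covers of distinct simply-covered non-trivial curves $v_k$, one has
\[ I(u') \ge \sum_k d_k I(v_k) \ge \sum_k d_k, \]
the last step because regularity gives $\mathrm{ind}(v_k)\ge 1$ (the $\R$-translation already contributes one dimension) and the index inequality gives $I(v_k)\ge\mathrm{ind}(v_k)$. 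With this replacement, and with super-additivity in place of your claimed equality, the numerical casework for $I=0,1,2$ does go through.
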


If $u$ has ECH index 1 (resp.~2) then we will call $u_1$ (resp.~$u_2$) the \emph{non-connector part} of $u$ and we will call $u_0$ the \emph{connector part} of $u$.

Note that any holomorphic curve $u$ can be augmented by the addition of trivial cylinders with index 0, provided they are disjoint from $u$. This fact plays a very important role in the behaviour of the ECH differential.

\section{ECH = HF}\label{ECH_equals_HF_section}

In a series of papers~\cite{CGH_HF_ECH_I,CGH_HF_ECH_II,CGH_HF_ECH_III}, Colin, Ghiggini and Honda recently proved that embedded contact homology and Heegaard Floer homology are equivalent.  More precisely:

\begin{thm}[{\nogapcite[Theorem 1]{CGH_HF_ECH}}]
Let $M$ be a closed oriented contact 3-manifold.  Then
\begin{align}
& ECH(M) \iso HF^+(-M)\text{, and}\\
& \widehat{ECH}(M) \iso \widehat{HF}(-M).
\end{align}
\end{thm}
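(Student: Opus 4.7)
The plan is to use an open book decomposition of $M$ as the bridge between the two theories. Fix an open book $(\Sigma, \phi)$ compatible with the contact structure on $M$ (which exists by Giroux's correspondence), and take a contact form $\alpha$ adapted to the open book in the sense developed in~\cite{CGH_ECH_OBD}. Then the Reeb vector field is transverse to the pages $\Sigma\cross\set{t}$ inside the mapping torus $N$, and the binding sits as a negative Morse--Bott family of Reeb orbits. By the main result of~\cite{CGH_ECH_OBD} (i.e.\ the integral case of \cref{relative_ECH_equals_ECH}), $ECH(M,\alpha)$ and $\widehat{ECH}(M,\alpha)$ can be computed from the relative complex on $(N,\del N,\alpha_N)$; in particular only Reeb orbits and holomorphic curves essentially supported in the mapping torus need to be analyzed.

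Next I would set up the corresponding picture on the Heegaard Floer side. Using the same open book (with monodromy $\phi\inv$ to account for orientation reversal), I invoke the Ozsváth--Szabó construction of $\widehat{HF}$ from open books together with Lipshitz's cylindrical reformulation, so that $\widehat{HF}(-M)$ is computed by a chain complex whose generators are tuples of intersection points between curves on $\Sigma$ and their images under $\phi\inv$, with differential counting holomorphic curves in $\R\cross[0,1]\cross\Sigma$. After a sequence of stabilizations of the open book (which do not change either side up to isomorphism), the intersection points that generate the HF complex can be put into bijection with periodic points of $\phi$, which in turn parametrize the embedded Reeb orbits in $N$; taking multi-orbits and respecting the Morse--Bott conventions at $\del N$ then yields a generator-by-generator identification with the relative ECH complex.

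The heart of the argument is the matching of the two differentials. The strategy is to put both moduli problems into a common framework by degenerating each to a problem on $[0,1]\cross N$: on the ECH side one stretches the symplectization radially near the Morse--Bott binding, while on the HF side one stretches the neck along the boundary of the Heegaard surface. In the limit, both counts are expressed as counts of holomorphic curves in the symplectization of $N$ with prescribed asymptotic behaviour along $\del N$. Once one shows that the degenerations converge to the same moduli spaces, a careful gluing argument (together with the index/energy constraints from \cref{index_0_1_and_2_holo_curves}) recovers the full $I=1$ curve counts on both sides, yielding the first isomorphism. The isomorphism $\widehat{ECH}(M)\iso\widehat{HF}(-M)$ then follows by checking that this identification intertwines the $U$-map with its HF counterpart, so that the mapping cones agree.

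The main obstacle is this last step: establishing the precise identification of holomorphic curve counts between two genuinely different moduli problems (pseudoholomorphic curves in a 4-manifold versus disks in a symmetric product). It requires a substantial gluing and compactness theory in the Morse--Bott setting, a delicate analysis of how ends of ECH curves approach $\del N$, and transversality results ensuring that the degenerated moduli spaces are cut out generically by equivalent data. Carrying this out uniformly, for open books of arbitrary genus and monodromy, is exactly what forces the proof in~\cite{CGH_HF_ECH_I,CGH_HF_ECH_II,CGH_HF_ECH_III} to be spread over three papers; any attempt I would make would essentially have to follow the same inductive strategy on open book complexity.
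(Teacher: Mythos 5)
Your first step is sound and matches the strategy the paper sketches in \cref{ECH_equals_HF_section}: choose an open book adapted to the contact structure, use \cref{relative_ECH_equals_ECH} (equivalently, periodic Floer homology of the monodromy) to compute ECH away from the binding, and build the Heegaard diagram for $-M$ out of the same open book. The gap is in your second and third paragraphs. There is no generator-by-generator identification between the two complexes: the generators of $\widehat{HF}(-M)$ coming from the open book are tuples of intersection points between a basis of arcs on the page and their images under the (perturbed) monodromy --- i.e.\ short chords on the page --- whereas the generators of the PFH/relative ECH complex are orbit sets of the suspension flow, with multiplicities on elliptic orbits. These are different objects, no stabilization puts them in bijection, and the two complexes are in general not isomorphic as chain complexes, only quasi-isomorphic. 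Consequently the proposed ``heart of the argument'' --- degenerating both moduli problems to a common count in the symplectization of $N$ so that the $I=1$ counts literally agree --- has nothing to match up and cannot get started.

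What Colin--Ghiggini--Honda actually do, and what the paper's sketch records, is construct chain maps in \emph{both} directions by counting holomorphic curves in auxiliary exact symplectic cobordisms that interpolate between the ``chords on the page'' picture and the ``closed orbits in the mapping torus'' picture, and then prove that the two compositions induce the identity on homology (via energy filtrations and degeneration arguments applied to the \emph{compositions}, not to the individual differentials). The compatibility with the $U$-maps, needed for the hat version, is likewise verified for these cobordism-induced maps rather than through any identification of moduli spaces. So the missing idea in your proposal is precisely this pair of cobordism maps and the proof that they are mutually inverse on homology; replacing it with a direct identification of differentials is not a viable route.
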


The proof starts by finding an open book decomposition of $M$ which is adapted to the contact structure.  It is then possible to define ECH solely in terms of the monodromy of the open book decomposition away from the binding, utilising a very similar construction called \emph{periodic Floer homology}, or PFH, which we will discuss in detail later in \cref{ECK_hat_supported_in_low_Alexander_degrees}.  

At the same time, we can use the open book decomposition to exhibit a Heegaard diagram for $M$, whose construction is similarly only dependent on the monodromy data.

It is then possible to express the generators of Heegaard Floer homology as short flow lines between the $\alpha$ and $\beta$ curves.   The authors construct symplectic cobordisms between these arcs and the orbits in PFH and vice versa.  It is then shown that these cobordisms induce chain maps whose induced maps on homology are mutual inverses.

Kuluhan, Lee and Taubes also proved the isomorphism above by passing through Seiberg-Witten Floer cohomology.  In their joint paper~\cite{KLT10} they prove that ECH = HM, and Taubes showed that HM = HF~\cite{Taubes10}.

There is also a conjectured isomorphism (c.f.~\cref{ECK_HFK_conjecture}) between knot versions of the two theories, which will be discussed in more detail later in \cref{embedded_contact_knot_homology}.

\section{ECH via rational open book decompositions (statement)}

As mentioned above, the first part of Colin, Ghiggini and Honda's proof that ECH = HF uses an open book decomposition of $M$ adapted to the contact structure on $M$ to express ECH solely in terms of the monodromy of the open book decomposition.

Our first step towards understanding the behaviour of ECH under knot surgery operations is to show that a similar result holds for \emph{rational} open book decompositions.

\begin{defn}\label{OBD1}
An \emph{open book decomposition} (with connected binding) for a closed 3-manifold $M$ is a pair $(K,\pi)$ such that
\begin{itemize}
\item $K$ is a null-homologous knot in $M$.
\item $\pi:M\sminus K\to S^1$ is a fibration such that each fibre $\pi^{-1}(\theta)$ is the interior of a compact surface in $M$ with boundary $K$.  Each fibre surface is called a \emph{page} of the open book decomposition.
\end{itemize}
\end{defn}

\begin{egs}\label{OBD_egs}~
\begin{enumerate}
\item The unknot $U \subset S^3$ admits an open book decomposition: $S^3 \sminus U$ is homeomorphic to $S^1\cross \mathrm{int}(D^2)$ and the closure of each page, $\set{\theta}\cross D^2$ has boundary equal to $U$.
\item The trefoil knot $T\subset S^3$, or indeed any fibred knot, admits an open book decomposition by Seifert surfaces in a completely analogous way.  In the case of the trefoil, each page of the decomposition is a punctured torus.
\end{enumerate}
\end{egs}

For the purposes of this thesis we will use a slightly different definition of an open book decomposition, which is given below and defined in terms of a mapping torus.

\begin{defn}
Let $\Sigma$ be a surface and $\phi:\Sigma\to\Sigma$ a diffeomorphism.  Define an equivalence relation $\sim$ on $\Sigma\times[0,1]$ by $(x,1)\sim(\phi(x),0)$ for all $x\in\Sigma$. The \emph{mapping torus} is the 3-manifold 
\[ M(\phi) := \Sigma\cross[0,1]/\!\sim. \]
We refer to $\phi$ as the \emph{monodromy} of the mapping torus.
\end{defn}

By abuse of notation, we will refer to points in $M(\phi)$ by $(x,t)$ rather than $[(x,t)]$.  Similarly, we will describe curves in $M(\phi)$ by defining arcs, in $\Sigma\cross[0,1]$, which form loops under the equivalence relation.

\begin{defn}\label{OBD2}
An \emph{open book decomposition} of a manifold $M$ with respect to a knot $K\subset M$ is a monodromy $\phi:\Sigma\to\Sigma$ such that
\begin{enumerate}
\item $\phi$ is a diffeomorphism fixing $\del \Sigma$ pointwise,
\item The mapping torus $N:= M(\phi)$ is equal to $M\sminus \nu(K)$, where $\nu(K)$ is a small open neighbourhood around $K$.
\item\label{fourth_point} the boundary of each page $\Sigma\cross\set{t}$ is homologous to $K$ in $\overline{\nu(K)}$.  In other words, $M$ is formed from $N$ by Dehn filling along the boundary curve
\[ m=\set{x}\cross[0,1]\subset \del\Sigma\cross[0,1],\]
 where $x$ is some point in $\del\Sigma$.
\end{enumerate}
\end{defn}

\begin{egs}~
\begin{enumerate}
\item The monodromy for the open book decomposition of the unknot in \cref{OBD_egs} is the identity map $D^2\to D^2$.
\item The monodromy for the right handed trefoil knot is given by the composition of two Dehn twists on the punctured torus, written $ab$ in Bell's notation~\cite{knottable}.
\end{enumerate}
\end{egs}

In the definition above, we refer to each $\Sigma\cross\set{t}$ as a page of the open book decomposition. In this thesis we will only deal with open book decompositions where $\Sigma$ has one boundary component; we  refer to the null-homologous curve $l=\del\Sigma\cross\set{0}\in\del N$ as the \emph{canonical longitude} of $K$, oriented as $\del\Sigma$.  We also fix an arbitrary point $x\in\del\Sigma$ and refer to the closed curve $m$ in part \cref{fourth_point} of the definition as the \emph{meridian} of the knot $K$, oriented so that $m\cdot l=1$ with respect to $\del(\overline{\nu(K)})$.  

Since any fibration $\pi:N\to S^1$ with fibre a surface $\Sigma$ is equivalent to the mapping torus of some monodromy $\phi:\Sigma\to\Sigma$, the two definitions of an open book decomposition above are equivalent.  Note however that the $N$ in \cref{OBD2} is compact and slightly smaller than the $N$ which is open in \cref{OBD1}.

\begin{defn}
A \emph{$p/q$-fractional twist} on $\del N$ is a map
\[ r_{p/q}: \del \Sigma \to \del \Sigma \]
such that there exists an orientation-preserving identification of $\del\Sigma$ with $S^1\subset\C$ such that $\restr{\phi}{\del\Sigma}:S^1\to S^1$ is given by multiplication by $e^{2\pi ip/q}$.
\end{defn}

\begin{defn}
A \emph{rational open book decomposition} of a manifold $M$ with respect to a knot $K\subset M$ is a monodromy $\phi:\Sigma\to\Sigma$ such that
\begin{enumerate}
\item $\phi$ is a diffeomorphism such that $\restr{\phi}{\del\Sigma}$ is a $p/q$-fractional twist $r_{p/q}$.
\item The mapping torus $N:= M(\phi)$ is equal to $M\sminus \nu(K)$, where $\nu(K)$ is a small open neighbourhood around $K$.
\item $M$ is formed from $N$ by surgery along a closed curve $d\subset\del N$ which we call the \emph{degeneracy slope}.  The curve $d$ is obtained by starting at some $(x,0)\in\del\Sigma\cross\set{0}$ and flowing under the vector field $\del/\del t$ until we form a closed loop.
\end{enumerate}
\end{defn}
Again, we will only consider rational open book decompositions where $\Sigma$ has a single boundary component.

Since $\restr{\phi}{\del\Sigma}$ is a $p/q$-fractional twist, the degeneracy slope will intersect each page $q$ times.  In fact, if our identification of $\del\Sigma$ with $S^1$ identifies $x$ with $1$ then $d$ will intersect $\del\Sigma\cross\set{0}=S^1$ at the following points in the following order:
\[ 1, e^{2\pi ip/q}, e^{2\pi i 2p/q}, \dots, e^{2\pi i (q-1)p/q}. \]

Since $M$ is obtained by surgery along $d$, $d$ is a meridian of $K$.  Again, denote by $l$ the curve $\del\Sigma\cross\set{0}$ but note that $d\cdot l = q$ so $l$ is not a longitude for $K$; instead, pick a longitude in $\del(\overline{\nu(K)})$ for $K$ and denote it by $\lambda$.

In an open book decomposition, $K$ is null-homologous since it is homologous to $\del\Sigma\cross\set{0}$.  However in a rational open book decomposition this is not necessarily the case.  For example, the decomposition of the lens space $L(p,q)$ into two tori is a rational open book decomposition, where $K$ is the core of one torus and the other torus represents $N$.  Then $[K]$ generates $H_1(L(p,q);\Z)\iso \Z/p$.

In general, let $m$ denote a curve in $\del N$ which intersects each page once. (For example, let $m$ be homotopic to  the union of the arc $\set{x}\cross[0,1]$ and an arc in $\del\Sigma\cross\set{0}$ joining $(\phi(x),0)$ and $(x,0)$.)  Since $m\cdot l=1$, $[m]$ and $[l]$ form a basis for $H_1(\del(\overline{\nu(K)});\Z)$.  Then $m\cdot d= k$ for some $k\in\Z$ and hence
\[ [d]=q[m] + k[l]\in H_1(\del(\overline{\nu(K)});\Z). \]
In addition, if we write $[\lambda] = a[m]+b[l]$ for some $a,b\in\Z$, then
\[\begin{split}
q[K] = q[\lambda] &= q(a[m]+b[l])\\
 &= a[d] + (qb-ak)[l]\\
 & = 0 \in H_1(M;\Z).
\end{split}\]
Here we are using $[l]=[\del\Sigma]=0$ and $[d]=0$ since it is a meridian for $K$.  Hence $[K]=0\in H_1(M;\Q)$, which motivates the term ``rational'' open book decomposition.

In the following chapter we will define a pair of ``relative'' embedded contact homology groups in the case of a rational open book decomposition, $ECH(N,\del N, \alpha, J)$ and $\widehat{ECH}(N,\del N, \alpha, J)$ (c.f.~\cref{relative_ECH_groups}).  The first of these groups comes equipped with a $U$-map and they both split as a direct sum over the homology of $N$ relative to the degeneracy slope $d$,
\[ H_1(N,[d]) := coker( d_*:H_1(S^1;\Z) \to H_1(N;\Z) ). \]
In \cref{ECH_via_rational_open_book_decompositions_chapter} we will then prove the following result, which is adapted from Colin, Ghiggini and Honda's result for integral open book decompositions~\cite[Theorem 1.1.1]{CGH_ECH_OBD}.

\begin{thm}\label{relative_ECH_equals_ECH}\label{RELATIVE_ECH_EQUALS_ECH}
Suppose that $\phi:N\to N$ is the monodromy for a rational open book decomposition of $M$, a closed, oriented, connected 3-manifold.  Then we can equip $N$ and $M$ with contact forms $\alpha$ and $\hat{\alpha}$ respectively such that $\restr{\hat{\alpha}}{N}$ differs from $\alpha$ only by a small $\mathcal{C}^\infty$ perturbation near $\del N$.  Furthermore, $\alpha$ is non-degenerate on $\mathrm{int}(N)$ and its associated Reeb vector field is negative Morse-Bott on $\del N$ and foliates $\del N$ by meridians of $K$.  We then have isomorphisms
\begin{align*}
&\text{1. }ECH(N,\del N, \alpha, J) \iso ECH(M,\hat{\alpha})\quad\text{and} \\
&\text{2. }\widehat{ECH}(N,\del N, \alpha, J) \iso \widehat{ECH}(M,\hat{\alpha})
\end{align*}
where $J$ is a choice of regular adapted almost complex structure for $\alpha$. In addition, the isomorphism in (1) is compatible with the $U$-maps on both sides and both isomorphisms are compatible with the splitting over $H_1(N,[d])\iso H_1(M)$.
\end{thm}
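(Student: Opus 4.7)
The plan is to follow the strategy developed by Colin, Ghiggini and Honda~\cite{CGH_ECH_OBD} for the integral case, adapting each step to accommodate the fractional rotation $r_{p/q}$ at the boundary of the pages. First I would construct the contact form $\alpha$ on $N$ so that its Reeb vector field is positively transverse to every page $\Sigma\cross\{t\}$ in the interior and is negative Morse-Bott on $\del N$ with its orbit foliation parallel to the degeneracy slope $d$; since $d$ is by construction a meridian of $K$, these orbits are meridional as required. The form is then extended across $\nu(K)$ by a standard rotational model in which the binding $K$ arises as an elliptic Reeb orbit and a second positive Morse-Bott torus sits parallel to $\del N$. The rational case forces the model to spiral consistently with the $p/q$-twist, so that the Reeb orbits on the two Morse-Bott tori are isotopic (up to orientation) within $\overline{\nu(K)}$ and so that pages meet $\del\nu(K)$ at curves agreeing with the framing dictated by $\restr{\phi}{\del\Sigma}$.

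After a small perturbation, the two Morse-Bott tori break into the orbit pairs $(e_-, h_-)\subset \del N$ and $(e_+, h_+)\subset \del\nu(K)$; a further $C^\infty$-small perturbation of $\alpha$ in $\mathrm{int}(N)$ achieves non-degeneracy there, while the three distinguished holomorphic curves $h_+\to\emptyset$, $h_+\to e_-$ and $e_+\to h_-$ appear as low-index Morse-Bott cylinders (the first capped off by a holomorphic disc near the binding). I would then compare the full ECH chain complex of $(M,\hat\alpha)$, whose generators are orbit sets built from $\mathrm{int}(N)$-orbits, the four orbits $\{e_\pm,h_\pm\}$ and iterates of the binding, with the relative complex $ECC(N,\del N,\alpha,J)$, whose generators use only $\mathrm{int}(N)$-orbits together with $\{e_-,h_-\}$. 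The comparison is established by constructing a suitable action-filtration and showing that, at the level of the associated graded complex, the three universal curves above and their covers cancel all orbit sets involving $e_+$, $h_+$ and binding iterates, mirroring CGH's cancellation in the integral setting.

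The central technical difficulty is a Morse-Bott/SFT neck-stretching argument along $\del N$ showing that every $I=1$ (resp.~$I=2$ for the $U$-map) holomorphic curve in $\R\cross M$ degenerates into a holomorphic building whose levels are each confined either to $\R\cross\mathrm{int}(N)$ or to $\R\cross\nu(K)$, with matching asymptotics along the Morse-Bott torus $\del N$. In the rational setting the orbits in this Morse-Bott family wrap $q$ times in the meridional direction, so the analysis of asymptotic multiplicities, of Fredholm and ECH indices, and of Hutchings' partition conditions requires a careful trivialization choice that accounts for the $p/q$-twist; this is where the bulk of the adaptation of the CGH argument must occur and will be the main obstacle. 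Once this compactness/gluing package is in place, the identification of chain complexes follows, compatibility with the $U$-map is obtained by placing the generic basepoint $z$ in $\mathrm{int}(N)$ away from all perturbation regions, the hat version follows by passing to the mapping cone of $U$, and the splitting over $H_1(N,[d])\iso H_1(M)$ (the isomorphism coming from the Mayer--Vietoris sequence for $M=N\cup\nu(K)$, in which $[d]$ bounds a disc in $\nu(K)$) is checked orbit set by orbit set.
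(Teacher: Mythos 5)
Your overall strategy is the right one and matches the paper's: both follow Colin--Ghiggini--Honda's scheme of building a model contact form on $\nu(K)$ adapted to the $p/q$-twist, perturbing the two Morse--Bott tori into $e_\pm,h_\pm$, and cancelling everything outside $N$ against the three distinguished curves via a filtration and spectral sequence. But two concrete points in your outline would not go through as written.

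First, the transition region. Once $\del N$ is a \emph{negative} Morse--Bott torus foliated by meridians and a \emph{positive} Morse--Bott torus sits near the binding, the Reeb direction must rotate between the two and necessarily sweeps through infinitely many rational slopes; the region $T^2\cross[1,2]$ between them (the ``no man's land'') therefore carries infinitely many closed orbits that appear in no version of the relative complex. Your proposal has no mechanism for discarding them. The paper does this by choosing a sequence of forms $\alpha_i$ (with $\delta_i\to0$) for which every orbit in that region has action greater than $L_i\to\infty$, and then taking a direct limit $\lim_i ECH^{L_i}(M,\alpha_i')\iso ECH(M)$ along interpolating-cobordism maps defined through Seiberg--Witten theory; this direct-limit package is a substantial and unavoidable part of the argument, since the contact forms themselves vary with $i$. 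Relatedly, the filtration that drives the cancellation is not an action filtration but the winding number $\eta(\gamma)$ of the $V$-part of an orbit set around the binding; the action bound plays the separate role of excluding the no man's land and organizing the limit.

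Second, two smaller points. The confinement of index-$1$ curves is obtained in the paper not by SFT neck-stretching along $\del N$ but by softer topological constraints---slope calculus, positivity of intersection with the foliated tori, and the Blocking and Trapping Lemmas---combined with the observation that the filtration-preserving part of the differential only involves ``nice'' Morse--Bott buildings, so the Morse--Bott and perturbed non-degenerate differentials agree there. And for the $U$-map you should place the basepoint $z$ in the interior of $V$, not of $N$: it is precisely the unique index-$2$ plane through $z\in\mathrm{int}(V)$ with positive end at $e_+$ that identifies the chain-level $U$-map with $\Gamma\mapsto\Gamma/e_+$ and hence makes compatibility with the relative $U$-map checkable.
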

The constructions of $\alpha$ and $\hat{\alpha}$ in the above theorem will be discussed in \cref{contact_forms_and_rational_open_book_decompositions} and the Morse-Bott terminology will be explained in \cref{Morse-Bott_contact_homology}.  Part (1) is proven on \cpageref{proof_of_rel_ECH_equals_ECH_pt_1} and part (2) on \cpageref{proof_of_rel_ECH_equals_ECH_pt_2}.

\chapter{Morse-Bott theory}

\section{Morse-Bott contact homology}\label{Morse-Bott_contact_homology}

In order to prove \cref{relative_ECH_equals_ECH}, we must start by defining a Morse-Bott version of embedded contact homology, $ECH_\MB(M,\alpha,J)$.  This gives rise to some tools which help us understand the behaviour of holomorphic curves near \emph{Morse-Bott tori} in $M$ and also allows the construction of \emph{finite energy foliations} which explicitly compute some holomorphic curves contributing to the ECH differential in a neighbourhood of the knot $K$.  It also means that it makes sense to define a version of embedded contact homology in the case where there is a Morse-Bott torus at $\del N$.  This section follows the construction of Morse-Bott contact homology originally due to Bourgeois~\cite{Bo02}. 

\begin{defn} \label{morse-bott_def}
Let $\alpha$ be a contact form on a connected, oriented, compact 3-manifold $M$ and $R$ its Reeb vector field.  Suppose that $\mathcal{N}$ is an $S^1$ family of simple Reeb orbits foliating an embedded torus $T_\mathcal{N}\subset M$.  For every $p\in T_\mathcal{N}$ choose a symplectic basis $\set{v_1,v_2}$ of $(\xi_p, d\alpha)$ such that $v_2$ is tangent to $T_\mathcal{N}$.  The linearized return map of $R$ at any point $p\in T_\mathcal{N}$ is degenerate in the $v_2$ direction, hence represents a parabolic element of $SL_2(\R)$ of the form
\[ \begin{pmatrix} 1 & 0 \\ a & 1 \end{pmatrix}. \]
\begin{enumerate}
\item The family $\mathcal{N}$ is a \emph{Morse-Bott family} of Reeb orbits if the linearized return map is non-degenerate in the $v_1$ direction at every $p\in T_\mathcal{N}$, i.e.~$a\ne0$.
\item We say that the Morse-Bott torus is \emph{positive (resp.~negative)} if $a$ is positive (resp.~negative) at all $p$.
\item The contact form $\alpha$ is said to be a \emph{Morse-Bott contact form} if all orbits are either non-degenerate or part of a Morse-Bott family.
\end{enumerate}
\end{defn}

In this section we will see that it is possible to perturb $\alpha$ in an arbitrarily small neighbourhood of $T_\mathcal{N}$ to form two non-degenerate Reeb orbits, one elliptic and one positive hyperbolic.  If the Morse-Bott torus is positive (resp.~negative) then the return map of the elliptic orbit will be a small positive (resp.~negative) rotation.  See \cref{Morse-Bott_perturbation} for an illustration of how these orbits arise under the perturbation.

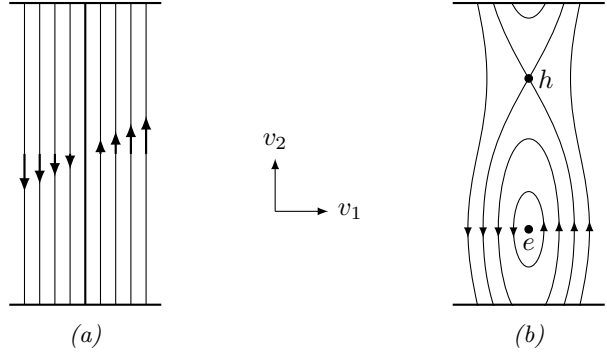
\begin{figure}\centering
	\begin{subfigure}{0.3\textwidth}\centering
		\begin{tikzpicture}%
			\draw [thick](-1,0) --(1,0);
			\draw [thick](-1,4) --(1,4);
			\foreach \x in {-0.8, -0.6,...,-0.2,0.2,0.4,...,0.8} {
				 \draw [thin](\x,0)--(\x,4);
				 \draw [-latex,thick,shorten >=-3pt] (\x,2)--(\x,2+0.5*\x);
			}
			\draw [thick](0,0)--(0,4);
		\end{tikzpicture}
		\caption{}
		\label{Morse-Bott_perturbation_a}
	\end{subfigure}
	\begin{subfigure}[c]{0.1\textwidth}\centering
	  \begin{tikzpicture}
	    \figureaxes{0}{0}{v_1}{v_2}{}{}
		\end{tikzpicture}
	\end{subfigure}
  \begin{subfigure}{0.3\textwidth}\centering
		\begin{tikzpicture}%
		 \path [clip] (-1,-0.01) rectangle (1,4.01);
			\draw [thick](-1,0) --(1,0);
			\draw [thick](-1,4) --(1,4);
			\draw [thin] (0.55,-1) to [out=90,in=-90] (0.8,1) to [out=90,in=-90] (0.55,3) to [out=90,in=-90] (0.8,5);
			\draw [thin] (-0.55,-1) to [out=90,in=-90] (-0.8,1) to [out=90,in=-90] (-0.55,3) to [out=90,in=-90] (-0.8,5);
			\draw [thin] (0,3) to [out=-120, in=90] (-0.6,1) to [out=-90,in=120] (0,-1);
			\draw [thin] (0,3) to [out=120, in=-90] (-0.6,5) to [out=90,in=-120] (0,7);
			\draw [thin] (0,3) to [out=-60, in=90] (0.6,1) to [out=-90,in=60] (0,-1);
			\draw [thin] (0,3) to [out=60, in=-90] (0.6,5) to [out=90,in=-60] (0,7);
			\draw [thin] (0,1) ellipse (0.4 and 1.2);
			\draw [thin] (0,5) ellipse (0.4 and 1.2);
			\draw [thin] (0,1) ellipse (0.2 and 0.5);
			\draw [fill] (0,1) circle (0.05) node [below] {$e$};
			\draw [fill] (0,3) circle (0.05) node [right] {$h$};
			\foreach \x in {0.2,0.4,0.6,0.8} {
				\draw [-latex,shorten >=-3pt] (\x,1)-- +(90:0.01);
				\draw [-latex,shorten >=-3pt] (-\x,1)-- +(-90:0.01);
			}
			
		\end{tikzpicture}
		\caption{}
		\label{Morse-Bott_perturbation_b}
	\end{subfigure}
	\caption{In both diagrams above, the top and bottom are identified to form an annulus and we then cross with $S^1$ to form the region $T_\mathcal{N}\cross(-\epsilon,\epsilon)$.  The flows as indicated in both diagrams represent the return map of the Reeb vector field.  In (a), we have highlighted the central vertical line which is a positive Morse-Bott torus foliated by orbits which wind around in the $S^1$ direction once.  $v_1$ and $v_2$ represent the vectors normal to and parallel to $T_\mathcal{N}$ respectively. In (b), the Morse-Bott torus has been perturbed into two non-degenerate orbits, one hyperbolic and one elliptic.}
	\label{Morse-Bott_perturbation}
\end{figure}

To formalize the notions of ``small'', we have to fix $L>0$ and consider only orbits of action less than $L$.  Then, since $M$ is compact we have a finite number of such orbits and Morse-Bott tori, $\mathcal{N}_1,\dots,\mathcal{N}_k$ (this would not be the case if we allowed orbits of any length; for example, in \cref{Morse-Bott_perturbation_a} the Morse-Bott tori form a dense subset).  We say that $L>0$ is an \emph{action constraint} for $\alpha$ if no Reeb orbit of $\alpha$ has action equal to $L$ and we say that $\alpha$ is $L$-non-degenerate if all orbits of action less than $L$ are non-degenerate.

To make the perturbation, start by picking a Morse-Bott function $\bar{g} :S^1 \to \R$ on each $\mathcal{N}_i\iso S^1$ with two critical points.  The pull back of this function under the quotient map $T_{\mathcal{N}_i} \to \mathcal{N}_i$ gives an map $g_i:T_{\mathcal{N}_i}\to \R$ which is invariant under the flow of $R$.  We then pick a smooth function $g:M\to \R$ such that the following hold:
\begin{itemize}
\item $g$ is supported in disjoint neighbourhoods of the tori $T_{\mathcal{N}_i}$, and away from any non-degenerate orbits of action less than $L$.
\item There exist small neighbourhoods of each Morse-Bott torus $T_{\mathcal{N}_i}$ which we can identify with $T_{\mathcal{N}_i}\cross (-\epsilon,\epsilon)$; on each such neighbourhood $g(x,\rho) = g_i(x)$.
\end{itemize}

We now use this function $g$ to perturb our contact form $\alpha$.
\begin{prop}[{\nogapcite[Lemmas 2.3 and 2.4]{Bo02}}]\label{perturbation_fn_g}
For every action constraint $L>0$ we can find a $g$ as above and an $\epsilon$ arbitrary small such that if we let $\alpha_\epsilon=(1+\epsilon g)\alpha$ then
\begin{enumerate}
\item $\alpha_\epsilon$ is $L$-non-degenerate,
\item each family of orbits $\mathcal{N}_i$ is perturbed into a pair of non-degenerate orbits, $e_i$ and $h_i$, which are elliptic and hyperbolic respectively and have $\alpha_\epsilon$-action less than $L$,
\item if $\mathcal{N}$ is positive (resp.~negative) then all multiple covers $e_i^m$ with $\alpha_\epsilon$-action less than $L$ have Conley-Zehnder index 1 (resp.~$-1$) with respect to the trivialization coming from the Morse-Bott torus. (I.e.~the rotation angle $\theta$ of $e_i$ can be made small enough so that $\lfloor m\theta\rfloor = 0$ (resp.~$\lfloor m\theta\rfloor = -1$).)
\end{enumerate}
\end{prop}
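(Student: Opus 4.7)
The plan is to use the multiplicative perturbation $\alpha_\epsilon = (1+\epsilon g)\alpha$, which remains a contact form for $\epsilon$ sufficiently small since $\alpha\wedge d\alpha>0$ is an open condition. A direct calculation gives
\begin{equation*}
d\alpha_\epsilon = \epsilon\, dg\wedge\alpha + (1+\epsilon g)\, d\alpha,
\end{equation*}
and solving $\alpha_\epsilon(R_\epsilon)=1$, $\iota_{R_\epsilon}d\alpha_\epsilon=0$ yields $R_\epsilon = R + O(\epsilon)$, with the correction (up to an $R$-component) being the symplectic gradient of $g$ on $\xi$ with respect to $d\alpha$. Outside $\mathrm{supp}(g)$ we have $R_\epsilon=R$ exactly, so all non-degenerate orbits of action less than $L$ living outside the Morse-Bott neighbourhoods are untouched; the entire analysis therefore reduces to a computation in a neighbourhood $T_{\mathcal{N}_i}\cross(-\delta,\delta)$ of each Morse-Bott torus.

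In such a neighbourhood I would use a Morse-Bott normal form, namely coordinates $(\theta_1,\theta_2,\rho)$ with the Reeb orbits of $\alpha$ given by fixing $\theta_2$ and $\rho=0$ and flowing in $\theta_1$, and with $R=\partial_{\theta_1}$ on the torus. Since $g=\bar g(\theta_2)$ by construction, one obtains schematically
\begin{equation*}
R_\epsilon = \partial_{\theta_1} + \epsilon\bar g'(\theta_2)\, V + O(\epsilon^2),
\end{equation*}
where $V$ is a fixed vector field with a non-trivial $\partial_\rho$-component determined by $d\alpha|_\xi$. A closed orbit of small action must return to the same $\rho$ after one lap in $\theta_1$, and to leading order in $\epsilon$ this forces $\bar g'(\theta_2)=0$. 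Hence the only orbits of $R_\epsilon$ of action less than $L$ in this neighbourhood lie over the two critical points of $\bar g$, giving the pair $e_i,h_i$ claimed in (2); making $\epsilon$ and $\delta$ small compared to $L$ rules out the creation of any spurious short orbits off the torus.

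To classify these two orbits and compute Conley-Zehnder indices I would linearize the time-$T$ return map. The degeneracy of the unperturbed return map in the Morse-Bott direction is $\begin{pmatrix}1&0\\a&1\end{pmatrix}$, and the perturbation contributes a term proportional to $\epsilon\, \bar g''(\theta_{\mathrm{crit}})$ in the off-diagonal position, so that the linearized return map becomes a small perturbation of the identity whose trace is controlled by the product $a\bar g''(\theta_{\mathrm{crit}})$. Since $\bar g''$ has opposite signs at the two critical points, exactly one yields an elliptic return map (trace just below $2$) and the other a positive hyperbolic one; the sign of the rotation angle $\theta$ of the elliptic orbit is determined by the sign of $a$, so positive (resp.\ negative) Morse-Bott tori give $\theta$ positive (resp.\ negative), of order $\epsilon$.

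Finally, because $\theta=O(\epsilon)$ while the action of $e_i$ is bounded below by a positive constant independent of $\epsilon$, shrinking $\epsilon$ forces $m|\theta|<1$ for every $m$ with $\mathcal A(e_i^m)<L$. In the positive Morse-Bott case $m\theta\in(0,1)$, so $\lfloor m\theta\rfloor=0$ and $\mu_\tau(e_i^m)=1$; in the negative case $m\theta\in(-1,0)$, giving $\lfloor m\theta\rfloor=-1$ and $\mu_\tau(e_i^m)=-1$, which is (3). The main obstacle I expect is the careful bookkeeping of the normal-form computation: one must verify that the $O(\epsilon^2)$ corrections do not spoil non-degeneracy at $e_i$ or $h_i$, and that the trivialisation induced from the Morse-Bott torus agrees with the one used to read off the rotation angle, both of which are standard but slightly tedious in the coordinate calculation.
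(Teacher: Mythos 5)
The paper itself offers no proof of this proposition; it is quoted verbatim as a cited result from Bourgeois (Lemmas~2.3 and 2.4 of the referenced paper), so there is no ``paper's own argument'' to compare against. Your sketch is exactly the standard Morse--Bott perturbation argument of Bourgeois: compute the Reeb vector field of $(1+\epsilon g)\alpha$, observe that the $\xi$-component is (up to a scalar) the $d\alpha$-symplectic gradient of $g$ on $\xi$, use the fact that $g$ depends only on the Morse--Bott coordinate $\theta_2$ to get a $\partial_\rho$-drift proportional to $\bar g'(\theta_2)$, conclude that short orbits localize over the critical points of $\bar g$, and read off the elliptic/hyperbolic type and rotation from the trace of the linearized return map. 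The outline is correct.

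Two small imprecisions, neither of which damages the conclusion. First, the perturbed linearized return map is not a ``small perturbation of the identity'': the unperturbed return map $\left(\begin{smallmatrix}1&0\\a&1\end{smallmatrix}\right)$ already has the shear $a$, which is $O(1)$ rather than $O(\epsilon)$; what the perturbation does is push the trace off $2$ by an amount of order $\epsilon\bar g''(\theta_{\mathrm{crit}})a$, so it is a small perturbation of the parabolic, not of $I$. Second, the rotation angle of the resulting elliptic orbit satisfies $\cos\theta = 1 + \tfrac{1}{2}ac\epsilon$, so $\theta = O(\sqrt{\epsilon})$, not $O(\epsilon)$: the discriminant $\mathrm{tr}^2-4$ is $O(\epsilon)$, so its square root is $O(\sqrt\epsilon)$. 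This does not affect part~(3), since you still have $\theta\to 0$ as $\epsilon\to 0$ while $\mathcal A(e_i)$ stays bounded below, so $\lfloor m\theta\rfloor$ can be made equal to $0$ (resp.\ $-1$) for all $m$ with $\mathcal A(e_i^m)<L$ — but if you state an order estimate, get the exponent right, because this is the kind of estimate that matters elsewhere (e.g.\ in iterated Conley--Zehnder index computations).
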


For the rest of this chapter we will work with Morse-Bott contact forms such as $\alpha$ and small perturbations $\alpha_\epsilon$.  We will construct a Morse-Bott analogue of embedded contact homology which, for certain ``nice'' Morse-Bott contact forms $\alpha$ can be approximated by $ECH(M,\alpha_\epsilon, J_\epsilon)$. At the same time, we will exploit topological restrictions which arise in the Morse-Bott setting, giving us some control over the behaviour of these complexes.

As a chain complex, $ECC_\MB(M,\alpha,J)$ is generated by orbit sets of $R_\alpha$ where from each Morse-Bott torus $\mathcal{N}_i$ we choose two distinguished orbits $e_i$ and $h_i$ corresponding to the maximum and minimum of the Morse-Bott perturbation $\bar{g}$. (If $\mathcal{N}$ is positive then $e_i$ corresponds to the maximum and $h_i$ to the minimum. If $\mathcal{N}$ is negative then the correspondence is switched.)  With respect to $\alpha$ these two orbits are geometrically indistinguishable from the remaining orbits in $\mathcal{N}_i$, but they can be thought of intuitively as the limit as $\epsilon$ tends to $0$ of the orbits $e_i$ and $h_i$ of $R_{\alpha_\epsilon}$.  It therefore is natural that we treat each $e_i$ as an elliptic orbit and each $h_i$ as a hyperbolic orbit. 

More precisely, let $\mathcal{P}'$ denote the set of simple non-degenerate orbits of $\alpha$; $\mathcal{P}_\MB=\mathcal{P}'\union(\bigcup_i\mathcal{N}_i)$ denote the set of all Reeb orbits of $\alpha$; and $\mathcal{P}=\mathcal{P}'\union(\bigcup_i\set{e_i,h_i})$.  The complex $ECC_\MB(M,\alpha,J)$ is generated over $\F_2$ by orbit sets of the form $\Gamma = \set{(\gamma_i,m_i)}$ where $\gamma_i$ are orbits in $\mathcal{P}$ and $m_i=1$ for hyperbolic non-degenerate orbits and the distinguished ``hyperbolic'' Morse-Bott orbits $\set{h_i}$.

\subsection{Morse-Bott buildings}

The definition of the differential in the Morse-Bott setting involves counting \emph{Morse-Bott buildings}, which are a generalization of holomorphic curves.  The definition and notation is rather unwieldy but the ideas are quite intuitive.  First we must discuss moduli spaces of holomorphic curves in the Morse-Bott setting.

\begin{defn}
Suppose that $\alpha$ is a Morse-Bott contact form on $M$ with adapted almost complex structure $J$.  Let $\Gamma_+ = \set{(\gamma_i^+, m_i^+)}$ be an orbit set where each $\gamma_i^+$ is a simple non-degenerate orbit and $\mathcal{N}_+ = \set{(\mathcal{N}_i^+, \widetilde{m}_i^+)}$ be a collection of Morse-Bott tori with multiplicities.  Similarly let  $\Gamma_- = \set{(\gamma_i^-, m_i^-)}$ and $\mathcal{N}_- = \set{(\mathcal{N}_i^-, \widetilde{m}_i^-)}$.

Define the moduli space
\[ \mathcal{M}_J(\Gamma_+, \mathcal{N}_+, \Gamma_-, \mathcal{N}_-) \]
of holomorphic curves $u$ such that
\begin{itemize}
\item the positive end of $u$ is equal to the orbit set $\Gamma_+\union \set{(\widetilde{\gamma}_i^+,\widetilde{m}_i^+)}$, where each $\widetilde{\gamma}_i^+$ is a simple orbit in $\mathcal{N}_i^+$ and
\item similarly the negative end of $u$ is equal to the orbit set $\Gamma_-\union \set{(\widetilde{\gamma}_i^-,\widetilde{m}_i^-)}$, where each $\widetilde{\gamma}_i^-$ is a simple orbit in $\mathcal{N}_i^-$.
\end{itemize}
\end{defn}

The almost complex structure $J$ is \emph{regular} if for all orbit and Morse-Bott torus sets $\Gamma_+, \mathcal{N}_+, \Gamma_-, \mathcal{N}_-$, and for all $u$ in the resulting moduli space containing no multiply-covered covered components, the moduli space near $u$ is transversely cut out and hence a manifold.  This is a generic condition by Bourgeois~\cite[Proposition 6.4]{Bo02}.

We will now state the definition of a Morse-Bott building.  For the purposes of this definition, we think of $\mathcal{P}_\MB$ as a union of Morse-Bott families, where the non-degenerate orbits in $\mathcal{P}'\subset\mathcal{P}_\MB$ are each treated as a Morse-Bott family consisting of a single orbit.

Define an \emph{end pair} within a Morse-Bott family $\mathcal{N}$ to be an ordered pair of orbits $(\widetilde{\gamma}^-, \widetilde{\gamma}^+)$ such that
\begin{itemize}
\item there exists $m$ so that $\widetilde{\gamma}^\pm$ is an $m$-fold cover of some simple orbit $\gamma^\pm\in\mathcal{N}$, and
\item there is a gradient flow line $\delta$ from $\gamma^-$ to $\gamma^+$ in $\mathcal{N}$ with respect to the Morse-Bott function $\bar{g}$. 
\end{itemize}
$\delta$ can be thought of as an $m$-fold unbranched cover of the annulus in $T_{\mathcal{N}}$ connecting $\gamma^-$ and $\gamma^+$.  If $\mathcal{N}$ is a Morse-Bott family consisting of a simple non-degenerate orbit, or if $\widetilde{\gamma}^-=\widetilde{\gamma}^+$, then $\delta$ is taken as a flow line of length 0.

We say that two collections of orbits $\set{\widetilde{\gamma}^-_j}$ and $\set{\widetilde{\gamma}^+_j}$ can be \emph{paired with respect to the flow lines $\set{\delta_j}$} if, up to some reordering of the orbits, we have for every $j$ that $( \widetilde{\gamma}^-_j, \widetilde{\gamma}^+_j)$ forms an end pair with flow line $\delta_j$.

\begin{defn}
Let $\Gamma_+$ and $\Gamma_-$ be Morse-Bott orbit sets constructed from $\mathcal{P}$.  A \emph{{$J$-holomorphic} Morse-Bott building} $\widetilde{u}$ is a collection of $J$-holomorphic maps, $\set{u_i:F_i\to \R\cross M\gappy{|}i=1,\dots,n}$, which we call the \emph{holomorphic part}, and a set of gradient flow lines in $\mathcal{P}_\MB$, $\set{\delta_{i,j}, i=0,\dots,n,j=1,\dots,j_i}$, such that the following hold:
\begin{itemize}
\item The number of positive ends of $u_1$ is $j_0$, the number of negative ends of $u_n$ is $j_n$, and for $i=1,\dots,n-1$, the number of negative ends of $u_i$ and the number of positive ends of $u_{i+1}$ are both equal to $j_i$.
\item For $i=1,\dots,n-1$, the negative orbits of $u_i$ and the positive orbits of $u_{i+1}$ can be paired with respect to the flow lines $\set{\delta_{i,j}, j=1,\dots,j_i}$ in the manner described above. 
\item There is a collection of orbits $\set{\gamma_{0,j}}$, realising the orbit set $\Gamma_+$, which can be paired with the positive orbits of $u_1$ with respect to $\set{\delta_{0,j},j=1,\dots,j_0}$.  Similarly there is a pairing (with respect to $\set{\delta_{n,j},j=1,\dots,j_n}$) between the negative orbits of $u_n$ and a collection of orbits realising the orbit set $\Gamma_-$. 
\end{itemize}
The set of all Morse-Bott buildings between $\Gamma_+$ and $\Gamma_-$ will be denoted $\mathcal{M}^\MB_J(\Gamma_+,\Gamma_-)$.
\end{defn}

It turns out that in order to relate Morse-Bott contact homology with embedded contact homology, we have to consider only Morse-Bott contact forms which satisfy a property restricting the types of Morse-Bott buildings which can arise.

\begin{defn}\label{niceness_defn} Let $u$ be a Morse-Bott building.
\begin{enumerate}
\item We say that $u$ is \emph{nice} if the non-connector part of $u$ has only one component.
\item We say that the contact form $\alpha$ is \emph{nice} if for every generic almost complex structure $J$, every $J$-holomorphic Morse-Bott building with ECH index 1 is nice\footnote{Note that the definition of ECH index is easily extended to Morse-Bott buildings since they too naturally give rise to a relative homology class $[\widetilde{u}]\in H_2(M,\Gamma_+,\Gamma_-)$.}.
\item Suppose that $U=\set{(\Gamma_+,\Gamma_-)}$ is a collection of pairs of orbit sets and $d$ is a differential constructed by counting only Morse-Bott buildings between pairs of orbits in $U$.  We say that $\alpha$ is \emph{nice with respect to $d$} if every such contributing Morse-Bott building is nice.
\end{enumerate}
\end{defn}

Once we restrict to only considering nice contact forms, we then form the Morse-Bott differential by only counting \emph{very nice} Morse-Bott buildings:

\begin{defn}
A Morse-Bott building $u$ is \emph{very nice} if it is nice and every connector component of $u$ is a trivial cylinder over a simple orbit.
\end{defn}

\begin{defn}\label{MB_homology_def}
Recall that the Morse-Bott chain complex is generated by orbit sets constructed from $\mathcal{P}$, where we have chosen two distinguished orbits $e_i$ and $h_i$ from each Morse-Bott torus $\mathcal{N}_i$.  Define the Morse-Bott differential by counting very nice ECH index 1 Morse-Bott buildings between orbit sets.  The Morse-Bott contact chain complex is then denoted by
\[ ECC_\MB(M, \alpha, J). \]
We also define an \emph{action-bounded} version,
\[ ECC^L_\MB(M,\alpha,J), \]
obtained by considering only those orbit sets with action less than $L$.  Morse-Bott contact homology is obtained by taking the homology of these complexes. 
\end{defn}

The proof that the Morse-Bott differential satisfies $d_\MB^2=0$ follows from the proposition below.  First we must introduce an action-bounded version of the standard ECH complex; define $ECC^L(M,\alpha_\epsilon, J_\epsilon)$ to be the chain complex generated by orbit sets with action less than $L$.  Note that since $d\alpha_\epsilon>0$ on any holomorphic curve, a simple application of Stokes' theorem implies that if there exists a holomorphic curve from $\Gamma_+$ to $\Gamma_-$ then $\mathcal{A}_{\alpha_\epsilon}(\Gamma_+)>\mathcal{A}_{\alpha_\epsilon}(\Gamma_-)$.  This implies that $ECC^L(M,\alpha_\epsilon, J_\epsilon)$ is a subcomplex of $ECC(M,\alpha_\epsilon, J_\epsilon)$.  Similarly, $ECC^L_\MB(M,\alpha, J)$ is a subcomplex of $ECC_\MB(M,\alpha, J)$.

\begin{prop}[{\nogapcite[Lemma 4.4.5 and Proposition 4.4.7]{CGH_ECH_OBD}}]\label{non-degen_close_to_MB}
Let $L>0$, suppose that $\alpha$ is either a nice Morse-Bott contact form or a nice $L$-non-degenerate Morse-Bott contact form and let $J$ be a regular adapted almost complex structure for $\alpha$.  Then, for all $\epsilon>0$ sufficiently small, the $L$-non-degenerate perturbation $\alpha_\epsilon$ from \cref{perturbation_fn_g} has the properties below.  Let $J_\epsilon$ be a regular adapted almost complex structure for $\alpha_\epsilon$.
\begin{enumerate}
\item If $\Gamma_+$, $\Gamma_-$ are two orbit sets constructed from $\mathcal{P}$ then
\[ \mathcal{M}_{J}^{\MB,I=1,\mathrm{vn}}(\Gamma_+,\Gamma_-)/\R \iso \mathcal{M}_{J_\epsilon}^{I=1,\nmc}(\Gamma_+,\Gamma_-)/\R, \]
where ``vn'' means that only very nice Morse-Bott buildings are counted and recall that ``nmc'' means that only holomorphic curves with no multiply-covered components are counted.
\item \label{non-degen_close_to_MB_b} There is an isomorphism of chain complexes
\begin{equation}\label{ECC_L_equals_ECCMB_L} ECC^L_\MB(M,\alpha,J) \iso ECC^L(M,\alpha_\epsilon, J_\epsilon). \end{equation}
\end{enumerate}
\end{prop}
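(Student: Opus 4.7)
Both parts assert that the Morse--Bott picture and its non-degenerate perturbation agree below action $L$. The plan is to obtain the bijection of moduli spaces in (1) by a combined gluing/compactness argument, and then deduce (2) as an essentially formal consequence once generators and actions are identified.

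First I would identify the generators. Since $\alpha_\epsilon$ is $L$-non-degenerate and each Morse--Bott family $\mathcal{N}_i$ is perturbed by \cref{perturbation_fn_g} into exactly the pair $\{e_i,h_i\}$ of simple orbits of action less than $L$, the simple $\alpha_\epsilon$-orbits below action $L$ are precisely the elements of $\mathcal{P}' \cup \bigcup_i \{e_i,h_i\} = \mathcal{P}$. Identifying each distinguished Morse--Bott orbit of $\alpha$ with the corresponding non-degenerate orbit of $\alpha_\epsilon$, the generators of $ECC^L_\MB(M,\alpha,J)$ and $ECC^L(M,\alpha_\epsilon,J_\epsilon)$ match as $\F_2$-vector spaces, and for $\epsilon$ small enough the actions agree up to the chosen bound (with no orbit crossing $L$, using that $L$ is an action constraint).

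Next I would construct the bijection in (1). In the forward direction, given a very nice ECH-index-$1$ Morse--Bott building $\widetilde{u}$, glue along each gradient flow line $\delta_{i,j}$ on a Morse--Bott torus $T_{\mathcal{N}}$ to an actual $J_\epsilon$-pseudoholomorphic cylinder for $\alpha_\epsilon$ that lies in a neighbourhood of $T_\mathcal{N}$ and connects the distinguished $\alpha_\epsilon$-orbits corresponding to the starting and ending endpoints of $\delta_{i,j}$ in $\{e_i,h_i\}$. This is Bourgeois's Morse--Bott gluing technique, adapted to the ECH index setting; the very nice hypothesis ensures that no connector of $\widetilde{u}$ is a multiply-covered trivial cylinder, which is exactly what avoids branched-cover obstructions in the gluing. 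Conversely, given a sequence $\epsilon_n\to 0$ and $\alpha_{\epsilon_n}$-holomorphic ECH-index-$1$ curves $u_n$ with no multiply-covered components between fixed orbit sets $\Gamma_+, \Gamma_-$, apply SFT/Morse--Bott compactness to extract a subsequential limit $\widetilde{u}\in \mathcal{M}_J^\MB(\Gamma_+,\Gamma_-)$. The niceness of $\alpha$ forces the non-connector part of $\widetilde{u}$ to be connected; combined with the no-multiply-covered hypothesis and the action bound this upgrades $\widetilde{u}$ to a very nice building. Both constructions are $\R$-equivariant, so they descend to the quotient moduli spaces, and standard gluing uniqueness shows that they are mutual inverses.

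The main obstacle is making this gluing/compactness picture rigorous in the ECH index setting. Two points require care: first, that gluing preserves the ECH index and does not create multiply-covered components, which uses the index inequality of \cref{index_0_1_and_2_holo_curves} together with the fact that the inserted cylinders sit in a neighbourhood of $T_\mathcal{N}$ of $d\alpha$-area $O(\epsilon)$ and hence contribute trivially to $I$; second, that bubbling of multiply-covered trivial cylinders is ruled out in the limit, which is precisely where the niceness of $\alpha$ (excluding any extra connected components of ECH-index $0$ in an index $1$ building) is used. Part \cref{non-degen_close_to_MB_b} then follows immediately from (1): the action filtration is respected under the bijection (actions differ by $O(\epsilon)$), so the subcomplex structures $ECC^L_\MB \subset ECC_\MB$ and $ECC^L \subset ECC$ coincide after the generator identification, and their differentials agree term-by-term since by (1) each contributing curve below action $L$ is counted with the same multiplicity on both sides.
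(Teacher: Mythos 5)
This proposition is not proved in the paper at all---it is quoted directly from Colin, Ghiggini and Honda (Lemma 4.4.5 and Proposition 4.4.7 of \cite{CGH_ECH_OBD}), so there is no internal proof to compare your argument against. That said, your two-sided gluing/compactness strategy is indeed the Bourgeois-style mechanism that underlies the cited results, and your identification of where niceness enters (ruling out disconnected non-connector parts of index-$1$ limiting buildings) and where very-niceness enters (avoiding branched-cover degeneracies in the gluing) matches the intended picture.

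One justification you give is substantively wrong, though. You claim the flowline cylinders inserted during gluing ``contribute trivially to $I$'' because they sit in an $O(\epsilon)$-thin $d\alpha$-area neighbourhood of $T_\mathcal{N}$. The ECH index of a surface is not controlled by its $d\alpha$-area; it is computed from the Conley--Zehnder terms, $c_1$ and $Q_\tau$ of the relative homology class, and a flowline annulus in a Morse--Bott torus connecting two distinct orbits typically has nonzero ECH index contributions (for instance, the short gradient cylinder between the perturbed $e$ and $h$ in a positive Morse--Bott torus is itself an ECH-index-$1$ differential). The correct reason gluing preserves $I$ is that the glued curve represents the same relative homology class in $H_2(M,\Gamma_+,\Gamma_-)$ as the building, and $I$ is a function of that class alone---no appeal to area is needed or valid. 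You should also be explicit that ``for all $\epsilon$ sufficiently small'' is established by contradiction via a sequence $\epsilon_n\to 0$ (you gesture at this in the compactness direction but apply it only informally), and that the SFT limit of nmc index-$1$ curves has connector components that remain simply-covered trivial cylinders---this is the step that promotes the limit building from ``nice'' to ``very nice'', and it relies on the nmc hypothesis excluding branched covers in the approximating sequence rather than on the action bound.
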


In particular, this implies that $ECC_\MB(M,\alpha,J)$ is a chain complex, since the restriction of $d_\MB^2$ to $ECC^L_\MB(M,\alpha,J)$ is zero for every $L$, and hence $d_\MB^2=0$ on the unbounded complex.

In this thesis we will be careful to only appeal to Morse-Bott contact homology when the contact forms are nice with respect to the differentials we are computing, and hence we are permitted to apply the above proposition.  However we will sometimes wish to prove the existence of certain Morse-Bott buildings even in the case when $\alpha$ is not necessarily nice, and for this we will use the following Lemma.

\begin{lemma}[{\nogapcite[Theorem 4.4.3]{CGH_ECH_OBD}}]\label{MB_building_exists_epsilon_small_enough}
Let $L>0$, suppose that $\alpha$ is a (not necessarily nice) Morse-Bott contact form on $M$ and let $J$ be a regular adapted almost complex structure for $\alpha$. Choose a sequence $\epsilon_i$ tending to $0$ and for each $i$ choose an $L$-non-degenerate perturbation $\alpha_{\epsilon_i}$ as in \cref{perturbation_fn_g} and a regular adapted almost complex structure $J_{\epsilon_i}$.

Then for fixed orbits sets $\Gamma_+$, $\Gamma_-$, all sequences $u_i \in \mathcal{M}_{J_{\epsilon_i}}(\Gamma_+,\Gamma_-)$ have a subsequence converging to some (not necessarily nice) Morse-Bott building $u\in \mathcal{M}^\MB_J(\Gamma_+,\Gamma_-)$.
\end{lemma}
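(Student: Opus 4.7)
The plan is to combine standard SFT compactness with Bourgeois's Morse-Bott refinement, treating the perturbation parameter $\epsilon_i \to 0$ as an additional degeneration alongside any potential bubbling or breaking. The output will be a $J$-holomorphic building whose level-to-level interfaces are governed by gradient flows of the Morse-Bott functions $\bar{g}_j$ on the Morse-Bott families $\mathcal{N}_j$.

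First I would establish a uniform energy bound. Since $\alpha_{\epsilon_i} = (1+\epsilon_i g)\alpha \to \alpha$ in $C^\infty$ and each $u_i$ has fixed positive end $\Gamma_+$ and negative end $\Gamma_-$, the Hofer energies of the $u_i$ are controlled by $\mathcal{A}_{\alpha_{\epsilon_i}}(\Gamma_+)$, which converges to $\mathcal{A}_\alpha(\Gamma_+)$, and are hence uniformly bounded.  Choosing $J_{\epsilon_i}$ to converge to $J$ (which is possible because the space of adapted regular almost complex structures varies continuously with the contact form, and $J_{\epsilon_i}$ may be picked from an arbitrarily small neighbourhood of $J$), I would then invoke the SFT compactness theorem of Bourgeois-Eliashberg-Hofer-Wysocki-Zehnder to extract a subsequence converging in the Gromov-Hofer sense to a $J$-holomorphic building $u$ in $\R\cross M$, whose levels are $J$-holomorphic curves with asymptotic ends at Reeb orbits of $\alpha$.

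Next I would perform the Morse-Bott analysis at the interfaces between adjacent levels.  At any such interface, a negative end of one level and the corresponding positive end of the next are asymptotic to covers of a common simple Reeb orbit of $\alpha$; when that orbit lies in a Morse-Bott family $\mathcal{N}_j$, the two ends are permitted to lie on distinct simple orbits in $\mathcal{N}_j$, and the discrepancy is precisely a gradient flow trajectory of $\bar{g}_j$.  This is because each ``neck'' of $u_i$ passes through a region where the Reeb dynamics of $\alpha_{\epsilon_i}$ differ from those of $\alpha$ only by an $O(\epsilon_i)$ rotation in the $\mathcal{N}_j$-direction governed exactly by $\bar{g}_j$, so the neck collapses onto such a flow line as $\epsilon_i\to 0$.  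The same analysis applies at the outermost ends, whose asymptotics at the orbits $e_j, h_j$ making up $\Gamma_\pm$ correspond, in the limit, to orbits realising $\Gamma_\pm$ in the Morse-Bott sense joined by gradient flow lines (possibly of length zero for non-degenerate orbits).  Assembling the limiting holomorphic levels together with these flow lines produces the required element $u \in \mathcal{M}^\MB_J(\Gamma_+,\Gamma_-)$.

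The main obstacle is the Morse-Bott refinement of compactness in the second step: one must verify that the neck degenerations interpolating between the perturbed orbits $e_j, h_j$ of $\alpha_{\epsilon_i}$ and the genuine simple orbits of $\alpha$ on $T_{\mathcal{N}_j}$ are indeed governed by the gradient flow of $\bar{g}_j$.  This requires a careful comparison between the linearized return map of $R_{\alpha_{\epsilon_i}}$ along a Morse-Bott torus (whose nontrivial rotational component is of order $\epsilon_i$ and encodes $\bar{g}_j$) and the exponential decay rates of $J_{\epsilon_i}$-holomorphic ends near an orbit of $\alpha_{\epsilon_i}$.  This analytic heart is the content of Bourgeois's thesis and its application in the setting of \cite{CGH_ECH_OBD}; the remaining task would be to check that the construction of \cref{perturbation_fn_g} satisfies the required hypotheses, which it does by design.
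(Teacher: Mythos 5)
The paper does not actually prove this lemma: it is imported wholesale from Colin--Ghiggini--Honda (their Theorem 4.4.3), and the proof there is precisely the route you sketch --- a uniform Hofer energy bound from the fixed positive asymptotics, SFT compactness applied to the sequence $(J_{\epsilon_i},u_i)$ with $J_{\epsilon_i}\to J$, and Bourgeois's Morse--Bott neck analysis identifying the degenerating annular regions along each torus $T_{\mathcal{N}_j}$ with gradient flow lines of $\bar g_j$. Your decomposition of the argument, and your identification of the neck analysis as the analytic heart to be outsourced to Bourgeois, therefore match the intended proof.

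One step would fail as literally written, though. The SFT compactness theorem you invoke requires a uniform bound on the topological complexity (genus and number of punctures) of the domains, and neither the statement of the lemma nor your energy bound supplies one: for fixed $\Gamma_\pm$ the moduli space $\mathcal{M}_{J_{\epsilon_i}}(\Gamma_+,\Gamma_-)$ contains, for example, branched multiple covers of trivial cylinders whose domains have arbitrarily large genus, and a sequence of those admits no subsequence converging to a finite holomorphic building. In every application made in this thesis the curves fed into the lemma have ECH index at most $2$, which (via the index inequality and relative adjunction) bounds the genus, so the gap is harmless in practice --- but you should either add a genus or index hypothesis explicitly or say where the bound comes from before citing BEHWZ. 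A second, smaller point: the lemma as stated allows $J_{\epsilon_i}$ to be an arbitrary regular adapted structure for $\alpha_{\epsilon_i}$, whereas your argument (correctly) requires $J_{\epsilon_i}\to J$; this convergence should be recorded as part of the choice being made, as it is in the source.
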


The above lemma means that for a fixed $L>0$ and Morse-Bott contact form $\alpha$, it is always possible to choose a perturbation $\alpha_\epsilon$ such that, for any two orbit sets  $\Gamma_+$, $\Gamma_-$ of length less than $L$, the existence of a $J_\epsilon$-holomorphic curve in $\mathcal{M}_{J_\epsilon}(\Gamma_+,\Gamma_-)$ implies the existence of a $J$-holomorphic Morse-Bott building in $\mathcal{M}_J^\MB(\Gamma_+,\Gamma_-)$.

To see this, let $Z$ denote the set of pairs $(\Gamma_+,\Gamma_-)$ of orbit sets with action less than $L$ such that there exists a $J$-holomorphic Morse-Bott building between $\Gamma_+$ and $\Gamma_-$.  Similarly define $Z_\epsilon$ for $J_\epsilon$.  We wish to argue that for $\epsilon$ small enough, $Z_\epsilon\subset Z$.  But if this is not the case then we can find a single pair $(\Gamma_+, \Gamma_-)$ and a sequence $\epsilon_i$ tending to $0$ with $J_{\epsilon_i}$-holomorphic curves between $\Gamma_\pm$ for every $i$ but no $J$-holomorphic Morse-Bott building between the pair.  This of course contradicts \cref{MB_building_exists_epsilon_small_enough}.

\section{Direct limits}

In order to obtain an isomorphism relating ECH and Morse-Bott ECH of the form of \cref{ECC_L_equals_ECCMB_L} in the unbounded case, Colin, Ghiggini and Honda use a direct limit argument to let $L$ tend to $\infty$.  We will give an exposition of this argument in this section. Letting $L$ tend to $\infty$ on the right hand side of \cref{ECC_L_equals_ECCMB_L} is straightforward since for $L<L'$ there is a canonical inclusion map 
\begin{equation} \label{MB_inclusion_map}
i_{L,L'}:ECC^L_\MB(M,\alpha, J) \to ECC^{L'}_\MB(M,\alpha, J)
\end{equation}
and in the direct limit we obtain $ECC_\MB(M,\alpha,J)$ by definition.  However on the left hand side, $L\to\infty$ implies that $\epsilon\to0$ and hence the contact forms $\alpha_\epsilon$ (and almost complex structures $J_\epsilon$) are not fixed.  Hence some work must be done to apply a direct limit argument on this side.

\subsection{Maps induced by exact symplectic cobordisms}\label{direct_limits_through_cobordism_maps}

The key to taking a direct limit is to construct \emph{cobordism maps} between pairs of contact forms $\alpha_\epsilon$ and $\alpha_{\epsilon'}$.  These cobordism maps will be covered in great detail later in \cref{cobordism_maps_via_SW_theory} and for now we will only give a brief overview, stating only the results which are needed for this section.  

An \emph{exact symplectic cobordism} between two contact manifolds $(M_+,\alpha_+)$ to $(M_-,\alpha_-)$ is a symplectic 4-manifold $(X,d\lambda)$ such that $\del X = M_+ \union (-M_-)$ and $\restr{\lambda}{M_\pm}=\alpha_\pm$. Given such a cobordism, we obtain a map 
\begin{equation}\label{cobordism_map_bounded} 
\Phi^L(X,d\lambda) : ECH^L(M_+,\alpha_+) \to ECH^L(M_-,\alpha_-)
\end{equation}
for all $L>0$ which satisfies a collection of naturality conditions~\cite[Theorem 1.9]{HT_Arnold13}.  It is induced by a non-canonical chain map on ECC defined in terms of the isomorphism with Seiberg-Witten theory---it is not currently known how to construct the map solely in terms of pseudoholomorphic curves in $X$.  Note that in \cref{cobordism_map_bounded} we have omitted the choice of the almost complex structure---this is legitimate since the groups are independent of $J$~\cite[Theorem 1.3]{HT_Arnold13}.

All exact symplectic cobordisms we consider will be \emph{interpolating cobordisms}, which are defined via isotopies of $M$ in the following way.  Suppose that $\alpha_0$ and $\alpha_1$ are homotopic contact forms through $\alpha_t$. Then by Gray's Stability Theorem (see for example Geiges~\cite[Theorem 2.2.2]{Ge08}) there exists an isotopy of $M$,
\[ \phi :[0,1]\cross M\to M, \]
and a one-parameter family of smooth functions on $M$,
\[ f_t : M \to \R_{>0},  \]
such that
\begin{itemize}
\item $\phi_t^*(f_t\alpha_0)=\alpha_t$ for all $t\in[0,1]$, and
\item $\phi_0=\id$ and $f_0=1$.
\end{itemize}
Now use this data to construct $\lambda_\phi := f \phi^*(\alpha_0)$, a 1-form on $[0,1]\cross M$.  If $\frac{\del f}{\del t}>0$ then $([0,1]\cross M, d\lambda_\phi)$ is an exact symplectic cobordism.

In general, we can always find an interpolating cobordism between any two homotopic contact forms, up to rescaling by a constant, by the proposition below.  Furthermore, the induced map $\Phi_\phi := \Phi([0,1]\cross M, d\lambda_\phi)$ depends only on the homotopy class of $\phi$ relative to its endpoints~\cite[Lemma 3.1.6]{CGH_ECH_OBD}.

\begin{prop}\label{interpolating_cobordism_between_homotopic_forms}
Suppose that $\alpha_0$ and $\alpha_1$ are homotopic contact forms.  Then there exists $A>0$ such that we can find an interpolating cobordism between $\alpha_1$ and $A\alpha_0$.
\end{prop}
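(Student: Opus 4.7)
The plan is to arrange, via Gray's Stability Theorem, an isotopy $\phi_t$ and family of positive functions $f_t$ such that $\phi_t^*(f_t\alpha_1) = \tilde{\alpha}_t$ for some homotopy $\tilde\alpha_t$ from $\alpha_1$ to $A\alpha_0$, with the crucial additional property that $\partial_t f_t > 0$; the scaling constant $A$ will be chosen large enough to force this positivity.

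First I would fix a smooth homotopy $\alpha_s$ from $\alpha_0$ to $\alpha_1$, and, for a parameter $A > 0$ to be determined, consider the scaled reverse homotopy
\[ \tilde\alpha_t := A^t\,\alpha_{1-t}, \qquad t \in [0,1], \]
so that $\tilde\alpha_0 = \alpha_1$ and $\tilde\alpha_1 = A\alpha_0$. Applying Gray's theorem with basepoint $\alpha_1$ then produces $\phi_t : M \to M$ and $f_t : M \to \R_{>0}$ with $\phi_0 = \id$, $f_0 = 1$, and $\phi_t^*(f_t \alpha_1) = \tilde\alpha_t$. The resulting 1-form $\lambda_\phi := f\,\phi^*\alpha_1$ already restricts correctly to $\alpha_1$ on $\{0\}\cross M$ and to $A\alpha_0$ on $\{1\}\cross M$, so the whole problem reduces to verifying $\partial_t f > 0$ for a suitable choice of $A$.

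The standard derivation of Gray's theorem identifies $\partial_t(\log f_t)$, transported along $\phi_t$, with $\dot{\tilde\alpha}_t(R_{\tilde\alpha_t})$---the evaluation of the $t$-derivative of the family on its own Reeb vector field. Using $R_{\tilde\alpha_t} = A^{-t}R_{\alpha_{1-t}}$ together with
\[ \dot{\tilde\alpha}_t \;=\; A^t(\log A)\,\alpha_{1-t} \;-\; A^t\,\dot\alpha_{1-t}, \]
a direct substitution yields
\[ \dot{\tilde\alpha}_t\bigl(R_{\tilde\alpha_t}\bigr) \;=\; \log A \;-\; \dot\alpha_s\bigl(R_{\alpha_s}\bigr)\big|_{s=1-t}. \]
Since $M$ is compact and $\alpha_s$ is smooth on the compact parameter interval $[0,1]$, there is a uniform bound $\bigl|\dot\alpha_s(R_{\alpha_s})\bigr|\le C$ on $[0,1]\cross M$. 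Choosing any $A$ with $\log A > C$ therefore makes $\dot{\tilde\alpha}_t(R_{\tilde\alpha_t}) > 0$ pointwise, hence $\partial_t f_t > 0$, so $d\lambda_\phi$ is symplectic and $([0,1]\cross M, d\lambda_\phi)$ is the desired interpolating cobordism.

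The main subtle point is sign tracking through the Gray construction: one must verify in the paper's convention $\phi_t^*(f_t\alpha_1)=\tilde\alpha_t$ that positivity (rather than negativity) of $\dot{\tilde\alpha}_t(R_{\tilde\alpha_t})$ is what corresponds to $f_t$ being increasing in $t$. Once this is pinned down, the exponential scaling $A^t$ simply contributes a uniform constant $\log A$ that dominates the bounded error term coming from the original homotopy, and the compactness of $M$ closes the argument.
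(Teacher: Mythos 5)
Your proof is correct and uses essentially the same mechanism as the paper: an exponential factor in $t$ whose rate $\log A$ is chosen to dominate a uniform bound obtained from compactness of $[0,1]\cross M$. The only cosmetic difference is that you build the factor $A^t$ into the family of forms before invoking Gray stability and bound $\dot\alpha_s(R_{\alpha_s})$, whereas the paper multiplies the Gray function $f_t$ by $e^{A'(t-1)}$ afterwards and bounds $f_t$ and $\partial_t f_t$ directly; the identification $\partial_t\log f_t = \dot{\tilde\alpha}_t(R_{\tilde\alpha_t})$ that you flag does hold in the paper's convention, so your sign-tracking works out.
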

\begin{proof}
Following the above discussion, we have 
\[ \phi_t^*(f_t\alpha_0)=\alpha_t \]
for some positive family $f_t$ of smooth functions on $M$ with $f_0=1$.  If $\frac{\del}{\del t}f_t >0$ then we are done with $A=1$.  Otherwise, our aim is to find some positive function $\lambda(t)$ so that
\begin{itemize}
\item $\frac{\del}{\del t}(\lambda(t)f_t) > 0$ everywhere,
\item $\lambda(1)=1$
\end{itemize}
We can then write 
\[ \phi_t^*(g_t\alpha_0)=\lambda(t)\alpha_t \]
where $g_t=\lambda(t)f_t$ and by the above discussion obtain an exact symplectic cobordism between $\alpha_1$ and $\lambda(0)\alpha_0$.  

To define $\lambda$, suppose that $f_t$ is bounded below by $c>0$ and $\frac{\del f_t}{\del t}$ is bounded below by $-C\le 0$.  Let $A'>C/c$ and define
\[  \lambda(t) = e^{A'(t-1)}. \]
Then
\[\begin{split}
\frac{\del}{\del t}(\lambda(t)f_t) &= \lambda(t)\frac{\del f_t}{\del t} + \lambda'(t)f_t \\
&= e^{A'(t-1)}( \frac{\del f_t}{\del t} + A' f_t ) \\
&> e^{A'(t-1)}( -C + A' c ) \\
&> 0
\end{split}\]
and we are done.
\end{proof}

The next step towards a direct limit is to construct maps where the action constraint increases.  

\begin{lemma}[{\nogapcite[Lemma 3.1.7]{CGH_ECH_OBD}}]\label{interpolating_cobordism_maps}
Suppose that $\alpha_+ = f\phi^*(\alpha_-)$ where $f\ge 1$ is a smooth function on $M$ and $\phi$ is isotopic to the identity map through $\phi_t$.  If $\alpha_+$ and $\alpha_-$ are $L$ and $L'$ non-degenerate respectively, and if $L'>L/f$, then there is a map
\[ ECH^{L}(M, \alpha_+) \to ECH^{L'}(M, \alpha_-) \]
which depends only on the relative homotopy class of $\phi_t$.  Furthermore, if $f\equiv1$ and $\phi_t\equiv\id$ then the map is induced by inclusion.

If we also have $0<L'<L''$, and a third contact form $\alpha'_+$ related to $\alpha_+$ in the same way, then the following triangle commutes:
\[\begin{tikzcd}
ECH^{L''}(M, \alpha'_+)\arrow[rr]\arrow[dr] && ECH^L(M, \alpha_-) \\
& ECH^{L'}(M, \alpha_+)\arrow[ur] &
\end{tikzcd}\]
\end{lemma}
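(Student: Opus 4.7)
The plan is to realise the desired map as the action-filtered cobordism map of a carefully chosen exact symplectic cobordism from $(M, \alpha_+)$ to $(M, \alpha_-)$, and then to read off both homotopy invariance and the triangle identity from the naturality properties of the Hutchings--Taubes maps recalled in \cref{cobordism_map_bounded}.

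First I would construct the cobordism explicitly. Interpolating the boundary data $(h, \psi) = (1, \id)$ at $s=0$ and $(h, \psi) = (f, \phi)$ at $s=1$ by a smooth one-parameter family $(h_s, \phi_s)$ with $\phi_s$ an isotopy of $M$, set $\lambda := h_s \cdot \phi_s^*(\alpha_-)$ on $[0,1] \times M$. The hypothesis $f \geq 1$ lets one arrange $\partial_s h_s > 0$ everywhere, if necessary after a preliminary rescaling of $\alpha_-$ by a positive constant $A$ exactly in the manner of the proof of \cref{interpolating_cobordism_between_homotopic_forms}; this positivity is what guarantees that $d\lambda$ is symplectic, and by construction $\restr{\lambda}{\set{1} \times M} = \alpha_+$ while $\restr{\lambda}{\set{0} \times M} = A\alpha_-$, absorbing the factor $A$ into the canonical isomorphism $ECH^{\bullet}(M, A\alpha_-) \iso ECH^{\bullet/A}(M, \alpha_-)$.

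Next, the filtered cobordism map of \cref{cobordism_map_bounded} supplies a homomorphism $ECH^L(M, \alpha_+) \to ECH^{L'}(M, \alpha_-)$. The admissible value of $L'$ comes from the standard $\lambda$-energy estimate: any $J$-holomorphic curve $u$ in the cobordism with positive end $\Gamma_+$ and negative end $\Gamma_-$ satisfies
\[ \mathcal{A}_\lambda(\Gamma_+) - \mathcal{A}_\lambda(\Gamma_-) = \int u^*\, d\lambda \geq 0. \]
Unpacking this through the boundary identifications together with the scaling relation $\alpha_+ = f\phi^*(\alpha_-)$ converts an $\alpha_+$-action bound of $L$ on top into an $\alpha_-$-action bound of $L/f$ on the bottom, so that any $L' > L/f$ which is not itself an action value of $\alpha_-$ is a valid target filtration level.

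The remaining assertions then follow from general properties of the Hutchings--Taubes maps. Independence of the induced map on the relative homotopy class of $\phi_t$ is a consequence of invariance of the cobordism map under deformations through exact symplectic cobordisms, since two relatively homotopic isotopies $\phi_t$ produce exact-symplectically homotopic $\lambda$. In the trivial case $f \equiv 1$ and $\phi_t \equiv \id$, the cobordism reduces to a symplectization slice (with, say, $\lambda = e^{\epsilon s}\alpha_-$ after a small rescaling), whose action-filtered map is the identity on chain generators and hence induces the inclusion $ECC^L \hookrightarrow ECC^{L'}$ on homology. Finally, the triangle commutativity is inherited from the composition law for filtered cobordism maps, applied to the vertical juxtaposition of the interpolating cobordisms associated to the pairs $(\alpha'_+, \alpha_+)$ and $(\alpha_+, \alpha_-)$.

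I expect the main technical hurdle to be the action analysis: one must control the passage from $\lambda$-action in the interior of the cobordism to the $\alpha_\pm$-actions at the two ends tightly enough to extract precisely the bound $L' > L/f$, rather than a weaker bound involving $\sup f$ or extra multiplicative constants arising from the choice of interpolation $h_s$. The remaining ingredients are packaged directly from the Seiberg--Witten construction of the filtered cobordism maps.
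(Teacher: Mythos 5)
First, a point of comparison: the thesis does not actually prove this lemma --- it is quoted directly from Colin--Ghiggini--Honda --- so the only thing to measure your sketch against is the construction set up in the surrounding subsection. Your architecture (build the interpolating cobordism $\lambda=h_s\,\phi_s^*(\alpha_-)$ with $\del_s h_s>0$, feed it to the Hutchings--Taubes filtered cobordism maps of \cref{cobordism_map_bounded}, get independence of the relative homotopy class from deformation invariance, and the triangle from the composition law) is exactly that framework, and the trivial case $f\equiv1$, $\phi_t\equiv\id$ is handled correctly via the product-region argument.

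The genuine gap is in the action estimate, which is precisely the point you flag as ``the main technical hurdle'' without resolving. The Stokes/energy inequality for a curve in the completed cobordism with top $\alpha_+$ and bottom $\alpha_-$ gives only $\mathcal{A}_{\alpha_-}(\Gamma_-)\le\mathcal{A}_{\alpha_+}(\Gamma_+)<L$; it does not ``convert'' the bound $L$ into $L/f$, because $\Gamma_+$ is a Reeb orbit set of $\alpha_+$ and there is no pointwise comparison between $\mathcal{A}_{\alpha_+}(\Gamma_+)$ and the $\alpha_-$-action of anything. As written you therefore only obtain a map into $ECH^{L}(M,\alpha_-)$, which need not even be defined when $L'<L$, since the hypothesis only makes $\alpha_-$ $L'$-non-degenerate. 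The fix is to let the rescaling constant do the work at the \emph{bottom} of the cobordism rather than as a preliminary normalization: choose a constant $c$ with $L/L'\le c<\min f$ (possible exactly because $L'>L/\min f$) and interpolate $h_s$ monotonically from $h_0\equiv c$ up to $h_1=f$. The Hutchings--Taubes map then lands in $ECH^{L}(M,c\alpha_-)$, and the canonical rescaling isomorphism $ECH^{L}(M,c\alpha_-)\iso ECH^{L/c}(M,\alpha_-)$ (c.f.~\cref{rescale_alpha_J_iso}) followed by the inclusion $ECH^{L/c}(M,\alpha_-)\to ECH^{L'}(M,\alpha_-)$ gives the stated target. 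In your write-up the constant $A$ appears only as a device for forcing $\del_s h_s>0$ when $f=1$ somewhere; it is in fact the sole source of the improved bound $L'>L/f$, and without making that explicit you have proved only the weaker statement requiring $L'\ge L$.
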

The composition property of the above lemma allows us to form a directed system and take a direct limit in the next few paragraphs.

Now suppose we are working with a sequence of homotopic contact forms, $\set{\alpha_i}_{i=1}^\infty$.
\begin{defn}
Let $\alpha$ be a contact form on $M$.  We say that the sequence $\set{\alpha_i}$ is \emph{commensurate} to $\alpha$ if there is a constant $0<c<1$ such that for each $i\in\N$ there is diffeomorphism $\phi_i$ of $M$ which is isotopic to the identity and a function $f_i: M\to \R$ such that $\phi_i^*\alpha_i = f_i\alpha$ and $c<|f_i|_{C^0}<\frac{1}{c}$.
\end{defn}

\begin{thm}[{\nogapcite[Corollary 3.2.3]{CGH_ECH_OBD}}]\label{commensurate_contact_forms_direct_limit}
Let $\set{\alpha_i}$ be commensurate to $\alpha$ with constant $c$ and $L_i$ tending to $\infty$ such that each $\alpha_i$ is $L_i$-non-degenerate and $L_{i+1}>\frac{1}{c^3}L_i$ for all $i$.  Then the interpolating cobordism maps from \cref{interpolating_cobordism_maps}, 
\[ \Phi_{i,i'} : ECH^{L_i}(M, \alpha_i) \to ECH^{L_{i'}}(M, \alpha_{i'}), \]
form a directed system and
\[ ECH(M,\alpha) = \lim_{i\to\infty}ECH^{L_i}(M, \alpha_i). \]
\end{thm}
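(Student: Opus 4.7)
The plan is to prove the theorem in three stages: construct the cobordism maps $\Phi_{i,i'}$ of \cref{interpolating_cobordism_maps}; verify they assemble into a directed system; and identify the resulting colimit with $ECH(M,\alpha)$. The hypothesis $L_{i+1} > L_i/c^3$ provides just enough slack relative to the factors of $1/c^2$ arising from commensurability for each of these stages to go through.

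For the first stage, I would combine the two commensurability relations $\phi_i^*\alpha_i = f_i\alpha$ and $\phi_{i'}^*\alpha_{i'} = f_{i'}\alpha$ to obtain the identity
\[ \alpha_i = F\,\psi^*\alpha_{i'}, \quad F := (f_i/f_{i'})\circ\phi_i^{-1}, \quad \psi := \phi_{i'}\circ\phi_i^{-1}, \]
in which $c^2 < F < 1/c^2$ and $\psi$ is isotopic to the identity as a composition of two such isotopies. Since $F$ is not necessarily bounded below by $1$, I rewrite this as $\alpha_i = (F/c^2)\,\psi^*(c^2\alpha_{i'})$, for which $F/c^2 \ge 1$, and apply \cref{interpolating_cobordism_maps} with $\alpha_+ = \alpha_i$ and $\alpha_- = c^2\alpha_{i'}$. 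After identifying $ECH^{L'}(M, c^2\alpha_{i'})$ with $ECH^{L'/c^2}(M, \alpha_{i'})$ by rescaling, and setting $L' = c^2 L_{i'}$, the hypothesis $L_{i'} > L_i/c^3$ is strong enough to guarantee that the action constraint in \cref{interpolating_cobordism_maps} is satisfied, yielding $\Phi_{i,i'}$. The directed system property $\Phi_{i',i''}\circ\Phi_{i,i'} = \Phi_{i,i''}$ then follows from the composition triangle of \cref{interpolating_cobordism_maps} together with the observation that the composite isotopy relating $\alpha_i$ to $\alpha_{i''}$ via $\alpha_{i'}$ is homotopic, relative to its endpoints, to the direct one produced from commensurability.

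For the identification of the colimit with $ECH(M,\alpha)$, I would introduce a cofinal subsystem of $L$-non-degenerate approximations $\{\alpha'_L\}$ of $\alpha$ obtained via the Morse-Bott-style perturbation of \cref{perturbation_fn_g}, so that $ECH(M,\alpha) = \lim_{L\to\infty} ECH^L(M, \alpha'_L)$ holds by definition. Commensurability together with further applications of \cref{interpolating_cobordism_maps} produces maps in both directions between $ECH^{L_i}(M,\alpha_i)$ and $ECH^{L'_i}(M,\alpha'_{L'_i})$ for suitably chosen $L'_i$, which are compatible with both directed systems. By the composition property, the two composites of these maps agree with the natural inclusion maps within the directed systems (via the clause of \cref{interpolating_cobordism_maps} stating that the map is induced by inclusion when $f \equiv 1$ and $\phi_t \equiv \id$), so the induced maps on colimits are mutual inverses. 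The main technical obstacle throughout is the careful bookkeeping of action constraints: the factor $1/c^3$ in the hypothesis is precisely what is needed to accommodate both the direct comparison between $\alpha_i$ and $\alpha_{i'}$ (which costs a factor $1/c^2$) and the additional round trip through $\alpha$ required for the colimit identification, and verifying that no orbit set slips outside the action window at any stage is the most delicate part of the argument.
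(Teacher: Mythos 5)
The paper does not actually prove this statement: it is quoted from Colin, Ghiggini and Honda (their Corollary 3.2.3), with only the one-line remark afterwards that the hypothesis $L_{i+1}>\frac{1}{c^3}L_i$ exists so that the interpolating cobordisms of \cref{interpolating_cobordism_maps} are available between consecutive terms. Your outline is the standard argument and is consistent with that remark --- the rescaling trick to force $f\ge 1$, the check that $\frac{1}{c^3}>\frac{1}{c^2}$ leaves enough slack in the action windows, and the interleaving of the system $\set{ECH^{L_i}(M,\alpha_i)}$ with the action-filtered system for $\alpha$ itself are all sound --- so I see no gap.
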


The requirement that $L_{i+1}>\frac{1}{c^3}L_i$ has the purpose of ensuring that we can find interpolating cobordisms between consecutive ECH groups---recall the assumption on $L$ and $L'$ in \cref{interpolating_cobordism_maps}.

Note that it is not necessarily the case that $\alpha_i$ tends to $\alpha$, so this result still holds if $\lim_{i\to\infty}\alpha_i$ is non-degenerate.  In particular, note that if $\set{\alpha_i}$ is commensurate to $\alpha$ then it is also commensurate to each $\alpha_i$ in turn.  In light of this we may often simply write that a sequence of contact forms $\set{\alpha_i}$ \emph{is commensurate}.

\subsection{Direct limits in the Morse-Bott setting}

Suppose that $\alpha$ is a nice Morse-Bott contact form on $M$ and $L_i$ is an increasing sequence of action constraints which tend to $\infty$.  Colin, Ghiggini and Honda construct a sequence of perturbed 1-forms $\alpha_i$, each of which is $L_i$-non-degenerate and which tend to $\alpha$.  In this section we will outline the method behind this construction.

Recall that in \cref{perturbation_fn_g} we picked, for each action constraint $L>0$, a function $g$ and $\epsilon>0$ to obtain the $L$-non-degenerate form $\alpha_\epsilon= (1+\epsilon g)\alpha$.  Furthermore each Morse-Bott torus of action less than $L$ was perturbed into a pair of non-degenerate orbits, such that all appropriate covers of the elliptic orbit had Conley-Zehnder index $\pm 1$.

We now choose $g_i$ and $\epsilon_i$ for each $L_i$ according to \cref{perturbation_fn_g} to obtain the contact forms $\alpha_i := (1+\epsilon_i g_i)\alpha$.  These are chosen in such a way that in addition the following hold:
\begin{enumerate}
\item Each $\epsilon_i$ is also small enough so that \cref{non-degen_close_to_MB} holds for each $L_i$, 
\item for $i<i'$ there are interpolating cobordism maps
\[  \Phi_{i,i'} : ECH^{L_i}(M, \alpha_i) \to ECH^{L_{i'}}(M, \alpha_{i'}), \]
\item there exists a sequence of positive constants $c_k$ such that $||g_i||_{C^k} \le c_k$ for all $i, k$, and
\item $\alpha_i$ has no Reeb orbits with $\alpha_i$-action in a small neighbourhood of $\set{L_i, L_{i-1}}$.  More precisely, if we define $a_i = (1+\epsilon_i c_0)^2$, then the neighbourhood is given by
\[ I_i := [a_i^{-2}L_i, a_i^2 L_i] \union [a_{i-1}^{-2}L_{i-1}, a_{i-1}^2 L_{i-1}]. \]
\end{enumerate}

The final point has the effect of controlling the introduction of new orbits when we move from $\alpha_i$ to $\alpha_{i+1}$ and is used to ensure that the interpolating cobordisms behave like inclusions---indeed, consider the following diagram:
\begin{equation}\begin{tikzcd}\label{interpolating_cobordisms_commute_with_MB_inclusion}
ECH^{L_i}(M,\alpha_i)\arrow[rr,"\iso"]\arrow[d,"\Phi_{i,i+1}"'] &&  ECH^{L_i}_\MB(M,\alpha) \arrow[d,"i"] \\
ECH^{L_{i+1}}(M,\alpha_{i+1})\arrow[rr,"\iso"] &&  ECH^{L_{i+1}}_\MB(M,\alpha)
\end{tikzcd}\end{equation}

It is easy to assume at first sight that this diagram commutes and that each interpolating cobordism map on the left is simply the map induced by inclusion at the chain level.  However recall that the maps $\Phi_{i,i+1}$ are defined via exact symplectic cobordisms and Seiberg-Witten theory so their behaviour is not as clear as we would like.  

Fortunately it is possible to show that \refDiagram{interpolating_cobordisms_commute_with_MB_inclusion} does indeed commute.  By \cref{interpolating_cobordism_maps} the map $\Phi_{i,i+1}$ is equal to the composition of interpolating cobordism maps
\[  ECH^{L_i}(M,\alpha_i) \to ECH^{a_i L_i}(M, \alpha_{i+1}) \to ECH^{L_{i+1}}(M, \alpha_{i+1}), \]
where we  are assuming that $a_i L_i < L_{i+1}$ by taking $\epsilon_i$ small enough.
The fact that $\alpha_i$ has no orbits with action in the region $I_i$ is used~\cite[Lemma 4.5.3]{CGH_ECH_OBD} to show that the first map is an isomorphism and this in turn is then used~\cite[Lemma 4.5.5]{CGH_ECH_OBD} to show that it is in fact equal to the ``obvious'' map induced by inclusion.  It then follows that, since the second map is induced by inclusion by \cref{interpolating_cobordism_maps}, \refDiagram{interpolating_cobordisms_commute_with_MB_inclusion} does indeed commute as expected.

Since the contact forms $\set{\alpha_i}$ are commensurate, we can apply \cref{commensurate_contact_forms_direct_limit} and take direct limits to obtain the following result.

\begin{thm}[{\nogapcite[Theorem 4.5.9]{CGH_ECH_OBD}}]
Let $\alpha$ be a nice  Morse-Bott contact form and choose $\set{\alpha_i}$ as above. Then we can take the direct limit of the isomorphism 
\[ ECH^{L_i}(M,\alpha_i) \xrightarrow{\iso} ECH^{L_i}_\MB(M,\alpha) \]
from \refListInThm{non-degen_close_to_MB}{non-degen_close_to_MB_b} to obtain an isomorphism
\[ ECH_\MB(M,\alpha) \iso ECH(M). \]
\end{thm}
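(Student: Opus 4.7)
The plan is to take the direct limit of the commutative square \refDiagram{interpolating_cobordisms_commute_with_MB_inclusion} across all $i$, and then read off an isomorphism of the two resulting limits. By the previous subsection the left vertical arrows $\Phi_{i,i+1}$ assemble into a directed system whose direct limit is identified with $ECH(M,\alpha)$ via \cref{commensurate_contact_forms_direct_limit}; here one uses that $\set{\alpha_i}$ was constructed to be commensurate to $\alpha$, that each $\alpha_i$ is $L_i$-non-degenerate, and that the ratio $L_{i+1}/L_i$ grows fast enough to guarantee the existence of the interpolating cobordism maps from \cref{interpolating_cobordism_maps}.

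For the right column I would observe that, by the analysis following \refDiagram{interpolating_cobordisms_commute_with_MB_inclusion}, the vertical map $i:ECH^{L_i}_\MB(M,\alpha)\to ECH^{L_{i+1}}_\MB(M,\alpha)$ is induced by the canonical chain-level inclusion \cref{MB_inclusion_map} of action-bounded Morse-Bott complexes. Hence the directed system on the right is the one defining $ECC_\MB(M,\alpha,J)$ as a union of its action-bounded subcomplexes, so its direct limit is $ECH_\MB(M,\alpha)$ by definition.

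Next I would use the commutativity of \refDiagram{interpolating_cobordisms_commute_with_MB_inclusion}, established in the discussion preceding the theorem via \cite[Lemmas~4.5.3 and 4.5.5]{CGH_ECH_OBD}, to conclude that the horizontal isomorphisms from \refListInThm{non-degen_close_to_MB}{non-degen_close_to_MB_b} constitute a morphism of directed systems. Since taking direct limits is exact (in particular, preserves isomorphisms applied level-wise between compatible directed systems), the induced map
\[ \lim_{i\to\infty} ECH^{L_i}(M,\alpha_i) \longrightarrow \lim_{i\to\infty} ECH^{L_i}_\MB(M,\alpha) \]
is an isomorphism, which upon substituting the identifications above yields the desired isomorphism $ECH(M)\iso ECH_\MB(M,\alpha)$.

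The only non-routine step is verifying that the hypotheses for applying \cref{commensurate_contact_forms_direct_limit} are in fact satisfied by the sequence $\set{\alpha_i}$; this is where the uniform bound $\|g_i\|_{C^k}\le c_k$ and the careful choice of $\epsilon_i$ in the construction of the perturbations enters, ensuring both commensurability with constant $c$ and the growth condition $L_{i+1}>c^{-3}L_i$. Given that these have already been arranged in the construction of $\set{\alpha_i}$, the proof is essentially formal once the two columns of the square are correctly identified.
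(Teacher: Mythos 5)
Your proposal is correct and follows essentially the same route as the paper: the surrounding discussion establishes exactly the commutativity of \refDiagram{interpolating_cobordisms_commute_with_MB_inclusion} via the cited lemmas, identifies the left column's limit with $ECH(M)$ through \cref{commensurate_contact_forms_direct_limit} and the right column's limit with $ECH_\MB(M,\alpha)$ via the inclusions \cref{MB_inclusion_map}, and then passes the levelwise isomorphisms of \refListInThm{non-degen_close_to_MB}{non-degen_close_to_MB_b} to the direct limit.
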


\section{Topological constraints in the Morse-Bott setting}\label{topological_constrants_in_the_MB_settting}

As mentioned earlier, the purpose of switching to Morse-Bott contact homology is two-fold.  Morse-Bott contact forms are simpler to handle than the perturbed non-degenerate contact forms---in particular it is possible to construct finite energy foliations to understand explicitly where pseudoholomorphic curves arise and how they contribute to the differential.  We will discuss this in more detail later.  Another advantage, which we will discuss in this section, is that Morse-Bott tori give rise to further topological constraints on the existence of holomorphic curves.

The three main topological constraints are positivity of intersection, the Blocking Lemma and the Trapping Lemma.  These are all due to Colin, Ghiggini and Honda and will be detailed in \cref{pos_of_intersection_section,blocking_trapping_lemmas_section}.  Before stating these results we must build up a notion of ``slope'' and will develop an original tool which we call ``slope calculus''.

\subsection{Slope calculus}\label{section_slope_calculus}

Let $\alpha$ be a contact form on $M$ and suppose that $T$ is an oriented torus which is foliated by Reeb trajectories.  If the foliation consists of closed leaves then it is required to be a Morse-Bott family of orbits; if the foliation consists of dense leaves then we say that the torus is \emph{foliated by Reeb trajectories of irrational slope}.

The \emph{slope} of $T$ is an element of
\[ \mathbb{P}^+H_1(T; \R) = (H_1(T;\R)\sminus\set{0})/\!\sim,\]
where $\sim$ is the equivalence relation generated by multiplication by positive real numbers.  If $T$ has closed leaves then the slope $\delta$ is rational and defined simply as $[\gamma]$ where $\gamma$ is one of the orbits in $T$.  If $T$ has dense leaves then $\delta$ is irrational and can be defined by considering arbitrarily long Reeb trajectories in $T$.

\begin{lemma}[{\nogapcite[Lemma 5.1.1]{CGH_ECH_OBD}}]\label{finite_singularities}
The map $u_M:=\pi_M\circ u$ is transverse to the Reeb vector field $R$ away from a finite number of points in the domain of $u$.  In particular, it is an immersion away from a finite number of points.
\end{lemma}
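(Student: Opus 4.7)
The plan is to recast transversality as the non-vanishing of a certain section of a complex line bundle on $F$, and then to invoke standard analytic tools. Under the decomposition $T(\R\cross M) = \R\partial_s \oplus \R R \oplus \xi$, let $\pi_\xi$ denote projection to the $\xi$-factor. A direct dimension count shows that $u_M$ is transverse to $R$ at $x\in F$ if and only if $\pi_\xi\circ du_x : T_xF \to \xi_{u_M(x)}$ is an isomorphism; since both spaces are two-real-dimensional, injectivity and surjectivity are equivalent. Moreover, if $\pi_\xi\circ du$ is an isomorphism at $x$ then $du_M$ has rank $2$ at $x$, so transversality to $R$ forces $u_M$ to be an immersion at that point, giving the ``in particular'' clause. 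Thus it suffices to show that the zero set of $\pi_\xi\circ du$, viewed as a section of $\mathrm{Hom}(TF, u_M^*\xi)$, is finite.

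I would first check that $\pi_\xi\circ du$ is complex linear with respect to $j$ on $TF$ and $J|_\xi$ on $\xi$. This is a short computation: for $v\in T_xF$ we have $du(jv) = J(du(v))$ by the Cauchy-Riemann equation, and since $J(\partial_s) = R$, $J(R) = -\partial_s$ and $J(\xi) = \xi$, projecting to $\xi$ gives $\pi_\xi(du(jv)) = J\pi_\xi(du(v))$. In local conformal coordinates on $F$, the Cauchy-Riemann equation for $u$ then translates into an equation of the form $\bar\partial w + Aw = 0$ with bounded zeroth-order term $A$ for the local representative $w$ of $\pi_\xi\circ du$. By Carleman's similarity principle, the zero set of $w$ is discrete unless $w$ vanishes identically on a connected component of $F$.

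If $\pi_\xi\circ du$ vanishes identically on some component $F_i$, then $du(TF_i)\subset \langle\partial_s, R\rangle$. This distribution is integrable with leaves precisely the trivial cylinders $\R\cross\gamma$ over simple Reeb orbits, so $u|_{F_i}$ factors through some $\R\cross\gamma$ and is a (possibly branched) cover of a trivial cylinder. Such a component is handled separately throughout the paper, and for any other component we obtain a discrete zero set for $\pi_\xi\circ du$.

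Finally, to promote discreteness to finiteness we must rule out accumulation of zeros at the punctures of $F\subset\hat F$. Near each puncture, $u$ is asymptotic to a trivial cylinder $\R\cross\gamma_0$, and the standard Hofer-Wysocki-Zehnder asymptotic expansion gives exponential decay of the $\xi$-component of $u$ together with a description of its leading behaviour via an eigenfunction of the asymptotic operator at $\gamma_0$. This rules out zeros of $\pi_\xi\circ du$ accumulating at the puncture. Combining this with the compactness of $\hat F$ away from the punctures and the discreteness already established yields a finite zero set. The principal technical obstacle is precisely this asymptotic analysis near the punctures; everything else is either linear-algebraic or a direct application of the similarity principle.
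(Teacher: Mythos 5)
The paper does not give a proof here: it cites the lemma directly from Colin--Ghiggini--Honda, so there is no in-paper argument to compare against. Your proposal is the standard Hofer--Wysocki--Zehnder argument, and it is correct in its essentials: the equivalence between transversality to $R$ and non-vanishing of $\pi_\xi\circ du$ is a dimension count; $\pi_\xi\circ du$ is complex-linear because $J$ respects the splitting $\R\partial_s\oplus\R R\oplus\xi$; in local conformal coordinates it solves a perturbed $\bar\partial$-equation, so the similarity principle forces its zero set to be discrete on each component where it is not identically zero; and the asymptotic description of $u$ near punctures in terms of a nowhere-vanishing eigenfunction of the asymptotic operator rules out accumulation at the punctures, yielding finiteness by compactness of $\hat F$.

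One point you rightly flag but that deserves to be made explicit: the lemma as stated in this thesis is literally false for components that are (covers of) trivial cylinders, since on such a component $du_M$ has image contained in $\R R$ everywhere and $u_M$ is nowhere transverse to $R$. In CGH's formulation this case is excluded by hypothesis, and your proof correctly isolates it as the case $\pi_\xi\circ du\equiv 0$, giving the clean dichotomy (cover of a trivial cylinder) versus (transverse to $R$ away from a finite set). If you wanted to strengthen the write-up you could state the dichotomy as the actual conclusion rather than relegating the trivial-cylinder case to ``handled separately throughout the paper.''
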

In this section we will often, by abuse of notation, refer to the image of the map $u_M$ also by $u_M$.

Now let $T$ be any oriented torus in $M$.  Then by \cref{finite_singularities} any holomorphic curve $u$ which has no ends intersecting $T$ has a finite number of singularities on $T$, and hence gives rise to a homology class
\[ [u_T]:=[u_M\cap T]\in H_1(T;\Z),\]
which we call the \emph{homology class of $u$ at $T$}, orientated as follows:  identify a neighbourhood of $T$ with $T\cross[-\epsilon, \epsilon]$ such that the coordinate $y\in[-\epsilon,\epsilon]$ is oriented as the normal direction to $T\cross \set{0}$ with respect to the orientation on $T$.  This convention means that $T\cross\set{0}$ is oriented as the boundary of $T\cross(-\epsilon, 0]$. We then orient $u_M\cap T=u_M\cap (T\cross\set{0})$ as the boundary of $u_M\cap (T\cross(-\epsilon,0])$.  The homology class $[u_T]$ also defines a slope in $\mathbb{P}^+H_1(T; \R)$.

\begin{prop}\label{fundamental_principle_of_slope_calculus}
Let $T$ be an oriented (not necessarily foliated) torus in $M$ and suppose that the positive end of $u$ has multiplicity $m$ at some simple Reeb orbit $\gamma\subset T$, and that it has no other ends in the region $T\cross[-\epsilon, \epsilon]$.  Then 
\[ [u_{\epsilon}] = [u_{-\epsilon}] - m[\gamma] \in H_1(T\cross[-\epsilon, \epsilon]; \Z), \]
where $\gamma$ is oriented by the Reeb vector field $R$, $u_y := u_{T\cross\set{y}}$ and each $T\cross\set{y}$ is oriented as $T$.
If instead it is the negative end of $u$ with multiplicity $m$ at $\gamma$ then
\[ [u_{\epsilon}] = [u_{-\epsilon}] + m[\gamma]. \]
If $u$ has no ends in the region $T\cross[-\epsilon, \epsilon]$ then
\[ [u_{\epsilon}] = [u_{-\epsilon}] .\]
\end{prop}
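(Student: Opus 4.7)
The plan is to apply a Stokes-style argument to a suitably truncated version of $u_M$ inside the tube $T\cross[-\epsilon,\epsilon]$. First I would invoke \cref{finite_singularities} and, if necessary, replace $\epsilon$ by a nearby value so that $u_M$ is transverse to both $T\cross\set{\epsilon}$ and $T\cross\set{-\epsilon}$ on its immersion locus; this makes each $u_{\pm\epsilon}$ a well-defined 1-cycle. Then, if $u$ has ends in the tube, I would work with the compact subsurface $F_\delta\subset F$ obtained by removing a small coordinate disk around each puncture point contributing to the end at $\gamma$. By the standard asymptotic description of pseudoholomorphic curves near a puncture, for $\delta$ sufficiently small the image of each such disk under $u_M$ is contained in an arbitrarily small tubular neighbourhood of $\gamma$, hence in the tube $T\cross[-\epsilon,\epsilon]$.

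The main step is then to regard $V := u_M|_{F_\delta}\cap (T\cross[-\epsilon,\epsilon])$ as a singular 2-chain in $T\cross[-\epsilon,\epsilon]$ and compute $\del V$. With the orientation conventions adopted in the statement, the top face contributes $+u_\epsilon$ and the bottom face contributes $-u_{-\epsilon}$, since the outward normal at $T\cross\set{-\epsilon}$ points in the $-y$ direction, opposite to the convention used to define $u_{-\epsilon}$. The remaining piece of $\del V$ comes from the image under $u_M$ of $\del F_\delta$, i.e.\ the small loops around the punctures of the end at $\gamma$; by the previous paragraph, this piece lies inside the tube, so $\del V$ is a genuine 1-chain in $T\cross[-\epsilon,\epsilon]$. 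Since $V$ is itself a 2-chain in this tube, $\del V$ is null-homologous there, and under the deformation retract $T\cross[-\epsilon,\epsilon]\to T$ we obtain a relation in $H_1(T;\Z)$.

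The delicate point---and the main obstacle---will be identifying the puncture contribution to $\del V$ with $\pm m[\gamma]$ with the correct sign. Near a positive puncture the curve $u$ is asymptotic to $\R\cross\gamma^m$ with the $\R$-coordinate tending to $+\infty$, so each small loop around such a puncture, oriented as part of $\del F_\delta$, projects under $u_M$ to a loop winding around $\gamma$ in the Reeb direction with multiplicity equal to the multiplicity of that puncture; summing over the punctures of the end, whose multiplicities partition $m$, yields a total contribution of $+m[\gamma]$. This sign convention is consistent with the identity $[\del Z]=[\Gamma_+]-[\Gamma_-]$ built into the definition of $H_2(M,\Gamma_+,\Gamma_-)$, and a negative puncture reverses it. Combining this with $\del V$ being null-homologous then yields $[u_\epsilon]-[u_{-\epsilon}]\pm m[\gamma]=0$ in $H_1(T;\Z)$, recovering all three cases of the proposition (the third being the degenerate case in which no puncture contribution appears).
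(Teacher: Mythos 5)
Your proposal is correct and is essentially the paper's own argument: both compute the boundary of the 2-chain cut out of $u_M$ by the tube $T\cross[-\epsilon,\epsilon]$, identify the top and bottom faces with $+u_\epsilon$ and $-u_{-\epsilon}$ via the stated orientation convention, and account for the ends at $\gamma$ with sign determined by whether the punctures are positive or negative. The only cosmetic difference is that you truncate the domain near the punctures and use the asymptotics to identify the resulting boundary loops with $m[\gamma]$, whereas the paper works directly with the compactification $\overline{u_M}$ so that $m\gamma$ appears as an honest boundary component.
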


\begin{proof}
First suppose that the positive end of $u$ has multiplicity $m$ at $\gamma\subset T$ and consider the surface
\[ S := \overline{u_M}\cap (T\cross[-\epsilon, \epsilon]),\]
which, considered at first set-wise without orientation, has boundary
\begin{align*} 
\del S &= \big((\del\overline{u_M})\cap (T\cross[-\epsilon, \epsilon])\big) \union \big(u_M\cap(T\cross\set{-\epsilon, \epsilon})\big) \\
       &= \gamma \union u_{-\epsilon} \union u_{\epsilon}. 
\end{align*}
We claim that when $S$ is considered as a 2-chain, its boundary is
\begin{equation}\label{boundary_of_S_formula}
dS = m\gamma - u_{-\epsilon} + u_{\epsilon}.
\end{equation}
To see this, first note that $S$ inherits a canonical orientation from $u$, as does every $\overline{u_M}\cap(T\cross[y_1,y_2])$.  Then the orientation on $u_\epsilon$ coming from $S$ agrees with that coming from $\overline{u_M}\cap(T\cross(\epsilon/2,\epsilon])$ which is equal to that in the definition of $[u_\epsilon]=[u_{T\cross\set{\epsilon}}]$ above.

The orientation on $u_\epsilon$ coming from $S$ agrees with that coming from $\overline{u_M}\cap(T\cross[-\epsilon,-\epsilon/2])$.  This is the opposite orientation to that in the definition of $[u_{-\epsilon}]=[u_{T\cross\set{-\epsilon}}]$ above, explaining the minus sign in \cref{boundary_of_S_formula}.

Since every end of $u$ at $\gamma$ is a positive end, and $S$ is oriented by $J(\del_s) = R$, the induced orientation on $\gamma$ agrees with the orientation of $R$.  Noting that the total multiplicity of $u$ at $\gamma$ is $m$ completes our justification of \cref{boundary_of_S_formula}.

On the level of homology, \cref{boundary_of_S_formula} becomes
\[ m[\gamma] - [u_{-\epsilon}] + [u_{\epsilon}] = 0 \in H_1(T\cross[-\epsilon, \epsilon]; \Z), \]
which completes the proof.

The case where $u$ has negative ends at $\gamma$ follows from the fact that the induced orientation from $S$ on negative ends of $u$ is opposite to that of $R$.  When $u$ has no ends in the region $T \cross[-\epsilon, \epsilon]$ the result is clear.
\end{proof}

The above proposition gives rise to a theory of ``slope calculus'':

\begin{prop}[Slope calculus]\label{slope_calculus}
Suppose that some region of $M$ can be identified with $T^2\cross[0,1]$ and $u\in\mathcal{M}_J(\Gamma,\Gamma')$ is some holomorphic curve such that every end $\gamma$ of $u$ in the region $T^2\cross[0,1]$ lies in some slice $T^2\cross\set{y_\gamma}$.  Write $\Gamma=\set{(\gamma_i, m_i)}$ and $\Gamma'=\set{(\gamma_i', m_i')}$.

Then for all $y$ such that $u$ has no ends in $T\cross\set{y}$, the homology class $[u_y] = [u_{T\cross\set{y}}]$ can be computed as follows:
\begin{equation}
[u_y] = [u_0] - \sum_{\set{i|y_{\gamma_i}<y}}m_i[\gamma_i] + \sum_{\set{i|y_{\gamma_i'}<y}}m_i'[\gamma_i'].
\end{equation}
\end{prop}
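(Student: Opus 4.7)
The plan is to derive this as an iterated application of \cref{fundamental_principle_of_slope_calculus}, interpreting the torus $T^2\cross\set{y}\subset T^2\cross[0,1]$ as the torus $T$ in that proposition.

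First I would observe that the claimed formula is additive in the ends crossed as $y$ increases from $0$, so it suffices to track how $[u_y]$ changes as $y$ passes through a single end. Pick any $y^*\in[0,1]$ at which $u$ has no ends, and consider the totally ordered (finite) set of values $y_\gamma\in[0,y^*)$ for ends $\gamma$ of $u$. Choose $\epsilon>0$ small enough that the intervals $[y_\gamma-\epsilon,y_\gamma+\epsilon]$ are pairwise disjoint and contained in $[0,y^*]$, and such that no two ends of $u$ occupy the same slice (if several ends lie in the same slice, slightly shrink $\epsilon$ and apply the argument at each slice separately, summing their contributions---the proof of \cref{fundamental_principle_of_slope_calculus} adapts verbatim to several ends in one slice).

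Next, on each interval $[y_\gamma-\epsilon,y_\gamma+\epsilon]$, identify a neighbourhood of $T^2\cross\set{y_\gamma}$ with $T^2\cross[-\epsilon,\epsilon]$ via $y\mapsto y-y_\gamma$, oriented so that the inward coordinate agrees with increasing $y$. Then \cref{fundamental_principle_of_slope_calculus} applies: if $\gamma$ is a positive end of $u$ (so $\gamma=\gamma_i$ for some $i$, with multiplicity $m_i$) we obtain
\[
[u_{y_\gamma+\epsilon}]=[u_{y_\gamma-\epsilon}]-m_i[\gamma_i],
\]
while if $\gamma$ is a negative end (so $\gamma=\gamma_i'$ with multiplicity $m_i'$) we obtain
\[
[u_{y_\gamma+\epsilon}]=[u_{y_\gamma-\epsilon}]+m_i'[\gamma_i'].
\]
Finally, on each subinterval of $[0,y^*]$ on which $u$ has no ends---namely $[0,y_{\gamma_1}-\epsilon]$, the intervals between consecutive ends, and $[y_{\gamma_\ell}+\epsilon,y^*]$---the third case of \cref{fundamental_principle_of_slope_calculus}, applied slice-by-slice (or equivalently to the whole subinterval after a reparametrisation), gives that $[u_y]$ is constant in $y$.

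Composing these equalities telescopically from $y=0$ up to $y=y^*$ yields exactly the claimed formula, with the sums ranging over precisely those ends whose $y$-coordinate lies below $y^*$. No step is conceptually hard; the only mild subtlety---and the main thing to be careful about---is the orientation bookkeeping at each slice and the case of multiple ends living in the same slice, both of which are handled by the orientation convention fixed just before \cref{fundamental_principle_of_slope_calculus} and by a straightforward extension of its proof.
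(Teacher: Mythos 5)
Your proof is correct and is exactly the argument the paper intends: the paper simply states that the proposition ``follows immediately from \cref{fundamental_principle_of_slope_calculus}'', and your telescoping application of that proposition slice-by-slice (with the orientation bookkeeping and the multi-ends-in-one-slice caveat) is the obvious way to make that immediacy precise. No gaps.
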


The proof follows immediately from \cref{fundamental_principle_of_slope_calculus}.  Despite the name, slope calculus actually computes the homology class of each slice, $[u_y]$.  However we will use it later to compute the \emph{slope} of $u$ at each torus $T\cross\set{y}$, which proves to be a very useful topological constraint when applied to positivity of intersection and the Blocking Lemma.  The computations above also easily generalize to the theory of Morse-Bott buildings, meaning that we can also perform slope calculus in the Morse-Bott setting.

\subsection{Positivity of intersection}\label{pos_of_intersection_section}

\begin{prop}[Positivity of intersections in dimension 4~{\cite[Theorem 7.1]{MW95}}]\label{positivity_of_intersections_4}
Let $u, u'$ be two connected pseudoholomorphic curves with non-equal images and let $S(u,u')=\set{(p,p')\gappy{|}u(p)=u'(p')}$. Then every intersection point $(p,p')\in S(u,u')$ is isolated and the intersection number at $(p,p')$ is greater than or equal to 1, where equality holds if and only if $u$ and $u'$ are transverse immersions near $(p,p')$.
\end{prop}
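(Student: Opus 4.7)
The plan is to reduce the statement to a local analytic model near each intersection point and then read off positivity from that model. Fix $(p,p')\in S(u,u')$ and choose smooth coordinates $(z,w)\in\C^2$ on a neighbourhood of $q:=u(p)=u'(p')$ in the ambient 4-manifold such that $q$ corresponds to the origin and $J$ agrees with the standard complex structure $J_0$ at the origin. In these coordinates the $J$-holomorphic equation becomes a first-order semilinear elliptic system to which the Carleman similarity principle applies.

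Using the similarity principle, each of $u$ and $u'$ admits, after reparametrization of its domain, a local representation of the form $\zeta\mapsto(\zeta^{k},\,\phi(\zeta))$ for some integer $k\ge 1$ and some smooth $\phi$ with $\phi(\zeta)=O(|\zeta|^{k})$, once we choose the $z$-direction appropriately (for each curve separately we may pick different splittings $\C^2=\C_z\oplus\C_w$). Applying a further change of ambient coordinates (a $J_0$-holomorphic one at the origin) one can assume the two local representations share the $(z,w)$-splitting up to a linear change adapted to the respective tangent directions; then $u$ and $u'$ become branched perturbations of holomorphic plane curves. If the two leading-order expansions coincided the curves would agree on an open neighbourhood by unique continuation for pseudoholomorphic maps, contradicting that the images are non-equal. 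Hence $(p,p')$ is isolated in $S(u,u')$.

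With the local normal forms in hand, the intersection number at $(p,p')$ equals the local intersection number of the corresponding two germs of $J_0$-holomorphic curves, by a standard perturbation argument: a small $\mathcal{C}^{1}$-perturbation of $J$ through almost complex structures, together with a compactly supported perturbation of one of the curves, turns the intersection into a transverse $J'$-holomorphic intersection whose signed count equals the algebraic intersection number of the underlying homology classes in a ball around $q$. Because $J'$-holomorphic tangent planes are complex lines with respect to a compatible complex structure, each transverse intersection contributes $+1$. Summing over the perturbed intersection points grouped near $(p,p')$ gives a strictly positive integer, namely the local intersection multiplicity of the two holomorphic germs, which is $\ge 1$.

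Finally, equality to $1$ holds exactly when the local multiplicities $k$ and $k'$ in the two normal forms are both equal to $1$ and the tangent planes $du(p)(T_p F)$ and $du'(p')(T_{p'}F')$ are distinct complex lines in $T_q(\R\times M)$, i.e.\ exactly when $u$ and $u'$ are transverse immersions at $(p,p')$. The main obstacle in this argument is the analytic input, namely obtaining the local normal forms via the Carleman similarity principle and the unique continuation used to rule out coincident leading terms; everything else is a straightforward computation of intersection numbers of holomorphic germs in $\C^2$.
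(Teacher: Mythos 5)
The paper does not prove this proposition at all: it is quoted as an external result, namely Micallef--White \cite[Theorem 7.1]{MW95}, so there is no in-paper argument to compare yours against. Your sketch follows the standard route from the literature---Carleman similarity principle, local normal forms $\zeta\mapsto(\zeta^{k},\phi(\zeta))$, perturbation to transverse intersections each contributing $+1$, and the characterization of the equality case in terms of multiplicities and tangent lines---and at that level of granularity this is the right strategy for reproving the theorem.

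There is, however, a genuine gap in your isolation step. You argue that if the two leading-order expansions coincided, the curves would agree on a neighbourhood by unique continuation, and you deduce isolation from this. That implication is false: two pseudoholomorphic discs can be tangent to arbitrarily high finite order (in particular share leading terms) while having distinct images, and distinctness of \emph{images} is all the hypothesis gives you. The correct argument runs in the opposite direction: assume the intersection points accumulate at $(p,p')$, then apply a \emph{relative} version of the similarity principle (or the Micallef--White normal form for a pair of curves) to a suitable difference of local reparametrizations to conclude that the images coincide on an open set; connectedness and unique continuation then force the global images to be equal, contradicting the hypothesis. As written, your argument only disposes of the case where the leading terms differ. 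A minor further point: the remainder in the normal form should be $\phi(\zeta)=O(|\zeta|^{k+1})$, i.e.\ of strictly higher order than the leading term, otherwise the tangent direction is not determined and the equality case cannot be read off.
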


Now suppose that $T$ is an oriented torus and let $\langle \cdot,\cdot \rangle$ denote the intersection pairing on $H_1(T;\R)$.

\begin{defn}
Suppose that $T$ is an oriented Morse-Bott torus of slope $s\in \mathbb{P}^+H_1(T; \R)$ and choose some $\delta\in H_1(T;\Z)$.  Then we say that $\delta\cdot s > 0$ (resp.~$\delta\cdot s = 0$) if $\langle \delta,\gamma \rangle>0$ (resp.~$\langle \delta,\gamma \rangle=0$) for every representative $\gamma$ of $s$.
\end{defn}

\begin{prop}[Positivity of intersections in dimension 3~{\cite[Lemma 5.2.2]{CGH_ECH_OBD}}]\label{positivity_of_intersections_3}
Suppose that $T\subset M$ is an oriented Morse-Bott torus of slope $s \in \mathbb{P}^+H_1(T; \R)$ and that $u$ is a holomorphic curve such that no ends of $u$ intersect $T$.  Then $[u_T]\cdot s \ge 0$, and furthermore $[u_T]\cdot s=0$ if and only if $u_T=\emptyset$.
\end{prop}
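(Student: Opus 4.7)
The plan is to reduce to positivity of intersections in dimension~4 (\cref{positivity_of_intersections_4}) applied to $u$ together with trivial cylinders over Reeb orbits in $T$. Since $T$ is a Morse-Bott torus, it is foliated by simple closed Reeb orbits, each representing the slope $s\in\mathbb{P}^+H_1(T;\R)$. Picking any such orbit $\gamma\subset T$, the trivial cylinder $C_\gamma:=\R\cross\gamma$ is $J$-holomorphic, and by hypothesis $u$ has no ends at orbits in $T$, so (up to reparametrisation) $u$ is distinct from $C_\gamma$.

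First I would check that $u\cap C_\gamma$ is finite: away from the ends of $u$ the domain is essentially precompact, while near each end $u$ is asymptotic to a trivial cylinder over an orbit disjoint from $T$, so its image stays a uniform distance away from $\gamma$. Applying \cref{positivity_of_intersections_4} to the distinct $J$-holomorphic curves $u$ and $C_\gamma$, every intersection contributes $+1$ to the algebraic intersection number, giving $u\cdot C_\gamma\ge 0$, with equality if and only if their images are disjoint. I would then identify $u\cdot C_\gamma = \langle [u_T],[\gamma]\rangle$ as follows: each intersection point of $u$ and $C_\gamma$ projects to a point in $u_M\cap\gamma$, which sits on $u_T$ because $\gamma\subset T$, and a short local computation using the orientation convention from \cref{section_slope_calculus} (recalling that $u_T$ is oriented as the boundary of $u_M\cap(T\cross(-\epsilon,0])$) together with $J\del_s=R$ shows that the 4-dimensional intersection sign agrees with the 2-dimensional one on $T$. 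This yields $[u_T]\cdot s\ge 0$.

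For the equality statement, if $u_T\ne\emptyset$ then any $p\in u_T$ lies on some simple Reeb orbit $\gamma_p\subset T$ (since $T$ is foliated by such orbits), and applying the argument above with $\gamma=\gamma_p$ produces at least one positive intersection, forcing $\langle[u_T],[\gamma_p]\rangle\ge 1$ and hence $[u_T]\cdot s>0$. The converse direction is immediate. The main technical obstacle is the orientation/sign-matching claim in the previous paragraph: while intuitively both orientations come from the holomorphic structure, formalising this requires trivialising a neighbourhood of $\gamma$ as $\gamma\cross D^2$ in which $\xi$ appears as the $D^2$ factor, and then directly comparing the complex orientation on the transverse 4-dimensional intersection with the boundary orientation on $u_T$. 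Everything else in the proof is a routine combination of the Morse-Bott foliation of $T$ and \cref{positivity_of_intersections_4}.
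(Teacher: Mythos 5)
Your proposal is correct and follows essentially the same route as the paper's proof: reduce to \cref{positivity_of_intersections_4} via the trivial cylinder $\R\cross\gamma$ over an orbit $\gamma\subset T$, then match the 4-dimensional intersection sign with the sign of the corresponding point of $u_T\cap\gamma$ by an explicit orientation computation using $J(\del_s)=R$, $J v_1=v_2$ and the boundary-orientation convention on $u_T$. The paper additionally invokes \cref{finite_singularities} to choose $\gamma$ transverse to $u_M$ before doing the sign comparison, which is the only small ingredient your sketch leaves implicit.
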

A brief proof of this proposition was given by Colin, Ghiggini and Honda; here we will flesh out the details.
\begin{proof}
First assume that $\gamma\subset T$ is a Reeb orbit with rational slope $s\in \mathbb{P}^+H_1(T; \Q)$.  Then $\R\cross\gamma$ is a holomorphic curve, so positivity of intersections in dimension 4 (\cref{positivity_of_intersections_4}) implies that the image of $u$ intersects $\R\cross\gamma$ positively at finitely many points.  Furthermore, by \cref{finite_singularities} we can assume that $\gamma \subset T$ is chosen such that it is transverse to $u_M$.

The proof that the corresponding intersection points in $u_T\cap \gamma$ contribute positively to $\langle [u_T],[\gamma]\rangle$ is essentially an exercise in orientations and basis manipulation.  To start, let  
\[ p \in u_T\cap \gamma \subset T \]
 be an intersection point and pick a basis
\[ (R,v_1, v_2) \]
for $T_pM$ such that
\begin{itemize}
\item $R$ is the value of the Reeb vector field at $p$,
\item $v_i\subset \ker(\alpha)=\xi$ for $i=1,2$,
\item $v_2\subset T_p(T)$, and
\item $v_2=\restr{J}{\xi}(v_1)$.
\end{itemize}
Recall that a neighbourhood of $T$ was identified with $T\cross(-\epsilon,\epsilon)$ in order to define the orientations on $T$ and $u_T:=u_M\cap T$ in \cref{section_slope_calculus}.  By negating both $v_i$ if necessary we can assume that this identification is made such that $v_1$ points in the \emph{positive} $y$ direction.

We claim that the basis above is positively oriented; this follows from the fact that $\alpha\wedge d\alpha$ is a positive volume form on $M$ and
\[ (\alpha\wedge d\alpha)(R,v_1,v_2) = \alpha(R)\cdot d\alpha(v_1,Jv_1) > 0 \]
since $R\subset \ker(d\alpha)$, $v_i\subset\ker(\alpha)$ and $\restr{J}{\xi}$ is compatible with $d\alpha$. 

Since we have transversality between $u_M$ and $R$ at $p$, there exist real numbers $a_1,a_2$ such that 
\[ a_1R+v_1,\text{ }a_2R+v_2 \subset T_p(u_M). \]
This in turn means that there exist real numbers $b_1, b_2$ such that
\[ b_1\del_s + a_1R+v_1,\text{ }b_2\del_s+a_2R+v_2 \subset T_{\hat{p}}(\mathrm{Im}(u))\subset T_{\hat{p}}(\R\cross M), \]
where here $\hat{p} \in \mathrm{Im}(u)\cap \R\cross \gamma$ is the 
corresponding intersection point in the symplectization.

Furthermore, as $J(v_1)=v_2$, we must have that
\[ J(b_1\del_s + a_1R+v_1) = b_2\del_s+a_2R+v_2\]
and hence $b_1=a_2$ and $b_2=-a_1$.  Moreover, since the orientation of $u_M$ is inherited from the action of $J$ on $u$, under the image of the derivative $d_{\hat{p}}\pi_M$ we obtain a positively oriented basis 
\[ (a_1R+v_1, a_2R+v_2) \]
for $T_p(u_M)$.  In particular, note that $a_2R+v_2$ is tangent to $u_T$. 

Now we have positive bases for both $T\cross(-\epsilon,0]$ and $u_M\cap ( T\cross(-\epsilon,0])$.  As a result, since $T$ and $u_T$ are oriented as their respective boundaries (c.f.~\cref{section_slope_calculus}), 
\begin{itemize}
\item $T_p(u_T)$ is oriented by $a_2R+v_2$ and
\item $T_p(T)$ is oriented by $(v_2,R)$, or equivalently by $(a_2R+v_2,R)$.
\end{itemize}
Of course, we also have that $T_p(\gamma)$ is oriented by $R$ and it therefore follows that the intersection point $p$ makes a positive contribution to $\langle [u_T],[\gamma]\rangle$ as required.

To complete the proof of the rational case, first note that if $u_T=\emptyset$ then $[u_T]\cdot s=0$. Conversely, if $u_T\neq\emptyset$ then $\gamma$ can be chosen such that there is at least one (positively contributing) intersection point $p$, and hence $[u_T]\cdot s>0$.

The case where the slope $s$ is irrational is discussed in the proof by Colin, Ghiggini and Honda.
\end{proof}

\subsection{The Blocking and Trapping Lemmas}\label{blocking_trapping_lemmas_section}

The Blocking Lemma below follows immediately from \cref{positivity_of_intersections_3}.

\begin{prop}[The Blocking Lemma~{\cite[Lemma 5.2.3]{CGH_ECH_OBD}}]\label{blocking_lemma}
Let $T\subset M$ be an oriented Morse-Bott torus of slope $s$ and $u$ a holomorphic curve. Then:
\begin{enumerate}
\item If $u$ is homotopic, relative to its ends, to a map whose image is disjoint from $T\cross\R$, then $u$ itself is disjoint from $T\cross\R$.
\item If we identify a neighbourhood of $T$ with $T\cross[-\epsilon, \epsilon]$ such that $u$ has no ends in $T\cross([-\epsilon, \epsilon]\sminus\set{0})$, and if one of $[u_{\pm\epsilon}]$ has slope $\pm s$, then $u$ has an end in $T$.
\end{enumerate}
\end{prop}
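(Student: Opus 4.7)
Both parts reduce essentially formally to positivity of intersections (\cref{positivity_of_intersections_3}) combined with the slope calculus of \cref{slope_calculus}, and neither requires any new machinery.

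For part (1), the first step is to observe that any homotopy of $u$ rel its ends preserves the homology class $[u_T]\in H_1(T;\Z)$: the pre-image of $T\times\R$ under such a homotopy supplies a 2-chain in $T$ whose boundary realises $[u_T]-[u'_T]$, and nothing escapes through the ends since these are fixed by the homotopy and, by hypothesis of the conclusion, disjoint from $T$. Taking $u'$ to have image disjoint from $T\times\R$ gives $[u'_T]=0$, hence $[u_T]=0$, and in particular $[u_T]\cdot s=0$. Applying \cref{positivity_of_intersections_3} then forces $u_T=\emptyset$ as a set, i.e.\ $u_M$ avoids $T$ entirely, which is equivalent to $u$ being disjoint from $T\times\R$.

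For part (2), I would argue by contradiction. Suppose $u$ has no end in $T=T\times\{0\}$; combined with the hypothesis that $u$ has no ends in $T\times([-\epsilon,\epsilon]\sminus\set{0})$, this says $u$ has no ends in the entire slab $T\times[-\epsilon,\epsilon]$. Slope calculus (\cref{slope_calculus}) then gives $[u_\epsilon]=[u_{-\epsilon}]$, so both slices carry a common homology class; by hypothesis at least one, and therefore both, have slope $\pm s$. But any class of slope $\pm s$ pairs to zero against the slope $s$ of $T$, so $[u_{\pm\epsilon}]\cdot s=0$, and \cref{positivity_of_intersections_3} forces $u_{\pm\epsilon}=\emptyset$, whence $[u_{\pm\epsilon}]=0$, contradicting having a well-defined slope.

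The only genuinely conceptual step is the homotopy-invariance of $[u_T]$ used in part (1); part (2) is then a straightforward bookkeeping exercise with the formulae already set up. I don't foresee any real obstacle in either direction.
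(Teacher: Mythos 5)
Your argument is correct and takes essentially the same route as the paper, which simply remarks that the Blocking Lemma ``follows immediately from \cref{positivity_of_intersections_3}'' and cites Colin--Ghiggini--Honda without further detail; you are supplying the details behind that assertion. The only cosmetic points: in part (1) the 2-chain realising $[u_T]-[u'_T]$ is the trace of the homotopy intersected with $\R\cross T$ (living in $T\cross[0,1]$ before projecting), and in part (2) \cref{positivity_of_intersections_3} applies at the Morse-Bott torus $T=T\cross\set{0}$, so you should conclude $u_T=\emptyset$ and then use $[u_{\pm\epsilon}]=[u_T]=0$ rather than applying positivity directly at $T\cross\set{\pm\epsilon}$---neither changes the logic.
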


The Trapping Lemma does not follow as simply, requiring further analysis to understand the ends of holomorphic curves near Morse-Bott tori.  Here we will just state the result.  

\begin{prop}[The Trapping Lemma~{\cite[Lemma 5.3.2]{CGH_ECH_OBD}}]\label{trapping_lemma}
Suppose $T\subset M$ is a Morse-Bott torus and that $u$ is a holomorphic map with a one-sided end at some $\gamma\subset T$. Then if $T$ is positive (resp.~negative), the end is positive (resp.~negative).
\end{prop}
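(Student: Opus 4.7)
The plan is to analyse the asymptotic behaviour of $u$ near the puncture mapping to $\gamma$, using Bourgeois's generalisation of the Hofer--Wysocki--Zehnder asymptotic formula to the Morse--Bott setting, and compare the leading eigenvalue with the Morse--Bott normal form of $R$ near $T$.

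First I would set up convenient local coordinates $(\theta, t, y)$ on a neighbourhood $T\cross(-\epsilon,\epsilon)$ of $T$, with $\theta\in S^1$ parametrising each orbit in the Morse--Bott family, $t\in S^1$ parametrising the family itself, and $y$ the coordinate normal to $T$ oriented compatibly with \cref{section_slope_calculus}. In such coordinates the Reeb vector field can be written in a Morse--Bott normal form of the shape
\[
R \;=\; \partial_\theta \;+\; f(y)\,\partial_t \;+\; O(y^2),
\]
with $f(0)=0$ and $\mathrm{sign}(f'(0))$ equal to $\mathrm{sign}(a)$ from \cref{morse-bott_def}; in particular $f'(0)>0$ if $T$ is positive and $f'(0)<0$ if $T$ is negative.

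Second, using Bourgeois's asymptotic formula for ends at Morse--Bott orbits, I would write the puncture of $u$ at $\gamma$ in the form
\[
u(s,\theta) \;=\; \bigl(\phi(s),\;\gamma(\theta) + e^{\mu s}\eta(\theta) + r(s,\theta)\bigr),
\]
where $\mu$ is a real eigenvalue of the asymptotic operator $A_\gamma$ on $\xi|_\gamma$, $\eta\in\ker(A_\gamma-\mu)$ is the corresponding eigenvector, and $r$ decays faster than $e^{\mu s}$. The sign convention is that a positive puncture has $s\to+\infty$ and forces $\mu<0$, while a negative puncture has $s\to-\infty$ and forces $\mu>0$. The asymptotic operator has a kernel spanned by $\partial_t$ (the direction tangent to the Morse--Bott family), so to make progress one must constrain $\eta$ away from this kernel.

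Third, I would use the one-sidedness hypothesis to do exactly that. If $\eta$ lay in $\ker A_\gamma = \langle\partial_t\rangle$, the image of $u$ near the puncture would approach $\gamma$ tangentially along $T$, which would force the image to cross $T$ transversely and hence produce a two-sided end. Thus $\eta$ has a non-trivial $\partial_y$-component, and after absorbing the tangential part into $r$ one may take $\eta=\pm\partial_y$. The sign of $\eta$ determines on which side of $T$ the image of $u$ lies near $\gamma$.

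Finally, I would compute the eigenvalue $\mu$ corresponding to $\eta=\partial_y$ by linearising $R$ in the chosen normal form. A direct calculation shows that this eigenvalue has sign $-\mathrm{sign}(f'(0))$: the $\partial_y$ eigenvector has negative eigenvalue when $T$ is positive and positive eigenvalue when $T$ is negative. Combined with the convention $\mu<0\Leftrightarrow$ positive end, this yields the lemma. As a sanity check, one can cross-verify this conclusion via \cref{slope_calculus} and \cref{positivity_of_intersections_3}: on the slice $T\cross\{y\}$ the Reeb foliation has slope $s(y)=[\gamma]+\theta(y)[\partial_t]$ with $\theta(y)$ rotating in the direction dictated by $\mathrm{sign}(f'(0))$, and the constraint $[u_y]\cdot s(y)\geq 0$ is consistent precisely when the sign of the end matches the sign of $T$.

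The main obstacle is the book-keeping in the third and fourth steps: one must carefully compute the spectrum of the asymptotic operator in the Morse--Bott normal form, while tracking the orientation conventions for $T$, for the symplectic basis $(v_1,v_2)$, for $\gamma$ (oriented by $R$) and for the side of $T$ on which the image lies. All the analytic content is encoded in Bourgeois's asymptotic theorem; the remaining work is essentially a sign computation linking the ``parabolic'' parameter $a$ of \cref{morse-bott_def} to the sign of the relevant eigenvalue of $A_\gamma$.
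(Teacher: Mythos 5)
The paper does not actually prove this statement: it explicitly writes ``Here we will just state the result'' and defers to Colin--Ghiggini--Honda, so there is no in-paper proof to compare against. Your plan follows the strategy of the cited reference (Morse--Bott asymptotics at the end, plus a sign computation for the normal eigenvalue of the asymptotic operator), and it does work. In particular the computation you defer is short and comes out as you claim: in the local model $R=\del_\theta+ay\,\del_t+O(y^2)$ the asymptotic operator of $\gamma$ in the frame $(\del_y,\del_t)$ is $-J_0\del_\theta - S$ with $S=\mathrm{diag}(a,0)$, its winding-number-zero eigenfunctions are exactly the constants $\del_t$ (eigenvalue $0$) and $\del_y$ (eigenvalue $-a$), and all other eigenfunctions have normal component changing sign in $\theta$. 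With the convention that a positive (resp.\ negative) end decays with $\mu<0$ (resp.\ $\mu>0$), the eigenvalue $-a$ gives precisely the statement of the lemma, modulo the orientation bookkeeping you already flag relating $a$ to $f'(0)$.

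The one step whose justification as written is off is your third: if the asymptotic eigenfunction lay in $\ker A_\gamma=\langle\del_t\rangle$ the curve would \emph{not} be forced to ``cross $T$ transversely''---to leading order it would simply stay tangent to $T$, and the crossing behaviour would be invisible at that order. The correct way to exclude this is: (i) a nonzero asymptotic eigenfunction with eigenvalue $0$ is impossible because the end is required to converge to the specific orbit $\gamma$ rather than drift within the Morse--Bott family (and, in this model, any eigenfunction with $\mu\neq0$ automatically has nonvanishing $\del_y$-component, so no separate argument is needed to rule out ``tangential'' eigenfunctions with nonzero eigenvalue); (ii) one-sidedness is what excludes eigenfunctions of nonzero winding number, whose $\del_y$-component changes sign and would put the image on both sides of $T$; and (iii) the degenerate case where the normal component vanishes to infinite order forces the end to lie in $\R\cross T$, which contradicts one-sidedness as defined in the paper. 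With those corrections the argument is complete.
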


Here we are calling an end \emph{one-sided} if a small neighbourhood of the end does not intersect $T$, i.e.~its projection to $M$ lands entirely on one side of $T$.

\section{ECH groups for manifolds with boundary}

In this section we will use Morse-Bott homology to define some ECH groups for manifolds with boundary.  These definitions are originally due to Colin, Ghiggini and Honda~\cite[Section 7.1]{CGH_ECH_OBD}, although here we employ slightly different notation.

\begin{defn}
Suppose that $M$ is a manifold with torus boundary, and that $\alpha$ is a Morse-Bott contact form which is non-degenerate on $\mathrm{int}(M)$ and foliates $\del M$ by Reeb trajectories of either irrational or rational slope.  Let $\mathcal{P}$ denote the set of simple orbits of $R_\alpha$ in $\mathrm{int}(M)$ and $J$ be a regular almost complex structure adapted to $\alpha$.
\begin{enumerate}[series=ECH_boundary_defs]
\item $ECH(\mathrm{int}(M),\alpha, J)$ is defined to be the homology of $ECC(\mathrm{int}(M),\alpha,J)$, the chain complex generated by orbit sets constructed from simple orbits in $\mathcal{P}$.
\item $ECH(M,\alpha,J)$ for $\del M$ irrational is defined to be $ECH(\mathrm{int}(M),\alpha,J)$.
\end{enumerate}
\end{defn}
If $\del M$ is foliated by closed leaves of rational slope then we have a Morse-Bott family of orbits.  Recall that, in the closed case, Morse-Bott homology is defined by choosing two orbits $e$, $h$ in each Morse-Bott torus and counting Morse-Bott buildings between orbit sets constructed from non-degenerate orbits and such $e$ and $h$.  We perform the same procedure in the situation with boundary:
\pagebreak %
\addtocounter{equation}{-1}
\begin{defn}[continued]
Suppose that $\del M$ is a Morse-Bott family of closed orbits and choose two orbits $e$ and $h$ in $\del M$.  
\begin{enumerate}[resume=ECH_boundary_defs]
\item $ECH(M,\alpha,J)$ is defined to be the homology of $ECC_\MB(M,\alpha,J)$, the Morse-Bott complex which is generated by orbit sets constructed from $\mathcal{P}\union\set{e,h}$ as in the definition of Morse-Bott contact homology, \cref{MB_homology_def}.
\item Let $\mathcal{P}_\del$ be a subset of the collection $\set{e,h}$.  We say that $e$ (or $h$) is \emph{included} if it is in $\mathcal{P}_\del$. Consider the chain complex generated by orbit sets constructed from $\mathcal{P}\union\mathcal{P}_\del$, and denote it by $ECC^e_h(\mathrm{int}(M),\alpha,J)$, where we omit $e$ or $h$ from the notation if they are not included. For example, $ECC_\MB(M,\alpha,J)=ECC^e_h(\mathrm{int}(M),\alpha,J)$ and $ECC^e(\mathrm{int}(M),\alpha,J)$ is the complex which Colin, Ghiggini and Honda denote by $ECC^\flat(M,\alpha,J)$~\cite[Section 7.1]{CGH_ECH_OBD}.  The corresponding homology groups are denoted by replacing ECC with ECH.
\end{enumerate}
\end{defn}
These complexes can be shown to satisfy $d^2=0$ using the Blocking and Trapping lemmas~\cite[Section 7.1]{CGH_ECH_OBD}.  Note that we do not claim that any of these homology groups are independent of the chosen almost complex structures. 

We will finish this chapter by defining the relative embedded contact homology groups as seen in the statement of \cref{relative_ECH_equals_ECH}.  This is a generalization of Colin, Ghiggini and Honda's construction in the case of an integral open book decomposition~\cite[Remark 7.3.3]{CGH_ECH_OBD}.

\begin{defn}\label{relative_ECH_groups}
Suppose that $M$ is a manifold with torus boundary, and that $\alpha$ is a Morse-Bott contact form which is non-degenerate on $\mathrm{int}(M)$, and $\del M$ is a Morse-Bott torus of rational slope.  Let $J$ be a regular almost complex structure adapted to $\alpha$. We define the \emph{relative ECH groups} to be
\begin{align*}
ECH(M, \del M, \alpha,J) &:= ECH^{e}(\mathrm{int}(M),\alpha,J)/\!\sim \\
\widehat{ECH}(M, \del M, \alpha,J) &:= ECH^e_h(\mathrm{int}(M),\alpha,J)/\!\sim, 
\end{align*}
where $\sim$ is the equivalence relation generated by $e\sim \emptyset$.
\end{defn}

\chapter{ECH via rational open book decompositions}\label{ECH_via_rational_open_book_decompositions_chapter}

\section{Contact forms adapted to rational open book decompositions}\label{contact_forms_and_rational_open_book_decompositions}
\markright{9.\ \ CONTACT FORMS ADAPTED TO RATIONAL OBDs}

The first step to proving \cref{relative_ECH_equals_ECH} is to construct a contact form which is adapted in a suitable way to the rational open book decomposition
\[ M=\nu(K)\union N.\]
The form takes on a particular behaviour in $\nu(K)$ and this gives us full control over the Reeb orbits and holomorphic curves in this region---we then use a spectral sequence argument to compute $ECH(M)$ solely in terms of Reeb orbits and holomorphic curves in $N$.  This argument is completely analogous to that employed by Colin, Ghiggini and Honda~\cite[Section 6]{CGH_ECH_OBD} in the case of integral open books; we make some subtle adjustments in the rational case and also some generalizations which prove useful in \cref{a_knot_version_of_ECH_chapter}.  

Recall that $N=M(\phi)$ is the mapping torus corresponding to the rational open book decomposition and $\nu(K)\iso S^1\cross D^2$ is the neighbourhood of the binding $K=S^1\cross\set{0}\subset M$.  The monodromy $\phi$ rotates $\del\Sigma$ by a $p/q$-twist.

We further decompose $\overline{\nu(K)}$ into two pieces
\[ \overline{\nu(K)} = V\union T^2\cross[1,2]. \]

Here $V$ itself is homeomorphic to $S^1\cross D^2$ and is parametrized by cylindrical coordinates $(t_0,\theta_0,y_0)$, where $\theta_0\in \R/\Z$ parametrizes the $S^1$ direction, and $(t_0,y_0)\in \R/\Z \cross [0,1]$ are polar coordinates for $D^2$.  Write $T_{\rho}:= \set{y=\rho}$.

The piece $T^2\cross[1,2]$ is parametrized by $(t_1,\theta_1,y_1) \in (\R/\Z)^2\cross[1,2]$ and the two pieces are glued by the identity homeomorphism
\begin{align*}
T_1 &\to T^2\cross\set{1}\\
(t,\theta,1) &\mapsto (t,\theta,1).
\end{align*}
$T^2\cross \set{2}=\del \overline{\nu(K)}$ is identified with $\del N$ by a more complex homeomorphism.  Recall that $M$ is obtained by surgery along the degeneracy slope $d \subset \del N$.  We parametrize $\del N\homeo T^2$ in a way which comes from the mapping cylinder construction and is therefore a non-standard parametrization of $T^2$.

Recall that the mapping cylinder is constructed as $\Sigma\cross[0,1]/\!\sim$ where the equivalence relation is given by 
\[ (x,1)\sim (\phi(x),0) \]
and $\restr{\phi}{\del \Sigma}=r_{p/q}$, a $p/q$-fractional twist.  An equivalent definition is
\[ N = 	\Sigma\cross\R/\!\sim, \quad (x,t+1)\sim (\phi(x),t), \]
and this gives rise to the following parametrization of $\del N$:
\[ \del N = (\R\cross \R/\Z) / \!\sim, \quad (t_2+1,\theta_2)\sim( t_2, \theta_2+\frac{p}{q}). \]
This can be thought of as a tiling of $\R^2$ by parallelograms: let $t_2$ denote the horizontal coordinate and $\theta_2$ the vertical coordinate.  Then cut $\R^2$ into parallelograms along vertical lines $\set{t_2=m}_{m\in\Z}$ and lines of slope $-\frac{p}{q}$ given by $\set{\theta_2=-\frac{p}{q}t_2+m}_{m\in\Z}$.  See \cref{R2_parallelogram_tiling}.

\begin{figure}\centering
  \begin{tikzpicture}%
		\draw [->,thick] (0,0) -- (0,2.8) node [above left]  {$\theta_2$};
		\draw [->,thick] (0,0) -- (2.8,0) node [below right] {$t_2$};
		{
		\path [clip] (-0.4,-0.5)--(2.4,0.2)--(2.4,3)--(-0.4,2.3)--cycle;
		\draw [thin] (0,-2) -- (0,4);
		\draw [thin] (1,-2) -- (1,4);
		\draw [thin] (2,-2) -- (2,4);
		\draw [thin] (-2,-0.5) -- (4,1);
		\draw [thin] (-2,0.5) -- (4,2);
		\draw [thin] (-2,1.5) -- (4,3);
		}
	\end{tikzpicture}
  \caption{The tiling of $\R^2$ in the case $p/q=-1/4$.}
	\label{R2_parallelogram_tiling}
\end{figure}
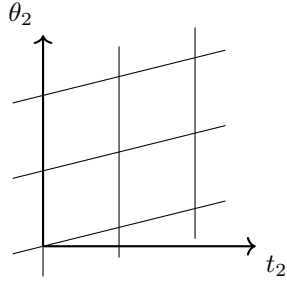

Now let $m\in\Z$ be such that $mp+1\equiv0 \gappy{\mathrm{ mod }}q$ and consider the matrix
\[ A=
\begin{pmatrix}
q & m \\
0 & \frac{1}{q} 
\end{pmatrix}
\] 
as a homeomorphism $\R^2\to\R^2$.  This descends to a map $T^2\cross\set{2} \to \del N$ (also denoted by $A$), since for $(n,n')\in\Z^2$,
\[\begin{split}
A(n,n') &= (qn+mn', \frac{n'}{q}) \\
&\sim (0, \frac{n'}{q} + (qn+mn')\frac{p}{q}) \\
&= (0, np + n'\frac{1+mp}{q}) \\
&\sim (0,0)
\end{split}\]
as $\frac{1+mp}{q}\in\Z$.  It is easy to see that $A^{-1}$ also descends to the quotient spaces and hence $A$ is a homeomorphism between the two tori.

We use $A$ to identify $T^2\cross\set{2}$ and $\del N$.  To see that this is the correct choice, we must check that it identifies the meridian slope of $K$ with the degeneracy slope $d\in\del N$.  By considering the identification of $V$ with $T^2\cross[1,2]$, it is easy to see that the curve $t\mapsto (t,0,2)$ is a meridianal curve for $K$.  Under the homeomorphism $A$ this curve is identified with the curve $t\mapsto (qt,0)\in \del N$, which is precisely the degeneracy slope.

\subsection{Contact forms on \texorpdfstring{$V$}{V}, \texorpdfstring{$T^2\cross[1,2]$}{T²x[1,2]} and  a neighbourhood of \texorpdfstring{$\del N$}{∂N}}

In this section, we will discuss families of smooth one-forms on $V$, $T^2\cross[1,2]$ and $\nu(\del N)$ in terms of the coordinates at the start of \cref{contact_forms_and_rational_open_book_decompositions}. We will then understand under what conditions these give rise to contact forms and compute their respective Reeb vector fields.

\paragraph{1. ($V$)}On $V$, we consider one-forms which can be written as
\[ f(y_0)\dd t_0 + g(y_0)\dd\theta_0 \]
for some $f,g:[0,1]\to\R$.  For smoothness to hold near $\set{y_0=0}$ we must have $f(0)=0$ and all derivatives of odd degree of both $f$ and $g$ at $\rho=0$ must vanish.  (This requirement can be easily verified via a simple change of coordinates, from polar to cartesian.)

\paragraph{2. ($T^2\cross[1,2]$)} On $T^2\cross[1,2]$ we consider 1-forms which can be similarly written as
\[ f(y_1)\dd t_1 + g(y_1)\dd \theta_1 \]
for some $f,g:[1,2]\to\R$.

\paragraph{3. ($\nu(\del N)$)}On $N$, we consider 1-forms which, on a neighbourhood $\del N \cross[2,2+\epsilon)$ of $\del N$, take the form
\[ f(y_2)\dd t_2 + g(y_2)\dd \theta_2, \]
where here $y_2$ denotes the $[2,2+\epsilon)$ coordinate.

\begin{lemma}\label{contact_condition_lemma}
The above forms on $T^2\cross[1,2]$ and $\del N\cross[2,2+\epsilon)$ are contact forms if and only if
\begin{equation}\label{contact_condition_eq}   
gf' - fg' > 0
\end{equation}
for all $y_1\in[1,2]$ and $y_2\in[2,2+\epsilon)$. The above form on $V$ is a contact form if and only if \cref{contact_condition_eq} holds for all $y_0\in(0,1]$ and
\[ \lim_{y_0\to0} \frac{gf' - fg'}{y_0}>0 .\]
\end{lemma}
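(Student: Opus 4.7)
The plan is to directly compute $\alpha \wedge d\alpha$ for each of the three forms and then read off when this top form is positive with respect to the ambient orientation. The only subtlety is the polar case on $V$, where the Jacobian of the change to Cartesian coordinates introduces a factor of $y_0$ that forces the strengthened condition at $y_0=0$.

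For a one-form $\alpha = f(y)\dd t + g(y)\dd\theta$ on any of the three regions, I would first compute
\[ d\alpha = f'(y)\dd y\wedge \dd t + g'(y)\dd y\wedge \dd\theta, \]
and hence
\[ \alpha\wedge d\alpha = (gf' - fg')\,\dd t\wedge \dd\theta\wedge \dd y, \]
after a short wedge-product bookkeeping (using $\dd t\wedge \dd y\wedge \dd\theta = -\dd t\wedge \dd\theta\wedge \dd y$ and $\dd\theta\wedge \dd y\wedge \dd t = \dd t\wedge \dd\theta\wedge \dd y$).

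On $T^2\cross[1,2]$ and on the collar $\del N\cross[2,2+\epsilon)$ the coordinates $(t_i,\theta_i,y_i)$ are genuine product coordinates, so $\dd t_i\wedge \dd\theta_i\wedge \dd y_i$ is (up to a fixed nonzero constant coming from the choice of ambient orientation) a positive volume form, and the contact condition $\alpha\wedge d\alpha>0$ is equivalent to $gf'-fg'>0$ on the relevant interval. This handles cases (2) and (3) immediately.

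For case (1), the pair $(t_0,y_0)$ are polar coordinates on $D^2$, so writing $x_1 = y_0\cos(2\pi t_0)$, $x_2 = y_0\sin(2\pi t_0)$ one has $\dd x_1\wedge \dd x_2 = 2\pi y_0\,\dd y_0\wedge \dd t_0$, and the orientation form on $V=S^1\cross D^2$ is $\dd\theta_0\wedge \dd x_1\wedge \dd x_2 = 2\pi y_0\,\dd t_0\wedge \dd\theta_0\wedge \dd y_0$. Hence
\[ \alpha\wedge d\alpha = \frac{gf'-fg'}{2\pi y_0}\cdot \dd\theta_0\wedge \dd x_1\wedge \dd x_2, \]
so on $\{y_0>0\}$ the contact condition is still $gf'-fg'>0$, while at $y_0=0$ positivity of the smooth volume form $\alpha\wedge d\alpha$ requires $(gf'-fg')/y_0$ to extend continuously to a strictly positive value, i.e.\ $\lim_{y_0\to 0}(gf'-fg')/y_0>0$.

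The main (and only real) obstacle is justifying that this limit is the correct intrinsic condition for smoothness at the core circle: one must check that the smoothness hypotheses on $f,g$ (namely $f(0)=0$ and vanishing of odd derivatives at $0$) are strong enough to guarantee that $gf'-fg'$ is an even function of $y_0$ vanishing at $0$, so that $(gf'-fg')/y_0$ extends smoothly. This follows from the parity remarks already made for $f$ and $g$ in the preceding paragraph and a short Taylor expansion, and so the strict positivity of the limit becomes equivalent to the strict positivity of $\alpha\wedge d\alpha$ at points of the core circle $\{y_0=0\}$.
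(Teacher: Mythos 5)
Your proof is correct and takes essentially the same approach as the paper: compute $\alpha\wedge d\alpha$ directly and observe it equals $(gf'-fg')\,\dd t\wedge\dd\theta\wedge\dd y$, then handle $V$ by the change to Cartesian coordinates on the $D^2$ factor, which introduces the Jacobian factor of $y_0$ that accounts for the strengthened condition at the core. You spell out the polar computation in more detail than the paper does, which is fine; the only trivial slip is that $gf'-fg'$ is an \emph{odd} function of $y_0$ near $0$ under the stated parity hypotheses ($f,g$ even implies $f',g'$ odd, so $gf'-fg'$ is odd), not even as you wrote, but this does not affect your conclusion that it vanishes at $0$ and $(gf'-fg')/y_0$ extends smoothly.
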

\begin{proof}
Since all three pieces are oriented the same way, the calculations are identical aside from notation.  For this reason we will omit the subscripts for the purposes of the proof.
\[\begin{split}
\alpha\wedge d\alpha &= \left(f\dd t + g\dd\theta\right)\wedge \left( f'\dd y\wedge dt + g'\dd y\wedge d\theta \right) \\
                     &= f g'\dd t\wedge dy\wedge d\theta + gf' \dd\theta \wedge dy\wedge ds \\
										 &= \left(gf' - fg'\right) \dd t\wedge d\theta\wedge d y
\end{split}\]
Hence $\alpha$ satisfies the contact condition if and only if $gf' - fg' > 0$.  The extra condition as $y_0$ tends to $0$ on $V$ follows by considering a simple change of coordinates, from polar to cartesian.
\end{proof}

\begin{lemma}\nogapcite[Lemmas 6.1.1 and 6.2.2]{CGH_ECH_OBD}\label{reeb_vector_field_lemma}
The Reeb vector fields on $T^2\cross[1,2]$ and $\del N\cross[2,2+\epsilon)$ are given by
\[ R = \frac{1}{gf'-fg'}\left( -g'\del_t + f'\del_\theta \right) \]
where we have omitted the subscripts for ease of notation.

The Reeb vector field on $V$ is given by the same formula above away from $\set{y_0=0}$ and is parallel to $\del_\theta$ when $y_0=0$.
\end{lemma}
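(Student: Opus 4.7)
The plan is to verify the standard defining characterization of the Reeb vector field: $R$ is the unique vector field on the contact manifold such that $\iota_R\alpha=1$ and $\iota_R d\alpha=0$. Because the three regions $T^2\cross[1,2]$, $\del N\cross[2,2+\epsilon)$, and the bulk of $V$ all carry the same local expression $\alpha = f(y)\,dt + g(y)\,d\theta$, a single pointwise computation will handle all of them away from the core circle $\set{y_0=0}\subset V$. Only the degenerate locus of $V$ requires a separate limiting argument.

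First I would write an arbitrary vector field as $R = a\,\del_t + b\,\del_\theta + c\,\del_y$. Since $d\alpha = f'\,dy\wedge dt + g'\,dy\wedge d\theta$, contracting yields
\[ \iota_R d\alpha = cf'\,dt + cg'\,d\theta - (af' + bg')\,dy. \]
Setting this to zero forces $c=0$ and $af'+bg'=0$; the latter is solved up to scale by $(a,b) = \lambda(-g', f')$. Imposing $\iota_R\alpha = af + bg = 1$ then determines $\lambda = 1/(gf' - fg')$, and the contact condition $gf'-fg' > 0$ from \cref{contact_condition_lemma} guarantees that this is well-defined. The resulting formula is precisely the one claimed.

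The remaining task is to analyse what happens on $V$ at the core $\set{y_0 = 0}$, where $(t_0, y_0)$ are polar coordinates on the $D^2$-factor and the coordinate vector field $\del_{t_0}$ vanishes: in Cartesian coordinates $(X,Y) = (y_0\cos 2\pi t_0, y_0\sin 2\pi t_0)$ it reads $2\pi(-Y\,\del_X + X\,\del_Y)$, which is zero at the origin. It therefore suffices to check that the coefficients of $\del_t$ and $\del_\theta$ extend smoothly across $y_0=0$ and that the $\del_\theta$-coefficient has non-zero limit. Using $f(0)=0$ and $f'(0) = g'(0) = 0$ and Taylor expanding, one obtains $gf' - fg' = g(0)f''(0)\,y_0 + O(y_0^3)$, together with $f' = f''(0)\,y_0 + O(y_0^3)$ and $g' = g''(0)\,y_0 + O(y_0^3)$; the explicit factor of $y_0$ in numerator and denominator cancels, and one reads off the finite limits $-g''(0)/(g(0)f''(0))$ and $1/g(0)$ respectively. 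The hypothesis $\lim_{y_0\to 0}(gf'-fg')/y_0 > 0$ is exactly $g(0)f''(0) > 0$, which both legitimises these limits and shows that the $\del_\theta$-coefficient is non-zero at the core, so $R$ restricted to $\set{y_0 = 0}$ is indeed a non-zero multiple of $\del_\theta$.

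The only genuinely non-automatic step is the bookkeeping between the cylindrical smoothness conditions at $y_0 = 0$ and the asymptotic behaviour of $gf' - fg'$; once those Taylor expansions are in hand, the rest of the verification is pure linear algebra.
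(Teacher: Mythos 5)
Your verification is correct, and it is the standard direct computation (contract a general vector field with $d\alpha$ and $\alpha$, then handle the polar-coordinate degeneracy at the core of $V$ by Taylor expansion using $f(0)=0$ and the vanishing of odd derivatives). The paper does not supply its own proof of this lemma — it cites Colin–Ghiggini–Honda — and your argument is exactly the verification being delegated there, including the correct resolution of the $0/0$ behaviour at $y_0=0$ via the hypothesis $\lim_{y_0\to 0}(gf'-fg')/y_0>0$.
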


There is a nice geometrical interpretation of the above two lemmas:  if we plot the parametric curve $(f,g)$ in the plane then \cref{contact_condition_lemma} implies that the curves must move \emph{strictly clockwise} around the origin.  \cref{reeb_vector_field_lemma} implies that the \emph{slope} of the Reeb vector field (defined with respect to the basis $(\del_\theta, \del_t)$) is always equal to the \emph{negative of the gradient of the curve}. See \cref{contact_forms_on_no_mans_land_and_V_fig}. These two facts allow us to very quickly construct contact forms on these three pieces of $M$ and immediately understand their Reeb vector fields.

\subsection{Constructing a contact form adapted to the open book decomposition}\label{contact_forms_on_3_pieces}

In this section we will define families of contact forms on $V$ and $T^2\cross[1,2]$ which will be used as building blocks in the next section when constructing forms on $M$.  We will also discuss the boundary conditions which must be satisfied by contact forms on $N$ in order to glue nicely with these families of forms. 

The forms we use on $V$ and $T^2\cross[1,2]$ are based on those defined by Colin, Ghiggini and Honda~\cite[Section 6]{CGH_ECH_OBD}.  We will begin with a form on $V$.

\begin{defn}\label{contact_form_on_V_defn}
Let $\mu\in\R$ and define a contact form $\alpha_{V,\mu}$ on $V$ by
\[ \alpha_{V,\mu} = f(y_0)\dd t + g(y_0) \dd \theta,\]
where:
\begin{enumerate}
\item $f$ and $g$ satisfy the contact condition in \cref{contact_condition_lemma}.
\item $(f(y_0),g(y_0))= (y_0^2, C-y_0^2)$ near $y_0=0$, where $C>0$ is a large constant.
\item $(f(y_0),g(y_0)) = (1 - (y_0-1)^2, \mu - (y_0-1))$ near $y_0=1$. \label{gluing_condition_V_to_no_mans_land}
\item The slope of the parametrized curve $(f,g)$, $\frac{g'}{f'}$ is monotonic, decreasing from $-1$ to $-\infty$ as $y_0$ increases from 0 to 1.
\end{enumerate}
See \cref{contact_forms_on_no_mans_land_and_V_fig_a}.
\end{defn}

\begin{figure}\centering
	\begin{subfigure}[b]{0.45\textwidth}\centering
		\begin{tikzpicture}%
			\draw [->] (0,0) -- (0,4.8) node [above left] {$g$};
			\draw [->] (0,0) -- (4.8,0) node [below right] {$f$};
			\draw [thick,-*,shorten >=-3pt] (0,3.75) to [out=-45,in=135] (1.5,2.25) to [out=-45,in=90] (2,1) node [below right] {$(f(1),g(1))=(1,\mu)$};
			\draw (0,0)--(4,2) node [above] {slope $=\mu$};	
			\draw (0,3.75) -- (-0.1,3.75) node [left] {$C$};
			
		\end{tikzpicture}
		\caption{}
		\label{contact_forms_on_no_mans_land_and_V_fig_a}
	\end{subfigure}
	\begin{subfigure}[b]{0.45\textwidth}\centering
		\begin{tikzpicture}%
			\draw [->] (0,0) -- (0,4.8) node [above left] {$g$};
			\draw [->] (0,0) -- (4.8,0) node [below right] {$f$};
			\draw [thick,*-*,shorten >=-3pt,shorten <=-3pt](1.5,1) to [out=90,in=-90] (2,3);
			\draw (0,0)--(3,4.5);
			\node [right,inner sep=4pt] at (1.5,1) {$(f_\delta(2),g_\delta(2))=(a_1,b_1)$};
			\node [right,inner sep=4pt] at (2,3) {$(f_\delta(1),g_\delta(1))$};
			\node [right] at (2.75,4) {slope $=\mu$};	
		\end{tikzpicture}
		\caption{}
		\label{contact_forms_on_no_mans_land_and_V_fig_b}
	\end{subfigure}
  \caption{The contact forms on $V$ and $T^2\cross[1,2]$ from \cref{contact_form_on_V_defn,contact_form_on_no_mans_land_def} respectively.  The curve in (a) is linear near $y_0=0$ and parabolic near $y_0=1$.  The curve in (b) is parabolic near $y_1=1$ and $y_1=2$ and its slope is never shallower than $1/\delta$.}
	\label{contact_forms_on_no_mans_land_and_V_fig}
\end{figure}
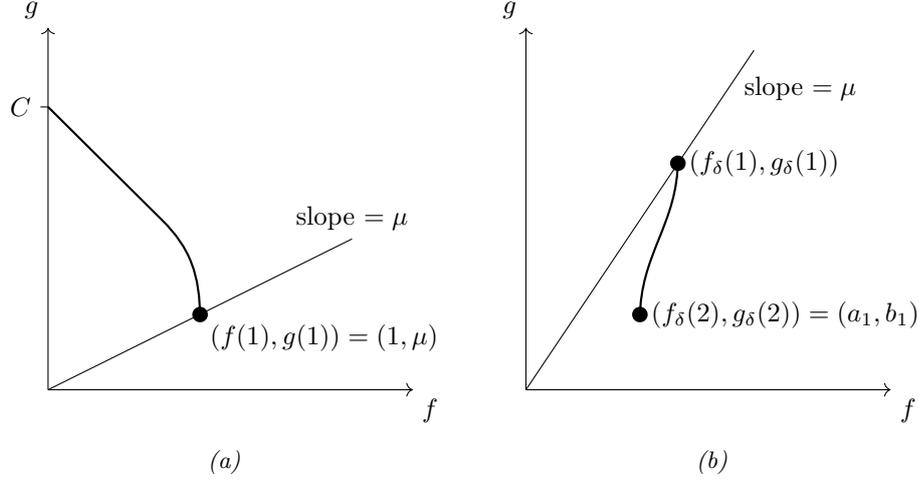

We now turn to the next piece, $T^2\cross[1,2]$.

\begin{defn}\label{contact_form_on_no_mans_land_def}
Fix $a_1, c>0$ and $b_1\in\R$ and let $\mu\in \R$ be such that the ray from the origin of slope $\mu$ in $\R_{>0}\cross\R$ lies above the point $(a_1,b_1)$.  Then for all small irrational $\delta>0$, define a contact form on $T^2\cross[1,2]$ by 
\[ \alpha_\delta = f_\delta(y_1)\dd t_1 + g_\delta(y_1)\dd\theta_1, \]
where:
\begin{enumerate}
\item $f_\delta$ and $g_\delta$ satisfy the contact condition in \cref{contact_condition_lemma}.
\item $(f_\delta(y_1), g_\delta(y_1)) = (a_1 + c(y_1-2)^2,b_1 - c(y_1-2))$ near $y_1=2$.
\item \label{contact_form_on_no_mans_land_def_c}The point $(f_\delta(1), g_\delta(1))$ lies on the ray of slope $\mu$ from the origin in $\R_{>0}\cross\R$ and in particular $f_\delta(1)>0$.
\item \label{contact_form_on_no_mans_land_def_d}$(f_\delta(y), g_\delta(y)) = (f_\delta(1) - c_\delta(y-1)^2,g_\delta(1) - c_\delta(y-1))$ near $y_1=1$, where $c_\delta=f_\delta(1)$.
\item The slope of the parametrized curve $(f_\delta,g_\delta)$, $\frac{g_\delta'}{f_\delta'}$, is never shallower than $\frac{1}{\delta}$.  It is infinite at $y_1=1$ and $y_1=2$, equal to $\frac{1}{\delta}$ at $y_1=3/2$, and strictly monotonic on $\set{1\le y_1\le \frac{3}{2}}$ and $\set{\frac{3}{2}\le y_1\le 2}$.
\end{enumerate}
See \cref{contact_forms_on_no_mans_land_and_V_fig_b}.
\end{defn}

Points \cref{contact_form_on_no_mans_land_def_c,contact_form_on_no_mans_land_def_d} imply that near $y_1=1$, 
\[ (f_\delta(y), g_\delta(y)) = c_\delta \cdot \left(1 - (y_1-1)^2, \mu - (y_1-1)\right) \]
and hence every $\alpha_\delta$ glues smoothly to the form $c_\delta\alpha_{V,\mu}$ on $V$.  Note that when we perform this gluing we obtain a positive Morse-Bott torus of infinite slope at $\del V=T^2\cross\set{1}$ (this follows from \cref{reeb_vector_field_lemma}).

Also note that our definition is slightly more general than that of Colin, Ghiggini and Honda, since we have introduced the positive constant $c$.  In this chapter we do not require this generalization, so will take $c=1$, but later in \cref{a_knot_version_of_ECH_chapter} we will need this slightly more general construction so that we can rescale forms by a positive constant when constructing product cobordisms (c.f.~\cref{product_region_defn}).

We will now introduce a second form on $V$ which is a small perturbation of $\alpha_{V\mu}$ to make it non-degenerate on $\mathrm{int}(V)$---the perturbed form will be denoted $\alpha_{V,\mu}'$.  The perturbation is performed in such a way that $c_\delta\alpha_{V\mu}'$ still glues smoothly to $\alpha_\delta$ on $T^2\cross[1,2]$ and $\del V$ remains a positive Morse-Bott torus.  In addition, the form $\alpha_{V,\mu}'$ satisfies a technical condition~\cite[Lemma 8.1.1]{CGH_ECH_OBD} ensuring the existence of tori $T_{\rho_i}$, with $\rho_i$ tending to $1$, foliated by Reeb trajectories of irrational slope---these tori are used by Colin, Ghiggini and Honda to compute the relative ECH of $V$ (c.f.~\cref{int_V_orbits_cancel}) and will also be used later in the proof of \cref{cobordism_respects_eta_filtration}.

We will end this section by discussing the conditions which must be satisfied by any form on $N$ near $\del N$ in order to glue nicely to $\alpha_\delta$ under the identification $A$.  Recall that on a neighbourhood of $T^2\cross\set{2}$, say $T^2\cross(2-\epsilon,2]$, $\alpha_\delta$ takes the form
\[\alpha_\delta = (a_1+c(y_1-2)^2)\dd t_1 + (b_1-c(y_1-2))\dd \theta_1 \]
for some $a_1,c>0$, $b_1\in\R$.  Extend this form to $T^2\cross(2-\epsilon,2+\epsilon)$, and identify this region with $\del N\cross(2-\epsilon,2+\epsilon)$ by the map 
\[ A: T^2\cross(2-\epsilon,2+\epsilon) \to \del N \cross(2-\epsilon,2+\epsilon). \]
The pull-back of $\alpha_\delta$ by $A^{-1}$ is given by
\begin{align}
(A^{-1})^*\alpha_\delta &= \bigg(a_1+c(y_2-2)^2\bigg)\left(\frac{1}{q}\dd t_2-m\dd\theta_2\right) + \bigg(b_1-c(y_2-2)\bigg)\bigg(q\dd \theta_2\bigg) \nonumber \\ 
                 &= \left(\frac{a_1}{q} + \frac{c}{q}(y_2-2)^2\right)\dd t_2 + \bigg(qb_1-ma_1 - qc(y_2-2) - mc(y_2-2)^2\bigg)\dd\theta_2 \nonumber \\
								&= \left(a_2 + \frac{c}{q}(y_2-2)^2\right)\dd t_2 + \bigg(b_2 - qc(y_2-2) - mc(y_2-2)^2\bigg)\dd\theta_2, \label{contact_form_on_nu_del_N}
\end{align}
where here we have set 
\begin{equation}\label{a_1_b_1_a_2_b_2_relations}
\begin{split}
a_2 &= \frac{a_1}{q}>0,\quad\text{and}\\
b_2&=qb_1-ma_1.
\end{split}
\end{equation}
(Recall that $q$ and $m$ are the integers used to define the transformation $A$ in \cref{contact_forms_and_rational_open_book_decompositions}.)

\begin{lemma}\label{contact_form_near_del_N_lemma}
Suppose that $\alpha$ is a contact form on $N$ which takes the form of \cref{contact_form_on_nu_del_N} on a neighbourhood $\del N\cross[2,2+\epsilon)$ of $\del N$.

Then provided the identification is performed with a sufficiently small $\epsilon>0$, the Reeb vector field in this neighbourhood has infinite slope at $y_2=2$ and has strictly decreasing positive slope on $\set{2< y_2 < 2+\epsilon}$. See \cref{contact_form_on_nu_del_N_fig}.
\end{lemma}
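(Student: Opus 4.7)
The plan is to apply \cref{reeb_vector_field_lemma} directly to the explicit form in \cref{contact_form_on_nu_del_N} and verify that the contact condition persists on a small enough neighbourhood. Write $s:=y_2-2$ and set
\[ f(s) = a_2 + \tfrac{c}{q}s^2,\qquad g(s) = b_2 - qcs - mcs^2, \]
so that $f'(s) = \tfrac{2c}{q}s$ and $g'(s) = -qc - 2mcs$. The first step is to check the contact condition $gf'-fg' > 0$ from \cref{contact_condition_lemma}. A direct expansion gives
\[ gf' - fg' \;=\; qca_2 + s\bigl(\tfrac{2cb_2}{q} + 2mca_2\bigr) - c^2 s^2, \]
which at $s=0$ equals $qca_2 > 0$; thus by continuity the contact condition holds on $[2,2+\epsilon)$ once $\epsilon$ is taken sufficiently small.

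Next, I invoke the formula for $R$ from \cref{reeb_vector_field_lemma}. Using the geometric interpretation noted just after that lemma, the slope of $R$ in the basis $(\del_{\theta_2},\del_{t_2})$ is $-g'/f'$. At $s=0$ we have $f'(0)=0$ but $g'(0)=-qc\neq 0$, so $R$ is parallel to $\del_{t_2}$ and has infinite slope at $y_2=2$, matching the first claim. For $s>0$ one computes
\[ -\frac{g'(s)}{f'(s)} \;=\; \frac{qc + 2mcs}{\tfrac{2c}{q}s} \;=\; \frac{q^2}{2s} + mq. \]

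Differentiating in $s$ gives $-q^2/(2s^2)<0$, so the slope is strictly monotonically decreasing on $(2,2+\epsilon)$. Since the first term blows up as $s\to 0^+$, the slope is certainly positive in a (possibly smaller) right neighbourhood of $s=0$: explicitly, positivity is automatic if $m\ge 0$, and for $m<0$ it requires only $s < q/(2|m|)$. Choosing $\epsilon>0$ smaller than both this bound and the bound coming from the contact condition above completes the proof. The argument is essentially a one-variable calculus check, so there is no genuine obstacle; the only thing to be careful about is keeping track of which way the slope is measured (the paper's convention is $-g'/f'$ in the basis $(\del_{\theta_2},\del_{t_2})$), and that the coordinate change through $A$ has already been absorbed into the coefficients $a_2,b_2$ via \cref{a_1_b_1_a_2_b_2_relations} so that no further bookkeeping is needed.
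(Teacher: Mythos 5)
Your proof is correct and follows essentially the same route as the paper's: verify the contact condition $gf'-fg'>0$ holds for small $\epsilon$, compute the Reeb slope $-g'/f'=\frac{q^2}{2(y_2-2)}+mq$ via \cref{reeb_vector_field_lemma}, and read off the asymptotic and monotonicity behaviour. Your version is slightly more explicit (differentiating to confirm strict decrease and splitting the $m\ge 0$/$m<0$ cases for positivity) but the substance is identical.
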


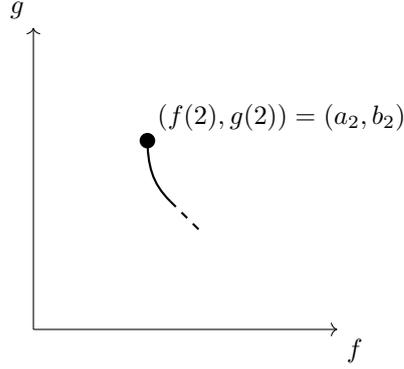
\begin{figure}\centering
  \begin{tikzpicture}%
		\draw [->] (0,0) -- (0,4) node [above left] {$g$};
		\draw [->] (0,0) -- (4,0) node [below right] {$f$};
		\draw [thick,*-,shorten <=-3pt](1.5,2.5) node [above right] {$(f(2),g(2))=(a_2,b_2)$} to [out=-90,in=135] (1.8,1.7);
		\draw [thick,dashed] (1.8,1.7) to (2.2,1.3);
	\end{tikzpicture}
  \caption{The form on $\del N\cross[2,2+\epsilon)$ as described in \cref{contact_form_near_del_N_lemma}.  The slope is infinite at $y_2=2$ and rotates as we move away from the boundary.}
	\label{contact_form_on_nu_del_N_fig}
\end{figure}

\begin{proof}
On $\del N\cross[2,2+\epsilon)$, $\alpha$ takes the form
\[ f(y_2)\dd t_2 + g(y_2)\dd\theta_2, \]
where
\begin{align*}
f(y_2) &= a_2 + \frac{c}{q}(y_2-2)^2,\text{ and}\\
g(y_2) &=b_2 - qc(y_2-2) - mc(y_2-2)^2.
\end{align*}
We know that the contact condition of \cref{contact_condition_lemma} is satisfied since this is the pullback of the contact form $\alpha_\delta$.  This fact can also be verified easily since
\[ gf'-fg' = a_2qc+2c(\frac{b_2}{q}+a_2m)(y_2-2)-c^2(y_2-2)^2, \]
which is positive for small values of $y_2-2>0$ as $a_2$, $c$ and $q$ are positive.

The slope of the parametrized curve is
\[ \frac{g'}{f'} = \frac{-q^2}{2(y_2-2)}-mq, \]
which is negative for small $y_2-2>0$, and asymptotic to $-\infty$ as $y-2$ tends to $0$ from above.

Finally, by \cref{reeb_vector_field_lemma}, the Reeb vector field on $\del N\cross[2,2+\epsilon)$ is parallel to
\[ -g'\del t_2 + f'\del\theta_2, \]
and hence satisfies the claims in the statement of the lemma.
\end{proof}

Note that as a result of this lemma, we see that the torus $\del N$ is a \emph{negative} Morse-Bott torus.

\begin{defn}\label{extendable_contact_form}
We say that a contact-form $\alpha$ on $N$ is \emph{extendable to the ($p/q$-rational) open book} $M$ if it takes the form of \cref{contact_form_on_nu_del_N} on a neighbourhood of $\del N$, or if it agrees with \cref{contact_form_on_nu_del_N} to infinite order along $\del N$ (and hence can still be glued to the forms $\alpha_\delta$ smoothly).
\end{defn}

\subsection{A contact form on \texorpdfstring{$N$}{N}, and its extension to \texorpdfstring{$M$}{M}}\label{a_contact_form_on_N}

In this section we will construct a contact form $\alpha$ on $N$ which is extendable to the $p/q$-rational open book $M$, taking the form of \cref{contact_form_on_nu_del_N} for $c=1$ and some $a_2>0$ and $b_2\in\R$.  We can then extend $\alpha$ to a contact form on $M$ as follows.

First, referring to \cref{a_1_b_1_a_2_b_2_relations}, set
\begin{align*}
a_1 &= qa_2 >0,\quad\text{and}\\
b_1 &= \frac{b_2}{q}+ma_2.
\end{align*}
Next choose an appropriate slope $\mu$ as in \cref{contact_form_on_no_mans_land_def}.  Then for all $\delta>0$, the forms $\alpha$, $\alpha_\delta$ and $c_\delta\alpha'_{V,\mu}$ glue together to form a valid contact form on $M$.  Refer to \cref{contact_form_on_nu_K_a}.

To construct $\alpha$, first recall that $N$ is the mapping cone of some diffeomorphism $\phi:\Sigma\to\Sigma$ such that $\restr{\phi}{\del\Sigma}=r_{p/q}$.  As in the previous section, identify a neighbourhood of $\del N$ with $\del N\cross[2,2+\epsilon)$.  We will parametrize this neighbourhood by coordinates
\[ (t,\theta,y), \]
dropping the subscripts ``2'' used in the previous sections. We will now construct a contact form on $N$, by an adaptation of the argument used by Colin, Ghiggini and Honda~\cite[Section 9.3.1]{CGH_ECH_OBD}.

Let $d_\Sigma$ denote the exterior derivative on $\Sigma$. Take a 1-form $\beta$ on $\Sigma$ such that $\omega:=d_\Sigma\beta$ is a positive area form for $\Sigma$ and with the boundary constraint that $\beta = c(2-y)\dd\theta$ on a neighbourhood $\del \Sigma\cross[2,2+\epsilon)$ of $\del \Sigma$.  Here $y$ denotes the $[2,2+\epsilon)$ coordinate and $c>0$ is a small constant.

We know that at the boundary the monodromy takes the form
\[\phi(\theta,2) = (r_{p/q}(\theta),2)=(\theta+p/q,2).\]
We say that a map $\Sigma\to\Sigma$ \emph{restricts to $r_{p/q}$ on a neighbourhood of $\del\Sigma$} if there exists some $\epsilon'>0$ such that
\[ \phi(\theta,y) = (r_{p/q}(\theta),y) = (\theta+p/q,y) \]
on $\del \Sigma\cross[2,2+\epsilon')$; by perturbing the diffeomorphism $\phi$, we may assume that it satisfies this requirement.  Let $\mathrm{Symp}(\Sigma, r_{p/q}, \omega)$ denote the group of symplectomorphisms of $(\Sigma,\omega)$ which restrict to $r_{p/q}$ on a neighbourhood of $\del \Sigma$, noting that $\phi$ is not necessarily in $\mathrm{Symp}(\Sigma,r_{p/q},\omega)$.

\begin{lemma}
$\phi$ is isotopic, through maps restricting to $r_{p/q}$ on a neighbourhood of $\del\Sigma$, to an element of $\mathrm{Symp}(\Sigma, r_{p/q}, \omega)$.
\end{lemma}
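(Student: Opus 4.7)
The plan is to apply Moser's trick in a boundary-relative form. First I would observe that on $\del\Sigma\cross[2,2+\epsilon')$ we have $\omega = d_\Sigma(c(2-y)\dd\theta) = -c\,\dd y\wedge\dd\theta$, which is invariant under $r_{p/q}(\theta,y) = (\theta+p/q,y)$. Since $\phi$ restricts to $r_{p/q}$ on this neighbourhood, it follows that $\phi^*\omega = \omega$ in a neighbourhood of $\del\Sigma$. Moreover, because $\phi$ is an orientation-preserving diffeomorphism of $\Sigma$, we have $\int_\Sigma \phi^*\omega = \int_\Sigma \omega$.

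Next I would consider the family of $2$-forms $\omega_t := (1-t)\omega + t\phi^*\omega$ for $t\in[0,1]$. Each $\omega_t$ is a positive area form (being a convex combination of two such forms inducing the same orientation), agrees with $\omega$ near $\del\Sigma$, and satisfies $\int_\Sigma\omega_t = \int_\Sigma\omega$. By Moser's theorem with boundary (applied to the class $[\omega_t-\omega]\in H^2(\Sigma,\del\Sigma;\R)=0$), there exists a smooth family of diffeomorphisms $\psi_t:\Sigma\to\Sigma$ with $\psi_0 = \id$, with $\psi_t^*\omega_t = \omega$, and with $\psi_t$ equal to the identity on a neighbourhood of $\del\Sigma$ for all $t$.

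Setting $\phi_t := \phi\circ\psi_t$, we obtain a smooth isotopy from $\phi_0 = \phi$ to $\phi_1 = \phi\circ\psi_1$. On the one hand, $\psi_t = \id$ near $\del\Sigma$, so each $\phi_t$ still restricts to $r_{p/q}$ near $\del\Sigma$. On the other hand,
\begin{equation*}
\phi_1^*\omega = \psi_1^*\phi^*\omega = \psi_1^*\omega_1 = \omega,
\end{equation*}
so $\phi_1$ is a symplectomorphism. Combined with the boundary behaviour, this gives $\phi_1\in\mathrm{Symp}(\Sigma, r_{p/q},\omega)$, completing the isotopy.

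The only genuine input beyond a bookkeeping check is the boundary-relative Moser lemma itself; the main thing to be careful about is verifying that $\phi^*\omega$ coincides with $\omega$ \emph{on the nose} on a neighbourhood of $\del\Sigma$ (not merely up to an exact form), because this is what allows the primitive $\eta_t$ with $\dot\omega_t = d\eta_t$ to be chosen to vanish near $\del\Sigma$, which in turn ensures that the time-dependent vector field $X_t$ defined by $\iota_{X_t}\omega_t = -\eta_t$ is compactly supported in $\mathrm{int}(\Sigma)$ and therefore integrates to a well-defined isotopy $\psi_t$ fixing a neighbourhood of the boundary.
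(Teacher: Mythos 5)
Your proof is correct and follows essentially the same route as the paper: interpolate $\omega_t$ between $\omega$ and $\phi^*\omega$, apply a boundary-relative Moser argument to get an isotopy $\psi_t$ fixing a neighbourhood of $\del\Sigma$ with $\psi_t^*\omega_t=\omega$, and compose with $\phi$. Your explicit verification that $\phi^*\omega=\omega$ on the nose near $\del\Sigma$ (so that the Moser vector field can be taken to vanish there) is a detail the paper's proof asserts without comment, so this is a welcome addition rather than a divergence.
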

\begin{proof}
Since $\phi$ is an orientation-preserving diffeomorphism, $\phi^*\omega=f\omega$ for some $f>0$.  Then 
\[ \omega_t=t\phi^*\omega+(1-t)\omega= (tf + 1-t)\omega \]
is an area form for all $t\in[0,1]$, and we can apply Moser's trick to obtain an isotopy $\Phi_t:\Sigma\to \Sigma$, equal to the identity map on a neighbourhood of $\del\Sigma$, such that
\[ \Phi_t^*(\omega_t) = \omega_0 = \omega. \]
Then, letting $\phi_t = \phi\circ \Phi_t$, we obtain
\[ \phi_1^*\omega = (\phi\circ\Phi_1)^*\omega = \Phi_1^* \phi^* \omega = \Phi_1^* \omega_1 = \omega. \tag*{\qedhere}\]
\end{proof}

\begin{lemma}
Any $\phi\in\mathrm{Symp}(\Sigma, r_{p/q}, \omega)$ is isotopic, through $\mathrm{Symp}(\Sigma, r_{p/q}, \omega)$, to an element $\phi_1$ such that
$\phi_1^*\beta - \beta = d_\Sigma f$ for some positive function $f$ on $\Sigma$ which is constant near $\del\Sigma$.
\end{lemma}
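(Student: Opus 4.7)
The plan is to use a symplectic flux argument. First I would set $\eta := \phi^*\beta - \beta$ and verify two properties: it is closed, since $d\eta = \phi^*\omega - \omega = 0$ because $\phi$ is a symplectomorphism; and it vanishes on some neighbourhood of $\del\Sigma$, because there $\phi = r_{p/q}$ acts by rotation in the $\theta$ coordinate whilst the model $\beta = c(2-y)\dd\theta$ is rotation-invariant. Consequently the class $[\eta]\in H^1(\Sigma)$ restricts to zero on $\del\Sigma$; equivalently it lies in the image of the natural map $H^1(\Sigma,\del\Sigma)\to H^1(\Sigma)$.

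The next step is to cancel this class by composing $\phi$ with a symplectic isotopy supported away from $\del\Sigma$. By the previous paragraph I can choose a closed 1-form $\tau$ on $\Sigma$, vanishing on a neighbourhood of $\del\Sigma$, with $[\tau]=-[\eta]\in H^1(\Sigma)$. Using non-degeneracy of $\omega$, define the vector field $X$ by $\iota_X\omega=\tau$; it vanishes near $\del\Sigma$, and is symplectic since $L_X\omega=d\tau=0$. Its flow $\psi_s$ therefore lies in $\mathrm{Symp}(\Sigma,r_{p/q},\omega)$ for every $s$. Writing $\psi := \psi_1$, Cartan's formula gives $\tfrac{d}{ds}\psi_s^*\beta = d(\psi_s^*\iota_X\beta) + \psi_s^*\iota_X\omega$, and using that $\psi_s$ is isotopic to the identity one concludes $[\psi^*\beta-\beta]=[\tau]=-[\eta]$ in $H^1(\Sigma)$.

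Finally I would set $\phi_1 := \phi \circ \psi$, so that $s\mapsto \phi\circ\psi_s$ is the required isotopy from $\phi$ to $\phi_1$ through $\mathrm{Symp}(\Sigma,r_{p/q},\omega)$. Expanding,
\[ \phi_1^*\beta - \beta \;=\; \psi^*(\phi^*\beta - \beta) + (\psi^*\beta - \beta) \;=\; \psi^*\eta + (\psi^*\beta - \beta). \]
The first summand is cohomologous to $\eta$ (as $\eta$ is closed and $\psi$ is isotopic to the identity) and the second represents $-[\eta]$, so the sum is exact: $\phi_1^*\beta - \beta = d_\Sigma f$ for some smooth $f$. Near $\del\Sigma$ one has $\phi_1 = r_{p/q}$, hence $\phi_1^*\beta = \beta$ and $d_\Sigma f = 0$ there; since $\del\Sigma$ is connected (we have a single boundary component), $f$ is constant on a collar of $\del\Sigma$. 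Finally, compactness of $\Sigma$ lets us add a constant to $f$ to make it strictly positive without changing $d_\Sigma f$, giving the desired $f$.

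The main obstacle is the flux computation in the second paragraph: one must confirm that isotopies through $\mathrm{Symp}(\Sigma,r_{p/q},\omega)$ — in particular with monodromy frozen to $r_{p/q}$ near the boundary — suffice to realise any class in the image of $H^1(\Sigma,\del\Sigma)\to H^1(\Sigma)$. This reduces to producing a \emph{compactly supported} closed primitive for $-[\eta]$, which is possible precisely because $[\eta]$ already lies in that image; the remainder is a routine Cartan-formula calculation. The positivity of $f$ and its constancy near $\del\Sigma$ are then essentially automatic.
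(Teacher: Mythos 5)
Your flux argument is correct and is essentially the proof the paper relies on; the paper itself simply cites Colin, Ghiggini and Honda~\cite[Section 9.3.1]{CGH_ECH_OBD}, noting only that $\phi^*\beta=\beta$ still holds near $\del\Sigma$ even though $\restr{\phi}{\del\Sigma}\ne\id$ (which you also observe). One small misstatement: $\psi_s$ is \emph{not} itself in $\mathrm{Symp}(\Sigma,r_{p/q},\omega)$, since it equals the identity (not $r_{p/q}$) near $\del\Sigma$; the argument is unaffected because what you actually use is that the composition $\phi\circ\psi_s$ lies in $\mathrm{Symp}(\Sigma,r_{p/q},\omega)$, which is true.
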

The proof of this is identical to that given by Colin, Ghiggini and Honda~\cite[Section 9.3.1]{CGH_ECH_OBD}, with the only difference arising from the fact that $\restr{\phi}{\del\Sigma}\ne \id$.  However it is still the case that $\phi^*\beta=\beta$ near $\del\Sigma$, so the result holds.

As a result of the above two lemmas we can assume that $N=M(\phi)$, where $\phi\in\mathrm{Symp}(\Sigma, r_{p/q}, \omega)$ has the property that $\phi^*\beta - \beta = d_\Sigma f$ for some $f$ which is identically equal to some constant $C>0$ near $\del\Sigma$. 

We now construct a contact form on $N$.  Start by taking a function $\chi:[0,1]\to[0,1]$ such that
\begin{itemize}
\item $\chi(0)=0$ and $\chi(1)=1$,
\item $\dot{\chi}\ge0$, and
\item $\chi$ is constant near $0$ and $1$.
\end{itemize}
Then choose some $C'>0$ and define $\beta_t := \chi(t)\phi^*\beta+(1-\chi(t))\beta$ and $f_t := \dot{\chi}(t)f+C'$, which is greater or equal to $C'$ everywhere.

We then define $\alpha_1 = f_t\dd t+\beta_t$ on $\Sigma\cross[0,1]$.  Near 0 and 1, $\chi(t)$ and $\beta_t$ are constant and $f_t\equiv C'$, and hence $\alpha_1$ descends to the mapping torus $M(\phi)$.  To verify that $\alpha_1$ is a contact form we make the following calculations.  First note that
\[ \dot{\beta}_t = \dot{\chi}(t)(\phi^*\beta - \beta) = \dot{\chi}(t)(d_\Sigma f) = d_\Sigma f_t \]
and
\[ d_\Sigma\beta_t = \chi\phi^*d_\Sigma\beta + (1-\chi)d_\Sigma\beta = \chi\phi^*\omega + (1-\chi)\omega = \omega. \]
Then
\[ d\alpha_1 = d_\Sigma f_t \wedge dt + d_\Sigma \beta_t+dt\wedge \dot{\beta}_t = d_\Sigma f_t \wedge dt + \omega +dt\wedge d_\Sigma f_t = \omega,\]
so $\alpha_1\wedge d\alpha_1 = f_t>0$ and hence $\alpha_1$ is a contact form with Reeb vector field $R_{\alpha_1}=f_t\del_t$.  We now go a step further than Colin, Ghiggini and Honda, and reparametrize the $t$ coordinate so that $\alpha$ is invariant in the $t$ direction near $\del N$.

More precisely, let
\[ t' = \frac{1}{C'+C}(C't + \chi(t)C), \]
then
\[ (C'+C)\dd t' = (C'+\dot\chi(t)C) \dd t \]
and hence near $\del N$, where $f_t = C'+\dot\chi(t)C$ and $\beta_t=\beta=c(2-y)\dd\theta$, we have
\[ \alpha_1 = (C'+C)\dd t' + c(2-y)\dd\theta. \]

The final stage in this section is to perturb $\alpha_1$ near $\del N$ so that it will glue nicely to the contact forms $\alpha_\delta$ on $T^2\cross[1,2]$.

On $T^2\cross[2,2+\epsilon)$, we currently have $\alpha_1=f(y)\dd t + g(y) \dd\theta$ where $f(y)=C'+C$ and $g(y)=c(2-y)$.  Extend $\alpha_1$ to $\tilde{N}:=T^2\cross[2-\epsilon,2]\union N$ by extending $f$ and $g$ to $[2-\epsilon,2]$ such that the following hold:
\begin{itemize}
\item $f$ and $g$ satisfy the contact condition in \cref{contact_condition_lemma},
\item $f$ and $g$ remain close to $f(2)$ and $g(2)$, and
\item near $y=2-\epsilon$,
\begin{align*}
f(y) &= f(2-\epsilon) + \frac{1}{q}(y-(2-\epsilon))^2,\quad\text{and} \\
g(y) &= g(2-\epsilon) - q(y-(2-\epsilon)) - m(y-(2-\epsilon))^2,
\end{align*}
where $q$ and $m$ are the integers used to define the transformation $A$ in \cref{contact_forms_and_rational_open_book_decompositions}.
\end{itemize}
\begin{figure}\centering
  \begin{tikzpicture}%
		\draw [->] (0,0.5) -- (0,4) node [above left] {$g$};
		\draw [->] (0,1.5) -- (4,1.5) node [below] {$f$};
		\draw [thick,*-*,shorten <=-3pt, shorten >=-3pt](1.5,2.5) node [above right] {$(f(2-\epsilon),g(2-\epsilon))$} to [out=-90,in=90] (2,1.5) node [above right] {$(f(2),g(2))=(C+C',0)$};
		\draw [thick] (2,1.5) to (2,1.2);
		\draw [thick,dashed] (2,1.2) to (2,0.7);

	\end{tikzpicture}
  \caption{The extended contact form $\alpha_2$, defined on $\tilde{N}$, which is pushed back onto $N$ by the diffeomorphism $\Phi$ to form $\alpha_3$.  The shape of the curve near $y=2-\epsilon$ is defined to match that of \cref{contact_form_on_nu_del_N_fig} so that it is extendable to the rational open book.}
	\label{contact_form_on_extension_of_N_fig}
\end{figure}
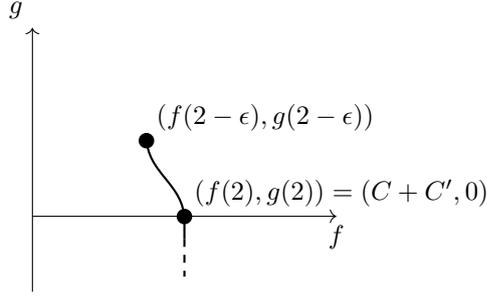
See \cref{contact_form_on_extension_of_N_fig}. Denote this contact form on $\tilde{N}$ by $\alpha_2$.  We use a deformation retract to push this form back onto $N$:  let $a(y)$ be a smooth function on $\tilde{N}$ which is 1 when $y\le2$ and is supported on $T^2\cross[2-\epsilon,2+\frac{1}{2}\epsilon]$.  Then define a diffeomorphism
\[ \Phi: \tilde{N} \to N \]
by flowing along the vector field $X=a(y)\del_y$.  Define $\alpha_3 := \Phi_*\alpha_2$ and also let $a_2=f(2-\epsilon)>0$ and $b_2=g(2-\epsilon)$.  Then $\alpha_3$ is extendable to the $p/q$-rational open book as required.

We are now ready to define $\alpha$, which is a small non-degenerate perturbation of $\alpha_3$:

\begin{defn}\label{contact_form_alpha_on_N}
Define the contact form $\alpha$ on $N$ by making a small $C^\infty$ perturbation of $\alpha_3$ so that $\alpha$ is non-degenerate on $\mathrm{int}(N)$ and the only Morse-Bott torus that remains is $\del N$.  This perturbation is done in such a way that $\alpha$ and $\alpha_3$ agree to infinite order along $\del N$ so that $\alpha$ is still extendable to the rational open book.
\end{defn}

\section{Computing ECH via a filtration argument}\label{computing_ech_via_a_filtration_argument}

Choose a sequence $\delta_i$ tending to $0$ and the corresponding $\alpha_{\delta_i}$ and $c_{\delta_i}\alpha_{V,\mu}'$ on $T^2\cross[1,2]$ and $V$ respectively.  For simplicity of notation we will drop the $\mu$ for the remainder of this chapter, writing only $\alpha_V'$.  Note that the values $c_{\delta_i}>0$ are decreasing and convergent---their limit is positive as can be seen in \cref{contact_forms_on_3_pieces}.  These two pieces glue to the contact form $\alpha$ on $N$ to produce a sequence of commensurate Morse-Bott contact forms $\alpha_i$ on $M$.  See \cref{contact_form_on_nu_K_a}. As $\delta_i$ tends to $0$, the action of all Reeb orbits in $T^2\cross(1,2)$ tend to infinity, so choose a sequence of real numbers $L_i$ tending to $\infty$ such that all orbits in $T^2\cross(1,2)$ have $\alpha_i$-action greater than $L_i$.  The idea of the main theorem is to use a direct limit argument applied to the homology groups $ECH^{L_i}_\MB(M,\alpha_i)$.  This way we can essentially ignore all orbits in the region $T^2\cross(1,2)$---for this reason it is referred to as \emph{no man's land}.

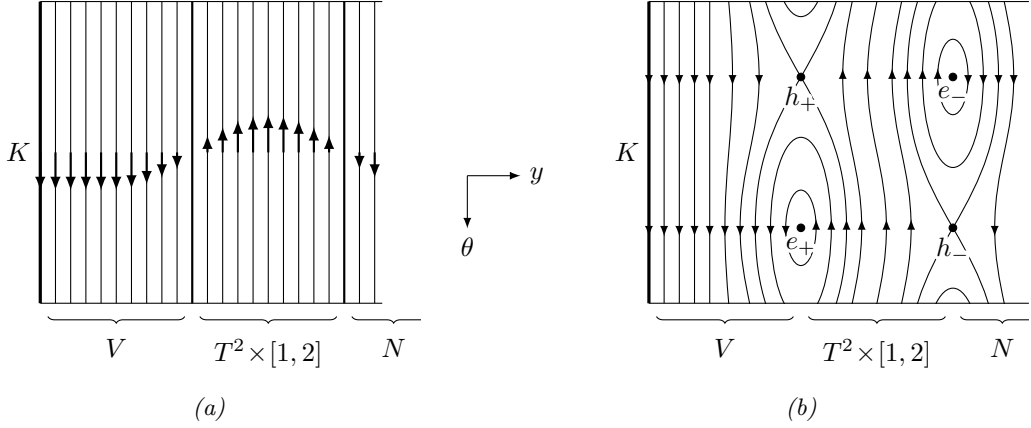
\begin{figure}\centering
  \begin{subfigure}{0.44\textwidth}\centering
		\begin{tikzpicture} %
			\draw  (0,0) --(4.5,0);
			\draw (0,4) --(4.5,4);
			\draw [very thick] (0,0)--(0,4); %
			\node [left] at (0,2) {$K$};

			\foreach \x in {0.2, 0.4,...,4.4} {
				 \draw [thin](\x,0)--(\x,4);
			}
			\draw [thick](2,0)--(2,4);
			\draw [thick](4,0)--(4,4);
			\foreach \x in {0,0.2,0.4,...,1} {
				\draw [-latex,thick,shorten >=-3pt] (\x,2)-- ++(0,-0.4);
			}
			\foreach \x in {-0.8,-0.6,...,-0.2,0.2,0.4,...,1.8,2.2,2.4} {
				\draw [-latex,thick,shorten >=-3pt] (2+\x,2)-- ++(0,{0.4*sin(90*\x)});
			}

			\draw [decorate,decoration={brace,mirror,raise=5pt}] (0.1,0) --node[below=10pt] {$V$} (1.9,0) ;
			\draw [decorate,decoration={brace,mirror,raise=5pt}] (2.1,0) --node[below=10pt] {$T^2\cross[1,2]$} (3.9,0) ;
			\begin{scope}
			\path [clip] (4,-1) rectangle (5,0);
			\draw [decorate,decoration={brace,mirror,raise=5pt}] (4.1,0) --node[below=10pt] {$N$} (5.2,0) ;
			\end{scope}
		\end{tikzpicture}
		\caption{}
		\label{contact_form_on_nu_K_a}
	\end{subfigure}
	\begin{subfigure}[c]{0.1\textwidth}
	  \begin{tikzpicture}
		  \figureaxes{0}{0}{y}{}{}{\theta}
		\end{tikzpicture}
	\end{subfigure}
	\begin{subfigure}{0.44\textwidth}\centering
		\begin{tikzpicture} %
			\draw  (0,0) --(5,0);
			\draw (0,4) --(5,4);
			\draw [very thick] (0,0)--(0,4); %
			\node [left] at (0,2) {$K$};

			\begin{scope}
				\path [clip] (-1,0) rectangle (5,4);

				\draw [thin] (1.1,-1) to [out=90,in=-90] (1,1) to [out=90,in=-90] (1.1,3) to [out=90,in=-90] (1,5);
				\draw [-latex,shorten >=-3pt] (1,1)-- +(-90:0.01);
				\draw [-latex,shorten >=-3pt] (1.1,3)-- +(-90:0.01);
				\foreach \x in {0,3.35} {
					\draw [thin] (\x+1.45,-1) to [out=90,in=-90] (\x+1.2,1) to [out=90,in=-90] (\x+1.45,3) to [out=90,in=-90] (\x+1.2,5);
					\draw [-latex,shorten >=-3pt] (\x+1.2,1)-- +(-90:0.01);
					\draw [-latex,shorten >=-3pt] (\x+1.45,3)-- +(-90:0.01);
				}
				\foreach \x in {0,0.325,0.65} {
					\draw [thin] (\x+2.55,-1) to [out=90,in=-90] (\x+2.8,1) to [out=90,in=-90] (\x+2.55,3) to [out=90,in=-90] (\x+2.8,5);
					\draw [-latex,shorten >=-3pt] (\x+2.8,1)-- +(90:0.01);
					\draw [-latex,shorten >=-3pt] (\x+2.55,3)-- +(90:0.01);
				}
				
				\foreach \y in {0,4} {
					\draw [thin] (2,\y+3) to [out=-120, in=90] (1.4,\y+1) to [out=-90,in=120] (2,\y-1);
					\draw [thin] (2,\y+3) to [out=-60, in=90] (2.6,\y+1) to [out=-90,in=60] (2,\y-1);
					\draw [thin] (4,5-\y) to [out=-120, in=90] (3.4,3-\y) to [out=-90,in=120] (4,1-\y);
					\draw [thin] (4,5-\y) to [out=-60, in=90] (4.6,3-\y) to [out=-90,in=60] (4,1-\y);

					\draw [thin] (2,\y+1) ellipse (0.4 and 1.2);
					\draw [thin] (4,3-\y) ellipse (0.4 and 1.2);
				}

				\draw [thin] (2,1) ellipse (0.2 and 0.5);
				\draw [thin] (4,3) ellipse (0.2 and 0.5);

				\foreach \x in {0,0.2,0.4,...,0.8} {
					\draw [thin] (\x,0)--(\x,4);
					\draw [-latex,shorten >=-3pt] (\x,1)-- +(-90:0.01);
					\draw [-latex,shorten >=-3pt] (\x,3)-- +(-90:0.01);
				}

				\draw [-latex,shorten >=-3pt] (1.4,1)-- +(-90:0.01);
				\draw [-latex,shorten >=-3pt] (1.6,1)-- +(-90:0.01);
				\draw [-latex,shorten >=-3pt] (2.2,1)-- +(90:0.01);
				\draw [-latex,shorten >=-3pt] (2.4,1)-- +(90:0.01);
				\draw [-latex,shorten >=-3pt] (2.6,1)-- +(90:0.01);
				\draw [-latex,shorten >=-3pt] (1.8,1)-- +(-90:0.01);
				\draw [-latex,shorten >=-3pt] (3.4,3)-- +(90:0.01);
				\draw [-latex,shorten >=-3pt] (3.6,3)-- +(90:0.01);
				\draw [-latex,shorten >=-3pt] (3.8,3)-- +(90:0.01);	
				\draw [-latex,shorten >=-3pt] (4.2,3)-- +(-90:0.01);
				\draw [-latex,shorten >=-3pt] (4.4,3)-- +(-90:0.01);
				\draw [-latex,shorten >=-3pt] (4.6,3)-- +(-90:0.01);
				
			\end{scope}

			\draw [fill] (2,1) circle (0.05) node [below] {\contourlength{1.5pt}\contour{white}{$e_+$}};
			\draw [fill] (2,3) circle (0.05) node [below] {\contourlength{1.5pt}\contour{white}{$h_+$}};
			\draw [fill] (4,1) circle (0.05) node [below] {\contourlength{1.5pt}\contour{white}{$h_-$}};
			\draw [fill] (4,3) circle (0.05) node [below] {\contourlength{1.5pt}\contour{white}{$e_-$}};

			\draw [decorate,decoration={brace,mirror,raise=5pt}] (0.1,0) --node[below=10pt] {$V$} (1.9,0) ;
			\draw [decorate,decoration={brace,mirror,raise=5pt}] (2.1,0) --node[below=10pt] {$T^2\cross[1,2]$} (3.9,0) ;
			\begin{scope}
				\path [clip] (4,-1) rectangle (5,0);
				\draw [decorate,decoration={brace,mirror,raise=5pt}] (4.1,0) --node[below=10pt] {$N$} (5.2,0) ;
			\end{scope}
		\end{tikzpicture}
		\caption{}
		\label{contact_form_on_nu_K_b}
	\end{subfigure}
  \caption{In these figures we see a single page of the open book decomposition of $\nu(K)=V\union T^2\cross [1,2]$; spin each figure around $K$ and identify the top and bottom to obtain a solid torus.  The flows represent the return map of the Reeb vector field.  In (a) we see a positive Morse-Bott torus at $\del V$ which is perturbed into two non-degenerate orbits $h_+$ and $e_+$ in (b).  Likewise the negative Morse-Bott torus at $\del N$ is perturbed into $e_-$ and $h_-$.}
	\label{contact_form_on_nu_K}
\end{figure}

Two Morse-Bott tori we cannot ignore however are the tori $T^2\cross\set{1}=\del V$ and $T^2\cross\set{2}=\del N$.  For this reason we will perturb each $\alpha_i$ to obtain an $L_i$-non-degenerate contact form $\alpha_i'$, in the manner laid out in \cref{perturbation_fn_g}.  $\del N$ is a negative Morse-Bott torus and is perturbed into two orbits which we label $e_-$ and $h_-$; $\del V$ is a positive Morse-Bott torus and is perturbed into two orbits $e_+$ and $h_+$.  See \cref{contact_form_on_nu_K_b}. Hence we have two chain complexes for each $i$: $ECC^{L_i}(M, \alpha_i')$ with ECH differential $d_i$ and $ECC^{L_i}_\MB(M, \alpha_i)$ with Morse-Bott differential $d^\MB_i$.  Each pair of chain complexes are trivially isomorphic as vector spaces but the differentials are not necessarily equal since the contact forms $\alpha_i$ are not necessarily nice (recall \cref{non-degen_close_to_MB}).  

\begin{convention}Note that here we have dropped the choice of (regular, adapted) almost complex structure for these complexes from the notation, and we will continue to make this omission for the remainder of the chapter.
\end{convention}

Recall that our aim is to express $ECH(M)$ entirely in terms of the ``relative'' embedded contact homology of $N$.  For this reason it is necessary to understand the behaviour of the orbits outside $N$.  As well as the four orbits $e_-$, $h_-$, $e_+$ and $h_+$, the other orbits which we must understand are those in $\mathrm{int}(V)$.  The idea of the proof by Colin, Ghiggini and Honda~\cite{CGH_ECH_OBD} is to use a filtration argument to exclude these orbits:  the authors define a filtration on the collection of orbit sets which is given by the number of times an orbit set winds around the $\theta$ direction \emph{inside $V$}.  They then prove that on the first page of this spectral sequence all generators with filtration level greater than 0 are killed - hence the spectral sequence converges at the first page and the embedded contact homology can be computed without ever considering the orbits in $\mathrm{int}(V)$ at all.

In practice, the proof is a lot more technical, since the Morse-Bott and non-degenerate contact forms, as well as the direct limits, must be handled carefully.  From this point forwards, all results carry directly over from the work of Colin, Ghiggini and Honda~\cite{CGH_ECH_OBD}---we will simply outline the method of the proof to complete the section.

\subsection{A sequence of filtrations on the ECH complexes}\label{a_sequence_of_filtrations_on_the_ECH_complexes}

The first step is to identify each of $ECC^{L_i}(M, \alpha_i')$ and $ECC^{L_i}_\MB(M, \alpha_i)$, as vector spaces, with a subspace of $ECC(V,\alpha_V')\tensor ECC(N,\alpha)$.  This can be done canonically since we do not see any orbits in the no man's land and $\restr{\alpha_i}{V}=c_{\delta_i}\alpha_V'$ so all Reeb orbits in $V$ coincide up to rescaling (recall that the perturbed form $\alpha_i'$ leaves all $\alpha_i$-orbits of length less than $L_i$ untouched).  This means that every orbit set can be written as $\gamma\tensor\Gamma$ where $\gamma$ and $\Gamma$ are orbit sets in $V$ and $N$ respectively. 

We next define, for every $i$, an ascending filtration $\mathcal{F}_i$ on $ECC^{L_i}(M, \alpha_i')$.  This gives rise to a spectral sequence $E^r(\mathcal{F}_i)$ which converges to $ECH^{L_i}(M, \alpha_i')$.  Every $\mathcal{F}_i$ is defined in the same way:

\begin{defn}\label{filtrations_def}
Identify $H_1(V;\Z)$ with $\Z$ such that the core of $V$, oriented in the positive $\theta$ direction, is identified with 1, and denote this identification by $\eta$.

Define $\mathcal{F}_i$ by
\[ \mathcal{F}_i(\gamma\tensor\Gamma) = \eta(\gamma) \ge 0, \]
extended linearly to $ECC^{L_i}(M,\alpha_i')$, and define
\[ \mathcal{F}_i^p = \set{ x\in ECC^{L_i}(M,\alpha_i') \gappy{|} \mathcal{F}_i(x)\le p}. \]
\end{defn}

We will apply slope calculus (\cref{slope_calculus}) and positivity of intersection (\cref{positivity_of_intersections_3}) to the region
\[ (\del V \sminus K)\union (T^2\cross[1,2])\iso T^2\cross (0,2]\]
to see that both differentials $d_i$, $d^\MB_i$ respect the filtration $\mathcal{F}_i$.

To see this for $d_i^\MB$, suppose that $u$ is the non-connector part of a Morse-Bott building between $\gamma\tensor\Gamma$ and $\gamma'\tensor\Gamma'$ and choose some torus $T_{\rho_i}$ of large irrational slope $r_i$ such that all orbits in $\gamma$ and $\gamma'$ lie in the region $\set{y_0 < \rho_i}$ (c.f.~the definition of $\alpha_V'$).  Then by slope calculus the homology class $[u_{T^2\cross\set{\rho_i}}]$ of the slice of $u$ through $T^2\cross\set{\rho_i}$ is equal to
\[ [\gamma']-[\gamma]\in H_1(T^2\cross(0,2]).\]
Positivity of intersection then states that
\begin{align*}
 0 &< ([\gamma']-[\gamma])\cdot r_i \\
\iff 0 &< \big((\eta(\gamma')-\eta(\gamma))[l] + k[m]\big)\cdot \big(r_i[m] + [l]\big) = k - r_i(\eta(\gamma')-\eta(\gamma)), 
\end{align*}
where $k\ge 0$ is the number of times that $\gamma\union\gamma'$ winds around the core $K$.  As we can choose $i$ to make $r_i$ arbitrarily large, this inequality holds if and only if $\eta(\gamma')\le\eta(\gamma)$.

To argue the same result for the differential $d_i$, we must invoke \cref{MB_building_exists_epsilon_small_enough}, assuming that $\alpha_i'$ is close enough to $\alpha_i$ that there exists a Morse-Bott building for every holomorphic curve.  We can then proceed by the argument above.

In the Morse-Bott case we can go a step further: if $\eta(\gamma')=\eta(\gamma)$ then the slope of $u$ at $T^2\cross\set{\rho_i}$ is infinite (i.e.~parallel to the meridianal slope) or zero.  If non-zero, then since the slope of the Morse-Bott torus $\del V$ is also infinite, the \refNamedThm{Blocking Lemma}{blocking_lemma} implies that $u$ is one-sided at $\del V$.  Furthermore the \refNamedThm{Trapping Lemma}{trapping_lemma} implies that such one-sided ends of $u$ at $\del V$ can only be positive ends.  Similarly, $u$ can only have negative one-sided ends at $\del N$.  We will state this as a proposition:

\begin{prop}\label{filtration_non-decreasing_must_be_one-sided}
If $u$ is the non-connector part of a Morse-Bott building which does not decrease $\mathcal{F}_i$-filtration level then $u$ can only have one-sided positive ends at $\del V$ and one-sided negative ends at $\del N$.
\end{prop}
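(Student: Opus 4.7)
The plan is to deduce this proposition by applying the three main tools from the preceding sections---\cref{slope_calculus,blocking_lemma,trapping_lemma}---to the two Morse-Bott tori $\del V$ and $\del N$ which bound the region $T^2\cross(0,2]$ separating $\mathrm{int}(V)$ from $\mathrm{int}(N)$. First I would note that the hypothesis, combined with the inequality $\eta(\gamma')\le\eta(\gamma)$ established in the argument preceding the statement (where $\gamma\tensor\Gamma$ and $\gamma'\tensor\Gamma'$ denote the positive and negative orbit sets of the Morse-Bott building containing $u$), forces the strict equality $\eta(\gamma')=\eta(\gamma)$; the filtration is exactly preserved by $u$.

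Next I would apply slope calculus to the non-connector $u$ on each slice $T^2\cross\set{y}$ of the no man's land, and also on each torus $T_{\rho_i}$ of irrational slope sitting on the $V$-side of $\del V$. In all cases, the computation yields
\[ [u_{T^2\cross\set{y}}] \;=\; (\eta(\gamma')-\eta(\gamma))[l] \;+\; k[m] \;=\; k[m] \]
for some non-negative integer $k$, so the homology class is either zero or has slope equal to the meridianal slope $[m]$, which coincides with the (infinite) slope of both $\del V$ and $\del N$. If $k=0$ on a given side, part~(1) of the \refNamedThm{Blocking Lemma}{blocking_lemma} implies that $u$ is disjoint from a neighbourhood of the corresponding torus, so $u$ has no ends there and the conclusion is vacuously satisfied. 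Otherwise the slopes of the adjacent slices match the slope of the torus on both sides, and part~(2) of the \refNamedThm{Blocking Lemma}{blocking_lemma} forces $u$ to have ends on that torus; positivity of intersection with the trivial cylinders over orbits in the Morse-Bott family (\cref{positivity_of_intersections_3}) prevents $u$ from crossing the torus transversally, so each such end must be one-sided. Finally, the \refNamedThm{Trapping Lemma}{trapping_lemma} fixes the signs of these one-sided ends: since $\del V$ is a \emph{positive} Morse-Bott torus, its one-sided ends are positive, and since $\del N$ is a \emph{negative} Morse-Bott torus, its one-sided ends are negative.

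The main obstacle is the passage from \emph{matching slopes} to \emph{one-sidedness} of the ends. The Blocking Lemma in its stated form only guarantees \emph{existence} of an end at the torus; to rule out a configuration in which the curve oscillates across $\del V$ (or $\del N$) as it approaches the asymptotic orbit, one must combine it carefully with positivity of intersection against the foliation of the Morse-Bott torus by trivial cylinders, exploiting the fact that the slopes on the two sides of the torus agree with that of the torus itself. Once this is in place, the Trapping Lemma step is immediate and gives the sign constraints advertised in the statement.
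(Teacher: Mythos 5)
Your argument follows the same route as the paper: exact preservation of $\mathcal{F}_i$, slope calculus to pin the homology class of $u$ at each slice to a multiple of $[m]$, then the Blocking Lemma to force ends at the two Morse--Bott tori, and finally the Trapping Lemma for signs. The structure is identical; you have simply made one implicit step explicit. The paper writes ``the Blocking Lemma implies that $u$ is one-sided at $\del V$,'' compressing into one phrase what is really two claims --- existence of ends at $\del V$ (this is literally what the Blocking Lemma gives) and their one-sidedness (which is not part of the statement of the Blocking Lemma). You correctly flag this as the main friction point and supply the missing ingredient: positivity of intersection against the cylinders over the Morse--Bott family, together with the matching of slopes on both sides, forbids transverse crossings near the end. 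That is the right way to close the gap, and your version is in this respect more careful than the paper's own narrative.

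One small imprecision: in the $k=0$ case you invoke part~(1) of the Blocking Lemma, asserting that $u$ is disjoint from a neighbourhood of the torus. Part~(1) requires knowing $u$ is homotopic \emph{rel ends} to a map whose image avoids $T\cross\R$, and vanishing of the homology class $[u_T]$ does not by itself produce such a homotopy. A cleaner way to dispatch $k=0$ is to go directly through positivity of intersections (\cref{positivity_of_intersections_3}) on a nearby non-degenerate slice, which already gives $u_T=\emptyset$. This does not change the outcome, but the citation as written is not quite justified.
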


This is an important result since we can apply the following Proposition:

\begin{prop}\label{one-sided_implies_nice}
Suppose that $u\in \mathcal{M}_J^{MB, I=1}(\Gamma_+, \Gamma_-)$ is a Morse-Bott building such that every end of each non-connector holomorphic component of $u$ at a Morse-Bott torus is one-sided.  Then $u$ is nice.
\end{prop}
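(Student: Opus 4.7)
The plan is to combine additivity of the ECH index over the holomorphic components of the Morse-Bott building with a positivity lower bound on the ECH index of each non-connector component, using the one-sidedness hypothesis to rule out the degenerate configurations at Morse-Bott tori that would otherwise spoil the bound.

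First I would decompose the holomorphic part of $u$ into its maximal connected pieces $u_1,\dots,u_n$. Since the ECH index depends only on the relative homology class of a Morse-Bott building, and since this class is additive under taking disjoint union of pieces (the gradient flow-line connections on the Morse-Bott tori do not alter it), one has
\[ 1 \;=\; I(u) \;=\; \sum_{i=1}^{n} I(u_i). \]
The Morse-Bott analogue of \cref{index_0_1_and_2_holo_curves} gives $I(u_i) \ge 0$ for each $i$, with $I(u_i) = 0$ precisely for connector components (possibly branched, possibly multiply covered trivial cylinders).

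The heart of the argument is then to establish $I(u_i) \ge 1$ for every non-connector component $u_i$. In the non-Morse-Bott situation, this is Hutchings' index inequality $\mathrm{ind}(u_i) \le I(u_i)$ combined with $\mathrm{ind}(u_i) \ge 1$ for a simply covered non-cylindrical curve. The subtlety in the Morse-Bott setting is that ends of $u_i$ at a single Morse-Bott torus $T$ could, a priori, pair up with opposite signs and allow the partition conditions to degenerate, producing non-connector components of ECH index zero. The one-sidedness hypothesis is exactly what prevents this: by the \refNamedThm{Trapping Lemma}{trapping_lemma}, a one-sided end of $u_i$ at a Morse-Bott torus $T$ must be positive when $T$ is positive and negative when $T$ is negative, so the ends of $u_i$ at any given torus all have the same sign. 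This forces the partition data at each Morse-Bott end to behave exactly as in the non-Morse-Bott setting, and the standard chain of inequalities $I(u_i) \ge \mathrm{ind}(u_i) \ge 1$ goes through.

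Combining these two steps, the number of non-connector components is at most $\sum I(u_i) = 1$, so $u$ has at most one non-connector component and is therefore nice. The main obstacle I anticipate is the middle step: a careful verification that Hutchings' partition-condition analysis extends to Morse-Bott ends under the one-sidedness hypothesis, and in particular that one-sidedness is precisely the topological input needed to preserve the lower bound $I(u_i) \ge 1$. Once this is in hand, the counting argument in the final step is immediate.
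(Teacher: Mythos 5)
Your overall strategy---localize the ECH index to the components of the building and count---is the same as the paper's, but two of your intermediate claims are false as stated and need repair.

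First, the ECH index is \emph{not} additive over disjoint components: the relative homology class is additive, but $I$ is not a linear function of that class, because the self-intersection term $Q_\tau([Z],[Z])$ in \cref{ECH_index_formula} is quadratic. What is true is an \emph{inequality}, $I(u) \ge \sum_i k_i I(v_i)$, where each $u_i$ is written as a $k_i$-fold cover of a simply-covered curve $v_i$; the cross terms and multiple-cover corrections are non-negative by positivity of intersections (this is Hutchings~\cite[Theorem 5.1]{Hu09}, which is exactly what the paper invokes). Fortunately the inequality points in the direction you need, so your counting step survives, but your justification for it does not. Second, and relatedly, the index inequality $\ind(u_i)\le I(u_i)$ is only valid for \emph{simply-covered} curves, so your chain $I(u_i)\ge\ind(u_i)\ge 1$ breaks down for a multiply-covered non-connector component. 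The paper's route is to pass to the underlying simple curves $v_i$, perturb to the non-degenerate setting to get $J_\epsilon$-holomorphic curves $v_{i,\epsilon}$ with $I(v_{i,\epsilon})\ge 1$ by \cref{index_0_1_and_2_holo_curves}, and then apply the Hutchings inequality to conclude $1\ge\sum k_i$, which forces $m=1$ and $k_1=1$ simultaneously.

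You have also slightly misidentified what one-sidedness buys. It is not about partition conditions degenerating; it is about how the components are wired together through the Morse-Bott tori. By the \refNamedThm{Trapping Lemma}{trapping_lemma}, one-sidedness forces all ends of the non-connector part at a given torus to have the same sign, so the gradient flow line attached to each such end must run directly to an orbit of $\Gamma_+$ or $\Gamma_-$ rather than to an end of another non-connector component. This is what allows one to replace $u$ by a building $\tilde u$ in which each $u_i$ is individually capped off by flow lines to $\Gamma_\pm$ --- so that $I(\tilde u)=I(u)$ and the per-component quantities $I(v_{i,\epsilon})$ are even well-defined. Without that step the decomposition of the index over components has no meaning, and this is the point your proposal leaves open.
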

\begin{proof}
This is an adaptation of work by Colin, Ghiggini and Honda~\cite[Lemma 7.1.2]{CGH_ECH_OBD}, which states that if a Morse-Bott contact form only has Morse-Bott tori at the boundary then it is nice.

Denote the connected components of the non-connector part of $u$ by $\set{u_i \gappy{|} 1\le i\le m}$ and let $\set{\mathcal{N}_j}$ be the set of Morse-Bott tori containing ends of the non-connector part of $u$.  Since all such ends are one-sided the \refNamedThm{Trapping Lemma}{trapping_lemma} implies that if $\mathcal{N}_j$ is positive (resp.~negative) then all ends of $u$ at $\mathcal{N}_j$ are positive (resp.~negative).

Consider a single end of some $u_i$ at some $\gamma$ in $\mathcal{N}_j$, and without loss of generality assume that it is a positive end.  Then according to the definition of a Morse-Bott building there is a gradient flow line in $\mathcal{N}_j$ ending at $\gamma$.  Ignoring possible interruptions at connector components, the other end of this gradient flow line must be an orbit in $\Gamma_+$, since no non-connector components have negative ends in $\mathcal{N}_j$.

The idea is to replace $u$ by a simpler Morse-Bott building $\tilde{u}$ with the same ECH index.  We can then compute a bound for $I(\tilde{u})$ which forces $m=1$.  Construct the Morse-Bott building $\tilde{u}$ by augmenting each $u_i$ at the positive end with gradient flow line from the corresponding orbit in $\Gamma_+$, and at the negative end with a gradient flow line to the corresponding orbit in $\Gamma_-$.  Then the ECH indices of $u$ and $\tilde{u}$ are equal, since they define the same relative homology class and have the same ends.  

We will now compute a lower bound for $I(\tilde{u})$ to complete the proof. Write each $u_i$ as a $k_i$-th cover of some simply covered $J$-holomorphic curve $v_i$ and augment each $v_i$ to form a simply-covered very nice building $\tilde{v}_i$.  Such a Morse-Bott building can be perturbed to form a $J_\epsilon$-holomorphic map $v_{i,\epsilon}$~\cite[Theorem 4.4.3(2)]{CGH_ECH_OBD}, which, by \cref{index_0_1_and_2_holo_curves} has ECH index greater than 0.

Finally, by a result of Hutchings~\cite[Theorem 5.1]{Hu09},
\[ 1 = I(u) = I(\tilde{u}) \ge \sum_{1\le i\le m}{k_i I(v_{i,\epsilon})} \ge \sum_{1\le i\le m}{k_i}, \]
hence $m=1$ and $k_1=1$.
\end{proof}

Let $d_{i,0}$ and $d^\MB_{i,0}$ denote the parts of $d_i$ and $d^\MB_i$ respectively which do not decrease filtration level $\mathcal{F}_i$.  Then \cref{filtration_non-decreasing_must_be_one-sided,one-sided_implies_nice} imply that even though the Morse-Bott forms $\alpha_i$ are not necessarily nice, they are nice \emph{with respect to the differentials $d^\MB_{i,0}$} (c.f.~\cref{niceness_defn}).

Therefore, by \cref{non-degen_close_to_MB},
\[ d^\MB_{i,0} = d_{i,0} \]
for all $i$ and we can use Morse-Bott theory to compute $E^1(\mathcal{F}_i)$.

\subsection{Taking limits and passing to the first page}

Colin, Ghiggini and Honda~\cite[Section 8.4]{CGH_ECH_OBD} analyze a \emph{finite energy foliation} of psuedoholomorphic curves in $T^2\cross[1,2]$ to show that there are precisely two Morse-Bott differentials
\begin{align*}
e_+ &\longrightarrow h_- \\
h_+ &\longrightarrow e_-
\end{align*}
in $T^2\cross[1,2]$.  There are also two gradient trajectories from $e_+$ to $h_+$, and two from $h_-$ to $e_-$, which cancel since we are working over the field $\F_2$.  Hence we obtain the following proposition:

\begin{prop}[{\nogapcite[Lemma 9.5.3]{CGH_ECH_OBD}}]\label{differential_on_E_0_i}
The differential $d_{i,0}$ on $E^0(\mathcal{F}_i)$ is given by:
\[ d_{i,0}(\gamma\tensor\Gamma) = d_V(\gamma)\tensor\Gamma + (\gamma/e_+)\tensor h_-\Gamma + (\gamma/h_+)\tensor e_-\Gamma + \gamma\tensor d_N(\Gamma). \]
\end{prop}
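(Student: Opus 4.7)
The plan is to compute each contribution to $d_{i,0}$ by determining which of the three regions $\overline{V}$, $T^2 \cross [1,2]$, or $\overline{N}$ contains the (unique) non-connector holomorphic component of the contributing Morse-Bott building. The four terms on the right hand side of the formula will correspond, respectively, to non-connectors in $\overline{V}$, the two ECH-index-$1$ curves in the no man's land $T^2 \cross [1,2]$, and non-connectors in $\overline{N}$.

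First I would use \cref{filtration_non-decreasing_must_be_one-sided} together with \cref{one-sided_implies_nice} to deduce that every Morse-Bott building $u$ contributing to $d_{i,0}$ is nice (so its non-connector part $u_1$ is connected) and that all ends of $u_1$ at the Morse-Bott tori $\del V$ and $\del N$ are one-sided. Combining this one-sidedness with the \refNamedThm{Blocking Lemma}{blocking_lemma} applied in a neighborhood of $\del V$ and of $\del N$, the image of $u_1$ cannot cross either bounding torus transversely, and so it is contained in exactly one of $\overline{V}$, $T^2 \cross [1,2]$, or $\overline{N}$ (with ends possibly on the bounding tori).

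Next I would treat each region in turn. If $u_1 \subset \overline{V}$, then all other components of $u$ are trivial cylinders over simple orbits and carry $\Gamma$ through unchanged; summing over such $u_1$ produces $d_V(\gamma)\tensor\Gamma$. Symmetrically, non-connectors $u_1 \subset \overline{N}$ contribute $\gamma\tensor d_N(\Gamma)$. For $u_1 \subset T^2 \cross [1,2]$, the analysis of the finite energy foliation in this region performed by Colin--Ghiggini--Honda~\cite[Section 8.4]{CGH_ECH_OBD} applies verbatim, because the region $T^2 \cross [1,2]$ together with the contact form $\alpha_{\delta_i}$ is identical in the integral and rational settings. That analysis yields exactly two ECH-index-$1$ holomorphic curves, $e_+ \longrightarrow h_-$ and $h_+ \longrightarrow e_-$, giving the two middle terms of the formula; there are in addition two gradient trajectories from $e_+$ to $h_+$ in $\del V$ and two from $h_-$ to $e_-$ in $\del N$, which cancel modulo $2$.

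The main technical obstacle has therefore already been dispatched by two earlier ingredients: the one-sidedness of ends established in \cref{a_sequence_of_filtrations_on_the_ECH_complexes}, and the explicit Morse-Bott count in the no man's land carried out in the integral case. What remains is purely organizational: using the tensor product decomposition of generators $\gamma \tensor \Gamma$ together with the conventions that $\gamma / h_+ = 0$ when $h_+ \notin \gamma$ and $h_- \Gamma = 0$ when $h_- \in \Gamma$ to read off the four terms in the stated formula.
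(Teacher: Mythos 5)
Your proposal is correct and follows essentially the same route as the paper: the paper likewise combines \cref{filtration_non-decreasing_must_be_one-sided} and \cref{one-sided_implies_nice} with \cref{non-degen_close_to_MB} to reduce $d_{i,0}$ to a count of Morse-Bott buildings localized in one of the three regions, and then quotes the Colin--Ghiggini--Honda finite energy foliation analysis in $T^2\cross[1,2]$ (together with the mod $2$ cancellation of the gradient trajectories) for the two middle terms, citing their Lemma 9.5.3 for the full statement. The only point worth being careful about is that one-sidedness of ends alone does not immediately confine the non-connector component to a single region; this also uses positivity of intersection and the Blocking Lemma as in \cref{a_sequence_of_filtrations_on_the_ECH_complexes}, which you do invoke.
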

In this proposition $d_V$ and $d_N$ denote the Morse-Bott differentials on $V$ and $N$ respectively.

The intuition behind the existence of the above differentials in $T^2\cross[1,2]$ can be understood in the perturbed case as seen in \cref{perturbed_alpha_i_showing_differentials_fig}---to construct this figure we are employing \cref{foliation_perp_to_R_xi} below, which is a generalization of Wendl's constructions of finite energy foliations\cite[Section 4.2]{Wendl}.

Before stating the proposition we need a few ingredients---first suppose that $U\subset M$ is homeomorphic to $S\cross S^1$ for some surface $S$, and that both the Reeb vector field $R$ and almost complex structure $J$ are $S^1$-invariant on $U$.  Write $R=R_\xi + a\del_t$, where $R_\xi\in\xi$, $t$ is the $S^1$ coordinate and $a:S\to\R$.  Now write $J(R_\xi) = X + b\del_t$ where $b:S\to\R$ and $X$ is tangent to $S$ and also $S^1$-invariant.
\begin{prop}\label{foliation_perp_to_R_xi}
Let $U\homeo S\cross S^1$, $R$, $J$ and $X$ be as above.  Then the symplectization $\R\cross U\homeo \R\cross S\cross S^1$ is foliated by pseudoholomorphic curves of the form
\[ Z\cross S^1, \]
where $Z\subset \R\cross S$ is an integral curve of $\del_s + X$.  This foliation is $\R\cross S^1$-invariant and its projection to $U$ is a foliation by cylinders of the form $\gamma\cross S^1$ where $\gamma$ is an integral curve of $X$.  At those points where $R$ is parallel to $\del_t$, and hence $X=0$, the foliation is by Reeb orbits (i.e.~$\gamma$ is a singleton).
\end{prop}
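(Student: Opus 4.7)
The plan is to verify that the distribution $\mathcal{D}$ on $\R \times U \cong \R \times S \times S^1$ spanned by the vector fields $\partial_s + X$ and $\partial_t$ is both $J$-invariant and integrable; then its integral leaves are automatically pseudoholomorphic submanifolds, and these will be precisely the surfaces $Z \times S^1$ described in the statement.

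First I would set up the decomposition. Since $R$ is transverse to $\xi$ and $R_\xi \in \xi$, the coefficient $a$ in $R = R_\xi + a\partial_t$ is nowhere zero on $U$. Using $J(\partial_s) = R$ together with $J(R_\xi) = X + b\partial_t$ and the identity $J^2 = -\id$, a direct manipulation of the equation $J(R) = -\partial_s$ gives
\[
J(\partial_t) \;=\; -\tfrac{1}{a}\bigl(\partial_s + X + b\partial_t\bigr),
\]
which rearranges to $\partial_s + X = -a\,J(\partial_t) - b\,\partial_t$. Applying $J$ once more yields $J(\partial_s + X) = a\,\partial_t - b\,J(\partial_t)$, which, using the formula for $J(\partial_t)$ above, lies in $\mathrm{span}(\partial_s + X, \partial_t)$. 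Thus $\mathcal{D}$ is $J$-invariant.

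Next I would check integrability, which is immediate: because $X$ is $S^1$-invariant and independent of $s$, the Lie bracket $[\partial_s + X, \partial_t]$ vanishes, so Frobenius applies and $\mathcal{D}$ is tangent to a foliation. The leaf through a point $(s_0, x_0, t_0)$ is obtained by flowing in the $\partial_t$ direction (tracing out the $S^1$ factor) and in the $\partial_s + X$ direction (tracing out an integral curve $Z \subset \R \times S$ starting at $(s_0, x_0)$); by $S^1$-invariance of $X$ this product structure is consistent, and the leaf has the required form $Z \times S^1$. Restricting $J$ to each leaf gives an almost complex structure $j$, and the inclusion of the leaf is then tautologically $(j,J)$-holomorphic. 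The $\R \times S^1$-invariance of the foliation follows from the corresponding invariance of the vector fields defining $\mathcal{D}$, and projection to $U$ sends $Z \times S^1$ to $\gamma \times S^1$ where $\gamma$ is the projection of $Z$ to $S$, which is manifestly an integral curve of $X$.

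Finally, at a point where $R$ is parallel to $\partial_t$ we have $R_\xi = 0$, hence $X + b\partial_t = J(R_\xi) = 0$, so $X = 0$ (and $b = 0$). Integral curves of $X$ through such a point are constant, so the corresponding leaves degenerate to cylinders $\{p\} \times S^1$, which coincide with the Reeb orbit through $p$ since $R$ is tangent to $\partial_t$ there. I expect the main obstacle to be the $J$-invariance calculation: one needs to be careful that the computations make sense despite $R_\xi$, $X$, and $\partial_t$ being neither all in $\xi$ nor all transverse to it, and the cleanest route is via the identity for $J(\partial_t)$ derived above, which crucially uses $a \neq 0$; the rest of the argument is formal once $\mathcal{D}$ is shown to be a $J$-invariant integrable distribution.
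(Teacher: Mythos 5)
Your proof is correct and takes essentially the same approach as the paper: the key step in both is the computation $J(\partial_t) = -\tfrac{1}{a}(\partial_s + X + b\partial_t)$, from which $J$-invariance and integrability of $\langle \partial_t, \partial_s + X\rangle$ follow (with the bracket vanishing because $X$ is $s$- and $t$-independent). The only cosmetic difference is that the paper starts from the manifestly $J$-invariant plane field $\langle \partial_t, J(\partial_t)\rangle$ and then identifies it with $\langle \partial_t, \partial_s + X\rangle$, whereas you start from the latter and verify $J$-invariance directly by computing $J$ of both generators.
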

\begin{proof}
Consider the the $J$-invariant plane field
\[ \langle \del_t, J(\del_t) \rangle, \]
where $t$ denotes the $S^1$ coordinate, defined on $\R\cross U$.  Note that the function $a$ defined above is everywhere non-zero since $R$ is transverse to $\xi$, and hence we can write
\[ \begin{split}
J(\del_t) &= J\left(\frac{1}{a}(R-R_\xi)\right) \\
&= \frac{1}{a}(-\del_s - J(R_\xi)) \\
&= \frac{1}{a}(-\del_s - X - b\del_t).
\end{split}\]
As a result, we have
\[ \begin{split}
\langle \del_t, J(\del_t) \rangle &= \langle \del_t, \frac{1}{a}(-\del_s - X - b\del_t) \rangle \\
&= \langle \del_t, -\del_s-X-b\del_t \rangle \\
&= \langle \del_t, \del_s + X \rangle.
\end{split}\]
It is easy to see that this plane field is integrable, since $X$ depends only on $S$ and hence the Lie bracket vanishes.  Therefore it is supported on a regular foliation which is $\R\cross S^1$-invariant and furthermore has leaves of the form $Z\cross S^1$ as described in the statement of the proposition.
\end{proof}

\begin{figure}\centering
	\begin{tikzpicture} %
		\draw  (0,0) --(4.5,0);
		\draw (0,4) --(4.5,4);
		\draw [very thick] (0,0)--(0,4); %
		\node [left] at (0,2) {$K$};

		\draw [thick] (2,0)--(2,4);
		\draw [-latex,shorten >=-3pt] (2,2.3)--(2,2.31);
		\draw [-latex,shorten >=-3pt] (2,3.6)--(2,3.59);
		\draw [thick] (4,0)--(4,4);
		\draw [-latex,shorten >=-3pt] (4,1.7)--(4,1.71);
		\draw [-latex,shorten >=-3pt] (4,0.4)--(4,0.39);
		\draw [thick] (0,3) --(4.45,3);
		\draw [thick,dashed] (4.45,3)--(5,3);
		\draw [-latex,shorten >=-3pt] (1,3)--(0.99,3);
		\draw [-latex,shorten >=-3pt] (2.7,3)--(2.71,3);
		\draw [-latex,shorten >=-3pt] (4.4,3)--(4.39,3);
		\draw [thick] (2,1) --(4.45,1);
		\draw [thick,dashed] (4.45,1)--(5,1);
		\draw [-latex,shorten >=-3pt] (3.3,1)--(3.31,1);
		\draw [-latex,shorten >=-3pt] (4.4,1)--(4.39,1);

		\begin{scope}
		\path [clip] (-1,0) rectangle (4,4);
		\draw [thin] (2,1) to [out=75,in=-135] (2.5,2.5) to [out=45,in=-165] (4,3);
		\draw [thin] (2,1) to [out=60,in=-135] (2.8,2.2) to [out=45,in=-150] (4,3);
		\draw [thin] (2,1) to [out=30,in=-135] (3.2,1.8) to [out=45,in=-120] (4,3);
		\draw [thin] (2,1) to [out=15,in=-135] (3.5,1.5) to [out=45,in=-105] (4,3);
		\foreach \y in {-2,2} {
			\draw [thin] (2,3+\y) to [out=-15,in=135] (3.5,2.5+\y) to [out=-45,in=105] (4,1+\y);
			\draw [thin] (2,3+\y) to [out=-30,in=135] (3.2,2.2+\y) to [out=-45,in=120] (4,1+\y);
			\draw [thin] (2,3+\y) to [out=-60,in=135] (2.8,1.8+\y) to [out=-45,in=150] (4,1+\y);
			\draw [thin] (2,3+\y) to [out=-75,in=135] (2.5,1.5+\y) to [out=-45,in=165] (4,1+\y);
		}
		\draw [thin] (2,1) to [out=100,in=0] (0,2.7);
		\draw [thin] (2,1) to [out=120,in=0] (0,2);
		\draw [thin] (2,1) to [out=150,in=0] (0,1.4);
		\draw [thin] (2,1) to [out=-150,in=0] (0,0.6);
		\draw [thin] (2,1) to [out=-120,in=0] (0,0);
		\foreach \y in {0,4} 
			\draw [thin] (2,1+\y) to [out=-100,in=0] (0,-0.7+\y);
		\end{scope}

		\draw [fill] (2,1) circle (0.05) node [left] {$e_+$};
		\draw [fill] (2,3) circle (0.05) node [above left] {$h_+$};
		\draw [fill] (4,1) circle (0.05) node [below right] {$h_-$};
		\draw [fill] (4,3) circle (0.05) node [below right] {$e_-$};

		\draw [decorate,decoration={brace,mirror,raise=5pt}] (0.1,0) --node[below=10pt] {$V$} (1.9,0) ;
		\draw [decorate,decoration={brace,mirror,raise=5pt}] (2.1,0) --node[below=10pt] {$T^2\cross[1,2]$} (3.9,0) ;
		\begin{scope}
		\path [clip] (4,-1) rectangle (5,0);
		\draw [decorate,decoration={brace,mirror,raise=5pt}] (4.1,0) --node[below=10pt] {$N$} (5.2,0) ;
		\end{scope}
	\end{tikzpicture}
  \caption{The structure of differentials in $\nu(K)$ between the orbits $h_\pm$, $e_\pm$ and $\emptyset$.  This picture can be obtained by applying \cref{foliation_perp_to_R_xi} to \cref{contact_form_on_nu_K_b}.  In particular, the integral curves of $X$ seen above arise by flowing in a perpendicular manner to the flow lines in \cref{contact_form_on_nu_K_b} (oriented by turning to the left).  The lines in bold correspond to cylinders with ECH index one, and the arrows point in the negative $s$ direction, i.e.~against the flow of $X$.  The curved lines correspond to cylinders with ECH index 2.  Finally, the two dashed lines to the right are shown only as a reminder that differentials in $\mathrm{int}(N)$ can only have negative ends at $\del N$ by the \refNamedThm{Trapping Lemma}{trapping_lemma}.}
	\label{perturbed_alpha_i_showing_differentials_fig}
\end{figure}
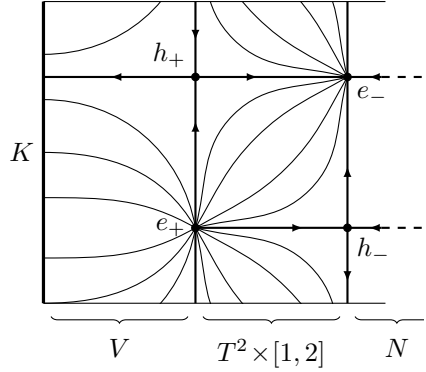

The next step in the proof of \cref{relative_ECH_equals_ECH} is to let $i$ tend to $\infty$.  For each $i$, the spectral sequence $E^r(\mathcal{F}_i)$ converges to $ECH^{L_i}(M,\alpha_i')$, and since the contact forms $\alpha_i$ are commensurate,
\[ \lim_{i\to\infty}ECH^{L_i}(M,\alpha_i') \iso ECH(M),\]
where the direct limit is taken with respect to the interpolating cobordism maps
\[ \Phi_i:ECH^{L_i}(M,\alpha_i') \to ECH^{L_{i+1}}(M,\alpha_{i+1}'). \]
Recall from \cref{direct_limits_through_cobordism_maps} that the above maps are induced by non-canonical chain maps
\[ \hat\Phi_i:ECC^{L_i}(M,\alpha_i') \to ECC^{L_{i+1}}(M,\alpha_{i+1}') \]
and in fact these induce chain maps~\cite[Section 9.6]{CGH_ECH_OBD}
\[ E^r(\mathcal{F}_i) \to E^r(\mathcal{F}_{i+1}) \]
for all $r$. The resulting direct limit is itself a spectral sequence, denoted\footnote{Note that we write $E^r(\mathcal{F})$ simply for notational purposes; we are not claiming that the spectral sequence arises from some filtration $\mathcal{F}$.} $E^r(\mathcal{F})$, converging to
\[ \lim_{i\to\infty}ECH^{L_i}(M,\alpha_i') \iso ECH(M).\]

Furthermore, the equation in \cref{differential_on_E_0_i} passes to the direct limit:

\begin{prop}[{\nogapcite[Lemma 9.6.3]{CGH_ECH_OBD}}]\label{differential_on_E_0}
As a vector space,
\[ E^0(\mathcal{F}) \iso ECC(V,\alpha_V')\tensor ECC(N,\alpha), \]
and the differential $d_0$ on this page is given by:
\begin{equation}\label{differential_d_0}
 d_0(\gamma\tensor\Gamma) = d_V(\gamma)\tensor\Gamma + (\gamma/e_+)\tensor h_-\Gamma + (\gamma/h_+)\tensor e_-\Gamma + \gamma\tensor d_N(\Gamma).
\end{equation}
\end{prop}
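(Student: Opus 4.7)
The plan is to deduce this proposition directly from \cref{differential_on_E_0_i} by a careful passage to the direct limit, in parallel with the argument used for the action filtration in the passage from $ECC^{L_i}_\MB$ to $ECC_\MB$. First I would identify the underlying vector space: for each $i$, since $\restr{\alpha_i}{V}=c_{\delta_i}\alpha_V'$, since there are no orbits in no man's land of $\alpha_i$-action less than $L_i$, and since $\alpha$ is the fixed form on $N$, every generator of $ECC^{L_i}(M,\alpha_i')$ canonically decomposes as $\gamma\tensor\Gamma$ with $\gamma\in ECC(V,\alpha_V')$ and $\Gamma\in ECC(N,\alpha)$ of total action less than $L_i$. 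Because $E^0(\mathcal{F}_i)$ is the same underlying vector space as $ECC^{L_i}(M,\alpha_i')$, and because the $\alpha_i$ are commensurate with $L_i\to\infty$, taking the direct limit recovers the full tensor product $ECC(V,\alpha_V')\tensor ECC(N,\alpha)$, provided the induced maps on $E^0$-pages act as inclusions on generators.

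Next I would verify that the interpolating cobordism maps $\hat\Phi_i$ do indeed act as inclusions at the $E^0$ level, by mirroring the argument behind \refDiagram{interpolating_cobordisms_commute_with_MB_inclusion}. Using \cref{interpolating_cobordism_maps}, factor the induced map $E^0(\mathcal{F}_i)\to E^0(\mathcal{F}_{i+1})$ through an intermediate action window of size $a_i L_i$; the first factor is an isomorphism because the action gap $I_i$ contains no Reeb orbits of $\alpha_i'$, and one then identifies it with the obvious inclusion by the same argument Colin--Ghiggini--Honda use. The second factor is already an inclusion by \cref{interpolating_cobordism_maps}. Both factors are manifestly compatible with the filtration $\mathcal{F}_i$, since the filtration is defined in terms of winding data in $V$ which is unaffected by the rescaling of $\alpha_i$ in no man's land.

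With the vector-space identification and inclusion property established, the formula for $d_0$ follows by transporting \cref{differential_on_E_0_i} to the limit: each term on the right-hand side of \cref{differential_d_0} is already present in every $d_{i,0}$ for $i$ large enough (depending on the action of $\gamma\tensor\Gamma$), and conversely any term appearing in $d_0$ must originate from some $d_{i,0}$ by the direct-limit construction. The hard part will be step two, namely verifying rigorously that the cobordism-induced chain maps commute (on $E^0$) with the inclusions coming from $L_i<L_{i+1}$, since the cobordism maps are defined via Seiberg--Witten theory and only non-canonically at the chain level; one must combine the action-gap property of the sequence $\alpha_i'$ with the fact that the filtration $\mathcal{F}_i$ depends only on the winding number $\eta(\gamma)$ inside $V$, which is preserved under the rescaling by $c_{\delta_i}/c_{\delta_{i+1}}$ used to relate $\alpha_i$ and $\alpha_{i+1}$.
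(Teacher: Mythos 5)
Your proposal is correct and follows essentially the same route as the paper, which obtains this proposition by citing the finite-level formula of \cref{differential_on_E_0_i} and passing to the direct limit along the interpolating cobordism maps, using the action-gap property of the sequence $\alpha_i'$ to see that these maps behave as inclusions on the $E^0$-pages. The difficulty you flag in your last step --- that the Seiberg--Witten chain maps are non-canonical and must be identified with inclusions via the no-orbits-in-$I_i$ argument --- is precisely the point the paper delegates to Colin--Ghiggini--Honda (Lemmas 4.5.3, 4.5.5 and Section 9.6 of their work).
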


Notice that the direct limit argument has successfully allowed us to ignore all orbits in the no man's land.  The next step is to show that we can also ignore all those in the interior of $V$.

Note that the filtration levels $\mathcal{F}_i$ as defined in \cref{filtrations_def} give rise to a grading on every $E^r(\mathcal{F}_i)$, which passes to the direct limit: write $E^r_p(\mathcal{F})$ for the summand of $E^r(\mathcal{F})$ generated by orbit sets $\gamma\tensor\Gamma$ with filtration grading $\eta(\gamma)=p$.  We will call this the $\eta$-grading.

\begin{thm}\label{F_spectral_seq_converges_to_ECH_at_first_page_thm}
$E^1_p(\mathcal{F})=0$ for $p>0$. As a result, the spectral sequence converges at the first page:
\[ E^1(\mathcal{F}) = E^1_0(\mathcal{F}) \iso ECH(M). \]
\end{thm}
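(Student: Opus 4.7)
The plan is to deduce the collapse statement from the vanishing statement and then to attack the vanishing. Observe that the differential $d^r$ on the $r$-th page shifts filtration grading by $-r$, so once $E^1_p = 0$ for all $p > 0$, every higher differential has trivial source (if $p \geq 1$) or trivial target (if $p = 0$, since $E^r_{-r}$ is zero for $r \geq 1$). Hence the sequence collapses at $E^1$ with $E^\infty = E^1_0$, and since the spectral sequence converges to $ECH(M)$, this identifies $E^1_0(\mathcal{F}) \iso ECH(M)$ as claimed.

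To prove $E^1_p(\mathcal{F}) = 0$ for $p > 0$, I would use the Künneth-style structure of $d_0$ given by \cref{differential_on_E_0}. Writing $E^0_p \iso ECC_p(V,\alpha_V')\tensor ECC(N,\alpha)$, the differential splits as $d_0 = D_V\tensor 1 + 1\tensor d_N$ where $D_V(\gamma) := d_V(\gamma) + (\gamma/e_+)\cdot h_- + (\gamma/h_+)\cdot e_-$ (treating $e_-, h_-$ formally as symbols landing in the $N$-factor). I would then set up a secondary spectral sequence on $E^0_p$ filtered by the total action on the $\mathrm{int}(N)$-orbits of $\Gamma$ (or equivalently by the number of such orbits); the induced $E^0$ of this secondary sequence kills $1\tensor d_N$ and leaves only the $D_V$ part, reducing the problem to showing that the single-factor complex $(ECC_p(V,\alpha_V'), D_V)$ is acyclic for $p > 0$.

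For this $V$-side acyclicity, I would construct a finite energy foliation on $\R\cross \mathrm{int}(V)$ using the $S^1$-invariance of $\alpha_V'$ around the core $K$ via \cref{foliation_perp_to_R_xi}, exactly as was done for the no man's land region. This explicitly computes $d_V$ between orbits in $V$ and identifies, on the Morse-Bott side, a natural pairing of orbits: on each rational-slope torus $T_\rho \subset \mathrm{int}(V)$ the elliptic/hyperbolic perturbations form a cancelling pair, and the core $K$ (which has $\eta(K) = 1$) pairs with orbits on the innermost such torus. Combined with the mixed terms involving $e_+, h_+$ (which have $\eta = 0$ so never generate the $\eta=p$ part on their own), this yields an explicit null-homotopy on $ECC_p(V,\alpha_V')$ in each positive filtration level, proving the acyclicity.

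The main obstacle will be book-keeping the chain-level null-homotopy when an orbit set in $\eta$-grading $p$ simultaneously contains orbits in $\mathrm{int}(V)$, multiple covers of $K$, and the boundary orbits $e_+, h_+$: one has to verify that the pairing on rational Morse-Bott tori extends multiplicatively to orbit sets and is compatible with the additional contributions from the mixed terms (which shift multiplicities of $e_\pm, h_\pm$). This is precisely the content of Colin, Ghiggini and Honda's argument in the integral case, and I expect the rational open book modification to be essentially cosmetic: the fractional twist $r_{p/q}$ is supported on $\del N$ and is invisible inside $V$, so the foliation analysis and the combinatorics of the pairing transplant without serious change.
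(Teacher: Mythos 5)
Your overall architecture matches the paper's: both reduce the vanishing of $E^1_p(\mathcal{F})$ for $p>0$ to the acyclicity of a $V$-side complex via a secondary spectral sequence on $E^0(\mathcal{F})$, and the collapse argument at the start is exactly right. The difference is in the choice of secondary filtration. The paper filters $E^0(\mathcal{F})$ by the multiplicities of $h_+$ and $h_-$ (the filtration $\mathcal{G}$ with $\mathcal{G}(C_{i,j})=j-i$), so that the associated graded homology is a tensor product of $ECH^{e_+}(\mathrm{int}(V),\alpha_V')$ with $ECH^{e_-}(\mathrm{int}(N),\alpha)$ twisted by powers of $h_\pm$, and then quotes \cref{int_V_orbits_cancel} ($ECH^{e_+}(\mathrm{int}(V),\alpha_V')\iso\F_2[e_+]$, which lives entirely in $\eta$-grading $0$) to kill everything with $p>0$. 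You instead filter by the action of the $\mathrm{int}(N)$-part; this does work, because curves in $N$ strictly decrease interior action while the mixed terms $(\gamma/e_+)\tensor h_-\Gamma$ and $(\gamma/h_+)\tensor e_-\Gamma$ only multiply the $N$-factor by boundary orbits --- but your parenthetical ``or equivalently by the number of such orbits'' is false (a pair of pants in $N$ can increase the number of interior orbits), so you must use action, with the usual discreteness caveat. The more substantive point is that your step three is not a routine verification: the acyclicity of the $V$-side complex in positive $\eta$-gradings \emph{is} the content of \cref{int_V_orbits_cancel}, which the paper imports wholesale from Colin--Ghiggini--Honda (their Theorem 8.1.2) rather than reproving; their argument requires not just the $S^1$-invariant foliation of \cref{foliation_perp_to_R_xi} but the specific irrational-slope tori $T_{\rho_i}$ built into $\alpha_{V,\mu}'$ to confine curves, and a genuine computation of the resulting differential, so your ``explicit null-homotopy'' sketch is where essentially all of the work you have not done resides. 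With that lemma taken as input, your proof closes up and is equivalent to the paper's.
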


The proof of this theorem involves defining a second filtration $\mathcal{G}$ on the zero-th page $E^0(\mathcal{F})$, which gives rise to a second spectral sequence $E^r(\mathcal{G})$ converging to $E^1(\mathcal{F})$.  Computing the first page of $E^r(\mathcal{G})$ will give us the result that $E^1_p(\mathcal{F})=0$ for $p>0$, and also provides the set-up which is used to prove that $E^1_0(\mathcal{F}) \iso ECH(N,\del N,\alpha)$, thus completing the proof of \cref{relative_ECH_equals_ECH}.  We will outline the proof of \cref{F_spectral_seq_converges_to_ECH_at_first_page_thm} below, following the method of Colin, Ghiggini and Honda~\cite[Section 9.6]{CGH_ECH_OBD}.

The filtration $\mathcal{G}$ is defined on $E^0(\mathcal{F})$ by first decomposing $E^0(\mathcal{F})$ into four pieces corresponding to the presence of the orbits $h_+$ and $h_-$:
\[ E^0(\mathcal{F}) \iso C_{0,0}\oplus C_{0,1}\oplus C_{1,0}\oplus C_{1,1}, \]
where $C_{i,j}$ denotes the subspace generated by orbit sets where the multiplicity of $h_-$ is $i$ and the multiplicity of $h_+$ is $j$.  Define the (ascending) filtration $\mathcal{G}$ by $\mathcal{G}(C_{i,j})=j-i$; in order to compute the spectral sequence corresponding to $\mathcal{G}$ we must first understand the explicit behaviour of the differential $d_0$ with respect to $h_+$ and $h_-$ respectively.  Write $d^\flat_V$ and $d^\flat_N$ for the differentials on $ECC^{e_+}(\mathrm{int}(V),\alpha_V')$ and $ECC^{e_-}(\mathrm{int}(N),\alpha)$ respectively.
\begin{itemize}
\item If $\Gamma\in ECC^{e_-}(\mathrm{int}(N),\alpha)$, we can write
\[ d_N(\Gamma) = d^\flat_N(\Gamma) + h_-d_N'(\Gamma),\] where $d_N^\flat(\Gamma)$ and $d_N'(\Gamma)$ do not contain $h_-$.  Furthermore, 
\[d_N(h_-\Gamma) = h_- d_N(\Gamma) = h_- d^\flat_N(\Gamma)\]
since no holomorphic curves can have a positive end at the negative Morse-Bott torus $\del N$.
\item If $\gamma\in ECC^{e_+}(\mathrm{int}(V), \alpha_V')$, we can write
\[d_V(h_+\gamma) = h_+ d^\flat_V(\gamma) + d_V'(h_+\gamma),\] where $d_V^\flat(h_+\gamma)$ and $d_V'(h_+\gamma)$ do not contain $h_+$.  Furthermore, 
\[d_V(\gamma) = d^\flat_V(\gamma)\]
since no holomorphic curves can have a negative end at the positive Morse-Bott torus $\del V$.
\end{itemize}
Referring to the description of $d_0$ in \cref{differential_d_0} we see that it can be written as
\[ d_0 = d_{\mathcal{G},0} + d_{\mathcal{G},1}, \]
where $d_{\mathcal{G},0}$ is the part of $d_0$ which does not decrease $\mathcal{G}$-filtration level, and $d_{\mathcal{G},1}$ is the part which does.  In particular, the components of $d_{\mathcal{G},1}$ can be seen in the following diagram, verifying that $\mathcal{G}$ is indeed a valid ascending filtration on $E^0(\mathcal{F})$:

\begin{equation}\label{d_G_1_diagram}\begin{tikzcd}
C_{0,1}\arrow[rrrr,"1\tensor h_-d'_N + \cdot/e_+\tensor h_-"]\arrow[d,"d'_V\tensor 1 + \cdot/h_+ \tensor e_-"'] &&&& C_{1,1}\arrow[d,"d'_V\tensor 1 + \cdot/h_+ \tensor e_-"] \\
C_{0,0}\arrow[rrrr,"1\tensor h_-d'_N + \cdot/e_+\tensor h_-"] &&&& C_{1,0}
\end{tikzcd}\end{equation}

We now pass to the first page of the spectral sequence, $E^1(\mathcal{G})$, by taking homology within each $C_{i,j}$, and obtain the same diagram as above but with each $C_{i,j}$ replaced with
\[  H_*(C_{i,j}) \iso (h_+)^j ECH^{e_+}(\mathrm{int}(V),\alpha_V') \tensor (h_-)^i ECH^{e_-}(\mathrm{int}(N),\alpha). \]

We can now apply the following lemma.

\begin{lemma}[{\nogapcite[Theorem 8.1.2]{CGH_ECH_OBD}}]\label{int_V_orbits_cancel}
\[ ECH^{e_+}(\mathrm{int}(V),\alpha_V') \iso \F_2[e_+], \]
the polynomial ring generated by the orbit $e_+$.
\end{lemma}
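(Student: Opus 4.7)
The plan is to compute $ECH^{e_+}(\mathrm{int}(V),\alpha_V')$ by a spectral sequence argument directly analogous to the one used in \cref{computing_ech_via_a_filtration_argument} for the full manifold, combined with an explicit finite energy foliation in the interior of $V$. The generators of the complex are orbit sets of the form $e_+^k \cdot \Gamma$, where $\Gamma$ is built from the simple orbits in $\mathrm{int}(V)$: namely the core $K$ at $y_0=0$ together with elliptic/hyperbolic pairs arising from perturbations of the rational-slope tori $T_\rho \subset \mathrm{int}(V)$ (up to any fixed action bound). We work Morse-Bott style on these tori and invoke \cref{non-degen_close_to_MB} and \cref{MB_building_exists_epsilon_small_enough} to translate between the non-degenerate and Morse-Bott settings.

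First, I would put the filtration $\mathcal{F}_\eta$ of \cref{filtrations_def} on the complex: for an orbit set $\gamma$, set $\mathcal{F}_\eta(\gamma) = \eta(\gamma)\in\Z_{\geq 0}$, the winding number around $K$. Using the tori $T_{\rho_i}$ of irrational slope, $\rho_i\to 1$, together with slope calculus (\cref{slope_calculus}) and positivity of intersection (\cref{positivity_of_intersections_3}), one checks exactly as in \cref{a_sequence_of_filtrations_on_the_ECH_complexes} that the differential respects $\mathcal{F}_\eta$, and that any filtration-preserving Morse-Bott building has only one-sided (positive) ends at $\del V$; hence by \cref{one-sided_implies_nice} it is nice, and the $E^0$ differential can be computed Morse-Bottly. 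Note that $\eta(e_+)=0$ because $e_+$ is a meridian of $K$, so that powers $e_+^k$ live entirely in filtration level $0$, while $\eta(K)=1$ and $\eta(\gamma)\geq 1$ for every non-trivial orbit set in $\mathrm{int}(V)$.

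Second, I would show that the spectral sequence collapses at $E^1$ with $E^1 \cong \F_2[e_+]$ concentrated in $\eta=0$. In the $\eta=0$ subcomplex, the only generators are $e_+^k$; any non-trivial differential out of $e_+^k$ would land in filtration $0$ and strictly lower action, which is impossible since no orbits of $\mathrm{int}(V)$ have $\eta=0$. Hence these generators all survive, contributing $\F_2[e_+]$. For each $\eta = N \geq 1$, the generators are of the form $e_+^k \cdot \Gamma$ with $\eta(\Gamma)=N$; to show acyclicity I would exhibit explicit holomorphic cylinders, obtained by applying \cref{foliation_perp_to_R_xi} on the $S^1_{\theta_0}$-invariant region, connecting the distinguished elliptic/hyperbolic pairs on successive tori $T_\rho$, together with a cylinder from $K$ (the $\rho\to 0$ limit) down to $e_+$ (via the slope rotation through $\del V$). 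These differentials pair up the generators in each positive $\eta$-level into an acyclic subcomplex, precisely as the $h_+\to e_-$ cylinder organises the analogous cancellation in $T^2\times[1,2]$ in \cref{differential_on_E_0_i}.

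The main obstacle is handling the finite energy foliation near the core $K$, where the $S^1_{\theta_0}$-symmetry degenerates and \cref{foliation_perp_to_R_xi} no longer applies directly; one must check that the required cylinders from $K^m$ (and from multicovers of the interior tori) really exist, and that the resulting Morse-Bott buildings counted by the differential satisfy the partition conditions so that the \emph{very nice} buildings give the expected cancellations. I expect this to follow from the same kind of local model analysis carried out by Colin, Ghiggini and Honda for the analogous region in $T^2\times[1,2]$, adapted to the mildly singular geometry at $y_0=0$; since the homology statement is purely algebraic once the cancellations are in place, the technical heart of the proof lies entirely in this local-model step.
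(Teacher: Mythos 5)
The paper does not prove this lemma---it is cited directly as \cite[Theorem 8.1.2]{CGH_ECH_OBD}---so there is no in-paper argument to compare your proposal against; you are effectively reconstructing the Colin--Ghiggini--Honda proof. Your filtration scheme via the winding number $\eta$, combined with the Blocking and Trapping Lemmas, is a reasonable framework, but the cylinder you rely on ``from $K$ (the $\rho\to 0$ limit) down to $e_+$'' cannot exist: a holomorphic curve from $\Gamma_+$ to $\Gamma_-$ requires $[\Gamma_+]=[\Gamma_-]\in H_1(V;\Z)$, and $K$ generates $H_1(V;\Z)\cong\Z$ (so $\eta(K)=1$) while $e_+$ is a meridian of $K$, hence null-homologous in $V$ (so $\eta(e_+)=0$). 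What the finite energy foliation on $V$ actually produces near the core is the index-2 \emph{disk} with a single positive end at $e_+$ and no negative end, meeting $K$ once transversely (cf.~\cref{perturbed_alpha_i_showing_differentials_fig})---that is the curve defining the $U$-map, not a differential. So the bottom rung of your proposed chain of cancellations does not exist.

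Two further points. First, in the $\eta=0$ step the only orbit sets are the powers $e_+^k$, so ``no orbits of $\mathrm{int}(V)$ have $\eta=0$'' does not rule out a differential $e_+^k\to e_+^j$ with $j<k$; to exclude those you need the \refNamedThm{Trapping Lemma}{trapping_lemma} (no negative ends at the positive Morse--Bott torus $\del V$, ruling out $j\ge 1$) together with index parity (a curve $e_+^k\to\emptyset$ joins orbit sets of the same $\Z/2$-grading, so its ECH index is even, not one). Second, ``pairing up the generators into an acyclic subcomplex'' at each $\eta=p>0$ is underspecified: the generators at level $p$ are arbitrary orbit sets whose $\eta$-contributions sum to $p$---multicovers of $K$, products of several interior orbits, products of these with powers of $e_+$---and a single cylinder between two simple orbits does not induce a pairing on this infinite set. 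Controlling all ECH-index-one contributions here, not just those visible in a local foliation model, is precisely the technical content that the citation to \cite[Theorem 8.1.2]{CGH_ECH_OBD} stands in for.
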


It is the result of this lemma which essentially allows us to ignore all the orbits in the interior of $V$, i.e.~with $\eta$-grading greater than 0. More precisely, if we write $E^r_p(\mathcal{G})$ for the summand of $E^r(\mathcal{G})$ with $\eta$-grading $p$, then, for $p>0$,
\[ E^1_p(\mathcal{G}) = 0,\]
we have convergence at the first page, and
\[ E^1_p(\mathcal{F}) \iso E^\infty_p(\mathcal{G})=0. \]

\Cref{F_spectral_seq_converges_to_ECH_at_first_page_thm} now follows trivially from the fact that $E^r(\mathcal{F})$ converges to $ECH(M)$.

\subsection{The isomorphism with relative ECH}\label{the_isomorphism_with_relative_ECH}

In this section we will show that $E^1_0(\mathcal{F}) \iso ECH(N,\del N,\alpha)$, using \cref{int_V_orbits_cancel} again together with the fact the only remaining holomorphic curve arising from $d_V'$ and contributing to the first page is
\[ h_+ \longrightarrow \emptyset, \]
manifested as a disk in $V$ intersecting $K$ once and with positive boundary at $h_+$.  This follows~\cite[Proposition 8.4.5 and Lemma 8.4.8]{CGH_ECH_OBD} by an explicit construction of a finite energy foliation on $\mathrm{int}(V)$ with respect to the contact form $\alpha_V$ and the existence of some isotopic foliation for the perturbed $\alpha_V'$.  Again, for intuition refer to the perturbed set-up as seen in \cref{perturbed_alpha_i_showing_differentials_fig}.

We can now refer back to \refDiagram{d_G_1_diagram} and write the first page of the spectral sequence, $E^1_0(\mathcal{G})$ as:
\begin{equation}\label{G_filtration_first_page}\begin{tikzcd}
h_+\F_2[e_+]\tensor ECH^{e_-}(\mathrm{int}(N), \alpha)\arrow[d,"\cdot/h_+\tensor(1+e_-)"']\arrow[rr,"\cdot/e_+\tensor h_-"] && h_+\F_2[e_+]\tensor h_-ECH^{e_-}(\mathrm{int}(N), \alpha)\arrow[d,"\cdot/h_+\tensor(1+e_-)"] \\
\F_2[e_+]\tensor ECH^{e_-}(\mathrm{int}(N), \alpha)\arrow[rr,"\cdot/e_+\tensor h_-"] && \F_2[e_+]\tensor h_-ECH^{e_-}(\mathrm{int}(N), \alpha).
\end{tikzcd}\end{equation}

With this in mind we introduce the following notation and a selection of complexes:

\begin{notation}
Let $e_1,\dots,e_n$ be a collection of elliptic orbits and $h_1,\dots,h_m$  hyperbolic.  Define the vector space
\[ \mathcal{R}[e_1,\dots,e_n,h_1,\dots,h_m] := \F_2[e_1,\dots,e_n,h_1,\dots,h_m]/(h_1^2,\dots,h_m^2). \]
\end{notation}

\begin{defn}\label{ECK_complex_defn}
Define the chain complex
\[ ECC\orbits{0he}{eh}(\mathrm{int}(N),\alpha) := \mathcal{R}[h_+,e_+]\tensor ECC\orbits{e}{h}(\mathrm{int}(N),\alpha),\]
equipped with the differential
\[ d(\gamma\tensor\Gamma) = \gamma/h_+\tensor(1+e_-)\Gamma + \gamma/e_+\tensor h_-\Gamma + \gamma\tensor d_N(\Gamma). \]
We also have the option of not including some or all of the orbits $e_\pm$, $h_\pm$, giving rise to other complexes such as:
\begin{itemize}
\item $ECC\orbits{0he}{}(\mathrm{int}(N),\alpha)$, which is obtained by ``cancelling'' the differential from $e_+$ to $h_-$ and is denoted by $ECH^\natural(N,\alpha)$ by Colin, Ghiggini and Honda~\cite[Section 9.7]{CGH_ECH_OBD}, and
\item $ECC\orbits{0he}{h}(\mathrm{int}(N),\alpha)$, denoted by $\widehat{ECH}^\natural(N,\alpha)$ by Colin, Ghiggini and Honda.
\end{itemize} 
\end{defn}

The intuition behind this definition reflects the behaviour of the differential in \refDiagram{G_filtration_first_page} in that $ECC\orbits{0he}{eh}(\mathrm{int}(N),\alpha)$ is the full complex, with homology equal to $E^1_0(\mathcal{F})\iso ECH(M)$.  The notation, although seemingly cumbersome, is intuitive since it is indicative of the differential on the complex.

\begin{thm}\label{ECH0heeh_E10F_ECH_iso}
\[ ECH\orbits{0he}{eh}(\mathrm{int}(N),\alpha) \iso E^1_0(\mathcal{F}), \]
and hence is isomorphic to $ECH(M)$.
\end{thm}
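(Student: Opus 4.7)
The strategy is to realize both $E^1_0(\mathcal{F})$ and $ECH\orbits{0he}{eh}(\mathrm{int}(N),\alpha)$ as the $E^\infty$-page of the same three-level spectral sequence. First observe that the $\eta$-grading zero part of $E^0(\mathcal{F})$ coincides with the underlying vector space of $ECC\orbits{0he}{eh}(\mathrm{int}(N),\alpha)$: combining \cref{reeb_vector_field_lemma} with \cref{contact_form_on_V_defn} shows that every Reeb orbit in $\mathrm{int}(V)$ has strictly positive winding in the $\theta$-direction, while $\eta(e_+)=\eta(h_+)=0$ since $\del V$ is foliated by meridians of $V$. Thus any $\eta=0$ orbit set has its $V$-component in $\mathcal{R}[h_+,e_+]$, yielding
\[ E^0_0(\mathcal{F}) \iso \mathcal{R}[h_+,e_+]\tensor ECC\orbits{e}{h}(\mathrm{int}(N),\alpha) = ECC\orbits{0he}{eh}(\mathrm{int}(N),\alpha). \]

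Next, equip both complexes with the filtration $\mathcal{G}$ from the discussion preceding \refDiagram{d_G_1_diagram}, which takes only three values $\{-1,0,1\}$, so both associated spectral sequences collapse at $E^3$. The $E^1$-page of the left-hand spectral sequence is \eqref{G_filtration_first_page}, obtained by invoking \cref{int_V_orbits_cancel} to collapse the $V$-factor down to $\mathcal{R}[h_+,e_+]$, together with the finite energy foliation argument that identifies $h_+\to\emptyset$ as the only $d_V'$-differential surviving on homology. For the right-hand spectral sequence, $d_{\mathcal{G},0}$ is just $1\tensor d_N^\flat$, whose homology gives exactly the same four-term complex $\mathcal{R}[h_+,e_+]\tensor(ECH^{e_-}(\mathrm{int}(N),\alpha)\oplus h_-\cdot ECH^{e_-}(\mathrm{int}(N),\alpha))$. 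The $d_1$-differentials match term-by-term: the vertical arrow $\cdot/h_+\tensor(1+e_-)$ matches the $\gamma/h_+\tensor(1+e_-)\Gamma$ component of $d$, while the horizontal component matches $\gamma/e_+\tensor h_-\Gamma + \gamma\tensor h_-d_N'(\Gamma)$. Hence the $E^2$-pages agree.

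Since $\mathcal{G}$ has only three levels, there is exactly one higher differential $d_2$ to match, going from filtration $1$ to filtration $-1$. This can be done by promoting the $E^1$-level agreement to a filtration-preserving chain equivalence between $(E^0_0(\mathcal{F}),d_0)$ and $(ECC\orbits{0he}{eh}(\mathrm{int}(N),\alpha),d)$: the two differentials differ only by $d_V^\flat$-contributions on $\mathcal{R}[e_+]$-generators, and \cref{int_V_orbits_cancel} furnishes an explicit contracting homotopy annihilating these extra terms. The resulting isomorphism of $E^\infty$-pages gives $ECH\orbits{0he}{eh}(\mathrm{int}(N),\alpha)\iso E^1_0(\mathcal{F})$, and \cref{F_spectral_seq_converges_to_ECH_at_first_page_thm} then supplies the isomorphism with $ECH(M)$.

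The hard part is constructing the chain equivalence in the previous paragraph concretely. Abstract spectral sequence comparison theorems render the conclusion automatic once the $d_1$-page is matched, but producing an explicit filtered quasi-isomorphism requires careful bookkeeping of the internal $V$-level homotopy implicit in \cref{int_V_orbits_cancel}, which is why the argument is cleanest when phrased through the spectral sequence rather than direct chain-level comparison.
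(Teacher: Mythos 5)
Your proposal is correct and follows essentially the same route as the paper: the paper likewise defines the analogous filtration $\bar{\mathcal{G}}$ (multiplicity of $h_+$ minus multiplicity of $h_-$) on $ECC\orbits{0he}{eh}(\mathrm{int}(N),\alpha)$, identifies its first page with \refDiagram{G_filtration_first_page} and hence with $E^1(\mathcal{G})$, and concludes via the chain $ECH\orbits{0he}{eh}(\mathrm{int}(N),\alpha)\iso E^\infty(\bar{\mathcal{G}})\iso E^\infty(\mathcal{G})\iso E^1_0(\mathcal{F})$. The only divergence is your final paragraph on matching the potential $d_2$ differential, a point the paper passes over by simply asserting that both spectral sequences converge at the second page.
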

\begin{proof}
Define an ascending filtration $\bar{\mathcal{G}}$ on $ECC\orbits{0he}{eh}(\mathrm{int}(N),\alpha)$ analogously to the filtration $\mathcal{G}$, by setting $\bar{\mathcal{G}}(\gamma\tensor\Gamma)$ to be the multiplicity of $h_+$ minus the multiplicity of $h_-$.  Then the $\bar{\mathcal{G}}$-preserving part of the differential is given by $d^\flat_N$, and hence the first page of the spectral sequence $E^r(\bar{\mathcal{G}})$ is given by \refDiagram{G_filtration_first_page} and hence is isomorphic to $E^1(\mathcal{G})$.  It therefore follows that $E^2(\bar{\mathcal{G}})\iso E^2(\mathcal{G})$ and both spectral sequences converge at this point.  The chain of isomorphisms
\[  ECH\orbits{0he}{eh}(\mathrm{int}(N),\alpha) \iso E^\infty(\bar{\mathcal{G}}) \iso E^\infty(\mathcal{G}) \iso E^1_0(\mathcal{F})\]
completes the proof.
\end{proof}

The Cancellation Lemma below makes rigorous the notion of ``cancelling'' the differential from $e_+$ to $h_-$ mentioned above; the process is non-trivial since the complex is infinite and the single holomorphic curve from $e_+$ to $h_-$ actually manifests itself as infinitely many differentials in the complex (e.g.~$e_+^i \longrightarrow e_+^{i-1}h_-$ for all $i$).

\begin{lemma}[Cancellation Lemma]\label{cancellation_lemma}
Suppose that $C$ is any variant of an ECC chain complex and that $u$ is a differential between two simple orbits $\gamma_1$ and $\gamma_2$, one of which is hyperbolic and one of which is elliptic. Furthermore suppose that
\begin{itemize}
\item no other differentials in the complex have ends at $\gamma_1$, and
\item if $\gamma_2$ is at the positive (resp.~negative) end of $u$ then all other differentials can only have positive (resp.~negative) ends at $\gamma_2$.
\end{itemize}
Then $C$ is chain homotopic to the complex, denoted $C'$, where we do not include the two orbits $\gamma_1$ and $\gamma_2$. 
\end{lemma}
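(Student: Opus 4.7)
The proof is an abstract mapping-cone / Gaussian elimination argument. Without loss of generality I take $\gamma_1 = h$ hyperbolic and $\gamma_2 = e$ elliptic with $u$ oriented so that $h$ is at the positive end and $e$ at the negative end of $u$; the other three sub-cases (swapping which orbit is hyperbolic, or swapping the orientation of $u$) are handled by analogous arguments. Every orbit set in $C$ can be written uniquely as $h^a e^b \Gamma$ with $a \in \set{0,1}$, $b \ge 0$, and $\Gamma$ an orbit set not involving $h$ or $e$. Let $A \subset C$ be the subspace with $a = 0$, $B \subset C$ the subspace with $a = 1$ (so $C = A \oplus B$ as vector spaces), and let $eA \subset A$ be the further subspace with $b \ge 1$; then the vector-space quotient $A / eA$ is exactly $C'$.

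From the first hypothesis $u$ is the only holomorphic curve with an end at $h$, so \cref{index_0_1_and_2_holo_curves} forces every ECH-index-$1$ differential to decompose either as $u$ together with trivial cylinders, or as a curve not touching $h$ together with a trivial cylinder over $h$. This makes $A$ a $d$-invariant subcomplex and shows that $d$ takes the block form
\begin{equation*}
d = \begin{pmatrix} d_A & U \\ 0 & d_B \end{pmatrix}
\end{equation*}
on $A \oplus B$, where $U: B \to A$ is the $u$-contribution $h e^b \Gamma \mapsto e^{b+1} \Gamma$ and $d_B$ agrees with $d_A$ under the bijection $h e^b \Gamma \leftrightarrow e^b \Gamma$. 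The second hypothesis forbids any differential from decreasing $e$-multiplicity, so $eA$ is itself a subcomplex of $A$. Under $U$, the map $B \to A$ is identified with the inclusion $eA \hookrightarrow A$, exhibiting $C$ as the mapping cone of this inclusion of chain complexes. Since the cone of an inclusion is chain homotopy equivalent to the cokernel, we conclude $C \simeq A/eA = C'$, with the induced differential matching that of the ECC variant obtained by omitting $\gamma_1$ and $\gamma_2$. An explicit chain homotopy equivalence can be constructed from the vector-space splitting $A = C' \oplus eA$ together with $U^{-1}$.

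The main technical obstacle is verifying the claimed block form of $d$, and in particular that the $u$-contribution to $d(he^b\Gamma)$ is $e^{b+1}\Gamma$ with coefficient $1$ mod $2$, with no additional terms beyond those coming from trivial cylinders over $h$. This reduces to an ECH-index-$1$ curve count via \cref{index_0_1_and_2_holo_curves}: the non-connector component is either $u$ itself or a curve not touching $h$, and the remaining ends at $e$ are covered by trivial cylinders whose combinatorics are fixed by the partition conditions at the elliptic orbit $e$. Once this combinatorial input is in place, the rest of the argument is purely algebraic.
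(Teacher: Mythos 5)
Your detailed case---$\gamma_1$ hyperbolic at the positive end of $u$, $\gamma_2$ elliptic at the negative end---is correct, and it takes a genuinely different route from the paper. The paper runs two nested spectral sequences (a filtration by the total multiplicity of $\gamma_1$ and $\gamma_2$, refined by a filtration by the multiplicity of $\gamma_1$) and concludes only an isomorphism $H_*(C)\iso H_*(C')$; your Gaussian-elimination/mapping-cone argument produces the homotopy equivalence explicitly, which is arguably cleaner and is what the statement actually asserts. The combinatorial input (only $u$ and trivial cylinders can account for a change in $\gamma_1$-multiplicity, via \cref{index_0_1_and_2_holo_curves}) is the same in both arguments.

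The gap is the sentence claiming the ``other three sub-cases \dots are handled by analogous arguments.'' Your splitting $C = A\oplus B$ is by the multiplicity of the \emph{hyperbolic} orbit, since that is what gives a two-piece decomposition. When the hyperbolic orbit is $\gamma_1$, the off-diagonal block of $d$ is purely the $u$-contribution, because $u$ is the only differential with an end at $\gamma_1$; this covers your detailed case and its orientation-reverse (where one uses ``cone of a surjection is homotopy equivalent to its kernel''). But when $\gamma_1$ is \emph{elliptic}, you are forced to split by the multiplicity of $\gamma_2$, and the second hypothesis explicitly allows \emph{other} differentials to have ends at $\gamma_2$ (all on the same side as $u$'s end there). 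Those differentials also change the $\gamma_2$-multiplicity and therefore also land in the off-diagonal block, so $V = U + W$ with $W\neq 0$ in general, and $C$ is not the cone of the inclusion or projection you describe. This is not a degenerate corner: the paper's main application takes $(\gamma_1,\gamma_2)=(e_+,h_-)$ with $u\colon e_+\to h_-$, where $\gamma_1$ is elliptic and the terms $\Gamma\mapsto h_- d_N'(\Gamma)$ of $d_N$ contribute to $W$. To repair it you need either the paper's first filtration---$W$ strictly increases the total multiplicity of $\gamma_1$ and $\gamma_2$ while $U$ preserves it, so on the associated graded the off-diagonal block is again purely $U$ and your cancellation applies---or a finiteness argument guaranteeing that the Gaussian-elimination correction terms $\sum_{k\ge 0}(U^{-1}W)^k$ terminate, which is exactly the finiteness invoked for the map $\sigma$ in the proof of \cref{U_natural_and_U_sharp_iso_prop}.
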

\begin{proof}
This proof will involve two filtrations, $\mathcal{F}$ and $\mathcal{G}$, which are different from the two filtrations used previously in this section.

Note that any generator of $C$ can be written as
\[ \gamma_1^i\gamma_2^j\Gamma' \]
where $\Gamma'\in C'$ does not contain either of the $\gamma_i$.  Define the first filtration $\mathcal{F}$ on $C$ by
\[ \mathcal{F}(\gamma_1^i\gamma_2^j\Gamma') = i+j. \]
By the hypotheses of the Lemma, ends of differentials at $\gamma_2$ are either all negative or all positive. In the negative case, the $\mathcal{F}$-filtration level can only increase, so $\mathcal{F}$ defines a descending filtration.  In the positive case, we obtain an ascending filtration.

To compute $E^1(\mathcal{F})$, we define a second filtration, $\mathcal{G}$ on $E^0(\mathcal{F})$, by
\[ \mathcal{F}(\gamma_1^i\gamma_2^j\Gamma') = i. \]
$\mathcal{G}$ is also a valid filtration since there is only one differential involving $\gamma_1$.  If $\gamma_1$ is at the positive (resp.\ negative) end of $u$ then $\mathcal{G}$ is an ascending (resp.\ descending) filtration.

The zeroth page $E^0(\mathcal{G})$ is isomorphic to 
\[ \bigoplus_{i,j}\gamma_1^j\gamma_2^{i-j}\cdot C', \]
and so the first page is isomorphic as a vector space to the same formula but with a copy of $H_*(C')$ in each summand.

Write $E^1_i(\mathcal{G})$ to denote the subcomplex of $E^1(\mathcal{G})$ in $\mathcal{F}$-grading $i$.  Then, since one of $\gamma_1$ and $\gamma_2$ is hyperbolic, $E^1_i(\mathcal{G})$ is supported in just two $\mathcal{G}$-filtration levels: 
\begin{itemize}
\item if $\gamma_1$ is hyperbolic, then the two filtration levels are 0 and 1, and
\item if $\gamma_2$ is hyperbolic, then the two filtration levels are $i$ and $i-1$.
\end{itemize}

We therefore have the following structure for $E^1_i(\mathcal{G})$:
\begin{itemize}
\item $E^1_0(\mathcal{G})\iso H_*(C')$, and
\item for $i>0$, $ E^1_i(\mathcal{G})$ is given by the diagram
\[ \gamma_1^j\gamma_2^{i-j} H_*(C') \xrightarrow{\gamma_1^j\gamma_2^{i-j}\Gamma'\mapsto\gamma_1^{j-1}\gamma_2^{i-j+1}\Gamma'} \gamma_1^{j-1}\gamma_2^{i-j+1} H_*(C') \]
if $u$ is a differential from $\gamma_1$ to $\gamma_2$, and
\[ \gamma_1^{j-1}\gamma_2^{i-j+1} H_*(C') \xrightarrow{\gamma_1^{j-1}\gamma_2^{i-j+1}\Gamma'\mapsto\gamma_1^{j}\gamma_2^{i-j}\Gamma'} \gamma_1^{j}\gamma_2^{i-j} H_*(C') \] if $u$ is a differential from $\gamma_2$ to $\gamma_1$.  (In the two diagrams above, $j=1$ if $\gamma_1$ is hyperbolic and $j=i$ if $\gamma_2$ is hyperbolic.) Note that both these maps are isomorphisms.
\end{itemize}
It therefore follows that the second page is given by
\[ E^2_i(\mathcal{G}) \iso \begin{cases*}
H_*(C') &\text{if $i=0$, and}\\
0 &\text{if $i\neq 0$.}
\end{cases*} \]
Hence we have convergence at this page, so the first page of the spectral sequence corresponding to $\mathcal{F}$ is also given by this formula.  This in turn implies that the spectral sequence corresponding to $\mathcal{F}$ also converges at this point, so we obtain
\[  H_*(C) \iso H_*(C') \]
as required.
\end{proof}

\begin{prop}[{\nogapcite[Lemma 9.7.1]{CGH_ECH_OBD}}]\label{relative_ECH_iso_augmented_complex_prop}
There is an isomorphism
\[ ECH(N,\del N,\alpha) \iso ECH\orbits{0he}{}(\mathrm{int}(N),\alpha). \]
\end{prop}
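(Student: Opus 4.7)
The plan is to realize $ECC\orbits{0he}{}(\mathrm{int}(N),\alpha)$ as a mapping cone and then invoke the standard homological principle that the mapping cone of an injective chain map is quasi-isomorphic to its cokernel. First I would unpack the definition: since the subscript of $\orbits{0he}{}$ is empty (so neither $e_+$ nor $h_-$ is included), \cref{ECK_complex_defn} specializes to
\[ ECC\orbits{0he}{}(\mathrm{int}(N),\alpha) \;=\; \mathcal{R}[h_+]\tensor ECC^{e_-}(\mathrm{int}(N),\alpha), \]
with differential $d(1\tensor\Gamma)=1\tensor d_N\Gamma$ and $d(h_+\tensor\Gamma)=1\tensor(1+e_-)\Gamma + h_+\tensor d_N\Gamma$, obtained by dropping the $\gamma/e_+\tensor h_-\Gamma$ term from the general formula. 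This is, by inspection, the mapping cone of the chain self-map
\[ f \;:=\; (1+e_-)\cdot \;\colon\; ECC^{e_-}(\mathrm{int}(N),\alpha) \longrightarrow ECC^{e_-}(\mathrm{int}(N),\alpha) \]
given by multiplication by $1+e_-$.

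Next, I would check that $f$ is injective at the chain level: writing any chain as $\sum_a e_-^a\Gamma_a$ with each $\Gamma_a$ a combination of orbit sets free of $e_-$, its image $\sum_a e_-^a(\Gamma_a+\Gamma_{a-1})$ (with $\Gamma_{-1}:=0$) vanishes only when every $\Gamma_a$ does. Consequently the cokernel of $f$ is the chain complex obtained by enforcing $e_-\Gamma\sim\Gamma$, which is precisely $ECC^{e_-}(\mathrm{int}(N),\alpha)/\!\sim$ from \cref{relative_ECH_groups}; its homology is $ECH(N,\del N,\alpha)$ by definition.

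Finally, I would apply the general fact that for any injective chain map $f\colon A\to B$, the obvious projection $\Phi\colon\mathrm{cone}(f)\to B/f(A)$, which kills the ``$A[-1]$'' summand and pushes the ``$B$'' summand to its cokernel, is a quasi-isomorphism. The cleanest proof is to observe that $\ker\Phi$ is isomorphic (via $f(a')\leftrightarrow a'$) to the mapping cone of $\id_A$ and is therefore acyclic. Applied to $f=(1+e_-)\cdot$ this yields
\[ ECH\orbits{0he}{}(\mathrm{int}(N),\alpha) \;\iso\; ECH(N,\del N,\alpha). \]
The only real obstacle is the mapping-cone identification in the first step; everything else is routine homological algebra. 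One minor point worth flagging is that ``$/\!\sim$'' in \cref{relative_ECH_groups} should be read at the chain level (taking the quotient by the subcomplex $(1+e_-)ECC^{e_-}$, which is legitimate because $(1+e_-)$ commutes with the differential), and then passing to homology.
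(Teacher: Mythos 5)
Your mapping-cone decomposition is exactly the one used in the paper: both identify $ECC\orbits{0he}{}(\mathrm{int}(N),\alpha)$ with the cone of $f=(1+e_-)\cdot$ on $ECC\orbits{e}{}(\mathrm{int}(N),\alpha)$. The difference lies in the endgame. The paper runs the long exact triangle on homology and explicitly invokes injectivity of $f_*\colon[\Gamma]\mapsto[\Gamma]+[e_-\Gamma]$ on \emph{homology}, concluding $ECH\orbits{0he}{}\iso ECH\orbits{e}{}/\mathrm{Im}(f_*)$, which matches \cref{relative_ECH_groups} read as a quotient of the homology group. You instead invoke the cone-versus-cokernel quasi-isomorphism, which needs only \emph{chain-level} injectivity of $f$, and land at $H_*\bigl(ECC\orbits{e}{}/\!\sim\bigr)$.

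The caveat you flag at the end is the genuine gap: these two outputs coincide only when $f_*$ is injective on homology, since from the short exact sequence $0\to ECC\orbits{e}{}\xrightarrow{f}ECC\orbits{e}{}\to ECC\orbits{e}{}/\!\sim\to 0$ the long exact sequence gives $H_*(ECC\orbits{e}{}/\!\sim)\iso\mathrm{coker}(f_*)$ precisely when the connecting maps vanish, i.e.\ when $f_*$ is injective. Chain-level injectivity of $f$ does not imply this. As written, \cref{relative_ECH_groups} takes the quotient of $ECH\orbits{e}{}$ (a homology group), and the paper's proof correspondingly establishes $f_*$ injective and then divides out. To close your argument you should either prove $f_*$ injective on homology (a filtration by $e_-$-degree works, since the \refNamedThm{Trapping Lemma}{trapping_lemma} forces $d_N$ to preserve or raise that degree, so the associated graded of $f$ is the identity) or else argue that the chain-level reading of the quotient is the intended one and is consistent with the rest of the paper's usage; merely noting the two readings differ does not settle which one the proposition is about.
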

\begin{proof}
We will reproduce here the proof by Colin, Ghiggini and Honda for completeness. In addition, the exact triangle argument involved will prove useful later in the proof of \cref{A_j_iso}.

The complex $ECC\orbits{0he}{}(\mathrm{int}(N),\alpha)$ can be exhibited as the mapping cone of the map
\begin{align*}
f: ECC\orbits{e}{}(\mathrm{int}(N),\alpha) &\to ECC\orbits{e}{}(\mathrm{int}(N),\alpha) \\
\Gamma &\to (1+e_-)\Gamma.
\end{align*}
We therefore obtain a triangle
\[\begin{tikzcd}
ECC\orbits{e}{}(\mathrm{int}(N),\alpha) \arrow[r,"i"] & ECC\orbits{0he}{}(\mathrm{int}(N),\alpha) \arrow[r,"\cdot/h_+"] & ECC\orbits{e}{}(\mathrm{int}(N),\alpha) \arrow[r,"f"]& ~
\end{tikzcd}\]
which gives rise to an exact triangle on homology:
\[\begin{tikzcd}
ECH\orbits{e}{}(\mathrm{int}(N),\alpha)\arrow[rr,"f_*"] && ECH\orbits{e}{}(\mathrm{int}(N),\alpha)\arrow[dl,"i_*"] \\
 & ECH\orbits{0he}{}(\mathrm{int}(N),\alpha)\arrow[ul,"(\cdot/h_+)_*"] & 
\end{tikzcd}\]
Now note that the map
\[ f_* : [\Gamma] \mapsto [\Gamma] + [e_-\Gamma] \]
is injective and hence the map induced by $\cdot/h_+$ is zero.  Then
\[ ECH\orbits{0he}{}(\mathrm{int}(N),\alpha) \iso \frac{ECH\orbits{e}{}(\mathrm{int}(N),\alpha)}{\mathrm{Im}(f_*)} = ECH(N,\del N,\alpha) \]
as required.
\end{proof}

\begin{proof}[Proof of \cref{relative_ECH_equals_ECH}, part (1)]\label{proof_of_rel_ECH_equals_ECH_pt_1}
We have isomorphisms
\[ ECH(N,\del N,\alpha) \iso ECH\orbits{0he}{}(\mathrm{int}(N),\alpha) \iso ECH\orbits{0he}{eh}(\mathrm{int}(N),\alpha), \]
where the first follows from \cref{relative_ECH_iso_augmented_complex_prop} above and the second by the \refNamedThm{Cancellation Lemma}{cancellation_lemma}.  \Cref{ECH0heeh_E10F_ECH_iso} then completes the proof.
\end{proof}

\subsection{The proof of the hat part of Theorem \protect\ref*{relative_ECH_equals_ECH}}

The structure of the proof of the second part of \cref{relative_ECH_equals_ECH} is as follows.  Recall that the hat version of ECH is defined as the homology of the mapping cone of a $U$-map on the embedded chain complex, which is in turn defined by counting holomorphic curves of ECH index 2 passing through a generic point $z\in M$ (c.f.~\cref{ECH_def_section}).

We will start by defining $U$-maps on the complexes 
\[ECC\orbits{0he}{eh}(\mathrm{int}(N),\alpha)\quad\text{and}\quad ECC\orbits{0he}{}(\mathrm{int}(N),\alpha),\]
 denoted by $U^\sharp$ and $U^\natural$ respectively.  We will then show that the homology of the mapping cones of all three $U$-maps are isomorphic, and moreover isomorphic to the homology of the complex
\[ ECC\orbits{0he}{h}(\mathrm{int}(N),\alpha). \]
These results will then be used to prove that the hat versions of ECH and relative ECH are isomorphic.

\begin{defn}~
\begin{enumerate}
\item Define an action on $ECC\orbits{0he}{}(\mathrm{int}(N),\alpha)$, denoted $U^\natural$, by
\[  U^\natural\cdot \gamma\tensor\Gamma := \gamma\tensor d_N'(\Gamma). \]
\item Define an action on $ECC\orbits{0he}{eh}(\mathrm{int}(N),\alpha)$, denoted $U^\sharp$, by
\[  U^\sharp\cdot \gamma\tensor\Gamma := \gamma/e_+ \tensor \Gamma. \]
\end{enumerate}
\end{defn}

\begin{prop}\label{Cone_U_natural_iso_ECC_0he_h}
The mapping cone of $U^\natural$, $C(U^\natural)$, is isomorphic as a chain complex to $ECC\orbits{0he}{h}(\mathrm{int}(N),\alpha)$.
\end{prop}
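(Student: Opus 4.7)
The plan is to construct an explicit vector-space identification between $ECC\orbits{0he}{h}(\mathrm{int}(N),\alpha)$ and the underlying vector space of $C(U^\natural)$, and then verify directly that the two differentials agree.

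First I would observe that, since $h_-$ is hyperbolic, every orbit set in $\mathcal{R}[h_+]\tensor ECC\orbits{e}{h}$ can be written uniquely as $a + h_-\,b$ with $a,b\in\mathcal{R}[h_+]\tensor ECC\orbits{e}{}=ECC\orbits{0he}{}$. This gives a canonical $\F_2$-linear isomorphism
\[
\Phi:ECC\orbits{0he}{}\oplus ECC\orbits{0he}{}\;\longrightarrow\;ECC\orbits{0he}{h},\qquad (a,b)\longmapsto a+h_-\,b,
\]
which is the candidate chain isomorphism with the cone $C(U^\natural)$.

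Next I would unpack the ECK differential on $ECC\orbits{0he}{h}$ using \cref{ECK_complex_defn}, which reads
\[
d(\gamma\tensor\Gamma)=\gamma/h_+\tensor(1+e_-)\Gamma+\gamma\tensor d_N(\Gamma),
\]
since the $\gamma/e_+\tensor h_-\Gamma$ term vanishes in the absence of $e_+$. Writing $\Gamma=\Gamma_0+h_-\Gamma_1$ with $\Gamma_0,\Gamma_1\in ECC\orbits{e}{}$ and using the decomposition $d_N(\Gamma_0)=d_N^\flat(\Gamma_0)+h_-\,d_N'(\Gamma_0)$ from \cref{a_sequence_of_filtrations_on_the_ECH_complexes}, together with $d_N(h_-\Gamma_1)=h_-\,d_N^\flat(\Gamma_1)$ (a consequence of the Trapping Lemma, which forbids positive ends at $h_-$, and of $h_-^2=0$), one obtains
\[
d(\gamma\tensor(\Gamma_0+h_-\Gamma_1))=\big[\gamma/h_+\tensor(1+e_-)\Gamma_0+\gamma\tensor d_N^\flat(\Gamma_0)\big]+h_-\big[\gamma/h_+\tensor(1+e_-)\Gamma_1+\gamma\tensor d_N^\flat(\Gamma_1)+\gamma\tensor d_N'(\Gamma_0)\big].
\]
Recognising the bracketed expressions as $d^\natural(\gamma\tensor\Gamma_0)$, $d^\natural(\gamma\tensor\Gamma_1)$ and $U^\natural(\gamma\tensor\Gamma_0)$ respectively, where $d^\natural$ is the differential on $ECC\orbits{0he}{}$ (obtained from the mapping-cone description in the proof of \cref{relative_ECH_iso_augmented_complex_prop}), pulls back through $\Phi$ to the map $(a,b)\mapsto(d^\natural a,\,U^\natural a+d^\natural b)$, which is precisely the mapping-cone differential of $U^\natural$.

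Before concluding I would record the two auxiliary facts that $(d_N^\flat)^2=0$ and $d_N^\flat d_N'+d_N' d_N^\flat=0$, which both follow by extracting $h_-$-coefficients from $d_N^2=0$; the first ensures $d^\natural$ is genuinely a differential, while the second confirms that $U^\natural$ is a chain map, so that the cone construction is well-defined. The main obstacle is not conceptual but bookkeeping: keeping track of which subcomplexes are allowed to contain $h_-$ and which are not, and verifying that no stray terms appear from curves with positive ends at the negative Morse–Bott torus $\del N$. Once these Morse–Bott/Trapping Lemma considerations are dispatched, the identification of the two differentials is a direct algebraic computation.
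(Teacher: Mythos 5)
Your proof is correct and takes essentially the same approach as the paper: the paper's (two-line) proof identifies $C(U^\natural)$ with the diagram $ECC\orbits{0he}{}\xrightarrow{h_-\cdot d'_N}h_-\cdot ECC\orbits{0he}{}$, which is exactly your decomposition $\Gamma=\Gamma_0+h_-\Gamma_1$ with connecting map $U^\natural$. You have simply written out explicitly the differential-matching computation that the paper leaves implicit.
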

\begin{proof}
The proof follows easily once we write out the diagram representing $C(U^\natural)$:
\[ ECC\orbits{0he}{}(\mathrm{int}(N),\alpha) \xrightarrow{d'_N} ECC\orbits{0he}{}(\mathrm{int}(N),\alpha),\]
where in addition there are differentials contained within each summand.

This is isomorphic to the following diagram representing $ECC\orbits{0he}{h}(\mathrm{int}(N),\alpha)$:
\[ ECC\orbits{0he}{}(\mathrm{int}(N),\alpha) \xrightarrow{h_-\cdot d'_N} h_-\cdot ECC\orbits{0he}{}(\mathrm{int}(N),\alpha).\tag*{\qedhere}\]
\end{proof}

\begin{prop}\label{U_natural_and_U_sharp_iso_prop}
The mapping cones $C(U^\natural)$ and $C(U^\sharp)$ are chain homotopic.
\end{prop}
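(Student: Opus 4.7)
The strategy is to lift the chain homotopy equivalence $\phi : A \to B$, implicit in the proof of part~(1) of \cref{relative_ECH_equals_ECH}, to the level of mapping cones, where $A := ECC\orbits{0he}{eh}(\mathrm{int}(N),\alpha)$ and $B := ECC\orbits{0he}{}(\mathrm{int}(N),\alpha)$. The naive map $\phi\oplus\phi : C(U^\sharp)\to C(U^\natural)$ fails to be a chain map because $\phi U^\sharp$ and $U^\natural\phi$ disagree, so the main task is to produce an explicit chain homotopy $H$ between them and then package $\phi$ and $H$ into a genuine chain map of cones.

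First I would take $\phi$ to be the linear projection sending $\gamma\tensor\Gamma$ to itself when $\gamma$ is $e_+$-free and $\Gamma$ is $h_-$-free, and to zero otherwise. That $\phi$ is a chain map follows by direct inspection of
\[ d_A(\gamma\tensor\Gamma) = \gamma/h_+\tensor(1+e_-)\Gamma + \gamma/e_+\tensor h_-\Gamma + \gamma\tensor d_N(\Gamma) \]
using the splitting $d_N = d_N^\flat + h_- d'_N$: on the $(e_+^0,h_-^0)$-block the only term that would land outside $B$ is $\gamma\tensor h_-d'_N(\Gamma)$, which is killed by $\phi$; on every other block $d_A$ preserves non-$(0,0)$-ness, so both $\phi d_A$ and $d_B\phi$ vanish. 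A short computation then shows that the defect $\phi U^\sharp - U^\natural\phi$ equals $U^\natural Y$ on elements $Y\in B$ and equals $Y$ on elements of the form $e_+\cdot Y$ with $Y\in B$, and vanishes on all remaining blocks. Defining $H : A\to B$ by $H(h_-\cdot Y) := Y$ for $Y\in B$ and $H := 0$ on every other orbit set, I would verify the homotopy identity $\phi U^\sharp - U^\natural\phi = d_B H + H d_A$ by a case analysis on the blocks $e_+^i h_-^\epsilon\cdot B$: the $(0,0)$-case uses that $H d_A$ captures the $h_- d'_N$-component of $d_N$, the $(1,0)$-case uses that $H d_A$ captures the $\gamma/e_+\tensor h_-\Gamma$ term in $d_A$, and the remaining blocks vanish on both sides.

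With $\phi$ and $H$ in hand, I would assemble the map
\[ \Phi : C(U^\sharp) \to C(U^\natural), \qquad \Phi(X_1, X_0) := (\phi X_1,\; \phi X_0 + H X_1), \]
whose chain-map condition unpacks precisely to the homotopy identity just verified. To finish, I would compare the long exact sequences of the two mapping cones via the five lemma: since $\phi_*$ is an isomorphism by part~(1) of \cref{relative_ECH_equals_ECH} (via \cref{cancellation_lemma,relative_ECH_iso_augmented_complex_prop,ECH0heeh_E10F_ECH_iso}), so is $\Phi_*$, and over $\F_2$ a quasi-isomorphism between complexes of vector spaces is automatically a chain homotopy equivalence. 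The main technical obstacle I expect is the bookkeeping in the verification of the homotopy identity, but this is made tractable by the fact that $d_A$ changes $e_+$-multiplicity by at most one and $h_-$-multiplicity by at most one (and only in coupled fashion via the $\gamma/e_+\tensor h_-\Gamma$ term), confining the analysis to a small finite collection of cases.
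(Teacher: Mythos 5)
Your argument is correct in outline but goes in the opposite direction from the paper's. The paper constructs a chain map $\sigma : ECC\orbits{0he}{}(\mathrm{int}(N),\alpha) \to ECC\orbits{0he}{eh}(\mathrm{int}(N),\alpha)$ by $\Gamma \mapsto \sum_{i\ge 0} e_+^i\tensor(d'_N)^i(\Gamma)$, and this commutes \emph{exactly} with the two $U$-maps ($\sigma(\Gamma)/e_+ = \sigma(d'_N(\Gamma))$), so $\sigma\oplus\sigma$ is already a chain map of cones and no homotopy correction is needed before invoking the five lemma. You instead take the projection $\phi$ in the other direction, which fails to commute with the $U$-maps and therefore requires the homotopy $H$ before it can be assembled into a map of cones; note $\phi\circ\sigma=\id$, so the two constructions are one-sided inverse to each other. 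I have checked your $\phi$ is a chain map, that your formula for the defect $\phi U^\sharp - U^\natural\phi$ is correct on every block, and that $H$ (nonzero only on the $(e_+^0,h_-^1)$-block, stripping off $h_-$) does satisfy the homotopy identity case by case over $\F_2$; the packaging $\Phi(X_1,X_0)=(\phi X_1,\,\phi X_0+HX_1)$ is the standard one. So your route does work: it starts from the most obvious candidate map and pays for that with the extra $H$-bookkeeping, whereas the paper's $\sigma$ is less obvious to write down but needs no homotopy at all.

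There is, however, a genuine gap before the five lemma can be applied. You assert that $\phi_*$ is an isomorphism by citing \cref{relative_ECH_equals_ECH}, \cref{cancellation_lemma}, \cref{relative_ECH_iso_augmented_complex_prop} and \cref{ECH0heeh_E10F_ECH_iso}; but those results establish only that the two homology groups are \emph{abstractly} isomorphic, not that the particular map $\phi$ realizes the isomorphism. To close the gap you must, exactly as the paper does for $\sigma$ (``by examining the proof of the Cancellation Lemma\ldots''), trace the proof of \cref{cancellation_lemma} through your specific map. Concretely: $\phi$ is the quotient onto the filtration-level-$0$ piece of the descending filtration $\mathcal{F}(e_+^i h_-^j\Gamma') = i+j$ with $(\gamma_1,\gamma_2)=(e_+,h_-)$; the proof of \cref{cancellation_lemma} shows the associated spectral sequence degenerates with $E^\infty$ concentrated in level $0$, and the edge morphism $H_*\big(ECC\orbits{0he}{eh}(\mathrm{int}(N),\alpha)\big) \to E^\infty_0 \iso H_*\big(ECC\orbits{0he}{}(\mathrm{int}(N),\alpha)\big)$ identifies with $\phi_*$. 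This verification is short and entirely parallel to the paper's treatment of $\sigma_*$, but without it the five lemma is not available and the argument is incomplete.
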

\begin{proof}
We begin by defining the following chain map
\begin{align}
\sigma : ECC\orbits{0he}{}(\mathrm{int}(N),\alpha) &\to ECC\orbits{0he}{eh}(\mathrm{int}(N),\alpha)\label{sigma_quasi_iso}\\
\Gamma &\mapsto \sum_{i=0}^\infty e_+^i \tensor (d'_N)^i(\Gamma) \nonumber
\end{align}
where of course the sum is finite since $(d'_N)^i(\Gamma)$ is zero for $i$ large enough. It is easy to check that this map commutes with the $U$-maps:
\[ \sigma(\Gamma)/e_+ = \sum_{i=1}^\infty e_+^{i-1} \tensor (d'_N)^i(\Gamma) = \sum_{i=0}^\infty e_+^{i} \tensor (d'_N)^{i+1}(\Gamma) = \sigma(d'_N(\Gamma)) \]
and hence induces a map, denoted by $\hat\sigma$, between the corresponding mapping cones.  

We therefore have a map between long exact sequences:
\begin{equation}\label{map_between_U_long_exact_sequences_diag}\begin{tikzcd}
\cdots\arrow[r,"U^\natural_*"] & ECH\orbits{0he}{}\arrow[r]\arrow[d,"\sigma_*"] & H_*(C(U^\natural))\arrow[r]\arrow[d,"\hat\sigma_*"] & ECH\orbits{0he}{}\arrow[r,"U^\natural_*"]\arrow[d,"\sigma_*"] & \cdots \\
\cdots\arrow[r,"U^\sharp_*"] & ECH\orbits{0he}{eh}\arrow[r] & H_*(C(U^\sharp))\arrow[r] & ECH\orbits{0he}{eh}\arrow[r,"U^\sharp_*"] & \cdots,
\end{tikzcd}\end{equation}
where here we are omitting ``$(\mathrm{int}(N),\alpha)$'' from the notation for brevity.

By examining the proof of the \refNamedThm{Cancellation Lemma}{cancellation_lemma}, where in this case we have
\[ (\gamma_1, \gamma_2) = (e_+, h_-), \]
we see that the map induced by $\sigma$ on the first page of the spectral sequence corresponding to $\mathcal{G}$ yields an isomorphism
\[\begin{tikzcd}
ECH\orbits{0he}{}(\mathrm{int}(N),\alpha) \arrow[r,"\sigma_*", "\iso"'] & E^1_0(\mathcal{G}) \iso ECH\orbits{0he}{eh}(\mathrm{int}(N),\alpha)
\end{tikzcd}\]
and hence $\sigma_*$ in \refDiagram{map_between_U_long_exact_sequences_diag} above is an isomorphism.  The five lemma completes the proof.
\end{proof}

\begin{prop}[{\nogapcite[Lemma 9.9.1]{CGH_ECH_OBD}}]\label{ECH_0he_h_iso_relative_ECH}~
\[ ECH\orbits{0he}{h}(\mathrm{int}(N),\alpha) \iso \widehat{ECH}(N,\del N,\alpha). \]
\end{prop}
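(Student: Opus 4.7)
My plan is to follow the template of the proof of \cref{relative_ECH_iso_augmented_complex_prop}, the analogous statement for the non-hat relative ECH group. Write $C_h := ECC\orbits{e}{h}(\mathrm{int}(N),\alpha)$ and let $f_h : C_h \to C_h$ be the chain map $\Gamma \mapsto (1+e_-)\Gamma$; this is well defined because, by the \refNamedThm{Trapping Lemma}{trapping_lemma}, no holomorphic curve in $N$ can have a positive end at $e_-$, and hence multiplication by $e_-$ commutes with the differential on $C_h$.

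The first step is to recognize $ECC\orbits{0he}{h}(\mathrm{int}(N),\alpha)$ as the mapping cone of $f_h$. Splitting this complex according to whether or not $h_+$ is present in the generator yields a vector-space decomposition $C_h \oplus h_+\cdot C_h$; the only differentials crossing between the two summands arise from the two pseudoholomorphic curves $h_+\to\emptyset$ and $h_+\to e_-$ from \cref{three_differentials_intro}, which together realize the map $h_+\Gamma \mapsto (1+e_-)\Gamma$. This immediately yields the exact triangle
\[\begin{tikzcd}
ECH\orbits{e}{h}(\mathrm{int}(N),\alpha)\arrow[rr,"f_{h,*}"] && ECH\orbits{e}{h}(\mathrm{int}(N),\alpha)\arrow[dl,"i_*"] \\
& ECH\orbits{0he}{h}(\mathrm{int}(N),\alpha)\arrow[ul,"(\cdot/h_+)_*"] &
\end{tikzcd}\]
Once $f_{h,*}$ is shown to be injective, this triangle collapses to an isomorphism $ECH\orbits{0he}{h}(\mathrm{int}(N),\alpha) \iso H_*(C_h)/\mathrm{Im}(f_{h,*})$, and quotienting by $\mathrm{Im}(f_{h,*})$ is by construction quotienting by the relation $[e_-\Gamma] \sim [\Gamma]$, which is exactly the definition of $\widehat{ECH}(N,\del N,\alpha)$ in \cref{relative_ECH_groups}.

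The main obstacle is thus the injectivity of $f_{h,*}$. To prove it, I plan to filter $C_h$ descendingly by $e_-$-multiplicity: the \refNamedThm{Trapping Lemma}{trapping_lemma} ensures that the differential cannot decrease $e_-$-multiplicity, so this is a genuine filtration by subcomplexes, and it is both exhaustive and Hausdorff because every chain is a finite linear combination of orbit sets of bounded $e_-$-multiplicity. The map $f_h$ preserves the filtration, and the extra contribution $e_-\Gamma$ lies in strictly higher filtration, so $f_h$ reduces to the identity on the associated graded. A standard filtration argument --- given any putative nonzero class $[x] \in \ker f_{h,*}$, choose the largest $p$ for which $[x]$ admits a representative in filtration level $\ge p$, and observe that the associated-graded identity forces $f_{h,*}[x]$ to have nonzero image in $F^pH_*(C_h)/F^{p+1}H_*(C_h)$, contradicting $f_{h,*}[x]=0$ --- then delivers the required injectivity and completes the argument.
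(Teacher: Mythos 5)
Your argument follows exactly the route the paper takes: the paper proves this proposition by declaring it ``the same as the proof of \cref{relative_ECH_iso_augmented_complex_prop} with $h_-$ included throughout'', which is precisely your mapping-cone/exact-triangle setup, and your identifications of the cone of $f_h$ with $ECC\orbits{0he}{h}(\mathrm{int}(N),\alpha)$ and of $H_*(C_h)/\mathrm{Im}(f_{h,*})$ with the quotient by $e_-\sim\emptyset$ are both correct.

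The only step that needs patching is your injectivity argument for $f_{h,*}$ (a step the paper merely asserts). For a descending filtration that is unbounded on the whole complex, chain-level Hausdorffness does not by itself guarantee that a nonzero class $[x]$ admits a \emph{largest} $p$ with $[x]\in\mathrm{Im}\big(H_*(F^p)\to H_*(C_h)\big)$: a priori a class could be represented by cycles of arbitrarily high $e_-$-multiplicity, and indeed $x+e_-x=dy$ forces $[x]=[e_-^nx]$ for all $n$, so this is exactly the scenario you must exclude. The fix is to use the splitting of $C_h$ by Alexander grading (equivalently by class in $H_1(N)$): the differential on $C_h$ preserves it, multiplication by $e_-$ raises it by $A(e_-)\ge 1$, and every orbit has nonnegative Alexander grading, so on each graded summand the $e_-$-multiplicity filtration is bounded and your largest-$p$ argument is valid there. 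Alternatively, one can skip the filtration entirely and induct on the Alexander grading: writing $x=\sum_A x_A$ and comparing graded pieces of $x+e_-x=dy$ gives $x_{A_0}=dy_{A_0}$ in the lowest grading $A_0$, whence $[x_{A_0}]=0$, then $[x_{A_0+1}]=[e_-x_{A_0}]=[d(e_-y_{A_0})]=0$, and so on. With that repair the proof is complete and coincides with the paper's.
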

The proof of this Proposition is exactly the same as that of \cref{relative_ECH_iso_augmented_complex_prop}, except the orbit $h_-$ is included throughout.

The final step is to show that $H_*(C(U^\natural)) \iso \widehat{ECH}(M)$---some care is required to deal with the sequence of contact forms $\alpha_i'$ on $M$.  Recall that the $U$-maps on $ECC(M,\alpha_i')$ are defined in terms of a generic basepoint $z$ in the interior of $V$; let $U_i$ denote the corresponding map on $ECC^{L_i}(M,\alpha_i')$.

It is shown~\cite[Lemma 9.9.3]{CGH_ECH_OBD} that there is a single index 2 $J_i'$-holomorphic curve passing through $z$; it has a single positive end at $e_+$ and no negative end---see \cref{perturbed_alpha_i_showing_differentials_fig}.  Therefore each $U_i$ is given by the map
\[ \gamma\tensor \Gamma \to \gamma/e_+\tensor \Gamma,\]
which explains the motivation behind the definition of $U^\sharp$.

The proof of \cref{relative_ECH_equals_ECH} is the completed by  constructing, for every $i\in\N$, an ``action bounded'' version of \refDiagram{map_between_U_long_exact_sequences_diag}.   More precisely, the top row is an $L_i$-bounded copy of the top row of \refDiagram{map_between_U_long_exact_sequences_diag}; the bottom row is given by
\[\begin{tikzcd}
\cdots \arrow[r,"(U_i)_*"] & ECH^{L_i}(M,\alpha_i')\arrow[r] & \widehat{ECH}^{L_i}(M,\alpha_i') \arrow[r]& ECH^{L_i}(M,\alpha_i')\arrow[r,"(U_i)_*"] & \cdots
\end{tikzcd}\]
where the two unlabeled horizontal maps are induced by the mapping cone inclusion and quotient maps; and the chain maps $\sigma_i$ and $\hat\sigma_i$ are constructed in a completely analogous way to the construction of $\sigma$ and $\hat\sigma$ in the proof of \cref{U_natural_and_U_sharp_iso_prop} above.  (For a precise description of this diagram refer to Colin, Ghiggini and Honda~\cite[Section 9.9]{CGH_ECH_OBD}---their construction carries over directly to our setting.)

Finally, we take the direct limit: the maps $(\sigma_i)_*$ limit to the isomorphism $\sigma_*$ and hence we can then honestly apply the five lemma to obtain an isomorphism
\begin{equation}\label{U_natural_ECH_hat_iso} \lim_{i\to\infty}(\hat\sigma_i)_* : H_*(C(U^\natural)) \xrightarrow{\iso} \widehat{ECH}(M). \end{equation}

\begin{proof}[Proof of \cref{relative_ECH_equals_ECH}, part (2)]\label{proof_of_rel_ECH_equals_ECH_pt_2}
This follows from \cref{Cone_U_natural_iso_ECC_0he_h,U_natural_and_U_sharp_iso_prop,ECH_0he_h_iso_relative_ECH} and \cref{U_natural_ECH_hat_iso} above.
\end{proof}

\chapter{A knot version of ECH}\label{a_knot_version_of_ECH_chapter}

\section{Embedded contact knot homology}\label{embedded_contact_knot_homology}

Embedded contact knot homology was first defined in terms of sutured contact homology~\cite[Section 7.2]{CGHH11} and later in terms of the ECC complexes from the previous chapter~\cite[Theorem 10.3.2]{CGH_ECH_OBD} for integral open book decompositions.  In this section we will generalize the definition to rationally null-homologous knots in rational open book decompositions.

Let $M$ be a closed 3-manifold equipped with a rational open book decomposition with binding $K$ and suppose that $\alpha$ is a contact form on $N$ which is adapted to the decomposition $M=N\union \nu(K)$ following the setup from the previous chapter.  As before, $K$ is the core of $\nu(K)$ and $N$ is the mapping torus of some monodromy $\phi:\Sigma \to \Sigma$ which restricts to a $p/q$-twist at the boundary.  Recall that the full ECC complex of $M$ can be written as
\[ ECC\orbits{0he}{eh}(\mathrm{int}(N),\alpha, J)\]
where $J$ is regular and adapted to the symplectization $\R\cross N$.

\begin{defn}
We define the \emph{Alexander grading} by $A(\gamma) := \langle \gamma, \Sigma\cross\set{0} \rangle$ for orbits in $N$, $A(h_+)=A(e_+):=q$, and on the full ECC complex by
\[ A\left( \prod{\gamma_i^{m_i}} \right) = \sum{m_i A(\gamma_i)}, \]
i.e.~the number of times that an orbit set winds around the $S^1$-direction within $N$.
\end{defn}

This defines an ascending filtration, which we call the \emph{knot filtration}, since the only differentials which can change $A$ are those arising from the holomorphic curve from $h_+$ to $\emptyset$.  In analogy to knot Floer homology, we consider this filtration on both the hat versions and full versions of the ECC complexes.
\pagebreak %
\begin{defn}~
\begin{enumerate}
\item Define the \emph{hat version of embedded contact knot homology} to be
\begin{equation}
\widehat{ECK}(K, \alpha) :=  H_*\left(ECC\orbits{he}{h}(\mathrm{int}(N),\alpha,J)\right),
\end{equation}
that is, the homology of the associated graded complex associated to the knot filtration on $\widehat{ECC}$. (Note that we have dropped the differential $h_+\longrightarrow \emptyset$ from the notation.)
\item Define the \emph{full version of embedded contact knot homology}, $ECK(K, \alpha)$, to be the bi-filtered homotopy type of the complex
\begin{equation}\label{full_ECK_complex}
ECC\orbits{0he}{eh}(\mathrm{int}(N), \alpha,J), 
\end{equation}
where the bi-filtering is taken with respect to the knot filtration and the filtration arising from the power of $e_+$. (Notice that similarly differentials can only decrease the power of $e_+$.)
\end{enumerate}
Note that in both these definitions we chosen to omit the ambient manifold, $M$, in which $K$ lies from the notation.
\end{defn}

The definition of $\widehat{ECK}$ was first given by Colin, Ghiggini, Honda and Hutchings in terms of sutured contact homology\cite[Section 7.2]{CGHH11} and they showed that it is well-defined with respect to the choice of (regular, adapted) almost complex structure $J$ and invariant under homotopies of $\alpha$~\cite[Theorem 10.2.2]{CGH_ECH_OBD}.  The full ECK complex was first discussed by Spano~\cite[Definition 2.1]{Spano17}.  However Spano's definition is slightly weaker since it applies only to integral open book decompositions and is defined as the homology of the graded complex associated to the Alexander filtration rather than in terms of bi-filtered homotopy type as seen here.  

The fact that full ECK in part (2) is well-defined with respect to the choice of (regular, adapted) almost complex structure is only conjectured, however we will prove later that
\begin{enumerate}
\item we have well-definedness in the case of an \emph{integral} open book decomposition (\cref{ECK_invariant_ZOBD}), and
\item in the rational case, the $e_+$-filtered homotopy type of
\begin{equation}\label{ECK_complex_no_A_decreasing_diff}
ECC\orbits{he}{eh}(\mathrm{int}(N), \alpha,J)
\end{equation}
is independent of $J$ within each Alexander grading (\cref{ECC_independent_of_alpha}).
\end{enumerate}

Spano uses a dynamical reformulation of the Alexander polynomial to show that, in the case of integral open book decompositions (where $K$ is null-homologous), $\widehat{ECK}(K,\alpha)$ categorifies the Alexander polynomial~\cite[Theorem 0.4]{Spano17}.  We will generalize this result to our setting of rationally null-homologous knots in rational open book decompositions in \cref{ECK_hat_categorifies_Alex_poly_section}.  Furthermore, we have the following conjecture:
\pagebreak %
\begin{conj}\label{ECK_HFK_conjecture}~
\begin{equation}\label{hat_conjecture_iso}
\widehat{ECK}(K,\alpha) \iso \widehat{HFK}(-K)
\end{equation}
and
\begin{equation}\label{full_conjecture} ECK(K,\alpha) \simeq CFK^+(-K), \end{equation}
where $\simeq$ denotes bi-filtered chain homotopy type.
\end{conj}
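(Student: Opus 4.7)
The plan is to adapt to the knot setting the strategy Colin, Ghiggini and Honda used to prove $ECH \iso HF$ (summarised in \cref{ECH_equals_HF_section}): use the open book decomposition to re-express both theories in terms of the monodromy of the open book, then construct symplectic cobordisms whose induced chain maps are mutually inverse on homology. The new feature for \cref{ECK_HFK_conjecture} is that both sides carry an Alexander filtration coming from winding around the $S^1$ factor of the mapping torus, and this filtration must be shown to correspond under the chain maps.

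As a first step, by \cref{relative_ECH_equals_ECH} the complex $ECC\orbits{0he}{eh}(\mathrm{int}(N),\alpha,J)$ already computes $ECH(M)$ purely from orbits of $R_\alpha$ in $N$ together with the four boundary orbits $e_\pm, h_\pm$, and the Alexander grading is defined geometrically on this complex as winding around the mapping torus. On the Heegaard-Floer side I would build a doubly pointed Heegaard diagram $(\Sigma',\mathbf{a},\mathbf{b},w,z)$ for $(M,K)$ directly from the rational open book, taking $\Sigma'$ to be a suitable double of the page $\Sigma$, $\mathbf{a}$ a basis of cycles on one copy and $\mathbf{b}$ its image under $\phi$ on the other, and placing $w,z$ on either side of a binding arc so that the Alexander grading defined by $z-w$ coincides with the geometric winding around $N$.

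The main work is then to construct chain maps in both directions, following the CGH strategy. In their proof, generators of $\widehat{HF}$ are realised as short flow arcs between $\mathbf{a}$ and $\mathbf{b}$ in a page, and symplectic cobordisms are built between these arcs and the PFH orbits, yielding chain maps whose compositions are the identity up to chain homotopy. In our setting these flow arcs can be arranged to lie in a single page, so they carry a well-defined winding number and hence a natural Alexander grading which should match that of the corresponding orbit sets; this would give \cref{hat_conjecture_iso} in each Alexander grading separately. For the full version \cref{full_conjecture} one must also identify the $e_+$-filtration on the left with the $U$-filtration on $CFK^+(-K)$; the observation near the end of \cref{the_isomorphism_with_relative_ECH} that the $\widehat{ECH}$ $U$-map acts on the ECK complex by $\gamma\tensor\Gamma\mapsto \gamma/e_+\tensor\Gamma$ is precisely the chain-level manifestation of this correspondence, so the task is to promote the underlying chain maps to bi-filtered ones.

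The hard part will be constructing and analysing the cobordism-induced chain maps. Even the closed case $ECH = HF$ is technically formidable; doing it with control on the Alexander grading requires showing that the relevant moduli spaces split over Alexander grading, which in turn needs careful analysis of how holomorphic curves wind around $K$ and how they interact with the boundary orbits $e_\pm, h_\pm$. Promoting the result to a bi-filtered homotopy equivalence, rather than to an isomorphism on the associated graded as in Spano's categorification of the Alexander polynomial, is strictly stronger and will require chain-level naturality that the cobordism maps are only known to satisfy up to chain homotopy. An alternative route would be to factor through a Seiberg--Witten knot invariant, mirroring Kutluhan--Lee--Taubes and Taubes, but adapting those arguments to track the Alexander filtration is itself a major undertaking.
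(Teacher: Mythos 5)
This statement is a \emph{conjecture} in the paper, not a theorem; the paper offers no proof, only supporting evidence (the categorification of the Alexander polynomial in \cref{ECK_hat_categorifies_Alex_poly}, agreement of the surgery formulas, and the fact that Colin--Ghiggini--Honda--Hutchings originally posed \cref{hat_conjecture_iso} in the sutured setting). So there is no ``paper proof'' to compare your attempt against.

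Your write-up is an honest research plan rather than a proof. The strategy you sketch --- adapt the CGH cobordism argument for $ECH\iso HF$ by building a doubly pointed Heegaard diagram from the open book, tracking the Alexander grading of the CGH flow arcs, and matching the $e_+$-filtration with the $U$-filtration via the chain-level description $U^\sharp(\gamma\tensor\Gamma)=\gamma/e_+\tensor\Gamma$ --- is the natural one and is consistent with what the paper says about the conjecture's provenance. But you explicitly defer every hard step: constructing the cobordism chain maps so that the relevant moduli spaces respect the Alexander grading, proving bi-filtered (not merely filtered, not merely graded) naturality of maps that are currently only defined up to chain homotopy via Seiberg--Witten theory, and upgrading the associated-graded isomorphism to a bi-filtered homotopy equivalence. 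These are not small gaps; they are the entire content of the conjecture, and as written your proposal does not resolve any of them. That is consistent with the paper treating this as open.
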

The isomorphism in \cref{hat_conjecture_iso} was first conjectured (in terms of sutured contact homology and sutured Floer homology) by Colin, Ghiggini, Honda and Hutchings~\cite[Conjecture 1.5]{CGHH11}.   Spano restated \cref{hat_conjecture_iso} in terms of ECK (for integral open book decompositions)~\cite[Conjecture 1.47]{Spano17} and also stated a variant of \cref{full_conjecture} for his version of full ECK~\cite[Conjecture 2.5]{Spano17}.

\subsection{Symmetries of the complex}\label{symmetries_of_the_complex}

In analogy to knot Floer homology, we can represent the bi-filtered complex \cref{full_ECK_complex} graphically by plotting dots in the plane.  A generator is represented by a dot at coordinates $(i,j)$ if its power of $e_+$ is $i$ and its Alexander grading is $j$.  Differentials arising from the holomorphic curve from $e_+$ to $h_-$ are represented by horizontal arrows decreasing the $i$ coordinate by 1, and those arising from the holomorphic curve from $h_+$ to $\emptyset$ are represented by vertical arrows decreasing the $j$ coordinate by 1.  All other differentials do not change the coordinates.

Recall that there is a $U$-action on this complex given by the map $\Gamma \mapsto \Gamma/e_+$; this is a diagonal map, decreasing both the $e_+$ grading and the Alexander grading by 1.  

The subcomplex obtained by just taking those generators with $i=0$ yields $\widehat{ECH}(M)$, and the associated graded of this column gives $\widehat{ECK}(K,\alpha)$.  Furthermore, the full complex has translational symmetry as described in the proposition below.

\begin{prop}\label{ECK_translational_symmetry}
For each $i'\ge 0$, consider the quotient complex
\[ C_{i'} := ECC\orbits{0he}{eh}(\mathrm{int}(N),\alpha,J; i\ge i'), \]
generated by orbit sets with coordinates $(i,j)$ such that $i\ge i_0$.  In particular $C_0$ is the full ECK complex.  Then the diagonal map
\begin{align*}
C_{i'} &\to C_0  \\
\Gamma &\mapsto \Gamma/e_+^{i'}
\end{align*}
is an isomorphism of chain complexes.
\end{prop}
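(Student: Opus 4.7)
The plan is to verify the isomorphism directly from the explicit form of the differential. First I would observe that, in the differential formula
\[ d(\gamma\tensor\Gamma) = \gamma/h_+\tensor(1+e_-)\Gamma + \gamma/e_+\tensor h_-\Gamma + \gamma\tensor d_N(\Gamma) \]
from \cref{ECK_complex_defn}, the only component that changes the $e_+$-multiplicity is the middle term arising from the holomorphic curve $e_+\longrightarrow h_-$, and it decreases $i$ by exactly one. The $h_+$-differentials and the interior differential $d_N$ leave $i$ unchanged. Consequently the span of orbit sets with $e_+$-multiplicity strictly less than $i'$ is closed under $d$, so $C_{i'}$ genuinely arises as a quotient complex, whose induced differential is obtained from the formula above by discarding any instance of $\gamma/e_+\tensor h_-\Gamma$ in which the resulting $e_+$-multiplicity would drop below $i'$.

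Next I would note that on the level of underlying vector spaces the map $\Gamma\mapsto \Gamma/e_+^{i'}$ is a bijection from generators of $C_{i'}$ (orbit sets with $i\ge i'$) onto generators of $C_0$ (orbit sets with $i\ge 0$), with obvious inverse $\Gamma'\mapsto e_+^{i'}\Gamma'$. The only remaining content is therefore that it intertwines the two differentials. Writing a generator of $C_{i'}$ as $\Gamma=e_+^i h_+^\epsilon\tensor \Gamma''$ with $i\ge i'$, $\epsilon\in\set{0,1}$, and $\Gamma''\in ECC\orbits{e}{h}(\mathrm{int}(N),\alpha)$, one computes both $d_{C_0}(\Gamma/e_+^{i'})$ and $d_{C_{i'}}(\Gamma)/e_+^{i'}$ using the displayed formula. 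The first and third terms commute tautologically with division by $e_+^{i'}$ since they do not involve $e_+$. The middle term contributes $e_+^{i-1}h_+^\epsilon\tensor h_-\Gamma''$ in $C_{i'}$, which is killed by the quotient precisely when $i-1<i'$; after applying $\cdot/e_+^{i'}$ this matches perfectly the condition under which $\Gamma/e_+^{i'}$ has $e_+$-multiplicity zero and hence $(\Gamma/e_+^{i'})/e_+=0$ in $C_0$. So the two expressions agree term-by-term, establishing the chain-map property and hence the isomorphism.

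I do not anticipate a serious obstacle; the argument is a direct bookkeeping exercise with the differential formula rather than a geometric one. The only conceptual point worth highlighting is that this translational symmetry is the algebraic shadow of the fact that the three boundary differentials from \cref{three_differentials_intro} each see $e_+$ only through the operation $\gamma\mapsto \gamma/e_+$, which manifestly commutes with multiplication by any fixed power of $e_+$; in particular the fact that $d_N$ lives in $\mathrm{int}(N)$ and therefore cannot touch $e_+$ at all is essential, and I would make this explicit at the start of the verification.
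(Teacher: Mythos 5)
Your argument is correct, and it takes a genuinely different route from the paper's. You treat the complex as given by the algebraic formula of \cref{ECK_complex_defn} and verify the chain-map property by pure bookkeeping: only the middle term $\gamma/e_+\tensor h_-\Gamma$ touches the $e_+$-multiplicity, it drops $i$ by exactly one, and so division by $e_+^{i'}$ intertwines the quotient differential on $C_{i'}$ with the differential on $C_0$, the boundary case $i=i'$ being absorbed by the convention that negative multiplicities give the zero orbit set. The paper instead argues geometrically: given a differential $e_+^i\Gamma_+\longrightarrow e_+^i\Gamma_-$ preserving the $e_+$-power, it invokes slope calculus (\cref{slope_calculus}) to show the underlying curve necessarily contains exactly $i$ trivial cylinders over $e_+$, which can be stripped off to yield a zeroth-column differential $\Gamma_+\longrightarrow\Gamma_-$, and conversely appended to lift one. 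The two approaches buy different things. Yours is shorter and purely formal, but it is only as strong as the decision to take the displayed differential formula as the definition; the geometric fact that the moduli spaces between $e_+^i\Gamma_+$ and $e_+^i\Gamma_-$ and between $\Gamma_+$ and $\Gamma_-$ coincide is exactly what justified that formula in \cref{differential_on_E_0} in the first place, so your ``essential'' closing observation that $d_N$ cannot touch $e_+$ is, in the paper's telling, the conclusion of the slope-calculus argument rather than a feature of the notation. The paper's version also makes explicit the correspondence of moduli spaces in the form that gets reused later, for instance when differentials in higher columns are produced by appending trivial cylinders in \cref{ECK_for_torus_knots_section}.
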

\begin{proof}
At the level of generators, the proposition holds trivially, since the structure is given as a tensor product
\[ \mathcal{R}[e_+] \tensor ECC\orbits{0he}{h}(\mathrm{int}(N),\alpha,J). \]
It remains to show that the differentials also satisfy the translational symmetry.  First note that the only non-filtration preserving differentials arise from the two holomorphic curves
\[  h_+\longrightarrow\emptyset\quad\text{and}\quad e_+\longrightarrow h_-;\]
such differentials clearly appear in accordance with the translational symmetry. 

Suppose we have a non-filtration preserving differential
\begin{equation} \label{differential_ith_column}
  e_+^i\Gamma_+ \longrightarrow e_+^i\Gamma_-,
	\end{equation}
where $\Gamma_\pm$ are orbit sets (with equal Alexander grading) constructed from the orbits $h_\pm$, $e_-$ and those in $\mathrm{int}(N)$.   

Then by slope calculus (\cref{slope_calculus}) the differential necessarily contains $i$ trivial cylinders over the orbit $e_+$, and hence by removing these we obtain a differential
\begin{equation} \label{differential_zeroth_column}
\Gamma_+ \longrightarrow \Gamma_-,
\end{equation}
which appears in the zeroth column ($i=0$) of the full complex.

Conversely, any such differential in the zeroth column of the form in \cref{differential_zeroth_column} naturally gives rise to a differential in the $i$-th column of the form in \cref{differential_ith_column} by appending the necessary trivial cylinders.
\end{proof}

Notice that by definition the full complex is supported in the ``upper triangular'' region
\[ \set{ (i,j)\in \Z^2 \gappy{|} i\ge0, j\ge i}. \]
Furthermore, we will show later (\cref{ECK_supported_genus}) that, for an integral open book decomposition, $\widehat{ECK}(K,\alpha)$ is supported within Alexander gradings $0\le j\le 2g$, where $g$ is the genus of the page $\Sigma$.  Therefore, when considered up to bi-filtered chain homotopy type, the full complex is in fact supported in the region
\[ \set{ (i,j)\in \Z^2 \gappy{|} i\ge0, i\le j \le i+2g }. \]
We will call this region the \emph{diagonal region of height $2g$}.  This fact, along with \cref{ECK_translational_symmetry}, implies that the entirety of the full ECK complex can be computed, up to bi-filtered homotopy type, by considering only the (finite) complex
\begin{equation}\label{ECK_first_column_up_to_2g}
ECC\orbits{0he}{h}(\mathrm{int}(N),\alpha,J;A\le 2g),
\end{equation}
which we define to be the subcomplex of the zeroth column ($i=0$) generated by orbit sets with Alexander grading less than or equal to $2g$. More precisely, we have the following theorem.

\begin{thm}\label{full_ECK_computable_from_ECK_hat}
In the case of an integral open book decomposition of genus $g$, the full ECK complex is bi-filtered homotopy equivalent to the complex with underlying vector space
\[ \mathcal{R}[e_+]\tensor ECC\orbits{0he}{h}(\mathrm{int}(N),\alpha,J;A\le 2g), \]
and with differential
\[  e_+^i\tensor \Gamma \mapsto e_+^{i-1}\tensor h_-\Gamma + e_+^{i}\tensor d\Gamma, \]
where $d$ is the differential on the complex in \cref{ECK_first_column_up_to_2g}.
\end{thm}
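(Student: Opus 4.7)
The plan is to combine the translational symmetry from \cref{ECK_translational_symmetry} with the Alexander-grading bound of \cref{ECK_supported_genus} to exhibit the full complex, up to bi-filtered chain homotopy, inside the diagonal region of height $2g$.

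First, I would write the full complex $C = ECC\orbits{0he}{eh}(\mathrm{int}(N), \alpha, J)$ as $\mathcal{R}[e_+] \tensor K_0$, where $K_0 = ECC\orbits{0he}{h}(\mathrm{int}(N), \alpha, J)$ is the zeroth column. Using the translational symmetry, the differential on $C$ decomposes as
\[ d(e_+^i \tensor \Gamma) = e_+^{i-1} \tensor h_- \Gamma + e_+^i \tensor d_0 \Gamma, \]
where $d_0$ is the internal differential on $K_0$ (comprising the internal $\mathrm{int}(N)$ differential, the $h_+ \to \emptyset$ differential, and $h_+ \to e_-$) and the first term captures the horizontal $e_+ \to h_-$ differential.

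Next, I would Alexander-truncate $K_0$. Since $\widehat{ECK}(K, \alpha)$ is the homology of the associated graded complex of $K_0$ with respect to the Alexander filtration (namely $ECC\orbits{he}{h}$), and since \cref{ECK_supported_genus} says this homology vanishes in Alexander gradings greater than $2g$, the graded pieces $\mathrm{gr}^j K_0$ are acyclic for $j > 2g$. I would then construct, by an inductive argument, strong deformation retract (SDR) data between $K_0$ and $K_0|_{A \le 2g}$: a projection $\pi$, the inclusion $\iota$, and a filtered chain homotopy $H$ with $d_0 H + H d_0 = \iota \pi - \mathrm{id}$, all respecting the Alexander filtration. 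In each grading $j > 2g$ one uses acyclicity of $\mathrm{gr}^j K_0$ to pick a contracting homotopy and lifts these compatibly across the filtration, which is possible because each graded piece is finite-dimensional for generic data.

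Finally, I would apply the Homological Perturbation Lemma to the full complex, treating the horizontal differential $e_+^i \tensor \Gamma \mapsto e_+^{i-1} \tensor h_- \Gamma$ as a perturbation of $1 \tensor d_0$. Transporting the SDR to the tensor product $\mathcal{R}[e_+] \tensor K_0$ yields a bi-filtered homotopy equivalence with $\mathcal{R}[e_+] \tensor K_0|_{A \le 2g}$, equipped with an induced differential of the stated form. For $\Gamma$ with $A(\Gamma) \le 2g - 1$ the naive expression $h_- \Gamma$ already lies in $K_0|_{A \le 2g}$ and the HPT corrections vanish; they only become relevant at $A(\Gamma) = 2g$, where they encode the adjustment needed to land inside the truncated complex. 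The main obstacle is the construction of the Alexander-filtered SDR on the infinite-dimensional $K_0$: producing a \emph{filtered} (not merely quasi-isomorphic) deformation retract requires patiently lifting the contracting homotopies on the acyclic graded pieces into a single filtered homotopy, and one must also verify that the extension of this SDR to $\mathcal{R}[e_+] \tensor K_0$ respects the bi-filtration by $(i,j)$-coordinates.
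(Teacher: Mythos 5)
Your approach---write the full complex as $\mathcal{R}[e_+]\otimes K_0$ where $K_0$ is the zeroth column, use the vanishing of $\widehat{ECK}$ in Alexander gradings greater than $2g$ from \cref{ECK_supported_genus} to build a filtered strong deformation retract of $K_0$ onto its $A\le 2g$ subcomplex, then transport it to $\mathcal{R}[e_+]\otimes K_0$ by the Homological Perturbation Lemma with the horizontal $e_+\to h_-$ map as the perturbation---is the right one, and it is exactly the combination of translational symmetry (\cref{ECK_translational_symmetry}) and the genus bound that the paper's surrounding discussion appeals to without writing out a proof.

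The gap is in your final paragraph, where you wave at "HPT corrections" at $A(\Gamma)=2g$ without computing them. If you do compute them you will find they are not optional: the differential printed in the theorem, read with the natural convention $h_-\Gamma:=0$ when $A(\Gamma)=2g$, fails to square to zero. Take $\Gamma = h_+\Gamma'$ with $A(h_+\Gamma')=2g$, so $A(\Gamma')=2g-1$, and $\Gamma'$ containing neither $h_+$ nor $h_-$. The horizontal term $e_+^{i-1}\otimes h_-h_+\Gamma'$ is dropped (it has Alexander grading $2g+1$), so
\[ d(e_+^i\otimes h_+\Gamma') = e_+^i\otimes\big((1+e_-)\Gamma' + h_+ d_N\Gamma'\big), \]
and a direct computation over $\F_2$ then gives
\[ d^2(e_+^i\otimes h_+\Gamma') = e_+^{i-1}\otimes h_-\Gamma' \neq 0. \]
What the HPT output actually contains is a strictly bi-filtration-decreasing correction on the top stratum $j-i=2g$, schematically $e_+^i\otimes h_+\Gamma' \mapsto e_+^{i-1}\otimes h_-\Gamma'$, arising from the zig-zag that goes horizontally out of the height-$2g$ diagonal region, applies the contracting homotopy in the acyclic part, and returns via $h_+\to\emptyset$; this term is precisely what restores $d^2=0$. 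So you must track these corrections explicitly, and the resulting differential is not literally the one in the theorem statement---the theorem is informal here, and a precise formulation must include the diagonal term on the top of the diagonal region. Your other stated concerns (constructing a \emph{filtered} SDR on the infinite-dimensional $K_0$ and verifying convergence of the HPT series) are genuine but routine over $\F_2$: the Alexander-graded pieces are finite-dimensional and acyclic above degree $2g$, which lets you lift contracting homotopies inductively along the filtration, and the perturbation strictly lowers the $e_+$-power, which is bounded below, so the HPT series is locally finite.
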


In \cref{ECK_for_torus_knots_section} we will compute ECK of a family of torus knots and the structure and symmetries discussed above will be apparent. In particular see \cref{ECK_full_complex_diagram}.

\subsection{\texorpdfstring{$\widehat{ECK}$}{ECK hat} categorifies the Alexander polynomial}\label{ECK_hat_categorifies_Alex_poly_section}

In the case of an integral open book decomposition of $S^3$ with binding $K$, $\widehat{ECK}(K,\alpha)$ categorifies the Alexander polynomial, $\Delta_K(t)$~\cite[Theorem 0.4]{Spano17}.  This is, of course, also the case for knot Floer homology and provides strong evidence for their conjectured equivalence in this case.  In this section, we will generalize this result to rational open book decompositions of arbitrary 3-manifolds $M$.

First it is necessary to discuss what is meant by ``categorifying the Alexander polynomial'' in this setting, and in order to answer that we must first define the \emph{graded Euler characteristic} of $\widehat{ECK}$ and introduce the concept of the \emph{Turaev torsion} of the knot complement.

Recall that $\widehat{ECK}(K,\alpha)$ splits as a direct sum over $H_1(M\sminus K;\Z)$---write
\[ \widehat{ECK}(K,\alpha;A) \]
to denote the summand of $\widehat{ECK}(K,\alpha)$ lying in homology class $A$.
\begin{defn}
The \emph{graded Euler characteristic} of $\widehat{ECK}$ is defined by the equation
\[ \chi\left( \widehat{ECK}(K,\alpha) \right) := \sum_{A\in H_1(M\sminus K;\Z)} \chi\left( \widehat{HFK}(K,\alpha;A) \right)\cdot [A] \in \Z[H_1(M\sminus K;\Z)], \]
where on the right $\chi$ denotes the normal Euler characteristic and $[A]$ is the group ring element corresponding to the homology class $A$.
\end{defn}

The \emph{Turaev torsion} is a 3-manifold invariant which is a generalization of the Reidemeister torsion, defined by considering the torsion of an acyclic chain complex arising from the universal abelian cover of the manifold in question.  For an excellent introduction and precise definition refer to Turaev~\cite{Turaev1}.  

Let $H$ denote the homology group $H_1(M\sminus K;\Z)$.  The Turaev torsion, denoted $\tau(M\sminus K)$, is defined as an element of
\[ \frac{Q\big( \Z[H] \big)}{\pm H}, \]
where $Q\big( \Z[H] \big)$ is the total ring of fractions, i.e.~the localization of the group ring $\Z[H]$ by the multiplicative set of non-zerodivisors.  In our setting we will consider the torsion as an infinite power series in the group ring $\Z[H]$. The quotient by $\pm H$ means that the torsion is only defined up to multiplication by monomials in the group ring.

For manifolds with torus boundary, $\tau$ is easily computed via a presentation of the fundamental group.  We will briefly describe this process, following Turaev~\cite[Section II.1]{Turaev3}.  Without loss of generality assume that the presentation is geometric in the sense that it comes from a handle decomposition of $M\sminus K$; such a presentation will have generators $a_1,\dots,a_{n+1}$ and relators $w_1,\dots,w_n$ for some $n\in\N$.  Furthermore assume that $a_1$ is chosen so that its image $[a_1]\in H_1(M\sminus K)$ has infinite order.

Let $\phi$ denote the natural map $\Z[\pi_1(M\sminus K)]\to \Z[H] $ and consider the matrix $A$ with entries
\[ \phi\left(\frac{\del w_i}{\del a_j}\right),\quad 1\le i\le n,\ 1\le j\le n+1, \]
where here we are employing the Fox derivative~\cite{Fox}.  Let $A'$ be the $n\cross n$ minor obtained by deleting the first column.  Then we have~\cite[Section II.1.7]{Turaev3}
\[ \tau(M\sminus K) \gappy{\dot{=}} \mathrm{det}(A')\cdot(1-[[a_1]])^{-1}, \]
where $\dot{=}$ denotes equality up to multiplication by a monomial in $\Z[H]$. 
\begin{eg}
If $K$ is a knot in $S^3$, then $H_1(S^3\sminus K;\Z)\iso\Z$ and the group ring is given simply by the polynomial ring $\Z[t,t^{-1}]$ and as such we write the generators as $t^j$ rather than $[j]$.  Then the Turaev torsion is related to the standard Alexander polynomial~\cite[Theorem 11.8]{Turaev1} by the equation
\[ \Delta_K(t)\gappy{\dot{=}} (1-t)\cdot\tau(M\sminus K). \]
For example, the torus knot $T(2,5)$ has
\[\begin{split}
\tau\left(S^3\sminus T(2,5)\right) \gappy{&\dot{=}} \frac{1-t+t^2-t^3+t^4}{1-t} \\
\gappy{&\dot{=}} 1 + t^2 + t^4 + t^5+t^6+\cdots.
\end{split}\]
\end{eg}

We will now digress briefly to discuss how knot Floer homology is related to the Turaev torsion.  First recall that the set of spin$^c$ structures on $M\sminus K$ forms an affine copy of $H_1(M\sminus K;\Z)$, and that, after a non-canonical choice of zero element, we therefore have a splitting
\[  \widehat{HFK}(K) = \bigoplus_{A\in H_1(M\sminus K;\Z)}\widehat{HFK}(K;A). \]
The \emph{graded Euler characteristic} of $\widehat{HFK}$ is then defined analogously to that of $\widehat{ECK}$, by
\[ \chi\left( \widehat{HFK}(K) \right) := \sum_{A\in H_1(M\sminus K;\Z)} \chi\left( \widehat{HFK}(K;A) \right)\cdot [A] \in \Z[H_1(M\sminus K;\Z)]. \]
Notice that, since we made a non-canonical identification between spin$^c$ structures and elements of $H_1(M\sminus K;\Z)$, this Euler characteristic is well-defined only up to multiplication by monomials in the group ring.
\begin{lemma}[{\nogapcite[Proposition 2.1]{RasRas}}]\label{RasRas_lemma}
$\widehat{HFK}$ ``categorifies the Alexander polynomial'' in the sense that
\[ \chi\left( \widehat{HFK}(K) \right) \gappy{\dot{=}} (1-[\mu])\cdot\tau(M\sminus K), \]
where $\mu\in H_1(M\sminus K;\Z)$ is the homology class of the meridian of $K$.
\end{lemma}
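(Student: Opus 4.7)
The plan is to compute both sides of the identity directly from a doubly-pointed Heegaard diagram $(\Sigma_g, \bm{\alpha}, \bm{\beta}, w, z)$ for the pair $(M,K)$, and then invoke Turaev's combinatorial formula for the torsion of a 3-manifold with toroidal boundary. First I would unpack the left-hand side: since $\widehat{CFK}(K)$ is freely generated over $\F_2$ by intersection points $\mathbb{T}_{\bm\alpha}\cap \mathbb{T}_{\bm\beta}\subset \mathrm{Sym}^g(\Sigma)$, and since the splitting over $H_1(M\sminus K;\Z)$ is given (after a non-canonical choice of base spin$^c$-structure) by $x\mapsto \mathfrak{s}_{w,z}(x)$, the graded Euler characteristic is, tautologically,
\[ \chi\big(\widehat{HFK}(K)\big) \gappy{\dot{=}} \sum_{x\in \mathbb{T}_{\bm\alpha}\cap \mathbb{T}_{\bm\beta}} \epsilon(x)\cdot [\mathfrak{s}_{w,z}(x)] \in \Z[H_1(M\sminus K;\Z)], \]
where $\epsilon(x)$ is the local sign of the intersection (recovered from the $\Z/2$ absolute grading together with the standard sign refinement).

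Next I would identify this signed count with the Turaev torsion of the complement. The Heegaard diagram, together with a small tubular neighbourhood of the arc joining $w$ and $z$ through $K$, yields a handle decomposition of $M\sminus\nu(K)$ and hence a geometric presentation of $\pi_1(M\sminus K)$ with generators $a_1,\dots,a_{g+1}$ dual to the $\bm\alpha$-curves and relators $w_1,\dots,w_g$ read off the $\bm\beta$-curves. Expanding the Fox-Jacobian determinant $\det A'$ from \cite[Section II.1.7]{Turaev3} along the symmetric group $S_g$, each permutation term is exactly the contribution of a single intersection point of $\mathbb{T}_{\bm\alpha}\cap\mathbb{T}_{\bm\beta}$, weighted by its image in $H_1(M\sminus K;\Z)$ under the assignment $x\mapsto \mathfrak{s}_{w,z}(x)$. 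This is the content of Turaev's theorem identifying Reidemeister--Franz torsion with a Lefschetz-type intersection count on the Heegaard surface, and it produces, up to multiplication by a monomial in $\pm H_1(M\sminus K;\Z)$, the identity
\[ \sum_{x}\epsilon(x)\cdot[\mathfrak{s}_{w,z}(x)] \gappy{\dot{=}} (1-[\mu])\cdot \tau(M\sminus K). \]

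The appearance of the factor $(1-[\mu])$ is the delicate topological point and will be the main obstacle. It arises because the two basepoints $w,z$, together with the arc between them in $\Sigma\sminus\bm\alpha$, play the role of an additional meridianal loop in the CW-structure: the Alexander matrix acquires an extra row whose image in the abelianization is $1-[\mu]$. Equivalently, comparing the singly-pointed count (which recovers $\chi(\widehat{HF}(M))\dot{=}\tau(M)$) with the doubly-pointed count computed above, the only homological difference is this one extra generator, which contributes precisely $(1-[\mu])$ to the determinant. Once the correspondence between Ozsv\'ath--Szab\'o's $\mathfrak{s}_{w,z}$ and Turaev's Euler structures is set up carefully (this being the fiddly bookkeeping step), the desired equality up to $\pm H$-ambiguity follows immediately. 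Note that throughout we only need the identity up to multiplication by monomials, so the sign refinements and base-point choices that would otherwise be subtle can be suppressed.
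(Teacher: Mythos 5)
The paper does not prove this lemma: it is stated with the attribution to \cite[Proposition~2.1]{RasRas} and no proof is given, so there is no in-paper argument to compare your sketch against --- the author uses the result as a black box.

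Taking your proposal on its own terms: the strategy you outline (compute the graded Euler characteristic from a doubly-pointed Heegaard diagram as a signed, $\mathfrak{s}_{w,z}$-weighted count of intersection points; identify that count with the Fox--Jacobi determinant of the $\pi_1$-presentation the Heegaard decomposition induces; match against Turaev's formula $\tau(M\sminus K)\,\dot{=}\,\det(A')\,(1-[a_1])^{-1}$ quoted in the paper) is the expected one. But the two steps you yourself flag as ``the delicate topological point'' and ``the fiddly bookkeeping step'' are precisely where the entire content of the result lives, and your treatment of them is assertion rather than argument. In particular, the claim that the second basepoint contributes ``an extra row whose image in the abelianization is $1-[\mu]$'' requires actually building a CW or handle structure on $M\sminus\nu(K)$ from the doubly-pointed diagram, computing its Reidemeister torsion directly, and matching Ozsv\'ath--Szab\'o's assignment $x\mapsto\mathfrak{s}_{w,z}(x)$ with Turaev's Euler/$\mathrm{Spin}^c$ structures so that the $\pm H$-ambiguities on the two sides are resolved consistently. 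Moreover your heuristic comparison ``the singly-pointed count recovers $\chi(\widehat{HF}(M))\,\dot{=}\,\tau(M)$, the doubly-pointed count differs by one extra generator'' is shaky: $\tau(M)$ and $\tau(M\sminus K)$ are torsions of different spaces with different chain complexes, so one cannot just read off a multiplicative factor from ``one extra generator.'' As a blind proposal your sketch names the right ingredients, but it stops short of a proof --- which is presumably why the paper cites \cite{RasRas} rather than reproving the statement.
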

\addtocounter{equation}{-2}
\begin{eg}[continued]
Return to the torus knot $K=T(2,5)\subset S^3$ and suppose that we perform ($p\mu+q\lambda$)-Dehn surgery on $K$, where $\mu$ and $\lambda$ are a choice of meridian and null-homologous longitude for $K$.  Then $[p\mu+q\lambda]=t^p$ and hence
\[ \begin{split}
\chi\left( \widehat{HFK}(K) \right) \gappy{&\dot{=}} (1-t^p)\cdot\tau(M\sminus K) \\
\gappy{&\dot{=}} 1+t^2+t^4+t^5+t^6 +\cdots+ t^{p-2} + t^{p-1} + t^{p+1} + t^{p+3}.
\end{split}\]
\end{eg}
\addtocounter{equation}{1}

\begin{thm}\label{ECK_hat_categorifies_Alex_poly}
$\widehat{ECK}$ ``categorifies the Alexander polynomial'', in the sense that
\[ \chi\left( \widehat{ECK}(K,\alpha) \right) \gappy{\dot{=}} (1-[\mu])\cdot\tau(M\sminus K), \]
and hence $\widehat{ECK}$ and $\widehat{HFK}$ are isomorphic at the level of Euler characteristic.
\end{thm}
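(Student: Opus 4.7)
My strategy is to compute $\chi(\widehat{ECK}(K,\alpha))$ directly as a sum over orbit sets weighted by Lefschetz signs, factor out the contributions from the boundary-adjacent orbits $h_+$, $e_-$, $h_-$ to isolate a dynamical zeta function on $\mathrm{int}(N)$, and then identify this zeta function with the Turaev torsion via Fried's theorem.

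By the index parity formula (\cref{index_parity_formula}), each generator $\Gamma$ of $ECC\orbits{he}{h}(\mathrm{int}(N),\alpha,J)$ contributes its Lefschetz sign $\epsilon(\Gamma)=\prod_i\epsilon(\gamma_i)^{m_i}$ to the Euler characteristic, so $\chi(\widehat{ECK}(K,\alpha))=\sum_\Gamma\epsilon(\Gamma)[\Gamma]$. Every such generator factors uniquely as $\Gamma=h_+^a e_-^b h_-^c\,\Gamma'$ with $a,c\in\{0,1\}$, $b\ge 0$, and $\Gamma'$ an orbit set in $\mathrm{int}(N)$. Since the Morse--Bott perturbations at $\del V$ and $\del N$ produce positive hyperbolic orbits $h_\pm$ and elliptic orbits $e_\pm$, we have $\epsilon(h_+)=\epsilon(h_-)=-1$ and $\epsilon(e_-)=+1$. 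Multiplicativity of $\epsilon$ together with summation over $a$, $b$, $c$ give
\[ \chi\bigl(\widehat{ECK}(K,\alpha)\bigr)\;=\;(1-[h_+])(1-[h_-])\cdot\frac{1}{1-[e_-]}\cdot Z_{\mathrm{int}(N)}, \]
where $Z_{\mathrm{int}(N)}:=\sum_{\Gamma'}\epsilon(\Gamma')[\Gamma']$ is the Lefschetz-weighted generating function over interior orbit sets.

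Next I compute the three boundary homology classes. The orbits $e_-$ and $h_-$ lie on $\del N$ and trace out exactly the same loop as the degeneracy slope $d$, so $[e_-]=[h_-]=[\mu]$. For $h_+$, an annular isotopy through $V\cup T^2\cross[1,2]$ identifies $h_+$ with a curve on $T^2\cross\set{2}$ in the $t_1$-direction; under the mapping-torus homeomorphism $A$ of \cref{contact_forms_and_rational_open_book_decompositions} this is precisely the degeneracy slope, so $[h_+]=[\mu]$ as well. After cancelling the $(1-[h_-])$ and $1/(1-[e_-])$ factors this yields
\[ \chi\bigl(\widehat{ECK}(K,\alpha)\bigr)\;=\;(1-[\mu])\cdot Z_{\mathrm{int}(N)}. \]

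The remaining and most substantial step is to show $Z_{\mathrm{int}(N)}\gappy{\dot{=}}\tau(M\sminus K)$. Since the Reeb vector field on $\mathrm{int}(N)$ is positively transverse to every page, its simple closed orbits correspond bijectively to periodic points of iterates of the first-return monodromy $\phi$, and $\epsilon(\gamma)$ coincides with the local fixed-point index of the corresponding iterate. The Euler product
\[ Z_{\mathrm{int}(N)}\;=\;\prod_{\gamma\ \text{ell}}\frac{1}{1-[\gamma]}\prod_{\gamma\ \text{hyp}}\bigl(1+\epsilon(\gamma)[\gamma]\bigr) \]
is then the Lefschetz zeta function of $\phi$ taken with coefficients in $\Z[H_1(M\sminus K;\Z)]$, which by a theorem of Fried agrees with the Reidemeister--Turaev torsion $\tau(M\sminus K)$ up to a monomial in $\Z[H_1]$. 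The main obstacle lies here: extending Fried's formula from the integral to the rational open book setting requires treating the fractional boundary rotation $r_{p/q}$ of $\phi$ along $\del\Sigma$, tracking its effect on a Fox-calculus presentation of $\pi_1(M\sminus K)$, and controlling the Morse--Bott orbits on $\del N$ so that no additional twist factor creeps in. Spano's integral-case identification $\chi(\widehat{ECK})\gappy{\dot{=}}\Delta_K$ in \cite[Theorem~0.4]{Spano17} provides the template, and the rational generalisation should follow by threading the $q$-fold twist carefully through the torsion calculation.
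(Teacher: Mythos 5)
Your proposal follows essentially the same route as the paper: the Lefschetz-sign product formula for the Euler characteristic, the factorisation isolating $(1-[h_+])(1-[h_-])(1-[e_-])^{-1}$ with $[h_+]=[h_-]=[e_-]=[\mu]$, and the identification of the interior zeta function with $\tau(M\sminus K)$ via Fried's theorem. The one step you flag as an obstacle --- extending Fried's formula to the rational setting by passing to the monodromy and a Fox-calculus presentation --- is handled more directly in the paper: Fried's theorem is a statement about non-degenerate circular flows transverse to $\del N$ and pointing outward, so one simply replaces $R$ by $R_L := R - a_L(y)\del_y$ on a collar of $\del N$ chosen small enough not to disturb any orbit of action less than $L$ (this kills the Morse--Bott torus at the boundary and makes the flow outward-pointing), applies Fried to $R_L$ for each $L$, and lets $L\to\infty$ using the fact that the coefficient of each $[A]$ in $\zeta(\phi_{R_L})$ stabilises once $L$ exceeds the action of all orbit sets in class $A$. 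No Fox-calculus computation and no special treatment of the fractional twist $r_{p/q}$ is needed, so the gap you identify closes without the work you anticipate.
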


The proof of this theorem follows the ideas of Spano~\cite{Spano17}, so first we will discuss some of the ingredients of his set-up.  The most important is the well-known \emph{twisted Leftschetz zeta function} which is a formal polynomial obtained by counting periodic orbits of flows.

Let $N$ be a manifold with torus boundary and suppose that $R$ is a vector field on $N$ which is non-degenerate, in the sense that all closed orbits of $R$ are isolated.  Let $\phi_{R}$ denote the flow of $R$ on $N$, and suppose that $\gamma$ is a periodic orbit of $\phi_{R}$.  Recall from \cref{The_ECH_index} that associated to $\gamma$ is a Lefschetz sign, $\epsilon(\gamma)\in\set{1,-1}$.\footnote{Actually, in \cref{The_ECH_index} we only defined the Lefschetz sign for Reeb orbits, but the definition of the linearized return map $f_\gamma$ is easily extended to closed non-degenerate orbits in a non-Reeb setting, and hence the Lefschetz sign is well-defined.}
\begin{defn}~
\begin{enumerate}
\item The \emph{local Lefschetz zeta function} of $\phi_{R}$ near $\gamma$ is the formal power series
\[ \zeta_\gamma(t) := \sum_{i\ge 1} \epsilon(\gamma^i)\frac{t^i}{i} \in \Z[t]. \]
\item The \emph{twisted Lefschetz zeta function} of $\phi_{R}$ is defined\footnote{Note that Spano's definition is denoted by $\zeta_\rho(\phi_{R})$ where $\rho$ denotes an abelian cover of $N$; here we are taking $\rho$ to be the universal abelian cover.} by taking the following product across all simple orbits of $R$:
\[ \zeta(\phi_{R}) := \prod_{\gamma}\zeta_\gamma([[\gamma]]) \in \Z[H_1(N;\Z)]. \]
\end{enumerate}
\end{defn}
The lemma below follows immediately from \cref{lemma_lefschetz_sign} by a simple calculation using the Taylor series for the natural logarithm.
\begin{lemma}\label{local_lefschetz_for_elliptic_and_hyperbolic}
If $\gamma$ is a Reeb orbit, then
\[ \zeta_\gamma(t) = f_\gamma(t) := \begin{cases*}
1-t &\text{if $\gamma$ is positive hyperbolic,}\\
1+t &\text{if $\gamma$ is negative hyperbolic, and}\\
(1-t)^{-1}=1+t+t^2+\cdots &\text{if $\gamma$ is elliptic.}\\
\end{cases*}\]
\end{lemma}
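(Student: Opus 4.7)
The plan is to determine $\epsilon(\gamma^i)$ for every $i\ge 1$ in each of the three cases by analyzing the iterates of the linearized return map $f_\gamma$, then recognize the resulting power series as Taylor expansions of $\log(1\pm t)$, and finally exponentiate (as is standard for Lefschetz zeta functions of flows) to obtain the stated closed forms.

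First I would work out the sign $\epsilon(\gamma^i)$ in the three cases. If $\gamma$ is positive hyperbolic, then $f_\gamma$ has positive real eigenvalues $\lambda,\lambda^{-1}$, so $f_{\gamma^i}=f_\gamma^i$ has eigenvalues $\lambda^i,\lambda^{-i}$, which remain positive; hence every iterate is positive hyperbolic and \cref{lemma_lefschetz_sign} yields $\epsilon(\gamma^i)=-1$ for all $i$. If $\gamma$ is negative hyperbolic, the eigenvalues of $f_\gamma$ are negative reals, so the eigenvalues of $f_{\gamma^i}$ have sign $(-1)^i$; thus $\gamma^i$ is negative hyperbolic for odd $i$ and positive hyperbolic for even $i$, giving $\epsilon(\gamma^i)=(-1)^{i+1}$. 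If $\gamma$ is elliptic with irrational rotation angle $\theta$ (which is automatic since $\alpha$ is non-degenerate, ensuring no power of $\gamma$ has $1$ as an eigenvalue), every iterate $\gamma^i$ is again elliptic, and therefore $\epsilon(\gamma^i)=+1$ for all $i$.

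With these values in hand, substituting into the definition of $\zeta_\gamma(t)$ reduces the lemma to identifying three standard series. For the positive hyperbolic case one obtains
\[
\sum_{i\ge 1}(-1)\,\frac{t^i}{i}\;=\;\log(1-t),
\]
for the negative hyperbolic case
\[
\sum_{i\ge 1}(-1)^{i+1}\frac{t^i}{i}\;=\;\log(1+t),
\]
and for the elliptic case
\[
\sum_{i\ge 1}\frac{t^i}{i}\;=\;-\log(1-t).
\]
Exponentiating (as per the standard convention for Lefschetz zeta functions, which the excerpt's appeal to ``the Taylor series for the natural logarithm'' indicates is what is meant) yields $1-t$, $1+t$, and $(1-t)^{-1}$ respectively, matching the three cases of $f_\gamma(t)$.

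There is no genuine obstacle here: the only point requiring a moment's thought is the iteration behaviour of the linearised return map, which is immediate from the eigenvalue computation in the hyperbolic cases and from the irrationality of $\theta$ in the elliptic case (both of which are consequences of the global non-degeneracy hypothesis). Every other step is formal manipulation of geometric series.
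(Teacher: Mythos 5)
Your proposal is correct and follows exactly the route the paper intends: the paper dispenses with the proof in one sentence, saying the lemma ``follows immediately from \cref{lemma_lefschetz_sign} by a simple calculation using the Taylor series for the natural logarithm,'' which is precisely your computation of $\epsilon(\gamma^i)$ via the eigenvalues of the iterated return map followed by recognition of the $\log(1\pm t)$ series and exponentiation. Your observation that the definition of $\zeta_\gamma$ as written must implicitly carry the standard exponential is also the right reading, since the closed forms in the lemma only make sense under that convention.
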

The proof of \cref{ECK_hat_categorifies_Alex_poly} proceeds by exploiting the following relation between the twisted Lefschetz zeta function and the Turaev torsion, due to Fried\footnote{Note that Fried calls the Turaev torsion the ``Alexander quotient'', and denotes it by $\mathrm{ALEX}(N)$.}:
\begin{lemma}[{\nogapcite[Theorem 7]{Fried}}]\label{zeta_fn_is_ALEX_torsion}
Suppose that $R$ is a non-degenerate flow on $N$ which is
\begin{enumerate}
\item circular, meaning that there exists a map $\theta:N\to S^1$ such that $d\theta(R)>0$; and
\item transverse to $\del N$ and pointing \emph{out} of $N$.
\end{enumerate}
Then
\[ \zeta(\phi_{R}) \gappy{\dot{=}} \tau(N). \]
\end{lemma}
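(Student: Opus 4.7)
The plan is to reduce Fried's theorem to a discrete dynamical statement by using the circular structure of $R$ to produce a global cross-section, and then to identify both sides via the Lefschetz trace formula on twisted homology.

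First I would use the hypothesis $d\theta(R)>0$ to extract a compact cross-section $S=\theta^{-1}(0)$ transverse to $R$, on which the flow induces a first-return diffeomorphism $f\colon S\to S$. Since $R$ is transverse to $\del N$ (pointing outward), the cross-section $S$ is a compact surface-with-boundary, and the flow identifies $N$ (up to smooth equivalence) with the mapping torus $M(f)$. Non-degeneracy of $R$ translates to non-degeneracy of all periodic points of $f$, and there is a bijection between simple closed orbits $\gamma$ of $R$ and $f$-orbits $\{x,f(x),\dots,f^{n-1}(x)\}$ of minimal period $n$. A direct unwinding of the definitions shows that the Lefschetz sign $\epsilon(\gamma^k)$ agrees with the fixed-point index of $f^{kn}$ at $x$, and the homology class $[\gamma^k]\in H_1(N;\Z)$ is determined by the loop obtained by concatenating the orbit with an arbitrary path in $S$ back to the basepoint.

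Next I would rewrite $\zeta(\phi_R)$ in terms of $f$. Taking logarithms of the local zeta functions computed in \cref{local_lefschetz_for_elliptic_and_hyperbolic} and repackaging the sum over simple orbits as a sum over all periodic points, one obtains
\[
\log\zeta(\phi_R) \gappy{\dot{=}} \sum_{n\ge1}\frac{1}{n}\sum_{f^n(x)=x}\epsilon_n(x)\,[\gamma_x^n],
\]
where $[\gamma_x^n]\in H_1(N;\Z)$ is the class of the orbit of $x$ under $f^n$. Now let $\Lambda=\Z[H_1(N;\Z)]$, and regard $H_*(S;\Lambda)$ as a module over $\Lambda$ equipped with a $\Lambda$-linear endomorphism $f_*$ induced by $f$ (twisted by the monodromy translation $t\in H_1(N;\Z)$ coming from traversing one $S^1$-period). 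The twisted Lefschetz trace formula then identifies the inner sum with $\sum_i(-1)^i\operatorname{tr}(f_*^n)$, and summing the resulting exponential series gives the key identity
\[
\zeta(\phi_R) \gappy{\dot{=}} \prod_i \det\bigl(I - t\cdot f_*\bigm| H_i(S;\Lambda)\bigr)^{(-1)^{i+1}}.
\]

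Finally I would identify the right-hand side with $\tau(N)$. The mapping torus structure gives a Wang-type exact sequence whose connecting map is $I-tf_*$, and applying Turaev's formula for the torsion of a fibration (equivalently, computing the torsion of the acyclic mapping-torus complex from a CW decomposition induced by $f$) yields exactly the same alternating product of determinants, up to multiplication by a unit in $\pm H_1(N;\Z)$. Comparing the two expressions gives $\zeta(\phi_R)\gappy{\dot=}\tau(N)$. The main obstacle will be the twisted-coefficient bookkeeping: one must verify carefully that the group-ring element tracking an orbit agrees on both the dynamical side (where it arises from lifting closed orbits to the universal abelian cover) and the torsion side (where it arises from the action of $\pi_1(N)$ on chains of $\widetilde N$), and that the relative version needed for $\partial N\ne\emptyset$ is compatible with the chosen cross-section $S$.
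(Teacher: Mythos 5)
First, a point of reference: the paper does not prove this lemma at all --- it is imported verbatim from Fried (Theorem 7 of the cited paper) --- so there is no internal argument to measure yours against. Your sketch does follow the strategy of Fried's actual proof (cross-section, twisted Lefschetz trace formula, Milnor--Turaev torsion of a fibration over $S^1$), but one step fails as written, and it is exactly the step where the boundary hypothesis bites. Because $R$ is transverse to $\del N$ and points \emph{out} of $N$, the forward orbit of a point of $S=\theta^{-1}(0)$ may leave $N$ through $\del N$ before $\theta$ completes a full turn, so the first-return map $f$ is only defined on a proper open subset of $S$, and $N$ is \emph{not} diffeomorphic to the mapping torus $M(f)$. (What is globally defined is the \emph{backward} return map, which is an injective but non-surjective self-map of $S$; its image is precisely the domain of $f$.) Everything downstream of the identification $N\cong M(f)$ --- the honest endomorphism $f_*$ of $H_*(S;\Lambda)$, the Wang sequence with connecting map $I-tf_*$, and the torsion formula for a fibration --- therefore has to be replaced by the version for a partially defined return map: the periodic points all lie in the maximal invariant set, so the Lefschetz traces still make sense, but $\tau(N)$ must be computed from the pair rather than from a fibration. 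This is not a cosmetic issue in the present application: the field to which the lemma is applied is $R_L=R-a_L(y)\del_y$, which deliberately pushes a collar of $\del N$ out of the domain of the return map.

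Two smaller points of bookkeeping, which you partly anticipate: the alternating product of determinants must be taken over the total quotient ring $Q(\Z[H_1(N;\Z)])$, since $H_1$ may have torsion and the relevant modules need not be free; and the coefficient system on $S$ is the one induced from the universal abelian cover of $N$ (i.e.\ the cover of $S$ corresponding to $\ker(H_1(S)\to H_1(N))$), not $H_*(S;\Z)\tensor\Lambda$. With the partially-defined-return-map repair and these conventions in place, your outline is a faithful reconstruction of Fried's argument.
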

\begin{proof}[Proof of \cref{ECK_hat_categorifies_Alex_poly}]
To start, note that the Reeb vector field $R$ associated to $\alpha$ is circular and non-degenerate on the interior of $N$, but at the boundary it is neither non-degenerate nor pointing out of $N$.  We will alter $R$ slightly so that it satisfies all the hypotheses of \cref{zeta_fn_is_ALEX_torsion}.

To do that, let $L>0$ and suppose that $\del N\cross[2,2+\epsilon_L]$ is a neighbourhood of $\del N$ such that all orbits with action less than $L$ lie outside this neighbourhood.  Then on this region replace the vector field $R$ with
\[ R_L := R - a_L(y)\del y, \]
where $a_L:[2,2+\epsilon_L]\to\R$ is a smooth non-negative function which is zero on a neighbourhood of $2+\epsilon_L$ and positive on a neighbourhood of 2.  Then $R_L$ is transverse to $\del N$ and identical to $R$ near all orbits of action less than $L$.  Furthermore the Morse-Bott torus of Reeb orbits at $\del N$ has been eliminated, so $R_L$ is non-degenerate.

We can therefore apply \cref{zeta_fn_is_ALEX_torsion} to obtain:
\begin{equation}\label{HFK_zeta_function_L_eqn}
\tau(N) \gappy{\dot{=}} \zeta(\phi_{R_L}).
\end{equation}
But this equality holds for all $L>0$ and furthermore notice that, for every $A\in H_1(N;\Z)$, the coefficient of $[A]$ on the right hand side of \cref{HFK_zeta_function_L_eqn} stabilizes once $L$ is larger than the action of all orbit sets with homology class $A$.  We can therefore let $L$ tend to $\infty$ on the right hand side of \cref{HFK_zeta_function_L_eqn}, replacing $\zeta(\phi_{R_L})$ with
\[ \zeta\left(\restr{\phi_R}{\mathrm{int}(N)}\right), \]
the twisted Lefschetz zeta function obtained by considering all simple orbits of $R$ in $\mathrm{int}(N)$, a set which (recall \cref{Morse-Bott_contact_homology}) is denoted by $\mathcal{P}'$.

Using \cref{local_lefschetz_for_elliptic_and_hyperbolic}, we now replace the contribution to $\zeta\left(\restr{\phi_R}{\mathrm{int}(N)}\right)$ from each orbit with the contribution $f_\gamma([\gamma])$.  This yields
\[ \tau(N) \gappy{\dot{=}} \prod_{\gamma\in\mathcal{P}'}f_\gamma([[\gamma]]).\]

Next, we turn to the Euler characteristic of $\widehat{ECK}$, which we will compute as the graded Euler characteristic of the chain complex
\[  ECC\orbits{he}{h}(\mathrm{int}(N),\alpha,J). \]
We claim that this is given by the following product
\begin{equation}\label{chi_ECK_orbit_product}
 \chi\left( ECC\orbits{he}{h}(\mathrm{int}(N),\alpha,J)\right ) = \prod_{\gamma\in\set{h_+,e_-,h_-}\union\mathcal{P}'} f_\gamma([[\gamma]]),
\end{equation}
and remark that, since
\[\begin{split}
\prod_{\gamma\in\set{h_+,e_-,h_-}\union\mathcal{P}'} f_\gamma([[\gamma]]) &= (1-[[h_+]])(1-[[h_-]])(1-[[e_-]])^{-1}\prod_{\gamma\in\mathcal{P}'} f_\gamma([\gamma]) \\
&= (1-[\mu])\prod_{\gamma\in\mathcal{P}'} f_\gamma([\gamma])
\end{split}\]
(as $[h_+]=[h_-]=[e_-]=\mu$), this claim completes the proof.

We will verify \cref{chi_ECK_orbit_product} by recursively considering the contribution to the Euler characteristic from simple orbits, recalling the absolute $\Z/2$-homological grading from \cref{The_ECH_index}.  Suppose that we have simple orbits $\set{\gamma_i}_{i\in\N}$ and let $\mathcal{P}_k$ denote the collection of orbit sets constructed from orbits $\gamma_1$ through $\gamma_k$.  Then if
\[ \mathcal{P}_{k-1} = \set{\Gamma_1,\Gamma_2,\dots} \]
and $\gamma_k$ is positive hyperbolic, i.e.~with $\Z/2$-homological grading 1 and Lefshetz sign $-1$, then
\[ \mathcal{P}_k = \set{\Gamma_1,\Gamma_2,\dots}\union\set{\gamma_k\Gamma_1,\gamma_k\Gamma_2,\dots } \]
and $\epsilon(\gamma_k\Gamma_j) =-\epsilon(\Gamma_j)$, so
\[ \sum_{\Gamma\in\mathcal{P}_k}{\epsilon(\Gamma)[[\Gamma]]} = (1-[[\gamma_k]])\cdot\sum_{\Gamma\in\mathcal{P}_{k-1}}{\epsilon(\Gamma)[[\Gamma]]}. \]
Similarly a negative hyperbolic orbit, with grading 0 and Lefschetz sign $+1$, contributes by a factor of $(1+[[\gamma]])$ and an elliptic orbit, all of whose covers have grading 0 (and Lefschetz sign $+1$), contributes
\[ 1+[[\gamma]]+[[\gamma]]^2+[[\gamma]]^3+\dots = (1-[[\gamma]])^{-1}.\]
Therefore, when we take the contributions to the Euler characteristic from all the simple orbits, we obtain the product seen on the right hand side of \cref{chi_ECK_orbit_product}.
\end{proof}

\subsection{ECK for a family of torus knots}\label{ECK_for_torus_knots_section}

In this section we will compute ECK of the torus knots $T(2,n)\subset S^3$ for odd $n$ greater than or equal to 3.  It is well-known that for these knots, the knot complement $N$ is fibred over $S^1$, where each fibre is the Seifert surface of the knot (with genus $(n-1)/2$) and the monodromy has finite order $2n$. This construction is outlined in Kauffman's book~\cite[Chapter 19 (pp.~393-6)]{Kauffman}.  Taking the quotient of the monodromy action yields an orbifold which is the base orbifold of the Seifert-fibred fibration of the knot. As a result we see two elliptic orbits, $e_2$ and $e_n$, in the knot complement, with Alexander gradings $2$ and $n$ respectively.  All other orbits have Alexander gradings greater than or equal to $2n$.

By \cref{full_ECK_computable_from_ECK_hat}, we can compute the entirety of ECK by considering only the orbits $e_\pm$, $h_\pm$, $e_2$ and $e_n$ and differentials between them.  The differentials in the interior of $N$, with ends at $e_2$ and $e_n$, are a priori harder to understand, but can be deduced by exploiting the isomorphism 
\begin{equation}\label{ECH_torus_knot_is_F_eqn}
\widehat{ECH}(N,\del N,\alpha,J) \iso \widehat{ECH}(S^3) \iso \F_2
\end{equation}
arising from \cref{relative_ECH_equals_ECH}.  Recall that the left hand side of this equation is defined via an equivalence relation $e_-\sim\emptyset$; this means that it is possible to compute $\widehat{ECH}$ by considering the complex
\begin{equation}\label{complex_only_int_and_h_minus}
 ECC\orbits{}{h}(\mathrm{int}(N),\alpha,J),
\end{equation}
augmented with additional Alexander filtration-decreasing differentials
\[ \Gamma_1 \longrightarrow \Gamma_2 \]
whenever there is a an honest differential in the full ECK complex between the orbit sets
\[ \Gamma_1 \quad\text{and}\quad e_-^{A(\Gamma_1)-A(\Gamma_2)}\Gamma_2 \]
(with non-connector part contained entirely in the interior of $N$).

Furthermore, by the \refNamedThm{Cancellation Lemma}{cancellation_lemma}, the complex \cref{complex_only_int_and_h_minus} computes $\widehat{ECK}$ of the torus knot $T(2,5)$.  By \cref{ECK_supported_genus} we know that $\widehat{ECK}$ vanishes in Alexander gradings greater than $2g$, and this results in the following explicit description of $\widehat{ECK}(T(2,5))$:
\begin{equation}\label{ECK_hat_torus_knot_complex_table}
\begin{tabular}{c|c|c|c|c|c|c|c}
$A$ grading & 0 & 1 & 2 & 3 & $\cdots$ & $n-2$ & $n-1$ \\
\hline
 & $\emptyset$ & $h_-$ & $e_2$ & $h_-e_2$ & $\cdots$ & $h_-e_2^{g-1}$ & $e_2^g$ 
\end{tabular}
\end{equation}
We will now exploit \cref{ECH_torus_knot_is_F_eqn} to prove the existence of a differential
\[ e_2 \longrightarrow e_-h_-. \]
First note that, when we augment the complex \cref{ECK_hat_torus_knot_complex_table} with Alexander filtration-decreasing differentials as discussed above, the resulting complex must, by \cref{ECH_torus_knot_is_F_eqn}, have homology $\F_2$. By recalling the absolute $\Z/2$-homological grading on ECC, all such differentials must have a copy of $h_-$ at precisely one end; by the \refNamedThm{Trapping Lemma}{trapping_lemma} it must therefore be at the negative end.  This means that the only possibility is for such differentials to arise from holomorphic curves of the form
\[ e_2^i \longrightarrow h_- e_-^{i-2i'-1}e_2^{i'}. \]
We now consider $e_-^i$ for each $i$ in turn.  First, the empty set---we cannot have any differentials of the above form either into or out of $\emptyset$, so this must survive as the (sole) generator of $\widehat{ECH}(S^3)$.  Hence all other generators in \cref{ECK_hat_torus_knot_complex_table} must be killed.

Next consider $e_2$---it is clear from the classification above that the only possibility is a differential
\[ e_2 \longrightarrow h_-e_- \]
which manifests itself as a Alexander filtration-decreasing differential
\[ e_2 \longrightarrow h_- \]
in the complex \cref{ECH_torus_knot_is_F_eqn}.  By adding trivial cylinders we also obtain differentials
\[ e_2^i \longrightarrow h_-e_2^{i-1} \]
for each $i> 1$, which yields the complex
\begin{equation}\label{T_2_5_complex_first_column}
\begin{tikzpicture}
\fill (0,0) circle (0.05) node[above] {$\emptyset$};
\fill (1.5,0) circle (0.05) node[above] {$h_-$};
\fill (3,0) circle (0.05) node[above] {$e_2$};
\fill (4.5,0) circle (0.05) node[above] {$h_-e_2$};
\fill (6,0) circle (0.05) node[above] {$e_2^2$};
\node at (7.5,0) {$\cdots$};
\fill (9,0) circle (0.05) node[above] {$h_-e_2^{g-1}$};
\fill (10.5,0) circle (0.05) node[above] {$e_2^{g}$};
\draw [->] (3 -0.15,0) -- +(-1.2,0);
\draw [->] (6 -0.15,0) -- +(-1.2,0);
\draw [->] (10.5 -0.15,0) -- +(-1.2,0);
\end{tikzpicture}
\end{equation}

We claim that there are no other holomorphic curves of the form
\[ e_2^i \longrightarrow h_-e_-^{i-2i'-1}e_2^{i'} \]
with $i'<i-1$; this follows by arguing via the ECH index.  Indeed, consider the orbit sets $e_2^i$ and $h_-e_2^i$.  The existence of index 1 curves
\[\begin{tikzcd}
& h_+e_2^i\arrow[dl]\arrow[dr] & & h_-e_2^i\arrow[dl] \\
e_2^i & & e_-e_2^i &
\end{tikzcd}\]
(recall that there are two index 1 curves between $h_-$ and $e_-$ which cancel in the chain complex) implies that the homological grading in \cref{T_2_5_complex_first_column} is identical to the Alexander grading.  Therefore no other differentials are possible.

By \cref{full_ECK_computable_from_ECK_hat}, this complex is filtered chain homotopic to the zeroth column of the full ECK complex and furthermore the full ECK complex can be computed by taking the tensor product with $\mathcal{R}[e_+]$ and considering all occurences of the holomorphic curve
\[ e_+\longrightarrow h_-. \]
In the case of $T(2,5)$ this results in differentials
\[ e_+^ie_2^j \longrightarrow h_-e_+^{i-1}e_2^j, \quad\text{$i\ge 1$, $0\le j < g$.}\]

\begin{absolutelynopagebreak} %
The resulting complex, shown in \cref{ECK_torus_knot_fig} for the case $n=5$, is exactly what we expect, since it is isomorphic to $CFK^+(-T(2,n))$, providing additional evidence towards \cref{ECK_HFK_conjecture}.
\end{absolutelynopagebreak} %

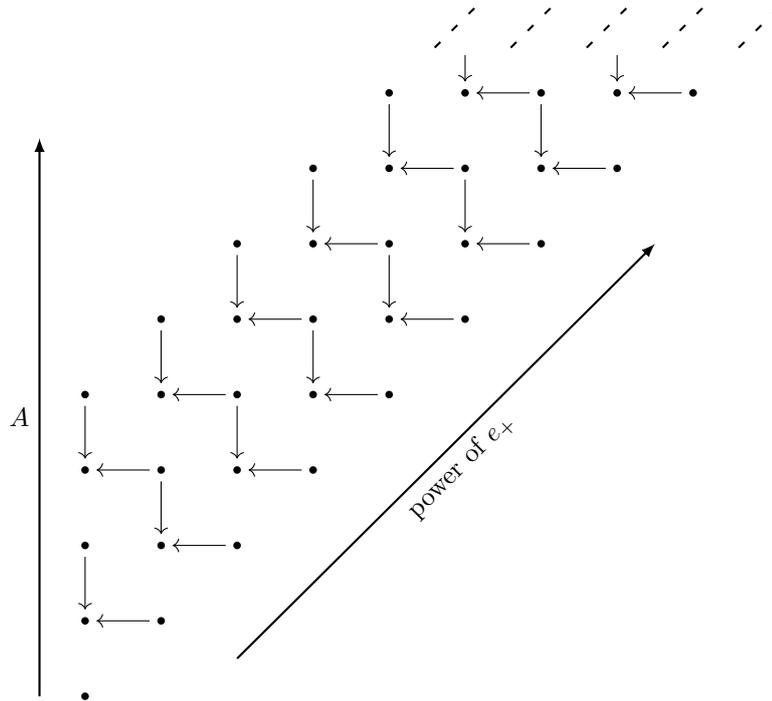
\begin{figure}\label{ECK_torus_knot_fig}\centering
	\begin{tikzpicture}  %
		\draw [-latex,thick] (-0.6,0) -- (-0.6,7.4) node [midway, left] {$A$};
		\draw [-latex,thick] (2,0.5) -- +(5.5,5.5);
		\node [rotate=45] at (4.75+0.2,3.25-0.2) {power of $e_+$};
		
		\begin{scope}
			\path[clip] (-0.5,-0.5) rectangle (8.5,8.5);
			\foreach \x in {0,...,8} {
				\foreach \y in {0,...,4}
					\fill (\x,\x+\y) circle (0.05);
				\draw [->] (\x+0.85,1+\x) -- +(-0.7,0);
				\draw [->] (\x+0.85,3+\x) -- +(-0.7,0);
				\draw [->] (\x,\x+1.85) -- +(0,-0.7);
				\draw [->] (\x,\x+3.85) -- +(0,-0.7);
				}
		\end{scope}

		\foreach \x in {4,...,8}
			\draw [loosely dashed,thick] (\x+0.6,8.6) -- +(0.6,0.6);

	\end{tikzpicture}
  \caption{A diagram representing $ECK(T(2,5))$.  The vertical coordinate represents the Alexander grading and the horizontal coordinate represents the power of $e_+$.  Moreover, the $i$-th column is a copy of \cref{T_2_5_complex_first_column} multiplied by $e_+^i$. The lower (resp.\ upper) diagonal of horizontal arrows consists of differentials of the form $e_+^i \longrightarrow h_-e_+^{i-1}$ (resp.\ $e_+^ie_2 \longrightarrow h_-e_+^{i-1}e_2$).  The homology of the first column gives $\widehat{ECH}(S^3)$ and the associated graded of the first column gives $\widehat{ECK}(T(2,5))$.}
	\label{ECK_full_complex_diagram}
\end{figure}

\section{Cobordism maps via Seiberg-Witten theory}\label{cobordism_maps_via_SW_theory}

Before progressing further with ECK, it is necessary to understand in greater detail the cobordism maps from \cref{direct_limits_through_cobordism_maps} and how they are induced by non-canonical chain maps via Seiberg-Witten theory.  The purpose for this is to obtain invariance results for ECK (c.f.~\cref{J_invariance_bounded_case_thm,ECC_independent_of_alpha,ECK_invariant_ZOBD}), which are in turn used to prove \cref{ECK_supported_genus} and also the surgery formula (\cref{surgery_formula}).

Recall that an exact symplectic cobordism between two contact manifolds $(M_+,\alpha_+)$ and $(M_-,\alpha_-)$ is a symplectic 4-manifold $(X,d\lambda)$ such that $\del X = M_+ \union (-M_-)$ and $\restr{\lambda}{M_\pm}=\alpha_\pm$.  We will often denote such a cobordism simply by $(X,\lambda)$.  We can identify a neighbourhood of $M_+$ with $(-\epsilon,0]\cross M_+$ and a neighbourhood of $M_-$ with $[0,\epsilon)\cross M_-$ and furthermore it is possible to form this identification in such a way that $\alpha$ is given by $e^s \alpha_\pm$ on these neighbourhoods.  We then form the \emph{completion} of $X$ by attaching the half symplectizations to either end of $X$:
\[ \bar{X} := \big((-\infty,0]\cross M_-\big) \union X \union \big([0,\infty)\cross M_+\big). \]
More explicitly, the one-form on $\bar{X}$ is defined by the formula
\begin{equation*}
\bar{\lambda} :=
\begin{cases*}
e^s \alpha_- &\text{on $(-\infty,0]\cross M_-$} \\
\lambda       &\text{on $X$} \\
e^s \alpha_+ &\text{on $[0,\infty)\cross M_+$}.
\end{cases*}
\end{equation*}
An almost complex structure $J$ on $(\bar{X},\bar{\lambda})$ is said to be \emph{cobordism-admissible} if it is compatible with $d\lambda$ on $X$ and adapted in the sense of \cref{moduli_spaces_of_holomorphic_curves} to $\alpha_\pm$ on the attached ends.  Such $J$ will be used extensively when relating Seiberg-Witten theory to ECH later.

An important definition which we need moving forwards is that of a \emph{product region}.

\begin{defn}[{\nogapcite[Definition 1.7]{HT_Arnold13}}]\label{product_region_defn}
Suppose that $(Z,\alpha_0)$ is an open contact manifold and we have an embedding $[c_-,c_+]\cross Z$ into $X$ such that the following is true:
\begin{itemize}
	\item $\set{c_\pm} \cross Z$ is mapped to $M_\pm$ and $(c_-,c_+)\cross Z$ is mapped to $\mathrm{int}(X)$.
	\item The pullback of $\lambda$ is $e^s \alpha_0$, where $s$ is the $(c_-, c_+)$ coordinate.
	\item The pullback $J_0$ of $J$ on $\bar{X}$ to $[c_-,c_+]\cross Z$ has the following properties:
		\begin{itemize}
			\item $J_0$ restricted to $\mathrm{ker}(\alpha_0)$ is invariant in the $s$-direction, and
			\item $J_0(\del_s)=f(s)R_{\alpha_0}$, where $f$ is a positive function.
		\end{itemize}
\end{itemize}
We call the image of $[c_-,c_+]\cross Z$ in $X$ a \emph{product region}.
\end{defn}

Product regions are interesting because when we form the completion $(\bar{X},\bar{\lambda})$, the subspace 
\[ \bigg( \big((-\infty,0]\cross Z\big) \union \big([c_-,c_+]\cross Z\big) \union \big([0,\infty)\cross Z\big) , d\bar{\lambda}\bigg) \]
is symplectomorphic to the symplectization
\[ \big(\R\cross Z, d(e^s\alpha_0)\big) \]
and we can therefore apply the results of \cref{top_constraints_on_holo_curves,topological_constrants_in_the_MB_settting}, including positivity of intersection, the Blocking Lemma and the Trapping Lemma.

In fact, since the \refNamedThm{Trapping Lemma}{trapping_lemma} is a result concerning only the ends of holomorphic curves, it also applies away from product regions:

\begin{lemma}[The Trapping Lemma for cobordisms]\label{trapping_lemma_cobordisms}
Suppose that $(X,\lambda)$ is an exact symplectic cobordism between $M_+$ and $M_-$ and that $T_+$ (resp.\ $T_-$) is a Morse-Bott torus in $M_+$ (resp.\ $M_-$).   Suppose also that $u$ is a holomorphic curve in the completion $(\bar{X},J)$ and $u$ has a one-sided positive (resp.\ negative) end at some $\gamma \subset T_+$ (resp.\ $T_-$).  Then $T_+$ (resp.\ $T_-$) must be a positive (resp.\ negative) Morse-Bott torus.
\end{lemma}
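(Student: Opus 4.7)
The plan is to localise the argument to a neighbourhood of the end $\gamma \subset T_+$ in which the ambient data looks exactly like that of a symplectization, and then invoke the original Trapping Lemma (\cref{trapping_lemma}). First I would observe that a positive end of $u$ at $\gamma$ lies, for sufficiently large $s$, inside the cylindrical positive end $[0,\infty)\cross M_+$ of the completion $\bar X$. On this region $\bar\lambda = e^s\alpha_+$ by construction, and cobordism-admissibility forces $J$ to agree there with an almost complex structure adapted to $\alpha_+$ on the genuine symplectization $(\R\cross M_+, d(e^s\alpha_+))$.

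Next I would choose a tubular neighbourhood $U\iso T^2\cross(-\delta,\delta)$ of $T_+$ inside $M_+$ small enough that $u$ has no other ends in $U$, and then note that for $N$ large the end of $u$ at $\gamma$ lies entirely in $[N,\infty)\cross U \subset [0,\infty)\cross M_+$. Because the end is one-sided, after possibly shrinking $\delta$ we may assume the projection of this end to $M_+$ stays on just one side of $T_+$; that is, $u$ has no ends in $T^2\cross\bigl((-\delta,\delta)\sminus\set{0}\bigr)$. The restriction of $u$ to the preimage of this region is therefore an honest $J$-holomorphic map inside the symplectization $\R\cross M_+$, with a one-sided asymptotic end at $\gamma\subset T_+$.

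At this point the original Trapping Lemma (\cref{trapping_lemma}) applies verbatim: its proof analyses only the asymptotic behaviour of a holomorphic curve near a Morse-Bott torus inside a symplectization, using the local Reeb dynamics on $T_+$ and standard asymptotic convergence estimates, none of which reference the global structure of the ambient contact 3-manifold. Therefore if the one-sided end of $u$ at $\gamma$ is positive, the torus $T_+$ must be positive, which is the first claim. The statement for $T_-$ and a one-sided negative end is entirely symmetric, using the negative cylindrical end $(-\infty,0]\cross M_-$, on which $\bar\lambda = e^s\alpha_-$, in place of the positive end.

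The only genuine obstacle is to confirm the localisation step: one must verify that the proof of \cref{trapping_lemma} really is purely local near the asymptotic end and factors through nothing but the symplectization model on a neighbourhood of $T_+$. This is precisely the content of the remark preceding the statement of the cobordism version and amounts to inspecting that the asymptotic analysis of Colin, Ghiggini and Honda appeals only to the local Morse-Bott data. Once this is in hand, the cobordism Trapping Lemma follows as an immediate corollary.
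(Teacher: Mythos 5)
Your proposal is correct and takes essentially the same approach as the paper's own proof: both arguments observe that the almost complex structure on the cylindrical ends of the completion agrees with that of a symplectization, so the original Trapping Lemma applies directly to the asymptotic end. Your write-up is simply a more carefully spelled-out version of the same localisation step that the paper states in one sentence.
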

\begin{proof}
The proof follows since the almost complex structure on the ends of the completion is equal to a symplectization.  Therefore the ends of $u$ are constrained by the same results as the ends of $u$ in a symplectization, so the normal \refNamedThm{Trapping Lemma}{trapping_lemma} applies. 
\end{proof}

In addition, positivity of intersection in four dimensions (\cref{positivity_of_intersections_4}) clearly still applies in any completion $(\bar{X},J)$.  However positivity of intersection in three dimensions (\cref{positivity_of_intersections_3}) and the \refNamedThm{Blocking Lemma}{blocking_lemma} do not apply in general, since we have lost the notion of a product cylinder.

In very special cases, the entirety of $X$ is a product region.  Then $X=[c_-,c_+]\cross M$ and is a cobordism between $(M,e^{c_+}\alpha)$ and $(M,e^{c_-}\alpha)$.  The cobordism-admissible almost complex structure, when restricted to the ends of the completion $\bar{X}$, is adapted on the two pieces to the forms $e^{c_\pm}\alpha$ respectively.  The lemma below examines how the almost complex structures adapted to $\alpha$ and $e^{c_\pm}\alpha$ are related.

\begin{lemma}\label{rescale_alpha_J_iso}
Suppose that $(M,\alpha)$ is a closed contact manifold and let $J$ be a regular almost complex structure on $\R \cross M$ which is adapted to $\alpha$. Then for any $c\in\R$ there is a regular almost complex structure $J^c$ which is adapted to $e^c\alpha$ such that:
\begin{enumerate}
\item On each slice $\set{s}\cross M$, $\restr{J}{\xi}=\restr{J^c}{\xi}$, where $\xi$ is the contact plane field.
\item\label{moduli_spaces_diffeo} For any two orbit sets $\Gamma_+$ and $\Gamma_-$, the moduli spaces $\mathcal{M}_J(\Gamma_+,\Gamma_-)$ and $\mathcal{M}_{J^c}(\Gamma_+,\Gamma_-)$ are diffeomorphic.
\item\label{canon_rescaling_item} If $L'=e^{c}L$ then we have a canonical rescaling isomorphism 
\[ s^c : ECC^L(M,\alpha,J) \xrightarrow{\iso} ECC^{L'}(M,e^c \alpha,J^c). \]
\end{enumerate}
\end{lemma}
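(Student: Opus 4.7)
The plan is to construct $J^c$ explicitly by the formulas $J^c\!\restriction_\xi := J\!\restriction_\xi$ and $J^c(\del_s) := e^{-c}R_\alpha$, declaring $J^c$ to be $s$-invariant. First I would check that this $J^c$ is genuinely adapted to $e^c\alpha$: the Reeb vector field of $e^c\alpha$ is easily computed to be $e^{-c}R_\alpha$ (by imposing $e^c\alpha(R_{e^c\alpha})=1$ and using $\ker d\alpha = \ker d(e^c\alpha)$), so the condition $J^c(\del_s)=R_{e^c\alpha}$ holds. Compatibility with $d(e^c\alpha)=e^c\, d\alpha$ on $\xi$ is immediate since $d\alpha(\cdot,J\cdot)$ is already Euclidean on $\xi$, and rescaling by the positive constant $e^c$ preserves this. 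Property (1) is then built into the construction.

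For (2), the key observation is that $\Phi^c:\R\cross M\to\R\cross M$ defined by $\Phi^c(s,x):=(e^{-c}s,x)$ is a biholomorphism $(\R\cross M,J^c)\to(\R\cross M,J)$. Indeed, on $\xi$ the derivative is the identity, so $(\Phi^c)^*J\!\restriction_\xi=J\!\restriction_\xi=J^c\!\restriction_\xi$; on $\del_s$ we compute $(\Phi^c)_*\del_s=e^{-c}\del_s$, hence $(\Phi^c)^*J(\del_s)=e^{-c}R_\alpha=J^c(\del_s)$. Consequently, post-composition with $(\Phi^c)^{-1}$ sends any $J$-holomorphic curve $u:(F,j)\to(\R\cross M,J)$ to the $J^c$-holomorphic curve $(\Phi^c)^{-1}\!\circ u$. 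Since the Reeb orbits of $\alpha$ and $e^c\alpha$ coincide as subsets of $M$, this composition respects asymptotic ends and yields a diffeomorphism $\mathcal{M}_J(\Gamma_+,\Gamma_-)\xrightarrow{\iso}\mathcal{M}_{J^c}(\Gamma_+,\Gamma_-)$. Regularity of $J^c$ follows because transversality of the Cauchy--Riemann operator is preserved under this biholomorphism.

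For (3), I first note that the generators of $ECC^L(M,\alpha,J)$ and $ECC^{L'}(M,e^c\alpha,J^c)$ are in canonical bijection: Reeb orbits (and their hyperbolic/elliptic classification via the linearized return map, which is unchanged up to reparametrization) are the same, and $\mathcal{A}_{e^c\alpha}=e^c\mathcal{A}_\alpha$, so the action bounds $L$ and $L'=e^c L$ pick out the same orbit sets. Define $s^c$ to be this canonical identification on generators. To see $s^c$ is a chain map one checks that the diffeomorphism of moduli spaces from (2) intertwines the $\R$-translation actions up to a positive rescaling, namely $F(\tau\cdot u)=(e^c\tau)\cdot F(u)$, and so descends to a bijection of quotients $\widehat{\mathcal{M}}_J^{I=1,\nmc}(\Gamma,\Gamma')\xrightarrow{\iso}\widehat{\mathcal{M}}_{J^c}^{I=1,\nmc}(\Gamma,\Gamma')$ (here one also verifies that the ECH index is preserved, which is clear since Conley--Zehnder indices depend only on the linearized return maps, and the relative Chern class and self-intersection number are topological).

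The only place that deserves care is the verification that the ECH index is unchanged, because one might worry that rescaling the contact form alters Conley--Zehnder indices through the Reeb flow. However, the flow of $R_{e^c\alpha}=e^{-c}R_\alpha$ is merely a time reparametrization of the flow of $R_\alpha$, so the linearized return maps, together with the trivializations $\tau$ associated with framings of $\xi$ (which are themselves unchanged), produce identical paths in $Sp(2,\R)$. I do not anticipate any serious obstacle; the entire argument is a book-keeping exercise in tracking the linear rescaling through the various definitions.
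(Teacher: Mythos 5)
Your proposal is correct and follows essentially the same route as the paper: define $J^c$ by keeping $J$ on $\xi$ and setting $J^c(\del_s)=e^{-c}R_\alpha=R_{e^c\alpha}$, then use the rescaling self-diffeomorphism $(s,x)\mapsto(e^{c}s,x)$ of $\R\cross M$ to identify $J$-holomorphic curves with $J^c$-holomorphic curves, from which the diffeomorphism of moduli spaces, regularity of $J^c$, and the canonical chain isomorphism all follow. Your additional book-keeping on adaptedness, the $\R$-action, and the ECH index is consistent with (and slightly more explicit than) the paper's argument.
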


\begin{proof}
Define $J^c$ on $\R\cross M$ by 
\begin{align*}
\restr{J^c}{\xi}&:=\restr{J}{\xi},\quad\text{and}\\
J^c(\del_s)&:= R_{e^c \alpha} = e^{-c}R_\alpha = e^{-c}J(\del_s).
\end{align*}
Hence $J^c$ is adapted to $e^c\alpha$.  There is a canonical bijection between the complexes
\[ ECC^L(M,\alpha,J)\quad\text{and}\quad ECC^{L'}(M,e^c\alpha,J^c),\]
since the orbits are identical up to rescaling.  Hence point \cref{canon_rescaling_item} and the regularity of $J^c$ follow once we establish point \cref{moduli_spaces_diffeo}. 

Consider the self-diffeomorphism of $\R\cross M$ given by
\[ (s,x) \mapsto (e^c s,x). \]
This sends $J$-holomorphic curves to $J^c$-holomorphic curves, and hence the moduli spaces $\mathcal{M}_J(\Gamma_+,\Gamma_-)$ and $\mathcal{M}_{J^c}(\Gamma_+,\Gamma_-)$ are diffeomorphic. 
\end{proof}

Later we will see that the chain map induced by a product cobordism $[c_-,c_+]\cross M$ is just the identity map, but to see this we need to understand more Seiberg-Witten theory.

\subsection{Seiberg-Witten theory}

The introduction to Seiberg-Witten theory which follows will just be enough to understand the parallels with embedded contact homology and not the precise details of the objects involved.  For a full introduction refer to the work of Kronheimer and Mrowka~\cite{KM_monopoles07}.  The setup below is taken from Hutchings and Taubes~\cite[Sections 2 and 4]{HT_Arnold13}.

Seiberg-Witten Floer cohomology is an invariant assigned to a closed oriented connected 3-manifold $M$ equipped with a metric $g$.  In our case the metric $g$ arises as follows: given a contact structure $\alpha$ on $M$ choose an adapted almost complex structure $J$ as in the definition of ECC.  This determines a metric on $M$ by requiring that the Reeb vector field $R_\alpha$ has length 1, $R_\alpha$ is orthogonal to the contact field $\xi$, and $\restr{g}{\xi}$ is compatible with $\restr{J}{\xi}$ and $\restr{d\alpha}{\xi}$ in the sense that
\begin{equation}\label{g_alpha_J_equation_SW_theory}
 g(v,w) = \frac{1}{2}d\alpha(v,Jw)
\end{equation}
for all $v, w\in \xi$.  The reason for the factor of 1/2 here is for consistency with earlier papers of Taubes (c.f.~\cite[Remark 2.2]{HT_Arnold13}).

The Seiberg-Witten Floer cohomology chain complex, which we will denote in analogy to ECC by
\[ \widehat{CM}^*(M, \alpha, J) = \bigoplus_{\mathfrak{s}\in \mathrm{Spin}^c(M)} \widehat{CM}^*(M, \alpha, J ; \mathfrak{s}),\]
is generated over $\F_2$ by solutions to a perturbed version of the \emph{Seiberg-Witten equations} on $(M,g)$.  The splitting over Spin$^c$ structures in the equation above is anologous to that in Heegaard Floer homology and the splitting with respect to $H_1(M;\Z)$ of embedded contact homology.

The setup of the perturbed Seiberg-Witten equations are not important to us, except we note that they involve choosing some data, namely a particular exact 2-form $\mu$, a large perturbation parameter $r$ and a perturbation $\eta$ which is suitably generic to ensure transversality.  The resulting chain complex depends on these choices (although we omit them from the notation) but the homology is canonically independent of these choices and also of the choice of $g$ (and by extension, $\alpha$ and $J$).  

Solutions to the equations are pairs 
\[\mathfrak{c} = (\mathbb{A}, \Psi), \]
where $\mathbb{A}$ is a connection and $\Psi$ is a section on the \emph{Spinor bundle}, a rank 2 Hermitian vector bundle over $M$ making up part of the data for the Spin$^c$ structure $\mathfrak{s}$.  A priori, solutions fall into one of two categories: irreducible or reducible. However, when it comes to the correspondance between embedded contact homology and Seiberg-Witten Floer cohomology, it is only the irreducible solutions which interest us. This is because, given certain data $(\alpha, J, \mu, r, \eta)$, there is a correspondance between orbit sets generating ECC and irreducible solutions to the Seiberg-Witten equations.  (This is outlined by Taubes~\cite[Theorem 4.2]{Taubes10}.)

The notion of the action on embedded contact homology has an analogous concept in Seiberg-Witten theory, which is a functional defined on solutions to the Seiberg-Witten equations called the \emph{energy}.  For the right choice of data the energy $E(\mathbb{A})$ is approximately equal to $2\pi\mathcal{A}(\Gamma)$ and this motivates the definition of the filtered complex
\[ \widehat{CM}^*_L(M, \alpha, J), \]
which is generated by those solutions with energy less than $2\pi L$ and is a valid subcomplex for $r$ sufficiently large and carefully chosen generic $\mu$ and $\eta$.

It is more intuitive to think of the Seiberg-Witten complex in a Morse-theoretic manner; indeed there is a functional $\mathfrak{a}_\eta$ defined on the space of such pairs $(\mathbb{A}, \Psi)$ such that solutions to the Seiberg-Witten equations are precisely the critical points of $\mathfrak{a}_\eta$.  In this setting a differential between two irreducible solutions $\mathfrak{c}_\pm$ is simply a downward flowing gradient line, called an index 1 \emph{instanton} and denoted by $\mathfrak{d}$. The resulting filtered cohomology depends only on $\alpha$ and not $J$ or any of the other choices of data made when defining the complex.

Any cobordism $(X,\lambda)$ between two 3-manifolds $M_\pm$ induces a chain map on the Seiberg-Witten Floer complex.  Suppose that $J$ is a cobordism-admissible almost complex structure on the completion
\[ \bar{X} := (-\infty,0]\cross M_- \union X \union [0,\infty)\cross M_+.  \]
We then choose a large perturbation parameter $r$ and data $(g, \mu, \eta)$ on $\bar X$ which is compatible with the data $(g_\pm, \mu_\pm, \eta_\pm)$ on either end.  In particular the metric $g$ is defined in terms of $\lambda$ and $J$.  The chain map
\[ \widehat{CM}(X,\lambda) : \widehat{CM}^*(M_+, \alpha_+, J_+) \to \widehat{CM}^*(M_-, \alpha_-, J_-) \]
is defined, at least between the irreducible solutions which concern us, by counting index 0 instantons $\mathfrak{d}$ which limit to $\mathfrak{c_+}$ at the positive end and $\mathfrak{c_-}$ at the negative end.  Furthermore for $r$ large enough and carefully chosen generic $\mu$ and $\eta$, the chain map restricts to the filtered complexes:
\[ \widehat{CM}_L(X,\lambda) : \widehat{CM}^*_L(M_+, \alpha_+, J_+) \to \widehat{CM}^*_L(M_-, \alpha_-, J_-). \]
Both chain maps depend on the choices made (although again, they are omitted from the notation) but on homology the unfiltered map depends only on $X$ and the filtered map depends only on $X$ and $\lambda$.

The final aspect of Seiberg-Witten theory which we will need later concerns homotopies $(X,\lambda_t,J_t, g_t, \mu_t, \eta_t)$ through cobordisms.  Such a homotopy induces a chain homotopy between the two chain maps obtained at either end, which we denote
\[ H_L(X,\lambda_t) : \widehat{CM}^*_L(M_+, \alpha_+, J_+) \to \widehat{CM}^*_L(M_-, \alpha_-, J_-) \]
which is defined by counting index $-1$ instantons which appear during the homotopy.  (There is an analogous construction in the unfiltered case which we will not need.)  The fact that both chain maps and chain homotopies are supported on instantons will be used extensively in the proof of \cref{supported_on_holo_curves} and in \cref{cobordism_maps_on_ECK}.

\subsection{Constructing the chain map on ECC}

We are now in a position where we can work towards defining the (non-canonical) chain map  
\[ \hat{\Phi}^L(X,\lambda) : ECC^L(M, \alpha_+, J_+) \to ECC^L(M,\alpha_-, J_-) \]
which induces the cobordism maps on ECH first seen in \cref{direct_limits_through_cobordism_maps}.  Notice that here we have set $M=M_+=M_-$; this will be the situation for the remainder of the thesis.

\begin{prop}{\nogapcite[Lemma 3.6]{HT_Arnold13}}\label{L-flat_approx}
Let $L>0$ and $\epsilon>0$.  To every pair $(\alpha, J)$ on a closed manifold $M$ with $J$ regular, there exists an \emph{$L$-flat approximation} $(\alpha', J')$ with $J'$ regular and a homotopy $(\alpha_t, J_t)$ between $(\alpha, J)$ and $(\alpha', J')$ with the following properties:
\begin{enumerate}
\item $(\alpha', J')$ is \emph{$L$-flat} near every Reeb orbit of length less than $L$, where the precise definition of L-flatness is given by Taubes~\cite[(4-1)]{Taubes10}.
\item Every $\alpha_t$ is L-nongenerate and every $J_t$ is a regular adapted almost complex structure for $\alpha_t$.
\item The homotopy leaves the Reeb orbits of $\alpha$ with length less than $L$, and their lengths, unchanged.
\item Each $(\alpha_t, J_t)$ only differs from $(\alpha, J)$ in an $\epsilon$-neighbourhood of the Reeb orbits of $\alpha$ of length less than $L$. 
\item\label{isomorphic_chain_complexes} The chain complexes $ECC^L(M,\alpha_t,J_t)$ are isomorphic for all $t\in[0,1]$, under the canonical identification of orbits.
\end{enumerate}
\end{prop}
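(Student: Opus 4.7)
The plan is to carry out the perturbation locally near each Reeb orbit of action less than $L$, patching with the original data outside small tubular neighborhoods. Since $\alpha$ is $L$-nondegenerate, there are only finitely many such simple orbits $\gamma_1, \dots, \gamma_k$ and finitely many relevant covers up to action $L$; so we can fix disjoint tubular neighborhoods $U_i \supset \gamma_i$, each of radius smaller than $\epsilon$, in which to work.

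First I would exhibit, in a local standard model of $U_i$ coming from Reeb coordinates adapted to $\gamma_i$, the explicit $L$-flat pair $(\alpha_i', J_i')$: this amounts to writing down a contact form and adapted almost complex structure whose first few jets along $\gamma_i$ match the required normal form of Taubes~\cite[(4-1)]{Taubes10}, which is essentially a quadratic model in transverse coordinates determined by the Conley--Zehnder/return-map data of $\gamma_i$. Because this model is built to have precisely the same linearized return map as $\gamma_i$ in the original $(\alpha,J)$, one can arrange that $\gamma_i$ remains a Reeb orbit of the model of unchanged action, and that no additional Reeb orbits of action less than $L$ are introduced in $U_i$ (the latter by shrinking $U_i$ enough so that any non-trivial return to $U_i$ takes time exceeding $L$).

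Next I would define the homotopy $(\alpha_t, J_t)$ by interpolating $(\alpha,J) \rightsquigarrow (\alpha_i', J_i')$ inside each $U_i$ via a bump function supported in $U_i$ and equal to $1$ on a smaller neighborhood of $\gamma_i$, keeping $(\alpha_t, J_t) \equiv (\alpha, J)$ outside $\bigcup_i U_i$. The $L$-nondegeneracy of each $\alpha_t$ and the invariance of the Reeb orbits of action less than $L$ (parts (2) and (3)) then follow from the fact that the linearized return map at each $\gamma_i$ is fixed throughout the homotopy and from the shrinking-neighborhood argument above; regularity of $J_t$ is generic and can be arranged by a further small perturbation supported away from the orbits, which does not affect properties (1), (3), or (4).

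The main obstacle is part (5): obtaining an honest isomorphism of chain complexes, not merely of underlying vector spaces. The generators match tautologically by the content of (3), so what must be shown is that the counts of ECH-index-$1$ curves between fixed orbit sets $\Gamma_+, \Gamma_-$ of action less than $L$ coincide for all $t$. The idea is to use the parameterized moduli space $\bigcup_t \widehat{\mathcal{M}}_{J_t}^{I=1, \mathrm{nmc}}(\Gamma_+, \Gamma_-) \times \{t\}$, a generically $1$-manifold, and to rule out births/deaths as $t$ varies. Since the $L$-bound on action cuts off any escape to broken curves involving orbits of action exceeding $L$, and since the interpolation is supported in $\epsilon$-neighborhoods where positivity of intersection constrains index $1$ curves to behave as trivial-cylinder-like pieces, one argues that no new index $1$ curves are born in the $U_i$. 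This is the delicate point and is what forces the construction to be carried out very close to the orbits; I would treat it by combining the compactness results behind Proposition~\ref{index_0_1_and_2_holo_curves} with a pointwise analysis near $\gamma_i$ to identify the moduli spaces across the homotopy.
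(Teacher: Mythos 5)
First, note that the paper does not actually prove this proposition: it is imported verbatim from Hutchings and Taubes~\cite[Lemma 3.6]{HT_Arnold13}, whose proof in turn rests on the construction of $L$-flat approximations in the appendix of Taubes' ECH$\,=\,$SWF papers. So your sketch can only be judged on its own terms. Your outline of parts (1)--(4) --- local normal forms near the finitely many orbits of action less than $L$, interpolation by bump functions supported in small tubular neighbourhoods, and an argument excluding new short orbits --- is the right shape of construction, with one small caveat: shrinking $U_i$ only excludes new orbits that leave and re-enter $U_i$; new orbits \emph{contained} in $U_i$ are excluded instead by the nondegeneracy of $\gamma_i$ together with $C^1$-closeness of the perturbed Reeb flow to the original one.

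The genuine gap is in part (5), and the mechanism you propose is the wrong one. A parametrized moduli space $\bigcup_t \widehat{\mathcal{M}}^{I=1,\nmc}_{J_t}(\Gamma_+,\Gamma_-)\times\{t\}$ over a generic path does \emph{not} let you rule out births and deaths of index-$1$ curves --- such bifurcations are precisely why the ECH differential fails to be chain-level invariant under deformations of $J$ in general, and why cobordism maps on ECH are constructed through Seiberg--Witten theory rather than by counting curves. Your stated justification (``no new index $1$ curves are born in the $U_i$'') also misidentifies where the danger lies: any curve with an end at some $\gamma_i$ necessarily passes through $U_i$, so a newly born index-$1$ curve is a global object that merely intersects the perturbed region, and positivity of intersection inside $U_i$ says nothing about it. What actually makes part (5) work is a \emph{smallness} argument rather than a path/bifurcation argument: since $(\alpha',J')$ agrees with $(\alpha,J)$ to high order along the orbits, the entire homotopy can be made arbitrarily $C^1$-close to $(\alpha,J)$ by shrinking the neighbourhoods. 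Then (i) the finitely many transversally cut out index-$1$ curves for $J$ between action-$<L$ orbit sets persist uniquely by the implicit function theorem (this, not genericity, is also what preserves regularity of every $J_t$ for the relevant moduli spaces), and (ii) no new ones appear, because a sequence of new $J_{t_n}$-curves for perturbations tending to $0$ would Gromov-converge to a broken $J$-holomorphic building of total ECH index $1$, which by \cref{index_0_1_and_2_holo_curves} is an honest index-$1$ curve together with trivial cylinders, whence uniqueness in the implicit function theorem forces the sequence to eventually coincide with continuations of the old curves. Without this compactness-plus-uniqueness step --- the content of the lemma in Taubes' appendix that Hutchings--Taubes invoke --- your part (5) does not close.
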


Fix $\epsilon>0$ and find $L$-flat approximations $(\alpha_\pm', J_\pm')$.  $L$-flat approximations are useful since they allow us to step into the world of Seiberg-Witten theory.  

\begin{prop}{\nogapcite[Proposition 3.1]{HT_Arnold13}}
Suppose that the auxiliary data $(\mu,r,\eta)$ is chosen so that $\widehat{CM}_L(M,\alpha,J)$ is defined.  Then there is a canonical chain map
\begin{equation}
\Theta_{M,\alpha,J} : \widehat{CM}_L(M,\alpha,J) \to ECC^L(M,\alpha,J).
\end{equation}
Furthermore, if $(\alpha,J)$ is $L$-flat, then the map an isomorphism of chain complexes.
\end{prop}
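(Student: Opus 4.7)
The plan is to build $\Theta_{M,\alpha,J}$ via the Taubes correspondence, which for sufficiently large perturbation parameter $r$ identifies irreducible solutions to the perturbed Seiberg-Witten equations of bounded energy with orbit sets generating $ECC^L$. The first step will be to recall this bijection on the generator level: choosing $r$ large and the auxiliary data $(\mu,\eta)$ carefully generic, each irreducible solution $\mathfrak{c} = (\mathbb{A},\Psi)$ of energy less than $2\pi L$ has its zero set of $\Psi$ concentrated along a union of Reeb orbits with multiplicities, yielding an orbit set $\Gamma(\mathfrak{c})$ of action less than $L$, and conversely every such orbit set arises uniquely this way. We define $\Theta_{M,\alpha,J}$ on generators by $\mathfrak{c}\mapsto \Gamma(\mathfrak{c})$, ignoring reducible generators (which can be placed outside the action window by choice of $\mu$).

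Next I would verify that $\Theta$ is a chain map. The SW differential counts index~$1$ instantons on $\R\cross M$ connecting two irreducible solutions $\mathfrak{c}_\pm$; for $r$ large, Taubes' analysis shows that each such instanton, after suitable renormalization, converges to a broken $J$-holomorphic building between the orbit sets $\Gamma(\mathfrak{c}_\pm)$, which in the $L$-flat case turns into an honest ECH index~$1$ holomorphic curve. To show chain map compatibility one needs that, up to signs (working over $\F_2$ here, so signs drop out), the count of instantons equals the count of curves contributing to $\partial_{ECH}$. The key input is the filtered preservation of the differential under the correspondence, already encoded in the energy/action estimate $E(\mathbb{A}) \approx 2\pi\mathcal{A}(\Gamma)$ and in the compatibility of perturbation choices.

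For the isomorphism statement, I would exploit the $L$-flatness hypothesis: near every short Reeb orbit, $(\alpha,J)$ has the exact model form for which Taubes' local analysis is sharpest, and the map between solutions and orbit sets is a diffeomorphism of moduli spaces rather than merely a bijection of counts. Concretely, in the $L$-flat model the perturbed SW moduli problem linearized at $\mathfrak{c}$ reduces to the linearized problem controlling the ECH generator, so both the generators and, by a parallel gluing and deformation argument, the index~$1$ moduli spaces are identified on the nose. This upgrades $\Theta$ from a chain map to a chain isomorphism. Finally, one should check canonicity: different allowable auxiliary data $(\mu,r,\eta)$ yield the same map, because any two are connected by a homotopy through admissible data, and the resulting SW chain homotopy matches the identity on orbit sets in the $L$-flat model.

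The hard part will be the analytic identification of instantons with ECH differentials in the $L$-flat setting: controlling the concentration of solutions as $r\to\infty$, bounding degenerations, and ruling out spurious contributions from near-reducible configurations or from solutions whose zero set wanders away from the expected Reeb orbit. All of this is already carried out in Taubes' papers, so for our purposes the proof reduces to citing those results and checking that the chosen $L$-flat data place us squarely in their regime of applicability.
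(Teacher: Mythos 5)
This proposition is imported verbatim from Hutchings--Taubes (their Proposition 3.1), and the thesis gives no proof of its own; your sketch accurately mirrors the strategy of the cited proof, namely the Taubes correspondence on generators, the convergence of index-one instantons to holomorphic buildings for the chain-map property, and the $L$-flatness hypothesis upgrading the correspondence to an isomorphism of chain complexes. Since the intended "proof" here is precisely to cite that result, your plan is correct and takes essentially the same approach.
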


If $(X,\lambda)$ is an exact symplectic cobordism between $(\alpha_+,J_+)$ and $(\alpha_-,J_-)$ then it is possible to find a cobordism $(X, \lambda')$ between $L$-flat approximations $(\alpha_+',J_+')$ and $(\alpha_-',J_-')$ which is in a sense ``close'' to $(X,\lambda)$~\cite[Section 6.3]{HT_Arnold13}.  This cobordism then gives rise to a chain map
\begin{equation}\label{CM_cobordism_map}
\widehat{CM}_L(X, \lambda') : \widehat{CM}_L(M, \alpha_+', J_+') \to \widehat{CM}_L(M, \alpha_-', J_-').
\end{equation}

We can now define the chain map $\hat{\Phi}^L(X, \lambda)$:
\begin{defn}\label{cobordism_chain_map_def}
Define the chain map $\hat{\Phi}^L(X, \lambda)$ by composing $\widehat{CM}_L(X, \lambda')$ with the isomorphisms $\Theta_{M,\alpha'_\pm, J'_\pm}$ and the isomorphisms from \refListInThm{L-flat_approx}{isomorphic_chain_complexes} on either side.

More explicitly, the chain map is given by the following composition:
\[\begin{tikzcd}
ECC^L(M,\alpha_+,J_+) \arrow[r,"\iso"] &
ECC^L(M,\alpha_+',J_+') \arrow[rr,"\Theta_{M,\alpha_+',J_+'}"', "\iso"] && \widehat{CM}_L(M,\alpha_+', J_+') \arrow[d,"{\widehat{CM}_L(X,\lambda')}"]\\
ECC^L(M,\alpha_-,J_-) & ECC^L(M,\alpha_-',J_-') \arrow[l,"\iso"']&& \widehat{CM}_L(M,\alpha_-', J_-') \arrow[ll,"\Theta_{M,\alpha_-',J_-'}", "\iso"'].
\end{tikzcd}\]
\end{defn}
Note that the above definition is a priori dependent on the choice of approximation $(X,\lambda')$, the choice of cobordism-admissible almost complex structure $J$ on the completion $(\bar{X},\bar{\lambda'})$, and the choices of $(\mu,r,\eta)$ made when defining the chain map on $\widehat{CM}$. However the map on the level of homology is independent of these choices~\cite[Theorem 1.9]{HT_Arnold13}.   For our purposes the dependence of the chain map on these choices is not a problem---we will take these maps forwards, remembering that they are not canonical but omitting the choices from the notation for brevity.

\begin{lemma}\label{supported_on_holo_curves}
Let $L>0$ and $(X=[0,1]\cross M, \lambda)$ be an exact symplectic cobordism from $(M, \alpha_+, J_+)$ to $(M,\alpha_-, J_-)$ with $J_\pm$ regular.  Fix a cobordism-admissible almost complex structure $J$ on the completion $(\bar{X},\bar{\lambda})$. Then the induced chain map $\hat{\Phi}^L(X,\lambda)$ has the following properties:
\begin{enumerate}
\item\label{chain_map_holo_curves} If $\langle \hat{\Phi}^L(X,\lambda)(\Gamma_+), \Gamma_- \rangle \neq 0$ then there is an index 0 (possibly broken) $J$-holomorphic curve in $(\bar{X},J)$ between $\Gamma_+$ and $\Gamma_-$.  Furthermore, if the only index 0 holomorphic curves between $\Gamma_+$ and $\Gamma_-$ are contained in a product region, then
\[ \langle \hat{\Phi}^L(X,\lambda)(\Gamma_+), \Gamma_- \rangle = 1. \]
\item\label{homotopies_holo_curves} If $(X, \lambda_t)$ is a homotopy through exact symplectic cobordisms and $J_t$ is a collection of corresponding almost complex structures on the completions $(\bar{X}, \bar{\lambda_t})$, then the maps corresponding to $t=0$ and $t=1$ are chain homotopic via some homotopy $H^L(X,\lambda_t)$ such that if $\langle H^L(X,\lambda_t)(\Gamma_+), \Gamma_- \rangle \neq 0 $ then for some $t\in[0,1]$ there exists a (possibly broken) $J_t$-holomorphic curve between $\Gamma_+$ and $\Gamma_-$.
\end{enumerate}
\end{lemma}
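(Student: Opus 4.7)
The plan is to trace through the construction of $\hat{\Phi}^L(X,\lambda)$ in \cref{cobordism_chain_map_def} and exploit the fact that each ingredient in that composition is either a canonical isomorphism (the $L$-flat approximation maps and the Taubes isomorphisms $\Theta_{M,\alpha'_\pm,J'_\pm}$) or a Seiberg--Witten cobordism map $\widehat{CM}_L(X,\lambda')$ whose matrix coefficients count index $0$ instantons on the completion $(\bar{X},\bar{\lambda'})$. The essential point is then Taubes' correspondence: for the perturbation parameter $r$ large (and $\mu,\eta$ suitably generic), irreducible Seiberg--Witten solutions are in bijection with ECC orbit sets, and, more importantly, index $0$ instantons between such solutions concentrate, as $r\to\infty$, along broken $J$-holomorphic curves with matching ends. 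This is exactly the input needed to push a nonvanishing coefficient on the Seiberg--Witten side back to the existence of a holomorphic curve on the ECH side.

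For part \cref{chain_map_holo_curves}, I would first reduce to the $L$-flat setting: by \refListInThm{L-flat_approx}{isomorphic_chain_complexes} the outer vertical maps in \cref{cobordism_chain_map_def} are canonical bijections of generators, and $\Theta_{M,\alpha'_\pm,J'_\pm}$ likewise identifies Seiberg--Witten generators (of small enough energy) with ECC orbit sets canonically. So a nonzero coefficient $\langle \hat{\Phi}^L(X,\lambda)(\Gamma_+),\Gamma_-\rangle$ is equivalent to a nonzero coefficient of $\widehat{CM}_L(X,\lambda')$ between the corresponding irreducible Seiberg--Witten solutions $\mathfrak{c}_+,\mathfrak{c}_-$. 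Such a coefficient is (mod 2) the count of index $0$ instantons from $\mathfrak{c}_+$ to $\mathfrak{c}_-$, and by Taubes' convergence theorem (applied for a sequence $r_i\to\infty$, after shifting to the $L$-flat cobordism $(X,\lambda')$ whose perturbation is arbitrarily close to the original $(X,\lambda)$) any such instanton produces, in the limit, a broken $J$-holomorphic curve from $\Gamma_+$ to $\Gamma_-$ of total ECH-relevant index $0$. This gives the first assertion.

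For the product-region refinement, I would analyze what the broken index $0$ curves look like. Any such curve lies, by hypothesis, inside a product region $[c_-,c_+]\cross Z$. By \cref{index_0_1_and_2_holo_curves} (which applies in the product region since it is symplectomorphic to a piece of a symplectization), index $0$ holomorphic curves in a symplectization are exactly unions of (possibly multiply covered) trivial cylinders. So $\Gamma_+=\Gamma_-=:\Gamma$, and the entire broken curve is the union of trivial cylinders over the simple orbits of $\Gamma$. Conversely, such a union exists uniquely for each $\Gamma$. One then needs to argue that this unique model curve contributes exactly $1$ to the coefficient; this is where one appeals to the standard ``product cobordism = identity'' computation for Seiberg--Witten cobordism maps in the product setting, which on the ECH side translates via $\Theta$ into saying that $\hat{\Phi}^L(X,\lambda)$ restricted to coefficients whose only holomorphic realization is the trivial cylinder model is the identity. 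The hardest technical step is certifying that no ``non-geometric'' instanton contribution can survive in the limit; this is exactly the content of the Taubes correspondence together with the gluing theorem for trivial cylinders, and I would quote these rather than reprove them.

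Part \cref{homotopies_holo_curves} proceeds identically but one level up. A homotopy $(X,\lambda_t,J_t)$ yields, together with compatible auxiliary data $(\mu_t,r,\eta_t)$, a Seiberg--Witten chain homotopy $H_L(X,\lambda_t)$ whose matrix coefficients count index $-1$ instantons that appear at isolated parameter values $t\in[0,1]$. Conjugating by the $L$-flat approximation and $\Theta$ isomorphisms as before defines the ECH-level chain homotopy $H^L(X,\lambda_t)$, and the same Taubes convergence argument (applied to one-parameter families, sending $r\to\infty$) produces, for each nonzero matrix entry, a value of $t$ and a broken $J_t$-holomorphic curve from $\Gamma_+$ to $\Gamma_-$. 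The main obstacle throughout is not logical but expository: carefully invoking Taubes' SW-to-holomorphic correspondence in both the cobordism and the one-parameter-family settings, and reconciling the several non-canonical choices (perturbations, $L$-flat approximations, cobordism-admissible $J$) so that the resulting chain map and chain homotopy have the geometric support claimed.
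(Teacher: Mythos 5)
Your overall route is the same as the paper's: part~\cref{chain_map_holo_curves} is, as you say, precisely the content of the cited Hutchings--Taubes ``holomorphic curves axiom'' (together with Cristofaro-Gardiner's refinement), and the paper simply quotes those results rather than unpacking the $r\to\infty$ Taubes correspondence as you sketch; your treatment of the product-region case via \cref{index_0_1_and_2_holo_curves} matches the intended reading of that axiom, so for part~\cref{chain_map_holo_curves} there is nothing to object to.

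The place where your write-up has a real gap is part~\cref{homotopies_holo_curves}, in the sentence claiming that ``the same Taubes convergence argument \dots\ produces \dots\ a broken $J_t$-holomorphic curve.'' Two things are being elided. First, the holomorphic-curve support of the Seiberg--Witten chain homotopy (i.e.\ of the index $-1$ instantons appearing in the family) is \emph{not} a stated theorem of Hutchings and Taubes: in the homotopy case their Proposition 5.2(b) records only the energy bound, and one must observe that the holomorphic-curve conclusion can be extracted from the \emph{proof} of that proposition --- this is exactly the content of \cref{index_minus_1_holo_curves}, which your argument tacitly assumes is available off the shelf. Second, and more substantively, the Taubes limit only yields a broken curve for the almost complex structure $J_t^{(n)}$ attached to the $L$-flat approximated family $(X,\lambda_t^{(n)})$, which has been altered near the ends of the cobordism and so differs from $J_t$; the statement to be proved concerns the original $J_t$. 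Bridging this requires a second limiting argument on top of $r\to\infty$: one lets $n\to\infty$, passes to a subsequence of parameter values $t_n\to t$, applies Gromov compactness to the resulting sequence of $J_{t_n}^{(n)}$-holomorphic curves to produce a genuine $J_t$-holomorphic curve, and then uses finiteness of the set of pairs of orbit sets of action less than $L$ to fix a single $n$ that works for all pairs simultaneously. This is a genuine compactness step, not mere reconciliation of non-canonical choices, and your proposal as written does not supply it.
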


Broken holomorphic curves, as mentioned in the above lemma, are a slight generalization of holomorphic curves where the curve is allowed to ``break'' at intermediate orbit sets. More precisely, a \emph{broken holomorphic curve} between orbit sets $\Gamma_+$ and $\Gamma_-$ is a finite collection $u_1,\dots,u_k$ where
\begin{enumerate}
\item there exists some unique $k'$ such that $u_{k'}$ is a $J$-holomorphic curve in $\bar{X}$;
\item $u_1,\dots,u_{k'-1}$ are $J_+$-holomorphic curves in $\R\cross M$ and $u_{k'+1},\dots,u_k$ are $J_-$-holomorphic curves in $\R\cross M$;
\item the positive end of $u_1$ is equal to $\Gamma_+$, the negative end of $u_k$ is equal to $\Gamma_-$, and for each $i=1,\dots,k-1$ the negative end of $u_i$ coincides with the positive end of $u_{i+1}$ at some orbit set $\Gamma_i$; and
\item the curves $u_1,\dots,u_{k'-1},u_{k'+1},\dots,u_k$ are \emph{not} $\R$-invariant.
\end{enumerate}
Slope calculus, positivity of intersection and the Blocking and Trapping Lemmas still apply in the case of broken holomorphic curves, so this added technicality has no significance as far as we are concerned.

We will prove \cref{supported_on_holo_curves} shortly but first will state the following proposition which is an immediate consequence of part \cref{chain_map_holo_curves} of the lemma.

\begin{prop}\label{prod_cob_is_id}
If $(X,\lambda)$ is a product cobordism from $e^{c_+}\alpha$ to $e^{c_-}\alpha$ then the induced map
\[ \hat{\Phi}{(X,\lambda)} : ECC^L(M,e^{c_+}\alpha,J^{c_+}) \to ECC^{L}(M,e^{c_-}\alpha,J^{c_-}) \]
maps every orbit set to itself. 
\end{prop}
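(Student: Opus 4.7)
The plan is to exploit the fact that the entire cobordism $X = [c_-, c_+]\times M$ is itself a product region in the sense of \cref{product_region_defn}, and combine this with \cref{supported_on_holo_curves}(1) together with the classification of index~0 holomorphic curves in a symplectization (\cref{index_0_1_and_2_holo_curves}).

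First I would observe that, after attaching the half-symplectizations of $e^{c_\pm}\alpha$ at the two ends, the completion
\[ \bar{X} = \big((-\infty,0]\cross M\big)\ \union\ X\ \union\ \big([0,\infty)\cross M\big) \]
is symplectomorphic, as a Liouville manifold equipped with a cobordism-admissible almost complex structure, to the full symplectization $(\R\cross M, d(e^s\alpha))$ endowed with an $s$-invariant almost complex structure $J_0$ adapted to $\alpha$. Under the canonical identification of orbits of $\alpha$ with those of any rescaling $e^c\alpha$ (\cref{rescale_alpha_J_iso}), the orbit sets appearing on either side of $\hat\Phi^L(X,\lambda)$ are also identified with orbit sets of $\alpha$.

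Next I would invoke \cref{supported_on_holo_curves}(1): if $\langle \hat{\Phi}^L(X,\lambda)(\Gamma_+), \Gamma_- \rangle \neq 0$, then there must exist an index~0 (possibly broken) $J$-holomorphic curve in $\bar{X}$ from $\Gamma_+$ to $\Gamma_-$. Transporting via the symplectomorphism above, this gives a (possibly broken) index~0 holomorphic curve in a symplectization. Since ECH indices are additive and non-negative on unbroken components (\cref{index_0_1_and_2_holo_curves}), every piece must itself have ECH index~0, and \cref{index_0_1_and_2_holo_curves} then forces each piece to be a disjoint union of (possibly multiply-covered, possibly repeated) trivial cylinders. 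Composing these, the curve is a union of trivial cylinders, forcing $\Gamma_+=\Gamma_-$.

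Finally, for any fixed orbit set $\Gamma$, the only index~0 holomorphic curves from $\Gamma$ to $\Gamma$ are disjoint unions of trivial cylinders over the simple orbits in $\Gamma$, all of which lie inside the product region $[c_-,c_+]\cross M \subset X$. The ``furthermore'' clause of \cref{supported_on_holo_curves}(1) then gives $\langle \hat{\Phi}^L(X,\lambda)(\Gamma), \Gamma \rangle = 1$, completing the proof. The argument is essentially immediate once the identifications above are set up carefully; the only mild subtlety (not really an obstacle) is ensuring that broken configurations are also controlled, but this is handled by the additivity and non-negativity of the ECH index applied to each unbroken piece.
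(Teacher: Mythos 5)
Your proof is correct and follows exactly the approach the paper takes: the paper states the proposition as "an immediate consequence of part (1) of" \cref{supported_on_holo_curves}, and what you have written out is precisely the elaboration of that remark---identify the completion of a product cobordism with the symplectization $\R\cross M$, use \cref{supported_on_holo_curves}(1) to force any non-zero matrix entry to be supported on an index~0 (possibly broken) curve, use \cref{index_0_1_and_2_holo_curves} together with additivity of the ECH index to conclude that such a curve is a union of trivial cylinders (so $\Gamma_+=\Gamma_-$), and then apply the "furthermore" clause to get coefficient $1$ on the diagonal. The only phrasing I would tighten is the claim that the trivial cylinders "lie inside the product region $[c_-,c_+]\cross M\subset X$"; they of course extend to $\pm\infty$ in the completion, and what is relevant is that their intersection with $X$ lies in the product region, but since $X$ \emph{is} the product region this is automatic and the argument is unaffected.
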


Before we proceed with the proof of \cref{supported_on_holo_curves}, we make the following observation concerning Proposition 5.2 from Hutchings and Taubes' paper~\cite{HT_Arnold13}.  In the statement of this proof part (a) has two parts, (i) and (ii).  Part (a) is concerned with index 0 solutions to the Seiberg-Witten equations and states that (i) if the energy at the positive end of a solution is less than $2\pi L$ then the energy at the negative end is also less than $2\pi L$ and (ii) there exists a (possibly broken) holomorphic curve between the orbit sets in ECC corresponding to the positive and negative solutions under the map $\Theta_{M,\alpha,J}$.  Part (b) is concerned with index 1 solutions and only states a result concerning the energy bound.  Our observation is that the lemma below concerning index $-1$ (possibly broken) holomorphic curves follows trivially from the proof of Proposition 5.2(b):

\begin{lemma}\label{index_minus_1_holo_curves}
If there is an index $-1$ solution to the perturbed Seiberg-Witten equations as defined by Hutchings and Taubes~\cite[Section 4.2]{HT_Arnold13} at some $t$, with ends corresponding to solutions $\mathfrak{c}_\pm$, then there is a (possibly broken) $J_t$-holomorphic curve from $\Gamma_+$ to $\Gamma_-$, where $\Gamma_\pm = \Theta_{M,\alpha_\pm, J_\pm}(\mathfrak{c}_\pm)$.
\end{lemma}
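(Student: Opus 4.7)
The plan is to trace through the proof of Proposition~5.2(b) in Hutchings--Taubes and observe that the argument establishes the existence of a (possibly broken) holomorphic curve whose index matches that of the Seiberg--Witten instanton, independently of the sign of the index. First I would recall the general setup: for a homotopy $(X,\lambda_t,J_t,g_t,\mu_t,\eta_t)$ with sufficiently large perturbation parameter $r$, Taubes' correspondence associates to each solution of the perturbed Seiberg--Witten equations a current supported on Reeb orbits, and via $\Theta_{M,\alpha_\pm,J_\pm}$ the ends of an instanton correspond to the orbit sets $\Gamma_\pm$.

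Next, I would examine how the proof of Proposition~5.2(b) extracts a holomorphic curve from a Seiberg--Witten instanton. That argument takes a sequence $r_n \to \infty$ of perturbation parameters with instantons $\mathfrak{d}_n$ between fixed solutions $\mathfrak{c}_\pm$ and uses the action/energy bound together with Taubes' compactness machinery to extract a subsequential Gromov limit, which is a (possibly broken) $J_t$-holomorphic curve between $\Gamma_+$ and $\Gamma_-$ in $\bar{X}$. The key observation is that throughout this compactness argument, the Seiberg--Witten index and the Fredholm/ECH index of the limiting holomorphic building are related by a shift depending only on the ends, not on the particular value of the index itself.

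Therefore the step I expect to be the only non-routine one is verifying that no ingredient in the Hutchings--Taubes compactness argument implicitly uses positivity of the index: bubbling, transversality of limit curves, the passage to broken configurations, and energy bounds are all controlled by $r$-uniform estimates that are index-agnostic. Once this is confirmed, the same argument applied to an index $-1$ instanton produces a broken $J_t$-holomorphic curve whose components have Fredholm indices summing to $-1$, which is exactly the conclusion we want.

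Finally, I would note that the a priori energy bound needed to run the compactness argument is automatic here: the instantons contributing to the chain homotopy $H^L(X,\lambda_t)$ are by definition those with both endpoints of energy less than $2\pi L$, which is precisely the hypothesis Hutchings and Taubes use in Proposition~5.2(b). Hence the proof transfers verbatim and yields the desired (possibly broken) $J_t$-holomorphic curve from $\Gamma_+$ to $\Gamma_-$.
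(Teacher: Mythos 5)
Your proposal matches the paper's own justification: the paper simply observes that the lemma ``follows trivially from the proof of Proposition 5.2(b)'' of Hutchings--Taubes, i.e.\ that their compactness argument extracting a (possibly broken) holomorphic curve from a Seiberg--Witten instanton is index-agnostic and applies verbatim to the index $-1$ instantons arising in a homotopy, with the required energy bound supplied by the $2\pi L$ constraint. Your elaboration of why no step uses positivity of the index is exactly the content the paper leaves implicit, so the two arguments coincide.
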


This observation plays an important role in the following proof.

\begin{proof}[Proof of \cref{supported_on_holo_curves}]
Part \cref{chain_map_holo_curves} follows immediately from Hutchings and Taubes~\cite[Theorem 1.9]{HT_Arnold13} and Cristofaro-Gardiner~\cite[Theorem 5.1]{Cr13}.

To prove part \cref{homotopies_holo_curves}, let $(X=[0,1]\cross M, \lambda_t)$ be a strong homotopy of exact symplectic cobordisms with $J_t$ the corresponding cobordism-admissible almost complex structures on the completions $(\bar{X},\bar{\lambda_t})$. Let $s$ denote the $[0,1]$ coordinate of $X$.  Then there exists $\epsilon>0$ such that $\lambda_t$ is a symplectization on $\epsilon$-neighbourhoods of $s=0,1$ and hence the homotopy is supported on the region $s\in[\epsilon, 1-\epsilon]$.

The process performed by Hutchings and Taubes~\cite[Section 6.3]{HT_Arnold13} to obtain $(X, \lambda')$ from $(X, \lambda)$ involves choosing an $n>0$ and finding $L$-flat approximations to $(\alpha_+, J_+)$ and $(\alpha_-, J_-)$ which only differ in $\frac{1}{n}$-neighbourhoods of those Reeb orbits of action less than $L$.  The cobordism $(X, \lambda')$ (and corresponding $J'$) is then constructed by altering $(X, \lambda)$ (and $J$) on $\epsilon$-neighbourhoods of $s=0,1$, leaving the cobordism unchanged in the region $s\in[\epsilon, 1-\epsilon]$.

It is therefore possible to apply this same alteration to every $\lambda_t$ and $J_t$ to obtain a homotopy $(X, \lambda_t')$ of cobordisms (with corresponding $J'_t$) between the $L$-flat pairs $(\alpha_+', J_+')$ and $(\alpha_-', J_-')$.  Since this homotopy between $L$-flat pairs depends on $n$ we will denote it by $(X, \lambda_t^{(n)})$ and the almost complex structures by $J_t^{(n)}$.

Let $H_n$ denote the chain homotopy on $\widehat{CM}_L$ induced by the homotopy $(X, \lambda_t^{(n)})$.  Then \cref{index_minus_1_holo_curves} above implies that if $\langle H_n(\mathfrak{c}_+), \mathfrak{c}_-\rangle \neq 0$ then for some $t\in[0,1]$ there exists a (possibly broken) $J_t^{(n)}$-holomorphic curve between the orbit sets $\Gamma_+$ and $\Gamma_-$ where $\Gamma_\pm=\Theta_{M,\alpha_\pm, J_\pm}(\mathfrak{c}_\pm)$.

We will use a Gromov compactness argument similar to that employed by Hutchings and Taubes~\cite[Section 6.3]{HT_Arnold13} to complete the proof.  The argument proceeds by taking $n$ large enough and hence $J_t^{(n)}$ ``close'' enough to $J_t$ so that the existence of a (possibly broken) $J_t^{(n)}$-holomorphic curve between two orbit sets is sufficient for the existence of a (possibly broken) $J_t$-holomorphic curve as well.  More precisely, consider pairs $(\Gamma_+, \Gamma_-)$ where $\Gamma_\pm$ is an orbit set for $\alpha_\pm$ of action less than $L$.  Let $Z$ denote the set of pairs between which there exists a (possibly broken) $J_t$-holomorphic curve for some $t$, and likewise let $Z_n$ denote those pairs between which there exists a (possibly broken) $J_t^{(n)}$-holomorphic curve.

We will argue that for $n$ large enough $Z_n\subset Z$.  To see this, suppose that $(\Gamma_+, \Gamma_-)$ is a pair such that there exists a sequence $u_n$ of (possibly broken) $J_{t_n}^{(n)}$-holomorphic curves for some sequence $t_n$.  Pass to a subsequence so that $t_n$ converge to some fixed $t\in[0,1]$. Then $\set{J^{(n)}_{t_n}}$ converges to $J_t$ and we can apply Hutchings and Taubes~\cite[Lemma 6.8(a)]{HT_Arnold13} to see that the curves $u_n$ converge to a (possibly broken) $J_t$-holomorphic curve $u$ from $\Gamma_+$ to $\Gamma_-$.

To complete the proof, choose $n$ large enough so that $Z_n \subset Z$ and define the chain homotopy $H^L(X,\lambda_t)$ by composing $H_n$ with the isomorphisms $\Theta_{M,\alpha_\pm, J_\pm}$ on either side.
\end{proof}

\section{Cobordism maps on ECK}\label{cobordism_maps_on_ECK}

Before we are able to prove $J$-invariance of ECK as mentioned in \cref{embedded_contact_knot_homology}, we must see how exact symplectic cobordisms induce chain maps on ECK.  For the purposes of this thesis we will only consider cobordisms of the form $[c_-,c_+]\cross N$, but the results could easily be generalized to more general cobordisms.

\begin{defn}
If $N$ is a manifold with torus boundary, we say that $([c_-,c_+]\cross N, \lambda)$ is an \emph{exact symplectic cobordism} between contact forms $\alpha_+$ and $\alpha_-$ on $N$ if $d\lambda$ is a symplectic form on $[c_-,c_+]\cross N$ and $\restr{\lambda}{\set{c_\pm}\cross N} = \alpha_\pm$.
\end{defn}

Going forward, assume that $N$ is a mapping torus with a $p/q$-twist at the boundary and choose a corresponding transformation matrix $A$ as in \cref{contact_forms_and_rational_open_book_decompositions}, with which we identify $\del N$ and $T^2\cross\set{2}\subset T^2\cross[1,2]$.  

Let $([0,1]\cross N, \lambda)$ be an exact symplectic cobordism between forms $\alpha_+$ and $\alpha_-$ which are both extendable to a $p/q$-rational open book $M$.  Then under the pullback by $A$ they agree to infinite order at $y_1=2$ with
\[  f_\pm(y_1) \dd t_1 + g_\pm(y_1) \dd \theta_1, \]
where
\begin{align*}
f_\pm(y_1) &= a_\pm + c_\pm(y_1-2)^2,\quad\text{and} \\
g_\pm(y_1) &= b_\pm + c_\pm(y_1-2)
\end{align*}
for some $a_\pm,c_\pm>0$.  Finally, assume that $a_+>a_-$ and that, on $[0,1]\cross \del N\cross[2,2+\epsilon)$, $\lambda$ is given by a linear interpolation
\begin{equation} \label{exact_sympl_cob_near_del_N}
s\alpha_+ + (1-s)\alpha_-, 
\end{equation}
where here $s$ denotes the $[0,1]$ coordinate. 

In order to compose exact symplectic cobordisms later in the section, we will need the following technical definition.
\begin{defn}
Suppose that $\set{\lambda^{(i)}}$ are a collection of contact forms on $[0,1]\cross N$ of the above form.  We say that $\lambda^{(i)}$ are \emph{uniformly bounded at $\del N$ by the ray of slope $\mu$} if the ray in $\R_{>0}\cross\R$ from the origin of slope $\mu$ lies above the points $(a_\pm^{(i)}, b_\pm^{(i)})$ for every $i$. 
\end{defn}
Notice that since the values $a^{(i)}_\pm$ are always greater than 0, if the collection $\set{\lambda^{(i)}}$ is finite then there exists some ray which uniformly bounds all the $\lambda^{(i)}$.  If the collection is infinite then we require that the fractions $b^{(i)}_\pm/a^{(i)}_\pm$ are uniformly bounded above.

\begin{lemma} \label{extend_N_to_M}
Let $\lambda$ be a one-form on $[0,1]\cross N$ between $\alpha_+$ and $\alpha_-$ as above which is bounded at $\del N$ by the slope $\mu$.  Given any $\delta>0$, we can extend $\lambda$ to a one-form $\hat{\lambda}$ on $X=[0,1]\cross M$, where $M=V\union T^2\cross[1,2]\union N$, so that
\begin{itemize}
\item $([0,1]\cross M, \hat{\lambda})$ is an exact symplectic cobordism,
\item on each slice $\set{s}\cross T^2\cross[1,2]$, the form looks like those constructed in \cref{contact_form_on_no_mans_land_def} with the slopes of the Reeb vector fields no shallower than $\frac{1}{\delta}$, and
\item on a neighbourhood of $[0,1]\cross V$, the form looks like a product region, with the form on each slice $\set{s}\cross V$ equal to a constant multiple of $\alpha_{V,\mu}'$ as constructed in \cref{contact_form_on_V_defn}.
\end{itemize}
\end{lemma}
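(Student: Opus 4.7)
The plan is to construct $\hat{\lambda}$ slice-by-slice. For each $s \in [0,1]$ I would build a contact form $\beta_s$ on $M$ agreeing with $\restr{\lambda}{\set{s}\cross N}$ on $N$, and then define $\hat\lambda$ on $[0,1]\cross (M\sminus\mathrm{int}(N))$ by pulling back $\beta_s$ to each slice with no $ds$-component. Since the hypothesis $\lambda = s\alpha_+ + (1-s)\alpha_-$ near $[0,1]\cross\del N$ already has no $ds$-component, this extension will glue smoothly across the interface. On $V$ I would set $\beta_s = c_\delta(s)\alpha_{V,\mu}'$ with $c_\delta(s) := Ce^{ks}$ for suitable constants $C,k > 0$, so that the reparametrization $\rho = \ln c_\delta(s) = \ln C + ks$ directly exhibits a neighbourhood of $[0,1]\cross V$ as a product region in the sense of \cref{product_region_defn}. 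On the no man's land $T^2\cross[1,2]$ I would take $\beta_s$ to be the contact form produced by \cref{contact_form_on_no_mans_land_def}, with boundary parameters $(a_1(s), b_1(s), c(s)) = (sa_+ + (1-s)a_-, sb_+ + (1-s)b_-, sc_+ + (1-s)c_-)$ at $y_1 = 2$ forced by $\lambda$, and with terminal parameter $c_\delta(s)$ at $y_1 = 1$, depending smoothly on $s$. Here the point $(a_1(s), b_1(s))$ lies below the ray of slope $\mu$ for every $s$ by convexity of that half-plane together with the $\mu$-boundedness hypothesis on $\alpha_\pm$, so the construction of \cref{contact_form_on_no_mans_land_def} does indeed apply.

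The verification that $([0,1]\cross M, \hat\lambda)$ is an exact symplectic cobordism splits into three regions. On $[0,1]\cross N$ the condition $d\hat\lambda \wedge d\hat\lambda > 0$ holds by hypothesis. On $[0,1]\cross V$, a direct computation with $\hat\lambda = c_\delta(s)\alpha_{V,\mu}'$ gives $d\hat\lambda\wedge d\hat\lambda = 2c_\delta(s)c_\delta'(s)\,ds\wedge\alpha_{V,\mu}'\wedge d\alpha_{V,\mu}'$, which is positive since $c_\delta' > 0$ and $\alpha_{V,\mu}'$ is a contact form. On $[0,1]\cross T^2\cross[1,2]$, writing $\hat\lambda = f(s,y_1)\,dt_1 + g(s,y_1)\,d\theta_1$, a short computation shows the condition reduces to pointwise positivity of the Jacobian $f_{y_1}g_s - f_s g_{y_1}$. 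Smoothness to infinite order at $y_1 = 2$, where $\hat\lambda$ meets $\lambda$, is ensured by the standard matching conditions built into the definitions in \cref{contact_forms_on_3_pieces}.

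The main obstacle is ensuring this Jacobian positivity throughout the no man's land. Geometrically it demands that the smooth family of planar curves $y_1\mapsto (f(s,y_1),g(s,y_1))$ sweep monotonically across their union as $s$ varies, without crossing. At $y_1 = 2$ the endpoints $(a_1(s), b_1(s))$ move outward as $s$ increases because $a_+ > a_-$, and at $y_1 = 1$ the endpoints $(c_\delta(s),\mu c_\delta(s))$ move outward along the slope-$\mu$ ray because $c_\delta$ is strictly increasing, so the boundary behaviour is compatible with monotonic sweeping. To achieve non-crossing in the interior I would construct the family explicitly---for instance by fixing the $s=0$ and $s=1$ curves satisfying \cref{contact_form_on_no_mans_land_def} and interpolating between them via a smoothly varying radial homotopy---exploiting the considerable flexibility in \cref{contact_form_on_no_mans_land_def}, in particular the freedom in $c_\delta(s)$ and in the precise shape of the curves subject only to the slope bound $1/\delta$ and the prescribed parabolic behaviour near $y_1 \in \set{1, 2}$. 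The resulting family is smooth in $(s, y_1)$, has Reeb slopes no shallower than $1/\delta$ on every slice, and glues smoothly to the product region extension on $V$ by the matching condition $c_\delta = f_\delta(\cdot, 1)$.
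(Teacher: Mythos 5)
Your overall plan coincides with the paper's: construct a one-parameter family of contact forms on $M$ extending the given data on $N$, interpolate the boundary data $(a_\pm, b_\pm, c_\pm)$ linearly at $\del N$, put a constant multiple of $\alpha_{V,\mu}'$ on each slice of $V$, and fill the no man's land with forms of the type in \cref{contact_form_on_no_mans_land_def}. Your exponential scaling $c_\delta(s) = Ce^{ks}$ on $V$ differs only cosmetically from the paper's linear interpolation $sc'_+ + (1-s)c'_-$: both are monotone in $s$, both are equivalent under reparametrizing $s$, and both satisfy \cref{product_region_defn}. The computation reducing the exact-symplectic condition on the no man's land to positivity of a Jacobian in $(s,y_1)$ is also the same as the paper's.

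The genuine gap is in the resolution of that positivity condition. You correctly isolate the obstruction---the interpolating family of planar curves $y_1\mapsto(f(s,y_1),g(s,y_1))$ must sweep without crossing---but then discharge it with an appeal to an unspecified ``radial homotopy'' and ``considerable flexibility''. This is not a proof. Boundary compatibility (the endpoints at $y_1=1,2$ moving ``outward'' as $s$ increases) does not by itself force interior non-crossing, and a radial homotopy would in general destroy the parabolic matching conditions of \cref{contact_form_on_no_mans_land_def} near $y_1=1,2$, which must hold for every $s$ to allow gluing. The paper's proof does the concrete work: taking the linear interpolation $f_s = sf_+ + (1-s)f_-$, $g_s = sg_+ + (1-s)g_-$, it identifies the dominant term of $d\hat\lambda\wedge d\hat\lambda$ and then produces explicit quantitative bounds on the endpoint curves---a uniform lower bound $|f_+-f_-|>C_1$, a uniform upper bound $|g_+-g_-|<C_2$, a derivative lower bound $|g_\pm'|>\delta_1$, and the crucial constraint $|f_\pm'|<\tfrac{C_1\delta_1}{2C_2}$---and verifies that these can be achieved simultaneously with the boundary-matching and steep-slope conditions. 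Some such explicit construction is required; without it your argument does not close.
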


\begin{proof}
The idea of the proof is to extend the definition of $\lambda$ first to the region $[0,1]\cross T^2\cross[1,2]$ and then in turn to $[0,1]\cross V$.  The extension to $[0,1]\cross T^2\cross[1,2]$ will be of the form
\[ \hat\lambda = f_s(y_1) \dd t_1 + g_s(y_1) \dd \theta_1 \]
such that
\begin{itemize}
\item near $y_1=2$ the form is compatible with $\lambda$ under the identification with $[0,1]\cross N$;
\item near $y_1=1$ the form defines a product region with $(f_s(1),g_s(1))$ lying on the ray of slope $\mu$ from the origin; and
\item the derivative $d\hat\lambda$ is non-degenerate everywhere.
\end{itemize}
Furthermore, we will take the functions $f_s$ and $g_s$ to be linear interpolations:
\begin{align*}
f_s &=  sf_+ + (1-s)f_-\quad\text{and} \\
g_s &=  sg_+ + (1-s)g_-,
\end{align*}
where $f_\pm$ and $g_\pm$ are to be determined.
We can therefore write
\[\begin{split}
d\hat\lambda = & \big(f_+(y)-f_-(y)\big)\dd s \wedge dt + \big(g_+(y)-g_-(y)\big)\dd s\wedge d\theta \\
         & + \big(sf_+'(y) + (1-s)f_-'(y)\big) \dd y\wedge dt + \big(sg_+'(y) + (1-s)g'_-(y)\big) \dd y\wedge d\theta,
\end{split}\]
where we are dropping the subscripts from the notation for simplicity, and using coordinates
\[ (s,t,\theta,y) \]
for $[0,1]\cross T^2\cross[1,2]$.

The form $d\hat\lambda$ is a symplectic form if it is non-degenerate:
\begin{equation}\label{d_alpha_wedged_with_itself_eqn}\begin{split}
 0 &< d\hat\lambda\wedge d\hat\lambda \\
   &= 2\big(f_-(y)-f_+(y)\big)\big(sg_+'(y) + (1-s)g_-'(y)\big) \dd s\wedge dt\wedge d\theta \wedge dy \\
	 & - 2\big(g_+(y)-g_-(y)\big)\big(sf'_+(y) + (1-s)f_-'(y)\big) \dd s\wedge dt\wedge d\theta\wedge dy.
\end{split}\end{equation}
We will choose $f_\pm$ such that $f_-<f_+$ and $g_\pm$ so that $g_\pm'<0$.  Then the first summand of $d\hat\lambda\wedge d\hat\lambda$ above is positive.  We must ensure that in addition $f_\pm$ and $g_\pm$ are chosen such that the second summand is small enough to guarantee $d\hat\lambda\wedge d\hat\lambda>0$.

The behaviour of the form $\hat\lambda$ near $y=2$ is determined by the fact that it must be compatible under the gluing map $A$ with the form on $[0,1]\cross N$ near $[0,1]\cross\del N$.  The functions $f_\pm$ and $g_\pm$ must therefore be chosen of the form
\begin{equation}\label{forms_near_y_equals_2}\begin{split}
f_\pm(y) &= a_\pm+c_\pm(2-y)^2 \\
g_\pm(y) &= b_\pm+c_\pm(2-y)
\end{split}\end{equation}
near $y=2$, where the constants $a_\pm$, $b_\pm$ and $c_\pm$ are taken from the discussion above.

We also want the positive end of $\hat\lambda$ to be a constant multiple of the negative end near $y=1$ and so that the points $(f_\pm(1), g_\pm(1))$ lie on the ray of slope $\mu$ from the origin.  To ensure compatibility when gluing with the product region on $[0,1]\cross V$, the functions $f_\pm$ and $g_\pm$ must take the form
\[ \big(f_\pm(y),g_\pm(y)\big) = c'_\pm\cdot\big(  1-(1-y)^2 , \mu+(1-y) \big)  \]
near $y=1$ for some $0<c'_-<c'_+$.

Notice that even with the above constraints $f_\pm:[1,2]\to\R$ can be chosen so that $f_\pm(y)$ remains arbitrarily close to $f_\pm(2)$, and hence we can assume that when we choose the functions $f_\pm$ they will have the uniform lower bound
\[ |f_+(y) - f_-(y)| > C_1, \]
for some $C_1>0$.  In addition the functions $g_\pm:[1,2]\to\R$ can be chosen with some uniform upper bound
\[ |g_+(y) - g_-(y)| < C_2, \]
for some $C_2>0$.  Finally, we can assume that when $g_\pm$ are chosen we have $|g_\pm'(y)|>\delta_1$, for some $\delta_1>0$.

Now choose the functions $f_\pm$, $g_\pm:[1,2]\to\R$ so that
\begin{itemize}
\item $|f_\pm'(y)| < \frac{C_1\delta_1}{2C_2}$,
\item the slopes $\frac{g'_\pm(y)}{f'_\pm(y)}$ are never shallower than $\frac{1}{\delta}$,
\item $f_\pm$ and $g_\pm$ satisfy the above gluing requirements near $y=1$ and $y=2$, and
\item $f_\pm$, $g_\pm$ and $g_\pm'$ satisfy the bounds above concerning $C_1$, $C_2$ and $\delta_1$.
\end{itemize}
Then it is simple to check using \cref{d_alpha_wedged_with_itself_eqn} that $d\hat\lambda\wedge d\hat\lambda >0$ and hence we have an exact symplectic cobordism.  See \cref{one-form_lambda_on_no_mans_land}.  Finally define $\hat\lambda$ on $[0,1]\cross V$ by the equation
\[  s (c'_+ \alpha_{V,\mu}') + (1-s) (c'_- \alpha_{V,\mu}') \]
which defines a product region (recall \cref{product_region_defn}).  Furthermore, all gluing requirements are met so we obtain an exact symplectic cobordism 
\[ ([0,1]\cross M, \hat\lambda) \]
satisfying all the requirements of the lemma.
\end{proof}

\begin{figure}\centering
  \begin{tikzpicture}%
		\draw [->] (0,0) -- (0,4.8);
		\node [above left] at (0,4.8)  {$g$};
		\node [below right] at (4.8,0)  {$f$};
		\draw [->] (0,0) -- (4.8,0);
		
		\fill[lightgray] (1.8,0.5) to [out=90,in=-90] (2.2,2.2) to (3.8,3.8) to[out=-90,in=90] (3.2,0.8) to cycle;
		\draw (0,0)--(4,4);
		\node [above] at (4,4) {slope $=\mu$};
		\draw [very thick] (1.8,0.5) to [out=90,in=-90] (2.2,2.2);
		\draw [very thick] (3.2,0.8) to[out=90,in=-90] (3.8,3.8);
		\draw (1.8,0.5) to (3.2,0.8);
		
		\node [right] at (3.4,2) {$(f_+,g_+)$};
		\fill [white] (1.1,1.1) rectangle (1.5,1.5);
		\node [left] at (2,1.3) {$(f_-,g_-)$};
		\draw [fill] (3.2,0.8) circle (0.1) node [below right] {$(a_+,b_+)$};
		\draw [fill] (1.8,0.5) circle (0.1) node [below] {$(a_-,b_-)$};
		
	\end{tikzpicture}
  \caption{A diagram showing the construction of the one form $\lambda$ on $[0,1]\cross T^2\cross[1,2]$.  The left (resp.\ right) hand side of the shaded region is the path $(f_-,g_-)$ (resp.\ $(f_+,g_+)$), and the form interpolates linearly between these within the shaded region.  The image of curve $(f_s,g_s)$ moves strictly to the left as $s$ increases from 0 to 1.  Furthermore the form defines a product region near the ray of slope $\mu$ when $y$ is near 1, and obeys \cref{forms_near_y_equals_2} when $y$ is near 2.}
	\label{one-form_lambda_on_no_mans_land}
\end{figure}
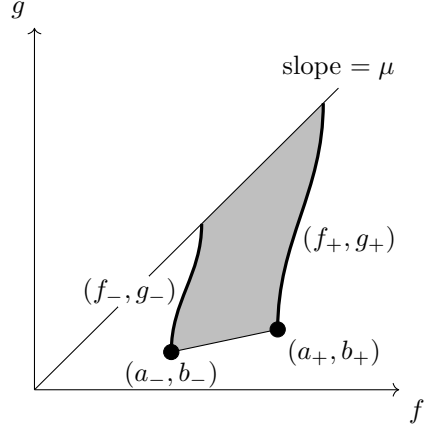

We must take a moment to discuss non-degenerate perturbations of the forms constructed above.  First note that given $L>0$ we can take $\delta>0$ small enough so that all Reeb orbits of $\hat\alpha_\pm$ in the region $T^2\cross(0,1)\subset M$ have $\hat\alpha_\pm$-action greater than $L$.  

We therefore obtain an exact symplectic cobordism $(X,\hat\lambda)$ between $\hat\alpha_+$ and $\hat\alpha_-$ where $\hat\alpha_\pm$ are both Morse-Bott contact forms with (at most) two Morse-Bott tori of action less than $L$, namely at $\del V$ and $\del N$.  Note that $X$ cannot induce a chain map yet since the construction in \cref{cobordism_chain_map_def} requires an exact symplectic cobordism between two honestly $L$-non-degenerate contact forms.  With this in mind, let $\hat\alpha_\pm'$ be $L$-non-degenerate forms on $M$, constructed by perturbing $\hat\alpha_\pm$ on small neighbourhoods of $\del V$ and $\del N$, following \cref{Morse-Bott_contact_homology}. Then update the exact symplectic cobordism $(X,\hat\lambda)$ to obtain an exact symplectic cobordism $(X,\hat\lambda')$ such that $\hat\lambda'$  defines a product region on some neighbourhood of $[0,1]\cross V$.

We will state the results of the above discussion as a theorem:

\begin{thm}\label{alpha_extension_thm}
Let $L>0$ and suppose that $([0,1]\cross N, \lambda)$ is an exact symplectic cobordism between $(N,\alpha_+, J_+)$ and $(N,\alpha_-, J_-)$, where $\alpha_\pm$ are Morse-Bott on $\del N$ and $L$-non-degenerate on $\mathrm{int}(N)$.  Assume that near $[0,1]\cross\del N$, $\lambda$ is of the form described at the start of \cref{cobordism_maps_on_ECK}.  Let $M=V\union T^2\cross[1,2] \union N$.  Then we can extend $\lambda$ to form an exact symplectic cobordism $([0,1]\cross M, \hat\lambda)$ and also obtain a perturbed version $([0,1]\cross M, \hat\lambda')$, with corresponding cobordism-admissible almost complex structures $\hat J$ and $\hat J'$ for the completions respectively, such that the following hold:
\begin{enumerate}
\item $\hat\alpha'_\pm$ are $L$-non-degenerate everywhere, and $\hat\alpha_\pm$ are $L$-non-degenerate everywhere except for two Morse-Bott tori at $\del V$ and $\del N$.
\item Neither $\hat\alpha_\pm$ nor $\hat\alpha'_\pm$ have any orbits of length less than $L$ in the no-man's land $T^2\cross(1,2)$.
\item Both $\hat\lambda$ and $\hat\lambda'$ are product regions on a neighbourhood of $[0,1]\cross V$.
\item\label{non_degen_approximation_to_MB} If there is a (possibly broken) $\hat J'$-holomorphic curve between two orbit sets $\Gamma_+$ and $\Gamma_-$ then there is a corresponding (possibly broken) $\hat J$-holomorphic Morse-Bott building between $\Gamma_+$ and $\Gamma_-$. 
\end{enumerate}
\end{thm}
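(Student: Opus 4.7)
The plan is to handle the theorem in three stages: (a) build $(X,\hat\lambda)$ and its cobordism-admissible $\hat J$, (b) construct the non-degenerate perturbation $(X,\hat\lambda',\hat J')$, (c) verify the compactness-type property (4) by choosing the perturbation parameter sufficiently small.

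For stage (a) I would invoke \cref{extend_N_to_M} directly to extend $\lambda$ across $[0,1]\cross(V\union T^2\cross[1,2])$. The free parameter there is the slope bound $1/\delta$ in the no-man's land; as $\delta\to 0$ the Reeb slopes on each slice $\{s\}\cross T^2\cross[1,2]$ become arbitrarily steep, and consequently the action of every closed Reeb orbit of $\hat\alpha_\pm$ inside $T^2\cross(1,2)$ tends to infinity (this is exactly the reasoning opening \cref{computing_ech_via_a_filtration_argument}). Choosing $\delta$ small enough thus makes the no-man's land orbit-free up to action $L$, which gives (2) for $\hat\lambda$, while the product-region clause of \cref{extend_N_to_M} gives (3). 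Choose $\hat J$ on the completion $\bar X$ to be cobordism-admissible and regular.

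Stage (b) consists of perturbing the two Morse-Bott tori at $\del V$ and $\del N$ on each boundary slice. I apply \cref{perturbation_fn_g} to each end, producing perturbed Morse-Bott functions $g_\pm$ with arbitrarily small support near $\del V\union \del N$ and perturbation parameter $\epsilon>0$, yielding $\hat\alpha'_{\pm,\epsilon}=(1+\epsilon g_\pm)\hat\alpha_\pm$ which are $L$-non-degenerate. I then modify $\hat\lambda$ inside narrow collars of $\{0\}\cross M$ and $\{1\}\cross M$ so that it restricts to $\hat\alpha'_{\pm,\epsilon}$ at the boundary; interpolating via \cref{exact_sympl_cob_near_del_N} and keeping the slice-wise contact condition holds for $\epsilon$ small. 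Since $g_\pm$ vanish on $V$, the product region of (3) is unaffected, establishing (3) for $\hat\lambda'$. Pick $\hat J'$ cobordism-admissible and regular, equal to $\hat J$ outside the collars.

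The essential content is (4). My plan is to show it holds once the parameter $\epsilon$ is taken small enough, by the cobordism analogue of \cref{MB_building_exists_epsilon_small_enough}. Arguing by contradiction, suppose that for a sequence $\epsilon_i\to 0$ we find orbit sets $\Gamma_+^{(i)},\Gamma_-^{(i)}$ of $\hat\alpha'_{\pm,\epsilon_i}$-action less than $L$ admitting a (possibly broken) $\hat J'_{\epsilon_i}$-holomorphic curve $u_i$ in $\bar X$, while no (possibly broken) $\hat J$-holomorphic Morse-Bott building exists between them. Since the orbits of action less than $L$ are canonically identified with those of $\hat\alpha_\pm$ as in \refListInThm{L-flat_approx}{isomorphic_chain_complexes}, and the set of such orbit sets is finite, after passing to a subsequence we may assume $(\Gamma_+^{(i)},\Gamma_-^{(i)})=(\Gamma_+,\Gamma_-)$ is constant. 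The curves $u_i$ have uniformly bounded Hofer energy (bounded by the actions of $\Gamma_\pm$), and $(\hat\lambda'_{\epsilon_i},\hat J'_{\epsilon_i})\to(\hat\lambda,\hat J)$ in $\mathcal{C}^\infty$, so the SFT compactness theorem applied in $\bar X$ produces a subsequential limit which is a (possibly broken) $\hat J$-holomorphic Morse-Bott building in $\bar X$ between $\Gamma_+$ and $\Gamma_-$, contradicting the assumption.

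The hard step is this cobordism compactness argument. Its symplectization counterpart is \cref{MB_building_exists_epsilon_small_enough}; the extension to the completion $\bar X$ is routine because both ends are modelled on symplectizations and outside the perturbation collars nothing depends on $i$. The only subtlety is ensuring that limiting behaviour at the Morse-Bott tori (which lie in the symplectization ends, not in the interior of $X$) organises itself via gradient flow segments on the perturbation tori in the standard Morse-Bott building fashion; this is exactly the content of Bourgeois-style Morse-Bott compactness applied at each end of $\bar X$. Once this is available, the proof is complete.
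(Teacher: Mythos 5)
Your proposal is correct and follows essentially the same route as the paper: points (1)--(3) come from \cref{extend_N_to_M} with $\delta$ chosen small enough to push all no-man's-land orbit actions above $L$, followed by the Morse-Bott perturbation of \cref{perturbation_fn_g} at $\del V$ and $\del N$, and point \cref{non_degen_approximation_to_MB} comes from a cobordism version of \cref{MB_building_exists_epsilon_small_enough}. The paper simply cites Colin--Ghiggini--Honda for that last compactness step, whereas you spell out the contradiction/SFT-compactness argument explicitly; the content is the same.
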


We will say that such a cobordism $([0,1]\cross N,\lambda)$ is \emph{extendable to a cobordism between ($p/q$)-rational open books} $X=[0,1]\cross M$ (or simply \emph{extendable} for short) and will call $([0,1]\cross M, \hat\lambda)$ the \emph{$L$-Morse-Bott extension to $X$} and $([0,1]\cross M, \hat\lambda')$ the \emph{$L$-non-degenerate extension to $X$}.  All points in the theorem follow from \cref{extend_N_to_M} and the discussion above except the last---point \cref{non_degen_approximation_to_MB} follows from applying a version of \cref{MB_building_exists_epsilon_small_enough} in the cobordism setting (c.f.~\cite[Section 9.2]{CGH_ECH_OBD}).

For the next proposition, recall that $\eta$ is the function used to define the filtration on ECC in \cref{computing_ech_via_a_filtration_argument}, and counts how many times an orbit set winds around inside $V$ in the $\theta_0$-direction.

\begin{prop}\label{cobordism_respects_eta_filtration}
Let $L>0$ and suppose that $(X,\hat\lambda')$ is an $L$-non-degenerate extension of the form constructed in \cref{alpha_extension_thm}.  Let $\Gamma_+\in ECC^L(M,\hat\alpha'_+,\hat J'_+)$ be an orbit set with $\eta(\Gamma_+)=0$, i.e.~$\Gamma_+=e_+^i h_+^j \Gamma_+'$ where $\Gamma_+'$ is constructed from orbits in $N$.  Then $\eta(\hat{\Phi}^L(X,\hat \lambda')(\Gamma_+))=0$.  Furthermore, if $(X,\hat \lambda'_t)$ is a homotopy of such cobordisms then the same result holds for the induced chain homotopy $H^L(X,\hat \lambda'_t)$.
\end{prop}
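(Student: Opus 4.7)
The plan is to mimic the filtration argument from \cref{a_sequence_of_filtrations_on_the_ECH_complexes}, in which slope calculus and positivity of intersection in a symplectization were used to show that the ECC differential preserves the $\mathcal{F}_i$-filtration, but now carried out inside the product region of the cobordism. The crucial observation is that, since $\hat\lambda'$ (and $\hat\lambda$) defines a product region on a neighbourhood of $[0,1]\cross V$, the completion of the cobordism contains an open subset symplectomorphic to a piece of the symplectization $\R\cross V$, and hence the 3-dimensional topological constraints of \cref{top_constraints_on_holo_curves,topological_constrants_in_the_MB_settting} apply verbatim to curves restricted to that region.

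First I would invoke part~\cref{chain_map_holo_curves} of \cref{supported_on_holo_curves} to produce, for every orbit set $\Gamma_-$ with $\langle \hat{\Phi}^L(X,\hat\lambda')(\Gamma_+), \Gamma_-\rangle \neq 0$, an index $0$ (possibly broken) $\hat J'$-holomorphic curve $u$ in the completion of $(X,\hat\lambda')$ from $\Gamma_+$ to $\Gamma_-$. Applying point~\cref{non_degen_approximation_to_MB} of \cref{alpha_extension_thm} gives a corresponding (possibly broken) $\hat J$-holomorphic Morse-Bott building $\tilde u$ from $\Gamma_+$ to $\Gamma_-$ in the completion of the Morse-Bott extension $(X,\hat\lambda)$. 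Writing $\Gamma_\pm = \gamma_\pm \Gamma_\pm^{(N)}$ where $\gamma_\pm$ is the $V$-part and $\Gamma_\pm^{(N)}$ is the $N$-part, the hypothesis $\eta(\Gamma_+)=0$ becomes $\gamma_+ = e_+^{i} h_+^{j}$ for some $i,j\ge 0$, so $[\gamma_+]$ lies entirely in the meridianal direction.

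Next I would pick, exactly as in \cref{a_sequence_of_filtrations_on_the_ECH_complexes}, a torus $T_{\rho_i}\subset \mathrm{int}(V)$ of large irrational slope $r_i$ such that all orbits of $\gamma_+$ and $\gamma_-$ are contained in $\set{y_0<\rho_i}$ and such that $\tilde u$ has no ends on $T_{\rho_i}$. Applying \cref{slope_calculus} to $\tilde u$ inside the product region yields
\[ [\tilde u_{T^2\cross\set{\rho_i}}] = [\gamma_-] - [\gamma_+] \in H_1(T_{\rho_i};\Z), \]
and \cref{positivity_of_intersections_3} then gives $[\tilde u_{T^2\cross\set{\rho_i}}] \cdot r_i \ge 0$. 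Expanding this pairing in the basis $([m],[l])$ of $H_1(T_{\rho_i};\Z)$, where $[m]$ is the meridianal direction and $[l]$ winds once around the core $K$, this inequality becomes
\[ k - r_i\bigl(\eta(\gamma_-) - \eta(\gamma_+)\bigr) \ge 0, \]
for some integer $k\ge 0$ counting the total wind of $\gamma_-\cup\gamma_+$ around $K$. Since $r_i$ can be made arbitrarily large by choosing $\rho_i\to 1$, we conclude $\eta(\gamma_-)\le \eta(\gamma_+)=0$, and as $\eta$ is non-negative we obtain $\eta(\Gamma_-)=0$ as required.

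For the chain homotopy statement, part~\cref{homotopies_holo_curves} of \cref{supported_on_holo_curves} produces, for every nonzero matrix entry of $H^L(X,\hat\lambda'_t)$, a (possibly broken) $\hat J'_{t_0}$-holomorphic curve for some $t_0\in[0,1]$; since the homotopy preserves the product region structure near $[0,1]\cross V$ throughout, the slope calculus argument above applies uniformly at that $t_0$, and the same conclusion follows. The main technical subtlety will be justifying cleanly that slope calculus and positivity of intersection can be invoked within the product region of a cobordism despite the fact that $(X,\hat\lambda')$ is not globally a symplectization; this amounts to observing that the relevant portion of the completion is symplectomorphic to an open subset of $\R\cross V$ and that the restrictions of $\tilde u$ to this region, together with its asymptotic ends at $\gamma_\pm$ and any exit curves through $\R\cross \del V$, satisfy the hypotheses of the 3-dimensional propositions.
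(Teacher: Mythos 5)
Your argument is correct and follows essentially the same route as the paper: pass to a Morse--Bott building via \cref{supported_on_holo_curves} and \refListInThm{alpha_extension_thm}{non_degen_approximation_to_MB}, then apply slope calculus and positivity of intersection at a torus $T_{\rho_i}$ of large irrational slope inside the product region over $V$ (the paper phrases this as a contradiction with $\eta(\Gamma_-)>0$, which is an immaterial difference). The one (harmless) imprecision is the claim that the orbits of $\gamma_+=e_+^ih_+^j$ lie in $\set{y_0<\rho_i}$ --- they sit on $\del V$ itself and so do not actually contribute to the slice $[\tilde u_{T_{\rho_i}}]$ --- but since $[e_+]$ and $[h_+]$ are purely meridianal this only shifts the integer coefficient $k$ and leaves the conclusion unchanged.
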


\begin{proof}
The proof is the same for the chain map and the homotopy so we will only prove the proposition for the chain map.

Suppose that $\langle \hat{\Phi}^L(X,\hat \lambda')(\Gamma_+), \Gamma_- \rangle \neq 0$ and that $\eta(\Gamma_-)>0$.  Then by \cref{supported_on_holo_curves} there exists a (possibly broken) $\hat J'$-holomorphic curve in the completion $\bar{X}$ with positive end at $\Gamma_+$ and negative end at $\Gamma_-$.  Hence by \refListInThm{alpha_extension_thm}{non_degen_approximation_to_MB} there exists a (possibly broken) $\hat J$-Morse-Bott building $u$ between these two orbit sets.  Write $\Gamma_-=\gamma\tensor\Gamma_-'$ where $\gamma$ is constructed from orbits in $V$ and $\Gamma_-'$ is constructed from orbits in $N$.  Denote by $\lambda$ the longitude of $K$ in the $\theta_0$-direction and $d$ the meridian of $K$ oriented in the $t_0$-direction so that $d\cdot \lambda=1$.  We will apply slope calculus (\cref{slope_calculus}) to $u$ at the torus $T_{\rho_i}$, which recall from \cref{contact_forms_on_3_pieces} is the torus in $V$ at radius ${\rho_i}$ and is foliated by Reeb trajectories of $\alpha_V'$ with irrational slope $s=d+\epsilon_i\lambda$.  Take ${\rho_i}$ sufficiently close to 1 so that no orbits of $\gamma$ lie between $T_{\rho_i}$ and $\del V$. Then by slope calculus
\[ [u_{T_{\rho_i}}] = [\gamma] = \eta(\Gamma_-)\lambda + kd \]
for some $k\in\Z$, and
\[ [\gamma]\cdot s = (\eta(\Gamma_-)\lambda+kd)\cdot (d+\epsilon_i\lambda) = -\eta(\Gamma_-) + k\epsilon_i, \]
which, for $\rho_i$ close to 1 and hence $\epsilon_i$ close to 0, is strictly less than 0 and hence contradicts positivity of intersection (\cref{positivity_of_intersections_3}), which applies since $\hat{\lambda}'$ defines a product region on $[0,1]\cross V$.
\end{proof}

As a result of the above proposition both chain maps and chain homotopies of the above form restrict to maps on the lowest $\eta$-filtration level of the ECC complexes, so we obtain maps
\[ ECC\orbits{0he}{eh}(\mathrm{int}(N),\alpha_+,J_+;L) \to ECC\orbits{0he}{eh}(\mathrm{int}(N),\alpha_-,J_-;L). \]

We are now able to show that such maps respect the bi-filtrations on these complexes.

\begin{lemma}\label{cobordism_maps_behave_nicely_on_V}
Let $L>0$ and suppose that $(X,\hat\lambda')$ is an $L$-non-degenerate extension of $([0,1]\cross N,\lambda)$ of the form constructed in \cref{alpha_extension_thm}.
\begin{enumerate}
\item \label{chain_map_respects_filtrations}The map
\[ \hat{\Phi}^L(X,\hat\lambda') : ECC\orbits{0he}{eh}(\mathrm{int}(N),\alpha_+,J_+;L) \to ECC\orbits{0he}{eh}(\mathrm{int}(N),\alpha_-,J_-;L) \]
has the property that if $\langle\hat{\Phi}^L(X,\hat\lambda')(\Gamma_+), \Gamma_-\rangle \neq 0$ then
\begin{enumerate}
\item\label{alex_grading} $A(\Gamma_-)=A(\Gamma_+)$,
\item\label{power_of_plus_orbits} the powers of $e_+$ and $h_+$ in $\Gamma_-$ are equal to their respective powers in $\Gamma_+$, and
\item\label{power_of_minus_orbits} the powers of $e_-$ and $h_-$ in $\Gamma_-$ are greater than or equal to their respective powers in $\Gamma_+$.
\end{enumerate}
\item\label{chain_homotopy} Assume that $H^L(X,\hat\lambda'_t)$ is a chain homotopy induced by a homotopy of such cobordisms, and $\langle H^L(X,\hat\lambda'_t)(\Gamma_+), \Gamma_- \rangle \neq 0$.  Then
\begin{enumerate}
\item\label{homotopy_A_grading} $A(\Gamma_-)\le A(\Gamma_+)$,
\item\label{homotopy_plus_orbits} the powers of $e_+$ and $h_+$ in $\Gamma_-$ are less than or equal to their respective powers in $\Gamma_+$, and
\item\label{homotopy_minus_orbits} the powers of $e_-$ and $h_-$ in $\Gamma_-$ are greater than or equal to their respective powers in $\Gamma_+$.
\end{enumerate}
\end{enumerate}
\end{lemma}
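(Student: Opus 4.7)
The plan is to reduce to the Morse-Bott setting via \cref{supported_on_holo_curves,alpha_extension_thm} and then exploit slope calculus, positivity of intersection and the Trapping Lemma inside the product region $[0,1]\times V$ and near the Morse-Bott torus $\del N$.

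First I would set up both parts in parallel. If $\langle\hat\Phi^L(X,\hat\lambda')(\Gamma_+),\Gamma_-\rangle\neq 0$ (resp.\ $\langle H^L(X,\hat\lambda'_t)(\Gamma_+),\Gamma_-\rangle\neq 0$), then \cref{supported_on_holo_curves} produces a (possibly broken) index $0$ (resp.\ index $-1$) $\hat J'$-holomorphic curve in $\bar X$ from $\Gamma_+$ to $\Gamma_-$, and \refListInThm{alpha_extension_thm}{non_degen_approximation_to_MB} then yields a corresponding (possibly broken) $\hat J$-holomorphic Morse-Bott building $u$ between the same orbit sets. By \cref{cobordism_respects_eta_filtration}, $\eta(\Gamma_-)=0$, so we can write $\Gamma_\pm=e_+^{i_\pm}h_+^{j_\pm}\Gamma_\pm'$ where $\Gamma_\pm'$ is built from orbits in $\mathrm{int}(N)\cup\set{e_-,h_-}$.

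The main step is controlling $i_\pm,j_\pm$ via slope calculus inside the product region $[0,1]\times V$. Since $\hat\lambda'$ defines a product region on a neighbourhood of $[0,1]\times V$, the piece of $u$ in that region behaves like a holomorphic curve in a symplectization, so \cref{slope_calculus,positivity_of_intersections_3,blocking_lemma} all apply there. I would choose an irrationally foliated torus $T_\rho\subset V$ close enough to $\del V$ that no orbits of the building lie in $V\sminus T_\rho$ other than at $\del V$ itself, and compute $[u_{T_\rho}]$ via slope calculus as a signed expression involving $i_\pm,j_\pm$ and the homology classes of $e_+,h_+$. Pairing against the irrational slope of $T_\rho$ and invoking positivity of intersection forces the pairing to be non-negative; an analogous argument using a small translate of $T_\rho$ in the $s$-direction yields the reverse inequality. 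In the chain map case this gives equality $i_+=i_-$, $j_+=j_-$, establishing part (1)(b); in the chain homotopy case the single-$t$ nature of the argument weakens the conclusion to $i_-\le i_+$ and $j_-\le j_+$, giving part (2)(b).

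For the $e_-,h_-$ multiplicities (parts (1)(c) and (2)(c)) I would apply the Trapping Lemma for cobordisms (\cref{trapping_lemma_cobordisms}) at the negative Morse-Bott torus $\del N$. One-sided positive ends at $\del N\subset M_+$ are forbidden, while one-sided negative ends at $\del N\subset M_-$ are allowed, producing an asymmetry which shows the multiplicities of $e_-$ and $h_-$ can only increase in passing from $\Gamma_+$ to $\Gamma_-$. The Alexander grading statements (parts (1)(a) and (2)(a)) then follow from the fact that $A(e_-)=A(h_-)=0$, the bounds on $i_\pm,j_\pm$ already established, and the observation that for the $\mathrm{int}(N)$-part of $u$ the induced relation $[\Gamma_+']=[\Gamma_-']$ in $H_1(\mathrm{int}(N))$, together with the intersection pairing against $\Sigma\times\set{0}$, forces preservation (resp.\ non-increase) of the Alexander contribution from the interior.

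The main technical obstacle I anticipate is the careful application of slope calculus and positivity of intersection within the product region of a cobordism, rather than in an honest symplectization, and the bookkeeping needed to extract separate bounds for $e_+$ and $h_+$ multiplicities from a single homology class computation. The chain homotopy variant is slightly more delicate because arguments must be run at a single parameter $t$ where the curve exists, which is what softens the equalities of part (1) into the inequalities of part (2).
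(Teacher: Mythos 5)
Your overall skeleton --- passing to a (possibly broken) $\hat J$-holomorphic Morse--Bott building via \cref{supported_on_holo_curves} and \cref{alpha_extension_thm}, exploiting the product region over $V$, and applying the \refNamedThm{Trapping Lemma for cobordisms}{trapping_lemma_cobordisms} at $\del N$ for parts (1)(c) and (2)(c) --- matches the paper's proof. But the central step, part (1)(b), does not go through as you describe. Slope calculus and positivity of intersection are purely homological: $e_+$ and $h_+$ are two orbits of the \emph{same} Morse--Bott family $\del V$ and represent the same class in $H_1$, so any computation of $[u_{T_\rho}]$ can only control the combined multiplicity $i_\pm+j_\pm$, never the individual powers of $e_+$ and $h_+$. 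Moreover the proposed ``reverse inequality via a small translate of $T_\rho$ in the $s$-direction'' is not an argument: translating in $s$ inside the product region produces the same slice of $u_M$ and hence the same inequality, not its opposite. The paper instead uses the index-$0$ constraint decisively: after the \refNamedThm{Blocking Lemma}{blocking_lemma} and the action bound confine $u$ to $V\union N$, the part of $u$ lying over the product region $[0,1]\cross V$ sits in an honest symplectization, so by \cref{index_0_1_and_2_holo_curves} it must be a union of trivial cylinders; hence $\gamma_+=\gamma_-$ as orbit sets, which is strictly stronger than any homological statement and gives (1)(b) at once. For (2)(b), where the index constraint is lost, the paper applies the Trapping Lemma at the \emph{positive} torus $\del V$ (one-sided ends there must be positive, so $e_+,h_+$ multiplicities can only decrease), again not slope calculus.

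Two smaller points. Your premise $A(e_-)=A(h_-)=0$ is false: these orbits are meridians at $\del N$ and carry Alexander grading $q$ (equal to $1$ for an integral open book), so the Alexander statements cannot be reduced to the $e_+,h_+$ bounds alone. The paper derives (1)(a) from the fact that the $N$-part of $u$ is a relative homology between $\Gamma'_+$ and $\Gamma'_-$ inside $N$ (once the $V$-part is known to consist of trivial cylinders), and (2)(a) from positivity of intersection of $u$ with $\R\cross K$ for $K=\set{y_0=0}\subset V$, which is cleaner than tracking the interior contribution separately.
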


\begin{proof}
Assume that $\langle \hat{\Phi}^L(X,\hat\lambda')(\Gamma_+), \Gamma_-\rangle \neq 0$ so there exists an index 0 (possibly broken) $\hat J'$-holomorphic curve between $\Gamma_+$ and $\Gamma_-$ by \cref{supported_on_holo_curves} and hence an index 0 (possibly broken) $\hat J$-holomorphic Morse-Bott building $u$ between $\Gamma_+$ and $\Gamma_-$ by \refListInThm{alpha_extension_thm}{non_degen_approximation_to_MB}.  Write $\Gamma_\pm = \gamma_\pm \tensor \Gamma'_\pm$, where $\gamma_\pm$ are contained in $V$ and $\Gamma'_\pm$ are contained in $N$.

Since $\hat\lambda'$ defines a product region on a neighbourhood of $\del V$, we apply the \refNamedThm{Blocking Lemma}{blocking_lemma} to see that the holomorphic part of $u$ is supported away from $\del V$.  Furthermore, since no Reeb orbits in $T^2\cross(1,2)$ have action less than $L$ it must be that $u$ is supported in $V\union N$.  But then $\restr{\hat\lambda}{[0,1]\cross V}$ is a product region so the part of $u$ contained in $V$ consists solely of trivial cylinders and hence $\gamma_+ = \gamma_-$.  This implies part \cref{power_of_plus_orbits} of the lemma.

On the other hand, the existence of the part of $u$ in $N$ means that $A(\Gamma'_+)=A(\Gamma'_-)$, and hence $A(\Gamma_+)=A(\Gamma_-)$.

The \refNamedThm{Trapping Lemma for cobordisms}{trapping_lemma_cobordisms} applied to $u$ at $\del N$ implies that, aside from trivial cylinders, only negative ends of $u$ can approach $\del N$, so part \cref{power_of_minus_orbits} holds.

The proof of part \cref{chain_homotopy} is similar, since the chain homotopy is also supported on (possibly broken) holomorphic curves by \cref{supported_on_holo_curves}.  The statement is slightly weaker since we have lost the index 0 constraint, however we can still apply the Trapping Lemma for cobordisms, this time at $\del V$ as well as $\del N$, to see that powers of orbits in $\del V$ can only decrease (hence part \cref{homotopy_plus_orbits} holds), while powers of orbits in $\del N$ can only increase (hence part \cref{homotopy_minus_orbits} holds).  Part \cref{homotopy_A_grading} holds by applying positivity of intersection (\cref{positivity_of_intersections_3}) to the knot $K=\set{y_0=0}\subset V$.
\end{proof}

The following theorem is an immediate corollary of the above lemma.

\begin{thm}\label{maps_on_ECK}
Suppose that $([0,1]\cross N,\lambda)$ is an extendable exact symplectic cobordism between $(N,\alpha_+, J_+)$ and $(N,\alpha_-, J_-)$, and that $([0,1]\cross N,\lambda_t)$ is a homotopy of such cobordisms.  Let $L>0$ and extend these cobordisms by \cref{alpha_extension_thm} to obtain cobordisms $(X=[0,1]\cross M,\hat\lambda')$ and $(X,\hat\lambda'_t)$ respectively.  Then the induced chain map $\hat{\Phi}^L(X,\hat\lambda')$ and chain homotopy $H^L(X,\hat\lambda'_t)$ yield maps:
\[ ECC\orbits{0he}{eh}(\mathrm{int}(N),\alpha_+, J_+;L) \to ECC\orbits{0he}{eh}(\mathrm{int}(N),\alpha_-, J_-;L) \]
which respect the Alexander and $e_+$-filtrations. Moreover the chain map fixes the Alexander grading and the multiplicities of both $e_+$ and $h_+$, and is supported entirely on index 0 Morse-Bott buildings in $N$.
\end{thm}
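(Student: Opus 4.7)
The statement is effectively a repackaging of results already established, so the plan is to assemble the pieces rather than prove anything substantially new. First I would apply \cref{alpha_extension_thm} to the cobordism $([0,1]\cross N,\lambda)$, obtaining the $L$-Morse-Bott extension $(X,\hat\lambda)$ together with an $L$-non-degenerate perturbation $(X,\hat\lambda')$ and an associated cobordism-admissible almost complex structure $\hat J'$ on $\bar X$. This gives access to the (non-canonical) chain map $\hat\Phi^L(X,\hat\lambda')$ of \cref{cobordism_chain_map_def} between the action-bounded complexes $ECC^L(M,\hat\alpha'_+,\hat J'_+)$ and $ECC^L(M,\hat\alpha'_-,\hat J'_-)$. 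The same procedure applied to the homotopy $([0,1]\cross N,\lambda_t)$ yields a family $(X,\hat\lambda'_t)$ with an associated chain homotopy $H^L(X,\hat\lambda'_t)$.

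Next, using the identification from \cref{computing_ech_via_a_filtration_argument}, the subcomplex of $ECC^L(M,\hat\alpha'_\pm,\hat J'_\pm)$ generated by orbit sets of $\eta$-filtration level zero is canonically isomorphic to
\[ ECC\orbits{0he}{eh}(\mathrm{int}(N),\alpha_\pm,J_\pm;L). \]
By \cref{cobordism_respects_eta_filtration}, both $\hat\Phi^L(X,\hat\lambda')$ and $H^L(X,\hat\lambda'_t)$ preserve this subcomplex, and so restrict to honest maps between the ECK complexes in the statement.

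The behaviour of these restricted maps with respect to the Alexander and $e_+$ filtrations is then precisely the content of \cref{cobordism_maps_behave_nicely_on_V}: parts \cref{alex_grading,power_of_plus_orbits,power_of_minus_orbits} of that lemma guarantee that $\hat\Phi^L$ preserves the Alexander grading, preserves the multiplicities of $e_+$ and $h_+$, and can only increase the multiplicities of $e_-$ and $h_-$; parts \cref{homotopy_A_grading,homotopy_plus_orbits,homotopy_minus_orbits} give the analogous filtration statements for $H^L$. In particular both maps respect the bi-filtration by $(e_+\text{-grading},A)$.

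Finally, the claim that the chain map is supported on index $0$ Morse-Bott buildings in $N$ follows from assembling: \refListInThm{supported_on_holo_curves}{chain_map_holo_curves}, which produces an index $0$ (possibly broken) $\hat J'$-holomorphic curve for each nonzero matrix element; \refListInThm{alpha_extension_thm}{non_degen_approximation_to_MB}, which converts this into a $\hat J$-holomorphic Morse-Bott building; and the \refNamedThm{Blocking Lemma}{blocking_lemma} together with the \refNamedThm{Trapping Lemma for cobordisms}{trapping_lemma_cobordisms}, which (as in the proof of \cref{cobordism_maps_behave_nicely_on_V}) force the holomorphic part of this building to sit inside $N$ away from $\del V$ and $\del N$ and to consist of trivial cylinders over $V$. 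There is no genuine obstacle here, since all of the analytic and topological work has already been carried out; the main thing to be careful about is the bookkeeping between the extended complexes on $M$ and the ECK complexes on $N$, which is handled cleanly by the $\eta$-filtration identification above.
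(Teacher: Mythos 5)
Your proposal is correct and follows essentially the same route as the paper, which presents this theorem as an immediate corollary of \cref{cobordism_maps_behave_nicely_on_V}, with the restriction to the ECK complexes supplied by \cref{cobordism_respects_eta_filtration} and the support statement coming from \cref{supported_on_holo_curves}, \cref{alpha_extension_thm} and the Blocking and Trapping Lemmas exactly as you describe. No gaps.
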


From this point forward we will denote such chain maps by $\Phi^L([0,1]\cross N,\lambda)$ and such chain homotopies by $H^L([0,1]\cross N,\lambda_t)$, dropping from the notation the dependence on the extension to $X$. We will complete this section by stating a version of \cref{prod_cob_is_id} for cobordisms of the form $[0,1]\cross N$.

\begin{prop}\label{prod_cob_for_torus_bdry_prop}
If $([0,1]\cross N, \lambda)$ is a product cobordism from $e^{c_+}\alpha$ to $e^{c_-}\alpha$ then the induced map
\[ \Phi^L([0,1]\cross N,\lambda) : ECC\orbits{0he}{eh}(\mathrm{int}(N),e^{c_+}\alpha,J^{c_+};L) \to ECC\orbits{0he}{eh}(\mathrm{int}(N),e^{c_-}\alpha,J^{c_-};L) \]
maps every orbit set to itself.
\end{prop}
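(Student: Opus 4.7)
The plan is to reduce the proposition to the analogous closed-manifold statement \cref{prod_cob_is_id}, by extending the product cobordism on $N$ to a product cobordism on $M$.

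First, I would extend $([0,1]\cross N,\lambda)$ to a cobordism on $[0,1]\cross M$ which is itself a product. After reparametrising $[0,1]$ as $[c_-,c_+]$, the form $\lambda$ on $N$ can be written as $e^s\alpha$ for some contact form $\alpha$ on $N$ extendable to the rational open book. Extend $\alpha$ to a Morse-Bott contact form $\hat\alpha$ on all of $M$ by choosing $\alpha_{V,\mu}'$ on $V$ and $\alpha_\delta$ on $T^2\cross[1,2]$ (using \cref{contact_form_on_V_defn,contact_form_on_no_mans_land_def}) which glue smoothly to $\alpha$ at $\del N$, for $\delta$ small enough that no Reeb orbit of $\hat\alpha$ of action less than $L$ lies in the no man's land. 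Then $\hat\lambda:=e^s\hat\alpha$ on $[c_-,c_+]\cross M$ defines a product cobordism between $e^{c_+}\hat\alpha$ and $e^{c_-}\hat\alpha$ whose restriction to $[c_-,c_+]\cross N$ agrees with $\lambda$. By construction this extension also falls under the framework of \cref{alpha_extension_thm}.

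Second, I would perturb near the Morse-Bott tori $\del V$ and $\del N$ to obtain an $L$-non-degenerate form $\hat\alpha'$. Since the two endpoint forms $e^{c_\pm}\hat\alpha$ differ only by a positive scalar, the perturbation can be applied simultaneously on both ends, so the perturbed cobordism $\hat\lambda':=e^s\hat\alpha'$ remains a genuine product cobordism, now between $L$-non-degenerate forms $e^{c_\pm}\hat\alpha'$. Picking a regular adapted $J$ for $\hat\alpha'$ yields, via \cref{rescale_alpha_J_iso}, almost complex structures $J^{c_\pm}$ on the two ends, and $J$ itself is cobordism-admissible for the completion of the product cobordism.

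Third, I would apply \cref{prod_cob_is_id} to the closed-manifold product cobordism $([c_-,c_+]\cross M,\hat\lambda')$: the chain map
\[ \hat\Phi^L([c_-,c_+]\cross M,\hat\lambda'):ECC^L(M,e^{c_+}\hat\alpha',J^{c_+})\to ECC^L(M,e^{c_-}\hat\alpha',J^{c_-}) \]
sends every orbit set to itself under the canonical rescaling bijection. Finally, by the definition of $\Phi^L([0,1]\cross N,\lambda)$ given immediately after \cref{maps_on_ECK}, the ECK chain map is exactly the restriction of $\hat\Phi^L$ to the $\mathcal{F}$-filtration level $0$ subcomplex, which is $ECC\orbits{0he}{eh}(\mathrm{int}(N),e^{c_\pm}\alpha,J^{c_\pm};L)$ by \cref{cobordism_respects_eta_filtration}. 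Since the identity restricts to the identity, every orbit set is sent to itself, as required. The only subtlety to check carefully is that the extension of $\alpha$ to $\hat\alpha$ in step 1 is compatible with the product structure, but this is immediate from the freedom in the choice of $\alpha_{V,\mu}'$ and $\alpha_\delta$, which glue smoothly to any extendable $\alpha$.
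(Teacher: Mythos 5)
Your proof is correct, but it is organized differently from the paper's. The paper's proof is a one\-/line appeal to \cref{supported_on_holo_curves}(1) together with \cref{maps_on_ECK}: the chain map is supported on index $0$ Morse--Bott buildings in $N$ (with only trivial cylinders over $e_+,h_+$ in $V$), and since $[0,1]\cross N$ is itself a product region when $\lambda$ is a product, those buildings are forced by \cref{index_0_1_and_2_holo_curves} to be unions of trivial cylinders; the second clause of \cref{supported_on_holo_curves}(1) then gives coefficient $1$ precisely when $\Gamma_+=\Gamma_-$. You instead globalize the product structure: you arrange the extension of \cref{extend_N_to_M} so that $\hat\lambda$ is a product on all of $[0,1]\cross M$ (which is indeed possible, since the boundary data $(f_\pm,g_\pm)$ at the two ends are proportional and the interpolation can be chosen proportional throughout), invoke the closed-manifold statement \cref{prod_cob_is_id}, and restrict to the $\eta=0$ level via \cref{cobordism_respects_eta_filtration}. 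Both arguments ultimately rest on the same mechanism. What your route buys is a clean reduction to an already-proved statement, with every index $0$ curve in the completion visibly contained in a single product region; what it costs is the extra verification that the extension construction is compatible with the global product structure, and the fact that your conclusion is tied to that particular choice of extension, whereas the paper's localization argument (curves confined to $V\union N$ because no short orbits live in the no man's land) is insensitive to what the extension does on $T^2\cross[1,2]$. Since the paper suppresses the extension from the notation for $\Phi^L([0,1]\cross N,\lambda)$ anyway, this is not a gap, but you should state explicitly that you are fixing the product extension as the one used to define the map.
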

\begin{proof}
This follows immediately from \refListInThm{supported_on_holo_curves}{chain_map_holo_curves} and the fact that the map is supported on index 0 holomorphic curves in $N$ by \cref{maps_on_ECK}.
\end{proof}

\subsection{Composing cobordisms}

In this section will we discuss the composition of cobordisms on ECK.  First take two extendable exact symplectic cobordisms
\[  ([0,1]\cross N,\lambda^{(1)})\quad\text{and}\quad ([0,1]\cross N,\lambda^{(2)})  \]
such that 
\[\restr{\lambda^{(1)}}{\set{0}\cross N} = \restr{\lambda^{(2)}}{\set{1}\cross N}.\]
We say that $\lambda^{(1)}$ and $\lambda^{(2)}$ are composable and denote their composition by
\[ ([0,2]\cross N, \lambda^{(1,2)}), \]
where
\[ \restr{\lambda^{(1,2)}}{\set{s}\cross N} = \begin{cases}
\restr{\lambda^{(1)}}{\set{s-1}\cross N} & \text{if $s\in[1,2]$,}\\
\restr{\lambda^{(2)}}{\set{s}\cross N} &\text{if $s\in[0,1]$.}
\end{cases}\]
One would hope that the induced chain maps satisfied the relation
\begin{equation}\label{cobordisms_compositions_equal_eqn}
\Phi^L([0,2]\cross N, \lambda^{(1,2)}) = \Phi^L([0,1]\cross N, \lambda^{(2)})\circ\Phi^L([0,1]\cross N, \lambda^{(1)}), 
\end{equation}
however this is not necessarily the case.  To see why, and to understand how these maps are in fact related, we must construct extensions to $\lambda^{(1)}$ and $\lambda^{(2)}$ such that the extensions are also composable,~i.e.
\[ \restr{\hat\lambda^{(1)}}{\set{0}\cross N} = \restr{\hat\lambda^{(2)}}{\set{1}\cross N} \]
on the whole of $M$.  The following proposition follows easily from the construction in \cref{extend_N_to_M}.

\begin{prop}\label{extensions_composable}
Suppose that we have a collection of extendable exact symplectic cobordisms, $\set{([0,1]\cross N, \lambda^{(i)})}_{i\in I}$, and furthermore that the $\lambda^{(i)}$ are uniformly bounded at $\del N$ by some ray of slope $\mu$.  Let $L>0$. Then it is possible to extend every $\lambda^{(i)}$ to an $L$-Morse-Bott extension and an $L$-non-degenerate extension with respect to the ray of slope $\mu$, such that whenever $([0,1]\cross N,\lambda^{(i)})$ and $([0,1]\cross N,\lambda^{(j)})$ are composable then their extensions are.
\end{prop}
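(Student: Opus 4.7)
The plan is to observe that the construction in \cref{extend_N_to_M} is, near each boundary slice $\set{s}\cross N$, entirely determined by two pieces of data: the boundary contact form itself, and a choice of auxiliary extension data (namely the functions $f,g:[1,2]\to\R$ used on $T^2\cross[1,2]$ together with the constant $c'$ governing the product region on $V$). If we fix this auxiliary data once and for all at each contact form on $N$ that appears as a boundary of some $\lambda^{(i)}$, then the extensions of two composable cobordisms will automatically agree on their common boundary slice.

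First I would form the set $\mathcal{B}$ of all contact forms on $N$ arising as $\restr{\lambda^{(i)}}{\set{0}\cross N}$ or $\restr{\lambda^{(i)}}{\set{1}\cross N}$ for some $i \in I$. For each $\alpha \in \mathcal{B}$ I would fix once and for all a collar extension of $\alpha$ from $N$ across $T^2\cross[1,2]$ to $\del V$: that is, I would choose functions $f_\alpha, g_\alpha : [1,2]\to\R$ satisfying the contact condition of \cref{contact_condition_lemma} and matching the boundary data $(a_\alpha, b_\alpha)$ of $\alpha$ at $\del N$, together with a constant $c'_\alpha > 0$ placing $(f_\alpha(1), g_\alpha(1))$ on the common ray of slope $\mu$. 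The uniform bound hypothesis is exactly what allows this ray to be used simultaneously for every $\alpha \in \mathcal{B}$. For the $L$-non-degenerate extension, I would in addition fix, once per $\alpha \in \mathcal{B}$, a Morse-Bott perturbation of the resulting form near $\del V$ and $\del N$ as in \cref{perturbation_fn_g}.

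Having prescribed this boundary data, I would then extend each $\lambda^{(i)}$ following the proof of \cref{extend_N_to_M}, but now forced to use the prescribed data $(f_\alpha,g_\alpha,c'_\alpha)$ at both $s=0$ and $s=1$. By construction, whenever $\lambda^{(i)}$ and $\lambda^{(j)}$ are composable at some $\alpha \in \mathcal{B}$, the extensions $\hat\lambda^{(i)}$ and $\hat\lambda^{(j)}$ restrict to the \emph{same} form on the common slice, so the extensions are composable; the same applies to their non-degenerate perturbations, since these were also chosen once per $\alpha$. The main (and only) obstacle is verifying that the proof of \cref{extend_N_to_M} still goes through with the boundary data on both ends prescribed in advance: specifically, one must check that interpolating $f$ and $g$ between the fixed endpoint values can be done while still satisfying the uniform bounds $|f_+ - f_-| > C_1$, $|g_+ - g_-| < C_2$, $|g_\pm'| > \delta_1$, $|f_\pm'| < C_1\delta_1/(2C_2)$, and the slope constraint $g_\pm'/f_\pm' \ge 1/\delta$ needed to make $d\hat\lambda^{(i)}\wedge d\hat\lambda^{(i)} > 0$. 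Since these are all open conditions on the interpolating path and our freedom to reshape $f,g$ on the interior of $[1,2]$ is otherwise unconstrained, this is achievable, completing the proof.
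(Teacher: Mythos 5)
Your proposal is correct and takes essentially the same approach the paper intends: the paper offers no argument beyond asserting that the proposition ``follows easily from the construction in \cref{extend_N_to_M}'', and your proof makes the key point explicit, namely that the collar data $(f_\alpha,g_\alpha,c'_\alpha)$ near each boundary slice depends only on the boundary contact form and the common ray of slope $\mu$, so fixing it once per boundary form forces composable cobordisms to have composable extensions. Your closing check that the interpolation constraints of \cref{extend_N_to_M} remain satisfiable with both endpoints prescribed is exactly the right (and only) thing to verify.
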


In the remainder of this chapter, whenever we compose cobordisms on $[0,1]\cross N$ we will assume that the extensions used to define their induced chain maps are constructed according to the above proposition.  We can then define the extension of $\lambda^{(i,j)}$ to be the composition of the extensions of $\lambda^{(i)}$ and $\lambda^{(j)}$.

Note that compositions of such cobordisms do not necessarily have the linear form of \cref{exact_sympl_cob_near_del_N} near the boundary---instead the composition has a piece-wise linear appearance.  However, since all the extensions making up the composition were made with respect to the same ray of slope $\mu$, we still have a product region on a neighbourhood of $V$.  In addition, we can still apply the \refNamedThm{Trapping Lemma for cobordisms}{trapping_lemma_cobordisms}, since the ends of the completion have not changed.  Therefore the results of the previous section (namely \cref{cobordism_respects_eta_filtration}, \cref{cobordism_maps_behave_nicely_on_V} and \cref{maps_on_ECK}) can be applied to the induced chain maps and chain homotopies nonetheless.

\begin{prop}\label{composition_homotopic}
Assume that $([0,1]\cross N, \lambda^{(1)})$ and $([0,1]\cross N, \lambda^{(2)})$ are composable extendable exact symplectic cobordisms and denote their composition by
\[ ([0,2]\cross N, \lambda^{(1,2)}). \]
Then \cref{cobordisms_compositions_equal_eqn} holds up to bi-filtered chain homotopy,~i.e.
\begin{equation}\label{compostions_homotopic_eqn}
\Phi^L([0,2]\cross N, \lambda^{(1,2)}) - \Phi^L([0,1]\cross N, \lambda^{(2)})\circ\Phi^L([0,1]\cross N, \lambda^{(1)}) = \del H+H\del
\end{equation}
for some bi-filtered chain homotopy $H$.
\end{prop}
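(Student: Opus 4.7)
The plan is to exhibit the required chain homotopy $H$ by constructing a one-parameter family of exact symplectic cobordisms that interpolates between the direct composition $\lambda^{(1,2)}$ and a ``neck-stretched'' cobordism whose induced chain map equals $\Phi^L([0,1]\cross N,\lambda^{(2)})\circ\Phi^L([0,1]\cross N,\lambda^{(1)})$. The chain homotopy arising from this family, via \cref{supported_on_holo_curves}(2), will automatically have the bi-filtered property by the same positivity-of-intersection and Trapping Lemma arguments used in \cref{cobordism_maps_behave_nicely_on_V}.

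For the construction, let $\beta := \restr{\lambda^{(1)}}{\set{0}\cross N} = \restr{\lambda^{(2)}}{\set{1}\cross N}$ and for $R\ge 0$ define $\lambda_R$ on $[0,2+R]\cross N$ by inserting a symplectization piece $[0,R]\cross N$ of $\beta$ between $\lambda^{(2)}$ and $\lambda^{(1)}$, so that $\lambda_0 = \lambda^{(1,2)}$. By \cref{extensions_composable} we may build $L$-non-degenerate extensions $\hat\lambda_R'$ on $[0,2+R]\cross M$ varying smoothly in $R$, with all of them uniformly bounded at $\del N$ by a single ray of slope $\mu$, so that each $\hat\lambda_R'$ defines a product region on a neighbourhood of $[0,2+R]\cross V$. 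Translating through the isomorphisms $\Theta_{M,\alpha'_\pm,J'_\pm}$ of \cref{cobordism_chain_map_def} reduces the problem to a corresponding statement for the chain maps on $\widehat{CM}_L$. The key input is the well-known chain-level composition property in Seiberg--Witten theory: for $R_0$ sufficiently large (depending on $L$), the cobordism $(X,\hat\lambda_{R_0}')$ induces on $\widehat{CM}_L$ a chain map chain-homotopic to the composition $\widehat{CM}_L(\hat\lambda^{(2)\prime})\circ \widehat{CM}_L(\hat\lambda^{(1)\prime})$, with the chain homotopy produced by varying the neck length $R\in[0,R_0]$. Transferring back via $\Theta$ then gives a chain homotopy $H$ on ECC satisfying \cref{compostions_homotopic_eqn}.

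It remains to verify that $H$ is bi-filtered. Because $H$ arises from the smooth family $(X,\hat\lambda_R')_{R\in[0,R_0]}$ of cobordisms of exactly the form considered in \cref{alpha_extension_thm} (each uniformly bounded by the same ray of slope $\mu$, each a product region over $V$, and each extending an extendable cobordism on $[0,2+R]\cross N$), we can invoke \refListInThm{cobordism_maps_behave_nicely_on_V}{chain_homotopy} directly: if $\langle H(\Gamma_+),\Gamma_-\rangle\neq 0$ then $A(\Gamma_-)\le A(\Gamma_+)$, the multiplicities of $e_+$ and $h_+$ in $\Gamma_-$ do not exceed those in $\Gamma_+$, and the multiplicities of $e_-$ and $h_-$ do not decrease. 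The Alexander inequality follows from positivity of intersection applied to $K\subset V$ in the product region, and the behaviour at $e_\pm,h_\pm$ follows from the Trapping Lemma for cobordisms (\cref{trapping_lemma_cobordisms}) applied at the Morse-Bott tori $\del V$ and $\del N$. These inequalities are precisely the statement that $H$ respects both the Alexander filtration and the $e_+$-filtration.

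The main obstacle is the chain-level composition property in Seiberg--Witten theory, i.e.\ that the neck-stretched cobordism $(X,\hat\lambda_{R_0}')$ for $R_0$ large induces (up to chain homotopy) the honest composition of the individual chain maps, with the chain homotopy coming from the family in $R$. While the analogous statement on homology is routine, its promotion to a chain-level statement requires gluing analysis in monopole Floer theory; here we appeal to this as a standard fact (as used implicitly throughout \cite{HT_Arnold13}). The only quantitative input we need is that $R_0$ can be chosen large enough (in terms of $L$) to ensure that all low-action index-zero instantons on $(X,\hat\lambda_{R_0}')$ split along the neck, exactly paralleling the gluing used to establish the isomorphism $\Theta$. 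Once this is granted, the bi-filtered conclusion follows formally from the geometric constraints already developed in \cref{maps_on_ECK,cobordism_maps_behave_nicely_on_V}.
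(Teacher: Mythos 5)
Your proposal follows the same neck-stretching route as the paper's proof: insert a symplectization neck of length $R$ between the two cobordisms, use the resulting homotopy of cobordisms to produce a bi-filtered chain homotopy via \cref{cobordism_maps_behave_nicely_on_V}(\ref{chain_homotopy}), and invoke the Seiberg--Witten gluing/composition result for large neck length (Kronheimer--Mrowka, Prop.~26.1.6) to identify the stretched chain map with the composition. The only cosmetic difference is that the paper states this last gluing step as an honest \emph{equality} of chain maps on $\widehat{CM}_L$ for $R$ large (whereas you phrase it as a chain homotopy generated by the $R$-family), but the two formulations combine to give the same conclusion.
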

\begin{proof}
The proof follows by examining Hutchings and Taubes' proof of the analogous result at the level of cohomology---namely that the composition of maps induced by cobordisms equals the map induced by the composition~\cite[Proposition 5.4]{HT_Arnold13}.

Their proof uses a neck stretching argument to construct a chain homotopy between the two maps concerned as in \cref{compostions_homotopic_eqn}; our proof will proceed by simply verifying that the chain homotopy in question respects the bi-filtration. First write $(X^{(i)},\hat\lambda^{(i)})$ for the $L$-non-degenerate extension of $([0,1]\cross N,\lambda^{(i)})$, for $i=1,2$, and likewise $(X^{(1,2)},\hat\lambda^{(1,2)})$ for the extension of the composition.

In their proof, Hutchings and Taubes construct the following ``neck-stretched'' variation of the composition:
\[ X_R := X^{(2)}\gappy{\union} \big([-R,R]\cross M\big)\gappy{\union} X^{(1)}, \]
for $R>0$, equipped with a one-form $\lambda_R$ defined by
\[ \lambda_R := \begin{cases}
e^{-2R}\hat\lambda^{(2)}&\text{on $X^{(2)}$,}\\
e^{2s}\alpha_0&\text{on $[-R,R]\cross M$,}\\
e^{2R}\hat\lambda^{(1)}&\text{on $X^{(1)}$,}
\end{cases} \]
where here $s$ denotes the $[-R,R]$ coordinate and $\alpha_0$ is the contact form at the negative end of $X^{(1)}$ (and the positive end of $X^{(2)}$). The factor of 2 here is directly related to the factor of 1/2 in \cref{g_alpha_J_equation_SW_theory} (c.f.~\cite[Remark 4.2]{HT_Arnold13}). Then
\[ (X_0,\lambda_0) = (X^{(1,2)},\hat{\lambda}^{(1,2)}) \]
by definition and $(X_0,\lambda_0)$ is homotopic through exact symplectic cobordisms to $(X_R,\lambda_R)$ for any $R>0$.  Denote the induced chain homotopy by $H_R$---by \cref{cobordism_maps_behave_nicely_on_V} (which we can apply to this composition by the discussion above) $H_R$ respects the bi-filtration.  Thus it is sufficient to show that, for large enough $R$, the chain maps
\[ \widehat{CM}_L(X_R,\lambda_R)\quad\text{and}\quad \widehat{CM}_L(X^{(2)},\hat\lambda^{(2)})\circ \widehat{CM}_L(X^{(1)},\hat\lambda^{(1)}) \]
are equal.  But this, as stated in the proof by Hutchings and Taubes, follows from Kronheimer and Mrowka~\cite[Proposition 26.1.6]{KM_monopoles07}.
\end{proof}

\section{Invariance results in the ECK setting}\label{invariance_results_in_the_ECK_setting}

\subsection{\texorpdfstring{$J$}{J}-invariance of ECK\texorpdfstring{$^L$}{\textasciicircum L}}

Provided that we retain the action bound $L$, proving $J$-invariance of the (bi-filtered homotopy type of) embedded contact knot homology is fairly straightforward.  When we remove the action bound things become more complex and we will need to make some compromises---this will be discussed in \cref{invariance_results_without_an_action_bound}.

Proving $J$-invariance of ECK$^L$ takes a slightly different approach to the proof of $J$-invariance of ECH as given by Hutchings and Taubes~\cite[Section 3.4]{HT_Arnold13}.  The reason for this is that $J$-invariance of ECH is proved via $J$-invariance of Seiberg-Witten Floer cohomology, which in turn is proved by constructing chain homotopies from variations in the data chosen to define the chain complex $\widehat{CM}$.  However such variations in data are not necessarily supported on holomorphic curves (in fact, the notion of holomorphic curves does not even make sense since the variations do not even give rise to an almost complex manifold) so we cannot use this technique to prove that the chain homotopies respect the bi-filtrations.

Instead, we will proof $J$-invariance of ECK$^L$ by constructing exact symplectic cobordisms and apply the results of \cref{cobordism_maps_on_ECK} to construct chain homotopies which do respect the bi-filtrations.

To start, let $L>0$ and $(N, \alpha)$ be a manifold with torus boundary with $\alpha$ extendable to a rational open book, as in \cref{contact_forms_and_rational_open_book_decompositions}.  Suppose that $J_+$ and $J_-$ are two regular adapted almost complex structures for $\alpha$.

The idea of the proof of $J$-invariance of ECK$^L$ is to construct exact symplectic cobordisms which ``interpolate'' between (scaled copies of) the two almost complex structures.

\begin{lemma}\label{chain_map_between_acs}
Suppose that $(M,\alpha)$ is $L$-non-degenerate, where here $M$ is either a closed manifold or has torus boundary. Let $J_\pm$ be two almost complex structures adapted to $\alpha$ and $\epsilon>0$.  Then we can construct an exact symplectic cobordism (and a cobordism-admissible almost complex structure) between $(M,\alpha, J_+)$ and $(M,e^{-\epsilon}\alpha, J_-^{-\epsilon})$.
\end{lemma}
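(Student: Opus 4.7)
The plan is to take the cobordism to be the product $X = [-\epsilon, 0]\cross M$ equipped with the Liouville form $\lambda := e^s \alpha$, where $s$ denotes the $[-\epsilon, 0]$ coordinate. First I would verify that $(X, d\lambda)$ is an exact symplectic cobordism: $d\lambda = e^s(ds\wedge \alpha + d\alpha)$ satisfies $d\lambda\wedge d\lambda = 2e^{2s}\, ds\wedge\alpha\wedge d\alpha > 0$, and at the boundary slices $\restr{\lambda}{\set{0}\cross M} = \alpha$ and $\restr{\lambda}{\set{-\epsilon}\cross M} = e^{-\epsilon}\alpha$. The completion $\bar X$ then identifies naturally with the full symplectization $(\R\cross M, d(e^s\alpha))$ via the usual gluing of half-symplectizations: the positive end $[0,\infty)\cross M$ carries the form $e^s\alpha$, while after the reparametrization $s_- = s + \epsilon$ the negative end $(-\infty,0]\cross M$ carries the form $e^{s_-}(e^{-\epsilon}\alpha)$ as required.

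Next I would construct the cobordism-admissible almost complex structure $J$ on $\bar X$. On the positive half-symplectization I simply take $J := J_+$, and on the negative half-symplectization (under the identification above) I take $J := J_-^{-\epsilon}$; note that by \cref{rescale_alpha_J_iso} this gives $\restr{J_-^{-\epsilon}}{\xi} = \restr{J_-}{\xi}$ and $J_-^{-\epsilon}(\partial_s) = e^\epsilon R_\alpha$. To extend $J$ over $X$ I choose a smooth function $f : [-\epsilon, 0] \to \R_{>0}$ with $f(-\epsilon) = e^\epsilon$, $f(0) = 1$ (matching derivatives to all orders at the endpoints), and set $J(\partial_s) := f(s) R_\alpha$ on each slice. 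For the action on the contact distribution I choose, at every $p\in M$, a smooth path $J^\xi_s(p)$ in the space of $d\alpha|_{\xi_p}$-compatible linear complex structures on $\xi_p$ joining $\restr{J_-}{\xi_p}$ at $s = -\epsilon$ to $\restr{J_+}{\xi_p}$ at $s = 0$; this exists and can be chosen to depend smoothly on $p$ because the space of $d\alpha$-compatible complex structures on a symplectic vector bundle is (fibrewise) contractible.

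Finally I would verify cobordism-admissibility. The matching at the ends holds by construction. Compatibility of $J$ with $d\lambda$ on $X$ follows from a direct computation showing that $d\lambda(\cdot, J\cdot)$ splits as the positive sum of $e^s f(s)\, ds\otimes ds$ on $\langle \partial_s\rangle$ and $e^s\, d\alpha(\cdot, J^\xi_s\cdot)$ on $\xi$, with the cross-terms vanishing because $Jv \in \xi$ for $v \in \xi$ and $R_\alpha \in \ker d\alpha$. The construction is identical in both the closed case and the torus-boundary case since nothing here interacts with $\partial M$. The only real subtlety, not a substantive obstacle, is arranging the interpolating family $J^\xi_s$ to be jointly smooth in $s$ and $p$; this is standard from the fibrewise contractibility, carried out e.g.\ by choosing compatible metrics, averaging, and then taking the complex structure associated to the resulting pointwise polar decomposition.
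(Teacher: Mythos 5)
Your proposal is correct and takes essentially the same route as the paper: both use the product cobordism with a rescaled Liouville form, and both interpolate the almost complex structure by exploiting the contractibility of the space of $d\alpha$-compatible complex structures on $\xi$. The only (harmless) differences are cosmetic: the paper parametrizes the cobordism over $[0,1]$ with $\lambda_s = e^{-(1-s)\epsilon}\alpha$ and implicitly takes the specific function $f(s) = e^{(1-s)\epsilon}$ for $J(\partial_s)$, whereas you use $[-\epsilon,0]$ with $e^s\alpha$ and allow an arbitrary positive interpolating $f$; you also spell out the block-diagonal compatibility check that the paper leaves tacit.
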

Here recall that $J_-^{-\epsilon}$ is the re-scaled almost complex structure from \cref{rescale_alpha_J_iso}.
\begin{proof}
Consider the one-form
\[ \lambda_s := e^{-(1-s)\epsilon}\alpha \]
on $X=[0,1]\cross M$, which interpolates between $\alpha$ (at the positive end) and $e^{-\epsilon}\alpha$ (at the negative end).  This is a product region on the whole of $M$ so is a valid exact symplectic cobordism.

The space of adapted almost complex structures for $\alpha$ is contractible, since all such $J$ are identical on $\langle \del_s,R\rangle \subset T(\R\cross M)$ and the space of almost complex structures on $\xi$ compatible with $d\alpha$ is contractible.  Hence we can find a path $J_t$ between $J_-$ and $J_+$.  Each $J_t$ is an almost complex structure on $\R\cross M$ adapted to $\alpha$ so by \cref{rescale_alpha_J_iso} we can define $J_t^{-(1-s)\epsilon}$ which, for all $t$, is an adapted almost complex structure for $e^{-(1-s)\epsilon}\alpha$.  Define an almost complex structure $J$ on $[0,1]\cross M$ by
\[ \restr{J}{\set{s}\cross M} := J_s^{-(1-s)\epsilon}. \]
Then 
\begin{align*}
\restr{J}{\set{1}\cross M}&=J_+,\quad\text{and}\\
\restr{J}{\set{0}\cross M}&=J_-^{-\epsilon},
\end{align*}
an hence we can extend the definition of $J$ to the completion $\bar{X}$ by $J_+$ on the positive end and $J_-^{-\epsilon}$ on the negative end to form the required cobordism-admissible almost complex structure.
\end{proof}

\begin{thm}\label{J_invariance_bounded_case_thm}
Let $L>0$ and suppose that $(N,\alpha)$ is a contact manifold with torus boundary where $\alpha$ is extendable to a rational open book and is $L$-non-degenerate on $\mathrm{int}(N)$.  Let $J_i$ for $i=0,1$ be two choices of regular adapted almost complex structure for $\alpha$.  Then there exists bi-filtered chain homotopy equivalences
\begin{align*}
ECC\orbits{0he}{eh}(\mathrm{int}(N),\alpha,J_0;L) &\to ECC\orbits{0he}{eh}(\mathrm{int}(N),\alpha,J_1;L) \\
ECC\orbits{0he}{eh}(\mathrm{int}(N),\alpha,J_1;L) &\to ECC\orbits{0he}{eh}(\mathrm{int}(N),\alpha,J_0;L)
\end{align*}
such that their compositions are bi-filtered chain homotopic to the identity maps on both spaces and hence the two complexes have the same bi-filtered homotopy type.
\end{thm}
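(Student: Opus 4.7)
The strategy is to build bi-filtered chain homotopy equivalences from interpolating exact symplectic cobordisms, in the spirit of \cref{maps_on_ECK} and \cref{composition_homotopic}, together with the rescaling isomorphism of \cref{rescale_alpha_J_iso}. Write $C^L(\alpha, J) := ECC\orbits{0he}{eh}(\mathrm{int}(N),\alpha,J;L)$ for brevity. First I would fix $\epsilon > 0$ small enough that no orbit set action of $\alpha$ lies in the band $(L, e^{3\epsilon}L]$. Such $\epsilon$ exists by $L$-non-degeneracy of $\alpha$ together with local finiteness of orbit actions near $L$ (if necessary, perturb $L$ slightly to a generic value). This guarantees that every rescaling isomorphism $s^{c}$ with $|c|\le 3\epsilon$ restricts to a bi-filtered isomorphism of the corresponding action-$L$ complexes under the geometric identification of orbits.

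Next I would apply \cref{chain_map_between_acs} twice to construct extendable exact symplectic cobordisms $W_{1}: (N,\alpha,J_{0}) \to (N,e^{-\epsilon}\alpha, J_{1}^{-\epsilon})$ and $W_{2}: (N, e^{-\epsilon}\alpha, J_{1}^{-\epsilon}) \to (N,e^{-2\epsilon}\alpha, J_{0}^{-2\epsilon})$, as well as analogous cobordisms $V_{1}, V_{2}$ starting from $J_{1}$ for the reverse direction. By \cref{extensions_composable} the extensions to $[0,1]\cross M$ can be chosen so that $(W_{1}, W_{2})$ and $(V_{1}, V_{2})$ are composable with respect to a single uniform bounding ray. \Cref{maps_on_ECK} then produces bi-filtered chain maps $\Phi^{L}(W_{i})$ and $\Phi^{L}(V_{i})$. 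Composing with rescaling isomorphisms, I would define
\[ F := s^{\epsilon} \circ \Phi^{L}(W_{1}) : C^{L}(\alpha,J_{0}) \to C^{L}(\alpha,J_{1}), \]
\[ G := s^{2\epsilon} \circ \Phi^{L}(W_{2}) \circ (s^{\epsilon})^{-1} : C^{L}(\alpha,J_{1}) \to C^{L}(\alpha,J_{0}), \]
with $G' \circ F'$ for $F \circ G$ constructed symmetrically from $V_{1}, V_{2}$. To show $G\circ F\simeq\mathrm{id}$, observe that $G\circ F = s^{2\epsilon}\circ \Phi^{L}(W_{2})\circ \Phi^{L}(W_{1})$, which by \cref{composition_homotopic} is bi-filtered chain homotopic to $s^{2\epsilon}\circ\Phi^{L}(W_{2}\circ W_{1})$. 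The composition $W_{2}\circ W_{1}$ has Liouville form $\lambda = e^{-\epsilon(2-s)}\alpha$ already in product form (by the explicit construction in \cref{chain_map_between_acs}), so only the interpolating path of almost complex structures $J_{0} \rightsquigarrow J_{1}^{-\epsilon} \rightsquigarrow J_{0}^{-2\epsilon}$ needs to be homotoped to the straight-line rescaling of $J_{0}$. Contractibility of the space of adapted almost complex structures (as used in \cref{chain_map_between_acs}) provides this homotopy through extendable exact symplectic cobordisms, and \refListInThm{maps_on_ECK}{chain_homotopy} converts it into a bi-filtered chain homotopy $\Phi^{L}(W_{2}\circ W_{1})\simeq \Phi^{L}(P)$, where $P$ is the product cobordism $(\alpha, J_{0}) \to (e^{-2\epsilon}\alpha, J_{0}^{-2\epsilon})$. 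Finally \cref{prod_cob_for_torus_bdry_prop} identifies $\Phi^{L}(P)$ with the identity on orbit sets, so $s^{2\epsilon}\circ\Phi^{L}(P)$ is the identity on $C^{L}(\alpha,J_{0})$ under our action-bound identifications. The argument for $F\circ G\simeq\mathrm{id}$ is entirely symmetric.

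The main obstacle will be the geometric step of Step 4: namely verifying that the interpolation of almost complex structures can be realised through a homotopy of \emph{extendable} exact symplectic cobordisms, i.e.~that at every moment of the homotopy the cobordism on $[0,1]\cross N$ can be simultaneously extended across $T^{2}\cross[1,2]\cup V$ to an honest exact symplectic cobordism of the form required by \cref{alpha_extension_thm}, with slopes at $\del N$ uniformly bounded by a single ray in the sense of \cref{extensions_composable}. The Liouville form is fixed throughout the homotopy, which reduces the problem to controlling the almost complex structure; once this is arranged, the direct Morse-Bott analysis of the holomorphic curves near $\del V$ and $\del N$ carries over without change, and the rest of the argument is a routine invocation of the naturality and composition results already proved in this chapter.
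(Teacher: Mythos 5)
Your proposal follows the paper's strategy closely: rescale by small amounts, construct interpolating exact symplectic cobordisms via \cref{chain_map_between_acs}, use \cref{maps_on_ECK} to get bi-filtered chain maps, and show composites are homotopic to the identity via \cref{composition_homotopic}, contractibility of the space of adapted almost complex structures, and \cref{prod_cob_for_torus_bdry_prop}. The problem is the final step.

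You construct two cobordisms $W_1, W_2$ in the direction $J_0\to J_1^{-\epsilon}\to J_0^{-2\epsilon}$ and obtain $G\circ F\simeq\mathrm{id}_{C_0}$, i.e.\ $G$ has a right homotopy inverse. You then claim $F\circ G\simeq\mathrm{id}_{C_1}$ is ``entirely symmetric,'' constructing a second independent pair $V_1, V_2$ starting from $J_1$. But this produces a \emph{different} pair of maps $F', G'$ and a \emph{different} identity, namely $G'\circ F'\simeq\mathrm{id}_{C_1}$. Having $G\circ F\simeq\mathrm{id}_{C_0}$ and $G'\circ F'\simeq\mathrm{id}_{C_1}$ only gives one-sided homotopy inverses for four separate maps; it does not, on its own, show that any one of them is a chain homotopy equivalence. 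The argument is not actually symmetric, because exact symplectic cobordisms only run in one direction (decreasing action), so you cannot realise $F\circ G$ as the chain map of a single composed cobordism: the gluing $W_1\circ(\text{rescaling})\circ W_2$ required for that would need a Liouville form with $\del_s f<0$ somewhere, which fails the exactness condition.

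The paper resolves this by using \emph{three} cobordisms in a single chain $J_0\to J_1^{-\epsilon}\to J_0^{-2\epsilon}\to J_1^{-3\epsilon}$, producing maps $\Phi_0,\Phi_1,\Phi_2$ such that both $\Phi_1\circ\Phi_0$ and $\Phi_2\circ\Phi_1$ are homotopic to identity. The middle map $\Phi_1$ (after conjugating by the rescaling isomorphisms $\Theta^{(k)}_i$) then has \emph{both} a left and a right homotopy inverse, so it is a two-sided homotopy equivalence with inverse $g_2\circ f\circ g_1\simeq g_1\simeq g_2$. To repair your proof you should add a third cobordism $W_3:(e^{-2\epsilon}\alpha,J_0^{-2\epsilon})\to(e^{-3\epsilon}\alpha,J_1^{-3\epsilon})$ and run the same product-cobordism argument for $W_3\circ W_2$, then apply the standard two-sided-inverse trick to the map induced by $W_2$. (Alternatively, you could try to show $F\simeq G'$ by yet another cobordism homotopy, so that $G\circ F\simeq\mathrm{id}$ and $F\circ F'\simeq\mathrm{id}$ makes $F$ a two-sided equivalence; but this extra step would need to be supplied and justified, and is not simpler than the paper's three-cobordism chain.)
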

\begin{proof}
Pick $\epsilon>0$ such that $\alpha$ has no orbit sets with action in the interval $[L,e^{3\epsilon}L]$.  Following the procedure from \cref{chain_map_between_acs} yields an exact symplectic cobordism between $(N,\alpha, J_0)$ and $(N,e^{-\epsilon}\alpha, J_1^{-\epsilon})$ which in turn gives rise to a bi-filtered chain map 
\[ \Phi_0: ECC\orbits{0he}{eh}(\mathrm{int}(N),\alpha, J_0;L) \to ECC\orbits{0he}{eh}(\mathrm{int}(N),e^{-\epsilon}\alpha, J_1^{-\epsilon};L) \]
by \cref{maps_on_ECK}.  By the same reasoning we also obtain chain maps induced by exact symplectic cobordisms:
\begin{align*}
\Phi_1 &: ECC\orbits{0he}{eh}(\mathrm{int}(N),e^{-\epsilon}\alpha, J_1^{-\epsilon};L) \to ECC\orbits{0he}{eh}(\mathrm{int}(N),e^{-2\epsilon}\alpha, J_0^{-2\epsilon};L),\text{ and}\\
\Phi_2 &: ECC\orbits{0he}{eh}(\mathrm{int}(N),e^{-2\epsilon}\alpha, J_0^{-2\epsilon};L) \to ECC\orbits{0he}{eh}(\mathrm{int}(N),e^{-3\epsilon}\alpha, J_1^{-3\epsilon};L).
\end{align*}
Denote the cobordism inducing $\Phi_i$ by $([0,1]\cross N,\lambda^{(i)})$ for $i=0,1,2$. The composition of the first two cobordisms ($i=0,1$) is given explicitly by
\[ ([0,2]\cross N, e^{-(2-s)\epsilon}\alpha), \]
a cobordism between $(\alpha,J_0)$ and $(e^{-2\epsilon}\alpha, J_0^{-2\epsilon})$; we claim that the almost complex structure $J$ on the completion of this cobordism is homotopic to that of a product region.  

Recall by following the proof of \cref{chain_map_between_acs} that $J$ is constructed via a path $J_t$ of almost complex structures from $J_0$ to $J_1$ and then back to $J_0$; on each slice $\set{s}\cross M$, $J$ is a rescaled copy of $J_s$. Since the space of adapted almost complex structures for $\alpha$ is contractible (c.f.~ the proof of \cref{chain_map_between_acs}), the path $J_t$ is homotopic to the constant path at $J_0$.  Hence the composition cobordism is homotopic through exact symplectic cobordisms to the product cobordism
\[ ([0,2]\cross N, e^{-(2-s)\epsilon}\alpha, J'_0 ), \]
where $J'_0$ is the cobordism-admissible almost complex structure given by
\begin{itemize}
\item $J_0$ on the positive end of the completion,
\item $J_0^{-2\epsilon}$ on the negative end of the completion, and 
\item $J_0^{-(2-s)\epsilon}$ on the cobordism region of the completion ($s\in[0,2]$).
\end{itemize}
By \cref{prod_cob_for_torus_bdry_prop}, the chain map induced by this cobordism sends every orbit set to itself, and \cref{maps_on_ECK} and \cref{composition_homotopic} together imply that it is bi-filtered chain homotopic to the composition $\Phi_1\circ\Phi_0$.  Similarly, the composition $\Phi_2\circ\Phi_1$ is also bi-filtered chain homotopic to the map sending every orbit set to itself.

Next, note that, for $0\le k\le 3$ and $i=0,1$, we have isomorphisms
\[ \Theta^{(k)}_i : ECC\orbits{0he}{eh}(\mathrm{int}(N),\alpha,J_i;L) \xrightarrow{\iso} ECC\orbits{0he}{eh}(\mathrm{int}(N),e^{-k\epsilon}\alpha, J_i^{-k\epsilon};L),\]
defined by the composition
\[\begin{tikzcd}
ECC\orbits{0he}{eh}(\mathrm{int}(N),\alpha,J_i;L)\arrow[d,"{i_{L,e^{k\epsilon}L}}"] \\
ECC\orbits{0he}{eh}(\mathrm{int}(N),\alpha,J_i;e^{k\epsilon}L) \arrow[d,"{s^{-k\epsilon}}"]\\
ECC\orbits{0he}{eh}(\mathrm{int}(N),e^{-k\epsilon}\alpha, J_i^{-k\epsilon};L)
\end{tikzcd}\]
where the inclusion maps $i_{L,e^{k\epsilon}L}$ are isomorphisms since $\alpha$ has no orbit sets with action in the interval $[L,e^{3\epsilon}L]$ and $s^{-k\epsilon}$ are the rescaling isomorphisms from \cref{rescale_alpha_J_iso}.  Furthermore, these maps, and hence $\Theta^{(k)}_i$, map each orbit set to itself.

Now define chain maps, as in the statement of the theorem, by
\begin{align*}
g_1 &:= \left(\Theta^{(1)}_1\right)^{-1}\circ\Phi_0\circ\Theta^{(0)}_0: C(\alpha,J_0) \to C(\alpha,J_1),\\
f &:= \left(\Theta^{(2)}_0\right)^{-1}\circ\Phi_1\circ\Theta^{(1)}_1:C(\alpha,J_1) \to C(\alpha,J_0),\\
g_2 &:= \left(\Theta^{(3)}_1\right)^{-1}\circ\Phi_2\circ\Theta^{(2)}_0: C(\alpha,J_0) \to C(\alpha,J_1),
\end{align*}
where here we are using the notation 
\[  C(\alpha,J_i) := ECC\orbits{0he}{eh}(\mathrm{int}(N),\alpha,J_i;L). \]
The diagram below explains the construction of these maps.
\[\begin{tikzcd}
C(\alpha,J_0) \arrow[r,"g_1"]\arrow[dr,"\Phi_0"']\arrow[loop left,"\Theta_0^{(0)}","\iso"']& C(\alpha,J_1)\arrow[r,"f"]\arrow[d,"\Theta_1^{(1)}","\iso"'] & C(\alpha,J_0)\arrow[r,"g_2"]\arrow[dd,"\Theta_0^{(2)}","\iso"'] & C(\alpha,J_1)\arrow[ddd,"\Theta_1^{(3)}","\iso"'] \\
& C(e^{-\epsilon}\alpha,J_1^{-\epsilon})\arrow[dr,"\Phi_1"'] \\
&& C(e^{-2\epsilon}\alpha,J_0^{-2\epsilon})\arrow[dr,"\Phi_2"'] \\
&&& C(e^{-3\epsilon}\alpha,J_1^{-3\epsilon})
\end{tikzcd}\]
Then both $f\circ g_1$ and $g_2\circ f$ are bi-filtered chain homotopic to the identity map, and hence
\[ f\quad\text{and}\quad g_2\circ f\circ g_1 \]
are the desired bi-filtered chain homotopy equivalences.
\end{proof}

\subsection{Invariance results in the unbounded case}\label{invariance_results_without_an_action_bound}

In this section we aim to prove $J$ and $\alpha$ invariance of ECK.  By $\alpha$-invariance we mean invariance under homotopies of $\alpha$---such an invariance result in the bounded case would not be feasible since, for example, a simple rescaling of $\alpha$ can increase the action of Reeb orbits arbitrarily.

Our approach to proving invariance results is to construct exact symplectic cobordisms which are homotopic.  Then we obtain chain maps and chain homotopies which respect the bi-filtrations on the complexes as required, and we can proceed in a similar manner to the proof of \cref{J_invariance_bounded_case_thm}

When removing the action bound, we run into difficulties since our procedure of extending cobordisms to cobordisms between rational open books in \cref{cobordism_maps_on_ECK}  involves choosing an action bound $L>0$.  One would hope that a direct limit argument might work: \cref{maps_on_ECK} constructs maps
\[ \Phi^L([0,1]\cross N,\lambda) : ECC\orbits{0he}{eh}(\mathrm{int}(N), \alpha_+, J_+; L) \to ECC\orbits{0he}{eh}(\mathrm{int}(N), \alpha_-, J_-; L) \]
and a direct limit as $L$ tends to $\infty$, if it were possible, would yield a map
\[ ECC\orbits{0he}{eh}(\mathrm{int}(N), \alpha_+, J_+) \to ECC\orbits{0he}{eh}(\mathrm{int}(N), \alpha_-, J_-). \]
Unfortunately this approach seems difficult, if possible at all, since in order to take a direct limit we require a commutative diagram of the form
\[\begin{tikzcd}
ECC\orbits{0he}{eh}(\mathrm{int}(N), \alpha_+, J_+; L)\arrow[d,"{i_{L,L'}}"]\arrow[rr,"{\Phi^L([0,1]\cross N,\lambda)}"] && ECC\orbits{0he}{eh}(\mathrm{int}(N), \alpha_-, J_-; L)\arrow[d,"{i_{L,L'}}"] \\
ECC\orbits{0he}{eh}(\mathrm{int}(N), \alpha_+, J_+; L')\arrow[rr,"{\Phi^{L'}([0,1]\cross N,\lambda)}"] && ECC\orbits{0he}{eh}(\mathrm{int}(N), \alpha_-, J_-; L')
\end{tikzcd}\]
for $L<L'$.  But the chain maps $\Phi^L([0,1]\cross N,\lambda)$ are highly non-canonical and hard to control, since they are defined via Seiberg-Witten theory.  In fact, the only method we have of obtaining commutativity results is via homotopies of exact symplectic cobordisms---this is how commutativity is proved in the case of ECH by Hutchings and Taubes~\cite[Theorem 1.9]{HT_Arnold13}.  But at the chain level such homotopies do not give rise to commutative diagrams, only diagrams which commute up to bi-filtered chain homotopy. 

For this reason we will start by making a slight compromise when proving invariance results in this section, which allows us to avoid infinite complexes and hence direct limits.

The goal of this section is to prove the following:

\begin{thm}\label{ECC_independent_of_alpha}
Suppose that $\alpha_0$ and $\alpha_1$ are homotopic extendable contact forms on $N$ and that $J_0$ and $J_1$ are corresponding regular adapted almost complex structures on $\R\cross N$.  Then the complexes
\[ ECC\orbits{he}{eh}(\mathrm{int}(N),\alpha_i, J_i) \]
for $i=0,1$ have the same $e_+$-filtered homotopy type within each Alexander grading. As a result, we recover the result of Colin, Ghiggini and Honda~\cite[Theorem 10.2.2]{CGH_ECH_OBD}, namely that the groups 
\[ \widehat{ECK}(K,\alpha_i, J_i) \]
for $i=0,1$ are isomorphic within each Alexander grading.
\end{thm}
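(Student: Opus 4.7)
The plan is to adapt the strategy of \cref{J_invariance_bounded_case_thm} to the homotopy setting, working one Alexander grading at a time. First I would build a chain of extendable exact symplectic cobordisms alternating between (rescaled copies of) $(\alpha_0,J_0)$ and $(\alpha_1,J_1)$. The starting point is \cref{interpolating_cobordism_between_homotopic_forms}, which gives an interpolating cobordism between $\alpha_1$ and some $A\alpha_0$; after truncating $\alpha_t$ near $\del N$ so that the resulting cobordism takes the form \cref{exact_sympl_cob_near_del_N} there, one checks that the cobordism is extendable to a rational open book in the sense of \cref{alpha_extension_thm}. Inserting the almost complex structure interpolation of \cref{chain_map_between_acs} produces a sequence $(\alpha_0,J_0)\rightsquigarrow(c_1\alpha_1,J_1^{c_1})\rightsquigarrow(c_2\alpha_0,J_0^{c_2})\rightsquigarrow(c_3\alpha_1,J_1^{c_3})$ of extendable exact symplectic cobordisms with $c_i>0$ decreasing, chosen so that $\alpha_i$ has no orbit sets with action in the rescaling bands used to form the rescaling isomorphisms.

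For each $L>0$, \cref{maps_on_ECK} produces bi-filtered chain maps $\Phi_0^L, \Phi_1^L, \Phi_2^L$ on the action-bounded complexes that preserve Alexander grading and $e_+, h_+$ multiplicities. By construction each consecutive composition can be homotoped through extendable exact symplectic cobordisms to a product cobordism (the space of adapted almost complex structures is contractible, exactly as in the proof of \cref{J_invariance_bounded_case_thm}). Applying \cref{composition_homotopic} and \cref{prod_cob_for_torus_bdry_prop} yields bi-filtered chain homotopies $\Phi_1^L\circ\Phi_0^L\simeq \mathrm{id}$ and $\Phi_2^L\circ\Phi_1^L\simeq \mathrm{id}$ after composing with the rescaling isomorphisms. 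This, exactly as in the bounded proof, produces bi-filtered chain homotopy equivalences
\[ f^L: ECC\orbits{0he}{eh}(\mathrm{int}(N),\alpha_1,J_1;L) \longrightarrow ECC\orbits{0he}{eh}(\mathrm{int}(N),\alpha_0,J_0;L), \]
together with a homotopy inverse $g^L$, both preserving Alexander grading.

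Next I remove the action bound by working within each Alexander grading. In the complex $ECC\orbits{he}{eh}$ the differential $h_+\longrightarrow\emptyset$ is excluded, so the complex splits as a direct sum over $A\in\Z_{\ge 0}$. For a fixed grading $j$, the inclusion maps $ECC\orbits{he}{eh}(\mathrm{int}(N),\alpha,J;A=j,L)\includes ECC\orbits{he}{eh}(\mathrm{int}(N),\alpha,J;A=j,L')$ commute with $f^L, g^L$ up to bi-filtered chain homotopy, by applying \cref{composition_homotopic} to the composition of the interpolating cobordism with a product cobordism that changes only the action bound. Passing to the direct limit via the mapping telescope in each Alexander grading converts this homotopy-coherent system into honest $e_+$-filtered chain homotopy equivalences between $ECC\orbits{he}{eh}(\mathrm{int}(N),\alpha_i,J_i;A=j)$ for $i=0,1$. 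Summing over $j$ gives the theorem, and the statement for $\widehat{ECK}$ follows by passing to the associated graded with respect to the $e_+$-filtration.

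The main obstacle is the last step: the bounded chain maps only commute with action-inclusion up to bi-filtered chain homotopy (since commutativity is derived from homotopies of cobordisms, not strict equalities). Realizing the ensuing homotopy-coherent direct system as a genuine $e_+$-filtered equivalence requires a careful telescope construction that respects both the $e_+$-filtration and the Alexander decomposition. The reason one can get away with this in the unbounded case (unlike for the full complex $ECC\orbits{0he}{eh}$) is precisely that, after dropping the $h_+\to\emptyset$ differential, the Alexander grading is preserved on the nose, so the telescope only has to be built in one direction and the output lives naturally in the $e_+$-filtered homotopy category within each grading.
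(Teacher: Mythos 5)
The crucial simplification in the paper's argument is that you never need a direct limit or a telescope at all: for a fixed Alexander grading $j$, the complex $ECC\orbits{he}{eh}(\mathrm{int}(N),\alpha_i,J_i;j)$ is already \emph{finite}. Every simple Reeb orbit in $\mathrm{int}(N)$, and also $e_\pm$ and $h_\pm$, has strictly positive Alexander grading (the Reeb flow is transverse to and positively crosses the pages), so an orbit set with Alexander grading exactly $j$ contains at most $j$ simple orbits, each of grading at most $j$, and there are only finitely many such orbits. Hence one can pick a single $L_j$ larger than the action of every orbit set in grading $j$, and the action-bounded complex coincides with the unbounded one; the problem reduces entirely to \cref{J_invariance_bounded_case_thm} applied grading by grading, with the constants $A_0,A_1$ absorbed by taking $L_j$ large enough that no orbit set has action in $[L_j,(A_0A_1)^{-1}L_j]$.

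This is where your proposal goes wrong. You correctly identify that the difficulty is that the bounded chain maps $\Phi^L$ commute with the inclusions $i_{L,L'}$ only up to bi-filtered chain homotopy, but then you set up a mapping-telescope-style direct limit to cope with this, and explicitly flag the coherence issues as ``the main obstacle.'' That obstacle does not exist in the setting of the theorem: once you restrict to a single Alexander grading, the inclusions $i_{L,L'}$ are eventually the identity map. The telescope would require controlling an infinite family of homotopies-of-homotopies, which is considerably more technology than the statement needs (and is precisely why the paper leaves the analogous result for the \emph{full} complex $ECC\orbits{0he}{eh}$, where the $h_+\to\emptyset$ differential mixes Alexander gradings, as \cref{full_ECK_J_and_alpha_invariant}, a separate conjecture proved only for integral open books via \cref{ECK_supported_genus}). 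Finally, a small notational slip: your map $f^L$ is stated between the complexes $ECC\orbits{0he}{eh}$, but the statement and all of the splitting-by-Alexander-grading argument require working with $ECC\orbits{he}{eh}$, which is the complex with the $h_+\to\emptyset$ differential removed.
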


$J$-invariance follows as an immediate corollary by setting $\alpha_0=\alpha_1$. Notice that in the statement of the theorem we have dropped the differential from $h_+$ to $\emptyset$.  The resulting complexes split, with respect to Alexander grading, as direct sums of finite complexes:
\[ ECC\orbits{he}{eh}(\mathrm{int}(N),\alpha, J) = \bigoplus_{j\in\N} ECC\orbits{he}{eh}(\mathrm{int}(N),\alpha, J;j) \]
where the modifier $j$ means we restrict to solely orbit sets with Alexander grading $j$.  We will see in the proof below that it is relatively straightforward to prove $J$ and $\alpha$ invariance of these finite complexes, by essentially taking for each $j$ an $L_j$ larger than the action of every orbit set of Alexander grading $j$.  We will however make the following more general conjecture:

\begin{conj}\label{full_ECK_J_and_alpha_invariant}
The bi-filtered homotopy type of the full ECK complex
\[ ECC\orbits{0he}{eh}(\mathrm{int}(N),\alpha, J) \]
is invariant of the choice of $J$ and under homotopies of $\alpha$.
\end{conj}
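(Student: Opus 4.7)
The plan is to imitate the proof of \cref{ECC_independent_of_alpha} while carrying the $h_+\longrightarrow\emptyset$ differential throughout, since this is precisely the piece of the differential that decreases Alexander grading and hence prevents the decomposition into finite Alexander-graded subcomplexes used there. The key structural observation that makes the bi-filtered statement tractable is \cref{maps_on_ECK}: any chain map arising from an extendable exact symplectic cobordism preserves both the Alexander grading and the $e_+$-grading exactly, and any chain homotopy induced by a homotopy through such cobordisms also respects both filtrations. Working entirely inside the cobordism framework of \cref{cobordism_maps_on_ECK} will therefore produce bi-filtered maps and bi-filtered chain homotopies automatically, which is exactly the type of data needed to conclude bi-filtered homotopy equivalence.

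First I would work at the level of action-bounded complexes. For each $L>0$, following the pattern of \cref{chain_map_between_acs,J_invariance_bounded_case_thm} and using \cref{interpolating_cobordism_between_homotopic_forms} to handle the homotopy of contact forms, construct extendable exact symplectic cobordisms interpolating between suitably rescaled copies of the pairs $(\alpha_0,J_0)$ and $(\alpha_1,J_1)$. By \cref{maps_on_ECK} these induce bi-filtered chain maps
\[ \Phi^L_i : ECC\orbits{0he}{eh}(\mathrm{int}(N),\alpha_i,J_i;L) \longrightarrow ECC\orbits{0he}{eh}(\mathrm{int}(N),\alpha_{1-i},J_{1-i};L). \]
Using \cref{composition_homotopic} together with the argument of \cref{J_invariance_bounded_case_thm} (homotoping the composition to a product cobordism and invoking \cref{prod_cob_for_torus_bdry_prop}), the compositions $\Phi^L_{1-i}\circ\Phi^L_i$ are bi-filtered chain homotopic to the natural rescaling isomorphisms. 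This yields bi-filtered chain homotopy equivalences of the action-bounded full ECK complexes for every $L$.

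The hard part, and the substantive new difficulty compared to \cref{ECC_independent_of_alpha}, is passing to the $L\to\infty$ limit. Because the $h_+\longrightarrow\emptyset$ differential connects different Alexander gradings, one cannot reduce the full complex to a finite piece in the way that worked within a fixed Alexander grading. At the same time, the maps $\Phi^L$ are defined via Seiberg--Witten theory and are highly non-canonical, so they do not commute strictly with the inclusions $ECC^L\hookrightarrow ECC^{L'}$; by \cref{composition_homotopic} they commute only up to a bi-filtered chain homotopy. The approach I would pursue is a homotopy direct limit (mapping telescope) argument: from the coherent system of $\Phi^L$'s and the compatibility homotopies, assemble a single chain map on the unbounded complex whose restriction to each $ECC^L$ is bi-filtered chain homotopic to $\Phi^L$, and then run a parallel telescope argument to show that the composition is bi-filtered chain homotopic to the identity on the unbounded complex. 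Throughout, the bi-filtration is preserved for free by \cref{maps_on_ECK}. The most delicate step is organizing the higher coherences (homotopies between homotopies, and so on) so that the telescope produces an honest chain map rather than merely a map of homotopy colimits; this is where a careful Seiberg--Witten-theoretic choice of compatible perturbation data across the sequence, perhaps in the spirit of the neck-stretching arguments behind \cref{composition_homotopic}, will be the crucial input.
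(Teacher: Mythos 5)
The statement you are attempting is left as a conjecture in the paper: it is proved there only for \emph{integral} open book decompositions (\cref{ECK_invariant_ZOBD}), and by a route that deliberately avoids any limit over $L$. The paper first establishes invariance of the associated graded complex within each Alexander grading (\cref{ECC_independent_of_alpha}), where finiteness lets one choose a single $L$ per grading; this feeds into the genus bound \cref{ECK_supported_genus}, and then \cref{full_ECK_computable_from_ECK_hat} replaces the full bi-filtered complex by the finite subcomplex of the zeroth column in Alexander gradings at most $2g$, on which the cobordism argument runs with one fixed action bound. Your action-bounded paragraphs correctly reproduce that machinery, but the reduction to a finite complex is precisely what is unavailable for a general rational open book, which is why the paper stops short of the full conjecture.

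The genuine gap is the telescope step. The paper explicitly flags in \cref{invariance_results_without_an_action_bound} that the maps $\Phi^L$ are non-canonical, defined through Seiberg--Witten theory, and commute with the inclusions $i_{L,L'}$ only up to bi-filtered chain homotopy; assembling them into a single chain map on the unbounded complex requires not just those homotopies but a coherent tower of higher homotopies, none of which are supplied by \cref{composition_homotopic} or by the neck-stretching results it rests on. You identify this as ``the crucial input'' but do not construct it, and there is no mechanism in the framework for producing it: the chain-level cobordism maps depend on choices of $L$-flat approximations, perturbation data $(\mu,r,\eta)$ and admissible almost complex structures on the completion, and no way is known to make these choices coherently across all $L$ at the chain level. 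Even granting the coherences, the homotopy colimit of the system $\{ECC^L,\Phi^L\}$ would still have to be identified, as a \emph{bi-filtered} complex, with the honest union $\bigcup_L ECC^L$ taken along the inclusions, which again requires controlling $\Phi^L$ against $i_{L,L'}$ beyond the first homotopy. As written your proposal is an accurate description of the obstruction rather than an argument that overcomes it; the realistic unconditional route is the one the paper itself suggests, namely extending the genus bound \cref{ECK_supported_genus} to rational open books so that the limit can be bypassed entirely.
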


Later, in \cref{invariance_results_for_full_ECK}, we will prove this conjecture in the case of \emph{integral} open  book decompositions.  This will follow from \cref{ECC_independent_of_alpha} and the results of \cref{ECK_hat_supported_in_low_Alexander_degrees}.

\begin{proof}[Proof of \cref{ECC_independent_of_alpha}]
The proof which follows is very similar conceptually to that of \cref{J_invariance_bounded_case_thm}.

By \cref{interpolating_cobordism_between_homotopic_forms}, there exist positive constants $A_0$, $A_1$ and interpolating cobordisms $([0,1]\cross N,\lambda_0)$ from $\alpha_1$ to $A_0\alpha_0$ and $([0,1]\cross N,\lambda_1)$ from $A_0\alpha_0$ to $A_1\alpha_1$.  We can assume that both $A_i$ are less than or equal to 1.  Furthermore, we can equip these cobordisms with cobordism-admissible almost complex structures which interpolate between $J_1$ and $J_0^{c_0}$ in the case of $\lambda_0$ and between $J_0^{c_0}$ and $J_1^{c_1}$ in the case of $\lambda_1$ (here $A_i=e^{c_i}$).  The reason for this follows from the contractability of the space of adapted almost complex structures for each form, via a very similar argument to that used in the proof of \cref{J_invariance_bounded_case_thm}.

Fix $j\ge 0$ and let $L>0$ be large enough so that all orbits of $\alpha_i$ with Alexander grading $j$ have $\alpha_i$-action less than $(A_0A_1)^{-1}L>L$, for $i=0,1$. Then
\[ ECC\orbits{he}{eh}(\mathrm{int}(N),A\alpha_i, J_i^{c};j,L) = ECC\orbits{he}{eh}(\mathrm{int}(N),A\alpha_i, J_i^{c};j) \]
for $i\in\set{0,1}$, $A\in\set{1,A_0, A_1, A_0A_1}$ and $c=\log(A)$.

Denote the induced chain maps by $\Phi_0$ and $\Phi_1$ respecticely.  We also have a cobordism $([0,1]\cross N, A_1\lambda_0)$ between $(A_1\alpha_1, J_1^{c_1})$ and $(A_0A_1\alpha_0, J_0^{c_0+c_1})$; denote its induced chain map by $\Phi_2$.

Recall from \cref{direct_limits_through_cobordism_maps} that interpolating cobordisms are defined via an isotopy of $M$.  By Colin, Ghiggini and Honda~\cite[Lemma 3.1.6]{CGH_ECH_OBD}, a homotopy of isotopies relative to its endpoints induces a homotopy of exact symplectic cobordisms.  In this case, if $\lambda_0$ is defined using some isotopy $\phi_t$ then we can assume $\lambda_1$ is defined via the inverse isotopy $\phi_{1-t}\phi_1^{-1}$.  The composition of the two cobordisms is therefore defined by the concatenation of the isotopies, which is in turn homotopic to the identity isotopy.

Therefore the composition of the two cobordisms above is homotopic, through exact symplectic cobordisms, to the product cobordism on $[0,2]\cross N$ between $\alpha_1$ and $A_1\alpha_1$, and hence, by \cref{maps_on_ECK} and \cref{prod_cob_for_torus_bdry_prop,composition_homotopic}, $\Phi_1\circ\Phi_0$ is bi-filtered chain homotopic to the map
\[ ECC\orbits{he}{eh}(\mathrm{int}(N),\alpha_1, J_1;j,L) \to ECC\orbits{he}{eh}(\mathrm{int}(N),A_1\alpha_1, J_1^{c_1};j,L) \]
which maps every orbit set to itself. Similarly the composition $\Phi_2\circ\Phi_1$ is also bi-filtered chain homotopic to the map sending every orbit set to itself.

Now, analogously to the proof of \cref{J_invariance_bounded_case_thm}, define a collection of isomorphisms
\[ \Theta_i^{(A)} : ECC\orbits{he}{eh}(\mathrm{int}(N),\alpha_i, J_i;j) \to ECC\orbits{he}{eh}(\mathrm{int}(N),A\alpha_i, J_i^c ; j,L), \]
for $i\in\set{0,1}$, $A\in\set{1,A_0, A_1, A_0A_1}$ and $c=\log(A)$, by the composition
\[\begin{tikzcd}
ECC\orbits{he}{eh}(\mathrm{int}(N),\alpha_i,J_i;j)\arrow[d,"="] \\
ECC\orbits{he}{eh}(\mathrm{int}(N),\alpha_i,J_i;j,L)\arrow[d,"{i_{L,A^{-1}L}}"] \\
ECC\orbits{he}{eh}(\mathrm{int}(N),\alpha,J_i;j,A^{-1}L) \arrow[d,"{s^{c}}"]\\
ECC\orbits{he}{eh}(\mathrm{int}(N),A\alpha, J_i^{c};j,L)
\end{tikzcd}\]
where the inclusion maps $i_{L,A^{-1}L}$ are isomorphisms since $\alpha$ has no orbit sets with action in the interval $[L,A^{-1}L]$ and $s^c$ are the rescaling isomorphisms from \cref{rescale_alpha_J_iso}.  Furthermore, these maps, and hence $\Theta^{(A)}_i$, map every orbit set to itself.

Now define chain maps by
\begin{align*}
g_1 &:= \left(\Theta^{(A_0)}_0\right)^{-1}\circ\Phi_0\circ\Theta^{(1)}_1: C_1 \to C_0,\\
f &:= \left(\Theta^{(A_1)}_1\right)^{-1}\circ\Phi_1\circ\Theta^{(A_0)}_0:C_0 \to C_1,\\
g_2 &:= \left(\Theta^{(A_0A_1)}_0\right)^{-1}\circ\Phi_2\circ\Theta^{(A_1)}_1: C_1 \to C_0,
\end{align*}
where here we are using the notation 
\[  C_i := ECC\orbits{he}{eh}(\mathrm{int}(N),\alpha_i,J_i;j). \]
Then both $f\circ g_1$ and $g_2\circ f$ are bi-filtered chain homotopic to the identity map, and hence
\[ f\quad\text{and}\quad g_2\circ f\circ g_1 \]
are the desired bi-filtered chain homotopy equivalences.
\end{proof}

\section{\texorpdfstring{$\widehat{ECK}$}{ECK hat} is supported in low Alexander degrees}\label{ECK_hat_supported_in_low_Alexander_degrees}

\begin{defn}
$\widehat{ECK}$ splits canonically as a direct sum with respect to the Alexander grading.  Write
\[ \widehat{ECK}(K,\alpha;j) \]
to denote the summand of $\widehat{ECK}$ with Alexander grading $j$, so that
\[ \widehat{ECK}(K,\alpha) = \bigoplus_{j\ge0}\widehat{ECK}(K,\alpha;j). \]
$\widehat{ECK}(K,\alpha;j)$ is the homology of the part of the full complex with coordinates $(0,j)$.
\end{defn}

In this section, assume that we are defining ECK from an integral open book decomposition, so that the binding $K$ is homologous to the boundary of the page $\Sigma$ and hence is null-homologous.  The goal of this section is to prove the following:

\begin{thm}\label{ECK_supported_genus}
Let $g$ be the genus of the surface $\Sigma$. Then
\[ \widehat{ECK}(K,\alpha;j) = 0 \]
for $j>2g$.  As a result, when considered up to bi-filtered chain homotopy type, the full complex
\[ ECC\orbits{0he}{eh}(\mathrm{int}(N), \alpha, J) \]
is supported entirely within the diagonal region of height $2g$,
\[ \set{ (i,j)\in \Z^2 \gappy{|} i\ge0, i\le j \le i+2g } \]
(c.f.~the discussion in \cref{symmetries_of_the_complex}).
\end{thm}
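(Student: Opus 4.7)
The plan is to use periodic Floer homology (PFH) of the monodromy $\phi:\Sigma\to\Sigma$ as an intermediate model for the complex $ECC\orbits{he}{h}(\mathrm{int}(N),\alpha,J)$ in each Alexander grading. Recall that in the integral setting the Alexander grading $A(\Gamma)=\langle \Gamma, \Sigma\cross\set{0}\rangle$ equals the total algebraic intersection number of the orbit set with a page, which is exactly the degree grading in PFH. Since the Reeb vector field of our $\alpha$ is, by construction, transverse to every page $\Sigma\cross\set{t}$, periodic orbits of $R_\alpha$ in $\mathrm{int}(N)$ of intersection number $k$ with a page correspond bijectively (via the first return map) to periodic orbits of $\phi$ of period $k$.

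First, I would relate $ECC\orbits{he}{h}(\mathrm{int}(N),\alpha,J;j)$ to the PFH chain complex $PFC_j(\phi)$ in degree $j$. The extra boundary generators $h_+, e_-, h_-$ each contribute $1$ to the Alexander grading (since in the integral case the meridian of $K$ is homologous to a $t$-fibre and $q=1$). Using the filtration and Cancellation Lemma techniques of \cref{computing_ech_via_a_filtration_argument,the_isomorphism_with_relative_ECH}---but now splitting by Alexander grading instead of the $\eta$-filtration---these boundary orbits can be packaged into factors equivalent to those already appearing in standard PFH. This gives a chain-level identification, or at least a quasi-isomorphism, between $ECC\orbits{he}{h}(\mathrm{int}(N),\alpha,J;j)$ and the PFH chain complex for $(\Sigma,\phi)$ in degree $j$.

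Second, I would invoke the computation of PFH in high degrees for a surface with boundary. For an integer $d>2g$, every generator of $PFC_d(\phi)$, viewed as a configuration of points on $\Sigma$ fixed by an iterate of $\phi$, admits a geometric interpretation via the Abel--Jacobi map into $\mathrm{Sym}^d(\bar\Sigma)$, which for $d>2g$ is a $\mathbb{CP}^{d-g}$-bundle over $\mathrm{Jac}(\bar\Sigma)$. The key geometric input is that in these high degrees one can, after a further perturbation (a version of \cref{perturbation_fn_g}), either produce holomorphic sections connecting any cycle to zero or argue directly from the vanishing of the appropriate relative Floer homology of $\phi^{(d)}$ on $\mathrm{Sym}^d(\Sigma)$ in the presence of the boundary of $\Sigma$, which kills the part detected by ECK since $K$ is (after filling) the core of the Dehn filling.

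The main obstacle will be the first step: making rigorous the chain-level passage from $\widehat{ECK}$ to PFH, because the three boundary orbits $h_+, h_-, e_-$ must be absorbed compatibly with all differentials counted by ECK (including multiply-covered contributions forbidden in ECH but appearing naively in PFH via branched covers of trivial cylinders). I expect to dispose of this by combining the \refNamedThm{Trapping Lemma}{trapping_lemma}---which forces any holomorphic curve near $\del N$ to have only negative ends there---with the Cancellation Lemma to remove $(e_-,h_-)$-pairs, leaving a complex in which $h_+$ contributes only as a shift and the remaining generators are precisely PFH generators, to which the Lefschetz/symmetric product vanishing can be applied.
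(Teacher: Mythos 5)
Your first step—passing from the ECK complex to periodic Floer homology of the monodromy—is on the right track and matches the paper's strategy in spirit: the paper reparametrizes the contact form so the Reeb vector field is parallel to $\del_t$, takes a family $\alpha_\zeta = C\,dt + \zeta\alpha$ limiting to a stable Hamiltonian structure, and invokes a Gromov compactness result of Colin--Ghiggini--Honda together with the $\alpha$-invariance established in \cref{ECC_independent_of_alpha} to identify the two complexes in each Alexander grading (\cref{ECC_is_PFC}). Your sketch of how the boundary orbits $h_\pm, e_-$ are absorbed via the Trapping and Cancellation Lemmas is in the right spirit, though note the paper keeps $e_-,h_-$ in the complex and only drops the differential $h_+\to\emptyset$, so the identification is with a Morse--Bott PFC that retains boundary generators, not a ``bare'' PFC of the monodromy.

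The real gap is in your second step. You propose to deduce the vanishing of $PFH$ in degrees $d>2g$ directly from the geometry of the Abel--Jacobi map on $\mathrm{Sym}^d(\bar\Sigma)$. This is the right heuristic, but there is no off-the-shelf theorem identifying PFH of a surface-with-boundary monodromy in degree $d$ with a Floer homology of $\mathrm{Sym}^d(\Sigma)$ to which the $\mathbb{CP}^{d-g}$-bundle structure could be directly applied; establishing a correspondence of that form is precisely the difficult analytic content of the CGH papers proving $ECH = HF$. Your phrases ``produce holomorphic sections connecting any cycle to zero'' and ``vanishing of the appropriate relative Floer homology of $\phi^{(d)}$ on $\mathrm{Sym}^d(\Sigma)$'' describe a hoped-for conclusion, not an argument. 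The paper instead proves \cref{e_map_is_quasi_iso}: the map $\Gamma\mapsto e_-\Gamma$ on $PFC^{e}_{h}(\Sigma;\cdot)$ is a quasi-isomorphism for Alexander degrees $\geq 2g$. This is shown by \emph{stabilizing} the open book---attaching a once-punctured torus carrying a trefoil monodromy to form $\Sigma'$---and exploiting the commutative square (\cref{stabilization_diag}) whose vertical maps are the CGH isomorphisms $\widehat{HF}\cong PFH$ and whose top map $\Theta$ is the known Heegaard Floer stabilization isomorphism; the tensor-product decomposition of \cref{stabilization_iso} then lets one compare terms and run an induction on the number of stabilizations. Without this detour through the already-established $\widehat{HF}\cong PFH$ isomorphism, the high-degree vanishing does not follow, so your proposal as written does not close the argument.
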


We will prove this theorem by building upon results of Colin, Ghiggini and Honda.  The first step is to switch from the embedded contact homology groups to the similarly defined \emph{periodic Floer homology} groups.

Let $\phi:(\Sigma,\omega)\to(\Sigma,\omega)$ be a symplectomorphism and define the mapping torus $N := M(\phi)$.  Periodic Floer homology is defined completely analogously to embedded contact homology, except we replace the Reeb vector field with the vector field $\del/\del t$ (where $t$ denotes the $[0,1]$ coordinate) and holomorphic curves are counted with respect to the almost complex structure $J$ on $\R\cross N$ satisfying 
\begin{itemize}
\item on each $\set{s}\cross\Sigma\cross\set{t}$, $J$ maps $T\Sigma$ to itself and is compatible with $\omega$ in the sense that $\omega(\cdot,J\cdot)$ is a Euclidean metric on $T\Sigma$; and
\item $J(\del_t) = \del_s$.
\end{itemize}
In fact, since we have a Morse-Bott torus at $\del N$, the definition of periodic Floer homology in this setting is more analogous to Morse-Bott contact homology and the differential is computed by counting very nice index 1 $J$-holomorphic Morse-Bott buildings.

The setup used in periodic Floer homology is called a stable Hamiltonian structure and is a generalization of a contact structure.  
\begin{defn}
A \emph{stable Hamiltonian structure} is a triple $(N,\omega,\alpha)$, where $\omega$ is a two-form on $N$ and $\alpha$ is a one-form, such that
\begin{itemize}
\item $\alpha\wedge\omega>0$ everywhere and
\item $\mathrm{ker}(\omega) \subset \mathrm{ker}(d\alpha)$.
\end{itemize}
A stable Hamiltonian structure then gives rise to a Reeb vector field $R$ on $N$ defined by
\[ \iota_R(\omega)=0 \quad\text{and}\quad \alpha(R)=1. \]
The associated symplectization is $(\R\cross N, J)$, where $J$ is defined on the plane distribution $\mathrm{ker}(\alpha)$ to be compatible with $\omega$ and otherwise by the equation $J(R)=\del_s$.
\end{defn}

The stable Hamiltonian structure used in periodic Floer homology is the triple
\[ (N, \omega, dt), \]
but note that any contact structure gives rise to a stable Hamiltonian structure by setting $\omega=d\alpha$.

We will follow the argument by Colin, Ghiggini and Honda~\cite[Section 3]{CGH_HF_ECH_I} to construct a sequence of contact forms $\alpha_\zeta$ on $N$ which limit towards a stable Hamiltonian structure, and obtain the result that for small enough $\zeta>0$ the resulting ECC and PFC chain complexes are isomorphic.

\subsection{Reparametrizing the contact form on \texorpdfstring{$N$}{N}}\label{reparametrizing_the_contact_form_on_N}

Recall the contact form $\alpha$ defined on $N$ in \cref{a_contact_form_on_N}, or more precisely the contact form $\alpha_3$ from which we obtained $\alpha$ by taking a small perturbation to achieve non-degeneracy on $\mathrm{int}(N)$.  We will recall a few important properties of $\alpha_3$ here, dropping the subscript for simplicity of notation.
\begin{itemize}
\item $\alpha = f_t\dd t + \beta_t$, where $f_t$ is a positive function and $\beta_t$ is a one-form on $\Sigma$.
\item On a neighbourhood $U:=\del N\cross[2,2+\epsilon)$ of $\del N$, $f_t$ and $\beta_t$ depend only on $y$, the $[2,2+\epsilon)$ coordinate.
\item On $N \sminus U$, the Reeb vector field of $\alpha$ is parallel to $\del/\del t$.
\item On $U$, the Reeb vector field is parallel to $-g'(y)\dd t + f'(y)\dd \theta$ (c.f.~\cref{contact_form_on_extension_of_N_fig}, recalling that the form as shown in the figure is pulled back to $N$ when defining $\alpha_3$).
\end{itemize}
We will now make a slight alteration to our coordinate system for $N$ on $U$ so that the Reeb vector field for $\alpha$ is parallel to $\del/\del t$ everywhere, and the monodromy $\phi$ for $N$, rather than restricting to the identity map on $\del\Sigma\cross[2,2+\epsilon)$, will turn to the \emph{right} away from $\del \Sigma$.  Let
\[ (t,\theta',y) \]
be a new coordinate system for $U$, where $\theta' = \theta - ts(y)$ for some function $s$ which is to be determined. Then on $U$,
\[\begin{split}
\alpha &= f(y)\dd t + g(y)\dd \theta \\
         &= f(y)\dd t + g(y)(d\theta' + s(y)\dd t + ts'(y)\dd y) \\
				 &= (f(y) + g(y)s(y))\dd t + g(y)\dd\theta' + tg(y)s'(y)dy
\end{split}\]
and hence
\[ d\alpha = (f'(y) + g'(y)s(y))\dd y\wedge dt + g'(y)\dd y\wedge d\theta'. \]
Now define
\[ s(y) := -\frac{f'(y)}{g'(y)}\ge 0. \]
Then
\[ d\alpha(\del_t,\cdot) = 0 \]
and $s(y) = 0$ for $y=2,2+\epsilon$, so the new coordinate system is compatible with the old coordinate system on the rest of $N$.  Also, in this new coordinate system, the monodromy near $\del\Sigma$ is given by the identification
\[  (1,\theta',y) \sim (0,\theta'+s(y),y), \]
which turns to the right away from $\del\Sigma$.  Of course, when defining the contact form $\alpha$ on $N$, we took a small perturbation to make all Reeb orbits non-degenerate.  In this setting, we make a small perturbation to the \emph{monodromy} to achieve non-degeneracy.

As a result of this discussion, we can assume that
\begin{itemize}
\item $\alpha = f_t\dd t + \beta_t$, where $f_t$ is a positive function and $\beta_t$ is a one-form on $\Sigma$.
\item The Reeb vector field of $\alpha$ is parallel to $\del/\del t$ everywhere.
\end{itemize}
Then we can compute that
\begin{align*}
 d\alpha &= d_\Sigma f_t\wedge dt + d_\Sigma \beta_t + dt\wedge \dot\beta_t \\
         &= d_\Sigma \beta_t + (d_\Sigma f_t - \dot\beta_t)\wedge dt,
\end{align*}
and hence, since $d\alpha(\del_t,\cdot) = 0$, we must have $d_\Sigma f_t = \dot\beta_t$.  Then 
\[ \frac{\del}{\del t}(d\alpha)=\frac{\del}{\del t}(d_\Sigma \beta_t) = d_\Sigma (\dot\beta_t) = d_\Sigma (d_\Sigma f_t) = 0, \]
so $d\alpha$ is independent of $t$ and hence can be denoted by $\omega$.

\subsection{Limiting towards a stable Hamiltonian structure}\label{limiting_towards_a_stable_hamiltonian_structure}

Now consider the one form
\[ \alpha_\zeta = C\dd t + \zeta\alpha \]
for $\zeta\in[0,1]$, which is a contact form for $\zeta\neq 0$ and defines a stable Hamiltonian structure $C\dd t$ when $\zeta=0$.  The derivative of $\alpha_\zeta$ is
\[ d\alpha_\zeta = \zeta d\alpha = \zeta\omega \]
and hence the data $(\alpha_\zeta,\omega)$ is a stable Hamiltonian structure with Reeb vector field parallel to $\del/\del t$ for all $\zeta\in[0,1]$.  Moreover we can choose compatible almost complex structures $J_\zeta$ such that 
\begin{itemize}
\item $J_\zeta$ is compatible with $(\alpha_\zeta,\omega)$,
\item $J_\zeta$ is smooth in $\zeta$,
\item $J_0$ is regular, in the sense that the moduli spaces between orbit sets with Alexander grading at most $j$ are transversely cut out~\cite[Lemma 3.5.2]{CGH_HF_ECH_I}.
\end{itemize}
Since $d\alpha_\zeta = \zeta\omega$, the almost complex structure $J_\zeta$ is also a suitable choice for defining the ECC complex, so we obtain the complexes
\[  ECC\orbits{he}{eh}(\mathrm{int}(N),\alpha_\zeta, J_\zeta;j)\quad\text{and}\quad PFC\orbits{he}{eh}(\mathrm{int}(N), C\dd t, J_0; j). \]
It then follows from a compactness argument~\cite[Theorem 3.6.1]{CGH_HF_ECH_I} that for $\zeta>0$ sufficiently small, the above chain complexes are isomorphic under the canonical identification of orbits.

Since $\alpha$ and $\alpha_\zeta$ are homotopic contact forms for all $\zeta>0$, their ECC complexes are filtered chain homotopy equivalent by \cref{ECC_independent_of_alpha}.  We therefore obtain the following:
\begin{thm}\label{ECC_is_PFC}
For a fixed $j\ge 0$, there exists $J_0$, a regular compatible almost complex structure for the stable Hamiltonian structure $(C\dd t, \omega)$, such that the complexes
\[  ECC\orbits{he}{eh}(\mathrm{int}(N),\alpha ;j)\quad\text{and}\quad PFC\orbits{he}{eh}(\mathrm{int}(N), \alpha_0, J_0; j) \]
have the same $(e_+)$-filtered homotopy type.
\end{thm}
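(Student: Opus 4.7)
The plan is to assemble the proof from the pieces already laid out in Sections \ref{reparametrizing_the_contact_form_on_N} and \ref{limiting_towards_a_stable_hamiltonian_structure} together with the invariance result \cref{ECC_independent_of_alpha} from the previous chapter. First, I would recall that after the reparametrization of $\alpha$ described in \cref{reparametrizing_the_contact_form_on_N}, the Reeb vector field of $\alpha$ is everywhere parallel to $\del/\del t$ and $d\alpha=\omega$ is independent of $t$. This is what allows us to define the one-parameter family $\alpha_\zeta = C\dd t + \zeta\alpha$ and observe that $(\alpha_\zeta,\omega)$ is a stable Hamiltonian structure for every $\zeta\in[0,1]$, with common Reeb vector field parallel to $\del/\del t$. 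In particular the set of simple Reeb orbits (and their Alexander gradings) is independent of $\zeta$, so there is a canonical identification of generators of the ECC and PFC complexes.

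Next, I would fix $j\ge0$ and select a regular compatible almost complex structure $J_0$ for the stable Hamiltonian structure $(C\dd t,\omega)$, in the sense that the moduli spaces of Morse-Bott buildings between orbit sets of Alexander grading at most $j$ are transversely cut out; the existence of such $J_0$ is Lemma~3.5.2 of \cite{CGH_HF_ECH_I}. Then I would extend $J_0$ to a smooth family $J_\zeta$ of compatible almost complex structures for $(\alpha_\zeta,\omega)$, noting that each $J_\zeta$ (for $\zeta>0$) serves equally well as a regular adapted almost complex structure for the contact form $\alpha_\zeta$ because $d\alpha_\zeta=\zeta\omega$.

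The heart of the argument is then a compactness/gluing statement: for all sufficiently small $\zeta>0$, every very nice ECH-index-1 Morse-Bott building of $(\alpha_\zeta,J_\zeta)$ between orbit sets of Alexander grading $j$ corresponds, under the canonical identification of orbits, to a very nice ECH-index-1 Morse-Bott building of $(C\dd t,J_0)$, and vice versa. This is essentially Theorem~3.6.1 of \cite{CGH_HF_ECH_I}, and it yields an honest isomorphism of chain complexes
\[ ECC\orbits{he}{eh}(\mathrm{int}(N),\alpha_\zeta,J_\zeta;j) \iso PFC\orbits{he}{eh}(\mathrm{int}(N),C\dd t,J_0;j). \]
Because this isomorphism is induced by the identity on generators, it respects the filtration by the $e_+$-multiplicity, hence gives an equivalence of $(e_+)$-filtered complexes within the fixed Alexander grading $j$. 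I expect this step to be the main obstacle: one must check that in the $\zeta\to0$ limit, no holomorphic curves contributing to the differential escape to infinity, bubble off, or acquire multiply-covered components, and that no new index-1 curves appear that were absent for $\zeta>0$. Fortunately the action bound implicit in restricting to Alexander grading $\le j$ and the niceness of $\alpha$ (with respect to the relevant differentials) make this a routine application of the cited compactness result rather than a genuinely new piece of analysis.

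Finally, since $\alpha$ and $\alpha_\zeta$ are homotopic through extendable contact forms on $N$ (namely through the family $\alpha_{s\zeta + (1-s)}$ of positive multiples and sums), \cref{ECC_independent_of_alpha} provides an $(e_+)$-filtered chain homotopy equivalence, within each Alexander grading, between $ECC\orbits{he}{eh}(\mathrm{int}(N),\alpha;j)$ and $ECC\orbits{he}{eh}(\mathrm{int}(N),\alpha_\zeta,J_\zeta;j)$. Composing this equivalence with the isomorphism above completes the proof. The only care needed here is to verify that the deformation from $\alpha$ to $\alpha_\zeta$ stays within the class of forms extendable to the rational open book, so that \cref{ECC_independent_of_alpha} genuinely applies; this is immediate from the fact that the deformation only rescales $\alpha$ and adds a multiple of $C\dd t$, neither of which affects the boundary behaviour near $\del N$.
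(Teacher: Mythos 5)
Your proposal follows the paper's argument essentially step for step: the reparametrization so that $R$ is parallel to $\del/\del t$ and $d\alpha=\omega$, the family $\alpha_\zeta=C\dd t+\zeta\alpha$ with compatible $J_\zeta$ and regular $J_0$, the compactness result \cite[Theorem 3.6.1]{CGH_HF_ECH_I} giving an isomorphism of chain complexes for small $\zeta>0$, and finally \cref{ECC_independent_of_alpha} to relate $\alpha$ and $\alpha_\zeta$. This is the same proof as in the paper, with only the minor extra (and correct) observation that the homotopy from $\alpha$ to $\alpha_\zeta$ stays within the relevant class of forms.
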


When $N$ is a mapping torus formed via $\phi:\Sigma\to\Sigma$, and when $\phi$ is understood, we will denote the periodic Floer groups by
\[ PFC\orbits{he}{eh}(\Sigma;j), \]
dropping the almost complex structure $J_0$ from the notation.

\subsection{Stabilization of periodic Floer groups}

By \cref{ECC_is_PFC}, the proof of \cref{ECK_supported_genus} is reduced to proving that
\[ PFH\orbits{he}{h}(\Sigma; j) = 0 \]
for $j>2g$. We will extend the work of Colin, Ghiggini and Honda~\cite[Section 5]{CGH_HF_ECH_II} to prove the following lemma.
\begin{lemma}\label{e_map_is_quasi_iso}
The map
\begin{align*}
\mathfrak{J}_j:PFC\orbits{e}{h}(\Sigma;j) &\to PFC\orbits{e}{h}(\Sigma;j+1) \\
\Gamma &\mapsto e_-\Gamma
\end{align*}
is a quasi-isomorphism for $j\ge 2g$.
\end{lemma}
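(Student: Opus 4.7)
The plan is to realise the cokernel of $\mathfrak{J}_j$ as a chain complex which is acyclic in the stable range $j+1 > 2g$. First, I check that $\mathfrak{J}_j$ is a chain embedding: by the \refNamedThm{Trapping Lemma}{trapping_lemma} applied at the negative Morse-Bott torus $\del N$, no Morse-Bott building contributing to the $PFC$ differential can have a positive end at $e_-$, so multiplication by $e_-$ commutes with $d$, and its image consists exactly of those orbit sets with positive $e_-$-multiplicity. This yields the short exact sequence of chain complexes
\[ 0 \to PFC\orbits{e}{h}(\Sigma;j) \xrightarrow{\mathfrak{J}_j} PFC\orbits{e}{h}(\Sigma;j+1) \to PFC\orbits{}{h}(\Sigma;j+1) \to 0, \]
and the induced long exact sequence on homology reduces the lemma to proving that $PFH\orbits{}{h}(\Sigma;j+1) = 0$ whenever $j+1 > 2g$.

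The same Trapping-Lemma argument applied now to the hyperbolic orbit $h_-$ (using $h_-^2 = 0$) presents $PFC\orbits{}{h}(\Sigma;k)$ as the mapping cone of a chain map
\[ \phi_k : PFC\orbits{}{}(\Sigma;k-1) \to PFC\orbits{}{}(\Sigma;k), \]
whose matrix entries count Morse-Bott buildings with exactly one negative end at $h_-$ and no end at $e_-$. The vanishing of $PFH\orbits{}{h}(\Sigma;k)$ is therefore equivalent to $\phi_k$ inducing an isomorphism on the bulk periodic Floer homology for $k > 2g$: in other words, to a pure stabilization statement intrinsic to the mapping torus.

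The main obstacle is this last step, identifying $\phi_k$ with a stabilization map and proving it is a quasi-isomorphism in the range $k > 2g$. I would attack it through the finite-energy foliation near $\del N$ already exploited in \cref{computing_ech_via_a_filtration_argument}: curves with a single $h_-$ negative end and no $e_-$ ends are controlled by this foliation and, combined with positivity of intersection at $\del N$ and a Fredholm-index bound, should identify $\phi_k$ with the natural ``add a meridian of $K$'' stabilization map on the bulk $PFC$. Once $k > 2g$ the $j$-dependence of periodic Floer homology becomes purely combinatorial in the genus (on the Heegaard-Floer side, this is the stable range in which $\widehat{HF}(\mathrm{Sym}^k(\Sigma))$ becomes independent of $k$ up to the obvious structure), which should be extractable either directly from the degeneration to the stable Hamiltonian structure $(C\,dt,\omega)$ of \cref{limiting_towards_a_stable_hamiltonian_structure} or by appealing to the corresponding $PFH$ stabilization theorem of Colin, Ghiggini and Honda. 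Either route produces the required quasi-isomorphism $\phi_k$, forcing $PFH\orbits{}{h}(\Sigma;k)=0$, and the short exact sequences above then close the argument.
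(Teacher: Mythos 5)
Your reduction via the short exact sequence
\[ 0 \to PFC\orbits{e}{h}(\Sigma;j) \xrightarrow{\mathfrak{J}_j} PFC\orbits{e}{h}(\Sigma;j+1) \to PFC\orbits{}{h}(\Sigma;j+1) \to 0 \]
is sound (and is in fact precisely how the paper deduces \cref{ECK_supported_genus} \emph{from} this lemma), but it is also content-free: the lemma, the vanishing of $PFH\orbits{}{h}(\Sigma;k)$ for $k>2g$, and the claim that your $\phi_k$ is a quasi-isomorphism are three equivalent restatements of the same assertion. You have not proved any of them; you have merely moved the difficulty from $\mathfrak{J}_j$ to $\phi_k$. (Also, $\phi_k$ should run from Alexander grading $k$ to $k-1$, not the other way, since by the Trapping Lemma the orbit $h_-$ at the negative Morse--Bott torus $\del N$ can only appear at negative ends, so the differential preserves or increases the $h_-$-multiplicity.)

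The proposed route to close the gap does not work. You want to identify $\phi_k$ --- a count of actual holomorphic curves with one negative end at $h_-$ and arbitrary behaviour in the bulk of $N$ --- with an ``add a meridian'' stabilization map, but in $PFC\orbits{}{}(\Sigma;\cdot)$ there is no distinguished orbit of Alexander grading $1$ to multiply by, and the finite energy foliation near $\del N$ constrains curves only in a collar of $\del N$, not in the interior of $N$ where the interesting part of a $\phi_k$-curve lives. More importantly, the ``PFH stabilization theorem of Colin, Ghiggini and Honda'' you appeal to establishes that $i_*\circ\mathfrak{J}^2_*$ (the composite $\mathfrak{J}_{2g+1}\circ\mathfrak{J}_{2g}$ post-composed with a genus-stabilization inclusion) is an isomorphism; upgrading this to the statement that \emph{each individual} $\mathfrak{J}_j$ is a quasi-isomorphism, for \emph{all} $j\ge 2g$, is exactly the new mathematical content that the lemma supplies and which is not in their work. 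The degeneration to the stable Hamiltonian structure controls the passage between $ECC$ and $PFC$ but contributes nothing towards stabilization. So every ingredient you invoke either proves something weaker or proves something orthogonal. The actual argument in the paper proceeds quite differently: it stabilizes $\Sigma$ by boundary-connected sum with a punctured torus, uses the Heegaard Floer--PFH isomorphism together with a tensor-product decomposition of $PFH$ of the stabilized surface (\cref{stabilization_iso}), and then runs an explicit element-comparison argument, inducting on the number of stabilizations, to isolate each $\mathfrak{J}_j$ from the composite. That element-level bookkeeping in the stabilized model is where the lemma is actually proved, and your proposal does not substitute for it.
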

\begin{proof}[Proof of \cref{ECK_supported_genus} from \cref{e_map_is_quasi_iso}]
First denote by $C_j$ the complex $PFC\orbits{}{h}(\Sigma;j)$ and notice that, by the \refNamedThm{Cancellation Lemma}{cancellation_lemma}, $C_j$ is chain homotopic to the complex $PFC\orbits{he}{h}(\Sigma;j)$.  We have a short exact sequence
\[\begin{tikzcd}
0\arrow[r] & PFC\orbits{e}{h}(\Sigma;j)\arrow[r,"\mathfrak{J}_j"] & PFC\orbits{e}{h}(\Sigma;j+1) \arrow[r]& C_{j+1} \arrow[r] &0,
\end{tikzcd}\]
where the second map is the quotient map sending $e_-$ to 0.  The resulting exact triangle on homology completes the argument since, if $\mathfrak{J}_j$ is a quasi-isomorphism, then
\[ PFH\orbits{he}{h}(\Sigma; j) \iso H_*(C_{j+1})=0\]
for $j\ge 2g$ as required.
\end{proof}

The remainder of this section will be spent proving \cref{e_map_is_quasi_iso}, which is rather involved and borrows many tools from the work of Colin, Ghiggini and Honda~\cite[Section 5]{CGH_HF_ECH_II}, who employ a stabilization technique which we will outline here briefly.

Let $\Sigma' := \Sigma \#_\del T$, where $T$ is a punctured torus and $\#_\del$ denotes the boundary-connected sum.  Define a monodromy $\phi':\Sigma'\to\Sigma'$ by
\begin{itemize}
\item $\restr{\phi'}{\Sigma}=\phi$,
\item $\restr{\phi'}{T}=\phi_T$ where $\phi_T$ is a non-degenerate monodromy corresponding to the right-handed trefoil knot which turns to the right away from $\del T$, and
\item on the connected sum region, $\phi'$ is given by a small perturbation of the identity map, illustrated in \cref{connect_sum_region_perturbation}.
\end{itemize}
There exists an almost complex structure $J'$ on the symplectization of $M(\phi')$ such that in the connect sum region the differentials are as shown in \cref{connect_sum_region_differentials}~\cite[Lemma 5.2.2]{CGH_HF_ECH_II}.

\begin{figure}\centering
	\begin{subfigure}{0.49\textwidth}\centering
		\begin{tikzpicture}[scale=0.78]%
			\draw [fill=lightgray, even odd rule] (0,0) ellipse (4 and 3) (-1.5,0) circle (0.7)  node[yshift=2,scale=1.5] {$\Sigma$} (1.5,0) circle (0.7) node[yshift=2,scale=1.5] {$T$};
			\draw [fill] (-1.5,0.7) circle (0.05) node[above] {$e_0$};
			\draw [fill] (-1.5,-0.7) circle (0.05) node[above,inner sep=2pt] {$h_0$};
			\draw [fill] (1.5,0.7) circle (0.05) node[above] {$e_T$};
			\draw [fill] (1.5,-0.7) circle (0.05) node[above,inner sep=2pt] {$h_T$};
			\draw [fill] (0,3) circle (0.05) node[above] {$e_-$};
			\draw [fill] (0,-3) circle (0.05) node[below] {$h_-$};
			\draw [fill] (-1,1.7) circle (0.05) node[above,inner sep=5pt] {$\phantom{h}h_{1P}$};
			\draw [fill] (1,1.7) circle (0.05) node[above,inner sep=5pt] {$h_{2P}\phantom{h}$};
			\draw [fill] (0,-1.7) circle (0.05) node[above] {$e_P$};

			\draw (-1,1.7) to[out=-150,in=90] (-2.5,0) to[out=-90,in=180] (-1.5,-1) to[out=0,in=-90] (-0.5,0) to[out=90,in=-60] (-1,1.7);
			\draw (1,1.7) to[out=-30,in=90] (2.5,0) to[out=-90,in=0] (1.5,-1) to[out=180,in=-90] (0.5,0) to[out=90,in=-120] (1,1.7);
			\draw (-1,1.7) to[out=30,in=180] (0,2) to[out=0,in=150] (1,1.7);
			\draw (-1,1.7) to[out=120,in=90] (-3.6,0) to[out=-90,in=180] (0,-2.75) to[out=0,in=-90] (3.6,0) to[out=90,in=60] (1,1.7);
			\draw (0,-0.7) to[out=180,in=45] (-0.4,-1.1) to[out=-135,in=90] (-1.4,-1.7) to[out=-90,in=180] (0,-2.25) to[out=0,in=-90] (1.4,-1.7) to[out=90,in=-45] (0.4,-1.1) to[out=135, in=0] cycle;
			\draw (0,-1.6) circle (0.4);
			\draw (0,1) to[out=180,in=90] (-0.25,0) to[out=-90,in=0] (-1.5,-1.2) to[out=180,in=-45] (-2.4,-0.9) to[out=135,in=0] (-3,0) to[out=180,in=135] (-2.5,-1.6) to[out=-45,in=180] (0,-2.5) to[out=0,in=-135] (2.5,-1.6) to[out=45,in=0] (3,0) to[out=180,in=45] (2.4,-0.9) to[out=-135,in=0] (1.5,-1.2) to[out=180,in=-90] (0.25,0) to[out=90,in=0] cycle;

			\draw [-latex,shorten >=-3pt] (-2.5,0)--(-2.5,0.01);
			\draw [-latex,shorten >=-3pt] (-0.5,0)--(-0.5,-0.01);
			\draw [-latex,shorten >=-3pt] (2.5,0)--(2.5,-0.01);
			\draw [-latex,shorten >=-3pt] (0.5,0)--(0.5,0.01);
			\draw [-latex,shorten >=-3pt] (0,-0.7)--(-0.01,-0.7);
			\draw [-latex,shorten >=-3pt] (-1.4,-1.7)--(-1.4,-1.71);
			\draw [-latex,shorten >=-3pt] (0,-2.25)--(0.01,-2.25);
			\draw [-latex,shorten >=-3pt] (1.4,-1.7)--(1.4,-1.69);
			\draw [-latex,shorten >=-3pt] (0.4,-1.6)--(0.4,-1.59);
			\draw [-latex,shorten >=-3pt] (-0.4,-1.6)--(-0.4,-1.61);
			\draw [-latex,shorten >=-3pt] (0,2)--(-0.01,2);
			\draw [-latex,shorten >=-3pt] (3.6,0)--(3.6,0.01);
			\draw [-latex,shorten >=-3pt] (0,-2.75)--(0.01,-2.75);
			\draw [-latex,shorten >=-3pt] (-3.6,0)--(-3.6,-0.01);
			\draw [-latex,shorten >=-3pt] (0,1)--(-0.01,1);
			\draw [-latex,shorten >=-3pt] (-1.5,-1.2)--(-1.51,-1.2);
			\draw [-latex,shorten >=-3pt] (-3,0)--(-3.01,0);
			\draw [-latex,shorten >=-3pt] (0,-2.5)--(0.01,-2.5);
			\draw [-latex,shorten >=-3pt] (3,0)--(2.99,0);
			\draw [-latex,shorten >=-3pt] (1.5,-1.2)--(1.49,-1.2);
			
		\end{tikzpicture}			
		\caption{}
		\label{connect_sum_region_perturbation}
	\end{subfigure}
	\begin{subfigure}{0.49\textwidth}\centering
		\begin{tikzpicture}[scale=0.78]%
			\draw [fill=lightgray, even odd rule] (0,0) ellipse (4 and 3) (-1.5,0) circle (0.7)  node[scale=1.5] {$\Sigma$} (1.5,0) circle (0.7) node[scale=1.5] {$T$};
			\draw [fill] (-1.5,0.7) circle (0.05) node[above left, inner sep=1pt] {$e_0$};
			\draw [fill] (-1.5,-0.7) circle (0.05) node[below left, inner sep=0pt] {$h_0$};
			\draw [fill] (1.5,0.7) circle (0.05) node[above right, inner sep=1pt] {$e_T$};
			\draw [fill] (1.5,-0.7) circle (0.05) node[below right, inner sep=1pt] {$h_T$};
			\draw [fill] (0,3) circle (0.05) node[above] {$e_-$};
			\draw [fill] (0,-3) circle (0.05) node[below] {$h_-$};
			\draw [fill] (-1,1.7) circle (0.05) node[above, inner sep=3pt] {$h_{1P}\phantom{h}$};
			\draw [fill] (1,1.7) circle (0.05) node[above, inner sep=3pt] {$\phantom{h}h_{2P}$};;
			\draw [fill] (0,-1.7) circle (0.05) node[right,inner sep=5pt] {$e_P$};

			\draw (0,-1.7)--(0,-3);
			\draw (0,-1.7) to[out=150,in=-20] (-1,-1.3) to[out=160,in=-90] (-1.5,-0.7);
			\draw (0,-1.7) to[out=30,in=-160] (1,-1.3) to[out=20,in=-90] (1.5,-0.7);
			\draw (0,-1.7) to[out=-150,in=-90] (-3,0) to[out=90,in=150] (-1,1.7);
			\draw (0,-1.7) to[out=-30,in=-90] (3,0) to[out=90,in=30] (1,1.7);
			\draw (0,-1.7) to[out=120,in=-90] (-0.3,0) to[out=90,in=-30] (-1,1.7);
			\draw (0,-1.7) to[out=60,in=-90] (0.3,0) to[out=90,in=-150] (1,1.7);
			\draw (-1,1.7) to[out=-150,in=60] (-1.4,1.3) to[out=-120,in=90] (-1.5,0.7);
			\draw (-1,1.7) to[out=30,in=-120] (-0.4,2.3) to[out=60,in=-120] (0,3);
			\draw (1,1.7) to[out=150,in=-60] (0.4,2.3) to[out=120,in=-60] (0,3);
			\draw (1,1.7) to[out=-30,in=120] (1.4,1.3) to [out=-60,in=90] (1.5,0.7);

			\draw [-latex,shorten >=-3pt] (0,-2.3)--(0,-2.31);
			\draw [-latex,shorten >=-3pt] (-1,-1.3)-- +(160:0.01);
			\draw [-latex,shorten >=-3pt] (1,-1.3)-- +(20:0.01);
			\draw [-latex,shorten >=-3pt] (-3,0)-- +(90:0.01);
			\draw [-latex,shorten >=-3pt] (3,0)-- +(90:0.01);
			\draw [-latex,shorten >=-3pt] (-0.3,0)-- +(90:0.01);
			\draw [-latex,shorten >=-3pt] (0.3,0)-- +(90:0.01);
			\draw [-latex,shorten >=-3pt] (-1.4,1.3)-- +(-120:0.01);
			\draw [-latex,shorten >=-3pt] (-0.4,2.3)-- +(60:0.01);
			\draw [-latex,shorten >=-3pt] (0.4,2.3)-- +(120:0.01);
			\draw [-latex,shorten >=-3pt] (1.4,1.3)-- +(-60:0.01);
			\draw [-latex,shorten >=-3pt] (-4,0)-- +(90:0.01);
			\draw [-latex,shorten >=-3pt] (-2.2,0)-- +(90:0.01);
			\draw [-latex,shorten >=-3pt] (-0.8,0)-- +(90:0.01);
			\draw [-latex,shorten >=-3pt] (0.8,0)-- +(90:0.01);
			\draw [-latex,shorten >=-3pt] (2.2,0)-- +(90:0.01);
			\draw [-latex,shorten >=-3pt] (4,0)-- +(90:0.01);

		\end{tikzpicture}
		\caption{}
		\label{connect_sum_region_differentials}
	\end{subfigure}
  \caption{In (a) we see the small perturbation of the identity map on the connect sum region which defines $\phi'$.  We obtain three non-degenerate orbits in the interior of the region and the three tori $\del\Sigma$, $\del T$ and $\del\Sigma'$ are negative Morse-Bott.  The differentials in (b) arise by employing \cref{foliation_perp_to_R_xi} in the PFH setting.  To see this, first note that, by a change of coordinates, (a) can also be interpreted as a diagram for the connect sum region in which the monodromy is given by the identity map and the arrows indicate the behaviour of the ($S^1$-invariant) vector field $R$.  Then $X$ is obtained by rotating the vector field a quarter-turn to the left, and its integral curves gives rise to the $S^1$-invariant holomorphic cylinders seen in (b).}
	\label{connect_sum_region}
\end{figure}
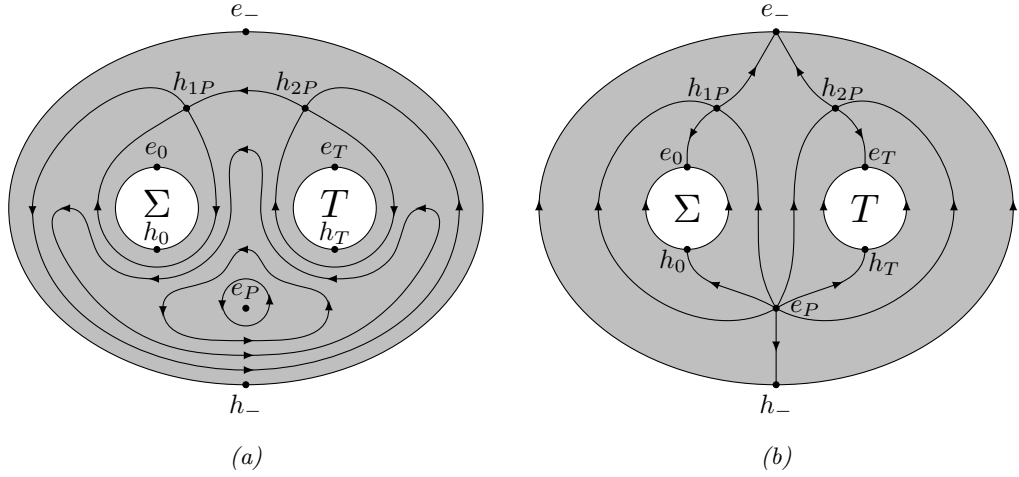

Next, consider the following diagram
\begin{equation}\label{stabilization_diag}\begin{tikzcd}
 \widehat{HF}(\Sigma, \phi) \arrow[r,"\Theta"]\arrow[d,"\Phi_{(S,\phi)}"] & \widehat{HF}(\Sigma',\phi') \arrow[d,"\Phi_{(S',\phi')}"] \\
 PFH\orbits{e}{h}(\Sigma;2g) \arrow[r,"i_*\circ \mathfrak{J}^2_*"] & PFH\orbits{e}{h}(\Sigma';2g+2).
\end{tikzcd}\end{equation}
Here the map $\Theta$ is an isomorphism induced by a chain map on the Heegaard Floer complexes given by including two intersection points between arcs in $T$, and the vertical maps are the isomorphisms between Heegaard Floer homology and periodic Floer homology induced by chain maps constructed by Colin, Ghiggini and Honda~\cite[Section 6.2]{CGH_HF_ECH_I} as part of their proof that HF=ECH.

The bottom map is the most interesting to us---it is induced by the chain map $i\circ \mathfrak{J}^2$, where $\mathfrak{J}^2 = \mathfrak{J}_{2g+1}\circ \mathfrak{J}_{2g}$
 and $i$ is the inclusion map
\[ PFC\orbits{e}{h}(\Sigma;2g+2) \to PFC\orbits{e}{h}(\Sigma';2g+2). \]

\refDiagram{stabilization_diag} commutes~\cite[Lemma 5.3.1]{CGH_HF_ECH_II} and hence, since three sides are isomorphisms, $i_*\circ \mathfrak{J}^2_*$ is also.

Let us introduce some notation which is used in the lemma below: 
\begin{align*}
V_i &:= PFH^{e_0}_{h_0}(\Sigma;i) \\
W_i &:= PFH^{e_T}_{h_T}(T;i).
\end{align*}
Here, to avoid confusion, we are denoting the orbits $e_-$ and $h_-$ appearing at $\del \Sigma$ by $e_0$ and $h_0$ and the orbits $e_-$ and $h_-$ appearing at $\del T$ by $e_T$ and $h_T$ (see \cref{connect_sum_region}). We will continue to refer to the orbits at $\del \Sigma'$ by $e_-$ and $h_-$.

\begin{lemma}[{\nogapcite[Lemma 5.4.2]{CGH_HF_ECH_II}}]\label{stabilization_iso}
There is a natural isomorphism
\[ PFH\orbits{e}{h}(\Sigma';j) \iso \left(\bigoplus_{i+k=j}V_i \tensor W_k\right)/\sim, \]
where $\sim$ is the equivalence relation generated by
\[ e_0\Gamma_0 \tensor \Gamma_1 \sim \Gamma_0 \tensor e_T\Gamma_1. \]
\end{lemma}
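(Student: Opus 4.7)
The plan is to mirror the spectral sequence and cancellation arguments used earlier in the paper (cf.~the proof of \cref{F_spectral_seq_converges_to_ECH_at_first_page_thm}), adapted to the periodic Floer setting. I first analyse the generators of $PFC\orbits{e}{h}(\Sigma';j)$: by applying the \refNamedThm{Blocking Lemma}{blocking_lemma} and \refNamedThm{Trapping Lemma}{trapping_lemma} to the three negative Morse-Bott tori $\del\Sigma$, $\del T$ and $\del\Sigma'$ sitting inside $\Sigma'$, every generator decomposes as a product $\Gamma_0\cdot\Gamma_P\cdot\Gamma_1$, where $\Gamma_0$ lies in $\Sigma$ (possibly involving $e_0,h_0$), $\Gamma_1$ lies in $T$ (possibly involving $e_T,h_T$), and $\Gamma_P$ is built from the connect-sum orbits $\set{e_P, h_{1P}, h_{2P}, e_-, h_-}$. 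As depicted in \cref{connect_sum_region_differentials}, the relevant connect-sum differentials are: from $e_P$ to each of $h_-, h_0, h_T, h_{1P}, h_{2P}$; from $h_{1P}$ to $e_0$ and $e_-$; and from $h_{2P}$ to $e_T$ and $e_-$.

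Next I would apply the \refNamedThm{Cancellation Lemma}{cancellation_lemma} in stages to eliminate the connect-sum orbits. First, the pair $(h_-, e_P)$ is cancelled: the differential $e_P\longrightarrow h_-$ is the unique differential with an end at $h_-$, and all other differentials touching $e_P$ are at its positive end, so the hypotheses of the lemma are satisfied. After this step the surviving connect-sum differentials are $h_{1P}\Gamma\longrightarrow (e_0 + e_-)\Gamma$ and $h_{2P}\Gamma\longrightarrow (e_T+e_-)\Gamma$. Direct cancellation fails here because each $h_{iP}$ has two outgoing differentials; instead I would introduce an auxiliary filtration (most naturally by the multiplicity of $e_-$) and argue on the associated graded spectral sequence. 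Viewing $h_{1P}$ (resp.~$h_{2P}$) as a chain homotopy realising $e_0\Gamma\sim e_-\Gamma$ (resp.~$e_T\Gamma\sim e_-\Gamma$) on homology then yields, after eliminating $e_-$, the composite relation $e_0\Gamma_0\tensor\Gamma_1\sim\Gamma_0\tensor e_T\Gamma_1$ of the lemma.

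Once all cancellations are performed, the residual differential decouples as the sum of a differential supported purely in $\Sigma$ (contributing the factor $V_i$) and one supported purely in $T$ (contributing $W_k$), together with the aforementioned identifications of $e_0$, $e_T$ and $e_-$. Collecting the pieces according to the Alexander grading $j = A(\Gamma_0) + A(\Gamma_1)$---and noting that the equivalence $e_0\Gamma_0\tensor\Gamma_1 \sim \Gamma_0\tensor e_T\Gamma_1$ is Alexander-grading preserving since $A(e_0)=A(e_T)=1$---gives the claimed isomorphism.

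The main obstacle will be the uniform bookkeeping of the $e_-$ multiplicity and Alexander grading through the filtration-cum-cancellation argument: since $e_-$ is elliptic it may appear in $\Gamma_P$ with arbitrary multiplicity, so the Cancellation Lemma must be applied consistently across all such multiplicities, and convergence of the auxiliary spectral sequence should follow once one verifies that each filtration level is finitely supported (using the fixed Alexander grading $j$). Verifying that no unexpected holomorphic curves cross the three Morse-Bott tori---so that the differential really decouples as a sum of one piece in $\Sigma$, one in $T$, and one in the connect-sum region---is a second point where positivity of intersection and the Trapping Lemma at the Morse-Bott tori will be essential.
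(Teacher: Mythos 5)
Your proposal is correct and follows essentially the same route as the paper's (itself only sketched) argument: cancel the pair $(e_P,h_-)$, then use the differential $h_{2P}\longrightarrow e_-+e_T$ to eliminate $e_-$ and transfer the relation, so that the surviving hyperbolic orbit $h_{1P}$ imposes $e_0\Gamma\sim e_T\Gamma$ on homology; your filtration-by-$e_-$-multiplicity workaround for the second step is just a repackaging of the Gaussian elimination the paper performs directly (which is what "introduces a new differential from $h_{1P}$ to $e_T$"). One slip worth noting: the two curves from $e_P$ to each of $h_{1P}$ and $h_{2P}$ come in cancelling pairs over $\F_2$ and do \emph{not} contribute to the differential --- with them included, your listed connect-sum differential would give $d^2(e_P)=e_0+e_T\neq 0$ --- though nothing in your subsequent argument actually uses them.
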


This is only stated by Colin, Ghiggini and Honda for the case when $j=2g+2$ but the proof applies to all $j$, as stated here.  The lemma is proven by a spectral sequence argument, which is easy to understand intuitively as follows.  Refer to \cref{connect_sum_region_differentials} and notice that we can cancel the differential from $e_P$ to $h_-$ without introducing any new differentials.  Next cancel the differential from $h_{2P}$ to $e_-$, which introduces a new differential from $h_{1P}$ to $e_T$.  We then have $d(h_{1P}\Gamma) = (e_0 + e_T)\Gamma$ so, on homology, the orbits $e_0$ and $e_T$ are identified as seen in the statement of the lemma.

The next step is to use \cref{stabilization_iso} to show that $\mathfrak{J}^2_*:V_{2g}\to V_{2g+2}$ is an isomorphism; we will briefly outline the method of the proof~\cite[Section 5.4]{CGH_HF_ECH_II} here.   Injectivity follows easily since $i_*\circ\mathfrak{J}^2_*$ is an isomorphism. To show surjectivity, start with an element
\[ v_{2g+2} \in V_{2g+2}, \]
whose image under $i_*$ in $PFH\orbits{e}{h}(\Sigma';2g+2)$ is $v_{2g+2}\tensor 1$ by \cref{stabilization_iso}. Here we are writing 1 to represent the non-zero element $\emptyset\in W_0$.  However since $i_*\circ\mathfrak{J}^2_*$ is an isomorphism, there exists $v_{2g}\in V_{2g}$ such that 
\[ v_{2g+2}\tensor 1 = i_*(\mathfrak{J}^2_*(v_{2g})) = e_0^2v_{2g} \tensor 1. \]
By \cref{stabilization_iso} we can compare terms to obtain
\[ v_{2g+2} = e_0^2v_{2g} = \mathfrak{J}_*^2(v_{2g})\]
and hence $\mathfrak{J}^2_*$ is surjective and therefore an isomorphism.

Colin, Ghiggini and Honda found that this result was sufficient to continue with their proof that ECH=HF. However, for our purposes we need to go a step further and prove that each induced map 
\[ (\mathfrak{J}_j)_*: V_{j}\to V_{j+1}  \]
for $j\ge 2g$ is an isomorphism.  For ease of notation, for the remainder of this section we will drop the $*$ and write $\mathfrak{J}_{j}$ for the map on homology.  We will first prove the case $j=2g,2g+1$, which follows from a slight generalization of the proof that $\mathfrak{J}^2_*$ is an isomorphism above, before proving the more general result by induction later.

\begin{proof}[Proof of \cref{e_map_is_quasi_iso} in the case $j=2g,2g+1$]
Since $\mathfrak{J}_{2g+1}\circ\mathfrak{J}_{2g} = \mathfrak{J}^2_*$ is an isomorphism, it suffices to prove that $\mathfrak{J}_{2g}$ is surjective.  

Let $v_{2g+1}\in V_{2g+1}$.  Then
\[\begin{split}
i_*(\mathfrak{J}_{2g+1}(v_{2g+1})) &= e_0v_{2g+1} \tensor 1 \quad\in PFH\orbits{e}{h}(\Sigma';2g+2)  \\
 &=v_{2g+1}\tensor e_T \\
&= i_*(\mathfrak{J}^2_*(v_{2g}))\quad\text{(for some $v_{2g}\in V_{2g}$)} \\
& = e_0^2 v_{2g}\tensor 1 \\
& = e_0 v_{2g} \tensor e_T.
\end{split}\]
Here the third line follows from the fact that $i_*\circ\mathfrak{J}_*^2$ is an isomorphism and the second and final lines follow from \cref{stabilization_iso}.

As discussed by Colin, Ghiggini and Honda~\cite[Proof of Proposition 5.4.1]{CGH_HF_ECH_II}, the element $e_T\in W_1$ is non-zero. Hence we can compare terms in lines 2 and 5 above to obtain
\[ v_{2g+1} = e_0 v_{2g} = \mathfrak{J}_{2g}(v_{2g}) \]
and we are done.
\end{proof}

\begin{corr}\label{stabilization_inclusion_is_iso}
The inclusion map
\[ i_*: PFH^{e_0}_{h_0}(\Sigma;2g+2) \to PFH\orbits{e}{h}(\Sigma';2g+2) \]
is an isomorphism.
\end{corr}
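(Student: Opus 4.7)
The plan is to observe that the corollary follows essentially immediately from the case $j=2g, 2g+1$ of \cref{e_map_is_quasi_iso} that has just been proved, combined with the commutativity of \refDiagram{stabilization_diag}. There is no real obstacle; the corollary is just a two-map cancellation.

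First I would factor $i_*$ through the composition appearing in the bottom row of \refDiagram{stabilization_diag}, namely
\[ V_{2g} \xrightarrow{\mathfrak{J}^2_*} V_{2g+2} \xrightarrow{i_*} PFH\orbits{e}{h}(\Sigma';2g+2), \]
where $\mathfrak{J}^2_* = \mathfrak{J}_{2g+1}\circ \mathfrak{J}_{2g}$. From the commutative square in \refDiagram{stabilization_diag} and the fact that $\Theta$, $\Phi_{(S,\phi)}$ and $\Phi_{(S',\phi')}$ are all isomorphisms, it follows that $i_* \circ \mathfrak{J}^2_*$ is an isomorphism.

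Next I would invoke the portion of \cref{e_map_is_quasi_iso} already established: $\mathfrak{J}_{2g}:V_{2g}\to V_{2g+1}$ is an isomorphism, and from $\mathfrak{J}_{2g+1}\circ\mathfrak{J}_{2g}=\mathfrak{J}^2_*$ being an isomorphism together with $\mathfrak{J}_{2g}$ being an isomorphism, we conclude that $\mathfrak{J}_{2g+1}$ is also an isomorphism. Hence $\mathfrak{J}^2_*$ is an isomorphism.

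Finally, since both $i_*\circ \mathfrak{J}^2_*$ and $\mathfrak{J}^2_*$ are isomorphisms, we can write
\[ i_* = (i_*\circ \mathfrak{J}^2_*)\circ (\mathfrak{J}^2_*)^{-1}, \]
which expresses $i_*$ as a composition of isomorphisms, and the corollary follows.
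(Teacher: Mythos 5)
Your proof is correct and is exactly the argument the paper intends (the paper states the corollary without proof, immediately after establishing that $i_*\circ\mathfrak{J}^2_*$ and $\mathfrak{J}_{2g}$, $\mathfrak{J}_{2g+1}$ are all isomorphisms): writing $i_* = (i_*\circ\mathfrak{J}^2_*)\circ(\mathfrak{J}^2_*)^{-1}$ is the whole content. Your middle paragraph is slightly roundabout — you could simply note that $\mathfrak{J}^2_* = \mathfrak{J}_{2g+1}\circ\mathfrak{J}_{2g}$ is an isomorphism as a composition of the two isomorphisms from \cref{e_map_is_quasi_iso} — but there is no gap.
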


We now turn to the general case of \cref{e_map_is_quasi_iso}, when $j\ge 2g+3$.  The idea of the proof is to make repeated stabilizations and then use induction by employing a similar trick to the above proof.  We introduce the following notation:
\begin{itemize}
\item Write $\Sigma^{(m)}$ for the $m$-th stabilization of $\Sigma$, so that $\Sigma^{(0)}=\Sigma$ and $\Sigma^{(1)}=\Sigma'$.
\item Write $e_{m}$ and $h_{m}$ for the two orbits which appear at $\del \Sigma^{(m)}$, so that $e_1=e_-$ and $h_1=h_-$.
\item Write $V_j^{(m)} := PFH^{e_{m}}_{h_{m}}(\Sigma^{(m)};j)$, so that $V^{(0)}_j=V_j$.
\item Write $\mathfrak{J}^{(m)}_j:V^{(m)}_j \to V^{(m)}_{j+1}$ for the map induced on homology by the chain map $\Gamma\mapsto e_m\Gamma$, so that $\mathfrak{J}^{(0)}_j=\mathfrak{J}_j$.
\item Write $T^{(m)}$ for the copy of $T$ added when performing the $m$-th stabilization; $e_{T^{(m)}}$ and $h_{T^{(m)}}$ for the two orbits appearing at $\del T^{(m)}$; and $W^{(m)}_i$ for $PFH^{e_{T^{(m)}}}_{h_{T^{(m)}}}(T^{(m)};i)$.
\end{itemize}
With this new notation, \cref{stabilization_iso} can be rewritten as
\[ V^{(m)}_j \iso \left(\bigoplus_{i+k=j} V^{(m-1)}_i \tensor W^{(m)}_k \right ) / \sim, \]
and the map $\mathfrak{J}^{(m)}_j : V^{(m)}_j \to V^{(m)}_{j+1}$ induced by the chain map $\Gamma\mapsto e_m\Gamma$, under this isomorphism, takes the form
\[ \Gamma_0\tensor \Gamma_1 \mapsto e_{m-1}\Gamma_0 \tensor \Gamma_1 = \Gamma_0\tensor e_{T^{(m)}}\Gamma_1  \]
since the orbits $e_m$, $e_{m-1}$ and $e_{T^{(m)}}$ are homologous in the $m$-th stabilization.  By induction we obtain the equation
\begin{equation}\label{stabilization_iso_general}
V^{(m)}_j \iso \left(\bigoplus_{i+k^{(1)}+\cdots+k^{(m)}=j}V^{(0)}_i\tensor W^{(1)}_{k^{(1)}}\tensor\cdots\tensor W^{(m)}_{k^{(m)}}\right) / \sim,
\end{equation}
where the equivalence relation is generated by
\[\begin{split}
e_{0}\Gamma_0\tensor \Gamma_1 \tensor \cdots\tensor\Gamma_m &\sim  \Gamma_0\tensor e_{T^{(1)}}\Gamma_1\tensor\cdots\tensor\Gamma_m \\
&\vdots \\
&\sim \Gamma_0\tensor\Gamma_1\tensor\cdots\tensor e_{T^{(m)}}\Gamma_m.
\end{split}\]
\begin{proof}[Proof of \cref{e_map_is_quasi_iso} for $j\ge 2g+2$]
Fix $m\ge 1$ and by induction assume that the map
\[  i_*: V^{(0)}_{2(g+m)} \to V^{(m-1)}_{2(g+m)} \]
is an isomorphism.  Our aim is to prove that the two maps
\[ \mathfrak{J}_{2(g+m)}\quad\text{and}\quad\mathfrak{J}_{2(g+m)+1} \]
are isomorphisms; for each $m\ge 1$ this proves the result for $j=2g+2m$ and $2g+2m+1$.  For the purposes of induction, we also wish to prove that
\[ i_*: V^{(0)}_{2(g+m+1)} \to V^{(m)}_{2(g+m+1)}  \]
is also an isomorphism.

Consider the following diagram, in which the Alexander grading varies in the horizontal direction and the number of stabilizations varies in the vertical direction:
\[\begin{tikzcd}
V^{(0)}_{2(g+m)} \arrow[rr,"\mathfrak{J}_{2(g+m)}"]\arrow[d,"i_*","\iso"'] && V^{(0)}_{2(g+m)+1}\arrow[rr,"\mathfrak{J}_{2(g+m)+1}"] && V^{(0)}_{2(g+m)+2}\arrow[dd,"i_*"]  \\
V^{(m-1)}_{2(g+m)} \arrow[d,"i_*","\iso"'] \\
V^{(m)}_{2(g+m)}\arrow[rr,"\mathfrak{J}^{(m)}_{2(g+m)}","\iso"'] && V^{(m)}_{2(g+m)+1}\arrow[rr,"\mathfrak{J}^{(m)}_{2(g+m)+1}","\iso"'] && V^{(m)}_{2(g+m)+2}.
\end{tikzcd}\]
The diagram commutes by the discussion above; the orbits $e_i$ and $e_{i-1}$ are homologous in the $i$-th stabilization for $i=1,\dots,m$.  The second inclusion map on the left is an isomorphism by \cref{stabilization_inclusion_is_iso} since $\Sigma^{(m-1)}$ is a surface of genus $g+m-1$. The bottom two maps are isomorphisms by applying the proof of the case when $j=2g,2g+1$ to the surface $\Sigma^{(m)}$ of genus $g+m$.

Denote by $\mathfrak{J}^2$ the composition $\mathfrak{J}_{2(g+m)+1}\circ\mathfrak{J}_{2(g+m)}$. Then $\mathfrak{J}^2$ is injective, and hence $\mathfrak{J}_{2(g+m)}$ is also.  Hence it remains to show that the two maps are surjective---we will prove this by an argument completely analogous to the case when $j=2g,2g+1$ earlier.

Let $v_{2(g+m)+2}\in V^{(0)}_{2(g+m)+2}$ and consider 
\[ i_*(v_{2(g+m)+2}) =v_{2(g+m)+2}\tensor 1\tensor\cdots\tensor 1\in V^{(m)}_{2(g+m)+2}. \]
Then since $i_*\circ\mathfrak{J}^2$ is surjective, there exists some $v_{2(g+m)}\in V^{(0)}_{2(g+m)}$ for which
\[\begin{split}
i_*(v_{2(g+m)+2}) &= i_*\circ\mathfrak{J}^2(v_{2(g+m)}) \\
&= e_0^2 v_{2(g+m)}\tensor 1\tensor\cdots\tensor 1,
\end{split}\]
and hence by applying \cref{stabilization_iso_general} and comparing terms we obtain
\[ v_{2(g+m)+2}=e_0^2 v_{2(g+m)} = \mathfrak{J}^2(v_{2(g+m)}). \]
Hence we obtain surjectivity of $\mathfrak{J}^2$, and by extension surjectivity of $\mathfrak{J}_{2(g+m)+1}$.

The proof that $\mathfrak{J}_{2(g+m)}$ is surjective follows by generalising the argument from the case when $j=2g,2g+1$ in exactly the same way.  Note that therefore
\[ i_*: V^{(0)}_{2(g+m+1)} \to V^{(m)}_{2(g+m+1)} \]
is an isomorphism, which is required for the inductive step.
\end{proof}

\section{Invariance results for full ECK}\label{invariance_results_for_full_ECK}

In this section we are finally able to deduce $\alpha$ and $J$ invariance of the full ECK complex, at least for integral open book decompositions, by combining the results of \cref{invariance_results_without_an_action_bound,ECK_hat_supported_in_low_Alexander_degrees}.

We will briefly recap the chain of implications which have allowed us to reach this moment, since it is a little convoluted.
\begin{enumerate}
\item We first proved $\alpha$ and $J$ invariance of $\widehat{ECK}$ in \cref{ECC_independent_of_alpha}, exploiting the fact that the complex, although infinite, splits as a direct sum of finite complexes.
\item This invariance was used in the proof of \cref{ECK_supported_genus}, when taking a sequence of homotopic contact structures limiting towards a stable Hamiltonian structure.
\item This is turn allows us to apply \cref{full_ECK_computable_from_ECK_hat}, which states that, for an integral  open book decomposition, the full ECK complex is computable by considering only the subcomplex lying in the zeroth column and in Alexander gradings less than or equal to $2g$.
\end{enumerate}

\begin{thm}\label{ECK_invariant_ZOBD}
In the case of an integral open book decomposition, embedded contact knot homology, defined as the bi-filtered homotopy type of the complex
\[ ECC\orbits{0he}{eh}(\mathrm{int}(N),\alpha,J), \]
is invariant under homotopies of $\alpha$ and choice of $J$.
\end{thm}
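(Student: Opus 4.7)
The plan is to combine the three ingredients just established: the within-each-Alexander-grading invariance of \cref{ECC_independent_of_alpha}, the support bound of \cref{ECK_supported_genus}, and the finite presentation of the full complex in \cref{full_ECK_computable_from_ECK_hat}.  By \cref{full_ECK_computable_from_ECK_hat} the full ECK complex is bi-filtered chain homotopy equivalent to $\mathcal{R}[e_+]\tensor ECC\orbits{0he}{h}(\mathrm{int}(N),\alpha,J;A\le 2g)$ with the differential described there.  Since this presentation is manifestly functorial with respect to bi-filtered chain maps that preserve Alexander grading and the power of $e_+$, it is enough to prove that the Alexander-filtered homotopy type of
\[ C(\alpha,J) := ECC\orbits{0he}{h}(\mathrm{int}(N),\alpha,J;A\le 2g) \]
is invariant under $J$ and under homotopies of $\alpha$.

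The key simplification in the integral case is that $C(\alpha,J)$ is genuinely finite-dimensional: $h_+$, $e_-$ and $h_-$ all have Alexander grading exactly $q=1$, every closed orbit in $\mathrm{int}(N)$ winds at least once in the mapping torus direction, and so an upper bound $A\le 2g$ forces total multiplicity at most $2g$; since only finitely many simple orbits have Alexander grading at most $2g$ (they correspond to periodic points of iterates of the monodromy $\phi$ of bounded period), there are only finitely many orbit sets to consider.  With finite-dimensionality in hand I would rerun the cobordism scheme from the proof of \cref{ECC_independent_of_alpha} on $C(\alpha,J)$ itself rather than within a fixed Alexander grading.  Given homotopic extendable contact forms $\alpha_0,\alpha_1$ on $N$, pick a single threshold $L$ large enough that every orbit set of Alexander grading at most $2g$ for each of $\alpha_0,\alpha_1$ and their rescalings $A_i\alpha_j$ has action below $L$; the same three interpolating cobordisms $\lambda_0,\lambda_1,A_1\lambda_0$ then produce chain maps $\Phi_0,\Phi_1,\Phi_2$ whose compositions are identified with the identity up to bi-filtered chain homotopy via \cref{composition_homotopic,prod_cob_for_torus_bdry_prop}, yielding the chain homotopy equivalences $f$ and $g_2\circ f\circ g_1$ exactly as before.

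The essential observation that makes all of this work on $C(\alpha,J)$ rather than merely on the Alexander-graded pieces of $ECC\orbits{he}{h}$ is \cref{maps_on_ECK}: cobordism chain maps preserve Alexander grading on the nose and preserve the multiplicities of both $e_+$ and $h_+$, while chain homotopies coming from homotopies of cobordisms respect the full bi-filtration.  Consequently every map and every homotopy in the scheme automatically commutes with (respectively, respects) the Alexander-filtration-decreasing differential $h_+\longrightarrow\emptyset$ that distinguishes $C(\alpha,J)$ from its Alexander-graded counterpart.  Composing as above produces the Alexander-filtered chain homotopy equivalence of $C(\alpha_i,J_i)$; tensoring with $\mathcal{R}[e_+]$ and invoking \cref{full_ECK_computable_from_ECK_hat} yields the desired bi-filtered chain homotopy equivalence of the full ECK complex.

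The main obstacle I expect is checking that the bi-filtered equivalence supplied by \cref{full_ECK_computable_from_ECK_hat} is genuinely natural enough to transport a bi-filtered equivalence of $C(\alpha,J)$ back to the full complex, and in particular that the horizontal differential $e_+\longrightarrow h_-$ coupling different columns behaves correctly under the induced map on the tensor description.  The translational symmetry of \cref{ECK_translational_symmetry} makes the extension from $C(\alpha,J)$ to $\mathcal{R}[e_+]\tensor C(\alpha,J)$ essentially formal, so this reduces to the observation that augmenting a holomorphic building by trivial cylinders over $e_+$ commutes with the cobordism map---which itself follows from the fact (again via \cref{maps_on_ECK}) that cobordism chain maps preserve the multiplicity of $e_+$ exactly.
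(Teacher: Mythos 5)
Your proposal is correct and follows essentially the same route as the paper: reduce via \cref{full_ECK_computable_from_ECK_hat} to the finite complex $ECC\orbits{0he}{h}(\mathrm{int}(N),\alpha,J;A\le 2g)$, then rerun the cobordism argument of \cref{ECC_independent_of_alpha} with a single action threshold in place of the fixed Alexander grading $j$. Your additional remarks on why the cobordism maps are compatible with the $h_+\longrightarrow\emptyset$ differential and with the tensoring by $\mathcal{R}[e_+]$ are a more explicit spelling-out of steps the paper leaves implicit, not a different argument.
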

\begin{proof}
By \cref{full_ECK_computable_from_ECK_hat}, it suffices to prove the result for the (finite) complex
\[ ECC\orbits{0he}{h}(\mathrm{int}(N),\alpha,J;A\le 2g). \]
Let $(\alpha_0,J_0)$ and $(\alpha_1,J_1)$ be two choices of data, with $\alpha_0$ and $\alpha_1$ homotopic.  Then by an argument completely analogous to that in the proof of \cref{ECC_independent_of_alpha} we obtain bi-filtered chain homotopy equivalences between the corresponding complexes.

In fact, the proof follows by simply replacing every occurrence of ``$j$'' in the proof of \cref{ECC_independent_of_alpha} with ``$A\le 2g$''---since we still have finiteness all the arguments apply as before.
\end{proof}

\chapter{A surgery formula}\label{a_surgery_formula_chapter}

\section{Large negative \texorpdfstring{$n$}{n}-surgery}\label{large_negative_n_surgery}

In the following section we will start with an integral open book decomposition, assuming that $N$ is a mapping torus formed by via a monodromy $\phi:\Sigma\to\Sigma$ which restricts to the identity map on $\del\Sigma$.  Recall from \cref{contact_forms_and_rational_open_book_decompositions} the curves $m$ and $l$; $m$ is the curve $\set{x} \cross [0,1] \in \del\Sigma\cross[0,1]$ oriented in the increasing $t$ direction and $l$ is the curve $\del\Sigma\cross\set{0}$ oriented in the increasing $\theta$ direction.  Recall that the closed manifold $M$ is obtained by attaching a copy of $S^1\cross D^2$ in a way which identifies the meridian $\set{1}\cross S^1$ with the degeneracy slope in $\del N$, which in the case of an integral open book decomposition is equal to $m$. The knot $K\in M$ is then defined as the core $S^1\cross\set{0}$ of $S^1\cross D^2$.

Our method of computing $n$-surgery on $K$ is as follows.  We will attach a collar region $S^1\cross[1,2]$ to $\del\Sigma$ to form a new surface 
\[ \Sigma_T := \Sigma\union \big(S^1\cross[1,2]\big), \]
with associated mapping torus denoted by $N\union T$ (here $T$ denotes the $S^1\cross[1,2]\cross[0,1]/\sim$ part of the mapping torus).  We equip $N\union T$ with a contact form $\alpha$, which is extended from the contact form on $N$ in such a way that the Reeb vector field rotates negatively in the $[1,2]$-coordinate, from vertical (at $\del N$) to slope $-n$ (at $\del (N\union T)$).  This means that $T$ is foliated by \emph{negative} Morse-Bott tori $S^1\cross\set{y}\cross[0,1]$.  Under a simple change of coordinates at the new boundary we see that the new mapping torus has a negative $1/n$ twist at the boundary and Reeb vector field parallel to the $t$ coordinate, and hence is extendable to a $(-1/n)$-rational open book as in \cref{extendable_contact_form}.  We will refer to $T$ as the \emph{twist region}.

The degeneracy slope of $(N\union T, \alpha)$, which recall is the curve obtained by flowing along $\del_t$ on the boundary, is
\[ n[m] - [l] \]
and when we attach $S^1\cross D^2$ by identifying a meridian $\set{1}\cross S^1$ to the degeneracy slope, we obtain ($-n$)-surgery.  Denote the resulting closed manifold by $M(-n)$ and the core of the attached $S^1\cross D^2$ by $K(-n)$.

\begin{absolutelynopagebreak} %
The aim of the remainder of this thesis is to compute the embedded knot contact homology of $K(-n)$ in terms of that of $K$.
\end{absolutelynopagebreak} %

\subsection{A contact form on \texorpdfstring{$N\union T$}{N∪T}}

Assume that we have a contact form $\alpha_N$ on $N$ which is extendable to an \emph{integral} open book decomposition.  Then there exists a neighbourhood of $\del N$ described by coordinates in $S^1\cross[2,2+\epsilon)\cross [0,1] / \sim$ on which the contact form $\alpha_N$ is a small non-degenerate perturbation of the form
\[ f(y)\dd t + g(y)\dd\theta , \]
where, by setting $q=c=1$ and $m=0$ in \cref{contact_form_on_nu_del_N},
\begin{equation}\label{form_near_del_sigma}
f(y) = a_2+(y-2)^2\quad\text{and}\quad g(y) = b_2-(y-2),
\end{equation}
for some $a_2,b_2\in\R$ with $a_2>0$.  Recall that $a_2$ can be taken arbitrarily large.  Furthermore, the equivalence relation $\sim$ is given on this small neighbourhood by the identity map.  

The collar $S^1\cross[1,2]$ is attached to $\Sigma$ by naturally identifying the points where $y=2$.  Now define a one-form on the twist region $T=S^1\cross[1,2]\cross[0,1]/\sim$ such that the following points hold.
\begin{enumerate}
\item The form looks like $f(y)\dd t + g(y)\dd \theta$ on the whole region and the path $(f,g)$ winds around the origin clockwise, so that it is a valid contact form.
\item On a small neighbourhood of $y=2$, the functions $f$ and $g$ are given by the same formula as in \cref{form_near_del_sigma}, so that the form is a smooth extension of $\alpha_N$ on $N$.
\item\label{functions_near_y_1} On a small neighbourhood of $y=1$, the functions take the form
\begin{align*}
f(y) &= f(1) - (y-1),\quad\text{and} \\
g(y) &= g(1) - n(y-1) - (y-1)^2.
\end{align*}
\item The ratio $f'/g'$ is strictly decreasing on the interval $[1,2]$ from $1/n$ (at $y=1$) to $0$ (at $y=2$).
\end{enumerate}
The final point ensures that the slope of the Reeb vector field, which is parallel to
\[-g'(y)\dd t + f'(y) \dd \theta,\]
 rotates gradually from $\infty$ to $-n$ as $y$ decreases from 2 to 1.  Hence at every $y\in[1,2]$ we obtain a negative Morse-Bott torus of slope $-g'(y)/f'(y)$. Provided that $a>0$ is large enough, it is possible to choose $f$ and $g$ so that all these requirements are met.  See \cref{MB_form_on_N_T}. 

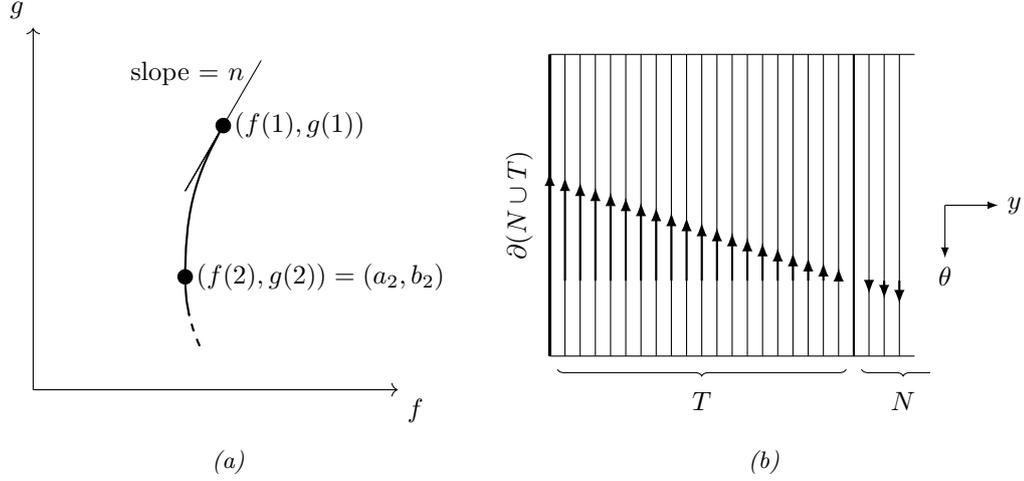
\begin{figure}\centering
	\begin{subfigure}[b]{0.45\textwidth}\centering
		\begin{tikzpicture}%
			\draw [->] (0,0) -- (0,4.8) node [above left] {$g$};
			\draw [->] (0,0) -- (4.8,0) node [below right] {$f$};
			\draw [thick,*-*,shorten >=-3pt,shorten <=-3pt](2,1.5) to [out=90,in=-120] (2.5,3.5);
			\draw [thin] (2.5,3.5) -- +(-120:1) -- +(60:1);
			\node [left] at (2.9,4.2) {slope = $n$};
			\node [right,inner sep=4pt] at (2,1.5) {$(f(2),g(2))=(a_2,b_2)$};
			\node [right,inner sep=4pt] at (2.5,3.5) {$(f(1),g(1))$};
			\begin{scope}
			  \path [clip] (2,1) rectangle (3,1.5);
				\draw [thick] (2,1.5) to[out=-90,in=120] (2.5,0);
			\end{scope}
			\begin{scope}
			  \path [clip] (2,0.5) rectangle (3,1);
				\draw [thick,dashed] (2,1.5) to[out=-90,in=120] (2.5,0);
			\end{scope}
		\end{tikzpicture}
		\caption{}
	\end{subfigure}
	\begin{subfigure}[b]{0.54\textwidth}\centering
		\begin{tikzpicture} %
			\draw  (0,0) --(4.8,0);
			\draw (0,4) --(4.8,4);
			\draw [very thick] (0,0)--(0,4); %
			\node [rotate=90,anchor=center] at (-0.4,2) {$\del(N\union T)$};

			\foreach \x in {0.2, 0.4,...,4.6} {
				 \draw [thin](\x,0)--(\x,4);
			}
			\draw [thick](4,0)--(4,4);
			\foreach \x in {-4,-3.8,...,-0.2,0.2,0.4,0.6}
				\draw [-latex,thick,shorten >=-3pt] (4+\x,1) -- +(0,-\x/3);

			\draw [decorate,decoration={brace,mirror,raise=5pt}] (0.1,0) --node[below=10pt] {$T$} (3.9,0) ;
			\begin{scope}
			\path [clip] (2,-1) rectangle (5,0);
			\draw [decorate,decoration={brace,mirror,raise=5pt}] (4.1,0) --node[below=10pt] {$N$} (5.2,0) ;
			\end{scope}
			\figureaxes{5.2}{2}{y}{}{}{\theta}
		\end{tikzpicture}	
		\caption{}
		\label{MB_form_on_N_T_b}
	\end{subfigure}
  \caption{In (a) we see the curve $(f,g)$ which defines the extension of $\alpha_N$ to $T$.  The curve has infinite slope at $y=2$ and slope $n$ at $y=1$.  In (b) we see a diagram representing a single page of the mapping torus $N\union T$; the flow represents the first return map of the Reeb vector field.}
	\label{MB_form_on_N_T}
\end{figure}

Denote the resulting form defined on the entirety of $N\union T$ by $\alpha$.  We will now check that point \cref{functions_near_y_1} ensures that $\alpha$ is extendable to a $(-1/n)$-rational open book.  We will verify this by performing a change of coordinates near $\del (N\union T)$; let
\[ (t,\theta',y') \]
be a new coordinate system for $\nu(\del N)$, where $ \theta' = \theta+\frac{t}{n}$ and $y'=y+1$.
Then near $y=1$,
\[\begin{split}
\alpha &= f(y)\dd t + g(y)\dd \theta \\
&= \bigg(f(1) - (y-1)\bigg)\dd t + \bigg(  g(1) - n(y-1) - (y-1)^2 \bigg)\dd \theta\\
&= \bigg(f(1) - (y'-2)\bigg)\dd t + \bigg(  g(1) - n(y'-2) - (y'-2)^2 \bigg)\bigg(d\theta' - \frac{1}{n}\dd t\bigg) \\
&= \bigg(f(1)-\frac{g(1)}{n} + \frac{1}{n}(y'-2)^2\bigg)\dd t + \bigg(  g(1) - n(y'-2) - (y'-2)^2 \bigg)\dd\theta'  
\end{split}\]
as required (c.f.~\cref{extendable_contact_form,contact_form_on_nu_del_N}).  Furthermore, in these new coordinates, the equivalence relation is given by
\[ (1,\theta', y') \sim (0,\theta' - \frac{1}{n}, y'), \]
and hence the monodromy at the boundary is a negative $1/n$ twist.

\subsection{Structure of proof}

In this section we will make some perturbations to $\alpha$ for non-degeneracy and computability purposes, and follow this with an outline of how these perturbed forms will be used to establish the surgery formula.  First note that $\alpha$ is a Morse-Bott contact form, with foliated tori at every $y\in[1,2]$ and Morse-Bott families of rational slope on a dense subset.
\begin{defn}\label{perturbed_contact_forms_on_N_union_T}
Define the following contact forms on $N\union T$.
\begin{enumerate}
\item Let $\alpha'$ denote the form obtained by making $\alpha$ completely non-degenerate on $\mathrm{int}(N\union T)$, leaving a single Morse-Bott family of orbits at the boundary ($y=1$).
\item For each $j\in\N$, let $\alpha_j$ denote the form obtained from $\alpha$ by perturbing every Morse-Bott family in $\mathrm{int}(T)$ of slope $p[l]+q[m]$ with $q\leq j$ into two non-degenerate orbits $e_{p/q}$ and $h_{p/q}$.  For each $j$ we leave a small neighbourhood of $\del N$ untouched, so that the Morse-Bott torus at $y=2$ remains.
\item For each $j\in\N$, let $\alpha_j'$ denote the form obtained by making a small perturbation of $\alpha_j$ such that the Morse-Bott torus at $y=2$ is perturbed into two orbits $e_{0/1}$ and $h_{0/1}$.
\end{enumerate}
See \cref{perturbed_forms_on_N_T}.
\end{defn}

\begin{figure}\centering
	\begin{subfigure}{0.49\textwidth}\centering
		\begin{tikzpicture} %
		\draw  (0,0) --(4.8,0);
		\draw (0,4) --(4.8,4);
		\draw [very thick] (0,0)--(0,4); %
		\node [rotate=90,anchor=center] at (-0.4,2) {$\del(N\union T)$};

		\foreach \x in {-4,-3.8,-3.6,-2.4,-2.2,-1.2,-1,-0.8,-0.6,-0.4,-0.2,0.2,0.4,0.6} {
			 \draw [thin](4+\x,0)--(4+\x,4);
		}
		\draw [thick](4,0)--(4,4);
		\foreach \x in {-4,-3.8,-3.6,-2.4,-2.2,-1.2,-1,-0.8,-0.6,-0.4,-0.2,0.2,0.4,0.6}
			\draw [-latex,thick,shorten >=-3pt] (4+\x,1) -- +(0,-\x/3);
		\foreach \x in {0.6,1.4,2,2.6} 
			\draw [dashed] (\x,0)--(\x,4);

		\foreach \y in {0,1,2,3} {
			\draw [fill] (1,\y+0.25) circle (0.05);
			\draw (1,\y+0.75) circle (0.05);
			\draw [-latex,shorten >=-2.5pt] (0.8,\y+0.75) -- +(90:0.01);
			\draw [-latex,shorten >=-2.5pt] (1.2,\y+0.75) -- +(-90:0.01);
		}
		\foreach \y in {0,1,2,3,4} {
			\draw [fill] (2.3,\y*4/5+0.2) circle (0.05);
			\draw (2.3,\y*4/5+0.6) circle (0.05);
			\draw [-latex,shorten >=-2.5pt] (2.15,\y*4/5+0.6) -- +(90:0.01);
			\draw [-latex,shorten >=-2.5pt] (2.45,\y*4/5+0.6) -- +(-90:0.01);
		}

		\begin{scope}
			\path[clip] (0,0) rectangle (4,4);

			\draw [thin] (0.8,-0.25) to[out=90,in=-90] (1.2,0.75) to[out=90,in=-90] (0.8,1.75) to[out=90,in=-90] (1.2,2.75) to[out=90,in=-90] (0.8,3.75) to[out=90,in=-90] (1.2,4.75);
			\draw [thin] (1.2,-0.25) to[out=90,in=-90] (0.8,0.75) to[out=90,in=-90] (1.2,1.75) to[out=90,in=-90] (0.8,2.75) to[out=90,in=-90] (1.2,3.75) to[out=90,in=-90] (0.8,4.75);	

			\draw [thin] (2.15,-0.2) to[out=90,in=-90] (2.45,0.6) to[out=90,in=-90](2.15,1.4) to[out=90,in=-90](2.45,2.2) to[out=90,in=-90](2.15,3) to[out=90,in=-90](2.45,3.8) to[out=90,in=-90] (2.15,4.6);
			\draw [thin] (2.45,-0.2) to[out=90,in=-90] (2.15,0.6) to[out=90,in=-90](2.45,1.4) to[out=90,in=-90](2.15,2.2) to[out=90,in=-90](2.45,3) to[out=90,in=-90](2.15,3.8) to[out=90,in=-90] (2.45,4.6);
		\end{scope}

		\draw [decorate,decoration={brace,mirror,raise=5pt}] (0.1,0) --node[below=10pt] {$T$} (3.9,0) ;
		\begin{scope}
		\path [clip] (2,-1) rectangle (5,0);
		\draw [decorate,decoration={brace,mirror,raise=5pt}] (4.1,0) --node[below=10pt] {$N$} (5.2,0) ;
		\end{scope}
	\end{tikzpicture}
		\caption{}
	\end{subfigure}
	\begin{subfigure}{0.49\textwidth}\centering
		\begin{tikzpicture} %
		\draw  (0,0) --(4.8,0);
		\draw (0,4) --(4.8,4);
		\draw [very thick] (0,0)--(0,4); %
		\node [rotate=90,anchor=center] at (-0.4,2) {$\del(N\union T)$};

		\foreach \x in {-4,-3.8,-3.6,-2.4,-2.2,-1.2,-1,-0.8} {
			 \draw [thin](4+\x,0)--(4+\x,4);
		}
		\foreach \x in {-4,-3.8,-3.6,-2.4,-2.2,-1.2,-1,-0.8,-0.6,-0.4,-0.2,0.2,0.4,0.6}
			\draw [-latex,thick,shorten >=-3pt] (4+\x,1) -- +(0,-\x/3);
		\foreach \x in {0.6,1.4,2,2.6} 
			\draw [dashed] (\x,0)--(\x,4);

		\foreach \y in {0,1,2,3} {
			\draw [fill] (1,\y+0.25) circle (0.05);
			\draw (1,\y+0.75) circle (0.05);
			\draw [-latex,shorten >=-2.5pt] (0.8,\y+0.75) -- +(90:0.01);
			\draw [-latex,shorten >=-2.5pt] (1.2,\y+0.75) -- +(-90:0.01);
		}
		\foreach \y in {0,1,2,3,4} {
			\draw [fill] (2.3,\y*4/5+0.2) circle (0.05);
			\draw (2.3,\y*4/5+0.6) circle (0.05);
			\draw [-latex,shorten >=-2.5pt] (2.15,\y*4/5+0.6) -- +(90:0.01);
			\draw [-latex,shorten >=-2.5pt] (2.45,\y*4/5+0.6) -- +(-90:0.01);
		}

		\begin{scope}
			\path[clip] (0,0) rectangle (5,4);

			\draw [thin] (0.8,-0.25) to[out=90,in=-90] (1.2,0.75) to[out=90,in=-90] (0.8,1.75) to[out=90,in=-90] (1.2,2.75) to[out=90,in=-90] (0.8,3.75) to[out=90,in=-90] (1.2,4.75);
			\draw [thin] (1.2,-0.25) to[out=90,in=-90] (0.8,0.75) to[out=90,in=-90] (1.2,1.75) to[out=90,in=-90] (0.8,2.75) to[out=90,in=-90] (1.2,3.75) to[out=90,in=-90] (0.8,4.75);	

			\draw [thin] (2.15,-0.2) to[out=90,in=-90] (2.45,0.6) to[out=90,in=-90](2.15,1.4) to[out=90,in=-90](2.45,2.2) to[out=90,in=-90](2.15,3) to[out=90,in=-90](2.45,3.8) to[out=90,in=-90] (2.15,4.6);
			\draw [thin] (2.45,-0.2) to[out=90,in=-90] (2.15,0.6) to[out=90,in=-90](2.45,1.4) to[out=90,in=-90](2.15,2.2) to[out=90,in=-90](2.45,3) to[out=90,in=-90](2.15,3.8) to[out=90,in=-90] (2.45,4.6);

			\draw [thin] (3.5,-1) to[out=90,in=-90] (3.4,1) to[out=90,in=-90] (3.5,3) to[out=90,in=-90] (3.4,5);
			\draw [thin] (4.5,-1) to[out=90,in=-90] (4.6,1) to[out=90,in=-90] (4.5,3) to[out=90,in=-90] (4.6,5);
			\foreach \y in {0,4} {
				\draw [thin] (4,\y-1) to [out=110, in=-90] (3.6,\y+1) to [out=90,in=-110] (4,\y+3);
				\draw [thin] (4,\y-1) to [out=70, in=-90] (4.4,\y+1) to [out=90,in=-70] (4,\y+3);
			}
			\draw [thin] (4,1) ellipse (0.2 and 0.8);
			\draw [fill] (4,3) circle (0.05);
			\draw (4,1) circle (0.05);
			
		\end{scope}

		\draw [decorate,decoration={brace,mirror,raise=5pt}] (0.1,0) --node[below=10pt] {$T$} (3.9,0) ;
		\begin{scope}
		\path [clip] (2,-1) rectangle (5,0);
		\draw [decorate,decoration={brace,mirror,raise=5pt}] (4.1,0) --node[below=10pt] {$N$} (5.2,0) ;
		\end{scope}
	\end{tikzpicture}
		\caption{}
	\end{subfigure}
  \caption{The figures show the return map of the Reeb vector fields corresponding to $\alpha_j$ and $\alpha_j'$ respectively in the case $n=3$ and $j=5$.  The form from \cref{MB_form_on_N_T_b} has been perturbed in a neighbourhood of the Morse-Bott torus of slope $4[m]-[l]$ to produce non-degenerate orbits $h_{1/4}$ and $e_{1/4}$, both of which are seen intersecting the page 4 times.  Similarly the torus of slope $5[m]-l$ has been perturbed to produce orbits $h_{1/5}$ and $e_{1/5}$.  In (a) the Morse-Bott torus at $\del N$, where $y=2$, has been left unperturbed whereas in (b) it has also been perturbed to yield the orbits $h_{0/1}$ and $e_{0/1}$.}
	\label{perturbed_forms_on_N_T}
\end{figure}
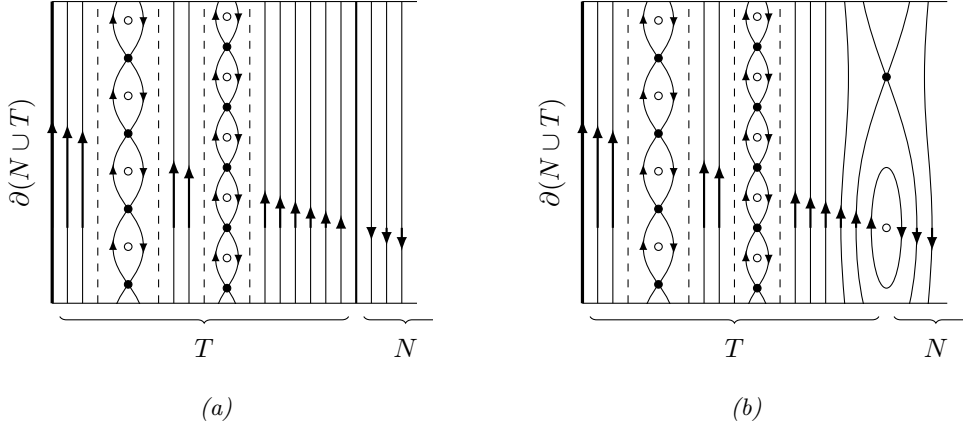

All the forms defined above are homotopic and we will now outline briefly how they will be used to compute the surgery formula.

The form $\alpha'$, as the only form which is completely non-degenerate on the interior of $N\union T$, is used simply for formal purposes, and will be used in statements of theorems summarising the surgery formula.  In other words, the result of this thesis is to compute
\[ \widehat{ECK}(K,\alpha') := H_*\left( ECC\orbits{he}{h}(\mathrm{int}(N\union T),\alpha')\right). \]

\begin{convention}
Here, once again, we have dropped the choice of (regular, adapted) almost complex structure from the notation, and we will continue to make this omission for the remainder of the chapter.
\end{convention}

When it comes to computing the groups above, however, we will actually use the forms $\alpha'_j$ to calculate ECK in each Alexander grading in turn.  This is legitimate since $\alpha_j'$ is non-degenerate in Alexander gradings $j$ and below and
\[ \widehat{ECK}(K,\alpha'_j;j) \iso \widehat{ECK}(K,\alpha';j) \]
for each $j>0$ by \cref{ECC_independent_of_alpha}.

The reason we move away from the form $\alpha'$ is that we do not have an explicit understanding of the Reeb orbits of $\alpha'$ in the twist region $T$.  However, for $\alpha_j'$ we do have an explicit description, namely the orbits $e_{p/q}$ and $h_{p/q}$ for every $p/q\in[0,1/n]$ with $q\le j$.  Furthermore, it is also possible to understand the behaviour of differentials contained within $T$ by applying results of Hutchings and Sullivan~\cite{HS05}, who use such perturbations to explicitly compute the periodic Floer homology of a Dehn twist.

Finally, the need for the forms $\alpha_j$ is so that we can apply Morse-Bott techniques, namely positivity of intersection and the Blocking Lemma, to help us compute the groups $\widehat{ECK}(K,\alpha'_j;j)$.  We have a slight problem since $\alpha_j$ has a Morse-Bott family of orbits at $y=2$ in the interior of $N\union T$ and therefore is not necessarily nice (recall \cref{niceness_defn}), and hence the complexes
\[ ECC\orbits{he}{h}(\mathrm{int}(N\union T),\alpha'_j; j)\quad\text{and}\quad ECC\orbits{he}{h}(\mathrm{int}(N\union T),\alpha_j; j) \]
do not necessarily agree.

We circumvent this problem by constructing filtrations on both these complexes such that $\alpha_j$ is nice with respect to the filtration-preserving part of the differential.  Then the differentials on the zero-th pages agree and we can therefore apply Morse-Bott techniques to $\alpha_j$ to allow us to compute the first page of the spectral sequence corresponding to $\alpha'_j$.

The first page has a ``two tower'' structure as detailed in \cref{computing_the_first_page_of_F} and illustrated in \cref{two_towers_example}.  In Alexander gradings 0 through $n-1$, the first page is supported entirely in the lowest filtration level, and the spectral sequence convergences at this point; this yields the groups $H_*(A_j)$ as seen in the statement of the surgery formula (\cref{surgery_formula}).

In Alexander gradings greater than or equal to $n$, the first page is supported in filtration levels 0 and 1; in \cref{the_differential_on_the_first_page} we use an algebraic argument and employ induction to compute the second page of the spectral sequence.  This computation yields the groups $H_*(B_i)$ as seen in the surgery formula.

\section{A filtration on the surgery complex}\label{a_filtration_on_the_surgery_complex}

For the remainder of the chapter we will always work with a fixed $j\ge 0$.  The complexes
\[ ECC\orbits{he}{h}(\mathrm{int}(N\union T), \alpha'_j;j)\quad\text{and}\quad ECC\orbits{he}{h}(\mathrm{int}(N\union T), \alpha_j;j) \]
are canonically isomorphic as vector spaces but their differentials, denoted by $d'$ and $d$ respectively, are not necessarily equal, since $\alpha_j$ is not necessarily a nice Morse-Bott contact form.

Define a descending filtration $\mathcal{F}$ on these chain complexes as follows.  Write any orbit set as
\[ \gamma\tensor\Gamma, \]
where $\gamma$ is constructed only from the orbits $e_-$, $h_\pm$ and orbits in the twist region $T$ and $\Gamma$ is constructed from orbits in $\mathrm{int}(N)$.  Every $\gamma$ has homology class equal to $q[m] - p[l]$ for some integers $p,q\ge 0$.  Here $m$ and $l$ are the meridian and longitude curves associated to $N$ as discussed at the start of \cref{large_negative_n_surgery}.  For example, the homology class of the orbits at $\del N$ ($y=2$) is $[m]$ and the homology class of the orbits at $\del (N\union T)$ ($y=1$) is $n[m]-[l]$. Define the filtration $\mathcal{F}$ on the two chain complexes above by
\[ \mathcal{F}(\gamma\tensor\Gamma) := p. \]
In particular,
\begin{align*}
&\mathcal{F}(h_{0/1}) = \mathcal{F}(e_{0/1}) = 0,\quad\text{and}\\
&\mathcal{F}(h_\pm) = \mathcal{F}(e_-) = 1.
\end{align*}

\begin{prop}
$\mathcal{F}$ defines a descending filtration on both complexes
\[ ECC\orbits{he}{h}(\mathrm{int}(N\union T), \alpha'_j;j)\quad\text{and}\quad ECC\orbits{he}{h}(\mathrm{int}(N\union T), \alpha_j;j). \]
\end{prop}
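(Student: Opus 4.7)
The plan is to apply slope calculus and positivity of intersection at a torus slice $T_{y^*}$ in the twist region, just above the boundary $y=1$. Let $u$ be the non-connector part of a holomorphic curve (for $\alpha_j'$) or a Morse-Bott building (for $\alpha_j$) contributing to the differential from $\gamma_+\tensor\Gamma_+$ to $\gamma_-\tensor\Gamma_-$. Writing homology classes in the basis $([m],[l])$ of $H_1$ of the twist region as $[\gamma_\pm\tensor\Gamma_\pm] = Q_\pm[m] - P_\pm[l]$, I observe that $P_\pm = \mathcal{F}(\gamma_\pm\tensor\Gamma_\pm)$, since all orbits in $\Gamma_\pm\subset\mathrm{int}(N)$ have $[l]$-coefficient zero. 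Moreover, since the differential on $ECC\orbits{he}{h}$ excludes the curve $h_+\longrightarrow\emptyset$, Alexander grading is preserved, and hence $Q_+ = Q_- = j$. The goal is therefore reduced to showing that $P_+\ge P_-$.

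I would next choose $y^* = 1+\delta$ with $\delta>0$ small and selected so that $T_{y^*}$ has irrational slope $s_{y^*} = q_{y^*}[m] - p_{y^*}[l]$. Such a torus is foliated by dense Reeb trajectories, and is therefore Morse-Bott for both $\alpha_j$ and $\alpha_j'$, since both perturbations are localized away from $y=1$ and only break rational-slope Morse-Bott families. Writing $\gamma_\pm^b$ for the parts of $\gamma_\pm$ at the boundary $y=1$ (with total homology class $\alpha_\pm(n[m]-[l])$, where $\alpha_\pm\ge 0$ counts the boundary orbits with multiplicity), and taking $\delta$ small enough that $u$ has no ends in $T^2\cross[1,1+\delta]$, \cref{slope_calculus} together with $Q_+ = Q_-$ yields
\[
[u_{y^*}] \;=\; [\gamma_+\tensor\Gamma_+] - [\gamma_+^b] - [\gamma_-\tensor\Gamma_-] + [\gamma_-^b] \;=\; -n(\alpha_+-\alpha_-)[m] + \bigl((\alpha_+-\alpha_-) - (P_+-P_-)\bigr)[l].
\]

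Finally, applying positivity of intersection (\cref{positivity_of_intersections_3}) at $T_{y^*}$ and taking the limit $\delta\to 0$ (so that $s_{y^*}\to n[m]-[l]$), the inequality $\langle[u_{y^*}], s_{y^*}\rangle\ge 0$ exhibits a clean cancellation of the contributions involving $\alpha_+-\alpha_-$, leaving exactly $n(P_+-P_-)\ge 0$, which gives $P_+\ge P_-$ as required. The same argument applies uniformly to both complexes; the main point to verify is that slope calculus and positivity of intersection extend to Morse-Bott buildings in the case of $\alpha_j$, which follows from the generalizations recorded in the Morse-Bott chapter of this thesis.
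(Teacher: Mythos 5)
Your choice of torus is the wrong one, and the slice class you assign to it does not hold up. At $T_{y^*}$ with $y^*=1+\delta$, the only ends of $u$ on the inner side (the boundary orbits at $y=1$, together with $h_+$ at $\del V$ for the formal part of the differential) all have class proportional to $n[m]-[l]$, and the slice of $u$ vanishes just below $y=1$ since the honest curves live in $N\union T$. Slope calculus computed from that side therefore gives $[u_{y^*}]=(\alpha_--\alpha_+)(n[m]-[l])$ outright, with no $P_\pm$ appearing. Your alternative expression, obtained by summing the ends on the outer side, presupposes that each orbit of $\Gamma_\pm\subset\mathrm{int}(N)$ carries a well-defined class $A[m]+0\cdot[l]$ in $H_1(T_{y^*})$ and that the slice vanishes ``beyond'' $N$; neither is available ($N$ is not a solid torus, and $[l]$ bounds a page, so the $[l]$-coefficient of an orbit set is invisible to $H_1(N\union T)$). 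The discrepancy between your formula and the correct one is exactly $\big(\mathcal{F}(\gamma_-)-\mathcal{F}(\gamma_+)\big)[l]$ --- the quantity you are trying to bound --- so the argument is circular. Pairing the correct class with $s_{y^*}\to n[m]-[l]$ yields only $\alpha_-\ge\alpha_+$ (the boundary multiplicity cannot drop, which is just the \refNamedThm{Trapping Lemma}{trapping_lemma} again) and gives no control on $\mathcal{F}$.

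The filtration inequality is detected at the other end of the twist region: take $T_\rho$ with $\rho$ close to $2$, so that every orbit of $\gamma_\pm$ with nonzero $[l]$-winding lies on the inner side and the foliating slope is $[m]-\epsilon_\rho[l]$ with $\epsilon_\rho\to0$. Then $[u_{T_\rho}]=[\gamma_-]-[\gamma_+]=k[m]-\big(\mathcal{F}(\gamma_-)-\mathcal{F}(\gamma_+)\big)[l]$, and positivity of intersection gives $\big(\mathcal{F}(\gamma_-)-\mathcal{F}(\gamma_+)\big)-k\epsilon_\rho\ge0$, hence $\mathcal{F}(\gamma_-)\ge\mathcal{F}(\gamma_+)$. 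Note that this is also the inequality actually required: in the convention of the \refNamedThm{Cancellation Lemma}{cancellation_lemma}, a descending filtration is one whose level can only \emph{increase} under the differential, so the needed statement is $\mathcal{F}(\gamma_-)\ge\mathcal{F}(\gamma_+)$, the reverse of your stated conclusion $P_+\ge P_-$ (which would in any case be inconsistent with the \refNamedThm{Trapping Lemma}{trapping_lemma}: the twist region is foliated by negative Morse-Bott tori, so curves can only \emph{create} $[l]$-winding at their negative ends). Your reduction from $\alpha_j'$ to Morse-Bott buildings for $\alpha_j$ via \cref{MB_building_exists_epsilon_small_enough} is the right move and matches the paper.
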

\begin{proof}
The proof proceeds in manner similar to the argument in \cref{a_sequence_of_filtrations_on_the_ECH_complexes} and the proof of \cref{cobordism_respects_eta_filtration}.  Let $J$ (resp.\ $J'$) denote the regular adapted almost complex structure for $\alpha_j$ (resp.\ $\alpha_j'$).

First note that in the case of the complex on the left we can apply \cref{MB_building_exists_epsilon_small_enough} to any $J'$-holomorphic curve between two orbit sets to obtain a nearby $J$-holomorphic Morse-Bott building.  Thus it suffices to prove the proposition for the complex on the right.

Suppose that there is a $J$-holomorphic Morse-Bott building $u$ between $\gamma_+\tensor\Gamma_+$ and $\gamma_-\tensor\Gamma_-$, and apply slope calculus (\cref{slope_calculus}) to the torus $T_\rho:=T^2\cross\set{\rho}$ for some $\rho\in(1,2)$.  Provided $\rho$ is large enough, the following is true:
\begin{itemize}
\item all orbits comprising $\gamma_+$ and $\gamma_-$ lie to one side $T_\rho$, i.e.~in the region parametrized by $1\le y<\rho$, and
\item $T_\rho$ is foliated by Reeb trajectories of slope $s=[m]-\epsilon_\rho[l]$ for some $\epsilon_\rho>0$ such that $\epsilon_\rho\to0$ as $\rho\to 2$.
\end{itemize}
Then, by slope calculus,
\[ [u_{T_{\rho}}] = [\gamma_-]-[\gamma_+] = k[m] - \big(\mathcal{F}(\gamma_-) - \mathcal{F}(\gamma_+)\big)[l] \]
for some $k\in\Z$, and hence, by positivity of intersection (\cref{positivity_of_intersections_3}),
\[\begin{split}
0<[u_{T_{\rho}}]\cdot s &= \big(k[m] - \left(\mathcal{F}(\gamma_-) - \mathcal{F}(\gamma_+)\right)[l]\big)\cdot ([m]-\epsilon_\rho[l]) \\
&= \left(\mathcal{F}(\gamma_-) - \mathcal{F}(\gamma_+)\right) -k\epsilon_\rho. 
\end{split}\]
Taking $\rho$ close to $2$ and hence $\epsilon_\rho$ sufficiently small forces $\mathcal{F}(\gamma_-)\ge\mathcal{F}(\gamma_+)$ as required.
\end{proof}

We shall call this filtration the \emph{twist filtration} and denote by $d_0$ the part of $d$ which does not decrease the filtration level.  Then apply slope calculus again to see that any Morse-Bott building contributing to $d_0$ must intersect the Morse-Bott torus at $\del N$ ($y=2$) with infinite slope.  Therefore the \refNamedThm{Blocking Lemma}{blocking_lemma} implies that every differential is one-sided at $y=2$ and, as a result, $\alpha_j$ is nice with respect to $d_0$ by \cref{one-sided_implies_nice} (recall \cref{niceness_defn}).  Hence \cref{non-degen_close_to_MB} implies the following:

\begin{thm}\label{alpha_j_and_alpha_j_prime_iso_thm}
Fix $j\ge 0$.  Provided that the perturbation in \cref{perturbed_contact_forms_on_N_union_T} is small enough, the complexes
\[ \left(ECC\orbits{he}{h}(\mathrm{int}(N\union T), \alpha'_j;j), d'_0\right) \quad\text{and}\quad 
   \left(ECC\orbits{he}{h}(\mathrm{int}(N\union T), \alpha_j;j), d_0\right) \]
are isomorphic for some appropriate choices of regular adapted almost complex structures. Here $d'_0$ denotes the filtration preserving part of $d'$.
\end{thm}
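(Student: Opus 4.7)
The proof will be a direct application of \cref{non-degen_close_to_MB}, leveraging the fact that, although $\alpha_j$ may fail to be globally nice, it \emph{is} nice with respect to the filtration-preserving differential $d_0$, so we are in precisely the situation in which that proposition is designed to be applied.

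As vector spaces the two complexes are canonically identified. The form $\alpha_j'$ differs from $\alpha_j$ only by a $C^\infty$-small perturbation supported in a neighbourhood of the remaining Morse-Bott torus at $y=2$, producing the non-degenerate pair $(e_{0/1}, h_{0/1})$. These correspond to the two distinguished orbits selected from this Morse-Bott family when assembling the Morse-Bott chain complex for $\alpha_j$; all other orbits (those in $\mathrm{int}(N)$, the perturbed non-degenerate pairs $e_{p/q}, h_{p/q}$ for $q \le j$, the orbits $h_+, e_-, h_-$, and any residual Morse-Bott tori with $q > j$) are unchanged. The resulting bijection of generators preserves the twist filtration $\mathcal{F}$, since every orbit retains its homology class.

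For the chain-level correspondence, I would proceed as follows. As established in the paragraphs preceding the theorem, every Morse-Bott building contributing to $d_0$ is one-sided at the Morse-Bott torus at $y=2$ (by slope calculus and the \refNamedThm{Blocking Lemma}{blocking_lemma}) and therefore nice by \cref{one-sided_implies_nice}. Fix an action bound $L > 0$ and, assuming the perturbation is sufficiently small, apply part (1) of \cref{non-degen_close_to_MB} to each pair of orbit sets of action less than $L$ lying in the same $\mathcal{F}$-level to obtain a bijection between the moduli spaces of very nice index-$1$ Morse-Bott buildings for $\alpha_j$ and the moduli spaces of index-$1$ nmc curves for $\alpha_j'$. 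In the reverse direction, \cref{MB_building_exists_epsilon_small_enough} guarantees that any $\alpha_j'$-holomorphic curve contributing to $d_0'$ limits, as the perturbation shrinks, to a filtration-preserving Morse-Bott building for $\alpha_j$, which is automatically very nice by the argument above. This yields an isomorphism of chain complexes at action level $L$, and a direct limit over an increasing sequence of action bounds paired with progressively smaller perturbations removes the action constraint within Alexander grading $j$.

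The principal subtlety to navigate is that \cref{non-degen_close_to_MB}(2) is formally stated for globally nice Morse-Bott contact forms, whereas $\alpha_j$ is only nice with respect to $d_0$. However, inspection of its proof shows that the only obstruction to a chain-level isomorphism is the appearance of non-nice Morse-Bott buildings in the differential being computed; restricting to $d_0$ precisely removes this obstruction, so the proposition applies on the filtration-preserving level. This strategy is entirely analogous to that used in \cref{a_sequence_of_filtrations_on_the_ECH_complexes}, where the Morse-Bott forms $\alpha_i$ there are not globally nice but are nice with respect to their filtration-preserving parts $d_{i,0}$, and the same application of \cref{non-degen_close_to_MB} yields the identification $d_{i,0}^{\MB} = d_{i,0}$.
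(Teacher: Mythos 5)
Your proposal is correct and follows essentially the same route as the paper: the paper's proof consists precisely of the observation (via slope calculus, the Blocking Lemma and \cref{one-sided_implies_nice}) that $\alpha_j$ is nice with respect to $d_0$, followed by an appeal to \cref{non-degen_close_to_MB}. Your additional care about the action bound, the reverse direction via \cref{MB_building_exists_epsilon_small_enough}, and the "nice with respect to $d_0$" subtlety only makes explicit what the paper handles through \cref{niceness_defn}(3) and the parallel argument of \cref{a_sequence_of_filtrations_on_the_ECH_complexes}.
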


This theorem allows us to use Morse-Bott techniques to compute the first page of the spectral sequence corresponding to $d'$.  More precisely, we can state the following facts about the complex corresponding to $\alpha'_j$:

\begin{prop}\label{facts_about_differentials_on_zeroth_page}~
\begin{enumerate}
\item\label{all_curves_in_N_or_T} All holomorphic curves contributing towards $d'_0$ have non-connector parts contained either entirely in $\mathrm{int}(T)$ or entirely in $\mathrm{int}(N)$.
\item\label{only_negative_ends_at_del_N} All holomorphic curves contributing towards $d'_0$ can only have negative ends at the orbits $e_{0/1}$ and $h_{0/1}$.
\item\label{complex_on_N_part} The complex obtained by just considering orbits and differentials in $N$ is isomorphic to the Morse-Bott complex
\[ ECC\orbits{e}{h}(\mathrm{int}(N), \alpha_N). \]
\end{enumerate}
\end{prop}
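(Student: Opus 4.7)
The plan is to reduce each statement to the Morse-Bott setting for $\alpha_j$ via \cref{alpha_j_and_alpha_j_prime_iso_thm}, after which I would apply slope calculus together with the Blocking and Trapping Lemmas near the Morse-Bott torus $\set{y=2}$. Under the correspondence of \cref{alpha_j_and_alpha_j_prime_iso_thm}, the non-degenerate orbits $e_{0/1}, h_{0/1}$ of $\alpha_j'$ correspond to the Morse-Bott family of orbits of $\alpha_j$ at $\set{y=2}$, and negative (respectively positive) ends at these non-degenerate orbits correspond to negative (respectively positive) one-sided ends at the Morse-Bott family. So it suffices to prove the analogous statements for $\alpha_j$ Morse-Bott buildings contributing to $d_0$.

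For \cref{all_curves_in_N_or_T}, let $u$ be such a Morse-Bott building and let $u_1$ denote its non-connector part; by \cref{index_0_1_and_2_holo_curves}, $u_1$ is a single connected curve since $u$ has ECH index 1. I would apply slope calculus to $u_1$ at tori $T_\rho=\set{y=\rho}$ for $\rho$ slightly less than and slightly greater than 2. Because $d_0$ is filtration-preserving, the argument used in the proof that $\mathcal{F}$ is a filtration shows that $[u_{1,T_\rho}]$ is of the form $k[m]$ (i.e.\ purely meridianal) for $\rho$ sufficiently close to $2$. Since the Morse-Bott torus at $\set{y=2}$ is foliated by meridians and so also has slope $[m]$, the \refNamedThm{Blocking Lemma}{blocking_lemma} applied on each side of $\set{y=2}$ forces $u_1$ to be disjoint from $\set{y=2}\times\R$ away from its ends. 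Connectedness of $u_1$ then places its body entirely in $\mathrm{int}(N)$ or entirely in $\mathrm{int}(T)$, as claimed. For \cref{only_negative_ends_at_del_N}, I would invoke the \refNamedThm{Trapping Lemma}{trapping_lemma} at $\set{y=2}$: since this torus is a negative Morse-Bott torus (the construction of $\alpha$ in \cref{large_negative_n_surgery} extends $\alpha_N$, for which $\del N$ is negative Morse-Bott, with the Reeb field rotating further negatively into $T$), any one-sided end of $u_1$ at $\set{y=2}$ must be negative.

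For \cref{complex_on_N_part}, note that $\alpha_j$ restricted to $N$ coincides with $\alpha_N$, since the perturbations from $\alpha$ to $\alpha_j$ are supported in the interior of $T$. By \cref{all_curves_in_N_or_T,only_negative_ends_at_del_N}, the differentials of $d_0$ between orbit sets supported in $N\union\set{y=2}$ arise exclusively from $\alpha_j$-Morse-Bott buildings contained in $N$ with only negative ends at the Morse-Bott torus $\set{y=2}$; these are precisely the Morse-Bott buildings in $N$ for $\alpha_N$ which define the differential of $ECC\orbits{e}{h}(\mathrm{int}(N),\alpha_N)$. Combined with \cref{alpha_j_and_alpha_j_prime_iso_thm}, which identifies the underlying vector spaces and matches $e_{0/1}, h_{0/1}$ with $e_-, h_-$, this yields the claimed isomorphism of chain complexes. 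The trickiest step will be \cref{all_curves_in_N_or_T}: one must be careful that slope calculus can be applied right up to the Morse-Bott torus $\set{y=2}$ (using the slope-$[m]$ tori $T_\rho$ on both sides) and that the resulting slope-$[m]$ conclusion is strong enough to invoke the \refNamedThm{Blocking Lemma}{blocking_lemma}, since the Morse-Bott family in $T$ includes tori whose slopes are irrational and close to $[m]$, and a priori $u_1$ could have ends at such nearby tori that interfere with the slope computation.
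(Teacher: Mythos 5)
Your proposal is correct and follows essentially the same route as the paper: part (1) via slope calculus (the filtration-preserving differentials have purely meridianal slope near $y=2$) plus the \refNamedThm{Blocking Lemma}{blocking_lemma}, part (2) via the \refNamedThm{Trapping Lemma}{trapping_lemma} at the negative Morse-Bott torus $\del N$, and part (3) from $\restr{\alpha_j}{N}=\alpha_N$, all after passing to the Morse-Bott setting via \cref{alpha_j_and_alpha_j_prime_iso_thm}. The paper's own proof is just a terser version of this, with the slope-calculus input already established in the discussion preceding \cref{alpha_j_and_alpha_j_prime_iso_thm}.
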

\begin{proof}
Part \cref{all_curves_in_N_or_T} follows from the \refNamedThm{Blocking Lemma}{blocking_lemma} applied to the torus $\del N$, and part \cref{only_negative_ends_at_del_N} follows from the \refNamedThm{Trapping Lemma}{trapping_lemma}.  Part \cref{complex_on_N_part} follows since $\restr{\alpha_j}{N}=\alpha_N$ so we can assume that the almost complex structure for $\alpha_j$ is equal, on $\R\cross N$, to the almost complex structure for $\alpha_N$, and hence the moduli spaces of Morse-Bott buildings are equal.
\end{proof}
The next step is to compute the first page of the spectral sequence corresponding to $\mathcal{F}$ by considering further filtrations on the zeroth page.  Write
\[  (E^0(\mathcal{F}), d'_0) := \left(ECC\orbits{he}{h}(\mathrm{int}(N\union T), \alpha'_j;j), d'_0\right) \]
and
\[ E^0_p(\mathcal{F}) \]
for the part of $E^0(\mathcal{F})$ lying in $\mathcal{F}$-grading $p\ge 0$.

The idea is to define a filtration $\mathcal{G}$ on the zeroth page $E^0(\mathcal{F})$ whose spectral sequence converges to the first page $E^1(\mathcal{F})$.  In fact, we will treat the case $p=0$ slightly differently since it is simpler.

\subsubsection{The case when \texorpdfstring{$p=0$}{p=0}}\label{the_case_when_p_equals_0}

The complex $E_0^0(\mathcal{F})$ is much easier to understand, since the constraint that $\mathcal{F}=0$ means we never see any orbits in the twist region $T$ apart from $e_{0/1}$ and $h_{0/1}$.

That is,
\begin{equation}\label{zeroth_page_no_twist_isomorphism}
E_0^0(\mathcal{F}) \iso ECC\orbits{e}{h}(\mathrm{int}(N),\alpha_N; j),
\end{equation}
where here we are using \refListInThm{facts_about_differentials_on_zeroth_page}{complex_on_N_part} to replace $\alpha'_j$ with $\alpha_N$.

We will return to this formula later, but first will consider the case $p>0$.

\subsubsection{A second filtration when \texorpdfstring{$p>0$}{p>0}}\label{a_second_filtration_for_p_greater_0}

When $p>0$ the complex $E^0_p(\mathcal{F})$ is more complex, so we will define a second descending filtration $\mathcal{G}$ to break it down further.

As in the start of \cref{a_filtration_on_the_surgery_complex}, write every orbit set in $E^0_p(\mathcal{F})$ as $\gamma\tensor\Gamma$ where $\gamma$ consists of the orbits in the twist region (including $e_-$, $h_\pm$, $e_{0/1}$ and $h_{0,1}$), and $\Gamma$ consists of the orbits in $\mathrm{int}(N)$.

Define $\mathcal{G}(\gamma\tensor\Gamma)$ to be the Alexander grading of $\gamma$, that is the number of times that $\gamma$ intersects each page $\Sigma_T$.  For example,
\begin{align*}
&\mathcal{G}(e_{p/q})=\mathcal{G}(h_{p/q})=q\quad\text{and}\\
&\mathcal{G}(e_-) = \mathcal{G}(h_\pm) = n.
\end{align*}
By \cref{facts_about_differentials_on_zeroth_page}, the only differentials which can change $\mathcal{G}$ are those in $N$ with one-sided ends at $e_{0/1}$ and $h_{0/1}$.  Furthermore all such ends must be negative by the \refNamedThm{Trapping Lemma}{trapping_lemma} and hence $\mathcal{G}$ defines a valid descending filtration.

The zeroth page of the spectral sequence splits as a tensor product:
\begin{equation}\label{zeroth_page_with_twist_isomorphism}
E^0_p(\mathcal{G}) \iso \sum_{j_T+j_N=j} ECC\orbits{he}{h}(T, \restr{\alpha'_j}{T};j_T,p) \tensor ECC(\mathrm{int}(N),\alpha_N; j_N)
\end{equation}
where on the right we are using \refListInThm{facts_about_differentials_on_zeroth_page}{complex_on_N_part} to replace $\alpha'_j$ with $\alpha_N$.
The complex on the left is defined to be generated by those orbit sets which
\begin{itemize}
\item are constructed from the orbits $h_\pm$, $e_-$ and the orbits in $T$, including $e_{0/1}$ and $h_{0/1}$,
\item have Alexander grading $j_T$, and
\item have $\mathcal{F}$-grading $p$.
\end{itemize}
Thus is order to proceed it is necessary to understand the complexes
\[ ECC\orbits{he}{h}(T, \restr{\alpha'_j}{T};j_T,p)\]
for $j_T\le j$ and $p\in\N$. Note that by \cref{alpha_j_and_alpha_j_prime_iso_thm} and \cref{facts_about_differentials_on_zeroth_page} we have isomorphisms
\[ ECC\orbits{he}{h}(T, \restr{\alpha'_j}{T};j_T,p)\iso ECC\orbits{he}{h}(T, \restr{\alpha_j}{T};j_T,p) \]
for every $j_T$ and $p$.

\section{Orbits and differentials in the twist region}\label{orbits_and_differentials_in_the_twist_region}

In this section we will appeal to the work of Hutchings and Sullivan~\cite{HS05}, who compute the periodic Floer homology of a Dehn twist.  We will use similar techniques to those in \cref{ECK_hat_supported_in_low_Alexander_degrees} to ensure that our contact structure is close enough to the stable Hamiltonian structure used to define periodic Floer homology, and then the results carry over directly.

\subsection{A change of coordinates (again)}

Recall that in \cref{reparametrizing_the_contact_form_on_N} we performed a change of coordinates so that the Reeb vector field was parallel to $\del_t$ everywhere on $N$.  We will perform the same procedure here. Currently we have the following setup: there is a neighbourhood of $T$, which we identify with
\[ S^1\cross[1,2+\epsilon)\cross [0,1]/\sim,\]
on which the contact form $\alpha'_j$ is a small non-degenerate perturbation of the form
\[ f(y)\dd t + g(y)\dd\theta; \]
we will forget the small perturbation for the moment.  This form has Reeb vector field parallel to
\[ f'(y)\del_\theta - g'(y)\del_t \]
and on the remainder of our manifold, the Reeb vector field is already parallel to $\del_t$ (c.f.~\cref{reparametrizing_the_contact_form_on_N}).  We will perform a reparametrization on $S^1\cross[1,2+\epsilon)\cross [0,1]/\sim$, using the same formula as that used earlier, to obtain a coordinate system
\[ (t, \theta', y) \]
where $\theta' = \theta + t\frac{f'(y)}{g'(y)}$.

Then it is easy to check (c.f.~\cref{reparametrizing_the_contact_form_on_N}) that in this coordinate system the Reeb vector field is parallel to $\del_{t}$ everywhere, and the monodromy on the twist region is given by the formula
\[  (y,\theta') \mapsto (y, \theta'-\frac{f'(y)}{g'(y)}), \]
i.e.~a positive Dehn twist which is equal to the identity when $y=2$ and a rotation by $-1/n$ when $y=1$.

The addition of the small perturbation to make $\alpha'_j$ non-degenerate corresponds in this setting to a small perturbation of the monodromy to make all those orbits whose intersection number with $\Sigma_T$ is less than or equal to $j$ non-degenerate.  Denote this perturbed monodromy by $\phi'_j$---this is simply the return map of the Reeb vector field $R_{\alpha'_j}$.

Recall that in \cref{limiting_towards_a_stable_hamiltonian_structure} we took a family of contact forms parametrized by $\zeta$ which approached some stable Hamiltonian structure $C\dd t$ and here we can perform the exact same procedure. As a result we may assume that $\alpha'_j$ is taken close enough to $C\dd t$ (for some large $C>0$) so that we have an isomorphism of complexes
\begin{equation}\label{ECC_is_PFC_in_surgery_case}
 ECC\orbits{he}{h}(\mathrm{int}(N\union T), \alpha'_j; j)\iso PFC\orbits{he}{h}(\mathrm{int}(N\union T), C\dd t; j)
\end{equation}
for some appropriate choices of regular adapted almost complex structures.

In particular, the complexes obtained by restricting only to orbit sets and differentials contained in the twist region are also isomorphic.  At this point it is tempting to write ``the periodic Floer homology of $T$'' but note that some care is required here since the perturbed monodromy $\phi'_j$ does not restrict to a monodromy on the collar region $S^1\cross[1,2]$ as the Morse-Bott torus at $y=2$ has been perturbed into two orbits $e_{0/1}$ and $h_{0/1}$.

The solution is to replace the contact form $\alpha'_j$ with the form $\alpha_j$ which has two Morse-Bott tori at $y=1,2$, and the monodromy $\phi'_j$ with the analogous monodromy $\phi_j$ which is the return map of $R_{\alpha_j}$.  Then $\phi_j$ honestly restricts to the collar region $S^1\cross[1,2]$ and it makes sense to define the periodic Floer complex, which we will denote by
\begin{equation}\label{PFC_twist_region_def} PFC(S^1\cross[1,2], \phi_j; j_T), \end{equation}
for all $j_T\le j$, as the Morse-Bott periodic Floer complex where we include all four boundary elliptic and hyperbolic orbits arising from the Morse-Bott tori at $y=1,2$:
\[ e_{0/1}\text{, }h_{0/1}\text{, }e_{1/n}\text{ and }h_{1/n}. \]
Here we are denoting the orbits $e_-$ and $h_-$ by $e_{1/n}$ and $h_{1/n}$ respectively in accordance with the standard $p/q$-notation in the twist region.  We are also writing $\phi_j$ in \cref{PFC_twist_region_def} rather than the more cumbersome $\restr{\phi_j}{S^1\cross[1,2]}$.

The complex $PFC(S^1\cross[1,2], \phi_j; j_T)$ splits naturally as a direct sum by homology class, which in this case is given by the winding number of the orbit sets in the $S^1$ direction, or equivalently by the $\mathcal{F}$-filtration level.  In analogy to the notation used in \cref{zeroth_page_with_twist_isomorphism} we will write
\[ PFC(S^1\cross[1,2], \phi_j; j_T,p) \]
for the subcomplex generated by orbit sets with homology class
\[ j_T[m] - p[l], \]
that is, with Alexander grading $j_T$ and $\mathcal{F}$-grading $p$.
 
We also define the following augmented complexes by ``including'' the differential from $h_+$ to $e_{1/n}$:
\begin{defn}~
\begin{enumerate}
\item The complex $PFC^{(h_+)}(S^1\cross[1,2], \phi_j; j_T)$ is defined as a vector space by
\[\mathcal{R}[h_+]\tensor PFC(S^1\cross[1,2], \phi_j; j_T)\]
and is equipped with differential
\[ d(\gamma\tensor\Gamma) = \gamma \tensor d\Gamma + (\gamma/h_+) \tensor e_{1/n}\Gamma. \]
\item The complex in part (1) splits as a direct sum according to homology class, or $\mathcal{F}$-grading, and we define the complex $PFC^{(h_+)}(S^1\cross[1,2], \phi_j; j_T,p)$, for each $p\in\N$, to be the subcomplex generated by orbit sets with $\mathcal{F}$-grading $p$.
\end{enumerate}
\end{defn}

Given these definitions we are now able to state the following proposition which follows directly from \cref{ECC_is_PFC_in_surgery_case} and the above discussion.
\begin{prop}\label{ECC_is_PFC_on_twist_region}
We have an isomorphism
\[ ECC\orbits{he}{h}(T, \restr{\alpha_j}{T};j_T,p) \iso PFC^{(h_+)}(S^1\cross[1,2], \phi_j; j_T,p) \]
for all $j_T\le j$ and all $p\in\N$.
\end{prop}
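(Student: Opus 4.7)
The plan is to derive the stated isomorphism by restricting the already-established isomorphism between the global ECC and PFC complexes (constructed via the limit towards a stable Hamiltonian structure) to the twist region $T$, and then checking that the orbits and differentials match under the formal addition of $h_+$.

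First I would apply the change of coordinates from the preceding discussion so that the Reeb vector field of $\alpha_j$ on $N\cup T$ is parallel to $\del_t$, with return map equal to the perturbed monodromy $\phi_j$. Mimicking the argument of \cref{limiting_towards_a_stable_hamiltonian_structure}, \cref{ECC_is_PFC_in_surgery_case} gives an isomorphism of complexes between $ECC\orbits{he}{h}(\mathrm{int}(N\union T), \alpha'_j;j)$ and the corresponding PFC complex once the perturbation parameter $\zeta$ is small enough. Combining this with \cref{alpha_j_and_alpha_j_prime_iso_thm} lets me replace $\alpha'_j$ by $\alpha_j$; this is the step that keeps the Morse-Bott torus at $y=2$ alive so that $\phi_j$ honestly restricts to a monodromy on $S^1\cross[1,2]$. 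The orbits of $\phi_j$ of Alexander grading at most $j$ lying in $T$ are then exactly the non-degenerate orbits $e_{p/q}, h_{p/q}$ for $q\le j$, together with the four boundary orbits $e_{0/1}, h_{0/1}, e_{1/n}=e_-, h_{1/n}=h_-$, which is precisely the generating set of $PFC(S^1\cross[1,2],\phi_j;j_T)$ in the relevant homology class.

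Next I would split off the twist region. By \refListInThm{facts_about_differentials_on_zeroth_page}{all_curves_in_N_or_T}, non-connector parts of holomorphic curves on the zeroth page are either entirely in $N$ or entirely in $T$, so the subcomplex generated by orbit sets supported in $T\cup\{h_+\}$ with Alexander grading $j_T$ and $\mathcal{F}$-grading $p$ is honestly a subcomplex. To match the $\mathcal{R}[h_+]$ factor on the right, note that $h_+$ is hyperbolic and lives in $\nu(K(-n))$, so it appears with multiplicity 0 or 1 and is a pure tensor factor on the left. The only differential involving $h_+$ whose negative end meets $T$ is the curve $h_+\longrightarrow e_-=e_{1/n}$ (constructed in \cref{computing_ech_via_a_filtration_argument}); the other curve $h_+\longrightarrow\emptyset$ is excluded from $ECC\orbits{he}{h}$ by definition. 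This matches exactly the formula $d(\gamma\tensor\Gamma)=\gamma\tensor d\Gamma+(\gamma/h_+)\tensor e_{1/n}\Gamma$ defining $PFC^{(h_+)}$.

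The main obstacle, as usual in this setup, will be confirming that no stray differentials cross $\del N$ or escape into $\mathrm{int}(N)$. This is already guaranteed by positivity of intersection and the Blocking Lemma applied at the Morse-Bott torus $\del N$ (of infinite slope for $\alpha_j$), together with the Trapping Lemma which forces ends at $e_{0/1},h_{0/1}$ to be negative, exactly as exploited in \cref{facts_about_differentials_on_zeroth_page}. Given these constraints, the verification of the isomorphism on generators and differentials is then a direct bookkeeping exercise, independent of $j_T$ and $p$.
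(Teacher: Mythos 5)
Your proposal is correct and follows essentially the same route as the paper, which states the proposition as a direct consequence of the isomorphism \cref{ECC_is_PFC_in_surgery_case} together with the preceding discussion (the change of coordinates making $R$ parallel to $\del_t$, the replacement of $\alpha'_j$ by $\alpha_j$ via \cref{alpha_j_and_alpha_j_prime_iso_thm} so that $\phi_j$ honestly restricts to a monodromy on $S^1\cross[1,2]$, and the matching of the $h_+\longrightarrow e_{1/n}$ differential with the definition of $PFC^{(h_+)}$). Your write-up simply makes explicit the bookkeeping the paper leaves implicit, including the correct appeal to \cref{facts_about_differentials_on_zeroth_page} and the Blocking/Trapping Lemmas to isolate the twist region.
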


Thus, in order to progress further with the description of $E_p^0(\mathcal{G})$ given in \cref{zeroth_page_with_twist_isomorphism}, it is necessary to compute the periodic Floer homology of the monodromy $\restr{\phi_j}{S^1\cross[1,2]}$.

\subsection{The periodic Floer homology of a Dehn twist}\label{the_periodic_floer_homology_of_a_dehn_twist}

In this section we will briefly review the results of Hutchings and Sullivan~\cite{HS05}.  The setup used is as follows: the authors start with an interval $[X_1, X_2]$, where for simplicity we will assume that $X_1$ and $X_2$ are irrational, and consider the right-handed Dehn twist on the annulus,
\begin{align*}
\phi_{[X_1,X_2]} : [X_1,X_2]\cross S^1 &\to [X_1,X_2]\cross S^1 \\
(x,\vartheta) &\mapsto (x,\vartheta-x),
\end{align*}
where $S^1$ is identified here with $\R/\Z$. The authors fix a homology class
\[ P [\pt] \cross [S^1] \cross [\pt] + Q [\pt] \cross [\pt] \cross [S^1] \]
in the corresponding mapping torus, $[X_1,X_2]\cross S^1\cross[0,1]/\sim$.  This homology class corresponds to the loop which winds around $P$ times in the $\vartheta$ direction and $Q$ times in the $t$ direction.  They then perform a small perturbation to the monodromy away from the boundary which ensures that every orbit set in this homology class is non-degenerate; for every rational number $p/q \in [X_1, X_2]$ with $q\le Q$ the Morse-Bott torus at $x=p/q$ is perturbed into two orbits which we label $e_{p/q}$ and $h_{p/q}$.  Note that this perturbation is locally exactly the same as that which we made in \cref{perturbed_contact_forms_on_N_union_T} when defining $\alpha_j$.

The resulting periodic Floer complex in this homology class is denoted by
\[ CP(X_1,X_2; P,Q) \]
and the generators are easy to understand explicitly; any orbit set can be written as an (ordered) product
\[ \Gamma = \gamma_1\gamma_2\cdots\gamma_k \]
where every $\gamma_i$ is equal to some $e_{p_i/q_i}$ or $h_{p_i/q_i}$ and $p_{i+1}/q_{i+1} \ge p_i/q_i$ for every $i$.  We associate a left-turning polygonal path in the plane by defining points
\[ w_j := \sum_{i=0}^{j} (q_i, p_i) \]
for $j=0,\dots,k$ and drawing line segments between consecutive $w_j$ and $w_{j+1}$ in turn\footnote{Note that we are using a slightly different convention to Hutchings and Sullivan but the constructions are equivalent.}.  Each line segment is given by the vector $(q_i, p_i)$ and has slope $p_i/q_i$, hence the path only turns to the left.  Note that the path starts at the origin and ends at the point $(Q,P)$.  We denote this path by $\mathcal{P}(\Gamma)$. See \cref{example_polygonal_path_dehn_twist} for an example.

\begin{figure}\centering
  \begin{tikzpicture}  %
		\foreach \x in {0,...,11}
			\foreach \y in {0,...,3}
				\fill [gray] (\x,\y) circle (0.05);
		\draw[thick] (0,0) node[below]{$(0,0)$} -- ++(2,0) node[below] {$(2,0)$} -- ++(4,1) node[below right] {$(6,1)$}-- ++(5,2) node[below right] {$(11,3)$};

	\end{tikzpicture}
  \caption{The left turning polygonal path associated to the orbit set $e^2_{0/1}e_{1/4}e_{2/5}$, which is a generator of the complex $CP(-\epsilon, \frac{2}{5}+\epsilon;3,11)$.}
	\label{example_polygonal_path_dehn_twist}
\end{figure}
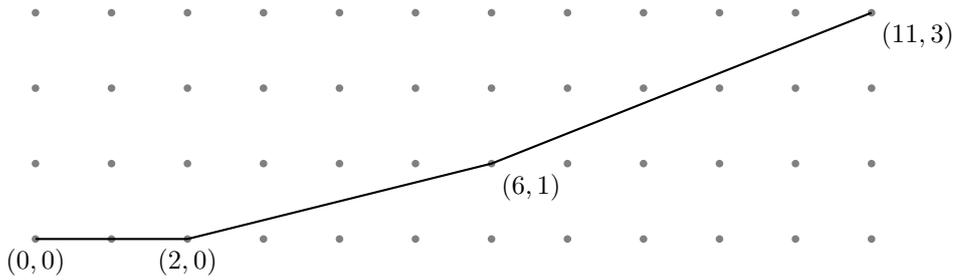

Hutchings and Sullivan use a variety of techniques to show that the differential is essentially\footnote{In fact, it is actually the \emph{dual differential} which is given by the corner rounding procedure. Furthermore, it is also possible that a differential exists in the case of so called ``double rounding'', but this phenomenon will never occur in our situation.} given by ``rounding a corner'' of the polygonal path above.  For a careful introduction to this concept refer to Hutchings and Sullivan~\cite[Definition 3.4]{HS05} but we will give the intuitive idea here.  Imagine that the plane is a board with pegs inserted at every integer lattice point.  Then the polygonal path $\mathcal{P}(\Gamma)$ associated to an orbit set can be imagined as a piece of elastic stretched around the pegs placed at the points $w_j$.  The process of ``rounding a corner'' in this scenario can be thought of as simply removing one of the pegs at a point where the path makes a turn---the elastic will snap onto the convex hull of the pegs immediately to the left of the path.  We also have to ``locally remove an $h$'' during this process.  See \cref{rounding_a_corner_examples} for an example.

The authors then use this explicit combinatorial description of the chain complex to compute the homology. Namely, they obtain the following:

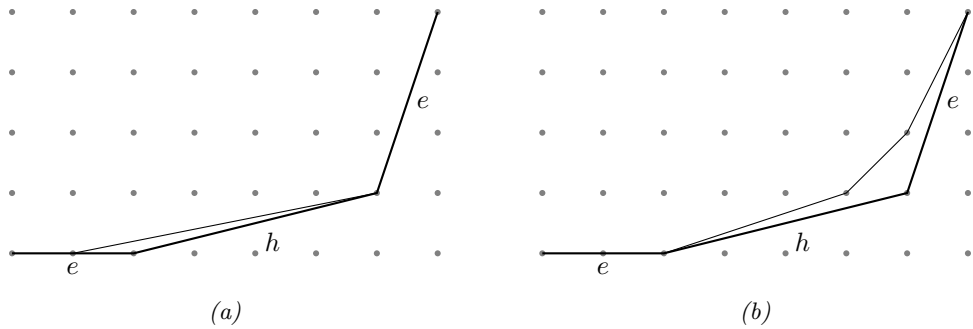
\begin{figure}\centering
	\begin{subfigure}{0.49\textwidth}\centering
		\begin{tikzpicture}[scale=0.8]  %
		\foreach \x in {0,...,7}
			\foreach \y in {0,...,4}
				\fill [gray] (\x,\y) circle (0.05);
		\draw[thick] (0,0) -- ++(2,0) -- ++(4,1) -- ++(1,3) ;
		\draw (1,0) -- ++(5,1);
		\node [below] at (1,0) {$e$};
		\node [below right] at (4,0.5) {$h$};
		\node [right] at (6.5,2.5) {$e$};

	\end{tikzpicture}
		\caption{}
	\end{subfigure}
	\begin{subfigure}{0.49\textwidth}\centering
		\begin{tikzpicture}[scale=0.8]  %
		\foreach \x in {0,...,7}
			\foreach \y in {0,...,4}
				\fill [gray] (\x,\y) circle (0.05);
		\draw[thick] (0,0) -- ++(2,0) -- ++(4,1) -- ++(1,3) ;
		\draw (2,0) -- ++(3,1)-- ++(1,1) -- ++(1,2);
		\node [below] at (1,0) {$e$};
		\node [below right] at (4,0.5) {$h$};
		\node [right] at (6.5,2.5) {$e$};

	\end{tikzpicture}
		\caption{}
	\end{subfigure}
  \caption{The bold paths in the figures above represent the orbit set $\Gamma=e_{0/1}^2h_{1/4}e_{3/1}\in CP(-\epsilon,3+\epsilon;4,7)$.  In (a) we round the corner at $(2,0)$ to obtain the orbit set $\Gamma_a=e_{0/1}e_{1/5}e_{3/1}$.  In (b) we round the corner at $(6,1)$ to obtain $\Gamma_b=e_{0/1}^2e_{1/3}e_{1/1}e_{2/1}$.  Notice that we have also locally removed an $h$ to obtain these orbit sets.  As a result we can say that the coefficients of $\Gamma$ in $d\Gamma_a$ and $d\Gamma_b$ are both 1 and that, since no other roundings are possible, $\Gamma$ does not appear in the differential of any other orbit set.}
	\label{rounding_a_corner_examples}
\end{figure}

\begin{thm}[{\nogapcite[Theorem 3.1]{HS05}}]\label{dehn_twist_homology}
Fix an interval $[X_1,X_2]$ with irrational endpoints and a homology class defined by integers $(P,Q)$ as above.  Then if $P/Q\in[X_1,X_2]$, the periodic Floer homology $HP(X_1,X_2; P,Q)$ is two-dimensional and generated by homology classes denoted by
\[ E(X_1,X_2;P,Q)\quad\text{and}\quad H(X_1,X_2;P,Q). \]
Here $E(X_1,X_2;P,Q)$ is defined as follows: draw a line of slope $X_1$ through the origin and a line of slope $X_2$ through the point $(Q,P)$, and denote their intersection point by $X$.  Imagine a piece of elastic which starts at the origin, ends at $(Q,P)$, and is stretched around a peg placed at the point $X$; let $\mathcal{P}$ denote the convex polygonal path obtained when we release the peg at $X$.  See \cref{generators_of_homology_of_1_n_dehn_twist}.

$E(X_1,X_2;P,Q)$ is a homology class containing the single orbit set which consists only of elliptic orbits and whose associated left-turning polygonal path is equal to $\mathcal{P}$. The homology class $H(X_1,X_2;P,Q)$ consists of all those orbit sets with precisely one hyperbolic orbit and whose associated left-turning polygonal path is also equal to $\mathcal{P}$.

If $P/Q\notin[X_1,X_2]$ then $HP(X_1,X_2; P,Q)=0$.
\end{thm}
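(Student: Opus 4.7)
The plan is to prove the theorem by a case split on whether $P/Q$ lies in $[X_1,X_2]$, handling the negative case trivially and reducing the positive case to a combinatorial computation on left-turning lattice paths.

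For $P/Q\notin[X_1,X_2]$ the chain complex itself vanishes: every generator corresponds to a left-turning lattice path from $(0,0)$ to $(Q,P)$ whose edge-slopes all lie in $[X_1,X_2]$, but the weakly-increasing slope condition along such a path forces the average slope $P/Q$ to lie between the minimum and maximum slopes used, hence in $[X_1,X_2]$.

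For the interesting case $P/Q\in[X_1,X_2]$, I would first verify that the convex hull path $\mathcal{P}$ described in the statement is a valid left-turning lattice path from $(0,0)$ to $(Q,P)$ with all edge-slopes in $[X_1,X_2]$, using irrationality of $X_1,X_2$ to ensure non-degeneracy of the triangle $OXF$. Then I would equip $CP(X_1,X_2;P,Q)$ with a filtration $\mathcal{F}$ by the lattice area enclosed between $\mathcal{P}(\Gamma)$ and $\mathcal{P}$, and verify via a direct convexity argument that the corner-rounding differential strictly decreases $\mathcal{F}$. The $E_0$ page of the resulting spectral sequence carries only the zero differential, so the computation reduces to understanding the $E_1$ differential, which is exactly corner rounding.

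The $E_1$ homology decomposes as a tensor product over ``excursions'' of $\mathcal{P}(\Gamma)$ above $\mathcal{P}$ --- maximal subpaths during which $\mathcal{P}(\Gamma)$ and $\mathcal{P}$ share only their endpoints. For each non-trivial excursion I would produce an acyclic matching that pairs an orbit set decorated with $h$ at a canonically chosen outermost corner with the orbit set obtained by rounding that $h$; this contracts every non-trivial excursion complex, showing that every generator with $\mathcal{P}(\Gamma)\neq\mathcal{P}$ is null-homologous. The remaining subcomplex is supported on orbit sets with path exactly $\mathcal{P}$, a finite complex indexed by subsets of the (primitive) edges of $\mathcal{P}$. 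A direct computation on this finite subcomplex shows that its homology is two-dimensional, generated by the all-elliptic class $E$ and the single-hyperbolic class $H$ of the theorem.

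The main obstacle is the acyclic-matching argument for the excursion complexes: one must verify that the combinatorics of corner rounding on lattice polygonal paths does indeed yield a genuinely acyclic matching, which requires careful case analysis to rule out cycles in the matching graph. Once this combinatorial fact is secured, the filtration argument proceeds routinely and the final reduction to a two-dimensional homology is a short explicit computation.
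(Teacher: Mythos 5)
First, a point of reference: the thesis does not prove this statement at all --- it is quoted from Hutchings and Sullivan (their Theorem 3.1), and the surrounding text only sketches the combinatorial model (left-turning lattice paths, rounding corners). So there is no internal proof to compare against; your proposal is an attempt to reprove an external result. Within that attempt, the vanishing case is fine (each edge vector $(q_i,p_i)$ satisfies $X_1q_i\le p_i\le X_2q_i$, so summing gives $P/Q\in[X_1,X_2]$ whenever a generator exists), and the overall shape --- filter by how far $\mathcal{P}(\Gamma)$ sits above $\mathcal{P}$ and collapse everything onto $\mathcal{P}$ --- is a reasonable strategy. But be aware that you are silently assuming the genuinely hard half of Hutchings--Sullivan's theorem, namely that the PFH differential \emph{is} the corner-rounding differential; that identification is analytic (moduli spaces of holomorphic curves, obstruction-bundle gluing), not combinatorial, and constitutes most of the content of the cited theorem.

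There are also two concrete gaps in the combinatorial half. (i) If the differential strictly decreases your area filtration, then the $E_0$ differential is zero and the $E_1$ differential is only the part of $d$ that drops the area by exactly one; a single corner rounding can cut off a lattice triangle of arbitrarily large area, so ``the $E_1$ differential is exactly corner rounding'' is false, and the spectral sequence carries higher differentials you have not controlled. (ii) More seriously, the endgame is dimensionally inconsistent. The generators with $\mathcal{P}(\Gamma)=\mathcal{P}$ sit at filtration level zero and hence form a subcomplex on which the corner-rounding differential vanishes identically (no rounding preserves the path), so ``a direct computation on this finite subcomplex'' yields homology of dimension $2^k$, where $k$ is the number of distinct edge slopes of $\mathcal{P}$, not $2$. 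The collapse from $2^k$ to $2$ --- the fact that all one-hyperbolic decorations of $\mathcal{P}$ represent the same class $H(X_1,X_2;P,Q)$ and that decorations with two or more hyperbolic orbits are boundaries --- comes precisely from differentials \emph{out of} orbit sets whose paths lie strictly above $\mathcal{P}$. Consequently you cannot first contract all the excursion generators among themselves and then work internally on $\mathcal{P}$: either the acyclic matching must pair some $\mathcal{P}$-generators with above-$\mathcal{P}$ generators, or you must compute the induced Morse differential on the $\mathcal{P}$-generators coming from zig-zags through the matched pairs. Your proposal does neither, and this is where the actual work of the homology computation lives.
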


A more rigourous definition of the path $\mathcal{P}$ in the above theorem is given by Hutchings and Sullivan in terms of convex hulls~\cite[Section 3.1]{HS05}, but for our purposes the intuitive definition above will suffice.

We are almost ready to apply the results of the above discussion to our surgery set-up, but first we have a small problem which must be addressed.  Namely, recall that in the setup above the Dehn twist is taken on an interval with irrational endpoints---the reason for this is so that the perturbation can be performed entirely on the interior of the resulting mapping torus and the chain complex is constructed solely from non-degenerate orbits in the interior.

In our situation, the boundary is not ignored: the monodromy $\restr{\phi_j}{S^1\cross[1,2]}$ corresponds to a small perturbation of the positive Dehn twist on the interval $[0,1/n]$ under a simple transformation between the coordinates $y\in[1,2]$ and $x\in[0,1/n]$ and, of course, $0$ and $1/n$ are rational. Furthermore the complex 
\[  PFC(S^1\cross[1,2], \phi_j; j_T, p) \]
is defined in a Morse-Bott-theoretic manner, which includes the four boundary orbits $e_{0/1}$, $h_{0/1}$, $e_{1/n}$ and $h_{1/n}$.

Fortunately this slight disparity can be resolved by extending the monodromy $\restr{\phi_j}{S^1\cross[1,2]}$ to a slightly larger interval
\[ S^1\cross[1-\epsilon,2+\epsilon], \]
where $\epsilon>0$ is sufficiently small so that no new orbits of length less than or equal to $j$ are introduced.  Then, under the reparametrization in the $y$ direction, this corresponds to a small perturbation of the positive Dehn twist on some interval:
\[ \phi_{\left[-\epsilon', \frac{1}{n}+\epsilon''\right]}:\left[-\epsilon', \frac{1}{n}+\epsilon''\right]\cross S^1 \to \left[-\epsilon', \frac{1}{n}+\epsilon''\right]\cross S^1 \]
where we can assume that $\epsilon'$ and $\epsilon''$ are irrational as required.  Of course, the perturbation we obtain still has Morse-Bott tori at $x=0,1/n$, so one final perturbation is required to remove these before we can honestly apply \cref{dehn_twist_homology}.

\begin{prop}\label{1_over_n_dehn_twist_homology}
We have an isomorphism
\[  PFC(S^1\cross[1,2], \phi_j; j_T, p) \iso CP(-\epsilon',1/n+\epsilon''; p,j_T), \]
and, in the case when $0<j_T\le j$ and $p/j_T\in[0,1/n]$, the homology groups are generated by the classes
\begin{align*}
E(-\epsilon',1/n+\epsilon''; p,j_T) &= [e_{1/n}^{p}e_{0/1}^{j_T-np}]\quad\text{and} \\
H(-\epsilon',1/n+\epsilon''; p,j_T) &=
\begin{cases*}
[h^{\phantom{0}}_{1/n}e_{1/n}^{p-1}] &\text{if $j_T-np=0$,}\\
[h^{\phantom{0}}_{0/1}e_{0/1}^{j_T-1}] &\text{if $p=0$,}\\
[h^{\phantom{0}}_{1/n}e_{1/n}^{p-1}e_{0/1}^{j_T-np}]=[e_{1/n}^{p}h_{0/1}^{\phantom{0}}e_{0/1}^{j_T-np-1}] &\text{otherwise.}
\end{cases*}
\end{align*}
\end{prop}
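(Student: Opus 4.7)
The plan is to apply \cref{dehn_twist_homology} directly, after establishing a canonical isomorphism between $PFC(S^1\cross[1,2], \phi_j; j_T, p)$ and the Hutchings--Sullivan complex.

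First I would verify the isomorphism. Following the discussion immediately preceding the proposition, the monodromy $\restr{\phi_j}{S^1\cross[1,2]}$ extends to a monodromy on $S^1\cross[1-\epsilon, 2+\epsilon]$ which, under the reparametrization $y\mapsto x$ identifying $[1,2]$ with $[0, 1/n]$, becomes a small perturbation of the positive Dehn twist $\phi_{[-\epsilon',\, 1/n+\epsilon'']}$ for some irrational $\epsilon', \epsilon''>0$.  By choosing $\epsilon>0$ sufficiently small, the extension introduces no new orbits of Alexander grading $\le j$, so the non-degenerate orbits of Alexander grading $j_T\le j$ are in canonical bijection in the two setups.  In particular, the boundary Morse-Bott tori at $x=0$ and $x=1/n$ are perturbed into the same four non-degenerate orbits $e_{0/1}, h_{0/1}, e_{1/n}, h_{1/n}$ in both pictures, so the Morse-Bott-theoretic convention of including these four orbits in $PFC(S^1\cross[1,2],\phi_j; j_T,p)$ is consistent with the purely non-degenerate set-up of $CP(-\epsilon', 1/n+\epsilon''; p, j_T)$.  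Both differentials count the same Morse-Bott buildings with respect to the fibre-wise almost complex structure, which yields the desired chain isomorphism
\[ PFC(S^1\cross[1,2], \phi_j; j_T, p) \iso CP(-\epsilon', 1/n+\epsilon''; p, j_T). \]

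Next I would compute the homology generators by applying \cref{dehn_twist_homology} with $P=p$, $Q=j_T$, $X_1=-\epsilon'$ and $X_2=1/n+\epsilon''$.  For $\epsilon',\epsilon''$ chosen small, the hypothesis $P/Q\in[X_1,X_2]$ is exactly the assumption $p/j_T\in[0,1/n]$ in the statement.  The polygonal path $\mathcal{P}$ of the theorem is determined by the intersection point $X$ of the line of slope $X_1\approx 0$ through the origin with the line of slope $X_2\approx 1/n$ through $(j_T,p)$: one computes directly that $X=(j_T-np,\, 0)$.  The resulting left-turning lattice path from $(0,0)$ to $(j_T,p)$ therefore consists of $(j_T-np)$ horizontal steps $(1,0)$ followed by $p$ steps of the form $(n,1)$.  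The unique all-elliptic orbit set realising $\mathcal{P}$ is $e_{0/1}^{\,j_T-np}\,e_{1/n}^{\,p}$, giving the generator $E$.  For the generator $H$, \cref{dehn_twist_homology} tells us that \emph{any} orbit set whose associated polygonal path equals $\mathcal{P}$ and which contains exactly one hyperbolic orbit represents the same class; the three cases in the statement correspond to whether only the second segment of $\mathcal{P}$ is present ($j_T-np=0$), only the first ($p=0$), or both.  In the last case, the hyperbolic orbit may replace either an $e_{1/n}$ or an $e_{0/1}$, and the asserted equality of the two representatives is precisely the content of the uniqueness of $H$ in \cref{dehn_twist_homology}.

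The main obstacle is essentially bookkeeping: one must verify that the choice of $\epsilon', \epsilon''>0$ can be made small enough that the perturbed Morse-Bott tori appearing in Hutchings--Sullivan's set-up are exactly those at rational slopes $p'/q'\in[0,1/n]$ with $q'\le j_T$, so that the two complexes have matching generating sets, and simultaneously large enough that $p/j_T$ lies in the interior of $[-\epsilon', 1/n+\epsilon'']$ even in the edge cases $p/j_T\in\{0, 1/n\}$.  Once this is done, the combinatorial identification of generators and differentials is immediate from the explicit description given by Hutchings and Sullivan, and the computation of $E$ and $H$ reduces to drawing the polygonal path $\mathcal{P}$.
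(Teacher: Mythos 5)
Your proposal is correct and follows essentially the same route as the paper: extend the monodromy to a slightly larger interval with irrational endpoints so that no new orbits of Alexander grading $\le j$ appear, reparametrize to identify the result with a perturbed Dehn twist on $[-\epsilon',1/n+\epsilon'']$, and then read off the generators $E$ and $H$ from \cref{dehn_twist_homology} via the polygonal path with corner at $(j_T-np,0)$. The only place the paper is more careful is the passage from the Morse-Bott complex (boundary tori included) to the honestly non-degenerate Hutchings--Sullivan complex, which it justifies by noting that one-sidedness of all differentials at the tori $y=1,2$ makes the form nice so that \cref{non-degen_close_to_MB} applies, rather than asserting directly that the two differentials agree.
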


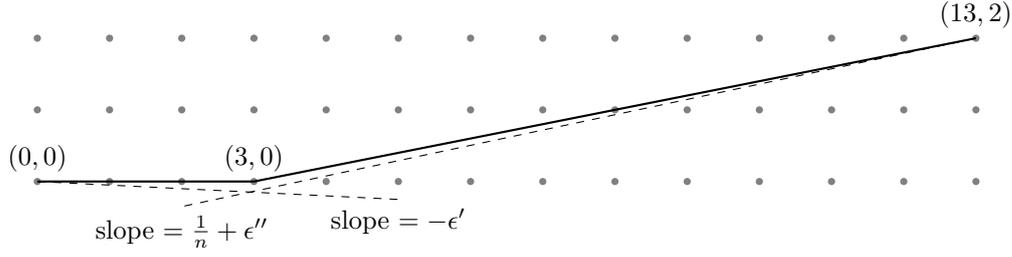
\begin{figure}\centering
  \begin{tikzpicture}[scale=0.95]  %
		\foreach \x in {0,...,13}
			\foreach \y in {0,1,2}
				\fill [gray] (\x,\y) circle (0.05);
		\draw[thick] (0,0) node[above]{$(0,0)$} -- ++(3,0) node[above] {$(3,0)$} -- ++(10,2) node[above] {$(13,2)$};
		\draw[dashed] (0,0) -- (5,-0.25) node[below] {slope $=-\epsilon'$};
		\draw[dashed] (13,2) -- (2,-0.35) node[below] {slope $=\frac{1}{n}+\epsilon''$};

	\end{tikzpicture}
  \caption{In this figure $n=5$, $p=2$ and $j_T=13$.  The dashed lines are of slope $-\epsilon'$ and $\frac{1}{n}+\epsilon''$ respectively.  Their (unmarked) intersection point is the point $X$ from \cref{dehn_twist_homology}; when we release the peg at $X$ the elastic ``snaps'' onto the path shown. Hence $HP(-\epsilon',1/5;2,13)$ is generated by the homology classes $[e_{0/1}^3e_{1/5}^2]$ and $[e_{0/1}^2h_{0/1}e_{1/5}^2]=[e_{0/1}^3h_{1/5}e_{1/5}]$.}
	\label{generators_of_homology_of_1_n_dehn_twist}
\end{figure}

\begin{proof}
We have the following isomorphisms:
\[\begin{split}
PFC(S^1\cross[1,2], \phi_j; j_T, p) &\iso PFC(S^1\cross[1-\epsilon,2+\epsilon], \tilde{\phi}_j; j_T, p) \\
 &\iso PFC(S^1\cross[1-\epsilon,2+\epsilon], \tilde{\phi}'_j; j_T, p) \\
 &\iso CP(-\epsilon',1/n+\epsilon''; p,j_T),
\end{split}\]
where $\tilde{\phi}_j$ is the monodromy obtained by extending $\restr{\phi_j}{S^1\cross[1,2]}$ and $\tilde{\phi}'_j$ is the monodromy obtained by taking a sufficiently small perturbation of $\tilde{\phi}_j$, making the two Morse-Bott tori at $y=1,2$ non-degenerate.  The first isomorphism follows since no new generators are introduced when we make the extension, and this in turn implies that all differentials must be one-sided at the tori at $y=1,2$.  Therefore we can apply \cref{non-degen_close_to_MB}\footnote{Although \cref{non-degen_close_to_MB} was stated in the embedded contact setting it applies just as well here in the periodic Floer setting since the proof relies only on the definition of $I$ and by exploiting Gromov compactness in the presence of a stable Hamiltonian structure.} to obtain the second isomorphism.  The third follows by a straightforward reparametrization as described in the above discussion.

The generators are computing by reading off the description of $E(-\epsilon',1/n+\epsilon''; p,j_T)$ and $H(-\epsilon',1/n+\epsilon''; p,j_T)$ from \cref{dehn_twist_homology} (see \cref{generators_of_homology_of_1_n_dehn_twist}).
\end{proof}

\subsection{The ``two tower'' structure of the first page \texorpdfstring{$E^1(\mathcal{F})$}{E\textasciicircum 1(F)}}\label{computing_the_first_page_of_F}

Recall that for a fixed $j\ge 0$, \cref{zeroth_page_with_twist_isomorphism} formulates $E^0_p(\mathcal{G})$ for $p>0$ as a tensor product, and hence we obtain
\begin{equation}\label{first_page_with_twist_isomorphism}
E^1_p(\mathcal{G}) \iso \sum_{j_T+j_N=j} ECH\orbits{he}{h}(T, \restr{\alpha'_j}{T};j_T,p) \tensor ECH(\mathrm{int}(N),\alpha_N; j_N).
\end{equation}
Recall that 
\[ ECC\orbits{he}{h}(T, \restr{\alpha'_j}{T};j_T,p) \iso PFC^{(h_+)}(S^1\cross[1,2], \phi_j; j_T,p) \]
by \cref{ECC_is_PFC_on_twist_region}.  We will compute the groups $PFH^{(h_+)}(S^1\cross[1,2], \phi_j; j_T,p)$ using yet another spectral sequence argument which is analogous to that used in the \refNamedThm{Cancellation Lemma}{cancellation_lemma}.

\begin{prop}\label{homology_of_augmented_dehn_twist}
For $0<j_T\le j$ and $p/j_T\in[0,1/n]$ the group
\[ PFH^{(h_+)}(S^1\cross[1,2], \phi_j; j_T,p)\]
is generated by the homology classes
\begin{itemize}
\item $[e_{0/1}^{j_T}]$ and $[h_{0/1}e_{0/1}^{j_T-1}]$ if $p=0$,
\item $[h_{1/n}]$ if $p=1$ and $j_T=n$, and
\end{itemize}
is 0 otherwise.
\end{prop}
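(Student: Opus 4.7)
The plan is to recognise $PFC^{(h_+)}(S^1\cross[1,2],\phi_j;j_T,p)$ as the mapping cone of multiplication by $e_{1/n}$ on $PFC(S^1\cross[1,2],\phi_j;\cdot,\cdot)$. Indeed, unpacking the definition, the complex splits as a vector space into the ``no $h_+$'' summand $PFC(j_T,p)$ and the ``with $h_+$'' summand $h_+\tensor PFC(j_T-n,p-1)$, where we use that multiplying by $h_+$ shifts the Alexander grading by $n$ and the $\mathcal{F}$-grading by $1$ (these are the gradings of $e_{1/n}$, since the differential $d(h_+\tensor\Gamma)=h_+\tensor d\Gamma + 1\tensor e_{1/n}\Gamma$ must preserve both gradings). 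Under these identifications the differential is exactly the mapping cone differential of the chain map
\[ e_{1/n}\cdot:\ PFC(S^1\cross[1,2],\phi_j;j_T-n,p-1)\longrightarrow PFC(S^1\cross[1,2],\phi_j;j_T,p). \]

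The associated long exact sequence in homology reads
\[ \cdots \to PFH(j_T-n,p-1) \xrightarrow{e_{1/n}\cdot} PFH(j_T,p) \to PFH^{(h_+)}(j_T,p) \to PFH(j_T-n,p-1) \xrightarrow{e_{1/n}\cdot} \cdots, \]
so the task reduces to computing the induced map $e_{1/n}\cdot$ on homology in each bigrading and reading off the kernel and cokernel. For this I would use \cref{1_over_n_dehn_twist_homology}, which provides explicit representatives for the two homology classes $E$ and $H$ of $PFH(j_T',p')$ whenever $0<j_T'$ and $p'/j_T'\in[0,1/n]$.

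The case analysis then runs as follows. If $p=0$ the source $PFH(j_T-n,-1)$ vanishes, so $PFH^{(h_+)}(j_T,0)\iso PFH(j_T,0)$, which by \cref{1_over_n_dehn_twist_homology} is generated by $[e_{0/1}^{j_T}]$ and $[h_{0/1}e_{0/1}^{j_T-1}]$. For $p\ge 1$ with $np\le j_T$ and $j_T>n$, both source and target are two-dimensional; a direct check shows $e_{1/n}\cdot$ sends the $E$ generator of the source to the $E$ generator of the target (obvious) and the $H$ generator of the source to an orbit set representing the $H$ generator of the target (using the two equivalent representatives of $H$ given in \cref{1_over_n_dehn_twist_homology} to move the hyperbolic factor between the $e_{0/1}$ and $e_{1/n}$ slots as needed). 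So the map is an isomorphism and $PFH^{(h_+)}=0$. The sole remaining case is the corner $p=1,\,j_T=n$, where the source is the one-dimensional $PFH(0,0)=\langle\emptyset\rangle$ and the target is the two-dimensional $\langle [e_{1/n}],[h_{1/n}]\rangle$; the map sends $\emptyset\mapsto e_{1/n}$, giving zero kernel and cokernel $\langle [h_{1/n}]\rangle$, hence $PFH^{(h_+)}(n,1)=\langle [h_{1/n}]\rangle$.

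The main obstacle will be the bookkeeping in the generic case $p\ge 1$, $np\le j_T$, $j_T>n$: verifying that $e_{1/n}\cdot$ actually sends the $H$ homology class of the source to the $H$ class of the target requires picking the right representative on each side and invoking the equality of the two representatives of $H$ in homology. Once this is handled carefully, the remaining cases are immediate, and the statement of the proposition falls out of the exact sequence.
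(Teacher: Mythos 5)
Your proposal is correct and takes essentially the same approach as the paper: the paper's two-level ascending filtration by $h_+$-multiplicity, with its one-page differential $h_+[\Gamma]\mapsto[e_{1/n}\Gamma]$, is exactly your mapping cone of $e_{1/n}\cdot$, and both arguments finish by reading off kernel and cokernel of that map from the explicit $E$ and $H$ representatives in \cref{1_over_n_dehn_twist_homology}. The only cosmetic difference is that the paper handles the trivial cases $p=0$ or $j_T<n$ up front by observing $h_+$ does not appear, whereas you absorb $p=0$ into the mapping cone sequence with vanishing source; both are fine.
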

\pagebreak %
\begin{proof}
First note that if $p=0$ or if $j_T<n$ then $h_+$ does not even appear in the complex and we have an isomorphism
\[ PFC^{(h_+)}(S^1\cross[1,2], \phi_j; j_T,p) \iso PFC(S^1\cross[1,2], \phi_j; j_T,p) \]
and the result follows immediately from \cref{1_over_n_dehn_twist_homology}.

Hence assume that $p\ge 1$ and $j_T\ge n$ and define a two-level ascending filtration $\mathcal{H}$ on the chain complex
\[  PFC^{(h_+)}(S^1\cross[1,2], \phi_j; j_T,p) \]
by counting the multiplicity of $h_+$. This exhibits the complex as the following diagram:
\[\begin{tikzcd}
h_+\tensor PFC(S^1\cross[1,2],\phi_j; j_T-n, p-1) \arrow[rr,"\cdot/h_+\tensor e_{1/n}"] && 1\tensor PFC(S^1\cross[1,2],\phi_j; j_T, p).
\end{tikzcd}\]
Therefore the first page of the spectral sequence is given by
\[ E^1(\mathcal{H}) \iso  PFH(S^1\cross[1,2], \phi_j; j_T, p) \gappy{\oplus} h_+PFH(S^1\cross[1,2], \phi_j; j_T-n, p-1) \]
with differential $d_{\mathcal{H},1}$ given by
\[ [\Gamma_1]\oplus h_+[\Gamma_2] \mapsto [e_{1/n}\Gamma_1]\oplus [\Gamma_2]. \]
Then, referring to \cref{1_over_n_dehn_twist_homology} for the possibilities of the orbit sets $\Gamma_1$ and $\Gamma_2$, we see that $\mathrm{Ker}(d_{\mathcal{H},1})=0$ and $\mathrm{Im}(d_{\mathcal{H},1})$ is precisely those generators containing a copy of $e_{1/n}$.

Therefore if $p=1$ and $j_T=n$ the only surviving generator on the second page $E^2(\mathcal{H})$ is $h_{1/n}$ and for all remaining values of $p$ and $j_T$ all generators are killed.  As a result the spectral sequence converges on the second page and we are done.
\end{proof}

\begin{thm}\label{first_page_of_G_spectral_sequence}
For a fixed $j\ge 0$ the first page of the spectral sequence $E^1_p(\mathcal{G})$ in $\mathcal{F}$-grading $p>0$ is given by
\[ E^1_p(\mathcal{G}) \iso
\begin{cases}
h_{1/n} ECH(\mathrm{int}(N),\alpha_N; j-n) &\text{if $p=1$ and $j\ge n$, and}\\
0 &\text{otherwise.}
\end{cases}
\]
\end{thm}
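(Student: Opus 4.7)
The plan is to exploit the tensor product decomposition of the zeroth page given by \cref{zeroth_page_with_twist_isomorphism} and then apply a Künneth-type argument to pass to homology. Specifically, \cref{zeroth_page_with_twist_isomorphism} exhibits $E^0_p(\mathcal{G})$ as a direct sum over pairs $(j_T,j_N)$ with $j_T+j_N=j$ of tensor products of a twist-region factor with an interior-$N$ factor, so the first goal is to show that the $\mathcal{G}$-preserving differential $d_{\mathcal{G},0}$ on $E^0_p(\mathcal{G})$ splits as a tensor differential of the form $d_T\tensor 1+1\tensor d_N$.

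First I would verify this splitting. The key input is \cref{facts_about_differentials_on_zeroth_page}, which tells us via \cref{alpha_j_and_alpha_j_prime_iso_thm} that every Morse-Bott building contributing to $d'_0$ is contained either entirely in $\mathrm{int}(T)$ or entirely in $\mathrm{int}(N)$ (Blocking Lemma at $\del N$), and that those living in $\mathrm{int}(N)$ can only have negative ends at $e_{0/1}, h_{0/1}$ (Trapping Lemma). Thus buildings in $\mathrm{int}(N)$ with such ends strictly change $\mathcal{G}$ and are discarded when passing to $d_{\mathcal{G},0}$, while all other surviving buildings act on exactly one of the two tensor factors. Since we work over the field $\F_2$, the algebraic Künneth theorem yields
\[ E^1_p(\mathcal{G})\iso\bigoplus_{j_T+j_N=j}ECH\orbits{he}{h}(T,\restr{\alpha'_j}{T};j_T,p)\tensor ECH(\mathrm{int}(N),\alpha_N;j_N). \]

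Next I would identify each twist-region factor with $PFH^{(h_+)}(S^1\cross[1,2],\phi_j;j_T,p)$ via \cref{ECC_is_PFC_on_twist_region} and then invoke \cref{homology_of_augmented_dehn_twist}. Restricting to $p>0$, that proposition guarantees that the twist-region homology vanishes except in the single case $p=1$, $j_T=n$, where it is one-dimensional over $\F_2$ and generated by $[h_{1/n}]$. Consequently only the summand with $j_T=n$, $j_N=j-n$ survives in the Künneth sum, and this requires $j\ge n$ in order for $j_N$ to be non-negative. Substituting gives
\[ E^1_p(\mathcal{G})\iso\begin{cases}h_{1/n}\cdot ECH(\mathrm{int}(N),\alpha_N;j-n)&\text{if }p=1\text{ and }j\ge n,\\0&\text{otherwise,}\end{cases} \]
as required.

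The main obstacle in the plan is the splitting of $d_{\mathcal{G},0}$ as a tensor differential: one must rule out mixed buildings which would introduce cross terms invisible to the Künneth formula. However, this is precisely the combined content of the Blocking Lemma at the torus $\del N$ and the Trapping Lemma for the Morse-Bott families $e_{0/1}, h_{0/1}$, both already encapsulated in \cref{facts_about_differentials_on_zeroth_page} through the identification of $d'_0$ with the Morse-Bott differential $d_0$ in \cref{alpha_j_and_alpha_j_prime_iso_thm}. Once this tensor structure is in hand, the rest is a purely formal application of results already established in \cref{orbits_and_differentials_in_the_twist_region}.
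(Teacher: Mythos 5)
Your proposal is correct and follows essentially the same route as the paper: the paper likewise passes from the tensor decomposition of $E^0_p(\mathcal{G})$ to the Künneth-type formula \cref{first_page_with_twist_isomorphism} for $E^1_p(\mathcal{G})$, and then reads off from \cref{ECC_is_PFC_on_twist_region,homology_of_augmented_dehn_twist} that for $p>0$ only the $p=1$, $j_T=n$ summand survives, generated by $[h_{1/n}]$. Your explicit justification that $d_{\mathcal{G},0}$ splits as a tensor differential (via \cref{facts_about_differentials_on_zeroth_page}) is a detail the paper leaves implicit, but it is exactly the intended content.
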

\begin{proof}
Refer to \cref{first_page_with_twist_isomorphism} and note that if $p>1$ or $j<n$ (in which case certainly $j_T<n$) then
\[ ECH\orbits{he}{h}(T, \restr{\alpha'_j}{T};j_T,p) = 0 \]
by \cref{ECC_is_PFC_on_twist_region,homology_of_augmented_dehn_twist}.

When $p=1$ and $j\ge n$ then by \cref{homology_of_augmented_dehn_twist} only the $j_T=n$ summand in \cref{first_page_with_twist_isomorphism} survives and the result follows.
\end{proof}
Notice that the complex $h_{1/n} ECH(\mathrm{int}(N),\alpha_N; j-n)$ lies entirely within $\mathcal{G}$-grading $n$, and hence the spectral sequence converges at this point.  Hence we obtain the following theorem, recalling \cref{zeroth_page_no_twist_isomorphism} from \cref{the_case_when_p_equals_0}.

\begin{thm}\label{first_page_of_F_spectral_sequence}
For each fixed $j\ge 0$, the first pages of the spectral sequences corresponding to the filtration $\mathcal{F}$ defined on the complexes
\[ ECC\orbits{he}{h}(\mathrm{int}(N\union T), \alpha'_j;j)\quad\text{and}\quad ECC\orbits{he}{h}(\mathrm{int}(N\union T), \alpha_j;j) \] are isomorphic as vector spaces to
\[ E^1(\mathcal{F}) \iso
\begin{cases}
ECH\orbits{e}{h}(\mathrm{int}(N),\alpha_N; j) &\text{if $j<n$, and}\\
ECH\orbits{e}{h}(\mathrm{int}(N),\alpha_N; j) \oplus h_{1/n} ECH(\mathrm{int}(N),\alpha_N; j-n) &\text{if $j\ge n$.}
\end{cases}
\]
\end{thm}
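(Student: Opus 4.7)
The plan is to assemble the ingredients already established in \cref{a_filtration_on_the_surgery_complex,orbits_and_differentials_in_the_twist_region} and observe that the $\mathcal{G}$\nobreakdash-spectral sequence collapses at its first page for purely $\mathcal{G}$-degree reasons, so that $E^1_p(\mathcal{F}) \iso E^1_p(\mathcal{G})$ for every $p \ge 0$. Since \cref{alpha_j_and_alpha_j_prime_iso_thm} identifies the zeroth pages of the $\mathcal{F}$\nobreakdash-spectral sequences associated to $\alpha'_j$ and to $\alpha_j$, it suffices to prove the statement for $\alpha'_j$; the statement for $\alpha_j$ follows formally.

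First I would split by $\mathcal{F}$\nobreakdash-grading and handle $p=0$ directly. By the observation recorded in \cref{the_case_when_p_equals_0}, the constraint $\mathcal{F}=0$ forces any generating orbit set in the twist region to be built from $e_{0/1}$ and $h_{0/1}$ alone, so the canonical identification of these with $e_-$ and $h_-$ of $\alpha_N$, together with \refListInThm{facts_about_differentials_on_zeroth_page}{complex_on_N_part}, gives an isomorphism of chain complexes $E^0_0(\mathcal{F}) \iso ECC\orbits{e}{h}(\mathrm{int}(N),\alpha_N;j)$, and thus $E^1_0(\mathcal{F}) \iso ECH\orbits{e}{h}(\mathrm{int}(N),\alpha_N;j)$ by taking homology.

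Next I would treat $p>0$ via the auxiliary filtration $\mathcal{G}$. By \cref{first_page_of_G_spectral_sequence}, $E^1_p(\mathcal{G})$ vanishes unless $p=1$ and $j \ge n$, in which case it is isomorphic to $h_{1/n}\,ECH(\mathrm{int}(N),\alpha_N;j-n)$. The crucial observation is that this first page lies entirely in $\mathcal{G}$\nobreakdash-grading $n$: the factor $h_{1/n}$ has Alexander grading exactly $n$, and the Alexander grading of the orbit sets in $\mathrm{int}(N)$ contributes zero to $\mathcal{G}$ by definition. Any higher differential $d_r(\mathcal{G})$ with $r \ge 1$ would have to connect two different $\mathcal{G}$\nobreakdash-gradings, so all such differentials vanish and the $\mathcal{G}$\nobreakdash-spectral sequence collapses at $E^1$. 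As this spectral sequence converges to $E^1_p(\mathcal{F})$, we conclude $E^1_p(\mathcal{F}) \iso E^1_p(\mathcal{G})$.

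Finally, combining the two cases yields exactly the claimed formula: for $j<n$ only the $p=0$ summand contributes, while for $j\ge n$ we pick up the additional summand $h_{1/n}\,ECH(\mathrm{int}(N),\alpha_N;j-n)$ from $p=1$. There is no real obstacle at this stage of the argument, since all the substantive work — namely the filtration-preservation via slope calculus, the Morse-Bott to non-degenerate comparison of \cref{alpha_j_and_alpha_j_prime_iso_thm}, the topological constraints of \cref{facts_about_differentials_on_zeroth_page}, and the periodic Floer computation in the twist region — has already been carried out in the preceding sections. The only point requiring any care is the collapse observation, and this is immediate once one notes that $h_{1/n}$ has Alexander grading $n$.
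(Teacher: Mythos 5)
Your proposal is correct and follows essentially the same route as the paper: handle $p=0$ directly via \cref{zeroth_page_no_twist_isomorphism}, and for $p>0$ use \cref{first_page_of_G_spectral_sequence} together with the observation that $E^1_p(\mathcal{G})$ is concentrated in the single $\mathcal{G}$-grading $n$, forcing collapse and hence $E^1_p(\mathcal{F})\iso E^1_p(\mathcal{G})$. The reduction from $\alpha_j$ to $\alpha'_j$ via \cref{alpha_j_and_alpha_j_prime_iso_thm} is also exactly how the paper proceeds, and your note that only a vector-space isomorphism of first pages is claimed (since the page-one differentials may differ) matches the remark following the theorem.
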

It is important to remember that since $\alpha_j$ is not necessarily nice, the differentials on the first pages of the two spectral sequences corresponding to $\alpha'_j$ and $\alpha_j$ may differ.

Recall that we were required to fix $j$ in order to define the filtration $\mathcal{F}$ on 
\[ ECC\orbits{he}{h}(\mathrm{int}(N\union T), \alpha'_j;j).\]
However it is helpful for intuitive reasons to picture the complex in full.  In other words, if we make the $j$ explicit in the notation, writing $E_{(j)}^1(\mathcal{F})$ for each $j$ in turn, we can write
\[\begin{split}
\bigoplus_{j\in\N}E_{(j)}^1(\mathcal{F}) &\iso ECH\orbits{e}{h}(\mathrm{int}(N),\alpha_N) \oplus h_{1/n} ECH(\mathrm{int}(N),\alpha_N)\\
&\iso ECH\orbits{e}{h}(\mathrm{int}(N),\alpha_N) \oplus h_{1/n} ECH\orbits{}{eh}(\mathrm{int}(N),\alpha_N),
\end{split}\]
where the second line follows from the first by the \refNamedThm{Cancellation Lemma}{cancellation_lemma}.

We refer to this direct sum as \emph{the two towers} of the surgery formula.  Furthermore, we will refer to the left-hand summand as the \emph{$e_-$ tower} and to the right-hand summand as the \emph{$h_{1/n}$ (or $e_+$) tower}.  Refer to \cref{two_towers_example} for an example of how these two summands appear in the case of ($-8$)-surgery on the torus knot $T(2,5)$.

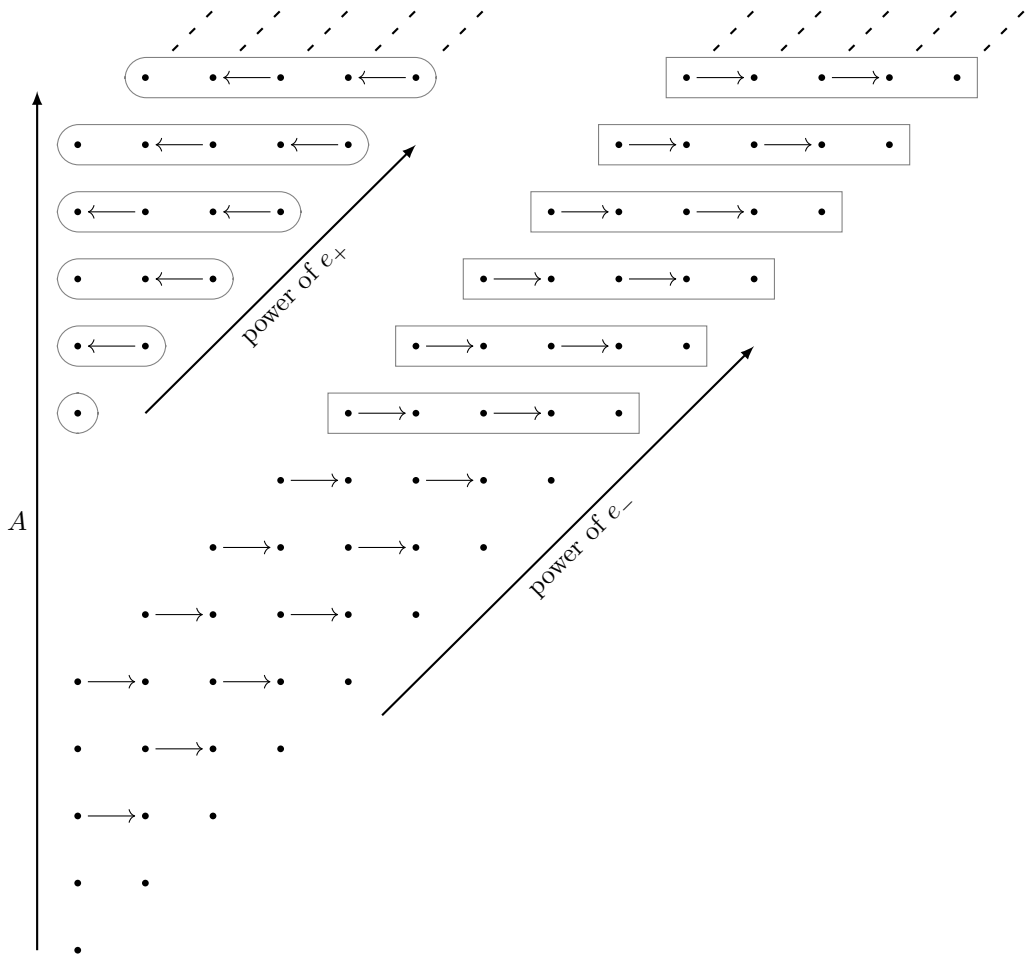
\begin{figure}\centering
  \begin{tikzpicture}[scale=0.89]  %
		\draw [-latex,thick] (-0.6,0) -- (-0.6,12.8) node [midway, left] {$A$};
		\draw [-latex,thick] (4.5,3.5) -- +(5.5,5.5);
		\node [rotate=45] at (7.25+0.2,6.25-0.2) {power of $e_-$};
		\draw [-latex,thick] (1,8) --  +(4,4);
		\node [rotate=45,anchor=center] at (3+0.2,10-0.2) {power of $e_+$};
		
		\begin{scope}
			\path[clip] (-0.5,-0.5) rectangle (13.5,13.5);
			\foreach \x in {0,...,13} {
				\foreach \y in {0,...,4}
					\fill (\x,\x+\y) circle (0.05);
				\draw [->] (\x+0.15,2+\x) -- +(0.7,0);
				\draw [->] (\x+0.15,4+\x) -- +(0.7,0);
				}
			\foreach \x in {0,...,5} {
				\foreach \y in {0,...,4}
					\fill (\x,8+\x+\y) circle (0.05);
				\draw [->] (\x+0.85,9+\x) -- +(-0.7,0);
				\draw [->] (\x+0.85,11+\x) -- +(-0.7,0);
			}
		\end{scope}

		\foreach \x in {1,...,5,9,10,...,13}
			\draw [loosely dashed,thick] (\x+0.4,13.4) -- (\x+1,14);

		\foreach \y in {8,...,13} {
			\draw [rounded corners=8pt,gray]({max(0,\y-12)-0.3}, \y-0.3) rectangle (\y-8+0.3, \y+0.3);
			\draw [gray](\y-4-0.3,\y-0.3) rectangle (\y+0.3,\y+0.3);
		}	

	\end{tikzpicture}
  \caption{The two-tower structure of the first page $E^1(\mathcal{F})$ for ($-n$)-surgery on the torus knot $T(2,5)$ in the case $n=8$.  The Alexander grading is represented by the vertical coordinate. The upper left tower is the $e_+$ tower, $h_{1/n}ECC\protect\orbits{}{eh}(\mathrm{int}(N),\alpha_N)$, and in this tower the horizontal coordinate represents the power of $e_+$; in particular the lowest diagonal is generated by the orbit sets $h_{1/n},h_{1/n}e_+,h_{1/n}e_+^2,\dots$  The bottom right tower is the $e_-$ tower, $ECC\protect\orbits{e}{h}(\mathrm{int}(N),\alpha_N)$, and in this tower the horizontal coordinate represents the power of $e_-$; in particular the lower diagonal is generated by the orbit sets $\emptyset,e_-,e_-^2,\dots$ Of course, in the $e_+$ tower differentials can only preserve or decrease the horizontal coordinate, and in the $e_-$ tower differentials can only preserve or increase the horizontal coordinate.  In this example we have also exploited the \refNamedThm{Cancellation Lemma}{cancellation_lemma} and \cref{A_j_iso} which allows us to appeal to the diagram in \cref{ECK_full_complex_diagram}; the $e_+$ tower looks like the full ECK complex for $T(2,5)$ but without any vertical arrows, and the $j$-th row of the $e_-$ tower looks like the subcomplex of the zeroth column of $ECK(T(2,5))$ which is generated by orbit sets with Alexander grading less than or equal to $j$.  Finally, note that the $e_-$ tower lies in $\mathcal{F}$-filtration level 0 and the $e_+$ tower lies in level 1.  Therefore, within each $j$-th row (for $j\ge n$), we expect the differential $d_{\mathcal{F},1}$ to exhibit itself as a map between the homology of the complex contained in the rectangular box and that of the complex in the oval box.  Understanding the behaviour of this map is the subject of \cref{the_differential_on_the_first_page}.}
	\label{two_towers_example}
\end{figure}

\section{The differential on the first page}\label{the_differential_on_the_first_page}

In this section we will compute the differential $d_{\mathcal{F},1}$ on the first page of the spectral sequence corresponding to the filtration $\mathcal{F}$ defined on the complex 
\[ ECC\orbits{he}{h}(\mathrm{int}(N\union T), \alpha'_j;j). \]
By \cref{first_page_of_F_spectral_sequence}, the first page is supported entirely within $\mathcal{F}$-gradings 0 and 1. Furthermore if $j<n$ then it is supported entirely in $\mathcal{F}$-grading 0 and hence we have convergence already.

Now assume $j\ge n$.  The differential $d_{\mathcal{F},1}$ will exhibit itself as a differential from the $e_+$ tower to the $e_-$ tower and hence is computed by counting holomorphic curves which intersect the torus at $y=2$ with non-infinite slope.  In fact, by slope calculus (\cref{slope_calculus}) we see that the slope of any such curve at $y=2$ must equal $n[m]-[l]$.  This is a problem since such holomorphic curves are hard to understand; for instance it is not clear how one would construct finite energy foliations in this case.

We will proceed by induction on $n$, and exploit \cref{ECK_supported_genus} via an algebraic argument to understand how the differential $d_{\mathcal{F},1}$ behaves.  For this we will need to introduce $n$ into the notation so denote by 
\[ d^{(n)}_{\mathcal{F},1} \]
the differential on the first page in the case of ($-n$)-surgery.

\subsection{A commutative diagram}

The goal of this section is to prove the following:
\begin{prop}\label{commutative_diagram_L_filtration}
Suppose that $j\ge n+1$. Then following diagram commutes:
\begin{equation}\label{commutative_diagram_L_filtration_eqn}
\begin{tikzcd}
ECH\orbits{e}{h}(\mathrm{int}(N),\alpha_N;j) \arrow[rr,"d^{(n+1)}_{\mathcal{F},1}"]\arrow[d,"d^{(n)}_{\mathcal{F},1}"'] && h_{\nicefrac{1}{n+1}}ECH(\mathrm{int}(N),\alpha_N;j-(n+1)) \arrow[d,"\iso"] \\
h_{\nicefrac{1}{n}}ECH(\mathrm{int}(N),\alpha_N;j-n) \arrow[rr,"h_{\nicefrac{0}{1}}(d'_N)_*"] && h_{\nicefrac{1}{n}}h_{\nicefrac{0}{1}}ECH(\mathrm{int}(N),\alpha_N;j-(n+1)).
\end{tikzcd}
\end{equation}
In this diagram the isomorphism on the right hand side is the canonical isomorphism
\[ h_{\nicefrac{1}{n+1}}\Gamma \mapsto h_{\nicefrac{1}{n}}h_{\nicefrac{0}{1}}\Gamma \]
and the map on the bottom is induced by
\[ h_{\nicefrac{1}{n}}\Gamma \mapsto h_{\nicefrac{1}{n}}h_{\nicefrac{0}{1}}d'_N(\Gamma). \]
\end{prop}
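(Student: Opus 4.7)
The plan is to prove the commutative diagram by degenerating the Morse-Bott buildings counted by $d_{\mathcal{F},1}^{(n+1)}$ along the Morse-Bott torus $\del N$ sitting in the interior of $N\cup T^{(n+1)}$ (where $T^{(n+1)}$ denotes the twist region for $(-(n+1))$-surgery), and then identifying the resulting broken buildings with the analogous buildings that appear in the $(-n)$-surgery setting, composed with a single extra ``differential across the $\del N$-neck''. A key preparatory observation is that $[l]=0$ in $H_1(N\cup T^{(n+1)})$, so both $[\Gamma]$ and $[h_{1/(n+1)}\Gamma']$ reduce to $j[m]$, making the homology balance necessary for such buildings to exist in the full symplectization.

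To implement this, I would introduce a family of contact forms $\alpha_R$ on $N\cup T^{(n+1)}$ obtained by inserting a neck of length $R$ along $\del N$, using the parametric form-construction of \cref{a_contact_form_on_N,perturbed_contact_forms_on_N_union_T} adapted to this setting. All $\alpha_R$ are homotopic, so by \cref{ECC_independent_of_alpha} they define the same map $d_{\mathcal{F},1}^{(n+1)}$ on $E^1(\mathcal{F})$. Letting $R\to\infty$ and applying a Gromov compactness argument in the spirit of \cite{HT_Arnold13} and the Morse-Bott extension used in the proof of \cref{composition_homotopic,MB_building_exists_epsilon_small_enough}, each sequence of contributing index-$1$ buildings degenerates into a broken building with upper level $u_N$ in the symplectization of $N$ and lower level $u_T$ in the symplectization of $T^{(n+1)}$, joined across the neck by Morse-Bott orbits in the $\del N$-family (realized as $\{e_{0/1},h_{0/1}\}$ after perturbation) and gradient flow lines between them.

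The lower level $u_T$ is then pinned down using the corner-rounding description of $CP(-\epsilon',\tfrac{1}{n+1}+\epsilon'';1,n+1)$ from \cref{the_periodic_floer_homology_of_a_dehn_twist} together with slope calculus (\cref{slope_calculus}) applied across the twist region: the positive ends of $u_T$ at the neck must distribute the homology $(n+1)[m]$ across $n+1$ orbits in the $\del N$-family, and the index-$0$ and single-hyperbolic constraints (together with the requirement that the path lies in the allowed slope range $[0,\tfrac{1}{n+1}]$) single out a unique admissible configuration. In parallel, $u_N$ has positive end $\Gamma$ and negative end involving exactly one $h_{0/1}$ and an orbit set whose image under a second neck-stretching in the $(-n)$-surgery setup contributes to $d_{\mathcal{F},1}^{(n)}(\Gamma)$, with the extra $h_{0/1}$-factor realising the operation $h_{0/1}(d'_N)_*$ (recalling the splitting $d_N=d_N^\flat+h_{0/1}d'_N$ from \cref{the_isomorphism_with_relative_ECH}). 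Matching ends and indices across both stretchings gives the equality $d_{\mathcal{F},1}^{(n+1)}(\Gamma)=h_{0/1}(d'_N)_*\bigl(d_{\mathcal{F},1}^{(n)}(\Gamma)\bigr)$ under the canonical identification $h_{1/(n+1)}\Gamma\mapsto h_{1/n}h_{0/1}\Gamma$, which is precisely the commutativity of the diagram.

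The main obstacle will be the gluing and multiplicity analysis: one must verify that each broken pair $(u_N,u_T)$ produced in the limit reassembles, via Morse-Bott gluing, to a unique index-$1$ building for finite $R$, and that no multiply-covered components or branched-covered connector cylinders at $\del N$ contribute spurious terms. The Morse-Bott niceness and gluing framework of \cref{one-sided_implies_nice,non-degen_close_to_MB}, combined with the neck-stretching compactness technology of \cite{HT_Arnold13} used throughout \cref{cobordism_maps_via_SW_theory,cobordism_maps_on_ECK}, should suffice once the corner-rounding bookkeeping is made rigorous.
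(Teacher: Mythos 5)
Your strategy is genuinely different from the paper's, and as it stands it has a structural problem that I do not think can be repaired within the proposed framework. The paper never computes $d^{(n+1)}_{\mathcal{F},1}$ as a composition and never invokes gluing: it defines a single descending filtration $\mathcal{L}$ on $ECC\orbits{he}{h}(\mathrm{int}(N\union T),\alpha'_j;j)$ for the $(-(n+1))$-surgery manifold alone, shows (using the twist-region computation of \cref{complex_in_twist_region_1_n+1} together with slope calculus and the Blocking Lemma) that $E^1(\mathcal{L})$ consists of exactly the four groups in the square, sitting in $\mathcal{L}$-gradings $0$, $1$, $1$ and $2$, and identifies the four arrows with the four components of $d_{\mathcal{L},1}$. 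Commutativity is then precisely the statement $(d_{\mathcal{L},1})^2=0$, which is free over $\F_2$. In other words, the identity being proved is an identity between two different two-step compositions on homology, obtained by pairing up the broken ends of an index-$2$ moduli space; it is not witnessed by any curve-by-curve correspondence between the moduli spaces defining $d^{(n+1)}_{\mathcal{F},1}$ and those defining $h_{\nicefrac{0}{1}}(d'_N)_*\circ d^{(n)}_{\mathcal{F},1}$.

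This is why your degeneration cannot produce the right-hand route around the square. The map $d^{(n)}_{\mathcal{F},1}$ is not a count of curves in $N$: it counts curves that leave $N$ and cross the twist region down to the Morse-Bott torus of slope $n[m]-[l]$, with negative end at $h_{\nicefrac{1}{n}}$. It therefore cannot appear as the ``$N$-level'' $u_N$ of a building broken along $\del N$, and the intermediate group $h_{\nicefrac{1}{n}}ECH(\mathrm{int}(N),\alpha_N;j-n)$ is not an asymptotic limit of the curves counted by $d^{(n+1)}_{\mathcal{F},1}$ under your stretching --- the neck at $\del N$ forces breaking along the $e_{0/1},h_{0/1}$ family, not along $h_{\nicefrac{1}{n}}\cdot(\cdots)$; moreover the composition $h_{\nicefrac{0}{1}}(d'_N)_*\circ d^{(n)}_{\mathcal{F},1}$ has its twist-crossing factor on top and its interior-of-$N$ factor on the bottom, the opposite order to what a break at $\del N$ yields. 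Even setting this aside, converting the compactness statement (\cref{MB_building_exists_epsilon_small_enough} and its cobordism analogues) into an equality of mod-$2$ counts requires a gluing theorem for buildings broken along a Morse-Bott $S^1$-family, with control of multiply covered connectors; that is obstruction-bundle-gluing territory and is supplied by none of the results you cite. (A smaller issue: \cref{ECC_independent_of_alpha} gives invariance of the $e_+$-filtered homotopy type, not of the induced map on $E^1(\mathcal{F})$, so the claimed $R$-independence of $d^{(n+1)}_{\mathcal{F},1}$ also needs justification.) The repair is to abandon the degeneration and instead realise both compositions as components of one differential on one page of one spectral sequence, as the paper does with $\mathcal{L}$.
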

Here are using the notation from \cref{computing_ech_via_a_filtration_argument}, writing
\[  d'_N(\Gamma) = d_N(\Gamma) / h_{\nicefrac{0}{1}} \]
where $d_N$ is the differential on $ECC\orbits{e}{h}(\mathrm{int}(N),\alpha_N;j)$.

The proof of this proposition will proceed by exhibiting the above diagram as the first page of a spectral sequence corresponding to a descending filtration $\mathcal{L}$ defined on the complex $ECC\orbits{he}{h}(\mathrm{int}(N\union T),\alpha'_j;j)$. Then the result will follow from $d^2=0$ and the fact that the notions of commutativity and anticommutativity are equivalent since we are working with coefficients in $\F_2$. The idea is to choose the filtration so that we isolate parts of the complex giving the four terms above.  We will define this filtration now.

\begin{defn}
Suppose that we have an orbit set $\Gamma \in ECC\orbits{he}{h}(\mathrm{int}(N\union T),\alpha'_j;j)$.  Define $\mathcal{L}$ as follows, recalling the filtrations $\mathcal{F}$ and $\mathcal{G}$ from \cref{a_filtration_on_the_surgery_complex,a_second_filtration_for_p_greater_0}:
\begin{itemize}
\item If $\mathcal{F}(\Gamma)=0$, define $\mathcal{L}(\Gamma):=0$.
\item If $\mathcal{F}(\Gamma)=1$, then note that we must have $\mathcal{G}(\Gamma)\ge n$.
\begin{itemize}
\item If $\mathcal{G}(\Gamma)=n$, define $\mathcal{L}(\Gamma):=1$.
\item If $\mathcal{G}(\Gamma)=n+1$ and $\Gamma$ contains the orbit $h_{\nicefrac{1}{n+1}}$, define $\mathcal{L}(\Gamma):=1$.
\item If $\mathcal{G}(\Gamma)=n+1$ and $\Gamma$ does not contain the orbit $h_{\nicefrac{1}{n+1}}$, define $\mathcal{L}(\Gamma):=2$.
\item If $\mathcal{G}(\Gamma)>n+1$, define $\mathcal{L}(\Gamma):=3$.
\end{itemize}
\item If $\mathcal{F}(\Gamma)>1$, define $\mathcal{L}(\Gamma):=\mathcal{F}(\Gamma)+2 >3$.
\end{itemize}
\end{defn}

In order to prove that $\mathcal{L}$ defines a descending filtration will we need the lemma below. For that we introduce the notation
\[ C_{\nicefrac{1}{k}} := ECC\orbits{he}{h}(T, \restr{\alpha'_j}{T};k,1). \]
\begin{lemma}\label{complex_in_twist_region_1_n+1}~
\begin{enumerate}
\item The complex $C_{\nicefrac{1}{n}}$ is generated by the orbit sets $h_{\nicefrac{1}{n}}$, $e_{\nicefrac{1}{n}}$ and $h_+$ and has a single differential
\[ h_+ \longrightarrow e_{\nicefrac{1}{n}}. \]
\item The complex $C_{\nicefrac{1}{n+1}}$ is given by the following diagram:
\[\begin{tikzcd}
h_{\nicefrac{1}{n+1}}\arrow[d] & e_{\nicefrac{1}{n+1}}\arrow[d]\arrow[dr] & h_{\nicefrac{0}{1}}h_+\arrow[d] & e_{\nicefrac{0}{1}}h_+\arrow[d] \\
h_{\nicefrac{0}{1}}h_{\nicefrac{1}{n}} & e_{\nicefrac{0}{1}}h_{\nicefrac{1}{n}} & h_{\nicefrac{0}{1}}e_{\nicefrac{1}{n}} & e_{\nicefrac{0}{1}}h_{\nicefrac{1}{n}}
\end{tikzcd}\]
\end{enumerate}
\end{lemma}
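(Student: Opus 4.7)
The plan is to translate the problem into the periodic Floer setting via \cref{ECC_is_PFC_on_twist_region}, obtaining isomorphisms
\[ C_{\nicefrac{1}{k}} \iso PFC^{(h_+)}(S^1\cross[1,2], \phi_j; k, 1) \]
for $k=n, n+1$. I would then enumerate the generators by cataloguing left-turning polygonal paths from $(0,0)$ to $(k,1)$ whose segment slopes lie in $[0, 1/n]$, and separately account for the single available copy of $h_+$ (which carries Alexander grading $n$ and twist filtration $1$).

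For part (1), with $k=n$, the only admissible path is the single segment of slope $1/n$, contributing $h_{\nicefrac{1}{n}}$ and $e_{\nicefrac{1}{n}}$. Augmenting by $h_+$ requires an accompanying orbit set of Alexander grading $0$, which must be empty, so only $h_+$ itself is added. A single-segment path has no corner to round, so the underlying PFC differential vanishes on these generators; the only surviving arrow is $h_+ \longrightarrow e_{\nicefrac{1}{n}}$, produced by the second term in the augmented differential $d(\gamma\tensor\Gamma) = \gamma\tensor d\Gamma + (\gamma/h_+)\tensor e_{\nicefrac{1}{n}}\Gamma$.

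For part (2), paths from $(0,0)$ to $(n+1,1)$ fall into two families: the single segment of slope $1/(n+1)$, yielding $h_{\nicefrac{1}{n+1}}$ and $e_{\nicefrac{1}{n+1}}$, and the two-segment paths through the corner $(1,0)$, yielding the four products $h_{\nicefrac{0}{1}}h_{\nicefrac{1}{n}}$, $e_{\nicefrac{0}{1}}h_{\nicefrac{1}{n}}$, $h_{\nicefrac{0}{1}}e_{\nicefrac{1}{n}}$, and $e_{\nicefrac{0}{1}}e_{\nicefrac{1}{n}}$. Augmenting by $h_+$ adds $h_+h_{\nicefrac{0}{1}}$ and $h_+e_{\nicefrac{0}{1}}$ (each needing a partner of Alexander grading $1$, twist filtration $0$), producing the eight generators in the diagram. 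The augmented differential contributes $h_+h_{\nicefrac{0}{1}} \to h_{\nicefrac{0}{1}}e_{\nicefrac{1}{n}}$ and $h_+e_{\nicefrac{0}{1}} \to e_{\nicefrac{0}{1}}e_{\nicefrac{1}{n}}$, while the remaining three arrows should come from the Hutchings--Sullivan corner-rounding rule at the corner $(1,0)$, combined with the ``remove an $h$'' bookkeeping that accompanies each rounding.

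The main obstacle is justifying that this is the complete list of PFC differentials. Rather than attempting an ad hoc combinatorial verification of each potential rounding (and in particular ruling out ``double rounding'' contributions by hand), I would instead appeal to \cref{1_over_n_dehn_twist_homology}: the homology of the non-augmented six-generator subcomplex must be two-dimensional, generated by the classes of $e_{\nicefrac{0}{1}}e_{\nicefrac{1}{n}}$ and $h_{\nicefrac{0}{1}}e_{\nicefrac{1}{n}}\sim e_{\nicefrac{0}{1}}h_{\nicefrac{1}{n}}$. A direct computation shows that the three proposed PFC differentials produce exactly this homology, so by dimension counting no additional PFC differentials can be present, completing the identification of the complex.
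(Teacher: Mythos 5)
Your overall strategy — translate to periodic Floer via \cref{ECC_is_PFC_on_twist_region}, enumerate the left-turning polygonal paths to identify the generators, and attribute the differentials to the Hutchings--Sullivan corner-rounding rule plus the augmented $h_+ \to e_{\nicefrac{1}{n}}$ term — coincides with the paper's. The generator count and the three corner-rounding arrows you propose are also correct. The gap is in your final step: you claim that because the three proposed PFC differentials already give the homology predicted by \cref{1_over_n_dehn_twist_homology}, ``by dimension counting no additional PFC differentials can be present.'' That inference does not hold. Knowing the homology groups — even together with the specific cycles named as generators of $E$ and $H$ — does not determine the chain complex. Concretely, replace the arrow $d(h_{\nicefrac{1}{n+1}}) = h_{\nicefrac{0}{1}}h_{\nicefrac{1}{n}}$ with $d(h_{\nicefrac{1}{n+1}}) = h_{\nicefrac{0}{1}}h_{\nicefrac{1}{n}} + e_{\nicefrac{0}{1}}e_{\nicefrac{1}{n}}$: this still satisfies $d^2=0$, still has rank $2$, and its homology is still two-dimensional with nonzero generators $[e_{\nicefrac{0}{1}}e_{\nicefrac{1}{n}}]$ and $[e_{\nicefrac{0}{1}}h_{\nicefrac{1}{n}}] = [h_{\nicefrac{0}{1}}e_{\nicefrac{1}{n}}]$, exactly matching \cref{1_over_n_dehn_twist_homology}. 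So your homology check is consistent with a differential strictly different from the one in the Lemma statement — and the Lemma's specific description is what actually gets used later (in \cref{L_is_filtration} and part 4 of the proof of \cref{commutative_diagram_L_filtration}).

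The good news is that the direct verification you were trying to sidestep is essentially trivial here. The only admissible paths from $(0,0)$ to $(n+1,1)$ with increasing slopes in $[0,\nicefrac{1}{n}]$ are the single segment and the two-segment path through $(1,0)$, and the latter has exactly one corner; a one-corner path cannot produce a ``double rounding,'' so HS Theorem~3.5 reduces to plain single-corner rounding. Moreover the triangle with vertices $(0,0)$, $(1,0)$, $(n+1,1)$ has area $\nicefrac{1}{2}$, so by Pick's theorem it has no interior lattice points and rounding yields precisely the single-segment path. Reading off, for each one-segment generator $\gamma$, which two-segment orbit sets round (with one $h$ removed) to $\gamma$ gives exactly the three PFC arrows in the Lemma and nothing else, which is what the paper's proof does.
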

\begin{proof}
By \cref{ECC_is_PFC_on_twist_region,1_over_n_dehn_twist_homology} the complex $C_{\nicefrac{1}{k}}$ is isomorphic to
\[  \mathcal{R}[h_+] \tensor CP(-\epsilon', 1/n+\epsilon'', 1, k), \]
equipped with differential
\[ d(\gamma\tensor\Gamma) = \gamma/h_+\tensor e_{\nicefrac{1}{n}}\Gamma + \gamma\tensor d_\mathrm{T}(\Gamma), \]
where $d_\mathrm{T}$ is the differential on the Dehn twist region described briefly in terms of rounding corners in \cref{the_periodic_floer_homology_of_a_dehn_twist} and more concretely by Hutchings and Sullivan~\cite[Theorem 3.5]{HS05}.  By examining the possible left-turning polygonal paths for $k=n,n+1$ we see that the generators of the complexes $C_{\nicefrac{1}{k}}$ are given precisely by those in the statement of the lemma, and by rounding the corners of these paths we see that the differential is also as claimed in both cases.
\end{proof}

\begin{prop}\label{L_is_filtration}
$\mathcal{L}$ defines a descending filtration on $ECC\orbits{he}{h}(\mathrm{int}(N\union T),\alpha'_j;j)$.
\end{prop}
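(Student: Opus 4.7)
The plan is to verify directly that $\mathcal{L}(\Gamma_-)\ge\mathcal{L}(\Gamma_+)$ whenever $\langle d\Gamma_+,\Gamma_-\rangle\ne0$, by partitioning on $\mathcal{F}(\Gamma_+)$ and using the descending-filtration property already established for $\mathcal{F}$, together with the control on $\mathcal{G}$ provided by \cref{facts_about_differentials_on_zeroth_page} and the explicit description of the twist-region complexes in \cref{complex_in_twist_region_1_n+1}. First, the case $\mathcal{F}(\Gamma_+)=0$ is immediate since $\mathcal{L}(\Gamma_+)=0$. The case $\mathcal{F}(\Gamma_+)\ge2$ is also immediate: since $\mathcal{F}$ is descending we have $\mathcal{F}(\Gamma_-)\ge\mathcal{F}(\Gamma_+)\ge2$, and in that range $\mathcal{L}=\mathcal{F}+2$.

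The only substantive case is therefore $\mathcal{F}(\Gamma_+)=1$. If the differential strictly increases $\mathcal{F}$ then $\mathcal{L}(\Gamma_-)\ge 4>\mathcal{L}(\Gamma_+)$, so we may assume $\mathcal{F}(\Gamma_-)=1$ as well. In this sub-case I would invoke \cref{facts_about_differentials_on_zeroth_page}: the non-connector part of any such Morse-Bott building is contained either entirely in $\mathrm{int}(N)$ or entirely in $\mathrm{int}(T)$. If it lies in $N$, then the Trapping Lemma forces every end at $\del N$ to be a negative end at $e_{0/1}$ or $h_{0/1}$, so $\gamma_-$ is obtained from $\gamma_+$ by adjoining copies of $e_{0/1}$ and/or $h_{0/1}$, which can only increase $\mathcal{G}$; moreover this modification leaves the multiplicity of $h_{1/(n+1)}$ in $\gamma$ unchanged. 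If it lies in $T$, then $\Gamma_+^N=\Gamma_-^N$ and the Alexander grading $\mathcal{G}$ is preserved since it is a topological invariant of the homology class of the curve.

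With these structural facts in hand, the verification reduces to a finite check against the four sub-cases in the definition of $\mathcal{L}$ on $\{\mathcal{F}=1\}$. When $\mathcal{G}(\Gamma_+)=n$, $\mathcal{G}(\Gamma_+)=n+1$ with $h_{1/(n+1)}\in\Gamma_+$, or $\mathcal{G}(\Gamma_+)>n+1$, the bound $\mathcal{G}(\Gamma_-)\ge\mathcal{G}(\Gamma_+)$ together with the definition of $\mathcal{L}$ yields $\mathcal{L}(\Gamma_-)\ge\mathcal{L}(\Gamma_+)$ directly. The genuinely delicate case is $\mathcal{G}(\Gamma_+)=n+1$ with $h_{1/(n+1)}\notin\Gamma_+$ (so $\mathcal{L}(\Gamma_+)=2$): one must rule out that $d$ produces a target with $\mathcal{G}=n+1$ containing $h_{1/(n+1)}$ (which would give $\mathcal{L}=1$).

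This last point is the main (and only) obstacle, and it is where \cref{complex_in_twist_region_1_n+1} does the essential work. For a differential in $N$, preservation of $\mathcal{G}$ at $n+1$ forces no boundary orbits to be added, so $\gamma_+=\gamma_-$ and absence of $h_{1/(n+1)}$ is preserved. For a differential in $T$ the relevant complex is precisely $C_{1/(n+1)}$, and inspection of \cref{complex_in_twist_region_1_n+1}(2) shows that $h_{1/(n+1)}$ occurs only as a source of differentials, never as a target, so no differential there can introduce $h_{1/(n+1)}$ into $\gamma_-$. This rules out the obstruction and completes the proof that $\mathcal{L}$ is a descending filtration.
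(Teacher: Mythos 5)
Your proof is correct and follows essentially the same route as the paper: both arguments reduce, via the descending properties of $\mathcal{F}$ and $\mathcal{G}$, to the single delicate case of a potential drop from $\mathcal{L}=2$ to $\mathcal{L}=1$ at $\mathcal{G}=n+1$, and both dispose of it using the explicit description of $C_{\nicefrac{1}{n+1}}$ from \cref{complex_in_twist_region_1_n+1} (the paper phrases this as the impossibility of a twist-region differential $\gamma_1\longrightarrow h_{\nicefrac{1}{n+1}}$, which is your observation that $h_{\nicefrac{1}{n+1}}$ is never a target). The only differences are organizational: you argue directly rather than by contradiction, and you split the $N$-versus-$T$ dichotomy out explicitly.
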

\begin{proof}
Denote by $d$ the differential on the complex in question and suppose for contradiction that $\mathcal{L}(\Gamma_1)>\mathcal{L}(\Gamma_2)$ and $\langle d\Gamma_1,\Gamma_2\rangle\ne 0$.

If $\mathcal{L}(\Gamma_2)=0$ or $\mathcal{L}(\Gamma_1)>3$ then we must have $\mathcal{F}(\Gamma_1)>\mathcal{F}(\Gamma_2)$ which contradicts the fact that $\mathcal{F}$ is a descending filtration.

In all other cases we have $\mathcal{F}(\Gamma_1)=\mathcal{F}(\Gamma_2)=1$ and hence, since $\mathcal{G}$ is a descending filtration on $E^0(\mathcal{F})$ we cannot have $\mathcal{G}(\Gamma_1)>\mathcal{G}(\Gamma_2)$.  As a result, we immediately see that we cannot have $\mathcal{L}(\Gamma_1)=3$.

We are left with the case $\mathcal{L}(\Gamma_2)=1$ and $\mathcal{L}(\Gamma_1)=2$. In this situation the only possibility is that $\mathcal{G}(\Gamma_1)=\mathcal{G}(\Gamma_2)=n+1$ and $\Gamma_2$ contains the orbit $h_{\nicefrac{1}{n+1}}$ whereas $\Gamma_1$ does not.

If we write $\Gamma_i$ in the form $\gamma_i\tensor\Gamma'_i$ for $i=1,2$ as in \cref{a_filtration_on_the_surgery_complex}, then $\mathcal{F}(\Gamma_i)=1$ and $\mathcal{G}(\Gamma_i)=n+1$ means that both $\gamma_i$ are elements of the complex $C_{\nicefrac{1}{n+1}}$.

Then we have a differential with $\gamma_1\tensor\Gamma'_1$ at the positive end and $h_{\nicefrac{1}{n+1}}\tensor\Gamma'_2$ at the negative end.  However we know that the non-connector part of this differential cannot intersect the torus at $y=2$, and by \cref{index_0_1_and_2_holo_curves} there is only one non-connector component.  Hence $\Gamma'_1=\Gamma'_2$ and we have a differential
\[  \gamma_1 \longrightarrow h_{\nicefrac{1}{n+1}} \]
lying entirely within the twist region. But by referring to the description of $C_{\nicefrac{1}{n+1}}$ in \cref{complex_in_twist_region_1_n+1} we see that this is not possible, so again we have a contradiction.
\end{proof}

We now compute the first page of the spectral sequence $E^1(\mathcal{L})$ by considering each $\mathcal{L}$-grading in turn.

\begin{prop}[$\mathcal{L}=0$]\label{L_0_part_prop}
The $\mathcal{L}=0$ part of the complex $ECC\orbits{he}{h}(\mathrm{int}(N\union T),\alpha'_j;j)$ is equal to $ECC^{e_{\nicefrac{0}{1}}}_{h_{\nicefrac{0}{1}}}(\mathrm{int}(N),\alpha_N;j)$ and hence on the first page of the spectral sequence we obtain
\[ E^1_0(\mathcal{L})\iso ECH^{e_{\nicefrac{0}{1}}}_{h_{\nicefrac{0}{1}}}(\mathrm{int}(N),\alpha_N;j). \]
\end{prop}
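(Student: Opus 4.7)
The strategy is to identify, at the chain level, the $\mathcal{L}=0$ subcomplex of $ECC\orbits{he}{h}(\mathrm{int}(N\cup T),\alpha'_j;j)$ with $ECC^{e_{\nicefrac{0}{1}}}_{h_{\nicefrac{0}{1}}}(\mathrm{int}(N),\alpha_N;j)$; the formula for $E^1_0(\mathcal{L})$ will then follow immediately by taking homology, because $\mathcal{L}=0$ is the minimum value of $\mathcal{L}$, so the $\mathcal{L}$-preserving part of $d'$ restricted to $\mathcal{L}=0$ is the entire induced differential on that subcomplex.

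First I would identify the generators. By construction $\mathcal{L}(\Gamma)=0$ if and only if $\mathcal{F}(\Gamma)=0$, so the $\mathcal{L}=0$ generators are orbit sets whose homology class is a non-negative multiple of $[m]$. Every simple orbit of $\alpha'_j$ in $\mathrm{int}(N)$ has homology class $q[m]$ (since $[l]=[\del\Sigma]=0\in H_1(N)$) and therefore lies in $\mathcal{F}=0$. In the twist region $T$, the Morse-Bott torus at $y=2$ is the unique torus of slope $0/1$, so after the $\alpha_j$-perturbation the only simple orbits in $T$ of $\mathcal{F}$-grading $0$ are $e_{\nicefrac{0}{1}}$ and $h_{\nicefrac{0}{1}}$; every other perturbed orbit $e_{p/q},h_{p/q}\subset\mathrm{int}(T)$, as well as $e_-,h_\pm$, satisfies $p\ge 1$ and hence $\mathcal{F}\ge 1$. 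Thus the $\mathcal{F}=0$ generators of Alexander grading $j$ are precisely the generators of $ECC^{e_{\nicefrac{0}{1}}}_{h_{\nicefrac{0}{1}}}(\mathrm{int}(N),\alpha_N;j)$.

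Next I would verify that the two differentials agree. By \cref{alpha_j_and_alpha_j_prime_iso_thm}, the $\mathcal{F}$-preserving part $d'_0$ of $d'$ coincides with the analogous Morse-Bott differential $d_0$ for $\alpha_j$, so I may work with $\alpha_j$. By \refListInThm{facts_about_differentials_on_zeroth_page}{all_curves_in_N_or_T}, the non-connector part of any Morse-Bott building contributing to $d_0$ lies either entirely in $\mathrm{int}(T)$ or entirely in $\mathrm{int}(N)$. The key point is that the first case cannot occur between $\mathcal{F}=0$ orbit sets: such a building would have every end supported on $\set{e_{\nicefrac{0}{1}},h_{\nicefrac{0}{1}}}$ and therefore one-sided at the Morse-Bott torus $y=2$; but that torus is negative for $\alpha_j$, so the \refNamedThm{Trapping Lemma}{trapping_lemma} forces every one-sided end there to be negative, contradicting the existence of a positive end. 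Hence every contributing building is supported in $\mathrm{int}(N)$, and by \refListInThm{facts_about_differentials_on_zeroth_page}{complex_on_N_part} these are exactly the Morse-Bott buildings counted by the differential of $ECC^{e_{\nicefrac{0}{1}}}_{h_{\nicefrac{0}{1}}}(\mathrm{int}(N),\alpha_N;j)$.

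Combining the two steps identifies the $\mathcal{L}=0$ subcomplex with $ECC^{e_{\nicefrac{0}{1}}}_{h_{\nicefrac{0}{1}}}(\mathrm{int}(N),\alpha_N;j)$ as chain complexes, and the formula for $E^1_0(\mathcal{L})$ follows by taking homology. The only real obstacle is the Trapping Lemma argument that rules out Morse-Bott buildings in $\mathrm{int}(T)$ whose ends are all supported on $\set{e_{\nicefrac{0}{1}},h_{\nicefrac{0}{1}}}$; everything else is formal bookkeeping with the filtrations $\mathcal{F}$ and $\mathcal{L}$ together with the results already established in this chapter.
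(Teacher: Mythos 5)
Your proof is correct and follows essentially the same route as the paper's, which simply cites the earlier discussion of the case $p=0$ (in \cref{the_case_when_p_equals_0}) together with \cref{facts_about_differentials_on_zeroth_page}; your version unpacks the two key ingredients (identification of the $\mathcal{F}=0$ generators and the Trapping-Lemma exclusion of curves in $\mathrm{int}(T)$), but these are exactly what the cited results encode. One small terminological slip: since $\mathcal{L}$ is a \emph{descending} filtration, $E^0_0(\mathcal{L})=\mathcal{L}^0/\mathcal{L}^1$ is a quotient complex rather than a subcomplex, though this has no bearing on your argument since the $E^0$-differential is still the $\mathcal{L}$-preserving part of $d'$.
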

\begin{proof}
This follows immediately from the fact that $\mathcal{F}=0$ as in \cref{the_case_when_p_equals_0} and by replacing $\restr{\alpha_j'}{N}$ with $\alpha_N$ by \cref{facts_about_differentials_on_zeroth_page}.
\end{proof}

\begin{prop}[$\mathcal{L}=1$]\label{L_1_part_prop}
The $\mathcal{L}=1$ part of the complex $ECC\orbits{he}{h}(\mathrm{int}(N\union T),\alpha'_j;j)$ is given by
\begin{equation}\label{L=1_part_of_complex} \bigg(C_{\nicefrac{1}{n}} \tensor ECC(\mathrm{int}(N),\alpha_N;j-n)\bigg) \oplus 
 h_{\nicefrac{1}{n+1}} ECC(\mathrm{int}(N),\alpha_N;j-(n+1)) \end{equation}
and on the first page of the spectral sequence we obtain
\[ E^1_1(\mathcal{L})\iso h_{\nicefrac{1}{n}} ECH(\mathrm{int}(N),\alpha_N;j-n) \oplus h_{\nicefrac{1}{n+1}} ECH(\mathrm{int}(N),\alpha_N;j-(n+1)). \]
\end{prop}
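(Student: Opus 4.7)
The plan is threefold.  First I would identify the underlying vector space of the $\mathcal{L}=1$ subquotient by classifying orbit sets $\Gamma=\gamma\tensor\Gamma_N$ with $\mathcal{L}(\Gamma)=1$, where $\gamma$ collects the twist-region orbits and $\Gamma_N$ the orbits in $\mathrm{int}(N)$.  The condition $\mathcal{L}(\Gamma)=1$ forces $\mathcal{F}(\Gamma)=1$ and $\mathcal{G}(\Gamma)\in\set{n,n+1}$.  The case $\mathcal{G}(\Gamma)=n$ yields $\gamma\in C_{\nicefrac{1}{n}}$ and $\Gamma_N\in ECC(\mathrm{int}(N),\alpha_N;j-n)$, while the case $\mathcal{G}(\Gamma)=n+1$ requires $\Gamma$ to contain $h_{\nicefrac{1}{n+1}}$; consulting the explicit list of generators of $C_{\nicefrac{1}{n+1}}$ in \cref{complex_in_twist_region_1_n+1} shows that $\gamma$ must equal $h_{\nicefrac{1}{n+1}}$ itself, paired with $\Gamma_N\in ECC(\mathrm{int}(N),\alpha_N;j-(n+1))$.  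Assembling both cases reproduces the direct sum in \cref{L=1_part_of_complex}.

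The main technical step is then to identify the $\mathcal{L}$-preserving part $d_{\mathcal{L},0}$ of $d$.  Since $\mathcal{F}$ is descending and any chain with $\mathcal{F}\ge 2$ has $\mathcal{L}\ge 4$, every $\mathcal{L}$-preserving differential out of $\mathcal{L}=1$ automatically preserves $\mathcal{F}$; the $\mathcal{G}$-preserving portion is then the tensor-product differential from \cref{zeroth_page_with_twist_isomorphism}, acting as $d_{C_{\nicefrac{1}{n}}}\tensor 1 + 1\tensor d_N$ on the first summand and as $h_{\nicefrac{1}{n+1}}\tensor d_N$ on the second.  What remains is to rule out $\mathcal{L}$-preserving differentials which strictly increase $\mathcal{G}$; the only candidate is a contribution from the first summand into the second.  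By \cref{index_0_1_and_2_holo_curves}, the non-connector part of the relevant Morse-Bott building is a single connected curve, so $h_{\nicefrac{1}{n+1}}$ would have to arise either as a trivial cylinder (impossible since it is absent from the positive orbit set) or as a negative end of that non-connector component.  A case analysis excludes the latter: if the non-connector component lies in $\mathrm{int}(T)$, the Hutchings--Sullivan corner-rounding description of \cref{the_periodic_floer_homology_of_a_dehn_twist} preserves the class $q[m]-p[l]$ and hence $\mathcal{G}$; if it lies in $\mathrm{int}(N)$, the \refNamedThm{Trapping Lemma}{trapping_lemma} applied at the negative Morse-Bott torus $\del N$ forces its ends into $\mathrm{int}(N)\cup\set{e_{\nicefrac{0}{1}},h_{\nicefrac{0}{1}}}$, avoiding $h_{\nicefrac{1}{n+1}}$ entirely; and a mixed component straddling $\del N$ is excluded by connectedness together with the \refNamedThm{Blocking Lemma}{blocking_lemma}.

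With the differential settled, the final step is a routine K\"unneth calculation over $\F_2$.  On the first summand, \cref{complex_in_twist_region_1_n+1} gives $H_*(C_{\nicefrac{1}{n}})\iso\F_2\langle[h_{\nicefrac{1}{n}}]\rangle$ (the single differential $h_+\to e_{\nicefrac{1}{n}}$ kills $h_+$ and $e_{\nicefrac{1}{n}}$), so the homology is $h_{\nicefrac{1}{n}}ECH(\mathrm{int}(N),\alpha_N;j-n)$.  On the second summand, $h_{\nicefrac{1}{n+1}}$ is inert under $d_{\mathcal{L},0}$ and the remaining factor has homology $ECH(\mathrm{int}(N),\alpha_N;j-(n+1))$.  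Adding these yields the claimed $E^1_1(\mathcal{L})$.  I expect the hardest part of the argument to be the case analysis in the previous paragraph: the K\"unneth computation and the vector-space identification are essentially bookkeeping, but decoupling the two summands hinges on a careful topological analysis of the Morse-Bott buildings across $\del N$.
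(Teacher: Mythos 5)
Your proof is correct and reaches the same conclusion as the paper's, with the same vector-space identification and the same K\"unneth step at the end, but the heart of the argument---ruling out differentials from the $\mathcal{G}=n$ summand into the $\mathcal{G}=n+1$ summand---is handled by a genuinely different route. The paper's exclusion is shorter and more direct: an orbit set $\Gamma_1$ with $\mathcal{G}(\Gamma_1)=n$ has $T$-part $\gamma_1\in\set{h_{1/n},e_{1/n},h_+}$, none of which appear in $\Gamma_2 = h_{\nicefrac{1}{n+1}}\Gamma_2'$; but $h_{1/n}$ and $e_{1/n}$ sit on a negative Morse-Bott torus, so by the Trapping Lemma they can only be negative ends (and they are not connected by a trivial cylinder, since $\Gamma_2$ omits them), while the single differential out of $h_+$ produces $e_{1/n}$ on the negative side, again absent from $\Gamma_2$. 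Contradiction, with no need to dissect the non-connector part. Your decomposition by where the non-connector component lives (entirely in $\mathrm{int}(T)$, entirely in $\mathrm{int}(N)$, or mixed, the last ruled out by the Blocking Lemma) is logically sound but reproves a portion of \refListInThm{facts_about_differentials_on_zeroth_page}{all_curves_in_N_or_T}; the appeal to Hutchings--Sullivan corner-rounding in the $\mathrm{int}(T)$ case can be replaced by the simpler observation that any holomorphic curve in $T$ preserves $[\gamma]\in H_1(T)$ and hence cannot connect $n[m]-[l]$ to $(n+1)[m]-[l]$; and the Trapping Lemma in the $\mathrm{int}(N)$ case is unnecessary, since a component contained in $\mathrm{int}(N)$ simply has no ends in $\mathrm{int}(T)$ at all, forcing the $T$-parts of $\Gamma_1$ and $\Gamma_2$ to coincide via trivial cylinders, which they do not. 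Each approach buys the same thing, but the paper's is more economical; yours has the pedagogical virtue of making the location of the non-connector component explicit.
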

\begin{proof}
If $\mathcal{L}(\Gamma)=1$ then $\mathcal{F}(\Gamma)=1$ and either 
\begin{itemize}
\item $\mathcal{G}(\Gamma)=n$ and hence $\Gamma=\gamma\Gamma'$ for some $\Gamma'\in ECC(\mathrm{int}(N),\alpha'_j;j-(n+1))$ and $\gamma\in\set{h_{\nicefrac{1}{n}},e_{\nicefrac{1}{n}},h_+}$.
\item $\mathcal{G}(\Gamma)=n+1$ and $\Gamma=h_{\nicefrac{1}{n+1}}\Gamma'$ for some $\Gamma'\in ECC(\mathrm{int}(N),\alpha'_j;j-(n+1))$.
\end{itemize}
Hence $E_1^0(\mathcal{L})$ is isomorphic to \cref{L=1_part_of_complex} as a vector space.  (Again, here we are applying \cref{facts_about_differentials_on_zeroth_page} to replace $\restr{\alpha_j'}{N}$ with $\alpha_N$.)  We claim that as a chain complex we also have this splitting, i.e.~there are no differentials between the two summands.  To see this, note that since we are in a fixed $\mathcal{F}$-grading, $\langle d\Gamma_1, \Gamma_2\rangle \ne 0$ implies that $\mathcal{G}(\Gamma_2)\geq\mathcal{G}(\Gamma_1)$; to obtain a contradiction suppose that we do not have equality---then $\mathcal{G}(\Gamma_1)=n$ and $\mathcal{G}(\Gamma_2)=n+1$.  But then there is a holomorphic curve whose positive end contains one of $h_{\nicefrac{1}{n}}$, $e_{\nicefrac{1}{n}}$ or $h_+$ and whose negative end does not contain any of these orbits. This gives a contradiction since only negative ends can appear at $h_{\nicefrac{1}{n}}$ and $e_{\nicefrac{1}{n}}$, and the only differentials containing $h_+$ at a positive end are those of the form
\[  h_+\Gamma \longrightarrow e_{\nicefrac{1}{n}}\Gamma. \]
When we pass to the first page, we use the fact that
\[ H_*(C_{1/n})=\langle[h_{1/n}]\rangle \]
from \cref{complex_in_twist_region_1_n+1} to obtain the desired result.
\end{proof}

\begin{prop}[$\mathcal{L}=2$]
The $\mathcal{L}=2$ part of the complex $ECC\orbits{he}{h}(\mathrm{int}(N\union T),\alpha'_j;j)$ is equal to
\begin{equation}\label{L=2_part_diagram} 
C'_{\nicefrac{1}{n+1}} \tensor ECC(\mathrm{int}(N),\alpha_N;j-(n+1)) 
\end{equation}
where $C'_{\nicefrac{1}{n+1}}$ is defined to be the subcomplex of $C_{\nicefrac{1}{n+1}}$ obtained by omitting the generator $h_{\nicefrac{1}{n+1}}$ (c.f.~\cref{complex_in_twist_region_1_n+1}), and on on the first page of the spectral sequence we obtain
\[ E_2^1(\mathcal{L}) \iso h_{\nicefrac{1}{n}}h_{\nicefrac{0}{1}} ECH(\mathrm{int}(N),\alpha_N;j-(n+1)). \]
\end{prop}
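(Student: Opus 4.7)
The plan is to follow the arguments of \cref{L_0_part_prop,L_1_part_prop} very closely, with the only new ingredient being a small homology computation for the truncated complex $C'_{\nicefrac{1}{n+1}}$.

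First I would observe that an orbit set $\Gamma$ lies in $\mathcal{L}$-grading $2$ precisely when $\mathcal{F}(\Gamma)=1$, $\mathcal{G}(\Gamma)=n+1$, and $\Gamma$ does not contain $h_{\nicefrac{1}{n+1}}$. Writing $\Gamma = \gamma\Gamma'$ as in the beginning of \cref{a_filtration_on_the_surgery_complex}, this is equivalent to requiring $\gamma$ to be one of the seven generators of $C_{\nicefrac{1}{n+1}}$ other than $h_{\nicefrac{1}{n+1}}$ (by \cref{complex_in_twist_region_1_n+1}) and $\Gamma' \in ECC(\mathrm{int}(N),\alpha_N;j-(n+1))$; the replacement of $\restr{\alpha'_j}{N}$ by $\alpha_N$ is justified by \refListInThm{facts_about_differentials_on_zeroth_page}{complex_on_N_part}. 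This gives the vector-space identification \cref{L=2_part_diagram}.

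To upgrade this to a chain-complex identification, I would argue as in the proof of \cref{L_1_part_prop}: the $\mathcal{L}$-preserving part of the differential necessarily preserves both $\mathcal{F}$ and $\mathcal{G}$, and by \refListInThm{facts_about_differentials_on_zeroth_page}{all_curves_in_N_or_T} it splits into a twist-region piece acting on $\gamma$ and an $N$-piece acting on $\Gamma'$. The twist-region piece restricts to the differential on the subcomplex $C'_{\nicefrac{1}{n+1}} \subset C_{\nicefrac{1}{n+1}}$: indeed, the explicit diagram in \cref{complex_in_twist_region_1_n+1} shows that $h_{\nicefrac{1}{n+1}}$ has only an \emph{outgoing} differential, so deleting it does yield a subcomplex. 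The $N$-piece, combined with $\mathcal{G}$-preservation (which rules out introducing extra copies of $e_{\nicefrac{0}{1}}$ or $h_{\nicefrac{0}{1}}$), is precisely the differential on $ECC(\mathrm{int}(N),\alpha_N;j-(n+1))$.

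The remaining work is the computation of $H_*(C'_{\nicefrac{1}{n+1}})$. Reading the differentials off \cref{complex_in_twist_region_1_n+1} gives
\begin{align*}
d(e_{\nicefrac{1}{n+1}}) &= e_{\nicefrac{0}{1}}h_{\nicefrac{1}{n}} + h_{\nicefrac{0}{1}}e_{\nicefrac{1}{n}}, \\
d(h_{\nicefrac{0}{1}}h_+) &= h_{\nicefrac{0}{1}}e_{\nicefrac{1}{n}}, \\
d(e_{\nicefrac{0}{1}}h_+) &= e_{\nicefrac{0}{1}}e_{\nicefrac{1}{n}},
\end{align*}
with the four bottom-row generators closed. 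Over $\F_2$ the image of $d$ is spanned by $e_{\nicefrac{0}{1}}h_{\nicefrac{1}{n}}$, $h_{\nicefrac{0}{1}}e_{\nicefrac{1}{n}}$, and $e_{\nicefrac{0}{1}}e_{\nicefrac{1}{n}}$; the three outgoing differentials from the top row are linearly independent, so the kernel consists exactly of the span of the four bottom-row generators. The quotient is one-dimensional, generated by the class $[h_{\nicefrac{0}{1}}h_{\nicefrac{1}{n}}]$, whence $H_*(C'_{\nicefrac{1}{n+1}}) \cong h_{\nicefrac{1}{n}}h_{\nicefrac{0}{1}}\F_2$. Tensoring with $ECH(\mathrm{int}(N),\alpha_N;j-(n+1))$ yields the stated formula for $E_2^1(\mathcal{L})$. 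I do not anticipate any significant obstacles: the splitting argument is essentially identical to that of \cref{L_1_part_prop}, and the homology calculation is purely combinatorial.
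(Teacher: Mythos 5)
Your proposal is correct and takes essentially the same approach as the paper: identify the $\mathcal{L}=2$ part as a tensor product using the $\mathcal{F}$- and $\mathcal{G}$-gradings, observe that $C'_{\nicefrac{1}{n+1}}$ is a genuine subcomplex, and compute its homology to be one-dimensional, generated by $[h_{\nicefrac{0}{1}}h_{\nicefrac{1}{n}}]$. Your explicit homology computation also implicitly corrects a typo in the paper's diagram for $C'_{\nicefrac{1}{n+1}}$, whose bottom-right generator should read $e_{\nicefrac{0}{1}}e_{\nicefrac{1}{n}}$ rather than $e_{\nicefrac{0}{1}}h_{\nicefrac{1}{n}}$.
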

\begin{proof}
Since $\mathcal{L}(\Gamma)=2$, we have $\mathcal{F}(\Gamma)=1$, $\mathcal{G}(\Gamma)=n+1$ and $\Gamma$ does not contain $h_{\nicefrac{1}{n+1}}$.  This immediately gives us the complex in \cref{L=2_part_diagram} (once again, we are using \cref{facts_about_differentials_on_zeroth_page} to replace $\restr{\alpha_j'}{N}$ with $\alpha_N$).

The complex $C'_{\nicefrac{1}{n+1}}$ is given by the diagram
\[\begin{tikzcd}
  & e_{\nicefrac{1}{n+1}}\arrow[d]\arrow[dr] & h_{\nicefrac{0}{1}}h_+\arrow[d] & e_{\nicefrac{0}{1}}h_+\arrow[d] \\
h_{\nicefrac{0}{1}}h_{\nicefrac{1}{n}} & e_{\nicefrac{0}{1}}h_{\nicefrac{1}{n}} & h_{\nicefrac{0}{1}}e_{\nicefrac{1}{n}} & e_{\nicefrac{0}{1}}h_{\nicefrac{1}{n}}
\end{tikzcd}\]
by \cref{complex_in_twist_region_1_n+1}, and hence its homology is generated by the class $[h_{\nicefrac{0}{1}}h_{\nicefrac{1}{n}}]$.
Taking the homology of \cref{L=2_part_diagram} yields the desired result.
\end{proof}

\begin{prop}[$\mathcal{L}>2$]\label{L_3_part_prop}
The first page of the spectral sequence vanishes in $\mathcal{L}$-gradings greater than 2.
\end{prop}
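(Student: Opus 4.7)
The plan is to split the proof into two cases according to the value of $\mathcal{L}$: the ``outer'' case $\mathcal{L} = p+2 \geq 4$, which by definition of $\mathcal{L}$ is equivalent to $\mathcal{F} = p \geq 2$, and the single ``inner'' case $\mathcal{L} = 3$, which is equivalent to $\mathcal{F} = 1$ together with $\mathcal{G} \geq n+2$. In both cases the strategy is to identify $E^1_\mathcal{L}(\mathcal{L})$ with a subquotient of an already-computed spectral sequence page and invoke \cref{first_page_of_G_spectral_sequence}.

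For the outer case, I would first observe that for $p > 1$ the $\mathcal{L} = p+2$ piece of the complex coincides as a vector space with the $\mathcal{F} = p$ piece, and under this identification the $\mathcal{L}$-preserving differential $d_{\mathcal{L},0}$ agrees with $d_{\mathcal{F},0}$; this gives $E^1_{p+2}(\mathcal{L}) \iso E^1_p(\mathcal{F})$. Next I would recall that the $\mathcal{G}$-filtration from \cref{a_second_filtration_for_p_greater_0} yields a spectral sequence $E^r_p(\mathcal{G})$ converging to $E^1_p(\mathcal{F})$. Since \cref{first_page_of_G_spectral_sequence} shows that $E^1_p(\mathcal{G}) = 0$ for every $p > 1$, all higher pages vanish as well, forcing $E^1_p(\mathcal{F}) = 0$, as required.

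For the inner case, the associated graded $E^0_3(\mathcal{L})$ is generated by orbit sets with $\mathcal{F} = 1$ and $\mathcal{G} \geq n+2$. The descending filtration $\mathcal{G}$ restricts to this subquotient, and the $\mathcal{L}$-preserving differential here is the restriction of the $\mathcal{F}$-preserving differential in $\mathcal{F} = 1$. I would then apply the resulting auxiliary spectral sequence, whose zeroth page in $\mathcal{G}$-level $g \geq n+2$ agrees with the corresponding summand of $E^0_1(\mathcal{G})$. Its first page is therefore
\[ \bigoplus_{g \geq n+2} E^1_{1,g}(\mathcal{G}), \]
and by tracing through the tensor product identification in \cref{first_page_with_twist_isomorphism} together with \cref{ECC_is_PFC_on_twist_region,homology_of_augmented_dehn_twist}, each summand $E^1_{1,g}(\mathcal{G})$ vanishes for $g \neq n$, in particular for every $g \geq n+2$. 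The auxiliary spectral sequence thus collapses at its first page, giving $E^1_3(\mathcal{L}) = 0$.

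I do not anticipate any serious obstacles: everything reduces to careful bookkeeping across the three filtrations and to inputs already available in earlier sections. The main point to check is that the identifications of associated gradeds really do intertwine the relevant filtration-preserving differentials, in particular the identification $E^0_{p+2}(\mathcal{L}) \iso E^0_p(\mathcal{F})$ for $p > 1$ and the restriction of $\mathcal{G}$ to the $\mathcal{L} = 3$ subquotient, both of which are straightforward from the definitions.
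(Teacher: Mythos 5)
Your proposal is correct and follows essentially the same route as the paper: in both cases you reduce to the $\mathcal{G}$-filtration spectral sequence on $E^0_p(\mathcal{F})$ and invoke \cref{first_page_of_G_spectral_sequence}, which shows $E^1_p(\mathcal{G})$ vanishes for $p>1$ and, for $p=1$, is concentrated in $\mathcal{G}$-grading $n$. The only difference is cosmetic — you treat the cases in the opposite order and spell out the identifications of associated gradeds slightly more explicitly.
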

\begin{proof}
First consider $E_3^0(\mathcal{L})$, where $\mathcal{F}=1$ and $\mathcal{G}>n+1$.  Then we can proceed by the same argument as used in \cref{computing_the_first_page_of_F}, namely by using the $\mathcal{G}$ filtration to define a spectral sequence $E_1^r(\mathcal{G})$ which converges to $E^1_1(\mathcal{F})$.  Then by \cref{first_page_of_G_spectral_sequence} we see that $E^1_1(\mathcal{G})$ vanishes in $\mathcal{G}$-gradings greater than $n$.

Now consider $E^0_{p+2}(\mathcal{L})$, for $p+2>3$, where $\mathcal{F}=p>1$.  Then the exact same argument, here noting that the spectral sequence corresponding to $\mathcal{G}$ vanishes in $\mathcal{F}$-gradings greater than 1, yields the result.
\end{proof}

We can summarize Propositions~\ref{L_0_part_prop} to~\ref{L_3_part_prop} with the following diagram of $E^1(\mathcal{L})$:
\[\hskip \textwidth minus \textwidth \begin{tikzpicture}[commutative diagrams/every diagram, scale=1]
\node (P0) at (90:2cm) {$ECH^{e_{\nicefrac{0}{1}}}_{h_{\nicefrac{0}{1}}}(\mathrm{int}(N),\alpha_N;j)$};
\node (P1) at (180:3cm) {$h_{\nicefrac{1}{n}} ECH(\mathrm{int}(N),\alpha_N;j-n)$} ;
\node (P2) at (0:3cm) {$h_{\nicefrac{1}{n+1}} ECH(\mathrm{int}(N),\alpha_N;j-(n+1))$};
\node (P3) at (270:2cm) {$h_{\nicefrac{1}{n}}h_{\nicefrac{0}{1}} ECH(\mathrm{int}(N),\alpha_N;j-(n+1)).$};
\node at (-6.2,0) {};
\node at (7,2) {($\mathcal{L}=0$)};
\node at (7,0) {($\mathcal{L}=1$)};
\node at (7,-2) {($\mathcal{L}=2$)};
\path[commutative diagrams/.cd, every arrow, every label]
(P0) edge node[swap] {$d_{\mathcal{L},1}$} (P1)
(P0) edge node {$d_{\mathcal{L},1}$} (P2)
(P1) edge node[swap] {$d_{\mathcal{L},1}$} (P3)
(P2) edge node {$d_{\mathcal{L},1}$} (P3);
\end{tikzpicture}\]
Hence to prove \cref{commutative_diagram_L_filtration} it suffices to show that the differential $d_{\mathcal{L},1}$ yields the four maps in \refDiagram{commutative_diagram_L_filtration_eqn}.

\begin{proof}[Proof of \cref{commutative_diagram_L_filtration}]
We will consider each component of $d_{\mathcal{L},1}$ in turn.  Let $d$ denote the original differential on the complex
\[ ECC\orbits{he}{h}(\mathrm{int}(N\union T),\alpha'_j;j). \]
\paragraph{1.}The component of $d_{\mathcal{L},1}$ from
\[ ECH^{e_{\nicefrac{0}{1}}}_{h_{\nicefrac{0}{1}}}(\mathrm{int}(N),\alpha_N;j)\quad\text{to}\quad h_{\nicefrac{1}{n}} ECH(\mathrm{int}(N),\alpha_N;j-n) \]
is induced by the part of $d$ obtained by counting holomorphic curves between orbit sets in
\[  ECC^{e_{\nicefrac{0}{1}}}_{h_{\nicefrac{0}{1}}}(\mathrm{int}(N),\alpha_N;j)\quad\text{and}\quad  h_{\nicefrac{1}{n}}  ECC(\mathrm{int}(N),\alpha_N;j-n), \]
which is precisely the definition of $d_{\mathcal{F},1}^{(n)}$.

\paragraph{2.}Similarly the component of $d_{\mathcal{L},1}$ from
\[ ECH^{e_{\nicefrac{0}{1}}}_{h_{\nicefrac{0}{1}}}(\mathrm{int}(N),\alpha_N;j)\quad\text{to}\quad h_{\nicefrac{1}{n+1}} ECH(\mathrm{int}(N),\alpha_N;j-(n+1)) \]
is induced by the part of $d$ obtained by counting holomorphic curves between orbit sets in
\[  ECC^{e_{\nicefrac{0}{1}}}_{h_{\nicefrac{0}{1}}}(\mathrm{int}(N),\alpha_N;j)\quad\text{and}\quad h_{\nicefrac{1}{n+1}}  ECC(\mathrm{int}(N),\alpha_N;j-(n+1)). \]
By slope calculus (\cref{slope_calculus}) and the \refNamedThm{Blocking Lemma}{blocking_lemma}, all such holomorphic curves are contained entirely in the region 
\[ T^2\cross[y_{\nicefrac{1}{n+1}}, 2] \union N, \]
where $y_{\nicefrac{1}{n+1}}$ is the value of $y$ at which $\alpha$ has Reeb slope $n[m]-l$.  Then the count of such curves is precisely equal to that when defining the differential in the case of $-(n+1)$-surgery, hence we obtain the map $d_{\mathcal{F},1}^{(n+1)}$.

\paragraph{3.}The component of $d_{\mathcal{L},1}$ from
\[ h_{\nicefrac{1}{n}} ECH(\mathrm{int}(N),\alpha_N;j-n)\quad\text{to}\quad h_{\nicefrac{1}{n}}h_{\nicefrac{0}{1}} ECH(\mathrm{int}(N),\alpha_N;j-(n+1)) \]
is induced by the part of $d$ obtained by counting holomorphic curves between orbit sets in
\[ h_{\nicefrac{1}{n}} ECC(\mathrm{int}(N),\alpha_N;j-n)\quad\text{and}\quad h_{\nicefrac{1}{n}}h_{\nicefrac{0}{1}}  ECC(\mathrm{int}(N),\alpha_N;j-(n+1)). \]
By slope calculus and the Blocking Lemma, all such holomorphic curves are supported entirely within $N$, and furthermore have a single negative end at $h_{\nicefrac{0}{1}}$ and no positive ends at $\del N$.  This count of curves is precisely the definition of the map
\[ \Gamma \mapsto h_{\nicefrac{0}{1}} d'_N(\Gamma), \]
hence this component of $d_{\mathcal{L},1}$ is induced by the map
\[ h_{\nicefrac{1}{n}}\Gamma \mapsto h_{\nicefrac{1}{n}}h_{\nicefrac{0}{1}} d'_N(\Gamma). \]

\paragraph{4.}Finally, the component of $d_{\mathcal{L},1}$ from
\[ h_{\nicefrac{1}{n+1}} ECH(\mathrm{int}(N),\alpha_N;j-(n+1))\quad\text{to}\quad h_{\nicefrac{1}{n}}h_{\nicefrac{0}{1}} ECH(\mathrm{int}(N),\alpha_N;j-(n+1)) \]
is induced by the part of $d$ obtained by counting holomorphic curves between orbit sets in
\[ h_{\nicefrac{1}{n+1}} ECC(\mathrm{int}(N),\alpha_N;j-(n+1))\quad\text{and}\quad h_{\nicefrac{1}{n}}h_{\nicefrac{0}{1}} ECC(\mathrm{int}(N),\alpha_N;j-(n+1)). \]
Applying slope calculus, the Blocking Lemma and \cref{index_0_1_and_2_holo_curves} here gives the result that the non-connector part of all such holomorphic curves are contained entirely entirely within the twist region $T$, and hence we only see trivial cylinders in $\mathrm{int}(N)$. We can then appeal to the description of $C_{\nicefrac{1}{n}}$ in \cref{complex_in_twist_region_1_n+1}, noting that there is precisely one contributing differential,
\[ h_{\nicefrac{1}{n+1}} \longrightarrow h_{\nicefrac{0}{1}}h_{\nicefrac{1}{n}}, \]
and hence this component of $d_{\mathcal{L},1}$ is induced by the map
\[ h_{\nicefrac{1}{n+1}}\Gamma \mapsto h_{\nicefrac{0}{1}}h_{\nicefrac{1}{n}}\Gamma \]
as required.
\end{proof}

\subsection{The mapping cone of \texorpdfstring{$d_{\mathcal{F},1}$}{d\_\{F,1\}}}

The goal of this section is to prove the following theorem
\begin{thm}\label{surgery_mapping_cones}
Fix $j>2g$, where $g$ is the genus of the surface $\Sigma$, and suppose that $1\le n\le j$.  The mapping cone of the map
\[ d_{\mathcal{F},1}^{n} : ECH\orbits{e}{h}(\mathrm{int}(N), \alpha_N; j) \to h_{\nicefrac{1}{n}}ECH(\mathrm{int}(N),\alpha_N; j-n) \]
has the same homology as the mapping cone of the quotient map
\begin{align*}
ECC\orbits{}{eh}(\mathrm{int}(N), \alpha_N; j-1) &\to ECC\orbits{}{eh}(\mathrm{int}(N), \alpha_N; j-n) \\
\Gamma &\mapsto \Gamma/e_+^{n-1},
\end{align*}
which in turn is given by the homology of the subcomplex
\[ ECC\orbits{}{eh}(\mathrm{int}(N), \alpha_N; j-1, n-2) \]
generated only by orbit sets with Alexander grading $j-1$ and ($e_+$)-multiplicity less than or equal to $n-2$.
\end{thm}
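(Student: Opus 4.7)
The second equivalence is a standard observation about mapping cones of surjections. The quotient map $\Gamma \mapsto \Gamma/e_+^{n-1}$ is surjective (any orbit set of Alexander grading $j-n$ is the image of its product with $e_+^{n-1}$), and its kernel is precisely the subcomplex $ECC\orbits{}{eh}(\mathrm{int}(N),\alpha_N; j-1, n-2)$ of orbit sets of $e_+$-multiplicity at most $n-2$. This is a subcomplex because the only differential affecting $e_+$-multiplicity, arising from the curve $e_+ \longrightarrow h_-$, strictly decreases it. Since the mapping cone of a surjection is quasi-isomorphic to its kernel, this gives the second identification.

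For the first equivalence I plan to argue by induction on $n \ge 1$. The inductive step uses the commutative square of \cref{commutative_diagram_L_filtration}: since the right vertical map there is an isomorphism, it exhibits $d^{(n+1)}_{\mathcal{F},1}$ as a composition $\beta_n \circ d^{(n)}_{\mathcal{F},1}$ on homology, where $\beta_n := h_{\nicefrac{0}{1}}(d'_N)_*$. The octahedral axiom then supplies an exact triangle
\[ \mathrm{Cone}(d^{(n)}_{\mathcal{F},1}) \longrightarrow \mathrm{Cone}(d^{(n+1)}_{\mathcal{F},1}) \longrightarrow \mathrm{Cone}(\beta_n) \longrightarrow \mathrm{Cone}(d^{(n)}_{\mathcal{F},1})[1]. \]
On the right-hand side of the theorem, the factorization $\Gamma \mapsto \Gamma/e_+^{n-1} \mapsto \Gamma/e_+^n$ yields an analogous exact triangle whose third term is the cone of the quotient $ECC\orbits{}{eh}(j-n) \twoheadrightarrow ECC\orbits{}{eh}(j-n-1)$, with kernel (and hence cone homology) equal to $H_*(ECC\orbits{}{h}(j-n))$.

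To match the two triangles I would identify $\mathrm{Cone}(\beta_n)$ with the same object: after cancelling the common $h_{\nicefrac{1}{n}}$-factor, $\beta_n$ reduces to the $U$-like map $(d'_N)_*: ECH(j-n) \to h_{-}ECH(j-n-1)$, and filtering $ECC\orbits{}{h}(j-n)$ by $h_{-}$-multiplicity realizes $ECH\orbits{}{h}(j-n)$ as the second page of a two-column spectral sequence whose $d_1$ is precisely $(d'_N)_*$. Using naturality of the octahedral construction together with the explicit chain-level description of $\beta_n$ coming from the definition of the filtration $\mathcal{L}$ in the proof of \cref{commutative_diagram_L_filtration}, I would then verify that the two connecting morphisms agree, after which the five lemma closes the inductive step.

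The principal obstacle is the base case $n = 1$: the right-hand cone is that of the identity map and so is contractible, so I must prove $d^{(1)}_{\mathcal{F},1}$ is a quasi-isomorphism for $j > 2g$. My approach would be to identify $d^{(1)}_{\mathcal{F},1}$ with the Alexander-grading-preserving differential $h_+\Gamma \longmapsto e_-\Gamma$ on the two-term subcomplex $ECC\orbits{e}{h}(j) \oplus h_+ ECC\orbits{e}{h}(j-1)$ of the zeroth column of the original (unsurgered) ECK complex of $K$; the associated graded of this piece at grading $j$ computes $\widehat{ECK}(K;j)$, which vanishes for $j>2g$ by \cref{ECK_supported_genus}. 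Making this identification rigorous—relating the holomorphic curves contributing to the $(-1)$-surgery differential through the $n=1$ twist region to the differentials coming from the integral open book of $K$—will be the most delicate step, and should be accessible via a finite-energy foliation analysis in the $n=1$ twist region analogous to the one exploited in \cref{computing_ech_via_a_filtration_argument}.
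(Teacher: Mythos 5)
Your treatment of the second equivalence (identifying the cone of the surjection with its kernel $ECC\orbits{}{eh}(\mathrm{int}(N),\alpha_N;j-1,n-2)$) matches the paper's argument. For the first equivalence, however, both your inductive step and your base case diverge from the paper, and the base case in your proposal contains the substantive gap.

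On the inductive step: the paper does not run a cone-by-cone induction via the octahedral axiom. Instead it telescopes the relation from \cref{commutative_diagram_L_filtration} together with \cref{U_map_commutes_prop} to get $\Phi_n\circ d^{(n)}_{\mathcal{F},1} = U_*^{n-1}\circ\Phi_1\circ d^{(1)}_{\mathcal{F},1}$ directly. This yields a single commutative square with vertical isomorphisms $\Phi_1\circ d^{(1)}_{\mathcal{F},1}$ (using the base case) and $\Phi_n$, which identifies the cone of $d^{(n)}_{\mathcal{F},1}$ with the cone of $U_*^{n-1}$ in one step. Your octahedral route is not wrong in principle, but it obliges you to verify that the connecting morphisms in the two exact triangles agree and to fill in the filtration-by-$h_-$-multiplicity argument on $ECC\orbits{}{h}$; the telescoping argument sidesteps all of this. (Either way one also needs the $\F_2$-coefficient fact that the homology of the chain-level cone of $U^{n-1}$ agrees with the cone of $U_*^{n-1}$ on homology, which the paper invokes without comment.)

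On the base case $n=1$: this is where your proposal is genuinely incomplete and, as written, likely to mislead you into a much harder problem than necessary. You propose to directly identify $d^{(1)}_{\mathcal{F},1}$ with a differential on a two-term piece of the \emph{original} ECK complex of $K$ via a finite-energy foliation analysis in the twist region, and you yourself flag this identification as the ``most delicate step.'' The paper avoids all of this. The observation that closes the base case is that $(-1)$-surgery produces \emph{another integral open book decomposition} of the same genus $g$: the twist region $S^1\cross[1,2]$ carries a full Dehn twist, so the monodromy restricts to the identity on the new boundary $y=1$, and the page $\Sigma_T=\Sigma\cup(S^1\cross[1,2])$ is $\Sigma$ with a collar attached, hence still genus $g$. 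Applying \cref{ECK_supported_genus} to this \emph{new} open book gives $\widehat{ECK}(K(-1),\alpha'_j;j)=0$ for $j>2g$. Since $E^1(\mathcal{F})$ for $(-1)$-surgery has exactly two nonzero filtration levels and $E^2(\mathcal{F})=\widehat{ECK}(K(-1);j)=0$, the differential $d^{(1)}_{\mathcal{F},1}$ must be an isomorphism, with no holomorphic curve analysis at all. You cite \cref{ECK_supported_genus} but apply it to the original $K$ rather than to the surgered knot $K(-1)$, which is why your version stalls and has to reach for a foliation computation. The missing idea is to look at the surgered manifold's own open book.
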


We start with the following proposition.

\begin{prop}\label{U_map_commutes_prop}
There are quasi-isomorphisms
\[ \hat\Phi_k : ECC(\mathrm{int}(N),\alpha_N; k)\to ECC\orbits{}{eh}(\mathrm{int}(N),\alpha_N;k) \]
for all $k\ge 1$ such that the following diagram commutes:
\begin{equation}\label{U_map_commutes}\begin{tikzcd}
ECC(\mathrm{int}(N),\alpha_N; k) \arrow[rr,"d'_N"]\arrow[d,"\hat\Phi_k"] && ECC(\mathrm{int}(N),\alpha_N; k-1)\arrow[d,"\hat\Phi_{k-1}"] \\
ECC\orbits{}{eh}(\mathrm{int}(N),\alpha_N;k) \arrow[rr,"\cdot /e_+"] && ECC\orbits{}{eh}(\mathrm{int}(N),\alpha_N;k-1)
\end{tikzcd}\end{equation}
for all $k$. 
\end{prop}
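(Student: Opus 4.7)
The plan is to define $\hat\Phi_k$ by an Alexander-graded refinement of the map $\sigma$ from the proof of \cref{U_natural_and_U_sharp_iso_prop}. Explicitly, set
\[
 \hat\Phi_k(\Gamma) := \sum_{i=0}^{k} e_+^i\cdot (d'_N)^i(\Gamma).
\]
This is a finite sum because $(d'_N)^i(\Gamma)$ lies in Alexander grading $k-i$ and so vanishes once $i>k$. Since $A(e_+)=1$ in the integral open book setting, every summand has total Alexander grading $i+(k-i)=k$, so the image lies in $ECC\orbits{}{eh}(\mathrm{int}(N),\alpha_N;k)$ as required.

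To establish that $\hat\Phi_k$ is a chain map, I would expand $d_N^2=0$ under the decomposition $d_N = d_N^\flat + h_-d'_N$ and use $h_-^2=0$ to deduce $(d_N^\flat)^2=0$ and $d_N^\flat d'_N = d'_N d_N^\flat$ over $\F_2$. Then by a direct computation using the description of the differential on $ECC\orbits{}{eh}$,
\[
 D(e_+^a\Gamma) = e_+^{a-1}h_-\Gamma + e_+^a d_N^\flat(\Gamma) + e_+^a h_- d'_N(\Gamma),
\]
the terms $e_+^{i-1}h_-(d'_N)^i(\Gamma)$ and $e_+^{i}h_-(d'_N)^{i+1}(\Gamma)$ cancel telescopically, leaving $D\hat\Phi_k(\Gamma) = \hat\Phi_k(d_N^\flat\Gamma)$. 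Commutativity of the stated square is then immediate:
\[
 \hat\Phi_k(\Gamma)/e_+ \;=\; \sum_{i=1}^{k} e_+^{i-1}(d'_N)^i(\Gamma) \;=\; \sum_{j=0}^{k-1} e_+^j (d'_N)^j\bigl(d'_N\Gamma\bigr) \;=\; \hat\Phi_{k-1}(d'_N\Gamma).
\]

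To show that $\hat\Phi_k$ is a quasi-isomorphism, I would introduce the canonical projection
\[
 \hat\pi_k : ECC\orbits{}{eh}(\mathrm{int}(N),\alpha_N;k) \to ECC(\mathrm{int}(N),\alpha_N;k)
\]
which sends any orbit set containing $e_+$ or $h_-$ to zero and fixes the rest. A short check confirms that $\hat\pi_k$ is a chain map and that $\hat\pi_k\circ\hat\Phi_k = \id$ at the chain level. Next I would apply the \refNamedThm{Cancellation Lemma}{cancellation_lemma} to the pair $(\gamma_1,\gamma_2)=(e_+,h_-)$: its hypotheses are satisfied since $e_+$ participates in exactly one differential (the formal $e_+\to h_-$), and $h_-$ occurs only at negative ends of differentials in $ECC\orbits{}{eh}$ by the \refNamedThm{Trapping Lemma}{trapping_lemma} applied at the negative Morse-Bott torus $\del N$. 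The resulting chain homotopy equivalence, constructed through the spectral sequence in the proof of the Cancellation Lemma, identifies $\hat\pi_k$ as a quasi-isomorphism. Combined with $\hat\pi_k\circ\hat\Phi_k = \id$, it follows that $(\hat\Phi_k)_* = (\hat\pi_k)_*^{-1}$ is an isomorphism, so $\hat\Phi_k$ is a quasi-isomorphism as well.

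The main obstacle I anticipate is the chain map verification: although the key algebraic input reduces to the commutation relation $d'_N d_N^\flat = d_N^\flat d'_N$ over $\F_2$, simultaneously tracking the three-term differential on $ECC\orbits{}{eh}$ against the telescoping definition of $\hat\Phi_k$ requires careful bookkeeping. Once the cancellation is pinned down, the remainder of the argument is essentially formal given the framework already developed.
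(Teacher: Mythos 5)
Your proposal is correct and follows essentially the same route as the paper: the same formula $\hat\Phi_k(\Gamma)=\sum_i e_+^i(d'_N)^i(\Gamma)$ (modelled on $\sigma$ from \cref{U_natural_and_U_sharp_iso_prop}), the same telescoping computation for commutativity, and the same appeal to the \refNamedThm{Cancellation Lemma}{cancellation_lemma} with $(\gamma_1,\gamma_2)=(e_+,h_-)$ for the quasi-isomorphism. Your explicit chain-map verification via $d_N^\flat d'_N = d'_N d_N^\flat$ and the retraction $\hat\pi_k\circ\hat\Phi_k=\id$ are only presentational refinements of what the paper leaves implicit.
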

\begin{proof}
Recall the quasi-isomorphism 
\[ \sigma : ECC\orbits{0he}{}(\mathrm{int}(N),\alpha_N) \to ECC\orbits{0he}{eh}(\mathrm{int}(N),\alpha_N) \]
 from the proof of \cref{U_natural_and_U_sharp_iso_prop}, which commutes with the $U$-maps $U^\natural$ and $U^\sharp$.

We define $\hat\Phi_k$ completely analogously, by the formula
\[ \Gamma \mapsto \sum_{i=0}^\infty e_+^i \tensor (d'_N)^i(\Gamma). \]
Then commutativity of \refDiagram{U_map_commutes} follows easily:
\[ \hat\Phi_k(\Gamma)/e_+ = \sum_{i=0}^\infty e_+^{i-1} \tensor (d'_N)^i(\Gamma) = \sum_{i=1}^\infty e_+^{i} \tensor (d'_N)^{i+1}(\Gamma) = \hat\Phi_{k-1}(d'_N(\Gamma)). \]
We see that the maps $\hat\Phi_k$ are quasi-isomorphisms by examining the proof of the \refNamedThm{Cancellation Lemma}{cancellation_lemma} in exactly the same manner as that used when showing that $\sigma$ was a quasi-isomorphism in \cref{U_natural_and_U_sharp_iso_prop}.
\end{proof}

We can join \hyperref[commutative_diagram_L_filtration_eqn]{Diagrams \ref*{commutative_diagram_L_filtration_eqn}} and \ref{U_map_commutes} to form the following:
\[\begin{tikzcd}
ECH\orbits{e}{h}(\mathrm{int}(N),\alpha_N;j) \arrow[rr,"d^{(n+1)}_{\mathcal{F},1}"]\arrow[d,"d^{(n)}_{\mathcal{F},1}"'] && h_{\nicefrac{1}{n+1}}ECH(\mathrm{int}(N),\alpha_N;j-(n+1)) \arrow[d,"\iso"'] \\
h_{\nicefrac{1}{n}}ECH(\mathrm{int}(N),\alpha_N;j-n) \arrow[d,"\cdot/h_{1/n}"', "\iso"]\arrow[rr,"h_{\nicefrac{0}{1}}(d'_N)_*"] && h_{\nicefrac{1}{n}}h_{\nicefrac{0}{1}}ECH(\mathrm{int}(N),\alpha_N;j-(n+1))\arrow[d,"\cdot/(h_{1/n}h_{0/1})", "\iso"'] \\
ECH(\mathrm{int}(N),\alpha_N; j-n) \arrow[rr,"(d'_N)_*"]\arrow[d,"(\hat\Phi_{j-n})_*"',"\iso"] && ECH(\mathrm{int}(N),\alpha_N; j-(n+1))\arrow[d,"\Phi_{j-(n+1)}", "\iso"'] \\
ECH\orbits{}{eh}(\mathrm{int}(N),\alpha_N;j-n) \arrow[rr,"(\cdot/e_+)_*"] && ECH\orbits{}{eh}(\mathrm{int}(N),\alpha_N;j-(n+1))
\end{tikzcd}\]
where the middle square commutes by definition.

We therefore obtain the following equation:
\begin{equation}\label{induction_equation}
d_{\mathcal{F},1}^{(n+1)} =  \Phi^{-1}_{n+1}\circ U_*\circ \Phi_{n}\circ d_{\mathcal{F},1}^{(n)},
\end{equation}
for all $n\le j-1$, where
\begin{itemize}
\item $\Phi_k: h_{\nicefrac{1}{k}}ECH(\mathrm{int}(N),\alpha_N;j-k)\to ECH\orbits{}{eh}(\mathrm{int}(N),\alpha_N;j-k)$ is the isomorphism induced by the chain map $h_{\nicefrac{1}{k}}\Gamma \mapsto \hat\Phi_{j-k}(\Gamma)$, and
\item $U_*$ is induced by the chain map $U$ defined by $\Gamma \mapsto \Gamma/e_+$.
\end{itemize}

The equation in \cref{induction_equation} is used in the inductive step when proving \cref{surgery_mapping_cones} below.  The ingredient making up the ``base case'' of the inductive argument involves considering ($-1$)-surgery:

\begin{prop}\label{1_surgery}
Suppose that $j > 2g$. Then the map
\begin{equation}\label{d_F_1_map} d_{\mathcal{F},1}^{(1)} : ECH\orbits{e}{h}(\mathrm{int}(N), \alpha_N; j) \to h_{\nicefrac{1}{1}}ECH(\mathrm{int}(N),\alpha_N; j-1) \end{equation}
is an isomorphism.
\end{prop}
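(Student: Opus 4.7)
The plan is to identify $E^\infty(\mathcal{F})$ in Alexander grading $j$ with $\widehat{ECK}(K(-1), \alpha'; j)$, observe that the surgered knot $K(-1)$ admits an integral open book decomposition of genus $g$, and then apply \cref{ECK_supported_genus} to conclude that this group vanishes. This would force $d^{(1)}_{\mathcal{F},1}$ to have trivial kernel and cokernel, and hence be an isomorphism.

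To carry this out, I would first note that for $n = 1$, \cref{first_page_of_F_spectral_sequence} shows that $E^1(\mathcal{F})$ is supported entirely in $\mathcal{F}$-gradings $0$ and $1$, so $d^{(1)}_{\mathcal{F},1}$ is the only possibly non-zero differential on any page of the spectral sequence. Hence $E^\infty(\mathcal{F}) = E^2(\mathcal{F}) = \mathrm{Cone}(d^{(1)}_{\mathcal{F},1})$, and since the spectral sequence converges to $H_*(ECC\orbits{he}{h}(\mathrm{int}(N \union T), \alpha'_j; j))$, \cref{ECC_independent_of_alpha} identifies its limit with $\widehat{ECK}(K(-1), \alpha'; j)$.

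Next, I would verify that our construction on $N \union T$ provides an integral open book for $M(-1)$ with binding $K(-1)$ and page of genus $g$. For $n = 1$, the change of coordinates $\theta' = \theta + t$ near $\del(N \union T)$ converts the $(-1/1)$-fractional twist into an integral one: the identification $(1, \theta', y') \sim (0, \theta' - 1, y')$ reduces modulo $\Z$ to the identity, so the new monodromy fixes $\del \Sigma_T$ pointwise. The page $\Sigma_T = \Sigma \union (S^1 \cross [1, 2])$ is diffeomorphic to $\Sigma$ and hence has genus $g$. The explicit formula for $\alpha$ in the new coordinates, computed at the end of \cref{large_negative_n_surgery}, is exactly the form appearing in \cref{extendable_contact_form}, so $\alpha'$ is extendable to this integral open book. \Cref{ECK_supported_genus} then yields $\widehat{ECK}(K(-1), \alpha'; j) = 0$ for $j > 2g$, and combined with the identification above this gives the isomorphism.

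The main obstacle will be the bookkeeping required to verify that the integral open book structure and the perturbed contact form $\alpha'$ satisfy the precise hypotheses of \cref{ECK_supported_genus}, and in particular that the non-degenerate perturbations used in \cref{perturbed_contact_forms_on_N_union_T} preserve the extendability condition after the change of coordinates on the collar of $\del(N \union T)$. Once this compatibility is established, the argument follows cleanly from the previously developed machinery, without requiring any further analysis of holomorphic curves.
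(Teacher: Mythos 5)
Your proposal follows essentially the same argument as the paper: observe that $(-1)$-surgery gives an integral open book decomposition of genus $g$, apply \cref{ECK_supported_genus} to conclude that $\widehat{ECK}(K(-1),\alpha';j)$ vanishes for $j > 2g$, and deduce that the spectral sequence for $\mathcal{F}$ collapses so that $d^{(1)}_{\mathcal{F},1}$ must be an isomorphism. The paper states the integral-open-book observation and the invocation of \cref{ECK_supported_genus} more briefly, but the content is identical to yours.
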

\begin{proof}
Note that ($-1$)-surgery yields an integral open book decomposition, since we perform a full twist in the twist region $S^1\cross[1,2]$ and hence the resulting monodromy restricts to the identity map on the new boundary at $y=1$.

We can therefore apply \cref{ECK_supported_genus}, which states that $\widehat{ECK}$ is supported in Alexander gradings $2g$ and below, to see that
\[ \widehat{ECK}(K(-1),\alpha'_j;j) := ECH\orbits{eh}{h}(\mathrm{int}(N\union T), \alpha'_j;j) = 0. \]

But \cref{d_F_1_map} describes the entirety of the first page of the spectral sequence corresponding to the filtration $\mathcal{F}$, and hence it must be the case that the spectral sequence converges at the second page and hence $d_{\mathcal{F},1}^{(1)}$ is an isomorphism.
\end{proof}

We are now ready to prove \cref{surgery_mapping_cones}.

\begin{proof}[Proof of \cref{surgery_mapping_cones}]
Let $j>2g$ be fixed, and suppose that $1\le n \le j$. Then we can apply \cref{induction_equation} repeatedly to write 
\[\begin{split}
\Phi_n\circ d_{\mathcal{F},1}^{(n)} &= U_*\circ \Phi_{n-1}\circ d_{\mathcal{F},1}^{(n-1)} \\
&= U_*\circ \Phi_{n-1}\circ \left(\Phi^{-1}_{n-1}\circ U_*\circ \Phi_{n-2}\circ d_{\mathcal{F},1}^{(n-2)}\right) \\
&= U^2_*\circ \Phi_{n-2}\circ d_{\mathcal{F},1}^{(n-2)} \\
&\cdots \\
&= U_*^{n-1} \circ \Phi_{1}\circ d_{\mathcal{F},1}^{(1)}.
\end{split}\]
This yields the following commutative diagram:
\[ \begin{tikzcd}
ECH\orbits{e}{h}(\mathrm{int}(N),\alpha_N;j)\arrow[rr,"d_{\mathcal{F},1}^{(n)}"]\arrow[d,"\Phi_{1}\circ d_{\mathcal{F},1}^{(1)}"] && h_{\nicefrac{1}{n}}ECH(\mathrm{int}(N),\alpha_N;j-n)\arrow[d,"\Phi_n"] \\
ECH\orbits{}{eh}(\mathrm{int}(N),\alpha_N;j-1)\arrow[rr,"U_*^{n-1}"] && ECH\orbits{}{eh}(\mathrm{int}(N),\alpha_N;j-n)
\end{tikzcd}\]
where the vertical maps are isomorphisms by \cref{U_map_commutes_prop,1_surgery}.
and hence the mapping cones associated to the top and bottom maps respectively are isomorphic.

The main result of the theorem follows since the homology of the mapping cones associated to $U^{n-1}$ and $U^{n-1}_*$ are equal. The fact that the mapping cone of $U^{n-1}$ is chain homotopy equivalent to the subcomplex
\[ ECC\orbits{}{eh}(\mathrm{int}(N), \alpha_N; j-1, n-2). \]
follows by the symmetry of the full ECK complex as discussed in \cref{symmetries_of_the_complex}; $U^{n-1}$ is surjective and its kernel is precisely those orbit sets with ($e_+$)-multiplicity less than or equal to $n-2$.
\end{proof}

\section{Proof of the surgery formula}

In this section we will prove \cref{surgery_formula}.  Recall that embedded contact homology splits as a direct sum with respect to the first homology group of the manifold, and the hat version of embedded knot contact homology splits as a direct sum with respect to the first homology group of the knot complement.

The correspondence between these splittings is given by the map on homology induced by the inclusion map
\[ M\sminus K \hookrightarrow M. \]

In our setting, the homology of the knot complement is equal to $\Z\oplus G$ where the $\Z$ factor, generated by the meridian curve $[m]$, we of course know as the Alexander grading.  When we perform ($-n$)-surgery, the $\Z$ factor is reduced modulo $n$, since $n[m]$ is identified with $[l]$, the boundary of $\Sigma$.  We therefore have that
\[ H_1(M(-n)) \iso \Z/n \oplus G \]
and hence we can write
\[ \widehat{ECH}(M(-n), \alpha') = \bigoplus_{[j]\in \Z/n}\widehat{ECH}(M(-n), \alpha'; [j]) \]
where $\widehat{ECH}(M(-n), \alpha'; [j])$ is obtained by considering all orbit sets with Alexander grading equal to $j$ modulo $n$.  This motivates the notation
\[ \widehat{ECK}(K(-n); \alpha'; [j]) := \bigoplus_{j' \equiv j \ (n)}\widehat{ECK}(K(-n),\alpha';j') \]
which we saw in the introduction.

The proof of \cref{surgery_formula} follows almost immediately from the results of the last section, but we must first relate the complexes
\[  ECC\orbits{e}{h}(\mathrm{int}(N), \alpha_N; j)\quad\text{and}\quad ECC\orbits{}{eh}(\mathrm{int}(N), \alpha_N; (n+j)-1, n-2) \]
to the complexes $A_j$ and $B_{2g-j}$ from \cref{complexes_A_j_and_B_i}.

\begin{prop}\label{B_j_isos}
Suppose that $j \ge 0$.  We have a series of isomorphisms
\[\begin{split}
ECH\orbits{}{eh}(\mathrm{int}(N), \alpha_N; (n+j)-1, n-2) &\iso ECH\orbits{he}{eh}(\mathrm{int}(N), \alpha_N; (n+j)-1, n-2) \\
 &\iso ECH\orbits{he}{eh}(\mathrm{int}(N), \alpha_N; 2g, 2g-j-1) \\
 &\iso H_*(B_{2g-j}).
\end{split}\]
In particular, the above groups are 0 for $j\ge 2g$.
\end{prop}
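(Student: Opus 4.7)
The plan is to establish the three isomorphisms in sequence, using respectively the \refNamedThm{Cancellation Lemma}{cancellation_lemma}, a combination of \cref{ECK_supported_genus} and the translational symmetry of \cref{ECK_translational_symmetry}, and finally the definition of $B_{2g-j}$. The first isomorphism is obtained by applying the \refNamedThm{Cancellation Lemma}{cancellation_lemma} with $(\gamma_1,\gamma_2)=(h_+,e_-)$ and $u$ the holomorphic curve $h_+\to e_-$: since $ECC\orbits{he}{eh}$ omits the curve $h_+\to\emptyset$, the only differential involving either $h_+$ or $e_-$ is $u$ itself, and since $A(h_+)=A(e_-)=1$ and neither orbit touches $e_+$, the cancellation preserves both the Alexander grading and the $e_+$-multiplicity filtration, so it descends to the specified subcomplex.

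For the second isomorphism I will combine the support theorem \cref{ECK_supported_genus} with the translational symmetry of \cref{ECK_translational_symmetry}. After taking the associated graded with respect to the Alexander filtration, \cref{ECK_supported_genus} implies that $ECC\orbits{he}{eh}(\mathrm{int}(N),\alpha_N)$ is bi-filtered chain homotopy equivalent to a representative supported in the diagonal region $\set{(i,A):0\le i\le A\le i+2g}$. Passing to this representative, every generator in Alexander grading $A=n+j-1$ automatically satisfies $i\ge n+j-1-2g>0$ (using $n>2g$), and so the subcomplex with $i\le n-2$ is concentrated in $i\in[n+j-1-2g,n-2]$. The chain isomorphism $\Gamma\mapsto\Gamma/e_+^{n+j-1-2g}$ supplied by \cref{ECK_translational_symmetry} then carries this subcomplex bijectively onto the subcomplex of Alexander grading $2g$ with $i\in[0,2g-j-1]$, producing the desired isomorphism on homology.

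The third isomorphism is a matter of unwinding definitions. The $2g$-th row of the full complex, being by construction the associated graded piece at Alexander grading $2g$, retains exactly those differentials which preserve Alexander grading: the curves $h_+\to e_-$, $e_+\to h_-$, and the internal differentials in $\mathrm{int}(N)$. These are precisely the differentials counted in $ECC\orbits{he}{eh}(\mathrm{int}(N),\alpha_N;2g)$, so $B_{2g-j}$ is literally $ECC\orbits{he}{eh}(\mathrm{int}(N),\alpha_N;2g,2g-j-1)$. The ``in particular'' statement then follows immediately: when $j\ge 2g$ the bound $i\le 2g-j-1<0$ is unsatisfiable, so $B_{2g-j}$ is the zero complex.

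The main obstacle will be in the second step, where \cref{ECK_translational_symmetry} is phrased for the \emph{quotient} complex $C_{i'}$ generated by orbit sets with $i\ge i'$, whereas the object of interest here is the \emph{subcomplex} with $i\le n-2$. This mismatch is resolved by first passing to the diagonal-region chain homotopy representative of \cref{ECK_supported_genus}: within that representative the support constraint $i\ge A-2g$ holds automatically in Alexander grading $A=n+j-1$, so the subcomplex $\set{i\le n-2}$ is also the relevant shifted copy of a quotient complex $C_{i'}$ to which the translational symmetry directly applies.
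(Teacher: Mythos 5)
Your proof is correct and follows essentially the same route as the paper: the \refNamedThm{Cancellation Lemma}{cancellation_lemma} applied to the differential $h_+\longrightarrow e_-$ for the first isomorphism, the diagonal translational symmetry of \cref{ECK_translational_symmetry} combined with the stabilization of rows $\ge 2g$ from \cref{ECK_supported_genus} for the second, and the definition of $B_{2g-j}$ for the third. (One trivial quibble: $n+j-1-2g$ is only guaranteed to be $\ge 0$, not $>0$, when $j=0$ and $n=2g+1$, but in that case the shift is by $e_+^0$ and the argument is unaffected.)
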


\begin{proof}
The first isomorphism follows by the \refNamedThm{Cancellation Lemma}{cancellation_lemma} applied to the differential from $h_+$ to $e_-$.  The second isomorphism follows from the diagonal translational symmetry of the full ECK complex as discussed in \cref{symmetries_of_the_complex} and the fact that the ($e_+$)-filtered homotopy type of each row stabilizes for rows $2g$ and higher by \cref{ECK_supported_genus}. (Note that $n+j-1 \ge 2g$ since $n>2g$ and $j\ge 0$.) The third isomorphism follows by definition.
\end{proof}

\begin{prop}\label{A_j_iso}
Suppose that $j\ge 0$. We have an isomorphism
\[ ECH\orbits{e}{h}(\mathrm{int}(N), \alpha_N; j) \iso H_*(A_j). \]
In particular, for $j\ge 2g$, the groups above are simply equal to $\widehat{ECH}(M)$.
\end{prop}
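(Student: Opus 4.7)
The plan is to exhibit $A_j$ as a mapping cone of an explicit chain-level injection, identify its cokernel with $V_j := ECC\orbits{e}{h}(\mathrm{int}(N),\alpha_N; j)$, and then use \cref{ECK_supported_genus} to stabilize at $j\ge 2g$. First I would write every generator of the zeroth column $ECC\orbits{0he}{h}(\mathrm{int}(N),\alpha_N)$ uniquely as either $\Gamma$ (no $h_+$) or $h_+\Gamma$; since $A(h_+)=1$ in the integral case, the subcomplex $A_j$ decomposes as a vector space as $C^{(\le j)}\oplus h_+\cdot C^{(\le j-1)}$, where $C^{(\le k)} := \bigoplus_{l=0}^{k} V_l$. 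Reading off $d(h_+\Gamma) = (1+e_-)\Gamma + h_+\, d_N(\Gamma)$ from \cref{ECK_complex_defn}, $A_j$ is precisely the mapping cone of the chain map $f_j : C^{(\le j-1)} \to C^{(\le j)}$ sending $\Gamma \mapsto (1+e_-)\Gamma$. This decomposition is consistent with the internal differentials because $d_N$ preserves Alexander grading: any holomorphic curve from $\Gamma_+$ to $\Gamma_-$ satisfies $[\Gamma_+]=[\Gamma_-]$ in $H_1$, hence $A(\Gamma_+)=A(\Gamma_-)$.

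The key step is to construct a chain-level isomorphism $C^{(\le j)}/f_j(C^{(\le j-1)})\iso V_j$. I would define $\pi : C^{(\le j)} \to V_j$ by $\pi\bigl(\sum_l \Sigma_l\bigr) := \sum_l e_-^{j-l}\Sigma_l$, which is a chain map because multiplication by $e_-$ commutes with $d_N$ (all ends of holomorphic curves at $e_-$ must be one-sided negative ends, by the \refNamedThm{Trapping Lemma}{trapping_lemma}). A direct $\F_2$-calculation verifies $\pi\circ f_j = 0$; conversely, given $\Sigma\in\ker\pi$, the equation $f_j(\Gamma)=\Sigma$ can be solved recursively by setting $\Gamma_l = \sum_{m=0}^{l} e_-^{l-m}\Sigma_m$, and the hypothesis $\pi(\Sigma)=0$ is exactly the consistency condition $e_-\Gamma_{j-1}=\Sigma_j$. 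Combined with the injectivity of $f_j$ on chains (decompose $\Gamma$ by powers of $e_-$ and observe no cancellation is possible in $(1+e_-)\Gamma$ over $\F_2$), this yields the chain-level isomorphism. Since $f_j$ is injective on chains, the natural projection $A_j = \mathrm{cone}(f_j) \to C^{(\le j)}/f_j(C^{(\le j-1)})$ is a quasi-isomorphism (its kernel is the cone of the identity $C^{(\le j-1)} \xrightarrow{\sim} f_j(C^{(\le j-1)})$, hence acyclic), so $H_*(A_j) \iso H_*(V_j)$.

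For the \emph{in particular} claim, the associated-graded quotient $A_{k+1}/A_k$ is the Alexander-grading-$(k+1)$ part of the zeroth column, with homology $\widehat{ECK}(K,\alpha;k+1)$; by \cref{ECK_supported_genus} these groups vanish for $k+1 > 2g$, so every inclusion $A_k \hookrightarrow A_{k+1}$ with $k\ge 2g$ is a quasi-isomorphism. Iterating, and commuting homology with the filtered colimit, gives $H_*(A_j) \iso H_*\bigl(ECC\orbits{0he}{h}(\mathrm{int}(N),\alpha_N)\bigr) \iso \widehat{ECH}(M)$ for $j\ge 2g$, where the second isomorphism is \cref{ECH_0he_h_iso_relative_ECH} combined with \refListInThm{relative_ECH_equals_ECH}{2}.

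The main obstacle is the chain-level identification $\ker\pi = f_j(C^{(\le j-1)})$: without it the long exact sequence of the mapping cone only pins down $H_*(A_j)$ up to the extension ambiguity determined by $(1+e_-)_*$. It is precisely the telescoping cancellation arising from the specific form $f_j = 1+e_-$ working over $\F_2$ that allows one to invert $f_j$ on $\ker\pi$ by the explicit recursion above; everything else is formal once this identification and the injectivity of $f_j$ on chains are in hand.
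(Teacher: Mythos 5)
Your proof is correct and follows essentially the same route as the paper: both exhibit $A_j$ as the mapping cone of the injective chain map $(1+e_-)\colon C_{\le j-1}\to C_{\le j}$ and identify its cokernel with $ECC\orbits{e}{h}(\mathrm{int}(N),\alpha_N;j)$ via the padding map $\Gamma\mapsto e_-^{j-A(\Gamma)}\Gamma$, then invoke \cref{ECK_supported_genus} for the $j\ge 2g$ statement. The only cosmetic difference is the final homological-algebra step---you note directly that the cone of an injective chain map is quasi-isomorphic to its cokernel, whereas the paper compares two long exact sequences and applies the five lemma---and your explicit recursion usefully fills in the paper's bare assertion that $\ker\pi=\mathrm{Im}(1+e_-)$.
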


\begin{proof}
Consider the chain map
\[ \sigma: A_j  \to  ECC\orbits{e}{h}(\mathrm{int}(N); \alpha_N; j) \]
defined by killing $h_+$ and padding all other orbit sets with the appropriate power of $e_-$:
\begin{align*}
h_+\Gamma &\mapsto 0,\\
\Gamma &\mapsto e_-^{j-A(\Gamma)}\Gamma
\end{align*}
where $A(\Gamma)$ is the Alexander grading of the orbit set $\Gamma$.  It is easy to see that this is a chain map since
\[ \sigma(d(h_+\Gamma)) = \sigma(\Gamma + e_-\Gamma) = e_-^{j-A(\Gamma)}\Gamma + e_-^{j-A(e_-\Gamma)}e_-\Gamma = 0 \]
(as we are working over $\F_2$) and, as the orbit $e_-$ can only appear at negative ends of differentials,
\[ e_- d\Gamma = d(e_- \Gamma) \]
for all orbit sets $\Gamma$ contained in $N$.

Write $C_{\le j}$ for the subcomplex of $ECC\orbits{e}{h}(\mathrm{int}(N), \alpha_N)$ generated by orbits sets with Alexander grading less than or equal to $j$.  Then, as in the proof of \cref{relative_ECH_iso_augmented_complex_prop}, the complex $A_j$ can be exhibited as the mapping cone of the function
\begin{align*}
f: C_{\le j-1} &\to C_{\le j} \\
\Gamma &\mapsto (1+e_-)\Gamma.
\end{align*}
Hence we have a triangle
\[  C_{\le j} \hookrightarrow A_j \to C_{\le j-1} \xrightarrow{f} \]
(where the second map is given by $\Gamma\mapsto \Gamma / h_+$), which induces a long exact sequence on homology.

We also have a short exact sequence
\begin{equation}\label{short_exact_sequence_final_isomorphism}
C_{\le j-1} \xrightarrow{f} C_{\le j} \xrightarrow{\sigma'} ECC\orbits{e}{h}(\mathrm{int}(N); \alpha_N; j)
\end{equation}
(where $\sigma'$ is the map $\Gamma\mapsto e_-^{j-A(\Gamma)}\Gamma$), since $\mathrm{Im}(f)$ and $\mathrm{ker}(\sigma')$ are both equal to the ideal generated by $1+e_-$.

The induced map $\sigma_*$ fits into the diagram of long exact sequences below. 
\[\begin{tikzcd}
\arrow[r] & H_*(C_{\le j-1})\arrow[r,"f_*"]\arrow[d,"\id"] & H_*(C_{\le j})\arrow[r,"i_*"]\arrow[d,"\id"] & H_*(A_j)\arrow[r,]\arrow[d,"\sigma_*"] & H_*(C_{\le j-1})\arrow[d,"\id"]\arrow[r] & ~\\
\arrow[r] & H_*(C_{\le j-1})\arrow[r,"f_*"] & H_*(C_{\le j})\arrow[r,"\sigma'_*"] & H_*(C_j)\arrow[r,] & H_*(C_{\le j-1}) \arrow[r] &~
\end{tikzcd}\]
Here we are writing $C_j$ instead of $ECC\orbits{e}{h}(\mathrm{int}(N); \alpha_N; j)$.

The first square commutes trivially, and the second since we already have commutativity at the chain level.  Commutativity of the third square follows since
\begin{align*}
 f_*: H_*(C_{\le j-1}) &\to H_*(C_{\le j}) \\
[\Gamma] &\mapsto [\Gamma] + [e_-\Gamma]
\end{align*}
is injective and hence both the horizontal maps in the third square are in fact zero.  Applying the five lemma yields the result that $\sigma_*$ is an isomorphism.

The fact that $H_*(A_j)\iso \widehat{ECH}(M)$ for $j\ge 2g$ follows from \cref{ECK_supported_genus}.
\end{proof}

\begin{proof}[Proof of \cref{surgery_formula}]
The proof follows from \cref{B_j_isos,A_j_iso}.  We simply compute the homology in each Alexander grading $j$.

\paragraph{1. ($j<n$)}In these Alexander gradings,
\[ \widehat{ECK}(K(-n),\alpha';j) \iso ECC\orbits{e}{h}(\mathrm{int}(N),\alpha_N; j) \iso H_*(A_j), \]
where the first isomorphism follows from \cref{first_page_of_F_spectral_sequence} and the second by \cref{A_j_iso}.  In particular, for $2g\le j < n$ we obtain $\widehat{ECH}(M)$ as required.

\paragraph{2. ($j \ge n$)}In these Alexander gradings, we have
\[ \widehat{ECK}(K(-n),\alpha';j) \iso ECH\orbits{}{eh}(\mathrm{int}(N), \alpha_N, j-1, n-2) \iso H_*(B_{2g-(j-n)}), \]
where the first isomorphism follows from \cref{surgery_mapping_cones} and the second from \cref{B_j_isos}.  In particular, for $j\ge n+2g$ the groups vanish and we are done.
\end{proof}

It is helpful here to refer back to \cref{two_towers_example}.  Firstly, the $A_j$ summands can be seen as the $j$-th row of the $e_-$ tower for $j<n$.  In rows $j\ge n$, recall that $d_{\mathcal{F},1}$ is a map in each row between (the homology of) the rectangular box and the oval box.  We can now say that the mapping cone in these rows has homology equal to that of $B_{2g-(j-n)}$, and that homology vanishes in rows $n+2g$ and above.

\section{Further questions}

\paragraph{A surgery formula for arbitrary knots.}The surgery formula in \cref{surgery_formula} only applies to knots which are the binding of an integral open book decomposition, but it should be possible to generalize this result to arbitrary knots.  Indeed, given any knot $K\subset M$, it is possible to realize $K$ as a single component of the binding of some integral open book (proved independently by Baker, Etnyre and Van Horn-Morris~\cite[Section 6]{BEV} and Guyard~\cite[Theorem 0.12]{Guyard}).  We can then proceed by the methods of \cref{ECH_via_rational_open_book_decompositions_chapter} to express $ECH(M)$ in terms of an appropriately defined relative ECH of the mapping torus $N$ with, say, $k\ge 1$ torus boundary components (c.f.~\cref{relative_ECH_equals_ECH}).  It should be possible to also generalize the work of Colin, Ghiggini and Honda (c.f.~the discussion in \cref{ECH_equals_HF_section}) to find an isomorphism between these relative ECH groups and the Heegaard Floer groups defined via a Heegaard diagram corresponding to the open book.  If $g$ is the genus of the page of the open book and $k$ is the number of boundary components then the relevant Heegaard surface is obtained by doubling this surface along its boundary (c.f.~the construction in \cite[Section 5]{CGHsummary}) and therefore has genus $2g+k-1$.  As a result, we expect that by an argument similar to that of \cref{ECK_hat_supported_in_low_Alexander_degrees}, it should be possible to show that that the resulting $\widehat{ECK}$ groups vanish in Alexander gradings greater than $2g+k-1$.  Given this, the arguments of \cref{a_surgery_formula_chapter} carry over directly and hence would result in a surgery formula for all $n>2g+k-1$.

\paragraph{A surgery formula for $\widehat{ECH}$.} The surgery formula in \cref{surgery_formula} computes $\widehat{ECK}(K(-n))$ in terms of the full complex, $ECK(K)$, of the original knot.  This agrees with the large-$n$ formula in the Heegaard Floer setting, but does not provide the full picture. Indeed, in the Heegaard Floer setting, we are able to compute $\widehat{HF}(M(n))$ from the full complex $CFK^+(K)$.  To achieve an analogous result in the embedded contact setting, it is necessary to understand how the differential
\[ h_+\longrightarrow \emptyset, \]
which lowers Alexander grading by $n$, manifests itself in the surgery formula.  More concretely, we know by \cref{relative_ECH_equals_ECH} and \cref{ECH_0he_h_iso_relative_ECH} that we have an isomorphism
\[ \widehat{ECH}(M(-n)) \iso ECH\orbits{0he}{h}(\mathrm{int}(N\union T),\alpha,J), \]
and the complex on the right admits a filtration by Alexander grading whose associated graded complex computes
\[ \widehat{ECK}(K(-n)). \]
In other words, the surgery formula in \cref{surgery_formula} computes the first page of the spectral sequence corresponding to this Alexander filtration.  It should be possible then to understand how the differential between $h_+$ and the empty set induces a chain map on $\widehat{ECK}(K(-n))$, the homology of which yields $\widehat{ECH}(M(-n))$.  (C.f.~the vertical differential between the oval and rectangle complexes in \cref{surgery_formula_figure}.)

Unfortunately, despite the author's best efforts, little progress has been made towards this goal.  It seems that such a formula may require understanding the behaviour of ``complex'' holomorphic curves with slope $n[m]-[l]$ at $y=2$, similar to those which make up the differential $d_{\mathcal{F},1}$.  Whereas the curves forming $d_{\mathcal{F},1}$ had negative ends at $h_{\nicefrac{1}{n}}$, the curves we seek in this instance must have negative ends at the neighbouring orbit $e_{\nicefrac{1}{n}}$. More research is necessary in this regard.

\paragraph{A rational surgery formula.}There is a more general surgery formula in the Heegaard Floer setting, which computes the Heegaard Floer homology of any ($p/q$)-Dehn surgery on $K$ in $M$ via a mapping cone construction~\cite[Theorem 1.1]{OS11knot}.  Furthermore, despite not being written down anywhere as far as the author is aware, a version of this formula can compute $\widehat{HFK}$ of the surgered knot $K_{p/q}$.  It would be an interesting to see if this mapping cone formula could be reconstructed in the case of ECK.

\paragraph{A surgery formula with $\Z$ coefficients.}It is possible to define ECH with $\Z$ coefficients~\cite[Section 9]{HT09} rather than $\F_2$ as in this thesis, and it would be interesting to investigate a surgery formula in this setting. As remarked by Colin, Ghiggini and Honda~\cite[Remark 9.9.5]{CGH_ECH_OBD}, it is expected that all results from \crefrange{ECH_chapter}{ECH_via_rational_open_book_decompositions_chapter} will hold over $\Z$. For this reason we also expect all results in \crefrange{embedded_contact_knot_homology}{invariance_results_in_the_ECK_setting} of \cref{a_knot_version_of_ECH_chapter} to hold, only running into trouble when proving that $\widehat{ECK}$ is supported in low Alexander degrees in \cref{ECK_hat_supported_in_low_Alexander_degrees} (the problem arises here since we exploited the isomorphism between $\widehat{HF}$ and PFH with $\F$ coefficients in \cref{stabilization_diag}).

In \cref{a_surgery_formula_chapter} most results carry over to the $\Z$-setting---in particular we note that the periodic Floer homology of a Dehn twist can be computed with $\Z$ coefficients, using an orientation system described by Hutchings and Sullivan in 2006~\cite{HS06}.  A less trivial point is the use of $1=-1\in\F_2$ in \cref{commutative_diagram_L_filtration}; when considered over $\Z$ the equation $d^2=0$ will instead produce an \emph{anticommutative} diagram on the first page $E^1(\mathcal{L})$.  However the orientation system described by Hutchings and Sullivan~\cite{HS06} seems to suggest that, over $\Z$, the differential $d_{\mathcal{L},1}$ will induce the map
\begin{align*} 
h_{\nicefrac{1}{n+1}}ECH(\mathrm{int}(N),\alpha_N;j-(n+1)) &\to h_{\nicefrac{1}{n}}h_{\nicefrac{0}{1}} ECH(\mathrm{int}(N),\alpha_N;j-(n+1)) \\
h_{\nicefrac{1}{n+1}}\Gamma &\mapsto -h_{\nicefrac{1}{n}}h_{\nicefrac{0}{1}}\Gamma, 
\end{align*} 
and hence \refDiagram{commutative_diagram_L_filtration_eqn}, in which the map on the right hand side is given by
\[h_{\nicefrac{1}{n+1}}\Gamma \mapsto +h_{\nicefrac{1}{n}}h_{\nicefrac{0}{1}}\Gamma,\]
 will commute as desired.

\paragraph{Remaining conjectures.} Finally, there are the two conjectures in the thesis which remain unproven.  The first, \cref{full_ECK_J_and_alpha_invariant}, concerning $\alpha$ and $J$ invariance of the bi-filtered homotopy type of the full ECK complex
\[ ECC\orbits{0he}{eh}(\mathrm{int}(N),\alpha,J) \]
in the case of a rational open book decomposition, seems the more approachable of the two, and would follow immediately from a proof that $\widehat{ECK}$ is supported in low Alexander degrees in this setting. The second, \cref{ECK_HFK_conjecture}, concerning the equivalence between HFK and ECK, is perhaps a long way off. However, a proof of this conjecture would provide a substantial and important bridge within knot theory, between the two fields of contact topology and Heegaard Floer theory.

\printbibliography

\end{document}